\documentclass[a4paper,12pt,twoside,openright]{report}

\usepackage[utf8]{inputenc}                  %
\usepackage[T1]{fontenc}                     %
\usepackage{lmodern}                         %
\usepackage{geometry}                        
\usepackage[square,numbers]{natbib}          
\usepackage[nottoc,notlot,notlof]{tocbibind} 
\usepackage{enumitem}                        
\usepackage{xparse}                          
\usepackage{xstring}                         
\usepackage{etoolbox}                        
\usepackage{parskip}                         
\usepackage{titling}

\usepackage{mathtools}              
\usepackage{amssymb}                
\usepackage{amsthm}                 
\usepackage{IEEEtrantools}          
\usepackage{tensor}                 
\usepackage{tikz}                   

\usepackage{hyperref}               
\usepackage{bookmark}               
\usepackage[capitalise]{cleveref}   
\usepackage[all]{hypcap}            


\apptocmd{\sloppy}{\hbadness 10000\relax}{}{}                       
\allowdisplaybreaks                                                 
\graphicspath{{./figures/}}                                         

\newlength{\alphabet}
\settowidth{\alphabet}{\normalfont abcdefghijklmnopqrstuvwxyz}
\geometry{textwidth=3\alphabet,textheight=4.5\alphabet,hcentering}

\setlist[description]{font=\normalfont}
\setlist[enumerate]{font=\normalfont}
\setlist[enumerate,1]{label = {(\arabic*)}}
\setlist[enumerate,2]{label = {(\arabic{enumi}.\arabic*)}}

\newcounter{dummy}
\makeatletter
\newcommand\myitem[1][]{\item[#1]\refstepcounter{dummy}\def\@currentlabel{#1}}
\makeatother

\usetikzlibrary{decorations.pathreplacing} 
\usetikzlibrary{math}                      
\usetikzlibrary{calc}                      
\usetikzlibrary{cd}                        
\tikzset{
    symbol/.style={
        draw=none,
        every to/.append style={
            edge node={node [sloped, allow upside down, auto=false]{$#1$}}
        },
    },
}

\hypersetup{
    bookmarksnumbered=true,
    colorlinks=true,
    linkcolor=blue,
    citecolor=blue,
    urlcolor=blue
}

\theoremstyle{plain}      
\newtheorem{theorem}     {Theorem} [chapter]
\newtheorem{proposition} [theorem] {Proposition}
\newtheorem{lemma}       [theorem] {Lemma}
\newtheorem{corollary}   [theorem] {Corollary}
\newtheorem{conjecture}  [theorem] {Conjecture}

\theoremstyle{definition} 

\newtheorem{definition}  [theorem] {Definition}
\newtheorem{example}     [theorem] {Example}
\newtheorem{remark}      [theorem] {Remark}
\newtheorem{assumption}  [theorem] {Assumption}


\NewDocumentCommand{\plabel}{m}{\phantomsection\label{#1}}

\newcommand{\rmn}[1]{\mathrm{\MakeUppercase{\romannumeral #1}}}
\NewDocumentCommand{\signature}{}{\operatorname{sign}}

\NewDocumentCommand{\symp}      { }{\mathbf{Symp}}
\NewDocumentCommand{\liouvndg}  { }{\mathbf{Liouv}_{\mathrm{ndg}}^{\mathrm{gle}}}
\NewDocumentCommand{\liouvle}   { }{\mathbf{Liouv}_{\mathrm{ndg}}}
\NewDocumentCommand{\liouvgle}  { }{\mathbf{Liouv}^{\mathrm{gle}}}
\NewDocumentCommand{\modl}      { }{\mathbf{Mod}}
\NewDocumentCommand{\komp}      { }{\mathbf{Comp}}
\NewDocumentCommand{\comp}      { }{\mathbf{hComp}}
\NewDocumentCommand{\admissible}{m}{\mathbf{I}_{#1}}
\NewDocumentCommand{\stair}     {m}{\mathbf{I}_{#1}}
\NewDocumentCommand{\admstair}  {m}{\mathbf{K}_{#1}}

\NewDocumentCommand {\cgh} {m} {c^{\mathrm{GH}}_{#1}}
\NewDocumentCommand {\csh} {m} {c^{S^1}_{#1}}

\NewDocumentCommand{\shf}{}{S}
\NewDocumentCommand{\inc}{}{\tilde{i}}


\NewDocumentCommand{\union}           { }{\cup}                                          
\NewDocumentCommand{\bigunion}        { }{\bigcup}                                       
\NewDocumentCommand{\bigproduct}      { }{\prod}                                         
\NewDocumentCommand{\bigcoproduct}    { }{\coprod}                                       
\NewDocumentCommand{\tensorpr}        { }{\otimes}                                       
\NewDocumentCommand{\directsum}       { }{\oplus}                                        
\NewDocumentCommand{\bigdirectsum}    { }{\bigoplus}                                     


\NewDocumentCommand{\Z}{}{\mathbb{Z}} 
\NewDocumentCommand{\Q}{}{\mathbb{Q}} 
\NewDocumentCommand{\R}{}{\mathbb{R}} 
\NewDocumentCommand{\C}{}{\mathbb{C}} 


\NewDocumentCommand{\id} {}{\operatorname{id}} 
\NewDocumentCommand{\img}{}{\operatorname{im}} 



\NewDocumentCommand{\idm}{}{I}

\NewDocumentCommand{\Hom}   { }{\operatorname{Hom}}   
\NewDocumentCommand{\End}   { }{\operatorname{End}}   
\NewDocumentCommand{\coker} { }{\operatorname{coker}} 
\NewDocumentCommand{\colim} { }{\operatorname{colim}}
\NewDocumentCommand{\spn}   { }{\operatorname{span}}  
\NewDocumentCommand{\Ann}   { }{\operatorname{Ann}}   


\NewDocumentCommand{\itr} {}{\operatorname{int}}  
\NewDocumentCommand{\supp}{}{\operatorname{supp}} 


\NewDocumentCommand {\critpt}  { } {\operatorname{CritPt}}  

\NewDocumentCommand  {\dv}     {}    {\mathrm{D}}                          
\NewDocumentCommand  {\odv}    {m m} {\frac{\mathrm{d} #1}{\mathrm{d} #2}} 
\NewDocumentCommand  {\pdv}    {m m} {\frac{\partial #1}{\partial #2}}     
\NewDocumentCommand  {\edv}    {}    {\mathrm{d}}                          
\NewDocumentCommand  {\ldv}    {m}   {{L}_{#1}}                            
\NewDocumentCommand  {\del}    {}    {\partial}                            
\NewDocumentCommand  {\delbar} {}    {\overline{\partial}}                 

\NewDocumentCommand{\ind}          {}{\mu}
\NewDocumentCommand{\morse}        {}{\mu_{\operatorname{M}}}  
\NewDocumentCommand{\maslov}       {}{\mu}                     
\NewDocumentCommand{\conleyzehnder}{}{\mu_{\operatorname{CZ}}} 


\newcommand{\lpar}{(}
\newcommand{\rpar}{)}
\newcommand{\lsize}{}
\newcommand{\rsize}{}

\NewDocumentCommand{\SetParenthesisTypeSize}{m m}{
	\renewcommand{\lpar}{(}
	\renewcommand{\rpar}{)}
	\renewcommand{\lsize}{}
	\renewcommand{\rsize}{}
    \IfEq{#1}{(} { \renewcommand{\lpar}{(}       \renewcommand{\rpar}{)}       }{}
    \IfEq{#1}{()}{ \renewcommand{\lpar}{(}       \renewcommand{\rpar}{)}       }{}
	\IfEq{#1}{c} { \renewcommand{\lpar}{\{}      \renewcommand{\rpar}{\}}      }{}
	\IfEq{#1}{<} { \renewcommand{\lpar}{\langle} \renewcommand{\rpar}{\rangle} }{}
	\IfEq{#1}{[} { \renewcommand{\lpar}{[}       \renewcommand{\rpar}{]}       }{}
    \IfEq{#1}{[]}{ \renewcommand{\lpar}{[}       \renewcommand{\rpar}{]}       }{}
	\IfEq{#1}{|} { \renewcommand{\lpar}{\lvert}  \renewcommand{\rpar}{\rvert}  }{}
	\IfEq{#1}{||}{ \renewcommand{\lpar}{\lVert}  \renewcommand{\rpar}{\rVert}  }{}
    \IfEq{#1}{L} { \renewcommand{\lpar}{\lfloor} \renewcommand{\rpar}{\rfloor} }{}
    \IfEq{#1}{T} { \renewcommand{\lpar}{\lceil}  \renewcommand{\rpar}{\rceil}  }{}
    \IfEq{#2}{0}{ \renewcommand{\lsize}{}       \renewcommand{\rsize}{}       }{}
    \IfEq{#2}{1}{ \renewcommand{\lsize}{\bigl}  \renewcommand{\rsize}{\bigr}  }{}
    \IfEq{#2}{2}{ \renewcommand{\lsize}{\Bigl}  \renewcommand{\rsize}{\Bigr}  }{}
    \IfEq{#2}{3}{ \renewcommand{\lsize}{\biggl} \renewcommand{\rsize}{\biggr} }{}
    \IfEq{#2}{4}{ \renewcommand{\lsize}{\Biggl} \renewcommand{\rsize}{\Biggr} }{}
    \IfEq{#2}{a}{ \renewcommand{\lsize}{\left}  \renewcommand{\rsize}{\right} }{}
}

\NewDocumentCommand{\p}{m m m}{
    \IfEq{#1}{n}{}{\SetParenthesisTypeSize{#1}{#2} \lsize \lpar}
    #3
    \IfEq{#1}{n}{}{\SetParenthesisTypeSize{#1}{#2} \rsize \rpar}
}

\NewDocumentCommand{\sbn}{o m m}{
    \IfValueF{#1}{
        \{ #2 \ | \ #3 \}
    }{}
    \IfValueT{#1}{
        \IfEq{#1}{0}{        \{ #2 \       | \ #3        \} }{}
        \IfEq{#1}{1}{ \bigl  \{ #2 \ \big  | \ #3 \bigr  \} }{}
        \IfEq{#1}{2}{ \Bigl  \{ #2 \ \Big  | \ #3 \Bigr  \} }{}
        \IfEq{#1}{3}{ \biggl \{ #2 \ \bigg | \ #3 \biggr \} }{}
        \IfEq{#1}{4}{ \Biggl \{ #2 \ \Bigg | \ #3 \Biggr \} }{}
    }{}
}


\newcommand {\modifier}    {} 
\newcommand {\equivariant} {} 
\newcommand {\manifold}    {} 
\newcommand {\theory}      {} 
\newcommand {\complex}     {}
\newcommand {\filtration}  {}
\newcommand {\grading}     {}

\NewDocumentCommand{\homology}{m m m m m m m}{
    \renewcommand {\modifier}    {}
    \renewcommand {\equivariant} {}
    \renewcommand {\manifold}    {}
    \renewcommand {\theory}      {}
    \renewcommand {\complex}     {}
    \renewcommand {\filtration}  {}
    \renewcommand {\grading}     {}
    \renewcommand {\modifier}    {#1}
    \renewcommand {\equivariant} {#2} 
    \renewcommand {\manifold}    {#3}
    \renewcommand {\theory}      {#4}
    \renewcommand {\complex}     {#5}
    \renewcommand {\filtration}  {#6}
    \renewcommand {\grading}     {#7}
    \IfEq {#1} {}    {} {\renewcommand {\equivariant} {#1}} 
    \IfEq {#1} {L}   {\renewcommand {\equivariant} {}} {}
    \IfEq {#4} {sch} {\renewcommand {\theory} {} \renewcommand {\equivariant} {} \renewcommand {\filtration}  {\star}}   {}
    \IfEq {#4} {rsh} {\renewcommand {\theory} {} \renewcommand {\equivariant} {} \renewcommand {\filtration}  {\dagger}} {}
    %
    \tensor*[]{\theory\complex}{^{\equivariant{\IfEq{#6}{}{}{,}}\filtration}_{\manifold\grading}}%
}


\NewDocumentEnvironment{copiedtheorem}
    {o m}
    {
        \theoremstyle{plain}
        \newtheorem*{copytheorem:#2}{\cref{#2}}
        \IfNoValueTF{#1}
        {
            \begin{copytheorem:#2}
        }
        {
            \begin{copytheorem:#2}[{#1}]
        }
    }
    {
        \end{copytheorem:#2}
    }

\NewDocumentEnvironment{secondcopy}
    {o m}
    {
        \IfNoValueTF{#1}
        {
            \begin{copytheorem:#2}
        }
        {
            \begin{copytheorem:#2}[{#1}]
        }
    }
    {
        \end{copytheorem:#2}
    }

\title{Equivariant symplectic homology, linearized contact homology and the Lagrangian capacity}
\author{Miguel Barbosa Pereira}
\date{\today}

\hypersetup{
    pdftitle={\thetitle},
    pdfauthor={\theauthor},
    pdflang={en-GB}
}

\begin{document}

\pagenumbering{roman}

\begin{titlepage}
	\centering
	\hspace{0pt}
	\vfill
    {\LARGE\bfseries \thetitle\par}
	\vspace{1.5cm}
	{\Large\bfseries Dissertation\par}
	\vspace{1.5cm}
	{\large zur Erlangung des akademischen Grades\par Dr. rer. nat.\par}
	\vspace{1.5cm}
	{\large eingereicht an der\par Mathematisch-Naturwissenschaftlich-Technischen Fakultät\par der Universität Augsburg\par}
	\vspace{1.5cm}
	{\large von\par}
	{\large\bfseries \theauthor\par}
	\vspace{2cm}
	{\large Augsburg, März 2022\par}
	\vspace{1cm}
	\includegraphics{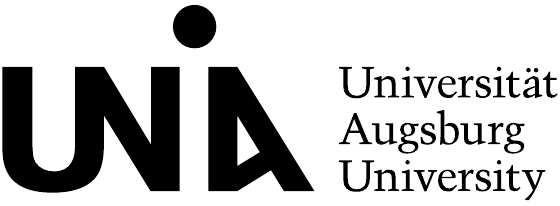}
\end{titlepage}

{
    \centering
    \hspace{0pt}
    \vfill
    \begin{tabular}{ r l }
        Betreuer:                   & Prof. Dr. Kai Cieliebak, Universität Augsburg \\
        Gutachter:                  & Prof. Dr. Urs Frauenfelder, Universität Augsburg \\
                                    & Prof. Dr. Klaus Mohnke, Humboldt-Universität zu Berlin \\ \\
    \end{tabular}
    \newline
    \begin{tabular}{ r l }
        Tag der mündlichen Prüfung: & 20.05.2022
    \end{tabular}
}

\cleardoublepage\pdfbookmark{Abstract}{abstract}
\chapter*{Abstract}

We establish computational results concerning the Lagrangian capacity from \cite{cieliebakPuncturedHolomorphicCurves2018}. More precisely, we show that the Lagrangian capacity of a 4-dimensional convex toric domain is equal to its diagonal. The proof involves comparisons between the Lagrangian capacity, the McDuff--Siegel capacities from \cite{mcduffSymplecticCapacitiesUnperturbed2022}, and the Gutt--Hutchings capacities from \cite{guttSymplecticCapacitiesPositive2018}. Working under the assumption that there is a suitable virtual perturbation scheme which defines the curve counts of linearized contact homology, we extend the previous result to toric domains which are convex or concave and of any dimension. For this, we use the higher symplectic capacities from \cite{siegelHigherSymplecticCapacities2020}. The key step is showing that moduli spaces of asymptotically cylindrical holomorphic curves in ellipsoids are transversely cut out.

\cleardoublepage\pdfbookmark{Acknowledgements}{acknowledgements}
\chapter*{Acknowledgements}

First and foremost, I would like to thank my advisor, Kai Cieliebak, for introducing me to this topic and for the guidance he gave me during this project. We had many fruitful discussions about the various details of this problem and I am very grateful for that.

Next, I want to thank my Mom Manuela, my Dad Manuel, and my Brother Pedro for their continued support during my PhD and their belief in me.

Finally, I want to thank the mathematical community at the University of Augsburg, for making it a pleasant place to work at. Special thanks go to Kathrin Helmsauer and Robert Nicholls for their help on several occasions, and to Yannis Bähni, Marián Poppr, Frederic Wagner, Thorsten Hertl, and Artem Nepechiy for listening to my talks about this subject and giving valuable feedback. I am also grateful to Kyler Siegel for productive discussions, and to Urs Frauenfelder and Klaus Mohnke for reading and refereeing my thesis.

\cleardoublepage\pdfbookmark{\contentsname}{contents}
\tableofcontents

\newpage

\pagenumbering{arabic}

\chapter{Introduction}

\section{Symplectic capacities and their uses}

A \textbf{symplectic manifold} is a pair $(X, \omega)$, where $X$ is a manifold and $\omega \in \Omega^2(X)$ is a closed and nondegenerate $2$-form on $X$. An example of a symplectic manifold is $\C^{n}$ with the canonical symplectic structure
\begin{IEEEeqnarray*}{c}
    \omega_0 \coloneqq \sum_{j=1}^{n} \edv x^j \wedge \edv y^j.
\end{IEEEeqnarray*}
An embedding $\phi \colon (X, \omega_X) \longrightarrow (Y, \omega_Y)$ between symplectic manifolds is \textbf{symplectic} if $\phi^* \omega_Y = \omega_X$. A \textbf{symplectomorphism} is a symplectic diffeomorphism. \textbf{Darboux' theorem} implies that any symplectic manifold $(X, \omega)$ is locally symplectomorphic to $(\C^n, \omega_0)$. We point out that the analogue of this theorem in Riemannian geometry is clearly false: such a theorem would imply that every Riemannian manifold is flat. Conversely, Darboux' theorem also implies that it is not possible to define local invariants of symplectic manifolds that are analogues of the curvature of a Riemannian manifold.

There are, however, examples of global invariants of symplectic manifolds, for example symplectic capacities. A \textbf{symplectic capacity} is a function $c$ that assigns to every symplectic manifold $(X,\omega)$ (in a restricted subclass of all symplectic manifolds) a number $c(X,\omega) \in [0,+\infty]$, satisfying
\begin{description}
    \item[(Monotonicity)] If there exists a symplectic embedding (possibly in a restricted subset of all symplectic embeddings) $(X, \omega_X) \longrightarrow (Y, \omega_Y)$, then $c(X, \omega_X) \leq c(Y, \omega_Y)$;
    \item[(Conformality)] If $\alpha > 0$ then $c(X, \alpha \omega_X) = \alpha \, c(X, \omega_X)$.
\end{description}
By the monotonicity property, symplectic capacities are symplectomorphism invariants of symplectic manifolds. There are many examples of symplectic capacities, a simple one being the \textbf{volume capacity} $c_{\mathrm{vol}}$, defined as follows for a $2n$-dimensional symplectic manifold $(X, \omega)$. Since $\omega$ is nondegenerate, $\omega^n / n!$ is a volume form on $X$. Define
\begin{IEEEeqnarray*}{rCl}
    \operatorname{vol}(X) & \coloneqq & \int_{X} \frac{\omega^n}{n!}, \\
    c_{\mathrm{vol}}(X)   & \coloneqq & \p{}{2}{\frac{\operatorname{vol}(X)}{\operatorname{vol}(B)}}^{1/n},
\end{IEEEeqnarray*}
where $B = \{z \in \C^n \mid \pi |z|^2 \leq 1 \}$. Symplectic capacities are especially relevant when discussing symplectic embedding problems. Notice that by the monotonicity property, a symplectic capacity can provide an obstruction to the existence of a symplectic embedding. We provide an example from physics. A \textbf{classical mechanical system} is a symplectic manifold $(X, \omega)$ together with a function $H$ called the \textbf{Hamiltonian}. The \textbf{Hamiltonian vector field} of $H$ is the unique vector field $X_H$ on $X$ such that
\begin{IEEEeqnarray*}{c}
    \edv H = - \iota_{X_H} \omega.
\end{IEEEeqnarray*}
Denote by $\phi^t_H$ the flow of $X_H$, which is a symplectomorphism. \textbf{Liouville's theorem} for a classical mechanical system says that for any subset $O \subset X$, the symplectic volume $c_{\mathrm{vol}}(\phi^t_H(O))$ is independent of $t$. The proof of this statement works for any capacity $c$ replacing the volume capacity. So, for every symplectic capacity we get a theorem analogous to Liouville's theorem, giving restrictions on what regions of the phase space flow onto other regions.

In more generality, one could say that \textbf{a symplectic capacity is a quantitative encoding of some specific property of symplectic manifolds}. To make this statement less vague, let us mention some symplectic capacities we will be working with in this thesis.
\begin{enumerate}
    \item If $(X, \omega)$ is a $2n$-dimensional symplectic manifold, a submanifold $L \subset (X, \omega)$ is \textbf{Lagrangian} if $\dim L = n$ and $\omega|_L = 0$. The \textbf{minimal symplectic area} of $L$ is given by
        \begin{IEEEeqnarray*}{c}
            A_{\mathrm{min}}(L) \coloneqq \inf \{ \omega(\sigma) \mid \sigma \in \pi_2(X,L), \, \omega(\sigma) > 0 \}.
        \end{IEEEeqnarray*}
        Cieliebak--Mohnke \cite[Section 1.2]{cieliebakPuncturedHolomorphicCurves2018} define the \textbf{Lagrangian capacity} of $(X, \omega)$ by
        \begin{IEEEeqnarray*}{c}
            c_L(X,\omega) \coloneqq \sup \{ A_{\mathrm{min}}(L) \mid L \subset X \text{ is an embedded Lagrangian torus}\}.
        \end{IEEEeqnarray*}
    \item If $(X, \lambda)$ is a nondegenerate \textbf{Liouville domain} (this implies that $X$ is a compact manifold with boundary together with a $1$-form $\lambda$ such that $(X, \edv \lambda)$ is symplectic, see \cref{def:liouville domain}), one can define its \textbf{$S^1$-equivariant symplectic homology}, denoted $\homology{}{S^1}{}{S}{H}{}{}(X,\lambda)$ (see \cref{sec:Floer homology}). This is a $\Q$-module which comes equipped with a filtration, i.e. for every $a \in \R$ we have a $\Q$-module $\homology{}{S^1}{}{S}{H}{a}{}(X,\lambda)$ and a map%
        \begin{equation*}
            \iota^a \colon \homology{}{S^1}{}{S}{H}{a}{}(X,\lambda) \longrightarrow \homology{}{S^1}{}{S}{H}{}{}(X,\lambda).
        \end{equation*}
        In particular, we can define the $S^1$-equivariant symplectic homology associated to intervals $(a,b] \subset \R$ and $(a, +\infty) \subset \R$ by taking the quotient:
        \begin{IEEEeqnarray*}{rCl}
            \homology{}{S^1}{}{S}{H}{(a,b]}{}(X,\lambda)       & \coloneqq & \homology{}{S^1}{}{S}{H}{b}{}(X,\lambda) / \iota^{b,a}(\homology{}{S^1}{}{S}{H}{a}{}(X,\lambda)), \\
            \homology{}{S^1}{}{S}{H}{(a,+\infty)}{}(X,\lambda) & \coloneqq & \homology{}{S^1}{}{S}{H}{}{} (X,\lambda) / \iota^{a}(\homology{}{S^1}{}{S}{H}{a}{}(X,\lambda)).
        \end{IEEEeqnarray*}
        The \textbf{positive $S^1$-equivariant symplectic homology} is given by $\homology{}{S^1}{}{S}{H}{+}{}(X,\lambda) = \homology{}{S^1}{}{S}{H}{(\varepsilon, + \infty)}{}(X,\lambda)$, where $\varepsilon > $ is a small number. The $S^1$-equivariant symplectic homology also comes with maps $U$ and $\delta$, which can be composed to obtain the map%
        \begin{equation*}
            \delta \circ U^{k-1} \circ \iota^a \colon \homology{}{S^1}{}{S}{H}{(\varepsilon,a]}{}(X) \longrightarrow H_\bullet(BS^1;\Q) \otimes H_\bullet(X, \partial X;\Q).
        \end{equation*}
        The $k$th \textbf{Gutt--Hutchings} capacity of $(X,\lambda)$ (\cite[Definition 4.1]{guttSymplecticCapacitiesPositive2018}) is given by
        \begin{IEEEeqnarray*}{c}
            \cgh{k}(X) \coloneqq \inf \{ a > 0 \mid [\mathrm{pt}] \otimes [X] \in \img (\delta \circ U^{k-1} \circ \iota^a) \}.
        \end{IEEEeqnarray*}
    \item Let $(X,\lambda)$ be a nondegenerate Liouville domain. There is a map
        \begin{equation*}
            \iota^{a,\varepsilon} \circ \alpha^{-1} \colon H_\bullet(BS^1;\Q) \otimes H_\bullet(X, \partial X;\Q) \longrightarrow \homology{}{S^1}{}{S}{H}{a}{}(X).
        \end{equation*}
        The $k$th $\textbf{$S^1$-equivariant symplectic homology capacity}$ was defined by Irie in \cite[Section 2.5]{irieSymplecticHomologyFiberwise2021}, and it is given by
        \begin{IEEEeqnarray*}{c}
            \csh{k}(X) \coloneqq \inf \{ a > 0 \mid \iota^{a,\varepsilon} \circ \alpha^{-1}([\C P^{k-1}] \otimes [X]) = 0 \}.
        \end{IEEEeqnarray*}
    \item Let $(X, \lambda)$ be a nondegenerate Liouville domain. Choose a point $x \in \itr X$ and a \textbf{symplectic divisor} (germ of a symplectic submanifold of codimension 2) $D \subset X$ through $x$. The boundary $(\partial X, \lambda|_{\partial X})$ is a \textbf{contact manifold} (\cref{def:contact manifold}) and therefore has a \textbf{Reeb vector field} (\cref{def:Reeb vector field}). The \textbf{completion} of $(X, \lambda)$ (\cref{def:completion of a Liouville domain}) is the exact symplectic manifold
        \begin{equation*}
            (\hat{X}, \hat{\lambda}) \coloneqq (X, \lambda) \cup_{\partial X} (\R_{\geq 0} \times \partial X, e^r \lambda|_{\partial X}).
        \end{equation*}
        Let $\mathcal{M}_X^J(\Gamma)\p{<}{}{\mathcal{T}^{(k)}x}$ denote the moduli space of $J$-holomorphic curves in $\hat{X}$ which are positively asymptotic to the tuple of Reeb orbits $\Gamma = (\gamma_1, \ldots, \gamma_p)$ and which have contact order $k$ to $D$ at $x$. Finally, for $\ell, k \in \Z_{\geq 1}$, the \textbf{McDuff--Siegel} capacities of $(X,\lambda)$ (\cite[Definition 3.3.1]{mcduffSymplecticCapacitiesUnperturbed2022}) are given by
        \begin{IEEEeqnarray*}{c}
            \tilde{\mathfrak{g}}^{\leq \ell}_k(X) \coloneqq \sup_{J \in \mathcal{J}(X,D)} \mathop{\inf\vphantom{\mathrm{sup}}}_{\Gamma_1, \dots, \Gamma_p} \sum_{i=1}^{p} \mathcal{A}(\Gamma_i),
        \end{IEEEeqnarray*}
        where $\mathcal{J}(X,D)$ is a set of almost complex structures on $\hat{X}$ which are cylindrical at infinity and compatible with $D$ (see \cref{sec:moduli spaces of holomorphic curves}) and the infimum is over tuples of Reeb orbits $\Gamma_1, \ldots, \Gamma_p$ such that there exist $k_1, \ldots, k_p \in \Z_{\geq 1}$ with 
        \begin{IEEEeqnarray*}{c+x*}
            \sum_{i=1}^{p} \# \Gamma_i \leq \ell, \qquad \sum_{i=1}^{p} k_i \geq k, \qquad \bigproduct_{i=1}^{p} \mathcal{M}_X^J(\Gamma_i)\p{<}{}{\mathcal{T}^{(k_i)}x} \neq \varnothing.
        \end{IEEEeqnarray*}
    \item Let $(X, \lambda)$ be a nondegenerate Liouville domain. If one assumes the existence of a suitable virtual perturbation scheme, one can define the \textbf{linearized contact homology} $\mathcal{L}_{\infty}$-algebra of $(X,\lambda)$, denoted $CC(X)[-1]$ (see \cref{def:l infinity algebra,def:linearized contact homology,def:lch l infinity}). We can then consider its \textbf{bar complex} $\mathcal{B}(CC(X)[-1])$ (see \cref{def:bar complex}) and the homology of the bar complex, $H(\mathcal{B}(CC(X)[-1]))$. There is an \textbf{augmentation map} (see \cref{def:augmentation map}) 
    \begin{IEEEeqnarray*}{c+x*}
        {\epsilon}_k \colon \mathcal{B}(CC(X)[-1]) \longrightarrow \Q
    \end{IEEEeqnarray*}
    which counts $J$-holomorphic curves satisfying a tangency constraint. For $\ell, k \in \Z_{\geq 1}$, Siegel \cite[Section 6.1]{siegelHigherSymplecticCapacities2020} defines the \textbf{higher symplectic capacities} by\footnote{To be precise, the definition we give may be slightly different from the one given in \cite{siegelHigherSymplecticCapacities2020}. This is due to the fact that we use an action filtration to define $\mathfrak{g}^{\leq \ell}_k(X)$, while the definition given in \cite{siegelHigherSymplecticCapacities2020} uses coefficients in a Novikov ring. See \cref{rmk:novikov coefficients} for further discussion.}
    \begin{IEEEeqnarray*}{c}
        \mathfrak{g}^{\leq \ell}_k(X) \coloneqq \inf \{ a > 0 \mid \epsilon_k \colon H(\mathcal{A}^{\leq a} \mathcal{B}^{\leq \ell}(CC(X)[-1])) \longrightarrow \Q \text{ is nonzero} \},
    \end{IEEEeqnarray*}
    where $\mathcal{A}^{\leq a}$ denotes the action filtration (\cref{def:action filtration lch}) and $\mathcal{B}^{\leq \ell}$ denotes the word length filtration (\cref{def:word length filtration}).
\end{enumerate}
The previous examples illustrate the fact that capacities can be defined using many tools that exist in symplectic geometry. If a capacity encodes a quantitative property between symplectic manifolds, then an inequality between two capacities encodes a relationship between said properties. So, capacities are also an efficient language to describe quantitative relations between properties of symplectic manifolds. Consider also that one can chain inequalities together to obtain new inequalities. In fact, one of the main goals of this thesis is to compute the Lagrangian capacity of convex or concave toric domains (a toric domain is a special type of Liouville domain, see \cref{def:toric domain}). We give two results in this direction (\cref{lem:computation of cl,thm:my main theorem}), and the proof of both results consists in composing together several inequalities between capacities (namely the capacities $\cgh{k}$, $\tilde{\mathfrak{g}}^{\leq 1}_k$ and $\mathfrak{g}^{\leq 1}_k$ which were defined above), where each of those inequalities is proven separately. Notice that in this case, we are able to compute the Lagrangian capacity of (some) toric domains, whose definition only concerns Lagrangian submanifolds, by considering other capacities whose definition concerns holomorphic curves in the toric domain. 

\section{Historical remarks}

The first symplectic capacity, the \textbf{Gromov width}, was constructed by Gromov \cite{gromovPseudoHolomorphicCurves1985}, although at this time the nomenclature of ``symplectic capacity'' had not been introduced. The notion of symplectic capacity was first introduced by Ekeland--Hofer in \cite{ekelandSymplecticTopologyHamiltonian1989}. In the sequel \cite{ekelandSymplecticTopologyHamiltonian1990}, the authors define the \textbf{Ekeland--Hofer capacities} $c_k^{\mathrm{EH}}$ (for every $k \in \Z_{\geq 1}$) using variational techniques for the symplectic action functional. The \textbf{Hofer--Zehnder capacity} \cite{hoferNewCapacitySymplectic1990,hoferSymplecticInvariantsHamiltonian2011} is another example of a capacity which can be defined by considering Hamiltonian systems. One can consider \textbf{spectral capacities}, which are generally defined as a minimal or maximal action of an orbit (Hamiltonian or Reeb) which is ``topologically visible''. The Gutt--Hutchings capacities \cite{guttSymplecticCapacitiesPositive2018}, $S^1$-equivariant symplectic homology capacities \cite{irieSymplecticHomologyFiberwise2021}, and Siegel's higher symplectic capacities \cite{siegelHigherSymplecticCapacities2020} mentioned above are examples of this principle. Other authors have used constructions like this, namely Hofer \cite{hoferEstimatesEnergySymplectic1993}, Viterbo \cite{viterboSymplecticTopologyGeometry1992,viterboFunctorsComputationsFloer1999}, Schwarz \cite{schwarzActionSpectrumClosed2000}, Oh \cite{ohChainLevelFloer2002,ohMinimaxTheorySpectral2002,ohSpectralInvariantsLength2005}, Frauenfelder--Schlenk \cite{frauenfelderHamiltonianDynamicsConvex2007}, Schlenk \cite{schlenkEmbeddingProblemsSymplectic2008} and Ginzburg--Shon \cite{ginzburgFilteredSymplecticHomology2018}. Using embedded contact homology (ECH), Hutchings \cite{hutchingsQuantitativeEmbeddedContact2011} defines the \textbf{ECH capacities} $c_k^{\mathrm{ECH}}$ (for every $k \in \Z_{\geq 1}$).

\section{Main results}

As explained before, one of the main goals of this thesis is to compute the Lagrangian capacity of (some) toric domains. A \textbf{toric domain} is a Liouville domain of the form $X_{\Omega} \coloneqq \mu^{-1}(\Omega) \subset \C^n$, where $\Omega \subset \R^n_{\geq 0}$ and $\mu(z_1,\ldots,z_n) = \pi(|z_1|^2,\ldots,|z_n|^2)$. The \textbf{ball}, the \textbf{cylinder} and the \textbf{ellipsoid}, which are defined by
\begin{IEEEeqnarray*}{rCrClCl}
    B^{2n}(a)              & \coloneqq & \{     z & = & (z_1,\ldots,z_n) \in \C^n & \mid  & \pi |z|^2 \leq a \}, \\
    Z^{2n}(a)              & \coloneqq & \{     z & = & (z_1,\ldots,z_n) \in \C^n & \mid  & \pi |z_1|^2 \leq a \}, \\
    E^{2n}(a_1,\ldots,a_n) & \coloneqq & \Big\{ z & = & (z_1,\ldots,z_n) \in \C^n & \Big| & \sum_{j=1}^{n} \frac{\pi |z_j|^2}{a_j} \leq 1 \Big\},
\end{IEEEeqnarray*}
are examples of toric domains.\footnote{Strictly speaking, the cylinder is noncompact, so it is not a toric domain. We will mostly ignore this small discrepancy in nomenclature, but sometimes we will refer to spaces like the cylinder as ``noncompact toric domains''.} The \textbf{diagonal} of a toric domain $X_{\Omega}$ is 
\begin{IEEEeqnarray*}{c}
    \delta_\Omega \coloneqq \max \{ a \mid (a,\ldots,a) \in \Omega \}.
\end{IEEEeqnarray*}
It is easy to show (see \cref{lem:c square leq c lag,lem:c square geq delta}) that $c_L(X_\Omega) \geq \delta_\Omega$ for any convex or concave toric domain $X_{\Omega}$. Cieliebak--Mohnke give the following results for the Lagrangian capacity of the ball and the cylinder.

\begin{copiedtheorem}[{\cite[Corollary 1.3]{cieliebakPuncturedHolomorphicCurves2018}}]{prp:cl of ball}
    The Lagrangian capacity of the ball is
    \begin{IEEEeqnarray*}{c+x*}
        c_L(B^{2n}(1)) = \frac{1}{n}.\footnote{In this introduction, we will be showcasing many results from the main text. The theorems appear here as they do on the main text, in particular with the same numbering. The numbers of the theorems in the introduction have hyperlinks to their corresponding location in the main text.}
    \end{IEEEeqnarray*}
\end{copiedtheorem}

\begin{copiedtheorem}[{\cite[p.~215-216]{cieliebakPuncturedHolomorphicCurves2018}}]{prp:cl of cylinder}
    The Lagrangian capacity of the cylinder is
    \begin{IEEEeqnarray*}{c+x*}
        c_L(Z^{2n}(1)) = 1.
    \end{IEEEeqnarray*}
\end{copiedtheorem}

In other words, if $X_{\Omega}$ is the ball or the cylinder then $c_L(X_{\Omega}) = \delta_\Omega$. This motivates the following conjecture by Cieliebak--Mohnke. 

\begin{copiedtheorem}[{\cite[Conjecture 1.5]{cieliebakPuncturedHolomorphicCurves2018}}]{conj:cl of ellipsoid}
    The Lagrangian capacity of the ellipsoid is%
    \begin{equation*}
        c_L(E(a_1,\ldots,a_n)) = \p{}{2}{\frac{1}{a_1} + \cdots + \frac{1}{a_n}}^{-1}.
    \end{equation*}
\end{copiedtheorem}

A more general form of the previous conjecture is the following.

\begin{copiedtheorem}{conj:the conjecture}
    If $X_{\Omega}$ is a convex or concave toric domain then 
    \begin{IEEEeqnarray*}{c+x*}
        c_L(X_{\Omega}) = \delta_\Omega.
    \end{IEEEeqnarray*}
\end{copiedtheorem}

The goal of this project is to prove \cref{conj:the conjecture}. We will offer two main results in this direction.
\begin{enumerate}
    \item In \cref{lem:computation of cl}, we prove that $c_L(X_\Omega) = \delta_\Omega$ whenever $X_{\Omega}$ is convex and $4$-dimensional.
    \item In \cref{thm:my main theorem}, using techniques from contact homology we prove that $c_L(X_\Omega) = \delta_\Omega$ for any convex or concave toric domain $X_{\Omega}$. More specifically, in this case we are working under the assumption that there is a virtual perturbation scheme such that the linearized contact homology of a nondegenerate Liouville domain can be defined (see \cref{sec:assumptions of virtual perturbation scheme}).
\end{enumerate}
Notice that by the previous discussion, we only need to prove the hard inequality $c_L(X_{\Omega}) \leq \delta_\Omega$. We now describe our results concerning the capacities mentioned so far. The key step in proving $c_L(X_{\Omega}) \leq \delta_\Omega$ is the following inequality between $c_L$ and $\tilde{\mathfrak{g}}^{\leq 1}_k$.

\begin{copiedtheorem}{thm:lagrangian vs g tilde}
    If $(X, \lambda)$ is a Liouville domain then 
    \begin{IEEEeqnarray*}{c+x*}
        c_L(X) \leq \inf_k^{} \frac{\tilde{\mathfrak{g}}_k^{\leq 1}(X)}{k}.
    \end{IEEEeqnarray*}
\end{copiedtheorem}

Indeed, this result can be combined with the following results from \cite{mcduffSymplecticCapacitiesUnperturbed2022} and \cite{guttSymplecticCapacitiesPositive2018}.

\begin{copiedtheorem}[{\cite[Proposition 5.6.1]{mcduffSymplecticCapacitiesUnperturbed2022}}]{prp:g tilde and cgh}
    If $X_{\Omega}$ is a $4$-dimensional convex toric domain then
    \begin{IEEEeqnarray*}{c+x*}
        \tilde{\mathfrak{g}}^{\leq 1}_k(X_\Omega) = \cgh{k}(X_\Omega).
    \end{IEEEeqnarray*}
\end{copiedtheorem}

\begin{copiedtheorem}[{\cite[Lemma 1.19]{guttSymplecticCapacitiesPositive2018}}]{lem:cgh of nondisjoint union of cylinders}
    $\cgh{k}(N^{2n}(\delta)) = \delta \, (k + n - 1)$.
\end{copiedtheorem}

Here, 
\begin{IEEEeqnarray*}{c}
    N^{2n}(\delta) \coloneqq \p{c}{2}{ (z_1,\ldots,z_n) \in \C^n \ \Big| \ \exists j=1,\ldots,n \colon \frac{\pi |z_j|^2}{\delta} \leq 1 }
\end{IEEEeqnarray*}
is the \textbf{nondisjoint union of cylinders}. Combining the three previous results, we get the following particular case of \cref{conj:the conjecture}. Since the proof is short, we present it here as well.

\begin{copiedtheorem}{lem:computation of cl}
    If $X_{\Omega}$ is a $4$-dimensional convex toric domain then
    \begin{IEEEeqnarray*}{c+x*}
        c_L(X_{\Omega}) = \delta_\Omega.
    \end{IEEEeqnarray*}
\end{copiedtheorem}
\begin{proof}
    For every $k \in \Z_{\geq 1}$,
    \begin{IEEEeqnarray*}{rCls+x*}
        \delta_\Omega
        & \leq & c_L(X_{\Omega})                                         & \quad [\text{by \cref{lem:c square geq delta,lem:c square leq c lag}}] \\
        & \leq & \frac{\tilde{\mathfrak{g}}^{\leq 1}_{k}(X_{\Omega})}{k} & \quad [\text{by \cref{thm:lagrangian vs g tilde}}] \\
        & =    & \frac{\cgh{k}(X_{\Omega})}{k}                           & \quad [\text{by \cref{prp:g tilde and cgh}}] \\
        & \leq & \frac{\cgh{k}(N(\delta_\Omega))}{k}                     & \quad [\text{$X_{\Omega}$ is convex, hence $X_{\Omega} \subset N(\delta_\Omega)$}] \\
        & =    & \frac{\delta_\Omega(k+1)}{k}                          & \quad [\text{by \cref{lem:cgh of nondisjoint union of cylinders}}].
    \end{IEEEeqnarray*}
    The result follows by taking the infimum over $k$.
\end{proof}

Notice that in the proof of this result, we used the Gutt--Hutchings capacities because the value $\cgh{k}(N^{2n}(\delta))$ is known and provides the desired upper bound for $c_L(X_{\Omega})$. Notice also that the hypothesis of the toric domain being convex and $4$-dimensional is present because we wish to use \cref{prp:g tilde and cgh} to compare $\tilde{\mathfrak{g}}^{\leq 1}_k$ and $\cgh{k}$. This suggests that we try to compare $c_L$ and $\cgh{k}$ directly.

\begin{copiedtheorem}{thm:main theorem}
    If $X$ is a Liouville domain, $\pi_1(X) = 0$ and $c_1(TX)|_{\pi_2(X)} = 0$, then%
    \begin{equation*}
        c_L(X,\lambda) \leq \inf_k \frac{\cgh{k}(X,\lambda)}{k}.
    \end{equation*}
\end{copiedtheorem}

We will try to prove \cref{thm:main theorem} by mimicking the proof of \cref{thm:lagrangian vs g tilde}. Unfortunately we will be unsuccessful, because we run into difficulties coming from the fact that in $S^1$-equivariant symplectic homology, the Hamiltonians and almost complex structures can depend on the domain and on a high dimensional sphere $S^{2N+1}$. Before we move on to the discussion about computations using contact homology, we show one final result which uses only the properties of $S^1$-equivariant symplectic homology.
\begin{copiedtheorem}{thm:ghc and s1eshc}
    If $(X, \lambda)$ is a Liouville domain, then
    \begin{enumerate}
        \item $\cgh{k}(X) \leq \csh{k}(X)$;
        \item $\cgh{k}(X) = \csh{k}(X)$ provided that $X$ is star-shaped.
    \end{enumerate}
\end{copiedtheorem}

We now present another approach that can be used to compute $c_L$, using linearized contact homology. This has the disadvantage that at the time of writing, linearized contact homology has not yet been defined in the generality that we need (see \cref{sec:assumptions of virtual perturbation scheme} and more specifically \cref{assumption}). Using linearized contact homology, one can define the higher symplectic capacities $\mathfrak{g}^{\leq \ell}_k$. The definition of $\mathfrak{g}^{\leq \ell}_k$ for any $\ell \in \Z_{\geq 1}$ relies on the $\mathcal{L}_{\infty}$-algebra structure of the linearized contact homology chain complex, as well as an $\mathcal{L}_{\infty}$-augmentation map $\epsilon_k$. However, to prove that $c_L(X_{\Omega}) \leq \delta_\Omega$, we will only need the capacity $\mathfrak{g}^{\leq 1}_k$, and for this the $\mathcal{L}_{\infty}$-algebra structure is not necessary. The key idea is that the capacities $\mathfrak{g}^{\leq 1}_k$ can be compared to $\tilde{\mathfrak{g}}^{\leq 1}_k$ and $\cgh{k}$.

\begin{copiedtheorem}[{\cite[Section 3.4]{mcduffSymplecticCapacitiesUnperturbed2022}}]{thm:g tilde vs g hat}
    If $X$ is a Liouville domain then 
    \begin{IEEEeqnarray*}{c+x*}
        \tilde{\mathfrak{g}}^{\leq \ell}_k(X) \leq {\mathfrak{g}}^{\leq \ell}_k(X).
    \end{IEEEeqnarray*}
\end{copiedtheorem}

\begin{copiedtheorem}{thm:g hat vs gh}
    If $X$ is a Liouville domain such that $\pi_1(X) = 0$ and $2 c_1(TX) = 0$ then%
    \begin{IEEEeqnarray*}{c+x*}
        {\mathfrak{g}}^{\leq 1}_k(X) = \cgh{k}(X).
    \end{IEEEeqnarray*}
\end{copiedtheorem}

These two results show that $\tilde{\mathfrak{g}}^{\leq 1}_k(X_\Omega) \leq \cgh{k}(X_\Omega)$ (under \cref{assumption}). Using the same proof as before, we conclude that $c_L(X_{\Omega}) = \delta_\Omega$.

\begin{copiedtheorem}{thm:my main theorem}
    Under \cref{assumption}, if $X_\Omega$ is a convex or concave toric domain then%
    \begin{IEEEeqnarray*}{c+x*}
        c_L(X_{\Omega}) = \delta_\Omega.
    \end{IEEEeqnarray*}
\end{copiedtheorem}

\section{Proof sketches}

In the last section, we explained our proof of $c_L(X_{\Omega}) = \delta_\Omega$ (first in the case where $X_{\Omega}$ is convex and $4$-dimensional, and second assuming that \cref{assumption} holds). In this section, we explain the proofs of the relations
\begin{IEEEeqnarray*}{rCls+x*}
    c_L(X)                                & \leq & \inf_k \frac{\tilde{\mathfrak{g}}^{\leq 1}_k(X)}{k}, \\
    \tilde{\mathfrak{g}}^{\leq \ell}_k(X) & \leq & \mathfrak{g}^{\leq \ell}_k(X), \\
    \mathfrak{g}_k^{\leq 1}(X)            & =    & \cgh{k}(X),
\end{IEEEeqnarray*}
which were mentioned without proof in the last section. Each of these relations will be proved in the main text, so the proof sketches of this section act as a way of showcasing what technical tools will be required for our purposes. In \cref{sec:symplectic capacities}, we study the question of extending the domain of a symplectic capacities from the class of nondegenerate Liouville domains to the class of Liouville domains which are possibly degenerate. By this discussion, it suffices to prove each theorem for nondegenerate Liouville domains only.

\begin{secondcopy}{thm:lagrangian vs g tilde}
    If $(X, \lambda)$ is a Liouville domain then 
    \begin{IEEEeqnarray*}{c+x*}
        c_L(X) \leq \inf_k^{} \frac{\tilde{\mathfrak{g}}_k^{\leq 1}(X)}{k}.
    \end{IEEEeqnarray*}
\end{secondcopy}
\begin{proof}[Proof sketch]
    Let $k \in \Z_{\geq 1}$ and $L \subset \itr X$ be an embedded Lagrangian torus. Denote $a \coloneqq \tilde{\mathfrak{g}}_k^{\leq 1}(X)$. We wish to show that there exists $\sigma \in \pi_2(X,L)$ such that $0 < \omega(\sigma) \leq a / k$. Choose a suitable Riemannian metric on $L$, given by \cref{lem:geodesics lemma CM abs} (which is a restatement of \cite[Lemma 2.2]{cieliebakPuncturedHolomorphicCurves2018}). Now, consider the unit cotangent bundle $S^* L$ of $L$. Choose a point $x$ inside the unit codisk bundle $D^* L$, a symplectic divisor $D$ through $x$, and a sequence $(J_t)_{t \in [0,1)}$ of almost complex structures on $\hat{X}$ realizing SFT neck stretching along $S^* L$. 
    
    By definition of $\tilde{\mathfrak{g}}_k^{\leq 1}(X) \eqqcolon a$, there exists a Reeb orbit $\gamma_0$ together with a sequence $(u_t)_t$ of $J_t$-holomorphic curves $u_t \in \mathcal{M}^{J_t}_X(\gamma_0)\p{<}{}{\mathcal{T}^{(k)}x}$. By the SFT-compactness theorem, the sequence $(u_t)_{t}$ converges to a holomorphic building $F = (F^1,\ldots,F^N)$, where each $F^{\nu}$ is a holomorphic curve. Denote by $C$ the component of $F^1 \subset T^* L$ which carries the tangency constraint. The choices of almost complex structures $J_t$ can be done in such a way that the simple curve corresponding to $C$ is regular, i.e. it is an element of a moduli space which is a manifold. Using the dimension formula for this moduli space, it is possible to conclude that $C$ must have at least $k + 1$ punctures (see \cref{thm:transversality with tangency,lem:punctures and tangency simple,lem:punctures and tangency}). This implies that $C$ gives rise to at least $k > 0$ disks $D_1, \ldots, D_k$ in $X$ with boundary on $L$. The total energy of the disks is less or equal to $a$. Therefore, one of the disks must have energy less or equal to $a/k$.

    We now address a small imprecision in the proof we just described. We need to show that $\omega(D_i) \leq a$ for some $i = 1, \ldots, k$. However, the above proof actually shows that $\tilde{\omega}(D_i) \leq a$, where $\tilde{\omega}$ is a piecewise smooth $2$-form on $\hat{X} \setminus L$ given as in \cref{def:energy of a asy cylindrical holomorphic curve}. This form has the property that $\omega = \tilde{\omega}$ outside $S^* L$. The solution then is to neck stretch along $S_{\delta}^* L$ for some small $\delta > 0$. In this case, one can bound $\omega(D_i)$ by $\tilde{\omega}(D_i)$ times a function of $\delta$ (see \cref{lem:energy wrt different forms}), and we can still obtain the desired bound for $\omega(D_i)$.    
\end{proof}

\begin{secondcopy}[\cite[Section 3.4]{mcduffSymplecticCapacitiesUnperturbed2022}]{thm:g tilde vs g hat}
    If $X$ is a Liouville domain then 
    \begin{IEEEeqnarray*}{c+x*}
        \tilde{\mathfrak{g}}^{\leq \ell}_k(X) \leq {\mathfrak{g}}^{\leq \ell}_k(X).
    \end{IEEEeqnarray*}
\end{secondcopy}
\begin{proof}[Proof sketch]
    Choose a point $x \in \itr X$ and a symplectic divisor $D$ through $x$. Let $J \in \mathcal{J}(X,D)$ and consider the bar complex $\mathcal{B}(CC(X)[-1])$, computed with respect to $J$. Suppose that $a > 0$ and $\beta \in H(\mathcal{A}^{\leq a} \mathcal{B}^{\leq \ell}(CC(X)[-1]))$ is such that $\epsilon_k(\beta) \neq 0$. By \cref{thm:g tilde two definitions}, 
    \begin{IEEEeqnarray*}{c+x*}
        \tilde{\mathfrak{g}}^{\leq \ell}_k(X) = \sup_{J \in \mathcal{J}(X,D)} \mathop{\inf\vphantom{\mathrm{sup}}}_{\Gamma} \mathcal{A}(\Gamma),
    \end{IEEEeqnarray*}
    where the infimum is taken over tuples of Reeb orbits $\Gamma = (\gamma_1, \ldots, \gamma_p)$ such that $p \leq \ell$ and $\overline{\mathcal{M}}^{J}_{X}(\Gamma)\p{<}{}{\mathcal{T}^{(k)}x} \neq \varnothing$. The class $\beta$ is a linear combination of words of Reeb orbits $\Gamma$ such that $\# \Gamma \leq \ell$ and $\mathcal{A}(\Gamma) \leq a$. Since $\epsilon_k(\beta) \neq 0$, one of the words in this linear combination, say $\Gamma$, is such that the virtual count of $\overline{\mathcal{M}}^{J}_{X}(\Gamma)\p{<}{}{\mathcal{T}^{(k)}x}$ is nonzero. By assumption on the virtual perturbation scheme, $\overline{\mathcal{M}}^{J}_{X}(\Gamma)\p{<}{}{\mathcal{T}^{(k)}x}$ is nonempty, which is the condition in the definition of $\tilde{\mathfrak{g}}^{\leq \ell}_k(X)$.
\end{proof}

\begin{secondcopy}{thm:g hat vs gh}
    If $X$ is a Liouville domain such that $\pi_1(X) = 0$ and $2 c_1(TX) = 0$ then%
    \begin{IEEEeqnarray*}{c+x*}
        {\mathfrak{g}}^{\leq 1}_k(X) = \cgh{k}(X).
    \end{IEEEeqnarray*}
\end{secondcopy}
\begin{proof}[Proof sketch]
    Choose a small ellipsoid $E$ such that there exists a strict exact symplectic embedding $\phi \colon E \longrightarrow X$. There are associated Viterbo transfer maps (see \cref{sec:viterbo transfer map of liouville embedding,sec:viterbo transfer map of exact symplectic embedding}, where we define the Viterbo transfer map of $S^1$-equivariant symplectic homology)
    \begin{IEEEeqnarray*}{rCls+x*}
        \phi_!^{S^1} \colon \homology{}{S^1}{}{S}{H}{}{}(X) & \longrightarrow & \homology{}{S^1}{}{S}{H}{}{}(E), \\
        \phi_!       \colon CH(X)                           & \longrightarrow & CH(E).
    \end{IEEEeqnarray*}
    Because of the topological conditions on $X$, the $S^1$-equivariant symplectic homology and the linearized contact homology have $\Z$-gradings given by the Conley--Zehnder index. In this context, one can offer an alternative definition of the Gutt--Hutchings capacities  via the Viterbo transfer map, namely $\cgh{k}(X)$ is the infimum over $a$ such that the map
    \begin{equation*}
        \begin{tikzcd}
            \homology{}{S^1}{}{S}{H}{(\varepsilon,a]}{n - 1 + 2k}(X) \ar[r, "\iota^{S^1,a}"] & \homology{}{S^1}{}{S}{H}{+}{n - 1 + 2k}(X) \ar[r, "\phi_!^{S^1}"] & \homology{}{S^1}{}{S}{H}{+}{n - 1 + 2k}(E)
        \end{tikzcd}
    \end{equation*}
    is nonzero (see \cref{def:ck alternative}). Bourgeois--Oancea \cite{bourgeoisEquivariantSymplecticHomology2016} define an isomorphism 
    \begin{IEEEeqnarray*}{c+x*}
        \Phi_{\mathrm{BO}} \colon \homology{}{S^1}{}{S}{H}{+}{}(X) \longrightarrow CH(X)
    \end{IEEEeqnarray*}
    between positive $S^1$-equivariant symplectic homology and linearized symplectic homology (whenever the latter is defined). All the maps we have just described assemble into the following commutative diagram.
    \begin{equation*}
        \begin{tikzcd}
            SH^{S^1,(\varepsilon,a]}_{n - 1 + 2k}(X) \ar[r, "\iota^{S^1,a}"] \ar[d, hook, two heads, swap, "\Phi_{\mathrm{BO}}^a"] & SH^{S^1,+}_{n - 1 + 2k}(X) \ar[r, "\phi_!^{S^1}"] \ar[d, hook, two heads, "\Phi_{\mathrm{BO}}"] & SH^{S^1,+}_{n - 1 + 2k}(E) \ar[d, hook, two heads, "\Phi_{\mathrm{BO}}"] \\
            CH^{a}_{n - 1 + 2k}(X) \ar[r, "\iota^{a}"] \ar[d, equals]                                                              & CH_{n - 1 + 2k}(X) \ar[r, "\phi_!"] \ar[d, equals]                                              & CH_{n - 1 + 2k}(E) \ar[d, "{\epsilon}^E_k"] \\
            CH^{a}_{n - 1 + 2k}(X) \ar[r, swap, "\iota^{a}"]                                                                       & CH_{n - 1 + 2k}(X) \ar[r, swap, "{\epsilon}_k^X"]                                               & \Q
        \end{tikzcd}
    \end{equation*}
    Here, the vertical arrows between the top two rows are the Bourgeois--Oancea isomorphism and the maps $\epsilon_k^X$ and $\epsilon_k^E$ are the augmentation maps of $X$ and $E$. Using this information, we can show that $\cgh{k}(X) \leq \mathfrak{g}^{\leq 1}_k(X)$:
    \begin{IEEEeqnarray*}{rCls+x*}
        \cgh{k}(X)
        & =    & \inf \{ a > 0 \mid \phi_!^{S^1} \circ \iota^{S^1,a} \neq 0 \} & \quad [\text{by the alternative definition of $\cgh{k}$}] \\
        & \leq & \inf \{ a > 0 \mid {\epsilon}_k^X \circ \iota^{a} \neq 0 \}   & \quad [\text{since the diagram commutes}] \\
        & =    & {\mathfrak{g}}^{\leq 1}_k(X)                                  & \quad [\text{by definition of $\mathfrak{g}^{\leq 1}_k$}].
    \end{IEEEeqnarray*}
    In this computation, the inequality in the second line is an equality if $\epsilon^E_k$ is an isomorphism. The proof of this statement is done in \cref{sec:augmentation map of an ellipsoid}, using the techniques from \cref{sec:cr operators,sec:functional analytic setup}. The key ideas are the following. One can show that $CH_{n - 1 + 2k}(E) \cong \Q$ (see \cref{lem:lch of ellipsoid}), and therefore it is enough to show that $\epsilon_k^E$ is nonzero. Recall that $\epsilon_k^E$ is given by the virtual count of holomorphic curves in $X$ satisfying a tangency constraint. We count those curves explicitly in \cref{lem:moduli spaces of ellipsoids have 1 element}. Notice that here we need to justify that the virtual count of curves equals the usual signed count. This follows by assumption on the virtual perturbation scheme and because in \cref{sec:augmentation map of an ellipsoid}, we also show that the moduli spaces are transversely cut out.
\end{proof}

\section{Outline of the thesis}

We now give a chapter by chapter outline of this thesis.

In \textbf{\cref{chp:symplectic manifolds}} we review the various types of manifolds that will show up in this thesis, i.e. symplectic manifolds and contact manifolds. We talk about the various types of vector fields in these manifolds (Hamiltonian vector field, Liouville vector field, Reeb vector field) and mention the properties of their flows. We give the definition of special types of symplectic manifolds, from less to more specific: Liouville domains, star-shaped domains, toric domains. Finally, we explain two constructions which will be present throughout: the symplectization of a contact manifold, and the completion of a Liouville domain.

In \textbf{\cref{chp:indices}} we give a review of the Conley--Zehnder indices. In order to list the properties of the Conley--Zehnder index, one needs to mention the Maslov index and the first Chern class, so we offer a review of those as well. We explain how to define the Conley--Zehnder index of an orbit in a symplectic or contact manifold by defining an induced path of symplectic matrices via a trivialization. Finally, we study the Conley--Zehnder index of a Reeb orbit in a unit cotangent bundle. The Conley--Zehnder index is needed for our purposes because it provides the grading of $S^1$-equivariant symplectic homology and of linearized contact homology.

\textbf{\cref{chp:holomorphic curves}} is about the analytic properties of holomorphic curves and Floer trajectories. We define punctured Riemann surfaces as the domains for such curves, and symplectic cobordisms as the targets for such curves. We prove the energy identity for holomorphic curves, as well as the maximum principle. Then, we discuss the known compactness and transversality for moduli spaces of asymptotically cylindrical holomorphic curves (these are the moduli spaces which are considered in linearized contact homology). The second half of this chapter is about solutions of the ``parametrized Floer equation'' (solutions to this equation are the trajectories which are counted in the differential of $S^1$-equivariant Floer chain complex). We prove an energy inequality for Floer trajectories, as well as three ``confinement lemmas'': the maximum principle, the asymptotic behaviour lemma, and the no escape lemma. Finally, we prove compactness and transversality for moduli spaces of solutions of the parametrized Floer equation using the corresponding results for moduli spaces of solutions of the Floer equation.

In \textbf{\cref{chp:floer}} we define the $S^1$-equivariant symplectic homology and establish its structural properties. First we define the $S^1$-equivariant Floer chain complex and its homology. The $S^1$-equivariant symplectic homology is then defined by taking the limit with respect to an increasing sequence of Hamiltonians of the $S^1$-equivariant Floer homology. We devote two sections to showing that $S^1$-equivariant symplectic homology is a functor, which amounts to defining the Viterbo transfer maps and proving their properties. Finally, we define a $\delta$ map, which enters the definition of the Gutt--Hutchings capacities.

\textbf{\cref{chp:symplectic capacities}} is about symplectic capacities. The first section is about generalities about symplectic capacities. We show how to extend a capacity of nondegenerate Liouville domains to a capacity of (possibly degenerate) Liouville domains. The next three sections are each devoted to defining and proving the properties of a specific capacity, namely the Lagrangian capacity $c_L$, the Gutt--Hutchings capacities $\cgh{k}$ and the $S^1$-equivariant symplectic homology capacities $\csh{k}$, and finally the McDuff--Siegel capacities $\tilde{\mathfrak{g}}^{\leq \ell}_k$. In the section about the Lagrangian capacity, we also state the conjecture that we will try to solve in the remainder of the thesis, i.e. $c_L(X_{\Omega}) = \delta_\Omega$ for a convex or concave toric domain $X_{\Omega}$. The final section is devoted to computations. We show that $c_L(X) \leq \inf_k^{} \tilde{\mathfrak{g}}^{\leq 1}_k(X) / k$. We use this result to prove the conjecture in the case where $X_{\Omega}$ is $4$-dimensional and convex.

\textbf{\cref{chp:contact homology}} introduces the linearized contact homology of a nondegenerate Liouville domain. The idea is that using the linearized contact homology, one can define the higher symplectic capacities, which will allow us to prove $c_L(X_{\Omega}) = \delta_\Omega$ for any convex or concave toric domain $X_{\Omega}$ (but under the assumption that linearized contact homology and the augmentation map are well-defined). We give a review of real linear Cauchy--Riemann operators on complex vector bundles, with a special emphasis on criteria for surjectivity in the case where the bundle has complex rank $1$. We use this theory to prove that moduli spaces of curves in ellipsoids are transversely cut out and in particular that the augmentation map of an ellipsoid is an isomorphism. The final section is devoted to computations. We show that $\mathfrak{g}^{\leq 1}_k(X) = \cgh{k}(X)$, and use this result to prove our conjecture (again, under \cref{assumption}).

\chapter{Symplectic and contact manifolds}
\label{chp:symplectic manifolds}

\section{Symplectic manifolds}

In this section, we recall some basics about symplectic manifolds.

\begin{definition}
    \label{def:symplectic manifold}
    A \textbf{symplectic manifold} is a manifold $X$ together with a $2$-form $\omega$ which is closed and nondegenerate. In this case we say that $\omega$ is a \textbf{symplectic form}. An \textbf{exact symplectic manifold} is a manifold $X$ together with a $1$-form $\lambda$ such that $\omega = \edv \lambda$ is a symplectic form. In this case we call $\lambda$ a \textbf{symplectic potential} for $\omega$.
\end{definition}

\begin{example}
    \label{exa:cn symplectic}
    Consider $\C^n$ with coordinates $(x^1, \ldots, x^n, y^1, \ldots, y^n)$, where $z^j = x^j + i y^j$ for every $j = 1, \ldots, n$. We define
    \begin{IEEEeqnarray*}{rCls+x*}
        \lambda & \coloneqq & \frac{1}{2} \sum_{j=1}^{n} (x^j \edv y^j - y^j \edv x^j), \\
        \omega  & \coloneqq & \edv \lambda = \sum_{j=1}^{n} \edv x^j \wedge \edv y^j.
    \end{IEEEeqnarray*}
    Then, $(\C^n, \lambda)$ is an exact symplectic manifold.        
\end{example}

\begin{example}
    \label{exa:cotangent bundle}
    Let $L$ be a manifold and consider the \textbf{cotangent bundle} of $L$, which is a vector bundle $\pi \colon T^*L \longrightarrow L$. As a set, $T^*L = \bigunion_{q \in L}^{} T^*_qL$. As a vector bundle, $T^*L$ is given as follows. For each coordinate chart $(U,q^1,\ldots,q^n)$ on $L$, there is a coordinate chart $(\pi ^{-1}(U),q^1 \circ \pi,\ldots,q^n \circ \pi,p_1,\ldots,p_n)$ on $T^*L$, where the $p_i$ are given by
    \begin{IEEEeqnarray*}{c}
        p_i(u) \coloneqq u \p{}{2}{ \pdv{}{q^i} \Big|_{\pi(u)} }
    \end{IEEEeqnarray*}
    for $u \in T^*L$. For simplicity, denote $q^i = q^i \circ \pi$. Define a 1-form $\lambda$ on $T^*L$, called the \textbf{canonical symplectic potential} or \textbf{Liouville $1$-form}, as follows. For each $u \in T^*L$, the linear map $\lambda _{u} \colon T _{u} T^*L \longrightarrow \R$ is given by $\lambda_{u} \coloneqq u \circ \dv \pi(u)$. The form $\omega \coloneqq \edv \lambda$ is the \textbf{canonical symplectic form}. In coordinates,
    \begin{IEEEeqnarray*}{rCls+x*}
        \lambda & = & \sum_{i=1}^{n} p_i \edv q^i, \\
        \omega  & = & \sum_{i=1}^{n} \edv p_i \wedge \edv q^i.
    \end{IEEEeqnarray*}
    Then, $(T^*L,\lambda)$ is an exact symplectic manifold. 
\end{example}

If $(X, \omega)$ is a symplectic manifold, then using symplectic linear algebra we conclude that $X$ must be even dimensional, i.e. $\dim X = 2n$ for some $n$ (see for example \cite[Theorem 1.1]{silvaLecturesSymplecticGeometry2008}). In particular, $\omega^n$ is a volume form on $X$.

\begin{definition}
    \label{def:types of embeddings}
    Let $(X,\omega_X)$, $(Y,\omega_Y)$ be symplectic manifolds and $\varphi \colon X \longrightarrow Y$ be an embedding. Then, $\varphi$ is \textbf{symplectic} if $\varphi^* \omega_Y = \omega_X$. A \textbf{symplectomorphism} is a symplectic embedding which is a diffeomorphism. We say that $\varphi$ is \textbf{strict} if $\varphi(X) \subset \itr Y$. If $(X,\lambda_X)$, $(Y,\lambda_Y)$ are exact, then we say that $\varphi$ is:
    \begin{enumerate}
        \item \label{def:types of embeddings 1} \textbf{symplectic} if $\varphi^* \lambda_Y - \lambda_X$ is closed (this is equivalent to the previous definition);
        \item \label{def:types of embeddings 2} \textbf{generalized Liouville} if $\varphi^* \lambda_Y - \lambda_X$ is closed and $(\varphi^* \lambda_Y - \lambda_X)|_{\partial X}$ is exact;
        \item \label{def:types of embeddings 3} \textbf{exact symplectic} if $\varphi^* \lambda_Y - \lambda_X$ is exact;
        \item \label{def:types of embeddings 4} \textbf{Liouville} if $\varphi^* \lambda_Y - \lambda_X = 0$.
    \end{enumerate}
\end{definition}

\begin{remark}
    \label{rmk:closed equivalent to exact}
    In the context of \cref{def:types of embeddings}, if $H^1_{\mathrm{dR}}(X) = 0$ then \ref{def:types of embeddings 1} $\Longleftrightarrow$ \ref{def:types of embeddings 2} $\Longleftrightarrow$ \ref{def:types of embeddings 3}.
\end{remark}

\begin{remark}
    The composition of generalized Liouville embeddings is not necessarily a generalized Liouville embedding. This means that exact symplectic manifolds together with generalized Liouville embeddings do not form a category.
\end{remark}

\begin{definition}
    Let $(X,\omega)$ be a symplectic manifold of dimension $2n$ and $\iota \colon L \longrightarrow X$ be an immersed submanifold of dimension $n$. Then, $L$ is \textbf{Lagrangian} if $\iota^* \omega = 0$. If $(X,\lambda)$ is exact, then we say that $L$ is:
    \begin{enumerate}
        \item \textbf{Lagrangian} if $\iota^* \lambda$ is closed (this is equivalent to the previous definition);
        \item \textbf{exact Lagrangian} if $\iota^* \lambda$ is exact.
    \end{enumerate}
\end{definition}

\begin{example}
    Let $L$ be a manifold and consider its cotangent bundle, $T^*L$. Then, the zero section $z \colon L \longrightarrow T^*L$ is an exact Lagrangian. In fact, $z^* \lambda = 0$.
\end{example}

\begin{lemma}[Moser's trick]
    \label{lem:mosers trick}
    Let $X$ be a manifold, $\alpha_t$ be a smooth $1$-parameter family of forms on $X$ and $Y_t$ be a complete time dependent vector field on $X$ with flow $\phi_t$. Then,%
    \begin{equation*}
        \phi^*_t \alpha_t^{} - \alpha_0^{} = \int_{0}^{t} \phi^*_s \p{}{1}{ \dot{\alpha}_s + \ldv{Y_s} \alpha_s } \edv s = \int_{0}^{t} \phi^*_s \p{}{1}{ \dot{\alpha}_s + \edv \iota _{Y_s} \alpha_s + \iota _{Y_s} \edv \alpha_s } \edv s.
    \end{equation*}
\end{lemma}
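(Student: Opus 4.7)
The plan is to reduce the statement to the fundamental theorem of calculus applied to the function $s \mapsto \phi_s^* \alpha_s$. More precisely, I would show that
\begin{IEEEeqnarray*}{c}
    \odv{}{s} \p{}{1}{\phi_s^* \alpha_s} = \phi_s^* \p{}{1}{\dot{\alpha}_s + \ldv{Y_s} \alpha_s},
\end{IEEEeqnarray*}
and then integrate from $0$ to $t$, noting that $\phi_0 = \id$ so the left endpoint contributes $\alpha_0$. The second equality in the lemma will then be immediate from Cartan's magic formula $\ldv{Y_s} \alpha_s = \edv \iota_{Y_s} \alpha_s + \iota_{Y_s} \edv \alpha_s$, which is a pointwise identity that passes through the pullback and the integral without incident.

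To establish the key derivative formula, I would split the dependence on $s$ into two contributions via a two-variable auxiliary function $F(s_1, s_2) \coloneqq \phi_{s_1}^* \alpha_{s_2}$, so that $\phi_s^* \alpha_s = F(s,s)$ and the chain rule gives
\begin{IEEEeqnarray*}{c}
    \odv{}{s}\p{}{1}{\phi_s^* \alpha_s} = \pdv{F}{s_1}(s,s) + \pdv{F}{s_2}(s,s).
\end{IEEEeqnarray*}
The second partial is straightforward: since $\phi_s$ does not depend on $s_2$, the pullback commutes with the $s_2$-derivative, giving $\phi_s^* \dot{\alpha}_s$. The first partial is the content of the time-dependent Lie derivative formula
\begin{IEEEeqnarray*}{c}
    \pdv{}{s_1}\Big|_{s_1 = s} \phi_{s_1}^* \alpha_s = \phi_s^* \ldv{Y_s} \alpha_s,
\end{IEEEeqnarray*}
which I would justify by reducing to the time-independent case: for each fixed $s$, compare $\phi_{s_1}$ with the flow $\psi_{s_1 - s}$ of the time-independent vector field $Y_s$; since the two flows agree to first order at $s_1 = s$, their pullbacks have the same derivative there, and for $\psi$ the identity $\odv{}{\tau}|_{\tau = 0} \psi_\tau^* \beta = \ldv{Y_s} \beta$ is the standard definition of the Lie derivative.

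Assembling the two partials gives the desired formula, and integrating from $0$ to $t$ then yields
\begin{IEEEeqnarray*}{c}
    \phi_t^* \alpha_t - \alpha_0 = \int_0^t \phi_s^* \p{}{1}{\dot{\alpha}_s + \ldv{Y_s} \alpha_s} \edv s.
\end{IEEEeqnarray*}
The main obstacle is really just the careful justification of the time-dependent Lie derivative identity; everything else is bookkeeping. Completeness of $Y_t$ is used only to ensure that $\phi_t$ is defined globally, so that the pullbacks and the integral make sense on all of $X$ for all $t$ in the relevant interval.
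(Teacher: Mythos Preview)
Your proposal is correct and follows essentially the same approach as the paper: apply the fundamental theorem of calculus to $s \mapsto \phi_s^*\alpha_s$, identify the derivative as $\phi_s^*(\dot{\alpha}_s + \ldv{Y_s}\alpha_s)$, and invoke Cartan's formula. The paper simply asserts the derivative identity ``by definition of Lie derivative,'' whereas you spell out the two-variable splitting more carefully, but the argument is the same.
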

\begin{proof}
    \begin{IEEEeqnarray*}{rCls+x*}
        \IEEEeqnarraymulticol{3}{l}{\phi^*_t \alpha_t^{} - \alpha_0^{}}\\ \quad
        & = & \phi^*_t \alpha_t^{} - \phi^*_0 \alpha_0^{}                                                                      & \quad [\text{since $\phi_0 = \id$}]            \\
        & = & \int_{0}^{t} \odv{}{s} \phi^*_s \alpha_s \, \edv s                                                               & \quad [\text{by the fundamental theorem of calculus}] \\
        & = & \int_{0}^{t} \phi^*_s \p{}{1}{ \dot{\alpha}_s + \ldv{Y_s} \alpha_s } \edv s                                      & \quad [\text{by definition of Lie derivative}] \\
        & = & \int_{0}^{t} \phi^*_s \p{}{1}{ \dot{\alpha}_s + \edv \iota _{Y_s} \alpha_s + \iota _{Y_s} \edv \alpha_s } \edv s & \quad [\text{by the Cartan magic formula}].             & \qedhere
    \end{IEEEeqnarray*}
\end{proof}

\begin{theorem}[Darboux]
    Let $(X,\omega)$ be a symplectic manifold. Then, for every $p \in X$, there exists a coordinate neighbourhood $(U,x^1,\ldots,x^n,y^1,\ldots,y^n)$ of $p$ such that
    \begin{equation*}
        \omega = \sum_{i=1}^{n} \edv x^i \wedge \edv y^i.
    \end{equation*}
\end{theorem}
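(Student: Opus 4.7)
The plan is to apply Moser's trick (\cref{lem:mosers trick}) to interpolate from a linearly-standard form to $\omega$, realizing the Darboux chart as the time-$1$ flow of a suitable vector field. First I would use the symplectic basis theorem applied to the symplectic vector space $(T_p X, \omega|_p)$ to pick a chart $(U, x^1, \ldots, x^n, y^1, \ldots, y^n)$ centred at $p$ in which $\omega|_p = \omega_{\mathrm{std}}|_p$, where $\omega_{\mathrm{std}} \coloneqq \sum_{i=1}^{n} \edv x^i \wedge \edv y^i$. After shrinking $U$ to a contractible neighbourhood, I set $\omega_t \coloneqq \omega_{\mathrm{std}} + t(\omega - \omega_{\mathrm{std}})$ for $t \in [0,1]$; each $\omega_t$ is closed and agrees with $\omega|_p$ at $p$, so by continuity and compactness of $[0,1]$, $\omega_t$ remains nondegenerate, uniformly in $t$, on some smaller neighbourhood $V \subset U$ of $p$.

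Next, since $\omega - \omega_{\mathrm{std}}$ is closed on the contractible set $V$, the Poincaré lemma produces a $1$-form $\beta \in \Omega^1(V)$ with $\edv \beta = \omega - \omega_{\mathrm{std}}$. Using the standard radial homotopy formula (or else subtracting a constant $1$-form in the chart coordinates) I would arrange $\beta(p) = 0$. Nondegeneracy of $\omega_t$ on $V$ then lets me define a time-dependent vector field $Y_t$ on $V$ by $\iota_{Y_t} \omega_t = -\beta$. The vanishing $\beta(p) = 0$ forces $Y_t(p) = 0$ for every $t$, so $p$ is a stationary point of $Y_t$, and by standard ODE arguments the flow $\phi_t$ of $Y_t$ exists for all $t \in [0,1]$ on some open neighbourhood $W \subset V$ of $p$.

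Finally I would apply \cref{lem:mosers trick} with $\alpha_t = \omega_t$: since $\edv \omega_t = 0$, $\dot{\omega}_t = \omega - \omega_{\mathrm{std}} = \edv \beta$ and $\edv \iota_{Y_t} \omega_t = -\edv \beta$, the integrand in Moser's formula vanishes identically, giving $\phi_t^* \omega_t = \omega_{\mathrm{std}}$ on $W$ for every $t \in [0,1]$. Specialising to $t = 1$ yields $\phi_1^* \omega = \omega_{\mathrm{std}}$, so that the coordinate functions $(x^i \circ \phi_1, y^i \circ \phi_1)$ defined on $\phi_1^{-1}(W)$ form a Darboux chart around $p$. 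The only delicate point in the argument is ensuring that the Moser flow is defined up to time $1$ on an honest open neighbourhood of $p$; this is precisely what the normalization $\beta(p) = 0$ achieves, by promoting $p$ to a fixed point of every $Y_t$, so that the flow exists for small neighbourhoods for all $t \in [0,1]$ without the risk of trajectories leaving $V$ before time one.
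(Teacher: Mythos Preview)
Your proof is correct and follows essentially the same approach as the paper: both linearly interpolate between the given form and the standard form (after arranging agreement at the point), take a primitive of the difference, and use Moser's trick (\cref{lem:mosers trick}) with the vector field $Y_t$ defined by $\iota_{Y_t}\omega_t = -\beta$ to produce the Darboux diffeomorphism. Your argument is in fact more careful than the paper's about the existence of the Moser flow up to time $1$, via the normalization $\beta(p)=0$; the paper simply asserts the flow exists without commenting on this point.
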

\begin{proof}
    Taking a coordinate chart on $X$, it is enough to assume that $\omega_0$, $\omega_1$ are symplectic forms on a neighbourhood of $0$ in $\C^n$ and to prove that there exists a local diffeomorphism $\phi$ of $\C^n$ such that $\phi^* \omega_1 = \omega_0$. Choosing the initial coordinate chart carefully, we may assume in addition that $\omega_j$ has a primitive $\lambda_j$, i.e. $\omega_j = \edv \lambda_j$, for $j = 0, 1$, and also that $\omega_0$ and $\omega_1$ are equal at $0 \in \C$, i.e. $\omega_0|_0 = \omega_1|_0$. Let
    \begin{IEEEeqnarray*}{rCls+x*}
        \lambda_t & \coloneqq & \lambda_0 + t (\lambda_1 - \lambda_0), \\
        \omega_t  & \coloneqq & \edv \omega_t = \omega_0 + t (\omega_1 - \omega_0).
    \end{IEEEeqnarray*}
    Since $\omega_t|_0 = \omega_0|_0$ is symplectic, possibly after passing to a smaller neighbourhood of $0$ we may assume that $\omega_t$ is symplectic. Let $Y_t$ be the unique time-dependent vector field such that $\dot{\lambda}_t + \iota_{Y_t} \omega_t = 0$ and denote by $\phi_t$ the flow of $Y_t$. Then,
    \begin{IEEEeqnarray*}{rCls+x*}
        \phi^*_t \omega_t^{} - \omega_0^{}
        & = & \int_{0}^{t} \phi^*_s \p{}{}{ \dot{\omega}_s + \edv \iota _{Y_s} \omega_s + \iota _{Y_s} \edv \omega_s } \edv s & \quad [\text{by Moser's trick (\cref{lem:mosers trick})}] \\
        & = & \int_{0}^{t} \phi^*_s \edv \p{}{}{ \dot{\lambda}_s + \edv \iota _{Y_s} \omega_s } \edv s                        & \quad [\text{since $\omega_t = \edv \lambda_t$}] \\
        & = & 0                                                                                                               & \quad [\text{by definition of $Y_t$}],
    \end{IEEEeqnarray*}
    which shows that $\phi_1$ is the desired local diffeomorphism.
\end{proof}

\begin{definition}
    \label{def:liouville vf}
    If $(X,\lambda)$ is an exact symplectic manifold, then the \textbf{Liouville vector field} of $(X,\lambda)$ is the unique vector field $Z$ such that
    \begin{IEEEeqnarray*}{c}
        \lambda = \iota_Z \omega.
    \end{IEEEeqnarray*}
\end{definition}

\begin{lemma}
    \label{lem:liouville vf}
    The Liouville vector field satisfies
    \begin{IEEEeqnarray*}{c}
        \ldv{Z} \lambda = \lambda.
    \end{IEEEeqnarray*}
\end{lemma}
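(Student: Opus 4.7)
The plan is to apply Cartan's magic formula and exploit the antisymmetry of $\omega$. Concretely, I would compute
\begin{IEEEeqnarray*}{rCl}
    \ldv{Z} \lambda = \edv \iota_Z \lambda + \iota_Z \edv \lambda = \edv \iota_Z \lambda + \iota_Z \omega,
\end{IEEEeqnarray*}
so the claim reduces to showing that $\iota_Z \lambda$ vanishes, since then $\edv \iota_Z \lambda = 0$ and $\iota_Z \omega = \lambda$ by the defining equation of $Z$.

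To see that $\iota_Z \lambda = 0$, I would substitute the defining identity $\lambda = \iota_Z \omega$ to get $\iota_Z \lambda = \iota_Z \iota_Z \omega = \omega(Z,Z)$. Since $\omega$ is a $2$-form, it is antisymmetric, so $\omega(Z,Z) = 0$. This is the only nontrivial observation required, and it is not really an obstacle but rather the whole content of the lemma.

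Putting the two steps together yields $\ldv{Z} \lambda = 0 + \lambda = \lambda$, completing the proof. There is no serious difficulty here; the only thing to be careful about is the order of the two terms in Cartan's formula and to recognize that $\iota_Z\iota_Z$ annihilates any differential form (in particular $\omega$), which is what makes the exact term drop out.
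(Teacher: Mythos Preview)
Your proof is correct and follows essentially the same approach as the paper: apply Cartan's magic formula, use $\iota_Z\omega = \lambda$, and observe that $\iota_Z\iota_Z\omega = \omega(Z,Z) = 0$ by antisymmetry.
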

\begin{proof}
    \begin{IEEEeqnarray*}{rCls+x*}
        \ldv{Z} \lambda
        & = & \edv \iota_Z \lambda + \iota_Z \edv \lambda & \quad [\text{by the Cartan magic formula}]     \\
        & = & \edv \iota_Z \lambda + \iota_Z \omega       & \quad [\text{since $\omega = \edv \lambda$}]     \\
        & = & \edv \iota_Z \iota_Z \omega + \lambda       & \quad [\text{by definition of Liouville vector field, $\lambda = \iota_Z \omega$}] \\
        & = & \lambda                                     & \quad [\text{since $\omega$ is antisymmetric, $\iota_Z \iota_Z \omega = 0$}].        & \qedhere
    \end{IEEEeqnarray*}
\end{proof}

\begin{definition}
    \label{def:Hamiltonian v field}
    Let $H \in C^\infty(X,\R)$ be a function on $X$. The \textbf{Hamiltonian vector field} of $H$, denoted $X_H$, is the unique vector field on $X$ satisfying 
    \begin{IEEEeqnarray*}{c}
        \edv H = -\iota _{X_H} \omega.
    \end{IEEEeqnarray*}
\end{definition}

\begin{proposition}
    \phantomsection\label{lem:hamiltonian vector field preserves symplectic form}
    The Hamiltonian vector field preserves the symplectic form, i.e.
    \begin{IEEEeqnarray*}{c}
        \ldv{X_H} \omega = 0.
    \end{IEEEeqnarray*}
\end{proposition}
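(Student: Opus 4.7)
The plan is to apply Cartan's magic formula directly, which reduces the identity to two statements, each of which is immediate from a definition already given in the excerpt. Specifically, I would write
\begin{IEEEeqnarray*}{rCls+x*}
    \ldv{X_H} \omega
    & = & \edv \iota_{X_H} \omega + \iota_{X_H} \edv \omega,
\end{IEEEeqnarray*}
and then handle the two terms separately.

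For the second term, the hypothesis that $\omega$ is a symplectic form (\cref{def:symplectic manifold}) gives $\edv \omega = 0$, so $\iota_{X_H} \edv \omega = 0$. For the first term, the defining relation $\edv H = - \iota_{X_H} \omega$ from \cref{def:Hamiltonian v field} yields $\iota_{X_H} \omega = - \edv H$, and then $\edv \iota_{X_H} \omega = - \edv \edv H = 0$ since $\edv^2 = 0$. Adding the two vanishing contributions gives $\ldv{X_H} \omega = 0$.

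There is no real obstacle here; the only thing to be careful about is the sign convention in \cref{def:Hamiltonian v field}, but the sign is irrelevant because we only use that $\iota_{X_H} \omega$ is exact (equal to $-\edv H$) and the exterior derivative of an exact form vanishes. The argument is three lines and purely formal.
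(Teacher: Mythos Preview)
Your proof is correct and follows exactly the same approach as the paper: apply Cartan's magic formula, kill the second term using $\edv\omega = 0$, and kill the first term using $\iota_{X_H}\omega = -\edv H$ together with $\edv^2 = 0$.
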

\begin{proof}
    \begin{IEEEeqnarray*}{rCls+x*}
        \ldv{X_H} \omega
        & = & \edv \iota_{X_H} \omega + \iota_{X_H} \edv \omega & \quad [\text{by the Cartan magic formula}] \\
        & = & \edv \iota_{X_H} \omega                           & \quad [\text{since $\omega$ is closed}] \\
        & = & - \edv^2 H                                        & \quad [\text{by definition of $X_H$}] \\
        & = & 0                                                 & \quad [\text{since $\edv^2 = 0$}].           & \qedhere
    \end{IEEEeqnarray*}
\end{proof}

\begin{proposition}[Liouville's theorem]
    The Hamiltonian vector field preserves the symplectic volume form, i.e.
    \begin{equation*}
        \ldv{X_H} \p{}{2}{\frac{\omega^n}{n!}} = 0.
    \end{equation*}
\end{proposition}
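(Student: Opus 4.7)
The plan is to reduce the claim to the previous proposition, $\ldv{X_H}\omega = 0$, by invoking the fact that the Lie derivative is a derivation with respect to the wedge product. Since $1/n!$ is a constant, it suffices to show $\ldv{X_H}(\omega^n) = 0$.

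First I would apply the Leibniz rule for the Lie derivative on differential forms iteratively. Because $\omega$ is a $2$-form, it commutes with itself under the wedge product, so each of the $n$ terms arising from differentiating one factor of $\omega$ at a time is equal, yielding
\begin{equation*}
    \ldv{X_H}(\omega^n) = \sum_{k=1}^{n} \omega^{k-1} \wedge (\ldv{X_H}\omega) \wedge \omega^{n-k} = n\, \omega^{n-1} \wedge \ldv{X_H}\omega.
\end{equation*}
Then I would invoke \cref{lem:hamiltonian vector field preserves symplectic form}, which gives $\ldv{X_H}\omega = 0$, and conclude that the whole expression vanishes. Dividing by $n!$ finishes the argument.

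There is no real obstacle here; the proof is a one-line consequence of the preceding proposition together with the derivation property of the Lie derivative. The only thing to be slightly careful about is the bookkeeping of signs when applying the Leibniz rule, but since $\omega$ has even degree every sign is positive and the $n$ summands simply collect into the factor $n$ shown above.
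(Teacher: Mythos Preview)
Your proof is correct and takes essentially the same approach as the paper: both reduce the claim to \cref{lem:hamiltonian vector field preserves symplectic form} via the Leibniz rule for the Lie derivative. The paper's proof is simply a one-line citation of these two facts, while you have written out the expansion of $\ldv{X_H}(\omega^n)$ explicitly.
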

\begin{proof}
    By \cref{lem:hamiltonian vector field preserves symplectic form} and the fact that Lie derivatives obey the Leibniz rule.
\end{proof}

\begin{proposition}[conservation of energy]
    \label{lem:conservation of energy}
    The Hamiltonian is constant along the Hamiltonian vector field, i.e.
    \begin{IEEEeqnarray*}{c}
        X_H(H) = 0.
    \end{IEEEeqnarray*}
\end{proposition}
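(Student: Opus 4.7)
The plan is to unwind the definitions and exploit the antisymmetry of $\omega$. Recall that for a function $H$, its derivative along a vector field $X_H$ is just $X_H(H) = \edv H (X_H)$. By the defining relation $\edv H = -\iota_{X_H} \omega$ from \cref{def:Hamiltonian v field}, this evaluates to $-\omega(X_H, X_H)$.

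Since $\omega$ is a $2$-form, it is antisymmetric, and therefore $\omega(X_H, X_H) = 0$. Chaining these equalities together yields $X_H(H) = 0$, which is the desired conclusion.

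There is no real obstacle here — the statement is a one-line consequence of the defining equation of $X_H$ and the antisymmetry of the symplectic form. The proof should be displayed simply as
\begin{IEEEeqnarray*}{rCls+x*}
    X_H(H)
    & = & \edv H (X_H)         & \quad [\text{by definition of differential}] \\
    & = & - \omega(X_H, X_H)   & \quad [\text{by definition of $X_H$}] \\
    & = & 0                    & \quad [\text{since $\omega$ is antisymmetric}].
\end{IEEEeqnarray*}
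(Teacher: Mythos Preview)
Your proof is correct and follows essentially the same approach as the paper: both unwind $X_H(H) = \edv H(X_H) = -\omega(X_H,X_H) = 0$ using the defining equation of $X_H$ and the antisymmetry of $\omega$. The only difference is that the paper inserts the intermediate step $-\iota_{X_H}\omega(X_H)$ before writing $-\omega(X_H,X_H)$, which you combine into one line.
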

\begin{proof}
    \begin{IEEEeqnarray*}{rCls+x*}
        X_H(H)
        & = & \edv H(X_H)                & \quad [\text{by definition of exterior derivative}] \\
        & = & - \iota_{X_H} \omega (X_H) & \quad [\text{by definition of $X_H$}] \\
        & = & - \omega(X_H, X_H)         & \quad [\text{by definition of interior product}] \\
        & = & 0                          & \quad [\text{since $\omega$ is a form}].              & \qedhere
    \end{IEEEeqnarray*}
\end{proof}

\section{Contact manifolds}

In this section, we recall some basics about contact manifolds.

\begin{definition}
    \label{def:contact manifold}
    A \textbf{contact manifold} is a pair $(M,\xi)$, where $M$ is a smooth manifold and $\xi$ is a distribution on $M$ of codimension 1, called the \textbf{contact structure}, such that for all locally defining forms $\alpha \in \Omega^1(U)$ for $\xi$ (i.e. such that $\xi = \ker \alpha$), $\edv \alpha |_{\xi}$ is nondegenerate. In this case we call $\alpha$ a \textbf{local contact form} for $M$. In the case where $\alpha \in \Omega^1(M)$ we say that $\alpha$ is a \textbf{global contact form} for $M$. A \textbf{strict contact manifold} is a pair $(M,\alpha)$ such that $(M,\ker \alpha)$ is a contact manifold.
\end{definition}

The following lemma characterizes the linear algebra of contact manifolds.

\begin{lemma}
    \label{lem:contact manifold}
    Let $M$ be an $m$-dimensional manifold, $\alpha \in \Omega^1(M)$ be nonvanishing and $\xi = \ker \alpha$. Then, the following are equivalent:
    \begin{enumerate}
        \item \label{lem:contact manifold 1} The form $\edv \alpha |_{\xi}$ is nondegenerate, i.e. $(M,\alpha)$ is a contact manifold;
        \item \label{lem:contact manifold 3} The tangent bundle of $M$ decomposes as $T M = \ker \edv \alpha \directsum \ker \alpha$;
        \item \label{lem:contact manifold 2} There exists an $n \in \Z_{\geq 0}$ such that $m = 2n + 1$ and $\alpha \wedge (\edv \alpha)^{n}$ is a volume form.
    \end{enumerate}
\end{lemma}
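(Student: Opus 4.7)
The statement is pointwise linear algebra on the tangent space, so fix $p \in M$ and set $V = T_p M$, $W = \xi_p = \ker \alpha_p$, and $\omega = \edv \alpha_p$. The task reduces to proving the three conditions are equivalent for a nonzero covector $\alpha$ on a finite-dimensional real vector space $V$, its hyperplane kernel $W$, and an alternating $2$-form $\omega$ on $V$. My plan is to prove \ref{lem:contact manifold 1} $\Leftrightarrow$ \ref{lem:contact manifold 3} and \ref{lem:contact manifold 1} $\Leftrightarrow$ \ref{lem:contact manifold 2} separately.

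First I would handle \ref{lem:contact manifold 1} $\Leftrightarrow$ \ref{lem:contact manifold 3}. A skew-symmetric bilinear form on an odd-dimensional space is always degenerate, so if $\omega|_W$ is nondegenerate then $\dim W$ is even, say $2n$, hence $m = 2n+1$. In this case $(\omega|_W)^n$ is a nonvanishing top form on $W$. Picking any $v \in V$ with $\alpha(v) = 1$ and a basis $e_1,\ldots,e_{2n}$ of $W$, one computes
\begin{IEEEeqnarray*}{c}
    (\alpha \wedge \omega^n)(v, e_1, \ldots, e_{2n}) = (\omega|_W)^n(e_1,\ldots,e_{2n}) \neq 0,
\end{IEEEeqnarray*}
so $\alpha \wedge \omega^n$ is a volume form. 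Conversely, if $\alpha \wedge \omega^n$ is a volume form then $m = 2n+1$ and the same computation, read backwards, shows $(\omega|_W)^n \neq 0$ on $W$, which forces the skew form $\omega|_W$ to be nondegenerate.

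Next I would prove \ref{lem:contact manifold 1} $\Leftrightarrow$ \ref{lem:contact manifold 2}. One direction is easy: if $V = \ker \omega \directsum W$, then $\ker(\omega|_W) = \ker \omega \intersection W = 0$, so $\omega|_W$ is nondegenerate. For the converse, assume $\omega|_W$ is nondegenerate, so by the previous step $\dim V = 2n+1$ with $\dim W = 2n$. The key input is that the rank of an alternating form is always even, together with the fact that restricting an alternating form to a codimension-one subspace drops its rank by $0$ or $2$. Since $\mathrm{rank}(\omega|_W) = 2n$, the rank of $\omega$ on $V$ is either $2n$ or $2n+2$; the latter exceeds $\dim V$, so $\mathrm{rank}(\omega) = 2n$ and $\dim \ker \omega = 1$. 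Combining $\dim \ker \omega + \dim W = 1 + 2n = \dim V$ with $\ker \omega \intersection W = \ker(\omega|_W) = 0$ yields the direct sum decomposition $V = \ker \omega \directsum W$.

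The only mildly subtle point is the rank-jump observation used in \ref{lem:contact manifold 1} $\Rightarrow$ \ref{lem:contact manifold 2}, which I would justify by noting that if $\tilde v \in \ker \omega$ for the restriction to $W$ but $\tilde v$ fails to lie in $\ker \omega$ on $V$, then the pairing of $\tilde v$ with any vector $v \notin W$ detects the full rank difference. Once all three implications are in place, passing the pointwise statements back to global statements over $M$ is automatic since each condition is checked at every point and $\alpha$ (and hence $\edv \alpha$) is smooth.
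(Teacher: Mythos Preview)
Your argument is correct and rests on the same pointwise linear algebra as the paper; you organize it as two biconditionals where the paper runs a cycle $(1)\Rightarrow(2)\Rightarrow(3)\Rightarrow(1)$, and your rank-drop step is a slight detour compared to the paper's more direct parity count (once $\ker\omega\cap W=0$, the codimension bound gives $\dim\ker\omega\leq 1$, and degeneracy of any alternating form in odd dimension gives $\dim\ker\omega\geq 1$).

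One labelling glitch worth fixing: in the lemma statement the label \texttt{lem:contact manifold 3} is attached to item~(2) (the decomposition) and \texttt{lem:contact manifold 2} to item~(3) (the volume form), so your two proof paragraphs have their announced references swapped relative to their content---exchange the two \texttt{\textbackslash ref}'s and the text will match what you actually prove.
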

\begin{proof}
    {\ref{lem:contact manifold 1}} $\Longrightarrow$ {\ref{lem:contact manifold 3}}: We show that $\ker \edv \alpha \cap \ker \alpha = 0$. For this, it suffices to assume that $v \in \ker \edv \alpha \cap \ker \alpha$ and to prove that $v = 0$. Since $\edv \alpha|_{\ker \alpha}(v) = 0$ and $\edv \alpha|_{\ker \alpha}$ is nondegenerate we conclude that $v = 0$.

    We show that $\dim TM = \dim \ker \edv \alpha + \dim \ker \alpha$. Since $\alpha$ is nonvanishing, $\dim \ker \alpha = \dim TM - 1$. Since $\ker \edv \alpha \cap \ker \alpha = 0$, this implies that $\dim \ker \edv \alpha \in \{0,1\}$. Considering that $\edv \alpha|_{\ker \alpha}$ is nondegenerate and that $\dim TM = \dim \ker \alpha + 1$, we conclude that $\edv \alpha|_{TM}$ is degenerate. Therefore, $\dim \ker \edv \alpha = 1$.    

    {\ref{lem:contact manifold 3}} $\Longrightarrow$ {\ref{lem:contact manifold 2}}: Since $T M = \ker \edv \alpha \oplus \ker \alpha$, we conclude that the forms $\alpha|_{\ker \edv \alpha}$ and $\edv \alpha|_{\ker \alpha}$ are nondegenerate. In particular, $\ker \alpha$ is even dimensional, i.e. $\dim \ker \alpha = 2n$ for some $n$, and $(\edv \alpha|_{\ker \alpha})^n$ is a volume form on $\ker \alpha$. So, $\alpha \wedge (\edv \alpha)^n$ is a volume form on $M$.

    {\ref{lem:contact manifold 2}} $\Longrightarrow$ {\ref{lem:contact manifold 1}}: If $v \in \xi = \ker \alpha$ is such that $v \in \ker \edv \alpha|_{\xi}$, then $\iota_v (\alpha \wedge (\edv \alpha)^n) = 0$, which implies that $v = 0$.
\end{proof}

\begin{definition}
    Let $(M,\xi_M)$, $(N,\xi_N)$ be contact manifolds. A \textbf{contactomorphism} from $M$ to $N$ is a diffeomorphism $\phi \colon M \longrightarrow N$ such that $T \phi(\xi_M) = \xi_N$. If $(M,\alpha_M)$, $(N,\alpha_N)$ are strict contact manifolds, a \textbf{strict contactomorphism} from $M$ to $N$ is a diffeomorphism $\phi \colon M \longrightarrow N$ such that $\phi^* \alpha_N = \alpha_M$.
\end{definition}

\begin{remark}
    We will consider only strict contact manifolds and strict contactomorphisms, and for simplicity we will drop the word ``strict'' from our nomenclature.
\end{remark}

\begin{definition}
    \label{def:Reeb vector field}
    The \textbf{Reeb vector field} of $(M,\alpha)$ is the unique vector field $R$ satisfying%
    \begin{IEEEeqnarray*}{rCls+x*}
        \iota_R \edv \alpha & = & 0, \\
        \iota_R \alpha      & = & 1.
    \end{IEEEeqnarray*}
\end{definition}

\begin{remark}
    \cref{lem:contact manifold} {\ref{lem:contact manifold 3}} can also be written as $TM = \p{<}{}{R} \directsum \xi$.
\end{remark}

\begin{lemma}
    \label{lem:reeb vf preserves contact form}
    The Reeb vector field preserves the contact form, i.e.
    \begin{IEEEeqnarray*}{c+x*}
        \ldv{R} \alpha = 0.
    \end{IEEEeqnarray*}
\end{lemma}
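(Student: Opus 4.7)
The plan is to apply the Cartan magic formula to $\ldv{R} \alpha$ and then invoke the two defining equations of the Reeb vector field from \cref{def:Reeb vector field}. This mirrors exactly the style of the proofs of \cref{lem:liouville vf} and \cref{lem:hamiltonian vector field preserves symplectic form} given earlier in the excerpt.

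Concretely, I would write
\begin{IEEEeqnarray*}{rCl}
    \ldv{R} \alpha & = & \edv \iota_R \alpha + \iota_R \edv \alpha.
\end{IEEEeqnarray*}
The first term vanishes because $\iota_R \alpha = 1$ is a constant function, so its exterior derivative is zero. The second term vanishes by the other defining property $\iota_R \edv \alpha = 0$ of the Reeb vector field. Adding the two yields $\ldv{R} \alpha = 0$.

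There is no real obstacle here; the statement is a one-line consequence of the definitions once the Cartan magic formula is invoked. The only thing to be careful about is writing the two applications of the Reeb defining equations in the correct order and making sure the constant $\iota_R \alpha = 1$ is explicitly noted so that the step $\edv \iota_R \alpha = 0$ is transparent to the reader.
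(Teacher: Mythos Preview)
Your proposal is correct and matches the paper's proof essentially line for line: the paper applies the Cartan magic formula and then uses $\iota_R \edv\alpha = 0$ and $\iota_R \alpha = 1$ (so $\edv\iota_R\alpha = \edv 1 = 0$) to conclude. The only cosmetic difference is the order in which the two summands are written.
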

\begin{proof}
    \begin{IEEEeqnarray*}{rCls+x*}
        \ldv{R} \alpha
        & = & \iota _{R} \edv \alpha + \edv \iota _{R} \alpha & \quad [\text{by the Cartan magic formula}] \\
        & = & 0 + \edv 1                                      & \quad [\text{by definition of $R$}]      \\
        & = & 0.                                              & \quad                                  & \qedhere
    \end{IEEEeqnarray*}
\end{proof}

We now consider contact manifolds which are hypersurfaces of symplectic manifolds.

\begin{definition}
    \label{def:hypersurface of contact type}
    Let $(X,\omega)$ be a symplectic manifold of dimension $2n$, $(M, \alpha)$ be a contact manifold of dimension $2n - 1$ such that $M \subset X$, and denote by $\iota \colon M \longrightarrow X$ the inclusion. We say that $M$ is a \textbf{hypersurface of contact type} if $\edv \alpha = \iota^* \omega$. In this case, the \textbf{Liouville vector field} is the unique vector field $Z \in C^{\infty}(\iota^* TX)$ such that
    \begin{IEEEeqnarray*}{c+x*}
        \iota_Z \omega = \alpha.
    \end{IEEEeqnarray*}
\end{definition}

\begin{example}
    Let $(L,g)$ be a Riemannian manifold. Recall that $(T^*L, \lambda)$ is an exact symplectic manifold. Consider the \textbf{unit cotangent bundle}
    \begin{IEEEeqnarray*}{c+x*}
        S^* L \coloneqq \{ u \in T^* L \mid \| u \| = 1 \}.
    \end{IEEEeqnarray*}
    The form $\alpha \coloneqq \lambda|_{S^*L}$ is a contact form on $S^* L$. Therefore, $(S^*L, \alpha) \subset (T^*L, \lambda)$ is a hypersurface of contact type. More generally, we can also define the cotangent bundle of radius $r > 0$ by $S^*_r L \coloneqq \{ u \in T^* L \mid \| u \| = r \}$, which is also a hypersurface of contact type.
\end{example}

\begin{lemma}
    \label{lem:decomposition coming from contact hypersurface}
    We have the decompositions
    \begin{IEEEeqnarray*}{rCls+x*}
        \iota^* TX & = & \p{<}{}{Z} \directsum \p{<}{}{R} \directsum \xi, \\
        TM         & = & \p{<}{}{R} \directsum \xi, \\
        \xi^\perp  & = & \p{<}{}{Z} \directsum \p{<}{}{R}.
    \end{IEEEeqnarray*}
\end{lemma}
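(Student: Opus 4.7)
The plan is to establish the three decompositions in sequence, starting with the one internal to $M$ and then using it to handle the ambient and the symplectic-orthogonal splittings.

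First, I would deduce $TM = \p{<}{}{R} \directsum \xi$ as a direct application of \cref{lem:contact manifold}\ref{lem:contact manifold 3} to the contact manifold $(M,\alpha)$: the defining properties $\iota_R \edv \alpha = 0$ and $\iota_R \alpha = 1$ imply that $R$ is a nonvanishing section of $\ker \edv \alpha|_{TM}$ (in particular $R \notin \xi$), and $\xi = \ker \alpha$ has codimension one in $TM$, so dimension count closes the argument.

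Next, for $\iota^* TX = \p{<}{}{Z} \directsum \p{<}{}{R} \directsum \xi$, the only additional ingredient is the transversality of $Z$ to $M$. I would argue this by contradiction: suppose $Z_p \in T_p M$ for some $p \in M$. Since $M$ is of contact type, $\edv \alpha = \iota^* \omega$, so restricting the defining identity $\iota_Z \omega = \alpha$ to $T_p M$ yields $\iota_Z \edv \alpha = \alpha$ at $p$. Evaluating on $R$ gives $\edv \alpha(Z,R) = \alpha(R) = 1$, whereas $\iota_R \edv \alpha = 0$ forces $\edv \alpha(Z,R) = -\edv \alpha(R,Z) = 0$, a contradiction. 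Combining the transversality $Z \notin TM$ with the previous splitting of $TM$ and the dimension count $\dim \iota^* TX = 2n = 1 + \dim TM$ produces the decomposition.

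Finally, for $\xi^{\perp} = \p{<}{}{Z} \directsum \p{<}{}{R}$, where $\xi^{\perp}$ denotes the symplectic orthogonal of $\xi$ in $\iota^* TX$ with respect to $\omega$, I would verify membership directly: for every $v \in \xi$, $\omega(Z,v) = (\iota_Z \omega)(v) = \alpha(v) = 0$ by definition of $Z$ and of $\xi$, while $\omega(R,v) = \edv \alpha(R,v) = 0$ because $R$ lies in $TM$ and $\iota_R \edv \alpha = 0$. Since $\omega$ is nondegenerate and $\xi$ has codimension $2$ in $\iota^* TX$, $\xi^{\perp}$ is two-dimensional, and $Z$ and $R$ are linearly independent by the previous paragraph. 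No serious obstacle arises here; the only mildly nontrivial step is the transversality of $Z$ to $M$, where pairing the relation $\iota_Z \edv \alpha = \alpha$ with the Reeb vector field is the decisive trick.
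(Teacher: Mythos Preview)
Your proof is correct and uses essentially the same computations as the paper: $TM = \p{<}{}{R} \oplus \xi$ from \cref{lem:contact manifold}, and $Z, R \in \xi^{\perp}$ via $\omega(Z,v) = \alpha(v) = 0$ and $\omega(R,v) = \edv\alpha(R,v) = 0$, followed by a dimension count. The only difference is organizational: you establish transversality of $Z$ to $M$ directly (via the contradiction $\edv\alpha(Z,R) = \alpha(R) = 1$ versus $\iota_R \edv\alpha = 0$) and then assemble the ambient splitting, whereas the paper first proves $\xi^\perp = \p{<}{}{Z} \oplus \p{<}{}{R}$ and then reads off $\iota^* TX = \xi^\perp \oplus \xi$ in one line, bypassing the separate transversality step.
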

\begin{proof}
    By \cref{lem:contact manifold}, we have that $TM = \p{<}{}{R} \directsum \xi$. To show that $\xi^\perp = \p{<}{}{Z} \directsum \p{<}{}{R}$, by considering the rank of the vector bundles it suffices to show that $\p{<}{}{Z} \directsum \p{<}{}{R} \subset \xi^\perp$. Let $v \in \xi_p = \ker \alpha_p$. We wish to show that $\omega(Z_p, v) = 0$ and $\omega(R_p, v) = 0$.
    \begin{IEEEeqnarray*}{rCls+x*}
        \omega(Z_p, v)
        & = & \alpha(v)           & \quad [\text{by definition of $Z$}] \\
        & = & 0                   & \quad [\text{since $v \in \ker \alpha_p$}], \\ \\
        \omega(R_p, v)
        & = & \edv \alpha(R_p, v) & \quad [\text{by definition of hypersurface of contact type}] \\
        & = & 0                   & \quad [\text{by definition of Reeb vector field}].
    \end{IEEEeqnarray*}
    Then, as oriented vector bundles, $\iota^* TX = \xi^\perp \directsum \xi = \p{<}{}{Z} \directsum \p{<}{}{R} \directsum \xi$.
\end{proof}

\begin{lemma}
    \label{lem:HR flow}
    Let $H \colon X \longrightarrow \R$ and assume that $M$ is the preimage of $H$ under a regular value $c \in \R$, i.e. $M = H^{-1}(c)$. Then, there exists a unique vector field $X_H^M$ on $M$ which is $\iota$-related to $X_H$. In addition, $X_H^M = \alpha(X_H^M) R$.
\end{lemma}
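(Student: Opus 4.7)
The plan is to produce $X_H^M$ by restricting $X_H$ to $M$, then to identify it as a multiple of $R$ by computing its contraction with $\edv \alpha$.

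First I would use conservation of energy, \cref{lem:conservation of energy}, which gives $X_H(H) = 0$. Since $c$ is a regular value and $M = H^{-1}(c)$, the function $H \circ \iota$ is the constant $c$, hence $X_H$ is tangent to $M$ along $M$. Because $\iota$ is an embedding, this tangency defines a unique vector field $X_H^M$ on $M$ satisfying $T\iota \circ X_H^M = X_H \circ \iota$; uniqueness follows from injectivity of $T\iota$, and this is exactly what it means for $X_H^M$ to be $\iota$-related to $X_H$.

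For the second assertion, the key observation is that $\edv \alpha = \iota^* \omega$ by the contact-type condition (\cref{def:hypersurface of contact type}). For any $v \in T_pM$ I would compute
\begin{IEEEeqnarray*}{rCl}
    \edv \alpha_p(X_H^M, v)
    & = & \omega_{\iota(p)}(X_H, T\iota(v)) \\
    & = & -\edv H(T\iota(v)) \\
    & = & -\edv(H \circ \iota)(v) \\
    & = & 0,
\end{IEEEeqnarray*}
since $H \circ \iota \equiv c$. Hence $X_H^M \in \ker (\edv \alpha|_{TM})$. By \cref{lem:contact manifold} applied to $\alpha$ on $M$, the line bundle $\ker \edv \alpha$ is a rank-one complement of $\xi = \ker \alpha$ in $TM$, and it is spanned by $R$ (since $\iota_R \edv \alpha = 0$). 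Therefore $X_H^M$ is pointwise proportional to $R$, and applying $\alpha$ and using $\alpha(R) = 1$ identifies the proportionality factor as $\alpha(X_H^M)$, giving $X_H^M = \alpha(X_H^M) R$.

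There is no real obstacle here; the only subtle point to get right is the bookkeeping around $\iota$-relatedness (so that $\edv \alpha(X_H^M, v)$ can legitimately be rewritten as $\omega(X_H, T\iota(v))$) and the identification of $\ker(\edv \alpha|_{TM})$ with the line spanned by $R$, which is already packaged in \cref{lem:contact manifold} and the definition of the Reeb vector field.
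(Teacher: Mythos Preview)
Your proposal is correct and follows essentially the same approach as the paper: both use conservation of energy to establish tangency of $X_H$ to $M$, then compute $\iota_{X_H^M}\edv\alpha = 0$ via the hypersurface-of-contact-type identity $\edv\alpha = \iota^*\omega$ and the definition of $X_H$, and finally invoke the fact that $\ker(\edv\alpha|_{TM})$ is spanned by $R$. The only cosmetic difference is that the paper writes the computation as an equality of $1$-forms while you evaluate pointwise on a test vector $v$.
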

\begin{proof}
    To prove the first statement, it suffices to show that $X_H|_p \in T_p M$ for every $p \in M$. By conservation of energy (\cref{lem:conservation of energy}), we have that
    \begin{IEEEeqnarray*}{rCls+x*}
        X_H|_p
        & \in & \ker \edv H(p) \\
        & = & T_p (H ^{-1}(c)) \\
        & = & T_p M.
    \end{IEEEeqnarray*}
    We now show that $\iota_{X_H^M} \edv \alpha = 0$.
    \begin{IEEEeqnarray*}{rCls+x*}
        \iota _{X_H^ M} \edv \alpha
        & = & \iota _{X_H^ M} \iota^* \omega & \quad [\text{by definition of hypersurface of contact type}]                        \\
        & = & \iota^* \iota _{X_H} \omega    & \quad [\text{since $X_H^M$ is $\iota$-related to $X_H$}]         \\
        & = & - \iota^* \edv H               & \quad [\text{by definition of Hamiltonian vector field}]      \\
        & = & - \edv \iota^* H               & \quad [\text{by naturality of $\edv$}]                        \\
        & = & 0                              & \quad [\text{since $H$ is constant equal to $c$ on $M$}].
    \end{IEEEeqnarray*}
    By definition of Reeb vector field, we conclude that $X_H^M$ and $R$ are collinear, and in particular $X_H^M = \alpha(X_H^M) R$.
\end{proof}

We now compare the dynamics from the points of view of Riemannian, symplectic and contact geometry. Let $(L,g)$ be a Riemannian manifold of dimension $n$. The manifold $L$ has a tangent bundle $TL$ and a cotangent bundle $T^*L$, and the map $\tilde{g} \colon TL \longrightarrow T^*L$ given by $\tilde{g}(v) = g(v,\cdot)$ is a vector bundle isomorphism. Consider the unit cotangent bundle $\iota \colon S^*L \longrightarrow T^*L$, which has a Reeb vector field $R$, and the function
\begin{IEEEeqnarray*}{rrCl}
    H \colon & T^*L & \longrightarrow & \R \\
             & u    & \longmapsto     & \frac{1}{2} \p{||}{}{u}_{}^2.
\end{IEEEeqnarray*}

\begin{definition}
    We define a vector field $G$ on $TL$, called the \textbf{geodesic field}, as follows. At $v \in TL$, $G _{v}$ is given by
    \begin{equation*}
        G _{v} \coloneqq \odv{}{t}\Big|_{t=0} \dot{\gamma}(t),
    \end{equation*}
    where $\gamma \colon I \longrightarrow L$ is the unique geodesic with $\dot{\gamma}(0) = v$ and $\dot{\gamma} \colon I \longrightarrow TL$ is the lift of $\gamma$.
\end{definition}

A curve $\gamma$ in $L$ is a geodesic if and only if its lift $\dot{\gamma}$ to $TL$ is a flow line of $G$.

\begin{theorem}
    \label{thm:flow geodesic vs hamiltonian}
    The vector field $G$ is $\tilde{g}$-related to $X_H$.
\end{theorem}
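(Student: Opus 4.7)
The plan is to verify the relation $T\tilde{g} \circ G = X_H \circ \tilde{g}$ in local coordinates. Pick a chart $(U, q^1, \ldots, q^n)$ on $L$, with induced coordinates $(q^i, v^i)$ on $TL$ (so that a tangent vector reads $v^i \, \partial/\partial q^i$) and $(q^i, p_i)$ on $T^*L$. Then $\tilde{g}$ is given in coordinates by the map $(q^i, v^i) \longmapsto (q^i, g_{ij}(q) v^j)$, so that its differential sends $\partial/\partial q^i$ to $\partial/\partial q^i + (\partial g_{jk}/\partial q^i) v^k \, \partial/\partial p_j$ and $\partial/\partial v^i$ to $g_{ji} \, \partial/\partial p_j$.

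From $H(q,p) = \tfrac{1}{2} g^{ij}(q) p_i p_j$ and \cref{def:Hamiltonian v field} one computes
\begin{IEEEeqnarray*}{c}
    X_H = g^{ij} p_j \, \pdv{}{q^i} - \frac{1}{2} \pdv{g^{jk}}{q^i} p_j p_k \, \pdv{}{p_i}.
\end{IEEEeqnarray*}
The geodesic equation $\ddot{q}^k + \Gamma^k_{ij} \dot{q}^i \dot{q}^j = 0$ translates to
\begin{IEEEeqnarray*}{c}
    G = v^i \, \pdv{}{q^i} - \Gamma^k_{ij} v^i v^j \, \pdv{}{v^k}.
\end{IEEEeqnarray*}

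Substituting $p_j = g_{jm} v^m$ into $X_H$, the horizontal coefficient becomes $g^{ij} g_{jm} v^m = v^i$, matching the horizontal part of $T\tilde{g}(G)$. For the vertical coefficient, the key identity $\partial g^{jk}/\partial q^l = - g^{ja} g^{kb} \, \partial g_{ab}/\partial q^l$ (obtained by differentiating $g^{ij} g_{jk} = \delta^i_k$) converts the coefficient of $\partial/\partial p_l$ in $X_H \circ \tilde{g}$ into $\tfrac{1}{2} (\partial g_{ab}/\partial q^l) v^a v^b$. On the other hand, the coefficient of $\partial/\partial p_l$ in $T\tilde{g}(G)$ equals
\begin{IEEEeqnarray*}{c}
    v^i v^k \pdv{g_{lk}}{q^i} - \Gamma^k_{ij} v^i v^j g_{lk};
\end{IEEEeqnarray*}
expanding $\Gamma^k_{ij} g_{lk}$ via the Koszul formula and using the symmetry of $v^i v^j$ to cancel the mixed derivatives $\partial g_{lj}/\partial q^i$ against $\partial g_{li}/\partial q^j$ leaves exactly $\tfrac{1}{2}(\partial g_{ij}/\partial q^l) v^i v^j$, completing the matching.

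The computation itself is the only real obstacle: it is routine but requires keeping careful track of index placements and exploiting both the identity $\partial_l g^{jk} = -g^{ja}g^{kb} \partial_l g_{ab}$ and the symmetrisation in $(i,j)$ provided by $v^i v^j$. An equivalent, slightly more conceptual route is to observe that $\tilde g$ is the Legendre transform associated to the Lagrangian $\mathcal{L}(q,v) = \tfrac{1}{2} g_{ij} v^i v^j$; since $H$ is its Legendre dual and the Euler--Lagrange equations for $\mathcal{L}$ are precisely the geodesic equations, the standard correspondence between Euler--Lagrange and Hamilton's equations yields the claim at once.
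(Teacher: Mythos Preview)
Your proof is correct. The paper itself does not give a proof of this theorem at all; it simply refers the reader to \cite[Theorem 1.5.2]{geigesIntroductionContactTopology2008} and \cite[Theorem 2.3.1]{frauenfelderRestrictedThreeBodyProblem2018}. Your direct coordinate verification (and the alternative Legendre-transform argument you mention) is exactly the standard proof one finds in those references, so in that sense you have supplied what the paper omits.
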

\begin{proof}
    See for example \cite[Theorem 1.5.2]{geigesIntroductionContactTopology2008} or \cite[Theorem 2.3.1]{frauenfelderRestrictedThreeBodyProblem2018}.
\end{proof}

\begin{theorem}
    \label{thm:flow reeb vs hamiltonian}
    The vector field $R$ is $\iota$-related to $X_H$.
\end{theorem}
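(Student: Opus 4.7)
The plan is to reduce the statement to \cref{lem:HR flow} by recognizing $S^\ast L$ as a regular level set of $H$ and then computing the proportionality factor $\alpha(X_H^{S^\ast L})$ explicitly.

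First I would note that $1/2$ is a regular value of $H \colon T^\ast L \longrightarrow \R$, $H(u) = \tfrac{1}{2}\|u\|^2$, and that $S^\ast L = H^{-1}(1/2)$. Applying \cref{lem:HR flow} to $M = S^\ast L$ yields a unique vector field $X_H^{S^\ast L}$ on $S^\ast L$ which is $\iota$-related to $X_H$, and moreover
\begin{IEEEeqnarray*}{c+x*}
    X_H^{S^\ast L} = \alpha(X_H^{S^\ast L})\, R.
\end{IEEEeqnarray*}
Thus the theorem is reduced to showing that $\alpha(X_H^{S^\ast L}) \equiv 1$ on $S^\ast L$.

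Next I would compute this factor. Since $X_H^{S^\ast L}$ is $\iota$-related to $X_H$ and $\alpha = \iota^\ast \lambda$, one has $\alpha(X_H^{S^\ast L}) = \lambda(X_H)|_{S^\ast L}$. Let $Z$ denote the Liouville vector field of $(T^\ast L, \lambda)$ (\cref{def:liouville vf}), so that $\lambda = \iota_Z \omega$. Then
\begin{IEEEeqnarray*}{rCls+x*}
    \lambda(X_H) & = & \omega(Z, X_H) \ = \ -\omega(X_H, Z) \ = \ \edv H(Z),
\end{IEEEeqnarray*}
where the last equality uses the definition of $X_H$. In the coordinates $(q^i, p_i)$ of \cref{exa:cotangent bundle}, one has $Z = \sum_i p_i\, \partial/\partial p_i$ (the fiberwise radial vector field) and $H = \tfrac{1}{2} g^{ij}(q) p_i p_j$, whence $\edv H(Z) = g^{ij} p_i p_j = 2H$ by Euler's theorem on homogeneous functions. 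Restricting to $S^\ast L = \{H = 1/2\}$ gives $\lambda(X_H)|_{S^\ast L} = 1$, and therefore $\alpha(X_H^{S^\ast L}) = 1$.

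Combining the two steps, $X_H^{S^\ast L} = R$, which is precisely the statement that $R$ is $\iota$-related to $X_H$. The only nontrivial ingredient is the identity $\edv H(Z) = 2H$; this is purely a consequence of the quadratic homogeneity of $H$ in the fiber direction and the fact that $Z$ generates the fiberwise radial dilations, so no real obstacle arises. In particular, no reference to \cref{thm:flow geodesic vs hamiltonian} is needed, although the two results together express the familiar fact that the Reeb flow on $S^\ast L$ corresponds via $\tilde g$ to the geodesic flow on the unit tangent bundle of $(L,g)$.
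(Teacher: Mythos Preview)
Your proof is correct and follows the same overall structure as the paper's: both invoke \cref{lem:HR flow} to reduce the statement to showing $\lambda(X_H)|_{S^\ast L}=1$, and both establish this by proving the identity $\lambda(X_H)=2H$. Your derivation of the latter via $\lambda(X_H)=\omega(Z,X_H)=\edv H(Z)$ together with Euler's homogeneity identity is somewhat more conceptual than the paper's route, which writes $X_H$ in coordinates and computes $\lambda(X_H)|_u = \|\dv\pi(u)X_H|_u\|^2 = \|u\|^2$ by first identifying $\langle \dv\pi(u)X_H|_u,\cdot\rangle$ with $u$.
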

\begin{proof}
    Notice that $S^*L = H^{-1}(2)$. By \cref{lem:HR flow}, it suffices to show that $\lambda(X_H) \circ \iota = 1$. Let $(q^1, \ldots, q^n)$ be coordinates on $L$, with induced coordinates $(q^1, \ldots, q^n, p_1, \ldots, p_n)$ on $T^* L$. With respect to these coordinates, $X_H$ can be written as
    \begin{IEEEeqnarray}{rCls+x*}
        X_H
        & = & \sum_{i = 1}^{n} \p{}{2}{ \pdv{H}{p_i} \pdv{}{q^i} - \pdv{H}{q^i} \pdv{}{p_i} } \IEEEnonumber \\
        & = & \sum_{i = 1}^{n} \p{}{2}{ \sum_{j=1}^{n} g^{ij} p_j \pdv{}{q^i} - \sum_{j,k=1}^{n} \pdv{g^{jk}}{q^i} p_j p_k \pdv{}{p_i} }. \plabel{eq:hamiltonian vector field in coordinates}
    \end{IEEEeqnarray}
    We show that $\p{<}{}{\dv \pi(u) X_H|_{u}, \cdot } = u$.
    \begin{IEEEeqnarray*}{rCls+x*}
        \p{<}{}{\dv \pi (u) X_{H}|_{u}, v}
        & = & \sum_{i,j=1}^{n} g _{ij} (\dv \pi (u) X_{H}|_{u})^i v^j \\
        & = & \sum_{i,j,k=1}^{n} g _{ij} g ^{ik} p_k v^j \\
        & = & \sum_{j,k=1}^{n} \delta^k_j p_k v^j \\
        & = & \sum_{j=1}^{n} p_j v^j \\
        & = & \sum_{i=1}^{n} p_i \edv q^i \p{}{2}{ \sum_{j=1}^{n} v^j \pdv{}{q^j} } \\
        & = & u(v).
    \end{IEEEeqnarray*}
    We show that $\lambda(X_H) = 2 H$:
    \begin{IEEEeqnarray*}{rCls+x*}
        \lambda(X_{H})|_{u}
        & = & u (\dv \pi (u) X_{H}|_{u})                               & \quad [\text{by definition of $\lambda$}]                 \\
        & = & \p{<}{}{ \dv \pi (u) X_{H}|_{u},\dv \pi (u) X_{H}|_{u} } & \quad [\text{since $u = \p{<}{}{\dv \pi(u) X_H|_{u}, \cdot }$}] \\
        & = & \p{||}{}{ \dv \pi (u) X_{H}|_{u} }^2                     & \quad [\text{by definition of the norm}]     \\
        & = & \p{||}{}{u}^2                                            & \quad [\text{since $u = \p{<}{}{\dv \pi(u) X_H|_{u}, \cdot }$}] \\
        & = & 2 H (u)                                                  & \quad [\text{by definition of $H$}].
    \end{IEEEeqnarray*}
    By definition of $H$, this implies that $\lambda(X_H) \circ \iota = 1$, as desired.
\end{proof}

\section{Liouville domains}

In this section we introduce Liouville domains, which are going to be the main type of symplectic manifold we will work with.

\begin{definition}
    \label{def:liouville domain}
    A \textbf{Liouville domain} is a pair $(X,\lambda)$, where $X$ is a compact, connected smooth manifold with boundary $\del X$ and $\lambda \in \Omega^1(X)$ is such that $\edv \lambda \in \Omega^2(X)$ is symplectic, $\lambda|_{\del X}$ is contact and the orientations on $\del X$ coming from $(X,\edv \lambda)$ and coming from $\lambda|_{\del X}$ are equal.
\end{definition}

\begin{example}
    Let $(L,g)$ be a Riemannian manifold. The \textbf{unit codisk bundle},
    \begin{IEEEeqnarray*}{c+x*}
        D^* L \coloneqq \{ u \in T^*L \mid \| u \| \leq 1 \},
    \end{IEEEeqnarray*}
    is a Liouville domain. More generally, we can define the codisk bundle of radius $r > 0$ by $D^*_r L \coloneqq \{ u \in T^*L \mid \| u \| \leq r \}$, which is also a Liouville domain.
\end{example}


\begin{definition}
    \label{def:star shaped}
    A \textbf{star-shaped domain} is a compact, connected $2n$-dimensional submanifold $X$ of $\C^{n}$ with boundary $\del X$ such that $(X,\lambda)$ is a Liouville domain, where $\lambda$ is the symplectic potential of \cref{exa:cn symplectic}.
\end{definition}

\begin{definition}
    \label{def:moment map}
    The \textbf{moment map} is the map $\mu \colon \C^n \longrightarrow \R^n _{\geq 0}$ given by 
    \begin{IEEEeqnarray*}{c+x*}
        \mu(z_1,\ldots,z_n) \coloneqq \pi(|z_1|^2,\ldots,|z_n|^2).   
    \end{IEEEeqnarray*}
    Define also
    \begin{IEEEeqnarray*}{rCrClClrCl}
        \Omega_X        & \coloneqq & \Omega(X)      & \coloneqq & \hphantom{{}^{-1}} \mu(X) \subset \R_{\geq 0}^n, & \qquad & \text{for every } & X      & \subset & \C^n, \\
        X_{\Omega}      & \coloneqq & X(\Omega)      & \coloneqq & \mu^{-1}(\Omega) \subset \C^n,                   & \qquad & \text{for every } & \Omega & \subset & \R^{n}_{\geq 0}, \\
        \delta_{\Omega} & \coloneqq & \delta(\Omega) & \coloneqq & \sup \{ a \mid (a, \ldots, a) \in \Omega \},     & \qquad & \text{for every } & \Omega & \subset & \R^{n}_{\geq 0}.
    \end{IEEEeqnarray*}
    We call $\delta_\Omega$ the \textbf{diagonal} of $\Omega$.
\end{definition}

\begin{definition}
    \label{def:toric domain}
    A \textbf{toric domain} is a star-shaped domain $X$ such that $X = X(\Omega(X))$. A toric domain $X = X _{\Omega}$ is 
    \begin{enumerate}
        \item \textbf{convex} if $\hat{\Omega} \coloneqq \{ (x_1, \ldots, x_n) \in \R^n \mid (|x_1|,\ldots,|x_n|) \in \Omega \} $ is convex;
        \item \textbf{concave} if $\R^n _{\geq 0} \setminus \Omega$ is convex.
    \end{enumerate}
\end{definition}

\begin{example}
    \phantomsection\label{exa:toric domains}
    Here we give some examples of toric domains. See \cref{fig:Toric domains} for a picture of the examples given below.
    \begin{enumerate}
        \item The \textbf{ellipsoid} is the convex and concave toric domain given by
            \begin{IEEEeqnarray*}{rCls+x*}
                E(a_1,\ldots,a_n)        & \coloneqq & \p{c}{2}{ (z_1,\ldots,z_n) \in \C^n           \ \Big| \ \sum_{j=1}^{n} \frac{\pi |z_j|^2}{a_j} \leq 1 } \\
                \Omega_E(a_1,\ldots,a_n) & \coloneqq & \p{c}{2}{ (x_1,\ldots,x_n) \in \R^n _{\geq 0} \ \Big| \ \sum_{j=1}^{n} \frac{x_j}{a_j} \leq 1 }.
            \end{IEEEeqnarray*}
            Its limit shape, the \textbf{ball}, is $B^{2n}(a) \coloneqq B(a) \coloneqq E(a,\ldots,a)$.
        \item The \textbf{polydisk} is the convex ``toric domain with corners'' given by
            \begin{IEEEeqnarray*}{rCls+x*}
                P(a_1,\ldots,a_n)        & \coloneqq & \p{c}{2}{ (z_1,\ldots,z_n) \in \C^n           \ \Big| \ \forall j=1,\ldots,n \colon \frac{\pi |z_j|^2}{a_j} \leq 1 } \\
                \Omega_P(a_1,\ldots,a_n) & \coloneqq & \p{c}{2}{ (x_1,\ldots,x_n) \in \R^n _{\geq 0} \ \Big| \ \forall j=1,\ldots,n \colon \frac{x_j}{a_j}         \leq 1 }.
            \end{IEEEeqnarray*}
            Its limit shape, the \textbf{cube}, is $P^{2n}(a) \coloneqq P(a) \coloneqq P(a,\ldots,a)$.
        \item The \textbf{nondisjoint union of cylinders} is the concave ``noncompact toric domain with corners'' given by
            \begin{IEEEeqnarray*}{rCls+x*}
                N(a_1,\ldots,a_n)        & \coloneqq & \p{c}{2}{ (z_1,\ldots,z_n) \in \C^n           \ \Big| \ \exists j=1,\ldots,n \colon \frac{\pi |z_j|^2}{a_j} \leq 1 } \\
                \Omega_N(a_1,\ldots,a_n) & \coloneqq & \p{c}{2}{ (x_1,\ldots,x_n) \in \R^n _{\geq 0} \ \Big| \ \exists j=1,\ldots,n \colon \frac{x_j}{a_j}         \leq 1 }.
            \end{IEEEeqnarray*}
            Its limit shape is denoted $N^{2n}(a) \coloneqq N(a) \coloneqq N(a,\ldots,a)$.
        \item The \textbf{cylinder} is the convex and concave ``noncompact toric domain'' given by
            \begin{IEEEeqnarray*}{rCls+x*}
                Z(a)        & \coloneqq & \p{c}{2}{ (z_1,\ldots,z_n) \in \C^n           \ \Big| \ \frac{\pi |z_1|^2}{a_1} \leq 1 } \\
                \Omega_Z(a) & \coloneqq & \p{c}{2}{ (x_1,\ldots,x_n) \in \R^n _{\geq 0} \ \Big| \ \frac{x_1}{a_1} \leq 1 }.
            \end{IEEEeqnarray*}
            Note that $Z^{2n}(a) \coloneqq Z(a) = E(a,\infty,\ldots,\infty) = P(a,\infty,\ldots,\infty)$.
    \end{enumerate}
\end{example}

\begin{figure}[ht]
    \centering

    \begin{tikzpicture}
        [
        nn/.style={thick, color = gray},
        zz/.style={thick, color = gray},
        pp/.style={thick, color = gray},
        bb/.style={thick, color = gray}
        ]
        \tikzmath{
            \x = 1.5;
            \y = 3;
            \z = 1.0;
            coordinate \o, \a, \b, \c, \d, \e, \r, \s, \q;
            \o{ball} = (0 , 0 ) + 0*(\y+\z,0);
            \a{ball} = (\x, 0 ) + 0*(\y+\z,0);
            \b{ball} = (0 , \x) + 0*(\y+\z,0);
            \c{ball} = (\x, \x) + 0*(\y+\z,0);
            \d{ball} = (\x, \y) + 0*(\y+\z,0);
            \e{ball} = (\y, \x) + 0*(\y+\z,0);
            \r{ball} = (\y, 0 ) + 0*(\y+\z,0);
            \s{ball} = (0 , \y) + 0*(\y+\z,0);
            \q{ball} = (\y, \y) + 0*(\y+\z,0);
            \o{cube} = (0 , 0 ) + 1*(\y+\z,0);
            \a{cube} = (\x, 0 ) + 1*(\y+\z,0);
            \b{cube} = (0 , \x) + 1*(\y+\z,0);
            \c{cube} = (\x, \x) + 1*(\y+\z,0);
            \d{cube} = (\x, \y) + 1*(\y+\z,0);
            \e{cube} = (\y, \x) + 1*(\y+\z,0);
            \r{cube} = (\y, 0 ) + 1*(\y+\z,0);
            \s{cube} = (0 , \y) + 1*(\y+\z,0);
            \q{cube} = (\y, \y) + 1*(\y+\z,0);
            \o{cyld} = (0 , 0 ) + 2*(\y+\z,0);
            \a{cyld} = (\x, 0 ) + 2*(\y+\z,0);
            \b{cyld} = (0 , \x) + 2*(\y+\z,0);
            \c{cyld} = (\x, \x) + 2*(\y+\z,0);
            \d{cyld} = (\x, \y) + 2*(\y+\z,0);
            \e{cyld} = (\y, \x) + 2*(\y+\z,0);
            \r{cyld} = (\y, 0 ) + 2*(\y+\z,0);
            \s{cyld} = (0 , \y) + 2*(\y+\z,0);
            \q{cyld} = (\y, \y) + 2*(\y+\z,0);
            \o{ndju} = (0 , 0 ) + 3*(\y+\z,0);
            \a{ndju} = (\x, 0 ) + 3*(\y+\z,0);
            \b{ndju} = (0 , \x) + 3*(\y+\z,0);
            \c{ndju} = (\x, \x) + 3*(\y+\z,0);
            \d{ndju} = (\x, \y) + 3*(\y+\z,0);
            \e{ndju} = (\y, \x) + 3*(\y+\z,0);
            \r{ndju} = (\y, 0 ) + 3*(\y+\z,0);
            \s{ndju} = (0 , \y) + 3*(\y+\z,0);
            \q{ndju} = (\y, \y) + 3*(\y+\z,0);
        }
        \foreach \domain in {ball, cube, cyld, ndju}{
            \draw[->] (\o{\domain}) -- (\r{\domain});
            \draw[->] (\o{\domain}) -- (\s{\domain});
            \node[anchor = north] at (\a{\domain}) {$1$};
            \node[anchor =  east] at (\b{\domain}) {$1$};
        }

        \node[anchor = north east] at (\q{ball}) {$\Omega_B(1)$};
        \fill[bb, opacity=0.5] (\o{ball}) -- (\a{ball}) -- (\b{ball}) -- cycle;
        \draw[bb] (\o{ball}) -- (\a{ball}) -- (\b{ball}) -- cycle;

        \node[anchor = north east] at (\q{cube}) {$\Omega_P(1)$};
        \fill[pp, opacity=0.5]  (\o{cube}) -- (\a{cube}) -- (\c{cube}) -- (\b{cube}) -- cycle;
        \draw[pp] (\o{cube}) -- (\a{cube}) -- (\c{cube}) -- (\b{cube}) -- cycle;
        
        \node[anchor = north east] at (\q{cyld}) {$\Omega_Z(1)$};
        \fill[zz, opacity=0.5] (\o{cyld}) -- (\a{cyld}) -- (\d{cyld}) -- (\s{cyld});
        \draw[zz] (\s{cyld}) -- (\o{cyld}) -- (\a{cyld}) -- (\d{cyld});
        
        \node[anchor = north east] at (\q{ndju}) {$\Omega_N(1)$};
        \fill[nn, opacity=0.5] (\o{ndju}) -- (\s{ndju}) -- (\d{ndju}) -- (\c{ndju}) -- (\e{ndju}) -- (\r{ndju}) -- cycle;
        \draw[nn] (\d{ndju}) -- (\c{ndju}) -- (\e{ndju});
        \draw[nn] (\s{ndju}) -- (\o{ndju}) -- (\r{ndju});
    \end{tikzpicture}

    \caption{Toric domains}
    \label{fig:Toric domains}
\end{figure}
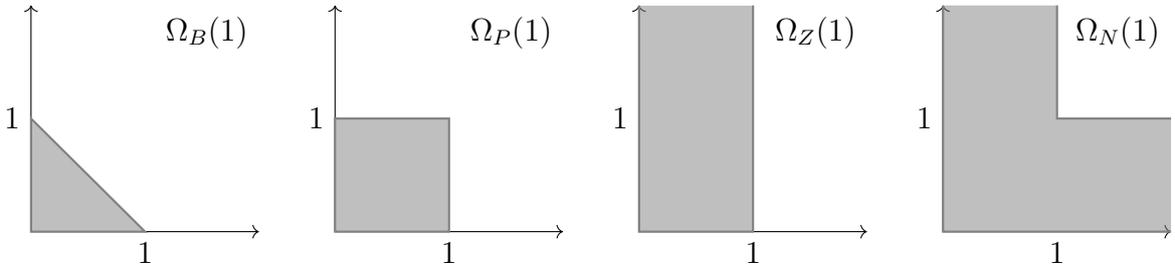

\section{Symplectization of a contact manifold}

Let $(M,\alpha)$ be a contact $(2n - 1)$-dimensional manifold.

\begin{definition}
    \label{def:symplectization}
    The \textbf{symplectization} of $(M,\alpha)$ is the exact symplectic manifold $(\R \times M, e^r \alpha)$, where $r$ is the coordinate on $\R$.
\end{definition}

\begin{lemma}
    \label{lem:symplectization form}
    The form $\edv (e^r \alpha)$ is symplectic.
\end{lemma}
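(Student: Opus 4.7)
The plan is to verify that $\omega \coloneqq \edv(e^r \alpha)$ is both closed and nondegenerate, and then invoke \cref{def:symplectic manifold}.

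Closedness is immediate: $\edv \omega = \edv^2(e^r \alpha) = 0$. So the whole content of the statement is nondegeneracy. Since $\dim(\R \times M) = 2n$, by standard symplectic linear algebra it suffices to prove that $\omega^n$ is nowhere-vanishing, i.e.\ that it is a volume form on $\R \times M$.

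To compute $\omega^n$, I first expand
\begin{IEEEeqnarray*}{c}
\omega = e^r \edv r \wedge \alpha + e^r \edv \alpha,
\end{IEEEeqnarray*}
where here and throughout I view $\alpha$ and $\edv \alpha$ as pulled back to $\R \times M$ along the projection to $M$. Since $(\edv r \wedge \alpha) \wedge (\edv r \wedge \alpha) = 0$, the binomial expansion collapses to
\begin{IEEEeqnarray*}{c}
\omega^n = n \, e^{nr} \, \edv r \wedge \alpha \wedge (\edv \alpha)^{n-1} + e^{nr} (\edv \alpha)^n.
\end{IEEEeqnarray*}
The second summand vanishes: $(\edv \alpha)^n$ is a $2n$-form pulled back from the $(2n-1)$-dimensional manifold $M$, so it is automatically zero. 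Hence
\begin{IEEEeqnarray*}{c}
\omega^n = n \, e^{nr} \, \edv r \wedge \alpha \wedge (\edv \alpha)^{n-1}.
\end{IEEEeqnarray*}

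Finally, by the contact condition on $(M, \alpha)$ and the equivalence \ref{lem:contact manifold 1} $\Longleftrightarrow$ \ref{lem:contact manifold 2} in \cref{lem:contact manifold}, the form $\alpha \wedge (\edv \alpha)^{n-1}$ is a volume form on $M$. Combined with $\edv r$ this produces a volume form on $\R \times M$, and multiplication by the strictly positive function $n \, e^{nr}$ preserves this property. Thus $\omega^n$ is a volume form, so $\omega$ is nondegenerate, completing the proof.

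There is no real obstacle here; the only point worth care is the bookkeeping in the binomial expansion and the observation that $(\edv \alpha)^n = 0$ for dimensional reasons on $M$.
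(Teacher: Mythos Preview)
Your proof is correct and follows essentially the same approach as the paper: expand $\edv(e^r\alpha) = e^r\,\edv r \wedge \alpha + e^r\,\edv\alpha$, take the $n$-th power via the binomial theorem, observe that only the $k=1$ term survives because $(\edv r \wedge \alpha)^2 = 0$ and $(\edv\alpha)^n = 0$ for dimensional reasons, and conclude using \cref{lem:contact manifold} that the result is a volume form. Your bookkeeping is in fact slightly more careful than the paper's, which drops the binomial coefficient $n$ in the final expression (harmless, since $n \neq 0$).
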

\begin{proof}
    The form $\edv (e^r \alpha)$ is exact, so it is closed. We show that $\edv (e^r \alpha)$ is nondegenerate.
    \begin{IEEEeqnarray*}{rCls+x*}
        \IEEEeqnarraymulticol{3}{l}{( \edv (e^r \alpha) )^n}\\ \quad
        & =    & ( e^r \edv r \wedge \alpha + e^r \edv \alpha )^n                                        & \quad [\text{by the Leibniz rule}]                                          \\
        & =    & e^{nr} \sum_{k=0}^{n} \binom{n}{k} ( \edv r \wedge \alpha)^k \wedge (\edv \alpha)^{n-k} & \quad [\text{by the binomial theorem}]                                      \\
        & =    & e^{n r} \edv r \wedge \alpha \wedge (\edv \alpha)^{n-1}                                 & \quad [\text{since $\alpha^2 = 0$ and $(\edv \alpha)^n = 0$}]               \\
        & \neq & 0                                                                                       & \quad [\text{since $\alpha \wedge (\edv \alpha)^{n-1}$ is a volume form on $M$}]. & \qedhere
    \end{IEEEeqnarray*}
\end{proof}

\begin{lemma}
    \label{lem:symplectization lvf}
    The Liouville vector field of $(\R \times M, e^r \alpha)$ is $Z = \partial_r$.
\end{lemma}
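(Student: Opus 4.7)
By \cref{def:liouville vf}, the Liouville vector field $Z$ is the unique vector field satisfying $\iota_Z \omega = \lambda$, where $\lambda = e^r \alpha$ and $\omega = \edv(e^r \alpha)$. Uniqueness is automatic because $\omega$ is nondegenerate (this was verified in \cref{lem:symplectization form}), so the plan is simply to verify that $\partial_r$ satisfies the defining equation.

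First I would expand the symplectic form using the Leibniz rule:
\begin{IEEEeqnarray*}{c+x*}
    \omega = \edv(e^r \alpha) = e^r \edv r \wedge \alpha + e^r \edv \alpha.
\end{IEEEeqnarray*}
Next I would compute $\iota_{\partial_r}$ on each summand. Since both $\alpha$ and $\edv \alpha$ are pulled back from $M$ along the projection $\R \times M \to M$, and $\partial_r$ is tangent to the $\R$-factor, we have $\iota_{\partial_r} \alpha = 0$ and $\iota_{\partial_r} \edv \alpha = 0$. Using $\edv r (\partial_r) = 1$ and the derivation property of interior multiplication, I obtain
\begin{IEEEeqnarray*}{rCl}
    \iota_{\partial_r}(e^r \edv r \wedge \alpha) & = & e^r \bigl( \edv r(\partial_r) \, \alpha - \edv r \wedge \iota_{\partial_r} \alpha \bigr) \ = \ e^r \alpha, \\
    \iota_{\partial_r}(e^r \edv \alpha)          & = & 0.
\end{IEEEeqnarray*}
Adding these gives $\iota_{\partial_r} \omega = e^r \alpha = \lambda$, so $\partial_r$ satisfies the defining equation of the Liouville vector field, and by uniqueness $Z = \partial_r$.

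There is no real obstacle here; the calculation is a one-line check once one recognizes that $\alpha$ and $\edv \alpha$ have no component in the $\edv r$ direction. The only thing to be careful about is the sign convention in $\iota_{\partial_r}(\edv r \wedge \alpha)$, which follows from treating $\iota_{\partial_r}$ as an antiderivation of degree $-1$.
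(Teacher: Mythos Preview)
Your proof is correct and follows essentially the same route as the paper: expand $\edv(e^r\alpha)$ via the Leibniz rule, apply $\iota_{\partial_r}$ using that $\alpha$ and $\edv\alpha$ are pulled back from $M$, and conclude $\iota_{\partial_r}\omega = e^r\alpha$. The paper's write-up is slightly more compressed but the argument is identical.
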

\begin{proof}
    By definition of Liouville vector field, we need to show that $\iota_{\partial_r} \edv (e^r \alpha) = e^r \alpha$.
    \begin{IEEEeqnarray*}{rCls+x*}
        \iota_{\partial_r} \edv (e^r \alpha)
        & = & \iota_{\partial_r} (e^r \edv r \wedge \alpha + e^r \edv \alpha)                               & \quad [\text{by the Leibniz rule}]              \\
        & = & e^r (\edv r (\partial_r) \alpha - \alpha(\partial_r) \edv r + \iota_{\partial_r} \edv \alpha) & \quad [\text{since $\iota_Z$ is a derivation}]  \\
        & = & e^r \alpha                                                                                    & \quad [\text{since $\alpha$ is a form on $M$}].   & \qedhere
    \end{IEEEeqnarray*}
\end{proof}

\begin{example}
    Let $(L,g)$ be a Riemannian manifold. Recall that $(T^*L,\lambda)$ is an exact symplectic manifold and that $(S^*L, \alpha)$ is a hypersurface of contact type. Consider the symplectization of $S^*L$, which is $(\R \times S^*L, e^r \alpha)$. Then, the map $\R \times S^*L \longrightarrow T^*L \setminus L$ given by $(r,u) \longmapsto e^r u$ is a Liouville diffeomorphism.
\end{example}

Defining $R_{(r,x)} = R_x$ we can view the Reeb vector field of $M$ as a vector field in $\R \times M$. Analogously, we define a distribution $\xi$ on $\R \times M$ by $\xi_{(r,x)} = \xi_x$. Then, $T(\R \times M) = \p{<}{}{Z} \directsum \p{<}{}{R} \directsum \xi$. Let $H \colon \R \times M \longrightarrow \R$ be a function which only depends on $\R$, (i.e. $H(r,x) = H(r)$). Define $h \coloneqq H \circ \exp^{-1} \colon \R_{> 0} \longrightarrow \R$ and $T(r) \coloneqq H'(r) / e^r = h'(e^r)$.

\begin{lemma}
    \label{lem:reeb equals hamiltonian on symplectization}
    The Hamiltonian vector field of $H$ satisfies $\alpha(X_H) = T$ and $X_H = T R$.
\end{lemma}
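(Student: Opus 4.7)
The plan is to use the decomposition $T(\R \times M) = \langle Z \rangle \oplus \langle R \rangle \oplus \xi$ from \cref{lem:decomposition coming from contact hypersurface}, applied to the symplectization (with $Z = \partial_r$ by \cref{lem:symplectization lvf}), and to determine the three components of $X_H$ one at a time by testing the defining equation $\edv H = - \iota_{X_H} \omega$ against suitable vectors. Write
\begin{IEEEeqnarray*}{c+x*}
    X_H = a \, \partial_r + b \, R + V, \qquad a, b \in C^{\infty}(\R \times M), \quad V \in \Gamma(\xi),
\end{IEEEeqnarray*}
so that $\alpha(X_H) = b$ (since $\alpha(\partial_r) = 0$, $\alpha(R) = 1$ and $\alpha|_\xi = 0$). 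The second identity $X_H = T R$ then follows from $a = 0$, $V = 0$ and $b = T$.

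First I would compute the symplectic form and the interior product. Since $\omega = \edv(e^r \alpha) = e^r \edv r \wedge \alpha + e^r \edv \alpha$, and since $\iota_{\partial_r} \edv \alpha = 0$ (the form $\edv \alpha$ has no $\edv r$ factor) and $\iota_R \edv \alpha = 0$ by definition of the Reeb vector field, I get
\begin{IEEEeqnarray*}{c+x*}
    \iota_{X_H} \omega = e^r \bigl( a \, \alpha - b \, \edv r + \iota_V \edv \alpha \bigr).
\end{IEEEeqnarray*}
Since $H$ depends only on $r$, $\edv H = H'(r) \, \edv r$. Comparing the equation $H'(r) \, \edv r = - \iota_{X_H} \omega$ term by term with respect to the splitting $T^*(\R \times M) = \langle \edv r \rangle \oplus \langle \alpha \rangle \oplus \xi^*$: the $\edv r$ component gives $H'(r) = e^r b$, i.e.\ $b = H'(r)/e^r = T(r)$; the $\alpha$ component gives $a = 0$.

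Finally, to conclude $V = 0$, the remaining condition $\iota_V \edv \alpha = 0$ (as a $1$-form on $\R \times M$) must be checked on every tangent vector. Contracting with $\partial_r$ and with $R$ both give zero for the reasons already noted, so the only nontrivial case is contracting with a vector $W \in \xi$, which yields $\edv \alpha(V, W) = 0$ for all $W \in \xi$; by the contact nondegeneracy of $\edv \alpha|_\xi$ this forces $V = 0$. Hence $X_H = T R$, and $\alpha(X_H) = T \alpha(R) = T$. The main (minor) obstacle is simply keeping the bookkeeping straight and invoking the right nondegeneracy, but no analytic input beyond \cref{lem:contact manifold,lem:decomposition coming from contact hypersurface,lem:symplectization lvf} is needed.
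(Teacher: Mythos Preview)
Your proof is correct. The core computation---expanding $\iota_{X_H}\omega$ with $\omega = e^r\,\edv r\wedge\alpha + e^r\,\edv\alpha$ and comparing against $\edv H = H'(r)\,\edv r$---is exactly the one the paper carries out. The only organizational difference is that the paper first invokes \cref{lem:HR flow} (which rests on conservation of energy) to conclude \emph{a priori} that $X_H$ is collinear with $R$; this kills the $\edv r(X_H)\,\alpha$ and $\iota_{X_H}\edv\alpha$ terms before the computation, so only the coefficient $\alpha(X_H)$ remains to be read off. You instead read off all three components $a$, $b$, $V$ directly from the single equation, appealing to the nondegeneracy of $\edv\alpha|_\xi$ for $V=0$. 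Your route is slightly more self-contained (it does not need \cref{lem:HR flow} or conservation of energy), while the paper's route factors the argument through a lemma that applies to general contact-type hypersurfaces; both amount to the same linear algebra.
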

\begin{proof}
    By \cref{lem:HR flow}, $X_H$ and $R$ are collinear. By definition of Reeb vector field, this implies that $X_H = \alpha(X_H) R$. It remains to show that $\alpha(X_H) = T$. For this, we compute%
    \begin{IEEEeqnarray*}{rCls+x*}
        H' \edv r
        & = & \edv H                                                                     & \quad [\text{by definition of exterior derivative}] \\
        & = & - \iota _{X_H} \edv (e^r \alpha)                                           & \quad [\text{by definition of Hamiltonian v.f.}]    \\
        & = & - \iota _{X_H} (e^r \edv r \wedge \alpha + e^r \edv \alpha)                & \quad [\text{Leibniz rule for exterior derivative}] \\
        & = & - e^r (\edv r(X_H) \alpha - \alpha(X_H) \edv r + \iota _{X_H} \edv \alpha) & \quad [\text{interior product is a derivation}].
    \end{IEEEeqnarray*}
    Therefore, $H' \edv r = e^r \alpha(X_H) \edv r$, which implies that $\alpha(X_H) = H'/\exp = T$.
\end{proof}

\begin{corollary}
    \phantomsection\label{cor:hamiltonian orbits are reeb orbits}
    Suppose that $\gamma = (r,\rho) \colon S^1 \longrightarrow \R \times M$ is a $1$-periodic orbit of $X_H$, i.e. $\dot{\gamma}(t) = X_H(\gamma(t))$. Then: 
    \begin{enumerate}
        \item $r \colon S^1 \longrightarrow \R$ is constant;
        \item $\rho \colon S^1 \longrightarrow M$ is a $T(r)$-periodic orbit of $R$, i.e. $\dot{\rho}(t) = T(r) R(\rho(t))$.
    \end{enumerate}
\end{corollary}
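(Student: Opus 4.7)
The plan is to deduce both conclusions directly from Lemma \ref{lem:reeb equals hamiltonian on symplectization}, which asserts $X_H = T \cdot R$, where $R$ is viewed as a vector field on $\R \times M$ via $R_{(r,x)} = R_x \in T_x M \subset T_{(r,x)}(\R \times M)$. The essential observation is that this lifted $R$ has no $\partial_r$-component by construction, so neither does $X_H$. Writing the ODE $\dot{\gamma}(t) = X_H(\gamma(t))$ in the canonical splitting $T(\R \times M) = T\R \oplus TM$ will then simultaneously yield the two assertions.

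Concretely, I would first split $\dot{\gamma}(t) = (\dot{r}(t), \dot{\rho}(t))$ according to $T_{\gamma(t)}(\R \times M) = T_{r(t)} \R \oplus T_{\rho(t)} M$. Since $X_H(\gamma(t)) = T(r(t)) \, R(\rho(t))$ lies entirely in the $TM$ summand, comparing components gives $\dot{r}(t) = 0$ for all $t \in S^1$, proving claim (1). Call the constant value $r_0$. Substituting $r(t) \equiv r_0$ into the $TM$-component of the same equation yields $\dot{\rho}(t) = T(r_0) \, R(\rho(t))$, which is exactly claim (2).

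I do not foresee a genuine obstacle here — the corollary is essentially a direct reading of Lemma \ref{lem:reeb equals hamiltonian on symplectization} once one fixes the identification used to regard $R$ as a vector field on $\R \times M$. The only mildly delicate point worth making explicit in the write-up is that the splitting $T(\R \times M) = T\R \oplus TM$ is canonical for a product manifold, so the decomposition of $\dot{\gamma}$ and of $X_H$ into components is unambiguous; once this is stated, the two claims fall out in one line each.
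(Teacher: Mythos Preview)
Your proposal is correct and follows essentially the same approach as the paper: both invoke \cref{lem:reeb equals hamiltonian on symplectization} to conclude that $X_H = T R$ has no $\partial_r$-component (the paper phrases this as ``$X_H$ is tangent to $\{r\} \times M$''), which forces $r$ to be constant, and then read off the $TM$-component to obtain the equation for $\rho$.
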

\begin{proof}
    The function $r \colon S^1 \longrightarrow \R$ is constant because $X_H$ is tangent to $\{r\} \times M$. Since $\dot{\gamma}(t) = X_H(\gamma(t))$ and by \cref{lem:reeb equals hamiltonian on symplectization}, we conclude that $\dot{\rho}(t) = T(r) R(\rho(t))$.
\end{proof}

\begin{lemma}
    \label{lem:action in symplectization}
    Let $\gamma = (r,\rho) \colon S^1 \longrightarrow \R \times M$ be a $1$-periodic orbit of $X_H$ and consider its action, given by
    \begin{IEEEeqnarray*}{c+x*}
        \mathcal{A}_H(\gamma) = \int_{S^1}^{} \gamma^* (e^r \alpha) - \int_{S^1}^{} H(\gamma(t)) \, \edv t.
    \end{IEEEeqnarray*}
    Then, $\mathcal{A}_H(\gamma) \eqqcolon \mathcal{A}_H(r)$ only depends on $r$, and we have the following formulas for $\mathcal{A}_H$ and $\mathcal{A}'_H$ (as functions of $r$):
    \begin{IEEEeqnarray*}{rClCl}
        \mathcal{A}_H (r) & = & H' (r) - H (r) & = & e^{ r} h' (e^r) - h(e^r), \\
        \mathcal{A}'_H(r) & = & H''(r) - H'(r) & = & e^{2r} h''(e^r).
    \end{IEEEeqnarray*}
\end{lemma}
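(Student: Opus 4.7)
The plan is to substitute the information we already have about $\gamma$ into the definition of the action and simplify. First, by \cref{cor:hamiltonian orbits are reeb orbits}, the $\R$-component $r \colon S^1 \to \R$ of $\gamma$ is constant, so the second integral reduces immediately to $H(r)$, and moreover $H(\gamma(t))$ contributes nothing depending on $t$. This already shows $\mathcal{A}_H(\gamma)$ depends only on the constant value $r$, so it makes sense to write $\mathcal{A}_H(r)$.

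For the first integral, I would compute $\gamma^*(e^r\alpha)$ pointwise. Since $\dot\gamma(t) = X_H(\gamma(t))$ and $r$ is constant, we have $\gamma^*(e^r\alpha)_t = e^r \alpha(X_H(\gamma(t)))\,\edv t$. By \cref{lem:reeb equals hamiltonian on symplectization}, $\alpha(X_H) = T(r) = H'(r)/e^r$, so $\gamma^*(e^r\alpha) = H'(r)\,\edv t$. Integrating over $S^1$ (of length $1$) gives $H'(r)$, so
\begin{IEEEeqnarray*}{c+x*}
    \mathcal{A}_H(r) = H'(r) - H(r).
\end{IEEEeqnarray*}

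To convert this to the formula in terms of $h$, I would use $H(r) = h(e^r)$ and the chain rule to get $H'(r) = e^r h'(e^r)$, yielding $\mathcal{A}_H(r) = e^r h'(e^r) - h(e^r)$. Finally, differentiating with respect to $r$ gives $\mathcal{A}'_H(r) = H''(r) - H'(r)$; applying the chain rule to $H'(r) = e^r h'(e^r)$ produces $H''(r) = e^r h'(e^r) + e^{2r} h''(e^r)$, and the $e^r h'(e^r)$ terms cancel, leaving $\mathcal{A}'_H(r) = e^{2r} h''(e^r)$. There is no genuine obstacle here; the only thing to be careful about is tracking the identifications $T(r) = H'(r)/e^r = h'(e^r)$ and consistently using that $r$ is constant along $\gamma$.
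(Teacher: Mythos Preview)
Your proposal is correct and follows essentially the same approach as the paper: reduce the second integral to $H(r)$ using that $r$ is constant, evaluate the first integral via $\alpha(X_H) = T(r) = H'(r)/e^r$, and derive the remaining formulas by the chain rule. The paper's write-up is nearly identical, differing only cosmetically in that it first rewrites $\gamma^*(e^r\alpha) = e^r \rho^*\alpha$ and then identifies $\int_{S^1}\rho^*\alpha$ with the period $T(\rho)$ via \cref{cor:hamiltonian orbits are reeb orbits}, rather than invoking \cref{lem:reeb equals hamiltonian on symplectization} directly.
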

\begin{proof}
    We show only that $\mathcal{A}_H(\gamma) = H'(r) - H(r)$, since the other formulas follow from this one by elementary calculus.
    \begin{IEEEeqnarray*}{rCls+x*}
        \mathcal{A}_H(\gamma)
        & = & \int_{S^1}^{} \gamma^* ( e^r \alpha) - \int_{S^1}^{} H(\gamma(t)) \, \edv t & \quad [\text{by definition of action}]                          \\
        & = & \int_{S^1}^{} e^r \rho^* \alpha - \int_{0}^{1} H(r, \rho(t)) \, \edv t      & \quad [\text{since $\gamma(t) = (r, \rho(t))$}]                 \\
        & = & e^r \int_{S^1}^{} \rho^* \alpha - \int_{0}^{1} H(r) \, \edv t               & \quad [\text{since $H = H(r)$}]                                 \\
        & = & e^r T(\rho) - H(r)                                                          & \quad [\text{by \cref{cor:hamiltonian orbits are reeb orbits}}] \\
        & = & H'(r) - H(r)                                                                & \quad [\text{by definition of $T$}].                              & \qedhere
    \end{IEEEeqnarray*}
\end{proof}

\begin{definition}
    \label{def:J cylindrical}
    Let $J$ be an almost complex structure on $(\R \times M, e^r \alpha)$. We say that $J$ is \textbf{cylindrical} if $J(\partial_r) = R$, if $J(\xi) \subset \xi$, and if the almost complex structure $J \colon \xi \longrightarrow \xi$ is compatible with $\edv \alpha$ and independent of $r$. We denote by $\mathcal{J}(M)$ the set of such $J$.
\end{definition}

\begin{lemma}
    \label{lem:J cylindrical forms}
    If $J$ is cylindrical then $\alpha \circ J = \edv r$.
\end{lemma}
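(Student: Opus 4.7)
The plan is to verify the identity $\alpha \circ J = \edv r$ pointwise, by checking that the two $1$-forms agree on each summand of the decomposition
\begin{IEEEeqnarray*}{c+x*}
    T(\R \times M) = \langle \partial_r \rangle \directsum \langle R \rangle \directsum \xi
\end{IEEEeqnarray*}
coming from \cref{lem:decomposition coming from contact hypersurface} (with $Z = \partial_r$, by \cref{lem:symplectization lvf}). Since both $\alpha \circ J$ and $\edv r$ are linear on fibers, agreement on this frame suffices.

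First I would check the $\partial_r$ direction: by definition of cylindrical almost complex structure, $J(\partial_r) = R$, so $(\alpha \circ J)(\partial_r) = \alpha(R) = 1$ by definition of the Reeb vector field, while $\edv r(\partial_r) = 1$. Next, for the Reeb direction, I use $J^2 = -\id$ together with $J(\partial_r) = R$ to deduce $J(R) = -\partial_r$; hence $(\alpha \circ J)(R) = -\alpha(\partial_r) = 0$, since $\alpha$ is pulled back from $M$ and so vanishes on $\partial_r$, while $\edv r(R) = 0$ since $R$ has no $\partial_r$ component. Finally, for $v \in \xi$, the condition $J(\xi) \subset \xi$ gives $Jv \in \xi = \ker \alpha$, so $(\alpha \circ J)(v) = 0$, and $\edv r(v) = 0$ because $\xi \subset TM$.

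There is no real obstacle here: every step is a direct unwinding of the definitions of cylindrical $J$, of $\alpha$ extended trivially from $M$ to $\R \times M$, and of the Reeb vector field. The only subtlety worth noting is the computation $JR = -\partial_r$, which uses that $J$ is an honest almost complex structure (so $J^2 = -\id$) together with the normalization $J \partial_r = R$; this is what forces the identity on the Reeb line, and it is the place where the compatibility of all three defining conditions of a cylindrical $J$ fits together.
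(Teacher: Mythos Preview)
Your proof is correct and follows exactly the same approach as the paper: verify the identity on each of the three summands $\langle \partial_r \rangle$, $\langle R \rangle$, $\xi$, using $J(\partial_r) = R$, $J(R) = -\partial_r$ (from $J^2 = -\id$), and $J(\xi) \subset \xi = \ker\alpha$ respectively. The paper's argument is just a more compressed version of yours.
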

\begin{proof}
    It suffices to show that $\alpha \circ J = \edv r$ on $\partial_r$, $R$ and $V \in \xi$.
    \begin{IEEEeqnarray*}{rCrClCl+x*}
        \alpha \circ J (\partial_r) & = & \alpha (R)            & = & 1 & = & \edv r (\partial_r) \\
        \alpha \circ J (R)          & = & - \alpha (\partial_r) & = & 0 & = & \edv r (R) \\
        \alpha \circ J (V)          & = & \alpha(J(V))          & = & 0 & = & \edv r (V).           & \qedhere
    \end{IEEEeqnarray*}
\end{proof}

\section{Completion of a Liouville domain}
\label{sec:completion of liouville domain}

Let $(X,\lambda)$ be a Liouville domain and $\omega = \edv \lambda$. Our goal in this section is to define the completion of $(X,\lambda)$, which is an exact symplectic manifold denoted by $(\hat{X}, \hat{\lambda})$. Recall that $(\del X, \lambda|_{\del X})$ is contact. Consider the symplectization $(\R \times \del X, e^r \lambda|_{\del X})$ of $(\del X, \lambda|_{\del X})$. Let $Z$ be the Liouville vector field of $(X, \lambda)$, which is given by $\lambda = \iota_Z \omega$. Denote the flow of $Z$ by
\begin{IEEEeqnarray*}{rrCl}
    \Phi_Z \colon & \R_{\leq 0} \times \del X & \longrightarrow & X \\
                  & (t,x)                     & \longmapsto     & \phi^t_Z(x).
\end{IEEEeqnarray*}
Since the vector field $Z$ is outward pointing at $\partial X$, the map $\Phi_Z$ is well-defined. Also, since $\Phi_Z$ is given by flowing along the vector field $Z$, it is an embedding.

\begin{lemma}
    \label{lem:flow of liouville}
    The map $\Phi_Z$ is a Liouville embedding, i.e. $\Phi_Z^* \lambda = e^r \lambda|_{\del X}$.
\end{lemma}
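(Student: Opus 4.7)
The plan is to combine \cref{lem:liouville vf} (which states that $\ldv{Z} \lambda = \lambda$) with a direct computation of the differential of $\Phi_Z$. First I would establish the key ODE-type identity $(\phi^t_Z)^* \lambda = e^t \lambda$ for every $t \leq 0$ for which the flow is defined on $\partial X$. This follows from the standard calculation
\begin{IEEEeqnarray*}{rCl}
    \odv{}{t} (\phi^t_Z)^* \lambda = (\phi^t_Z)^* \ldv{Z} \lambda = (\phi^t_Z)^* \lambda,
\end{IEEEeqnarray*}
so that $f(t) \coloneqq (\phi^t_Z)^* \lambda$ satisfies $f'(t) = f(t)$ with $f(0) = \lambda$, giving $f(t) = e^t \lambda$. (Equivalently, one could invoke \cref{lem:mosers trick}.)

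Next, I would compute $\Phi_Z^* \lambda$ by decomposing $T_{(t,x)}(\R_{\leq 0} \times \partial X) = \R \partial_r \directsum T_x \partial X$. For $s \in \R$ and $v \in T_x \partial X$, the differential of $\Phi_Z$ is
\begin{IEEEeqnarray*}{c}
    \dv \Phi_Z(t,x)(s \partial_r + v) = s \, Z|_{\Phi_Z(t,x)} + \dv \phi^t_Z(x)(v),
\end{IEEEeqnarray*}
since by definition $\partial_t \Phi_Z = Z \circ \Phi_Z$. Therefore
\begin{IEEEeqnarray*}{rCl}
    (\Phi_Z^* \lambda)(t,x)(s \partial_r + v) = s \, \lambda(Z)|_{\Phi_Z(t,x)} + \bigl((\phi^t_Z)^* \lambda\bigr)|_x(v).
\end{IEEEeqnarray*}

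The first term vanishes because $\lambda(Z) = \iota_Z \iota_Z \omega = 0$ by \cref{def:liouville vf} and the antisymmetry of $\omega$. The second term equals $e^t \lambda|_{\partial X}(x)(v)$ by the ODE step above. On the other hand, $(e^r \lambda|_{\partial X})(t,x)(s \partial_r + v) = e^t \lambda|_{\partial X}(x)(v)$, since $\lambda|_{\partial X}$ has no $\edv r$ component. Comparing the two expressions concludes the proof.

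I do not anticipate any real obstacle here: the only substantive ingredients are the Cartan-type identity $\ldv{Z} \lambda = \lambda$ (already proved) and the observation that $\iota_Z \lambda = 0$, which is immediate from $\lambda = \iota_Z \omega$. The mild subtlety is just being careful that $\Phi_Z$ is defined on all of $\R_{\leq 0} \times \partial X$; this is guaranteed because $Z$ points outward along $\partial X$ (so flowing backwards pushes points into the interior of the compact manifold $X$), as noted immediately before the statement.
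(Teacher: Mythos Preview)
Your proposal is correct and follows essentially the same approach as the paper: decompose a tangent vector at $(t,x)$ into its $\R$- and $T_x\partial X$-components, kill the $\R$-component using $\lambda(Z)=\iota_Z\iota_Z\omega=0$, and handle the $T_x\partial X$-component via $(\phi^t_Z)^*\lambda = e^t\lambda$. The only cosmetic difference is that the paper cites \cref{lem:mosers trick} together with \cref{lem:liouville vf} for the identity $(\phi^t_Z)^*\lambda = e^t\lambda$, whereas you derive it directly from the ODE $f'(t)=f(t)$ (which is precisely what Moser's trick unwinds to here, as you note).
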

\begin{proof}
    If $(t,x) \in \R_{\leq 0} \times \partial X$ and $(u,v) \in T_{(t,x)} (\R_{\leq 0} \times \partial X) = \R \oplus T_x \partial X$, then
    \begin{IEEEeqnarray*}{rCls+x*}
        \IEEEeqnarraymulticol{3}{l}{(\Phi_Z^* \lambda)(u,v)} \\
        \quad & = & \lambda(\dv \Phi_Z(t,x)(u,v))                                                & \quad [\text{by definition of pullback}] \\
              & = & \lambda(\dv \Phi_Z(t,x)(0,v)) + \lambda(\dv \Phi_Z(t,x)(u,0))                & \quad [\text{by linearity of the derivative}] \\
              & = & \lambda(\dv \phi^t_Z (x)(v))  + u \, \lambda(Z_{\phi^t_Z(x)})                & \quad [\text{by definition of $\Phi_Z$}]\\
              & = & \lambda(\dv \phi^t_Z (x)(v))  + u \, \omega(Z_{\phi^t_Z(x)},Z_{\phi^t_Z(x)}) & \quad [\text{by definition of $Z$}] \\
              & = & \lambda(\dv \phi^t_Z (x)(v))                                                 & \quad [\text{since $\omega$ is antisymmetric}]\\
              & = & ((\phi^t_Z)^* \lambda)(v)                                                    & \quad [\text{by definition of pullback}] \\
              & = & e^t \lambda (v)                                                              & \quad [\text{by \cref{lem:mosers trick,lem:liouville vf}}]. & \qedhere
    \end{IEEEeqnarray*}
\end{proof}

\begin{definition}
    \label{def:completion of a Liouville domain}
    We define an exact symplectic manifold $(\hat{X},\hat{\lambda})$ called the \textbf{completion} of $(X,\lambda)$, as follows. As a smooth manifold, $\hat{X}$ is the gluing of $X$ and $\R \times \del X$ along the map $\Phi _{Z} \colon \R_{\leq 0} \times \del X \longrightarrow \Phi_Z(\R_{\leq 0} \times \del X)$. This gluing comes with embeddings 
    \begin{IEEEeqnarray*}{rCls+x*}
        \iota_X \colon X                                 & \longrightarrow & \hat{X}, \\
        \iota_{\R \times \del X} \colon \R \times \del X & \longrightarrow & \hat{X}.
    \end{IEEEeqnarray*}
    The form $\hat{\lambda}$ is the unique $1$-form on $\hat{X}$ such that 
    \begin{IEEEeqnarray*}{rCls+x*}
        \iota_X^* \hat{\lambda}                   & = & \lambda, \\
        \iota _{\R \times \del X}^* \hat{\lambda} & = & e^r \lambda|_{\del X}.
    \end{IEEEeqnarray*}
    The symplectic form of $\hat{X}$ is given by $\hat{\omega} \coloneqq \edv \hat{\lambda}$, which satisfies
    \begin{IEEEeqnarray*}{rCls+x*}
        \iota_X^* \hat{\omega}                   & = & \omega, \\
        \iota _{\R \times \del X}^* \hat{\omega} & = & \edv (e^r \lambda|_{\del X}).
    \end{IEEEeqnarray*}
    The Liouville vector field of $\hat{X}$ is the unique vector field $\hat{Z}$ such that $\iota_{\hat{Z}} \hat{\omega} = \hat{\lambda}$, which satisfies
    \begin{IEEEeqnarray*}{rRls+x*}
        Z          & \text{ is $\iota_X$-related to }                      & \hat{Z}, \\
        \partial_r & \text{ is $\iota_{\R \times \partial X}$-related to } & \hat{Z}.
    \end{IEEEeqnarray*}
\end{definition}

\begin{example}
    Let $(L,g)$ be a Riemannian manifold. Recall that $T^*L$ is an exact symplectic manifold, $S^*L$ is a hypersurface of contact type and that $D^*L$ is a Liouville domain. Also recall that there is a Liouville embedding $\varphi \colon \R \times S^* L \longrightarrow T^*L$ given by $\varphi(r,u) = e^r u$. Then, we can define a Liouville diffeomorphism $\hat{\varphi} \colon \widehat{D^*L} \longrightarrow T^*L$ as the unique map such that the following diagram commutes:
    \begin{IEEEeqnarray*}{c+x*}
        \begin{tikzcd}
            \widehat{D^* L} \ar[dr, hook, two heads, "\hat{\varphi}"] & \R \times S^* L \ar[l, hook'] \ar[d, hook, "\varphi"] \\
            D^* L \ar[u, hook] \ar[r, hook] & T^* L
        \end{tikzcd}
    \end{IEEEeqnarray*}
\end{example}

\begin{lemma}
    \label{lem:properties of completion}
    The diagram
    \begin{IEEEeqnarray*}{c}
        \begin{tikzcd}[ampersand replacement = \&]
            \R_{\leq 0} \times \del X \ar[d, swap, hook, "\Phi_Z"] \ar[r, hookrightarrow] \& \R \times \del X \ar[d, hookrightarrow, "\iota _{\R \times \del X}"] \ar[r, hookrightarrow] \& \R \times \hat{X} \ar[d, two heads, "\Phi _{\hat{Z}}"] \\
            X \ar[r, swap, hookrightarrow, "\iota_X"] \& \hat{X} \ar[r, equals] \& \hat{X}
        \end{tikzcd}
    \end{IEEEeqnarray*}
    commutes.
\end{lemma}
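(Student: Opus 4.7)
The plan is to verify commutativity of the two squares separately, essentially by unpacking definitions.

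For the \textbf{left square}, I would argue that it commutes tautologically from the construction of the completion. By definition, $\hat{X}$ is the pushout of $X$ and $\R \times \partial X$ along $\Phi_Z$: a point $\Phi_Z(t,x) \in X$ is identified with $(t,x) \in \R \times \partial X$ for every $(t,x) \in \R_{\leq 0} \times \partial X$. Under the resulting embeddings $\iota_X$ and $\iota_{\R \times \partial X}$, this identification reads $\iota_X \circ \Phi_Z = \iota_{\R \times \partial X}|_{\R_{\leq 0} \times \partial X}$, which is precisely what the left square expresses. In particular, taking $t = 0$ shows that the map $\partial X \hookrightarrow \hat{X}$ obtained by $\iota_X$ agrees with $x \mapsto \iota_{\R \times \partial X}(0, x)$, which justifies interpreting the top horizontal map $\R \times \partial X \hookrightarrow \R \times \hat{X}$ as $(t, x) \mapsto (t, \iota_{\R \times \partial X}(0, x))$.

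For the \textbf{right square}, the key fact is that $\partial_r$ on $\R \times \partial X$ is $\iota_{\R \times \partial X}$-related to $\hat{Z}$ on $\hat{X}$, which is part of the definition of the completion. A standard consequence of being related by a smooth map is that the corresponding flows are intertwined; that is, for all $(s, x) \in \R \times \partial X$ and $t \in \R$,
\begin{IEEEeqnarray*}{c}
    \iota_{\R \times \partial X}(t + s, x) = \phi^t_{\hat{Z}}\bigl(\iota_{\R \times \partial X}(s, x)\bigr).
\end{IEEEeqnarray*}
Specializing to $s = 0$ gives $\iota_{\R \times \partial X}(t, x) = \phi^t_{\hat{Z}}(\iota_{\R \times \partial X}(0, x)) = \Phi_{\hat{Z}}(t, \iota_{\R \times \partial X}(0, x))$, which is exactly the commutativity of the right square.

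There is essentially no analytic obstacle here; the only thing that merits a remark is that $\Phi_{\hat{Z}}$ must be defined on all of $\R \times \hat{X}$ for the right square to make sense. This follows because $\hat{Z}$ is the Liouville vector field and, in the cylindrical coordinates on $\R \times \partial X$, agrees with $\partial_r$, which is complete; combined with the fact that flowing $\hat{Z}$ backward from any point of $X$ eventually enters the cylindrical end and that flowing forward from any point of $\hat{X}$ stays in the cylindrical end once it gets there, this gives completeness of $\hat{Z}$ on $\hat{X}$. The mildly delicate point — really just a bookkeeping matter — is making sure the horizontal map in the top right corner is being interpreted the same way in both squares, and the verification above shows that the interpretation $(t, x) \mapsto (t, \iota_{\R \times \partial X}(0, x))$ makes both squares commute simultaneously.
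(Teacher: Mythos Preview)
Your proposal is correct and follows essentially the same approach as the paper: the left square commutes by definition of the completion as a gluing, and the right square commutes because $\partial_r$ is $\iota_{\R \times \partial X}$-related to $\hat{Z}$, so their flows are intertwined. The paper's computation is slightly more compact (it chains the identities $\iota_{\R \times \partial X}(t,x) = \iota_{\R \times \partial X}(\phi^t_{\partial_r}(0,x)) = \phi^t_{\hat{Z}}(\iota_{\R \times \partial X}(0,x)) = \phi^t_{\hat{Z}}(\iota_X(x)) = \Phi_{\hat{Z}}(t,x)$ directly), while your version adds the useful clarifications about how the top-right inclusion should be read and about completeness of $\hat{Z}$, but these are the same argument.
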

\begin{proof}
    The left square commutes by definition of $\hat{X}$. To prove that the right square commutes, let $(t,x) \in \R \times \del X$. We wish to show that $\Phi_{\hat{Z}}(t,x) = \iota_{\R \times \del X}(t,x)$. 
    \begin{IEEEeqnarray*}{rCls+x*}
        \iota_{\R \times \partial X} (t, x)
        & = & \iota_{\R \times \partial X} \circ \phi^t_{\partial_r} (0, x) & \quad [\text{by definition of flow of $\partial_r$}] \\
        & = & \phi^t_{\hat{Z}} \circ \iota_{\R \times \partial X}(0, x)     & \quad [\text{since $\partial_r$ is $\iota_{\R \times \partial X}$-related to $\hat{Z}$}] \\
        & = & \phi^t_{\hat{Z}} \circ \iota_X(x)                             & \quad [\text{by definition of completion}] \\
        & = & \Phi_{\hat{Z}}(t,x)                                           & \quad [\text{by definition of $\Phi_{\hat{Z}}$}].                                          & \qedhere
    \end{IEEEeqnarray*}
\end{proof}

\begin{lemma}
    \label{lem:codim 0 liouville emb preserves lvf}
    If $(X, \lambda_X)$ and $(Y, \lambda_Y)$ are Liouville domains and $\varphi \colon X \longrightarrow Y$ is a Liouville embedding of codimension $0$ then $Z_X$ is $\varphi$-related to $Z_Y$.
\end{lemma}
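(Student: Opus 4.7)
The plan is to use the defining property of the Liouville vector field together with the nondegeneracy of the symplectic form. Since $\varphi$ is a Liouville embedding, by \cref{def:types of embeddings}\ref{def:types of embeddings 4} we have $\varphi^* \lambda_Y = \lambda_X$, and hence $\varphi^* \omega_Y = \omega_X$. Moreover, because $\varphi$ has codimension $0$, the derivative $\edv \varphi(x) \colon T_x X \longrightarrow T_{\varphi(x)} Y$ is a linear isomorphism at every point $x \in X$. So it makes sense to define $w \coloneqq \edv \varphi(x)(Z_X|_x) \in T_{\varphi(x)} Y$ and ask whether $w = Z_Y|_{\varphi(x)}$.

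Fix $x \in X$ and let $v \in T_{\varphi(x)} Y$ be arbitrary. By the isomorphism property, there is a unique $u \in T_x X$ with $\edv \varphi(x)(u) = v$. The key computation is then
\begin{IEEEeqnarray*}{rCls+x*}
    \omega_Y(w, v)
    & = & \omega_Y(\edv \varphi(x) Z_X|_x,\, \edv \varphi(x) u) & \quad [\text{by definition of $w$ and $v$}] \\
    & = & (\varphi^* \omega_Y)(Z_X|_x, u)                       & \quad [\text{by definition of pullback}] \\
    & = & \omega_X(Z_X|_x, u)                                   & \quad [\text{since $\varphi^*\omega_Y = \omega_X$}] \\
    & = & \lambda_X(u)                                          & \quad [\text{by \cref{def:liouville vf}}] \\
    & = & (\varphi^* \lambda_Y)(u)                              & \quad [\text{since $\varphi^*\lambda_Y = \lambda_X$}] \\
    & = & \lambda_Y(v)                                          & \quad [\text{by definition of pullback}].
\end{IEEEeqnarray*}
This shows $\iota_w \omega_Y|_{\varphi(x)} = \lambda_Y|_{\varphi(x)}$. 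By the defining property of $Z_Y$ together with nondegeneracy of $\omega_Y$, this forces $w = Z_Y|_{\varphi(x)}$, i.e. $\edv \varphi(x)(Z_X|_x) = Z_Y|_{\varphi(x)}$, which is precisely the assertion that $Z_X$ is $\varphi$-related to $Z_Y$.

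There is really no main obstacle: the statement is a formal consequence of the pointwise uniqueness of the Liouville vector field, with the codimension $0$ hypothesis entering only to guarantee that $\edv\varphi(x)$ is a linear isomorphism (so that the test vector $v$ above ranges over all of $T_{\varphi(x)} Y$). Without this hypothesis the computation above still holds, but only shows that $\iota_w \omega_Y$ and $\lambda_Y$ agree on the image of $\edv\varphi(x)$, which is not enough to identify $w$ with $Z_Y$.
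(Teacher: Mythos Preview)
Your proof is correct and follows essentially the same approach as the paper: both compute $\omega_Y(\edv\varphi(x)Z_X|_x,\,\cdot)$ using the pullback relations and the defining property of the Liouville vector field, then invoke nondegeneracy of $\omega_Y$ together with the codimension-$0$ hypothesis to conclude. The only cosmetic difference is that the paper organizes the computation as showing $\omega_Y(\edv\varphi(x)Z_X|_x - Z_Y|_{\varphi(x)},\,\cdot) = 0$ rather than $\iota_w\omega_Y = \lambda_Y$, which amounts to the same thing.
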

\begin{proof}
    For any $x \in X$ and $v \in T_x X$,
    \begin{IEEEeqnarray*}{rCls+x*}
        \IEEEeqnarraymulticol{3}{l}{\omega_Y (\dv \varphi(x) (Z_X|_x) - Z_Y|_{\varphi(x)}, \dv \varphi(x)(v))}\\
        \quad & = & (\iota_{Z_X} \varphi^* \omega_Y - \varphi^* \iota_{Z_Y} \omega_Y) (v) & \quad [\text{by the definitions of $\iota_{Z_X}$, $\iota_{Z_Y}$, and $\varphi^*$}] \\
        \quad & = & (\iota_{Z_X} \omega_X - \varphi^* \iota_{Z_Y} \omega_Y) (v)           & \quad [\text{since $\varphi$ is a Liouville embedding}] \\
        \quad & = & (\lambda_X - \varphi^* \lambda_X) (v)                                 & \quad [\text{by definition of Liouville vector field}] \\
        \quad & = & 0                                                                     & \quad [\text{since $\varphi$ is a Liouville embedding}].
    \end{IEEEeqnarray*}
    Since $\omega_Y$ is nondegenerate and $\varphi$ is a $0$-codimensional embedding, the result follows.
\end{proof}

We will now explain how to view the construction of taking the completion of a Liouville domain as a functor. Let $(X,\lambda_X)$, $(Y,\lambda_Y)$ be Liouville domains and $\varphi \colon X \longrightarrow Y$ be a Liouville embedding such that $Z_X$ is $\varphi$-related to $Z_Y$ (by \cref{lem:codim 0 liouville emb preserves lvf}, this is true whenever $\varphi$ is $0$-codimensional, although here we assume only that the Liouville vector fields are related). We wish to define an embedding $\varphi \colon \hat{X} \longrightarrow \hat{Y}$, using the following diagram as a guide (we will show that this diagram commutes in \cref{lem:diagram for map on completions commutes}):
\begin{IEEEeqnarray}{c}
    \plabel{eq:diagram for induced map on completions}
    \begin{tikzcd}[ampersand replacement = \&, row sep=scriptsize, column sep=0.2em]
        \& \R_{\leq 0} \times \del X \ar[dd, near end, swap, "\Phi_{Z_X}"] \ar[rr, "{\iota _{\R_{\leq 0}} \times \id_{\del X}}"] \& \& \R \times \del X \ar[dd, near start, swap, "{\iota _{\R \times \del X}}"] \ar[rr, "{\id \times \iota _{ \del X }}"] \& \& \R \times X \ar[ld, swap, "\id \times {\varphi}"] \ar[dd, near end] \ar[rr, "{\id \times \iota_X}"] \& \& \R \times \hat{X} \ar[ld,swap, "\id \times \hat{\varphi}"]\ar[dd, "\Phi _{\hat{Z}_X}"] \\
        \R_{\leq 0} \times \del Y \ar[dd, swap, "\Phi_{Z_Y}"] \ar[rr, crossing over] \& \& \R \times \del Y \ar[rr, crossing over] \& \& \R \times Y \ar[rr, crossing over, near end, "\hphantom{-}\id \times \iota_Y"] \& \& \R \times \hat{Y} \& \\
        \& X \ar[ld, "{\varphi}"] \ar[rr, near end, "\iota_X"] \& \& \hat{X} \ar[ld, "\hat{\varphi}"] \ar[rr, equals] \& \& \hat{X} \ar[ld, "\hat{\varphi}"]\ar[rr, equals] \& \& \hat{X} \ar[ld, "\hat{\varphi}"]\\
        Y \ar[rr, swap, "\iota_Y"] \& \& \hat{Y} \ar[uu, crossing over, near start, leftarrow, "{\iota _{\R \times \del Y}}"]\ar[rr, equals] \& \& \hat{Y} \ar[uu, near start, crossing over, leftarrow]\ar[rr, equals] \& \& \hat{Y} \ar[uu, near start, crossing over, leftarrow, "\Phi _{\hat{Z}_Y}"]\&
    \end{tikzcd}
    \IEEEeqnarraynumspace
\end{IEEEeqnarray}

\begin{definition}
    \label{def:embedding on completions coming from Liouville embedding}
    We define an embedding $\hat{\varphi} \colon \hat{X} \longrightarrow \hat{Y}$ by
    \begin{IEEEeqnarray*}{rCls+x*}
        \hat{\varphi} \circ \iota_X & \coloneqq & \iota_Y \circ \varphi, \\
        \hat{\varphi} \circ \iota_{\R \times \del X} & \coloneqq & \Phi_{\hat{Z}_Y} \circ (\id_ \R \times (\iota_Y \circ \varphi \circ \iota_{\partial X})).
    \end{IEEEeqnarray*}
\end{definition}

For $\hat{\varphi}$ to be well-defined, we need to check that the definitions of $\varphi$ on each region agree on the overlap.

\begin{lemma}
    \label{def:map on completions is well defined}
    The map $\hat{\varphi}$ is well-defined, i.e.
    \begin{IEEEeqnarray*}{c}
        \iota_Y \circ \varphi \circ \Phi _{Z_X} = \Phi_{\hat{Z}_Y} \circ (\id_ \R \times (\iota_Y \circ \varphi \circ \iota_{\partial X})) \circ (\iota _{\R_{\leq 0}} \times \id _{\del X}).
    \end{IEEEeqnarray*}
\end{lemma}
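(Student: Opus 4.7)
The plan is a straightforward diagram chase. Pick an arbitrary point $(t,x) \in \R_{\leq 0} \times \partial X$. Unpacking the definitions, the left-hand side evaluates to $\iota_Y(\varphi(\phi^t_{Z_X}(x)))$, while the right-hand side evaluates to $\phi^t_{\hat{Z}_Y}(\iota_Y(\varphi(x)))$. Our goal reduces to establishing
\begin{IEEEeqnarray*}{c}
    \iota_Y(\varphi(\phi^t_{Z_X}(x))) = \phi^t_{\hat{Z}_Y}(\iota_Y(\varphi(x))).
\end{IEEEeqnarray*}

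The key observation is the classical fact that if two vector fields are related by a smooth map, then that map intertwines their flows. First, by hypothesis $Z_X$ is $\varphi$-related to $Z_Y$, which yields $\varphi \circ \phi^t_{Z_X} = \phi^t_{Z_Y} \circ \varphi$ on $\R_{\leq 0} \times \partial X$ (where both sides are defined since $Z_X$ points outward along $\partial X$, so its backward flow stays in $X$). Second, by the final clause of \cref{def:completion of a Liouville domain}, $Z_Y$ is $\iota_Y$-related to $\hat{Z}_Y$, giving $\iota_Y \circ \phi^t_{Z_Y} = \phi^t_{\hat{Z}_Y} \circ \iota_Y$. Composing these two identities produces the desired equality immediately.

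The only mild subtlety to verify is that all flows appearing are defined on the correct domains: for $t \leq 0$, the backward flow of $Z_X$ from $\partial X$ remains inside $X$ (this is precisely why $\Phi_{Z_X}$ in \cref{def:completion of a Liouville domain} is an embedding of $\R_{\leq 0} \times \partial X$ into $X$), and likewise $\phi^t_{Z_Y}$ applied to $\varphi(x) \in Y$ stays inside $Y$ for $t \leq 0$, while $\phi^t_{\hat{Z}_Y}$ is globally defined on $\hat{Y}$. Since there are no substantive obstacles, the proof is essentially a two-line calculation; the display-style presentation given in \cref{lem:properties of completion} for the analogous identity $\iota_{\R \times \partial X} = \Phi_{\hat{Z}} \circ (\id_\R \times \iota_X|_{\partial X})$ serves as a template, and the same layout can be used here, with one additional step inserted to apply $\varphi$-relatedness of $Z_X$ and $Z_Y$.
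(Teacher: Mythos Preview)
Your proposal is correct and follows essentially the same approach as the paper: pick a point $(t,x) \in \R_{\leq 0} \times \partial X$, unfold both sides to $\iota_Y \circ \varphi \circ \phi^t_{Z_X}(x)$ and $\phi^t_{\hat{Z}_Y} \circ \iota_Y \circ \varphi(x)$, then apply $\varphi$-relatedness of $Z_X$ and $Z_Y$ followed by $\iota_Y$-relatedness of $Z_Y$ and $\hat{Z}_Y$. Your additional remarks on the domains of the flows are not in the paper's proof but are a reasonable sanity check.
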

\begin{proof}
    It suffices to assume that $(t,x) \in \R_{\leq 0} \times \del X$ and to prove that $\iota_Y \circ \varphi \circ \Phi _{Z_X}(t,x) = \Phi _{\hat{Z}_Y}(t,\iota_Y(\varphi(x)))$.
    \begin{IEEEeqnarray*}{rCls+x*}
        \iota_Y \circ \varphi \circ \Phi _{Z_X}(t,x)
        & = & \iota_Y \circ \varphi \circ \phi^t _{Z_X}(x)       & \quad [\text{by definition of $\Phi _{Z_X}$}] \\
        & = & \iota_Y \circ \phi^t _{Z_Y} \circ \varphi(x)       & \quad [\text{since $Z_X$ is $\varphi$-related to $Z_Y$}] \\
        & = & \phi^t _{\hat{Z}_Y} \circ \iota_Y \circ \varphi(x) & \quad [\text{since $Z_Y$ is $\iota_Y$-related to $\hat{Z}_Y$}] \\
        & = & \Phi _{\hat{Z}_Y}(t,\iota_Y(\varphi(x)))           & \quad [\text{by definition of $\Phi _{\hat{Z}_Y}$}]. & \qedhere
    \end{IEEEeqnarray*}
\end{proof}

\begin{lemma}
    \label{def:map on completions is liouville embedding}
    The map $\hat{\varphi}$ is a Liouville embedding, i.e. $\hat{\varphi}^* \hat{\lambda}_Y = \hat{\lambda}_X$.
\end{lemma}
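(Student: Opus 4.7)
The idea is that $\hat{X}$ is covered by the images of $\iota_X$ and $\iota_{\R \times \del X}$, so two $1$-forms on $\hat{X}$ coincide if and only if their pullbacks under these two embeddings coincide. Hence it suffices to verify the identities $\iota_X^* \hat{\varphi}^* \hat{\lambda}_Y = \iota_X^* \hat{\lambda}_X$ and $\iota_{\R \times \del X}^* \hat{\varphi}^* \hat{\lambda}_Y = \iota_{\R \times \del X}^* \hat{\lambda}_X$.

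The first identity is immediate from the piece $\hat{\varphi} \circ \iota_X = \iota_Y \circ \varphi$ in the definition of $\hat{\varphi}$: one computes $\iota_X^* \hat{\varphi}^* \hat{\lambda}_Y = \varphi^* \iota_Y^* \hat{\lambda}_Y = \varphi^* \lambda_Y = \lambda_X = \iota_X^* \hat{\lambda}_X$, using in turn the defining property $\iota_Y^* \hat{\lambda}_Y = \lambda_Y$ of the completion and the hypothesis that $\varphi$ is a Liouville embedding.

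For the second identity, the key intermediate step is the flow formula $\Phi_{\hat{Z}_Y}^* \hat{\lambda}_Y = e^r \pi_{\hat{Y}}^* \hat{\lambda}_Y$ on $\R \times \hat{Y}$, where $\pi_{\hat{Y}} \colon \R \times \hat{Y} \longrightarrow \hat{Y}$ is the projection. This is the analogue of \cref{lem:flow of liouville} applied to the flow of the global Liouville vector field $\hat{Z}_Y$ on $\hat{Y}$, and the proof is the same Moser-type calculation: the only subtlety is that the $\partial_t$ component contributes $\hat{\lambda}_Y(\hat{Z}_Y) = \iota_{\hat{Z}_Y} \iota_{\hat{Z}_Y} \hat{\omega}_Y = 0$ by antisymmetry, while the tangential component of the derivative of $\phi^t_{\hat{Z}_Y}$ pulls $\hat{\lambda}_Y$ back to $e^t \hat{\lambda}_Y$ by \cref{lem:mosers trick,lem:liouville vf}. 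Granted this, the computation reads
\begin{IEEEeqnarray*}{rCls+x*}
    \iota_{\R \times \del X}^* \hat{\varphi}^* \hat{\lambda}_Y
    & = & \p{}{1}{\Phi_{\hat{Z}_Y} \circ (\id_\R \times (\iota_Y \circ \varphi \circ \iota_{\del X}))}^* \hat{\lambda}_Y & \quad [\text{by definition of $\hat{\varphi}$}] \\
    & = & (\id_\R \times (\iota_Y \circ \varphi \circ \iota_{\del X}))^* (e^r \pi_{\hat{Y}}^* \hat{\lambda}_Y)           & \quad [\text{by the flow formula}] \\
    & = & e^r (\iota_Y \circ \varphi \circ \iota_{\del X})^* \hat{\lambda}_Y                                             & \quad [\text{naturality of pullback}] \\
    & = & e^r \iota_{\del X}^* \varphi^* \lambda_Y                                                                       & \quad [\text{since $\iota_Y^* \hat{\lambda}_Y = \lambda_Y$}] \\
    & = & e^r \lambda_X|_{\del X}                                                                                        & \quad [\text{since $\varphi$ is Liouville}] \\
    & = & \iota_{\R \times \del X}^* \hat{\lambda}_X                                                                     & \quad [\text{by definition of $\hat{\lambda}_X$}].
\end{IEEEeqnarray*}

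The main (very mild) obstacle is to justify the flow formula $\Phi_{\hat{Z}_Y}^* \hat{\lambda}_Y = e^r \pi_{\hat{Y}}^* \hat{\lambda}_Y$; everything else is a direct unwinding of definitions and the chain rule for pullbacks, together with the Liouville condition $\varphi^* \lambda_Y = \lambda_X$. One also needs to know that $\hat{\varphi}$ is well-defined, which was already established in \cref{def:map on completions is well defined}; otherwise the two local expressions for $\hat{\varphi}^* \hat{\lambda}_Y$ would not a priori patch together.
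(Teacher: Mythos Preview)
Your proof is correct and follows essentially the same approach as the paper: both reduce to checking the identity after pulling back by $\iota_X$ and $\iota_{\R \times \del X}$, and both handle the cylindrical piece via the flow formula $\Phi_{\hat{Z}_Y}^* \hat{\lambda}_Y = e^r \hat{\lambda}_Y$, which the paper justifies inline by citing \cref{lem:mosers trick,lem:liouville vf}. Your explicit discussion of why the $\partial_t$ component vanishes is a nice clarification but not a genuine departure from the paper's argument.
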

\begin{proof}
    We need to show that $\hat{\varphi}^* \hat{\lambda}_Y = \hat{\lambda}_X$, which is equivalent to
    \begin{IEEEeqnarray}{rCls+x*}
        \iota_X^* \hat{\varphi}^* \hat{\lambda}_Y                  & = & \iota_X^* \hat{\lambda}_X,                  \plabel{eq:map on completion is liouville embedding 1} \\
        \iota_{\R \times \del X}^* \hat{\varphi}^* \hat{\lambda}_Y & = & \iota_{\R \times \del X}^* \hat{\lambda}_X. \plabel{eq:map on completion is liouville embedding 2}
    \end{IEEEeqnarray}
    We prove Equation \eqref{eq:map on completion is liouville embedding 1}.
    \begin{IEEEeqnarray*}{rCls+x*}
        \iota_X^* \hat{\varphi}^* \hat{\lambda}_Y
        & = & (\hat{\varphi} \circ \iota_X)^* \hat{\lambda}_Y & \quad [\text{by functoriality of pullbacks}] \\
        & = & (\iota_Y \circ \varphi)^* \hat{\lambda}_Y       & \quad [\text{by definition of $\hat{\varphi}$}] \\
        & = & \varphi^* \iota_Y^* \hat{\lambda}_Y             & \quad [\text{by functoriality of pullbacks}] \\
        & = & \varphi^* \lambda_Y                             & \quad [\text{by definition of $\hat{\lambda}_Y$}] \\
        & = & \lambda_X                                       & \quad [\text{since $\varphi$ is a Liouville embedding}] \\
        & = & \iota_X^* \hat{\lambda}_X                       & \quad [\text{by definition of $\hat{\lambda}_X$}].
    \end{IEEEeqnarray*}
    We prove Equation \eqref{eq:map on completion is liouville embedding 2}.
    \begin{IEEEeqnarray*}{rCls+x*}
        \IEEEeqnarraymulticol{3}{l}{\iota _{\R \times \del X}^* \hat{\varphi}^* \hat{\lambda}_Y}\\
        \quad & = & (\hat{\varphi} \circ \iota _{\R \times \del X})^* \hat{\lambda}_Y                                            & \quad [\text{by functoriality of pullbacks}] \\
              & = & ( \Phi _{\hat{Z}_Y} \circ (\id_ \R \times (\iota_Y \circ \varphi \circ \iota _{\del X})) )^* \hat{\lambda}_Y & \quad [\text{by definition of $\hat{\varphi}$}] \\
              & = & (\id_ \R \times (\iota_Y \circ \varphi \circ \iota _{\del X}))^* \Phi _{\hat{Z}_Y}^* \hat{\lambda}_Y         & \quad [\text{by functoriality of pullbacks}] \\
              & = & (\id_ \R \times (\iota_Y \circ \varphi \circ \iota _{\del X}))^* e^r \hat{\lambda}_Y                         & \quad [\text{by \cref{lem:mosers trick,lem:liouville vf}}] \\
              & = & e^r \iota _{\del X}^* \varphi^* \iota_Y^* \hat{\lambda}_Y                                                    & \quad [\text{by functoriality of pullbacks}] \\
              & = & e^r \iota _{\del X}^* \varphi^* \lambda_Y                                                                    & \quad [\text{by definition of $\hat{\lambda}_Y$}] \\
              & = & e^r \iota _{\del X}^* \lambda_X                                                                              & \quad [\text{since $\varphi$ is a Liouville embedding}] \\
              & = & \iota^* _{\R \times \del X} \hat{\lambda}_X                                                                  & \quad [\text{by definition of $\hat{\lambda}_X$}].        & \qedhere
    \end{IEEEeqnarray*}
\end{proof}

\begin{lemma}
    \label{lem:liouville vector fields on completion are related}
    The Liouville vector fields $\hat{Z}_X$ and $\hat{Z}_Y$ are $\hat{\varphi}$-related.
\end{lemma}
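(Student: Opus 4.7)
My plan is to reduce the claim to the argument already given in \cref{lem:codim 0 liouville emb preserves lvf}. Although that lemma is stated for Liouville domains, its proof only uses three ingredients: nondegeneracy of the target symplectic form, codimension-zero of the embedding (so that $\dv\varphi(x)$ is pointwise surjective), and the fact that the embedding pulls back the symplectic potential. All three ingredients persist in our setting: $\hat{\omega}_Y = \edv \hat{\lambda}_Y$ is symplectic by construction, $\hat{\varphi} \colon \hat{X} \longrightarrow \hat{Y}$ has codimension zero since $\dim \hat{X} = \dim X = \dim Y = \dim \hat{Y} = 2n$, and $\hat{\varphi}^* \hat{\lambda}_Y = \hat{\lambda}_X$ has already been established in \cref{def:map on completions is liouville embedding}.

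With these ingredients in place, the proof is a direct transcription of the computation from \cref{lem:codim 0 liouville emb preserves lvf}. Fixing $x \in \hat{X}$ and $v \in T_x \hat{X}$, I will compute
\begin{equation*}
    \hat{\omega}_Y \bigl( \dv \hat{\varphi}(x)(\hat{Z}_X|_x) - \hat{Z}_Y|_{\hat{\varphi}(x)}, \, \dv \hat{\varphi}(x)(v) \bigr) = \bigl( \iota_{\hat{Z}_X} \hat{\varphi}^* \hat{\omega}_Y - \hat{\varphi}^* \iota_{\hat{Z}_Y} \hat{\omega}_Y \bigr)(v) = \bigl( \hat{\lambda}_X - \hat{\varphi}^* \hat{\lambda}_Y \bigr)(v) = 0,
\end{equation*}
where the middle equality uses $\hat{\varphi}^* \hat{\omega}_Y = \hat{\omega}_X$ together with the definitions $\iota_{\hat{Z}_X} \hat{\omega}_X = \hat{\lambda}_X$ and $\iota_{\hat{Z}_Y} \hat{\omega}_Y = \hat{\lambda}_Y$, and the final equality is \cref{def:map on completions is liouville embedding}. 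Since $\dv \hat{\varphi}(x)$ is surjective and $\hat{\omega}_Y$ is nondegenerate, this forces $\dv \hat{\varphi}(x)(\hat{Z}_X|_x) = \hat{Z}_Y|_{\hat{\varphi}(x)}$, which is the desired relation.

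There is no serious obstacle here; the only thing to verify is that the hypotheses of the original proof really do carry over to the non-compact completions, which they do. As a cross-check, one could equally well argue region-by-region using the diagram \cref{eq:diagram for induced map on completions}: on $\iota_X(X)$ one chains the facts that $Z_X$ is $\iota_X$-related to $\hat{Z}_X$, that $Z_X$ is $\varphi$-related to $Z_Y$ (by hypothesis), and that $Z_Y$ is $\iota_Y$-related to $\hat{Z}_Y$, combined with $\hat{\varphi} \circ \iota_X = \iota_Y \circ \varphi$; on $\iota_{\R \times \partial X}(\R \times \partial X)$ one uses that $\partial_r$ is $\iota_{\R \times \partial X}$-related to $\hat{Z}_X$ and that $\Phi_{\hat{Z}_Y}$ is by definition the flow of $\hat{Z}_Y$, so $\dv \Phi_{\hat{Z}_Y}$ sends the $\R$-factor to $\hat{Z}_Y$. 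I prefer the first approach because it avoids the case split and immediately reuses a lemma already in the paper.
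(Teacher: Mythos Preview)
Your main argument assumes that $\hat{\varphi}$ has codimension zero, asserting $\dim \hat{X} = \dim X = \dim Y = \dim \hat{Y}$. But the paper's setup explicitly does \emph{not} require $\varphi$ to be $0$-codimensional: just before \cref{def:embedding on completions coming from Liouville embedding} it says that the only hypothesis is that $Z_X$ is $\varphi$-related to $Z_Y$, and notes that this is automatic in codimension zero but is being assumed in greater generality. When $\dim X < \dim Y$, your computation only shows that $\dv\hat{\varphi}(x)(\hat{Z}_X|_x) - \hat{Z}_Y|_{\hat{\varphi}(x)}$ lies in the symplectic orthogonal of the image of $\dv\hat{\varphi}(x)$, which is a nontrivial subspace; nondegeneracy of $\hat{\omega}_Y$ no longer forces the difference to vanish. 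So the main route does not establish the lemma as stated.

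Your ``cross-check'' is in fact the paper's actual proof: it argues region by region using the definition of $\hat{\varphi}$. On $X$ one uses that $Z_X$ is $\varphi$-related to $Z_Y$ (the standing hypothesis) and that $Z_Y$ is $\iota_Y$-related to $\hat{Z}_Y$; on $\R \times \partial X$ one differentiates $\Phi_{\hat{Z}_Y} \circ (\id_\R \times (\iota_Y \circ \varphi \circ \iota_{\partial X}))$ in the $\R$-direction to get $\hat{Z}_Y$. This approach works in any codimension. If you restrict the lemma to the codimension-zero case, your first argument is fine and slightly slicker; but for the statement as written you should promote the region-by-region argument from cross-check to proof.
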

\begin{proof}
    We need to show that
    \begin{IEEEeqnarray}{Rls+x*}
        Z_X        \text{ is $(\iota_Y \circ \varphi)$-related to }                                                                    & \hat{Z}_Y, \plabel{eq:liouville vector fields on completion are related 1} \\
        \partial_r \text{ is $(\Phi_{\hat{Z}_Y} \circ (\id_ \R \times (\iota_Y \circ \varphi \circ \iota_{\partial X})))$-related to } & \hat{Z}_Y. \plabel{eq:liouville vector fields on completion are related 2}
    \end{IEEEeqnarray}
    Here, \eqref{eq:liouville vector fields on completion are related 1}, follows because $Z_X$ is $\varphi$-related to $Z_Y$. To prove \eqref{eq:liouville vector fields on completion are related 2}, notice that for every $(t,x) \in \R \times \partial X$, we have $\partial_r = (1,0) \in \R \oplus T_x \partial X$ and therefore
    \begin{IEEEeqnarray*}{rCls+x*}
        \IEEEeqnarraymulticol{3}{l}{\dv ( \Phi_{\hat{Z}_Y} \circ (\id_ \R \times (\iota_Y \circ \varphi \circ \iota_{\partial X})) )(t,x) (1,0)}\\
        \quad & = & \dv \Phi_{\hat{Z}_Y} (t, \varphi(x)) (1, 0) & \quad [\text{by the chain rule}] \\
              & = & \hat{Z}_Y(t, \varphi(x))                    & \quad [\text{by definition of $\Phi_{\hat{Z}_Y}$}]. & \qedhere
    \end{IEEEeqnarray*}
\end{proof}

\begin{lemma}
    \label{lem:diagram for map on completions commutes}
    Diagram \eqref{eq:diagram for induced map on completions} commutes.    
\end{lemma}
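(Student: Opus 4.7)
The plan is to verify commutativity of the diagram face by face, relying on the observation that each individual square is either an immediate consequence of the definition of $\hat{\varphi}$ (Definition \ref{def:embedding on completions coming from Liouville embedding}) or a restatement of a lemma already proved. I would organize the check into three groups: the pure-$X$ and pure-$Y$ faces, the side faces joining the $X$-data to the $Y$-data, and the bottom face prescribing $\hat{\varphi}$ on $\iota_X(X)$.

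First, the "back" face of the diagram (the one involving only $X$, $\hat{X}$, $\R_{\leq 0} \times \partial X$, $\R \times \partial X$, and $\R \times \hat{X}$ together with the arrows $\Phi_{Z_X}$, $\iota_X$, $\iota_{\R \times \partial X}$, $\Phi_{\hat{Z}_X}$) is exactly the content of Lemma \ref{lem:properties of completion} applied to $X$; the "front" face commutes by the same lemma applied to $Y$.

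Next, I would check the three side faces joining back to front. The leftmost one, involving $\Phi_{Z_X}$, $\Phi_{Z_Y}$, $\varphi$ and $\id_\R \times \varphi|_{\partial X}$, commutes because $Z_X$ is by hypothesis $\varphi$-related to $Z_Y$, hence their flows intertwine through $\varphi$. The middle side faces commute by unfolding the definition of $\hat{\varphi}$ on $\iota_{\R \times \partial X}$ and comparing with Lemma \ref{lem:properties of completion} applied to $Y$, which yields $\iota_{\R \times \partial Y} = \Phi_{\hat{Z}_Y} \circ (\id_\R \times \iota_Y \circ \iota_{\partial Y})$; feeding $\id_\R \times \varphi|_{\partial X}$ on the right then reproduces the defining formula $\hat{\varphi} \circ \iota_{\R \times \partial X} = \Phi_{\hat{Z}_Y} \circ (\id_\R \times (\iota_Y \circ \varphi \circ \iota_{\partial X}))$. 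The rightmost side face, involving $\Phi_{\hat{Z}_X}$, $\Phi_{\hat{Z}_Y}$, $\hat{\varphi}$ and $\id_\R \times \hat{\varphi}$, commutes because $\hat{Z}_X$ is $\hat{\varphi}$-related to $\hat{Z}_Y$ by Lemma \ref{lem:liouville vector fields on completion are related}, so their flows satisfy $\hat{\varphi} \circ \phi^t_{\hat{Z}_X} = \phi^t_{\hat{Z}_Y} \circ \hat{\varphi}$.

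Finally, the bottom face asserting $\hat{\varphi} \circ \iota_X = \iota_Y \circ \varphi$ commutes by the very definition of $\hat{\varphi}$. Together these cover every face of the 3D diagram, so it commutes. The main obstacle is purely organizational bookkeeping: making sure each of the subsquares is identified with exactly one of the cited lemmas or with the definition, and that no face is overlooked. No substantial new computation is required beyond tracing definitions through the flows.
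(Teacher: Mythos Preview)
Your proposal is correct and follows essentially the same approach as the paper: invoke Lemma \ref{lem:properties of completion} for the front and back faces, the definition of $\hat{\varphi}$ for the faces encoding $\hat{\varphi}\circ\iota_X=\iota_Y\circ\varphi$, and Lemma \ref{lem:liouville vector fields on completion are related} for the rightmost side face relating the flows $\Phi_{\hat{Z}_X}$ and $\Phi_{\hat{Z}_Y}$. One minor bookkeeping slip: the diagram as drawn has no back-to-front arrows at the first two columns (there are no maps $\R_{\leq 0}\times\partial X\to\R_{\leq 0}\times\partial Y$ or $\R\times\partial X\to\R\times\partial Y$), so the ``leftmost'' and ``middle'' side faces you describe do not actually appear; conversely, there is a top square joining $\R\times X$ to $\R\times\hat X$ via $\id\times\iota_X$ (and similarly for $Y$) which the paper checks separately, though it follows immediately from the same identity $\hat{\varphi}\circ\iota_X=\iota_Y\circ\varphi$ you already cite.
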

\begin{proof}
    We have already proven in \cref{lem:properties of completion} that the squares on the front and back commute. The first square on the bottom commutes by definition of $\hat{\varphi}$. The other two squares on the bottom commute trivially. The top square commutes because $\hat{\varphi} \circ \iota_X = \iota_Y \circ \varphi$ by definition of $\hat{\varphi}$. We prove that the right square commutes. For $(t,x) \in \R \times \hat{X}$,%
    \begin{IEEEeqnarray*}{rCls+x*}
        \hat{\varphi} \circ \Phi _{\hat{Z}_X}(t,x)
        & = & \hat{\varphi} \circ \phi^t _{\hat{Z}_X}(x)                & \quad [\text{by definition of $\Phi _{\hat{Z}_X}$}] \\
        & = & \phi^t _{\hat{Z}_Y} \circ \hat{\varphi} (x)               & \quad [\text{by \cref{lem:liouville vector fields on completion are related}}] \\
        & = & \Phi _{\hat{Z}_Y} (t, \hat{\varphi}(x))                   & \quad [\text{by definition of $\Phi _{\hat{Z}_Y}$}] \\
        & = & \Phi _{\hat{Z}_Y} \circ (\id_ \R \times \hat{\varphi})(x) & \quad [\text{by definition of $\id_ \R \times \hat{\varphi}$}]. & \qedhere
    \end{IEEEeqnarray*}
\end{proof}

Finally, we check that the induced maps on the completions behave nicely with respect to compositions.

\begin{proposition}
    \phantomsection\label{prop:completion is a functor}
    The operation of taking the completion is a functor.
\end{proposition}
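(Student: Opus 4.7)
The plan is to verify the two functoriality axioms: that $\widehat{\id_X} = \id_{\hat{X}}$ for every Liouville domain $(X,\lambda)$, and that $\widehat{\psi \circ \varphi} = \hat{\psi} \circ \hat{\varphi}$ whenever $\varphi \colon (X,\lambda_X) \longrightarrow (Y,\lambda_Y)$ and $\psi \colon (Y,\lambda_Y) \longrightarrow (Z,\lambda_Z)$ are composable Liouville embeddings with related Liouville vector fields (so that the completions are defined). Since $\hat{X}$ is built by gluing $X$ and $\R \times \del X$, any smooth map out of $\hat{X}$ is determined by its precompositions with $\iota_X$ and $\iota_{\R \times \del X}$, so it suffices to check equalities after these precompositions.

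For the identity, the first equation is immediate: $\widehat{\id_X} \circ \iota_X = \iota_X \circ \id_X = \iota_X = \id_{\hat{X}} \circ \iota_X$. For the second, unwinding \cref{def:embedding on completions coming from Liouville embedding} gives
\begin{IEEEeqnarray*}{rCl}
    \widehat{\id_X} \circ \iota _{\R \times \del X} & = & \Phi _{\hat{Z}_X} \circ (\id_\R \times (\iota_X \circ \iota _{\del X})),
\end{IEEEeqnarray*}
and \cref{lem:properties of completion} tells us precisely that this composition equals $\iota _{\R \times \del X}$, which is $\id_{\hat{X}} \circ \iota _{\R \times \del X}$. This settles the first axiom.

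For compositions, the same strategy applies. Precomposing with $\iota_X$ yields
\begin{IEEEeqnarray*}{rCl}
    \hat{\psi} \circ \hat{\varphi} \circ \iota_X = \hat{\psi} \circ \iota_Y \circ \varphi = \iota_Z \circ \psi \circ \varphi = \widehat{\psi \circ \varphi} \circ \iota_X,
\end{IEEEeqnarray*}
by applying \cref{def:embedding on completions coming from Liouville embedding} twice. Precomposing with $\iota _{\R \times \del X}$ gives, again by \cref{def:embedding on completions coming from Liouville embedding},
\begin{IEEEeqnarray*}{rCl}
    \hat{\psi} \circ \hat{\varphi} \circ \iota _{\R \times \del X} = \hat{\psi} \circ \Phi _{\hat{Z}_Y} \circ (\id_\R \times (\iota_Y \circ \varphi \circ \iota _{\del X})).
\end{IEEEeqnarray*}
Here the main point is to move $\hat{\psi}$ past $\Phi _{\hat{Z}_Y}$. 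This is exactly where \cref{lem:liouville vector fields on completion are related} enters: since $\hat{Z}_Y$ is $\hat{\psi}$-related to $\hat{Z}_Z$, the flows intertwine as $\hat{\psi} \circ \phi^t _{\hat{Z}_Y} = \phi^t _{\hat{Z}_Z} \circ \hat{\psi}$, so $\hat{\psi} \circ \Phi _{\hat{Z}_Y} = \Phi _{\hat{Z}_Z} \circ (\id_\R \times \hat{\psi})$. Substituting and using once more that $\hat{\psi} \circ \iota_Y = \iota_Z \circ \psi$ produces
\begin{IEEEeqnarray*}{rCl}
    \hat{\psi} \circ \hat{\varphi} \circ \iota _{\R \times \del X} = \Phi _{\hat{Z}_Z} \circ (\id_\R \times (\iota_Z \circ \psi \circ \varphi \circ \iota _{\del X})) = \widehat{\psi \circ \varphi} \circ \iota _{\R \times \del X},
\end{IEEEeqnarray*}
which finishes the verification. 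The only delicate step is the flow-intertwining identity for $\hat{\psi}$ and $\Phi _{\hat{Z}_Y}$, but this is already packaged in \cref{lem:liouville vector fields on completion are related}, so no genuinely new argument is required beyond carefully unwinding the definitions.
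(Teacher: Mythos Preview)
Your proof is correct and follows essentially the same approach as the paper: verify both functor axioms by precomposing with $\iota_X$ and $\iota_{\R \times \del X}$, using the definition of $\hat{\varphi}$ together with the flow-intertwining identity $\hat{\psi} \circ \Phi_{\hat{Z}_Y} = \Phi_{\hat{Z}_Z} \circ (\id_\R \times \hat{\psi})$. The only cosmetic difference is that the paper cites the commutativity of diagram \eqref{eq:diagram for induced map on completions} (whose right square is exactly this identity), whereas you derive it directly from \cref{lem:liouville vector fields on completion are related}; since the paper's proof of that square also goes through \cref{lem:liouville vector fields on completion are related}, the arguments are the same.
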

\begin{proof}
    We show that identities are preserved. Let $(X,\lambda)$ be a Liouville domain. We wish to prove that $\widehat{\id_X} = \id _{\hat{X}} \colon \hat{X} \longrightarrow \hat{X}$, which is equivalent to
    \begin{IEEEeqnarray}{rCls+x*}
        \widehat{\id_X} \circ \iota_X                  & = & \id_{\hat{X}} \circ \iota_X,                  \plabel{eq:completion functor identity 1} \\
        \widehat{\id_X} \circ \iota_{\R \times \del X} & = & \id_{\hat{X}} \circ \iota_{\R \times \del X}. \plabel{eq:completion functor identity 2}
    \end{IEEEeqnarray}
    We prove Equation \eqref{eq:completion functor identity 1}.
    \begin{IEEEeqnarray*}{rCls+x*}
        \widehat{\id_X} \circ \iota_X
        & = & \iota_X \circ \id_X          & \quad [\text{by definition of $\widehat{\id_X}$}] \\
        & = & \iota_X                      & \quad [\text{since $\id_X$ is the identity map}] \\
        & = & \id _{\hat{X}} \circ \iota_X & \quad [\text{since $\id_{\hat{X}}$ is the identity map}].
    \end{IEEEeqnarray*}
    We prove Equation \eqref{eq:completion functor identity 2}.
    \begin{IEEEeqnarray*}{rCls+x*}
        \widehat{\id_X} \circ \iota _{\R \times \del X}
        & = & \Phi_{\hat{Z}} \circ (\id_\R \times (\iota_X \circ \id_X \circ \iota_{\partial X})) & \quad [\text{by definition of $\widehat{\id_X}$}] \\
        & = & \id_{\hat{X}} \circ \iota_{\R \times \del X}                                        & \quad [\text{by \cref{lem:properties of completion}}].
    \end{IEEEeqnarray*}

    Now, we prove that compositions are preserved. Let $(X,\lambda_X)$, $(Y,\lambda_Y)$ and $(W,\lambda_W)$ be Liouville domains and $f \colon X \longrightarrow Y$ and $g \colon Y \longrightarrow W$ be Liouville embeddings. We wish to prove that $\widehat{g \circ f} = \hat{g} \circ \hat{f}$, which is equivalent to
    \begin{IEEEeqnarray}{rCls+x*}
        \widehat{g \circ f} \circ \iota_X                  & = & \hat{g} \circ \hat{f} \circ \iota_X,                  \plabel{eq:completion functor composition 1} \\
        \widehat{g \circ f} \circ \iota_{\R \times \del X} & = & \hat{g} \circ \hat{f} \circ \iota_{\R \times \del X}. \plabel{eq:completion functor composition 2}
    \end{IEEEeqnarray}
    We prove Equation \eqref{eq:completion functor composition 1}.
    \begin{IEEEeqnarray*}{rCls+x*}
        \widehat{g \circ f} \circ \iota_X
        & = & \iota_W \circ g \circ f             & \quad [\text{by definition of $\widehat{g \circ f}$}] \\
        & = & \hat{g} \circ \iota_Y \circ f       & \quad [\text{by definition of $\hat{g}$}]\\
        & = & \hat{g} \circ \hat{f} \circ \iota_X & \quad [\text{by definition of $\hat{f}$}].
    \end{IEEEeqnarray*}
    We prove Equation \eqref{eq:completion functor composition 2}.
    \begin{IEEEeqnarray*}{rCls+x*}
        \IEEEeqnarraymulticol{3}{l}{\widehat{g \circ f} \circ \iota _{\R \times \del X}} \\
        \quad & = & \Phi_{\hat{Z}_W} \circ (\id_{\R} \times (\iota_W \circ g \circ f \circ \iota_{\partial X}))                         & \quad [\text{by definition of $\widehat{g \circ f}$}] \\
              & = & \Phi_{\hat{Z}_W} \circ (\id_{\R} \times (\hat{g} \circ \iota_Y \circ f \circ \iota_{\partial X}))                   & \quad [\text{by definition of $\hat{g}$}]\\
              & = & \Phi_{\hat{Z}_W} \circ (\id_{\R} \times \hat{g}) \circ (\id_{\R} \times (\iota_Y \circ f \circ \iota_{\partial X})) & \\
              & = & \hat{g} \circ \Phi_{\hat{Z}_Y} \circ (\id_{\R} \times (\iota_Y \circ f \circ \iota_{\partial X}))                   & \quad [\text{by diagram \eqref{eq:diagram for induced map on completions}}] \\
              & = & \hat{g} \circ \hat{f} \circ \iota _{\R \times \del X}                                                               & \quad [\text{by definition of $\hat{f}$}].              & \qedhere
    \end{IEEEeqnarray*}
\end{proof}

\chapter{Indices}
\label{chp:indices}

\section{Maslov indices}
\label{sec:maslov indices}

In this section, our goal is to define the Maslov index of a loop of symplectic matrices and the Maslov index of a loop of Lagrangian subspaces. Our presentation is based on \cite{mcduffIntroductionSymplecticTopology2017}. We start by recalling relevant facts and notation about symplectic linear algebra. Let $V$ be a finite dimensional vector space. The vector spaces $V \directsum V^*$ and $V^* \oplus V$ admit symplectic structures given by
\begin{IEEEeqnarray*}{rCls+x*}
    \omega_{V \directsum V^*}((a,\alpha),(b,\beta)) & = & \beta(a) - \alpha(b), \\
    \omega_{V^* \directsum V}((\alpha,a),(\beta,b)) & = & \alpha(b) - \beta(a).
\end{IEEEeqnarray*}
If $V$ has an inner product $\p{<}{}{\cdot,\cdot}$, then we define a symplectic structure on $V \directsum V$ by
\begin{IEEEeqnarray}{c+x*}
    \plabel{eq:symplectic structure on v + v}
    \omega_{V \directsum V}((u,v),(x,y)) = \p{<}{}{u,y} - \p{<}{}{v,x}.
\end{IEEEeqnarray}
In this case, the maps
\begin{IEEEeqnarray*}{rrClCrrCl}
    \phi \colon & V \directsum V & \longrightarrow & V \directsum V^*      & \qquad & \psi \colon & V \directsum V & \longrightarrow & V^* \directsum V \\
                & (x,y)          & \longmapsto     & (x,\p{<}{}{y,\cdot}), &        &             & (x,y)          & \longmapsto     & (\p{<}{}{x,\cdot},y)
\end{IEEEeqnarray*}
are isomorphisms of symplectic vector spaces. For each $n$, define the $2n \times 2n$ matrices
\begin{IEEEeqnarray*}{c+x*}
    J_0 = \begin{bmatrix}
        0 & -\idm \\
        \idm & 0
    \end{bmatrix}, \quad
    \Omega_0 = \begin{bmatrix}
        0 & \idm \\
        -\idm & 0
    \end{bmatrix}.
\end{IEEEeqnarray*}
The canonical symplectic structure of $\R^{2n} = \R^n \directsum \R^n$, denoted $\omega_0$, is defined as in Equation \eqref{eq:symplectic structure on v + v} (where we use the Euclidean inner product). For $\mathbf{u} = (u,v) \in \R^{2n}$ and $\mathbf{x} = (x,y) \in \R^{2n}$, $\omega_0(\mathbf{u},\mathbf{v})$ is given by
\begin{IEEEeqnarray*}{rCls+x*}
    \omega_0((u,v),(x,y))
        & = & \p{<}{}{u,y} - \p{<}{}{v,x} \\
        & = & \mathbf{u}^T \Omega_0 \mathbf{v}.
\end{IEEEeqnarray*}
The \textbf{symplectic group} is given by
\begin{IEEEeqnarray*}{c+x*}
    \operatorname{Sp}(2n) \coloneqq \{ A \in \operatorname{GL}(2n,\R) \ | \ A^T \Omega_0 A = \Omega_0 \}.
\end{IEEEeqnarray*}
Denote by $C(S^1,\operatorname{Sp}(2n))$ the set of continuous maps from $S^1$ to $\operatorname{Sp}(2n)$, i.e. the set of loops of symplectic matrices.

\begin{theorem}[{\cite[Theorem 2.2.12]{mcduffIntroductionSymplecticTopology2017}}]
    \phantomsection\label{thm:maslov sympl properties}
    There exists a unique function
    \begin{IEEEeqnarray*}{c+x*}
        \maslov \colon C(S^1,\operatorname{Sp}(2n)) \longrightarrow \Z,
    \end{IEEEeqnarray*}
    called the \emph{\textbf{Maslov index}}, which satisfies the following properties:
    \begin{description}
        \item[(Homotopy)] The Maslov index descends to an isomorphism $\maslov \colon \pi_1(\operatorname{Sp}(2n)) \longrightarrow \Z$.
        \item[(Product)] If $A_1,A_2 \in C(S^1, \operatorname{Sp}(2n))$ then $\maslov(A_1 A_2) = \maslov(A_1) + \maslov(A_2)$.
        \item[(Direct sum)] If $A_i \in C(S^1, \operatorname{Sp}(2 n_i))$ for $i=1,2$ then $\maslov(A_1 \directsum A_2) = \maslov(A_1) + \maslov(A_2)$.
        \item[(Normalization)] If $A \in C(S^1, \operatorname{Sp}(2))$ is given by
            \begin{IEEEeqnarray*}{c+x*}
                A(t) =
                \begin{bmatrix}
                    \cos(2 \pi t) & -\sin(2 \pi t) \\
                    \sin(2 \pi t) & \cos(2 \pi t)
                \end{bmatrix}
            \end{IEEEeqnarray*}
            then $\maslov(A) = 1$.
    \end{description}
\end{theorem}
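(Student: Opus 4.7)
The plan is to construct $\maslov$ explicitly via the determinant of the unitary part of a symplectic matrix, and then derive uniqueness from the axioms.

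First, I would establish the homotopy equivalence $\operatorname{Sp}(2n) \simeq U(n)$ via polar decomposition. Any $A \in \operatorname{Sp}(2n)$ admits a unique factorization $A = Q P$ with $Q = A(A^T A)^{-1/2}$ orthogonal and symplectic, and $P = (A^T A)^{1/2}$ symmetric, positive-definite and symplectic. The family $s \mapsto Q P^{1-s}$ gives a deformation retraction of $\operatorname{Sp}(2n)$ onto the orthogonal symplectic subgroup, which is identified with $U(n)$ via
\begin{equation*}
    X + i Y \longleftrightarrow \begin{bmatrix} X & -Y \\ Y & X \end{bmatrix}.
\end{equation*}
Combining this with the fibration $SU(n) \hookrightarrow U(n) \xrightarrow{\det_{\C}} S^1$ and the classical fact that $SU(n)$ is simply connected, I deduce that $\det_{\C}$ induces an isomorphism $\pi_1(\operatorname{Sp}(2n)) \cong \pi_1(U(n)) \cong \pi_1(S^1) = \Z$.

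Next, for a loop $A \colon S^1 \to \operatorname{Sp}(2n)$, I would define $\maslov(A)$ to be the degree of $t \mapsto \det_{\C}(Q(t))$, where $Q(t)$ is the unitary part of $A(t)$. By the previous paragraph this depends only on the homotopy class of $A$ and induces the isomorphism $\pi_1(\operatorname{Sp}(2n)) \to \Z$, giving Homotopy. For Product, I would homotope each $A_i$ to a unitary loop $U_i$; then the pointwise product $A_1 A_2$ is homotopic to $U_1 U_2$, which is itself unitary, and $\det_{\C}(U_1 U_2) = \det_{\C}(U_1) \det_{\C}(U_2)$, so the degrees add. Direct sum follows identically from the multiplicativity of $\det_{\C}$ under block sum of unitary matrices. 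Normalization is a direct check: the indicated $A$ is already unitary and corresponds to the loop $t \mapsto e^{2 \pi i t} \in U(1)$, whose winding number is $1$.

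For uniqueness, suppose $\maslov'$ satisfies all four axioms. By Homotopy it descends to a homomorphism $\pi_1(\operatorname{Sp}(2n)) \cong \Z \to \Z$, so it suffices to pin down the value on a generator. When $n = 1$ the Normalization loop already represents a generator (its image under $\det_{\C}$ winds once around $S^1$), so Normalization forces $\maslov' = \maslov$. For $n > 1$, a generator is given by the block-diagonal sum of the Normalization loop with $2(n-1)$-dimensional constant identity loops; since constant loops are null-homotopic and hence have Maslov index $0$ by Homotopy, Direct sum and Normalization together fix $\maslov'$ on this generator, hence everywhere.

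The main technical point is verifying that polar decomposition produces a continuous deformation retract onto a copy of $U(n)$. This requires continuity of $Q$ and $P$ in $A$, the fact that $P$ is symplectic whenever $A$ is (which follows from the identity $\Omega_0 P = P^{-1} \Omega_0$ upon taking logarithms, showing $\log P \in \mathfrak{sp}(2n)$ and hence $P^s \in \operatorname{Sp}(2n)$ for all real $s$), and continuity of $s \mapsto P^{1-s}$ via spectral calculus. These are all standard but constitute essentially the only analytic input beyond the formal topological and algebraic arguments above.
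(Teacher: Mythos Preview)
The paper does not prove this theorem; it is stated as a citation of \cite[Theorem 2.2.12]{mcduffIntroductionSymplecticTopology2017} and used as a black box. Your proof is correct and is essentially the standard argument given in that reference: retract $\operatorname{Sp}(2n)$ onto $U(n)$ by polar decomposition, define $\maslov$ via the degree of $\det_{\C}$, and deduce uniqueness from the axioms by identifying the value on a generator of $\pi_1$.
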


Let $(V,\omega)$ be a symplectic vector space. A subspace $W$ of $V$ is \textbf{Lagrangian} if $\dim W = 1/2 \dim V$ and $\omega|_W = 0$. The \textbf{Lagrangian Grassmannian} of $(V,\omega)$, denoted $\mathcal{L}(V,\omega)$, is the set of Lagrangian subspaces of $(V,\omega)$. Denote $\mathcal{L}(n) = \mathcal{L}(\R ^{2n},\omega_0)$.

\begin{theorem}[{\cite[Theorem 2.3.7]{mcduffIntroductionSymplecticTopology2017}}]
    \label{thm:maslov lagrangian properties}
    There exists a unique function
    \begin{IEEEeqnarray*}{c+x*}
        \maslov \colon C(S^1,\mathcal{L}(n)) \longrightarrow \Z,
    \end{IEEEeqnarray*}
    called the \emph{\textbf{Maslov index}}, which satisfies the following properties:
    \begin{description}
        \item[(Homotopy)] The Maslov index descends to an isomorphism $\maslov \colon \pi_1(\mathcal{L}(n)) \longrightarrow \Z$.
        \item[(Product)] If $W \in C(S^1,\mathcal{L}(n))$ and $A \in C(S^1,\operatorname{Sp}(2 n))$ then $\mu(AW) = \mu(W) + 2 \mu(A)$.
        \item[(Direct sum)] If $W_i \in C(S^1,\mathcal{L}(n_i))$ for $i = 1,2$ then $\mu(W_1 \directsum W_2) = \mu(W_1) + \mu(W_2)$.
        \item[(Normalization)] If $W \in C(S^1, \mathcal{L}(n))$ is given by $W(t) = e^{\pi i t} \R \subset \C$ then $\mu(W) = 1$.
        \item[(Zero)] A constant loop has Maslov index zero.
    \end{description}
\end{theorem}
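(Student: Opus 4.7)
The plan is to construct $\mu$ explicitly via the standard identification $\mathcal{L}(n) \cong U(n)/O(n)$ and then verify the five properties, deducing uniqueness from the homotopy and normalization properties alone. Since $U(n)$ acts transitively on $\mathcal{L}(n)$ with stabilizer $O(n)$ at the Lagrangian $\R^n \subset \R^{2n}$, and since $\det \colon O(n) \longrightarrow \{\pm 1\}$ takes values in the kernel of squaring, the map $\det^2 \colon U(n) \longrightarrow S^1$ descends to a continuous map $\rho \colon \mathcal{L}(n) \longrightarrow S^1$. For a loop $W \in C(S^1, \mathcal{L}(n))$ I define
\begin{IEEEeqnarray*}{c+x*}
    \mu(W) \coloneqq \deg(\rho \circ W) \in \Z.
\end{IEEEeqnarray*}

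The normalization and zero properties are then immediate: for $W(t) = e^{\pi i t} \R \subset \C$ one computes $\rho \circ W(t) = e^{2\pi i t}$, which has degree $1$; and a constant loop composes with $\rho$ to give a constant loop in $S^1$, of degree $0$. The direct sum property follows from the block-diagonal identity $\rho(L_1 \directsum L_2) = \rho(L_1) \rho(L_2)$ combined with additivity of degree under pointwise multiplication of $S^1$-valued loops. For the homotopy property one feeds the fibration $O(n) \longrightarrow U(n) \longrightarrow \mathcal{L}(n)$ into the long exact sequence of homotopy groups and combines it with the fact that $\det \colon U(n) \longrightarrow S^1$ induces an isomorphism on $\pi_1$, to conclude that $\rho_*$ realises an isomorphism $\pi_1(\mathcal{L}(n)) \longrightarrow \pi_1(S^1) \cong \Z$.

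The main obstacle will be the product property, which couples the Lagrangian Maslov index with the symplectic Maslov index of \cref{thm:maslov sympl properties}. The key step is to realise the symplectic index via an analogous determinantal construction: using the polar decomposition, every $A \in \operatorname{Sp}(2n)$ factors as $A = U(A) P(A)$ with $U(A) \in \operatorname{Sp}(2n) \cap O(2n) \cong U(n)$ and $P(A)$ symplectic, symmetric, and positive definite, and the map $A \longmapsto \det_{\C}(U(A))$ realises the symplectic Maslov index on $\pi_1$. Since the set of such positive-definite $P$ is convex, one may homotope any symplectic loop $A$ to its unitary part $U \circ A$ without changing $\mu(A)$ or $\mu(AW)$, reducing the product identity to the case where $A$ is unitary. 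In that case $\rho(A(t) W(t)) = \det(A(t))^2 \rho(W(t))$ and taking degrees yields exactly $\mu(AW) = \mu(W) + 2 \mu(A)$, the factor of $2$ being precisely the squaring in the definition of $\rho$.

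For uniqueness, any two functions satisfying the homotopy property induce isomorphisms $\pi_1(\mathcal{L}(n)) \longrightarrow \Z$ and hence differ by an element of $\operatorname{Aut}(\Z) = \{\pm 1\}$; the normalization then pins down the sign. Thus uniqueness already follows from homotopy and normalization alone, and the remaining properties are automatic once they have been verified for the construction above.
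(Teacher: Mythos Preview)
The paper does not supply its own proof of this theorem: it is simply quoted from McDuff--Salamon \cite[Theorem 2.3.7]{mcduffIntroductionSymplecticTopology2017} without argument. Your sketch is correct and is essentially the construction carried out in that reference, building $\mu$ as the degree of $\det^2$ on $U(n)/O(n)$, reducing the product property to the unitary case via polar decomposition, and reading off uniqueness from the homotopy and normalization axioms.
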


\section{Conley--Zehnder index}

In this section we define the Conley--Zehnder index of a path of symplectic matrices. We define
\begin{IEEEeqnarray*}{rCls+x*}
    \operatorname{Sp}^\star(2n) & \coloneqq & \{ A \in \operatorname{Sp}(2n)  \ | \ \det(A - \idm) \neq 0 \},     \\
    \mathrm{SP}(n)              & \coloneqq &
    \left\{
        A \colon [0,1] \longrightarrow \mathrm{Sp}(2n)
        \ \middle\vert
        \begin{array}{l}
            A \text{ is continuous, } \\
            A(0) = \idm, \\
            A(1) \in \mathrm{Sp}^{\star}(2n)
        \end{array}
    \right\}.
\end{IEEEeqnarray*}

The following theorem characterizes the Conley--Zehnder index of a path of symplectic matrices. Originally, this result has appeared in \cite{salamonMorseTheoryPeriodic1992} and \cite{salamonLecturesFloerHomology1999}. However, we will use a restatement from \cite{guttConleyZehnderIndex2012}. Recall that if $S$ is a symmetric matrix, its \textbf{signature}, denoted by $\signature S$, is the number of positive eigenvalues of $S$ minus the number of negative eigenvalues of $S$.

\begin{theorem}[{\cite[Propositions 35 and 37]{guttConleyZehnderIndex2012}}]
    \phantomsection\label{thm:properties of cz}
    There exists a unique function
    \begin{IEEEeqnarray*}{c+x*}
        \conleyzehnder \colon \operatorname{SP}(n) \longrightarrow \Z,
    \end{IEEEeqnarray*}
    called the \emph{\textbf{Conley--Zehnder index}}, which satisfies the following properties:
    \begin{description}
        \item[(Naturality)] If $B \colon [0,1] \longrightarrow \operatorname{Sp}(2n)$ is a continuous path, then $\conleyzehnder(B A B ^{-1}) = \conleyzehnder(A)$;
        \item[(Homotopy)] $\conleyzehnder$ is constant on each component of $\operatorname{SP}(n)$;
        \item[(Zero)] If $A(s)$ has no eigenvalue on the unit circle for $s > 0$ then $\conleyzehnder(A) = 0$;
        \item[(Product)] If $A_i \in \operatorname{SP}(n_i)$ for $i=1,2$ then $\conleyzehnder(A_1 \directsum A_2) = \conleyzehnder(A_1) + \conleyzehnder(A_2)$;
        \item[(Loop)] If $B \in C(S^1, \operatorname{Sp}(2n))$ and $B(0) = B(1) = \idm$ then $\conleyzehnder(B A) = \conleyzehnder(A) + 2 \maslov(B)$.
        \item[(Signature)] If $S$ is a symmetric nondegenerate $2n \times 2n$-matrix with operator norm $\p{||}{}{S} < 2 \pi$ and $A(t) = \exp(J_0 S t)$, then $\conleyzehnder(A) = \frac{1}{2} \signature (S)$;
        \item[(Determinant)] ${n - \conleyzehnder(A)}$ is even if and only if $\det (\idm - A(1)) > 0$;
        \item[(Inverse)] $\conleyzehnder(A ^{-1}) = \conleyzehnder (A^T) = - \conleyzehnder(A)$.
    \end{description}
\end{theorem}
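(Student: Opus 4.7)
The plan is to establish uniqueness first and then existence. For uniqueness, observe that $\operatorname{Sp}^{\star}(2n)$ has two path-connected components, distinguished by the sign of $\det(\idm - A)$; this is implicit in the Determinant property but can also be verified directly by an analysis of canonical forms. Pick once and for all two reference paths $A^{\pm} \in \operatorname{SP}(n)$ of the form $A^{\pm}(t) = \exp(J_0 S^{\pm} t)$, where $S^{\pm}$ is a symmetric nondegenerate matrix with operator norm less than $2\pi$, whose signature is chosen so that $A^{+}(1)$ and $A^{-}(1)$ lie in the two distinct components of $\operatorname{Sp}^{\star}(2n)$. By the Signature property, $\conleyzehnder(A^{\pm}) = \tfrac{1}{2}\signature(S^{\pm})$ is forced. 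Given an arbitrary $A \in \operatorname{SP}(n)$, connect $A(1)$ to the appropriate $A^{\pm}(1)$ by a path inside its component of $\operatorname{Sp}^{\star}(2n)$; this yields a homotopy within $\operatorname{SP}(n)$ of $A$ to a path $\tilde A$ ending at $A^{\pm}(1)$, whence $\conleyzehnder(\tilde A) = \conleyzehnder(A)$ by Homotopy. Concatenating $\tilde A$ with the time-reverse of $A^{\pm}$ produces a loop $B \in C(S^1, \operatorname{Sp}(2n))$ based at $\idm$, and the Loop property forces $\conleyzehnder(\tilde A) = \conleyzehnder(A^{\pm}) + 2\maslov(B)$. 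This determines $\conleyzehnder$ uniquely on every element of $\operatorname{SP}(n)$.

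For existence, I would construct $\conleyzehnder$ via the Maslov index of a loop of Lagrangians. Given $A \in \operatorname{SP}(n)$, the graph $\Gamma(t) = \operatorname{Gr}(A(t))$ is a Lagrangian path in $(\R^{2n} \oplus \R^{2n}, (-\omega_0) \oplus \omega_0)$ starting at the diagonal $\Delta$ and ending at a Lagrangian transverse to $\Delta$ (this transversality is exactly the condition $A(1) \in \operatorname{Sp}^{\star}(2n)$). Extend $\Gamma$ to a loop by concatenating with a canonical capping path from $\Gamma(1)$ back to $\Delta$, chosen within the relevant component of the set of Lagrangians transverse to $\Delta$, and define $\conleyzehnder(A)$ as the resulting Maslov-Lagrangian index from \cref{thm:maslov lagrangian properties} (with a fixed sign convention). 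Each listed property then follows from the corresponding property of the Maslov-Lagrangian index: Naturality, Product, and Homotopy are transferred directly; Loop follows from the identity $\maslov(AW) = \maslov(W) + 2\maslov(A)$; Zero is immediate because a path never meeting the Maslov cycle contributes nothing; Inverse follows because $\operatorname{Gr}(A^{-1})$ and $\operatorname{Gr}(A^T)$ are related to $\operatorname{Gr}(A)$ by orientation-reversing symplectomorphisms of $\R^{2n} \oplus \R^{2n}$; and Determinant follows by a parity calculation tracking the component of $\operatorname{Sp}^{\star}(2n)$ containing $A(1)$.

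The main obstacle is the Signature property, which anchors the construction numerically: one must directly compute that the Maslov index of the loop attached to $A(t) = \exp(J_0 S t)$ with $\lVert S \rVert < 2\pi$ equals $\tfrac{1}{2}\signature(S)$. By the additivity already built into the construction, it suffices to check this on irreducible symplectic blocks; in such a block the eigenvalues of $A(t)$ travel along arcs of angular length $|\lambda| t < 2\pi$, so they never return to $1$ for $t \in (0,1]$, and the contribution of each block is computed to be $\pm \tfrac{1}{2}$ according to the sign of the corresponding eigenvalue of $S$. A secondary technical point is the well-definedness of the capping extension used in the existence construction: any two admissible cappings differ by a loop of Lagrangians in the complement of the Maslov cycle, which contributes zero to the Maslov index by the Homotopy and Zero axioms of \cref{thm:maslov lagrangian properties}, so $\conleyzehnder(A)$ depends only on $A$.
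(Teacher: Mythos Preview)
The paper does not prove this theorem; it is quoted from \cite{guttConleyZehnderIndex2012} and followed only by the remark that Homotopy, Loop and Signature already force uniqueness. So there is no in-paper proof to compare against.

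Your uniqueness argument is exactly the one the paper's remark alludes to, and it is correct. Your existence argument is plausible in outline but has a soft spot. You propose to cap the Lagrangian path $\Gamma(t)=\operatorname{Gr}(A(t))$ to a loop and then invoke the loop Maslov index of \cref{thm:maslov lagrangian properties}. The capping at the endpoint $\Gamma(1)$ is indeed well defined up to homotopy, since the set of Lagrangians transverse to $\Delta$ is contractible. However, the resulting loop is based at $\Gamma(0)=\Delta$, which lies on the Maslov cycle with maximal defect; the loop Maslov index of \cref{thm:maslov lagrangian properties} is an isomorphism $\pi_1(\mathcal L(2n))\to\Z$ and is insensitive to basepoint, so it does not directly see the crossing at $t=0$ that encodes the Conley--Zehnder contribution. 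In the standard treatments (Robbin--Salamon, or Gutt's extended rotation number) one uses a Maslov-type index for \emph{paths} relative to a fixed Lagrangian, which counts crossings with sign and handles the degenerate starting point correctly; that is what produces the half-signature in the Signature axiom. Your loop construction as stated would assign the same integer to $A$ and to any other path in the same free homotopy class of loops after capping, which is too coarse. To repair this you would either need to pass to the relative (path) Maslov index, or equivalently show that your capping recipe together with a crossing-form correction at $t=0$ reproduces it; as written, the existence half is a sketch rather than a proof.
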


\begin{remark}
    By \cite[Proposition 37]{guttConleyZehnderIndex2012}, the homotopy, loop and signature properties are enough to determine the Conley--Zehnder index uniquely.
\end{remark}

We finish this section with a result which we will use later on to compute a Conley--Zehnder index.

\begin{proposition}[{\cite[Proposition 41]{guttConleyZehnderIndex2012}}]
    \label{prp:gutts cz formula}
    Let $S$ be a symmetric, nondegenerate $2 \times 2$-matrix and $T > 0$ be such that $\exp(T J_0 S) \neq \idm$. Consider the path of symplectic matrices $A \colon [0,T] \longrightarrow \operatorname{Sp}(2)$ given by
    \begin{IEEEeqnarray*}{c+x*}
        A(t) \coloneqq \exp(t J_0 S).
    \end{IEEEeqnarray*}
    Let $a_1$ and $a_2$ be the eigenvalues of $S$ and $\signature S$ be its signature. Then,
    \begin{IEEEeqnarray*}{c+x*}
        \conleyzehnder(A) = 
        \begin{cases}
            \p{}{1}{\frac{1}{2} + \p{L}{1}{\frac{\sqrt{a_1 a_2} T}{2 \pi}}} \signature S & \text{if } \signature S \neq 0, \\
            0                                                                            & \text{if } \signature S = 0.
        \end{cases}
    \end{IEEEeqnarray*}
\end{proposition}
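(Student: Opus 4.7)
My plan is to split into three cases according to whether $\signature S$ is $0$, $+2$, or $-2$. Since $J_0 S$ has trace zero and determinant $a_1 a_2$, its eigenvalues are $\pm \sqrt{-a_1 a_2}$, which are real precisely when $\signature S = 0$ and purely imaginary otherwise. This dichotomy separates the hyperbolic behaviour of the flow $A(t) = \exp(t J_0 S)$ from the elliptic one and dictates the whole structure of the proof.

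The signature zero case I will dispatch immediately: here $a_1 a_2 < 0$, so $A(t)$ has eigenvalues $e^{\pm t \sqrt{-a_1 a_2}}$ off the unit circle for every $t > 0$, and the zero property of \cref{thm:properties of cz} forces $\conleyzehnder(A) = 0$.

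The main case is $\signature S = +2$, with $\omega \coloneqq \sqrt{a_1 a_2} > 0$. My first step is to reduce to the model $S = \omega \idm$ by conjugating with a suitable $P \in \operatorname{Sp}(2) = \operatorname{SL}(2,\R)$; concretely, $P$ can be built by diagonalizing $S$ with an $\operatorname{SO}(2)$-matrix and then rescaling with $\operatorname{diag}(r, 1/r)$ for an appropriate $r > 0$, so that $P J_0 S P^{-1} = \omega J_0$. Conjugation by the constant path $P$ preserves $\conleyzehnder$ by naturality, and after this reduction $A(t) = \exp(\omega t J_0)$ is a uniform rotation. Next, reparametrizing to $[0,1]$ and writing $\omega T = 2\pi k + \theta_0$ with $k = \lfloor \omega T / (2\pi) \rfloor$ and $\theta_0 \in (0, 2\pi)$ (allowed because $\exp(T J_0 S) \neq \idm$), I will factor the path pointwise as $B(s) \cdot C(s)$, where $B(s) = \exp(2\pi k s J_0)$ is a genuine loop at $\idm$ and $C(s) = \exp(\theta_0 s J_0)$. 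The normalization and product properties of \cref{thm:maslov sympl properties} give $\maslov(B) = k$, and the signature property of \cref{thm:properties of cz}, applied to the symmetric matrix $\theta_0 \idm$ (whose operator norm is $\theta_0 < 2\pi$), gives $\conleyzehnder(C) = 1$. The loop property then assembles these into $\conleyzehnder(A) = 1 + 2k$, which is the claimed formula for $\signature S = +2$.

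For $\signature S = -2$ I will appeal to the inverse property: $A^{-1}(t) = \exp(t J_0 (-S))$ falls into the previous case applied to $-S$, which has signature $+2$ and the same $\omega = \sqrt{a_1 a_2}$, and then $\conleyzehnder(A) = -\conleyzehnder(A^{-1})$ yields the correct sign. The step I expect to be the main obstacle is the symplectic normalization in the elliptic case: producing a single element of $\operatorname{Sp}(2)$ that conjugates $J_0 S$ to $\omega J_0$ is essentially the two-dimensional Williamson normal form, and it is what reduces the abstract axioms of \cref{thm:properties of cz} to a rotation path on which the loop, signature, and normalization properties can be read off directly.
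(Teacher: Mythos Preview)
The paper does not give its own proof of this proposition; it simply quotes the result from \cite[Proposition~41]{guttConleyZehnderIndex2012}. So there is no in-paper argument to compare against.

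That said, your proof sketch is correct and self-contained relative to the axiomatic characterization in \cref{thm:properties of cz}. The three-case split is the right organizing principle, and each case goes through as you describe: the zero property handles $\signature S = 0$ directly; for $\signature S = +2$ the symplectic normalization $P J_0 S P^{-1} = \omega J_0$ genuinely exists (the construction via $Q \in \operatorname{SO}(2)$ followed by $\operatorname{diag}(r, 1/r)$ with $r = (a_1/a_2)^{1/4}$ works because in dimension two $\operatorname{SO}(2)$ commutes with $J_0$), and then the factorization into a loop times a short path lets you read off $1 + 2k$ from the loop and signature properties; and the inverse property cleanly reduces $\signature S = -2$ to the previous case. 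One small point worth making explicit in a full write-up: the reparametrization from $[0,T]$ to $[0,1]$ does not affect $\conleyzehnder$ by the homotopy property, since the two paths are homotopic rel endpoints through paths in $\operatorname{SP}(1)$.
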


\section{First Chern class}

Denote by $\mathbf{Man}^2$ the category of manifolds which are $2$-dimensional, connected, compact, oriented and with empty boundary. We will give a definition of the first Chern class of a symplectic vector bundle $E \longrightarrow \Sigma$ where $\Sigma \in \mathbf{Man}^2$. Our presentation is based on \cite{mcduffIntroductionSymplecticTopology2017}. We will start by setting up some categorical language. Define a contravariant functor $\mathbf{Man}^2 \longrightarrow \mathbf{Set}$:
\begin{IEEEeqnarray*}{rrCl}
    \mathcal{E} \colon & \mathbf{Man}^2 & \longrightarrow & \mathbf{Set}                                             \\
                       & \Sigma         & \longmapsto     & \mathcal{E}(\Sigma) \coloneqq \{ \text{symplectic vector bundles with base $\Sigma$} \}/\sim \\
                       & f \downarrow   & \longmapsto     & \uparrow f^*                                             \\
                       & \Sigma'        & \longmapsto     & \mathcal{E}(\Sigma') \coloneqq \{ \text{symplectic vector bundles with base $\Sigma'$} \}/\sim,
\end{IEEEeqnarray*}
where $\sim$ is the equivalence relation coming from isomorphisms of symplectic vector bundles. Define also the following contravariant functors $\mathbf{Man}^2 \longrightarrow \mathbf{Set}$:
\begin{IEEEeqnarray*}{rrCl}
    H^2 \coloneqq H^2(-;\Z) \colon                          & \mathbf{Man}^2 & \longrightarrow & \mathbf{Set}, \\ \\
    H_2^* \coloneqq \operatorname{Hom}(H_2(-;\Z),\Z) \colon & \mathbf{Man}^2 & \longrightarrow & \mathbf{Set}, \\ \\
    \mathcal{Z} \colon                                      & \mathbf{Man}^2 & \longrightarrow & \mathbf{Set}                \\
                                                            & \Sigma         & \longmapsto     & \mathcal{Z}(\Sigma) \coloneqq \Z \\
                                                            & f \downarrow   & \longmapsto     & \uparrow \times \deg f           \\
                                                            & \Sigma'        & \longmapsto     & \mathcal{Z}(\Sigma') \coloneqq \Z.
\end{IEEEeqnarray*}
We have a natural transformation $\alpha \colon H^2 \longrightarrow H_2^*$ which is given by
\begin{IEEEeqnarray*}{rrCl}
    \alpha_\Sigma \colon & H^2(\Sigma;\Z) & \longrightarrow & \operatorname{Hom}(H_2(\Sigma;\Z),\Z) \\
                         & [\omega]       & \longmapsto     & \alpha_\Sigma([\omega]),
\end{IEEEeqnarray*}
where $\alpha_\Sigma([\omega])([\sigma]) = [\omega(\sigma)]$. By the universal coefficient theorem for cohomology (see for example \cite{rotmanIntroductionHomologicalAlgebra2009}), $\alpha_\Sigma$ is surjective. Both $H^2(\Sigma;\Z)$ and $\operatorname{Hom}(H_2(\Sigma;\Z),\Z)$ are isomorphic to $\Z$, since $\Sigma \in \mathbf{Man}^2$. Therefore, $\alpha$ is a natural isomorphism. We also have a natural isomorphism $\operatorname{ev} \colon H_2^* \longrightarrow \mathcal{Z}$, given by
\begin{IEEEeqnarray*}{rrCl}
    \operatorname{ev}_\Sigma \colon & \operatorname{Hom}(H_2(\Sigma;\Z),\Z) & \longrightarrow & \Z \\
                                    & \phi                                  & \longmapsto     & \phi([\Sigma]).
\end{IEEEeqnarray*}

As we will see, the first Chern class is a natural transformation $c_1 \colon \mathcal{E} \longrightarrow H^2$ and the first Chern number is a natural transformation (which we denote by the same symbol) $c_1 \colon \mathcal{E} \longrightarrow \mathcal{Z}$. These functors and natural transformations will all fit into the following commutative diagram:
\begin{IEEEeqnarray*}{c+x*}
    \begin{tikzcd}[ampersand replacement = \&]
        \mathcal{E} \ar[r, "c_1"] \ar[rrr, bend right=50, swap, "c_1"] \& H^2 \ar[r, hook, two heads, "\alpha"] \& H_2^* \ar[r, hook, two heads, "\operatorname{ev}"] \& \mathcal{Z}.
    \end{tikzcd}
\end{IEEEeqnarray*}
Therefore, the first Chern class determines and is determined by the first Chern number. More precisely, if $E \longrightarrow \Sigma$ is a symplectic vector bundle then the first Chern number of $E$ equals the first Chern class of $E$ evaluated on $\Sigma$:
\begin{IEEEeqnarray}{c+x*}
    \plabel{eq:first chern class vs number}
    c_1(E) = c_1(E)[\Sigma].
\end{IEEEeqnarray}

\begin{definition}[{\cite[Section 2.7]{mcduffIntroductionSymplecticTopology2017}}]
    \label{def:c1}
    Let $\Sigma \in \mathbf{Man}^2$ (i.e. $\Sigma$ is $2$-dimensional, connected, compact, oriented, with empty boundary) and $E \longrightarrow \Sigma$ be a symplectic vector bundle. We define the \textbf{first Chern number} of $E$, $c_1(E) \in \Z$, as follows. Choose embedded $0$-codimensional manifolds $\Sigma_1$ and $\Sigma_2$ of $\Sigma$ such that
    \begin{IEEEeqnarray*}{c+x*}
        S \coloneqq \del \Sigma_1 = \del \Sigma_2 = \Sigma_1 \cap \Sigma_2
    \end{IEEEeqnarray*}
    and $\Sigma$ is the gluing of $\Sigma_1$ and $\Sigma_2$ along $S$. Orient $S$ as the boundary of $\Sigma_1$. For $i=1,2$, denote by $\iota_i \colon \Sigma_i \longrightarrow \Sigma$ the inclusion and choose a symplectic trivialization
    \begin{IEEEeqnarray*}{c+x*}
        \tau^i \colon \iota_i^* E \longrightarrow \Sigma_i \times \R ^{2n}.
    \end{IEEEeqnarray*}
    Define the overlap map $A \colon S \longrightarrow \operatorname{Sp}(2n)$ by $A(x) = \tau^1_x \circ (\tau^2_x)^{-1}$. Denote by $S_1, \ldots, S_k$ the connected components of $S$ and parametrize each component by a loop $\gamma_i \colon S^1 \longrightarrow S_i$ such that $\dot{\gamma}_i(t)$ is positively oriented. Finally, let 
    \begin{IEEEeqnarray*}{c+x*}
        c_1(E) \coloneqq \sum_{i=1}^{k} \mu(A \circ \gamma_i),
    \end{IEEEeqnarray*}
    where $\mu$ is the Maslov index as in \cref{thm:maslov sympl properties}.
\end{definition}

\begin{theorem}[{\cite[Theorem 2.7.1]{mcduffIntroductionSymplecticTopology2017}}]
    The first Chern number is well-defined and it is the unique natural transformation $c_1 \colon \mathcal{E} \longrightarrow \mathcal{Z}$ which satisfies the following properties:%
    \begin{description}
        \item[(Classification)] If $E, E' \in \mathcal{E}(\Sigma)$ then $E$ and $E'$ are isomorphic if and only if $\operatorname{rank} E = \operatorname{rank} E'$ and $c_1(E) = c_1(E')$.
        \item[(Naturality)] If $f \colon \Sigma \longrightarrow \Sigma'$ is a smooth map and $E \in \mathcal{E}(\Sigma)$ then $c_1(f^*E) = \deg(f) c_1(E)$.
        \item[(Additivity)] If $E, E' \in \mathcal{E}(\Sigma)$ then $c_1(E \directsum E') = c_1(E) + c_1(E')$.
        \item[(Normalization)] The first Chern number of $T \Sigma$ is $c_1(T\Sigma) = 2 - 2g$.
    \end{description}
\end{theorem}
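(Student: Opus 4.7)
The plan is to first verify that $c_1(E)$ as given in \cref{def:c1} is independent of all auxiliary choices, then check each of the four listed properties, and finally derive uniqueness from the classification property.

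\textbf{Well-definedness.} I would fix the decomposition $\Sigma = \Sigma_1 \cup_S \Sigma_2$ first and vary the trivializations. If $\tilde{\tau}^i = g_i \tau^i$ for gauge transformations $g_i \colon \Sigma_i \longrightarrow \operatorname{Sp}(2n)$, the new overlap map on $S$ is $\tilde{A} = g_1 A g_2^{-1}$, and by the product property of the Maslov index from \cref{thm:maslov sympl properties} we get $\mu(\tilde{A} \circ \gamma_i) = \mu(A \circ \gamma_i) + \mu(g_1 \circ \gamma_i) - \mu(g_2 \circ \gamma_i)$. Since each loop $g_j \circ \gamma_i$ extends over the surface $\Sigma_j$ (whose boundary contains $S_i$), the sum $\sum_i \mu(g_j \circ \gamma_i)$ vanishes because the boundary of $\Sigma_j$ represents the zero class in $\pi_1(\operatorname{Sp}(2n))$, and the homotopy property then kills the correction. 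To handle changes of the decomposition, I would show that any two decompositions admit a common refinement, and that subdividing a piece $\Sigma_i$ by cutting along an auxiliary curve $S'$ does not change the sum: on the new piece one can arrange the trivialization to agree with the old one on both sides of $S'$, so the new overlap map on $S'$ is the identity and contributes nothing. Reversing the orientation of a loop $\gamma_i$ changes $\mu(A \circ \gamma_i)$ by a sign that is absorbed by orienting $S$ as $\partial \Sigma_1$.

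\textbf{The four properties.} Naturality is shown by pulling back a decomposition $\Sigma' = \Sigma'_1 \cup \Sigma'_2$ through $f$ and observing that if $f$ is transverse to $S'$, then $f^{-1}(S')$ is a union of circles, each mapping to $S'$ with a degree summing (with signs) to $\deg f$; pulling back the trivializations one sees that the Maslov indices scale by that degree. Additivity follows from the direct-sum property of the Maslov index applied to block-diagonal trivializations. For normalization, take the standard genus-$g$ splitting of $\Sigma_g$ into a pair of pants decomposition or, more simply, compute on $S^2 = D_+ \cup_{S^1} D_-$ using stereographic trivializations of $TS^2$ and show the overlap map $A \colon S^1 \longrightarrow \operatorname{Sp}(2)$ has Maslov index $2$, then induct on $g$ via the connected-sum formula coming from the additivity of the Euler characteristic (or equivalently apply naturality to a degree-$(2-2g)$ map $\Sigma_g \longrightarrow S^2$ after stabilizing $T\Sigma_g$).

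\textbf{Uniqueness.} Assuming the classification property (isomorphism class determined by rank and $c_1$), any other natural transformation $c' \colon \mathcal{E} \longrightarrow \mathcal{Z}$ satisfying the four properties must agree with $c_1$ on each $(\Sigma, E)$: by the normalization and additivity, $c'$ and $c_1$ agree on $T\Sigma_g \directsum (\text{trivial})$; by the classification, every $E$ of a given rank over $\Sigma$ is isomorphic to such a stabilized tangent bundle (after a degree-$1$ self-map of $\Sigma$ used via naturality to adjust the Chern number), so $c'(E) = c_1(E)$.

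\textbf{Main obstacle.} The substantive content is the classification property: two symplectic vector bundles of equal rank and equal first Chern number over $\Sigma$ are isomorphic. The standard route is to contract the space of compatible complex structures to reduce to complex vector bundles, then invoke that over a closed oriented surface a complex vector bundle is determined up to isomorphism by its rank and first Chern number — for instance via the clutching construction and the fact that $\pi_1(U(n)) \cong \Z$ is detected by the determinant. The bookkeeping of aligning this clutching description with the definition of $c_1$ via Maslov indices of overlap maps (using the relation $\mu = 2 \maslov$ compatibility on $U(n) \subset \operatorname{Sp}(2n)$ implicit in \cref{thm:maslov sympl properties}) is the technical heart of the argument.
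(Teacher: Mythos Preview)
The paper does not give its own proof of this theorem: it is stated with a citation to \cite[Theorem 2.7.1]{mcduffIntroductionSymplecticTopology2017} and no proof follows in the text. So there is nothing in the paper to compare your proposal against; your outline follows the standard textbook route and is broadly on target.

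One concrete issue in your uniqueness paragraph: the phrase ``after a degree-$1$ self-map of $\Sigma$ used via naturality to adjust the Chern number'' is confused, since naturality with a degree-$1$ map changes nothing. A cleaner way to finish is: naturality with a degree-$0$ map forces $c'$ to vanish on trivial bundles; normalization gives $c'(TS^2)=2$; applying naturality to a degree-$2$ self-map of $S^2$ and using classification (for $c_1$) to identify $f^*\mathcal{O}(1)$ with $TS^2$ yields $c'(\mathcal{O}(1))=1$; additivity plus classification then determines $c'$ on every bundle over $S^2$; finally, collapsing $\Sigma$ to $S^2$ by a degree-$1$ map and invoking classification once more transports the computation to arbitrary $\Sigma$.
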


\section{Conley--Zehnder index of a periodic orbit}

Let $(X,\omega)$ be a symplectic manifold of dimension $2n$ and $H \colon S^1 \times X \longrightarrow \R$ be a time-dependent Hamiltonian. For each $t \in S^1$ we denote by $H_t$ the map $H_t = H(t,\cdot) \colon X \longrightarrow \R$. The Hamiltonian $H$ has a corresponding time-dependent Hamiltonian vector field $X_H$ which is uniquely determined by 
\begin{IEEEeqnarray*}{c+x*}
    \edv H_t = - \iota_{X_{H_t}} \omega.
\end{IEEEeqnarray*}
We denote by $\phi^t_{X_H}$ the time-dependent flow of $X_{H}$.

\begin{definition}
    \label{def:orbit of hamiltonian}
    A \textbf{$1$-periodic orbit} of $H$ is a map $\gamma \colon S^1 \longrightarrow X$ such that
    \begin{IEEEeqnarray*}{c+x*}
        \dot{\gamma}(t) = X_{H_t} (\gamma(t))
    \end{IEEEeqnarray*}
    for every $t \in S^1$. If $\lambda$ is a symplectic potential for $(X,\omega)$, then the \textbf{action} of $\gamma$ is
    \begin{IEEEeqnarray*}{c+x*}
        \mathcal{A}_H(\gamma) \coloneqq \int_{S^1}^{} \gamma^* \lambda - \int_{S^1}^{} H(t, \gamma(t)) \edv t.
    \end{IEEEeqnarray*}
\end{definition}

\begin{definition}
    \label{def:nondegenerate hamiltonian orbit}
    Let $\gamma$ be a $1$-periodic orbit of $H$. We say that $\gamma$ is \textbf{nondegenerate} if the linear map
    \begin{IEEEeqnarray*}{c+x*}
        \dv \phi^{1}_{X_H} \colon T_{\gamma(0)} X \longrightarrow T_{\gamma(1)} X = T_{\gamma(0)} X
    \end{IEEEeqnarray*}
    does not have $1$ as an eigenvalue. We say that the Hamiltonian $H$ is \textbf{nondegenerate} if every $1$-periodic orbit of $H$ is nondegenerate.
\end{definition}

\begin{definition}
    \phantomsection\label{def:cz of hamiltonian orbit wrt trivialization}
    Let $\gamma$ be a $1$-periodic orbit of $H$ and $\tau$ be a symplectic trivialization of $\gamma^* TX$. We define the \textbf{Conley--Zehnder index} of $\gamma$ with respect to $\tau$, denoted $\conleyzehnder^{\tau}(\gamma)$, as follows. First, define a path of symplectic matrices $A^{\gamma,\tau} \colon [0,1] \longrightarrow \operatorname{Sp}(2n)$ by the equation $A^{\gamma,\tau}(t) \coloneqq \tau_t \circ \dv \phi^t_{X_H}(\gamma(0)) \circ \tau_{0}^{-1}$. In other words, $A^{\gamma,\tau}(t)$ is the unique linear map such that the diagram
    \begin{IEEEeqnarray*}{c+x*}
        \begin{tikzcd}
            T_{\gamma(0)} X \ar[d, swap, "\dv \phi^t_{X_{H}}(\gamma(0))"] \ar[r, "\tau_0"] & \R^{2n} \ar[d, "A^{\gamma,\tau}(t)"] \\
            T_{\gamma(t)} \ar[r, swap, "\tau_t"] & \R^{2n}
        \end{tikzcd}
    \end{IEEEeqnarray*}
    commutes. Notice that since $\gamma$ is nondegenerate, $A^{\gamma,\tau} \in \operatorname{SP}(n)$. Then, define
    \begin{IEEEeqnarray*}{c+x*}
        \conleyzehnder^{\tau}(\gamma) \coloneqq \conleyzehnder(A^{\gamma,\tau}).
    \end{IEEEeqnarray*}
\end{definition}

Let $D = \{ z \in \C \mid |z| \leq 1 \}$ be the disk and denote by $\iota_{D,S^1} \colon S^1 \longrightarrow D$ the inclusion on the boundary, i.e. $\iota_{D,S^1}(t) = e^{2 \pi i t}$.

\begin{lemma}
    \label{lem:cz of hamiltonian is independent of triv over filling disk}
    Let $\gamma$ be a $1$-periodic orbit of $H$. For $i = 1,2$, let $u_i \colon D \longrightarrow X$ be a filling disk for $\gamma$ (i.e. $\gamma = u_i \circ \iota_{D,S^1}$) and $\tau^i$ be a symplectic trivialization of $u_i^* TX$. If $c_1(TX)|_{\pi_2(X)} = 0$, then
    \begin{IEEEeqnarray*}{c+x*}
        \conleyzehnder^{\tau^1}(\gamma) = \conleyzehnder^{\tau^2}(\gamma).
    \end{IEEEeqnarray*}
\end{lemma}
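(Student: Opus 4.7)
The plan is to reduce the statement to the fact that the first Chern number of the symplectic vector bundle over $S^2$ obtained by gluing $u_1$ and $u_2$ along $\gamma$ vanishes, and then to use the loop property of the Conley--Zehnder index (\cref{thm:properties of cz}) together with \cref{def:c1} of the first Chern number.

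First I would reduce to the case where the two trivializations agree at $t = 0$. Given $\tau^1$ and $\tau^2$, let $C \coloneqq \tau^1_0 \circ (\tau^2_0)^{-1} \in \operatorname{Sp}(2n)$ and replace $\tau^2$ by $\tilde{\tau}^2_t \coloneqq C \circ \tau^2_t$. This is still a symplectic trivialization of $u_2^* TX|_{S^1}$; moreover $\tilde{\tau}^2_0 = \tau^1_0$, and the path $A^{\gamma,\tilde{\tau}^2}$ differs from $A^{\gamma,\tau^2}$ by conjugation with the constant path $C$, hence has the same Conley--Zehnder index by naturality. So I may assume $\tau^1_0 = \tau^2_0$ from the start.

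Next, define the loop of symplectic matrices $B \colon S^1 \longrightarrow \operatorname{Sp}(2n)$ by $B(t) \coloneqq \tau^1_t \circ (\tau^2_t)^{-1}$. Since $\tau^1_0 = \tau^2_0$, we have $B(0) = B(1) = \idm$, and a direct computation gives $A^{\gamma,\tau^1}(t) = B(t) \, A^{\gamma,\tau^2}(t)$. The loop property in \cref{thm:properties of cz} then yields
\begin{IEEEeqnarray*}{c+x*}
    \conleyzehnder^{\tau^1}(\gamma) = \conleyzehnder^{\tau^2}(\gamma) + 2 \maslov(B).
\end{IEEEeqnarray*}
Thus it suffices to show that $\maslov(B) = 0$.

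The key step, and the one which uses the hypothesis $c_1(TX)|_{\pi_2(X)} = 0$, is the identification $\maslov(B) = c_1(v^* TX)$, where $v \colon S^2 \longrightarrow X$ is the sphere obtained by gluing $u_1$ and $u_2$ along their common boundary $\gamma$, with appropriate orientations. Concretely, decompose $S^2 = D_1 \cup_{S^1} D_2$ with $D_1$ carrying the orientation of $u_1$ and $D_2$ the reversed orientation of $u_2$, so that $v|_{D_1} = u_1$ and $v|_{D_2} = u_2$. The trivializations $\tau^1$ and $\tau^2$ are symplectic trivializations of $v^*TX$ over the two hemispheres, and the overlap map along the equator $S^1$ parametrized positively (as boundary of $D_1$) is precisely $B$. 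By \cref{def:c1}, this gives $c_1(v^*TX) = \maslov(B)$. On the other hand, by \eqref{eq:first chern class vs number} and naturality of the first Chern class,
\begin{IEEEeqnarray*}{c+x*}
    c_1(v^* TX) = c_1(v^* TX)[S^2] = \p{<}{}{c_1(TX), v_*[S^2]},
\end{IEEEeqnarray*}
and this vanishes because $v_*[S^2] \in H_2(X;\Z)$ is represented by the sphere $v \in \pi_2(X)$ and $c_1(TX)$ vanishes on $\pi_2(X)$ by hypothesis. Hence $\maslov(B) = 0$ and the two Conley--Zehnder indices agree.

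The main obstacle is the careful bookkeeping in the identification $\maslov(B) = c_1(v^* TX)$: one has to verify that the orientations chosen for $D_1$, $D_2$ and the equator $S^1$ in the construction of $v$ match those implicit in \cref{def:c1}, and that the overlap map of the trivializations $\tau^1$, $\tau^2$ really is $B$ and not $B^{-1}$. Everything else is a direct application of the properties already established.
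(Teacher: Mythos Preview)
Your proposal is correct and follows essentially the same route as the paper: define the overlap loop $B(t) = \tau^1_t \circ (\tau^2_t)^{-1}$, apply the loop property of $\conleyzehnder$, and identify $\maslov(B)$ with the first Chern number of the glued sphere. The only cosmetic difference is that you normalize $\tau^2$ upfront so that $B(0) = \idm$ (using naturality), whereas the paper keeps $B(0)$ arbitrary and handles the resulting $B(0)^{-1}$ factor via naturality and homotopy invariance of $\maslov$ inside the computation. One small point: when you replace $\tau^2$ by $C \circ \tau^2$, you should note that since $C$ is constant this extends to a trivialization over all of $u_2^* TX$ (not just over $S^1$), which you implicitly use in the gluing step.
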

\begin{proof}
    Consider the diagram
    \begin{IEEEeqnarray}{c+x*}
        \plabel{eq:diagram cz indep choices}
        \begin{tikzcd}
            \R^{2n} \ar[d, swap, "A^{\gamma,\tau^1}(t)"] & T_{\gamma(0)} X \ar[d, "\dv \phi^t_{X_H}(\gamma(0))"] \ar[l, swap, "\tau^1_0"] \ar[r, "\tau^2_0"] & \R ^{2n} \ar[ll, bend right=50, swap, "B(0)"] \ar[d, "A^{\gamma,\tau^2}(t)"] \\
            \R^{2n}                                      & T_{\gamma(t)} X \ar[l, "\tau^1_t"] \ar[r, swap, "\tau^2_t"]                                       & \R ^{2n} \ar[ll, bend left=50, "B(t)"] \\
        \end{tikzcd}
    \end{IEEEeqnarray}
    where we have defined $B(t) \coloneqq \tau^1_t \circ (\tau^2_t)^{-1}$. Let $\sigma \colon S^2 \longrightarrow X$ be the gluing of the disks $u_1$ and $u_2$ along their common boundary $\gamma$. Then,
    \begin{IEEEeqnarray*}{rCls+x*}
        \IEEEeqnarraymulticol{3}{l}{\conleyzehnder^{\tau^1}(\gamma) - \conleyzehnder^{\tau^2}(\gamma)}\\ \quad
        & = & \conleyzehnder(A^{\gamma,\tau^1}) - \conleyzehnder(A^{\gamma,\tau^2})             & \quad [\text{by \cref{def:cz of hamiltonian orbit wrt trivialization}}]\\
        & = & \conleyzehnder(B A^{\gamma,\tau^2} B(0)^{-1}) - \conleyzehnder(A^{\gamma,\tau^2}) & \quad [\text{by diagram \eqref{eq:diagram cz indep choices}}] \\
        & = & \conleyzehnder(B(0)^{-1} B A^{\gamma,\tau^2}) - \conleyzehnder(A^{\gamma,\tau^2}) & \quad [\text{by naturality of $\conleyzehnder$}] \\
        & = & 2 \mu(B(0)^{-1} B)                                                                & \quad [\text{by the loop property of $\conleyzehnder$}] \\
        & = & 2 \mu(B)                                                                          & \quad [\text{by homotopy invariance of $\maslov$}] \\
        & = & 2 c_1(\sigma^* TX)                                                                & \quad [\text{by definition of the first Chern number}] \\
        & = & 2 c_1 (TX) ([\sigma])                                                             & \quad [\text{by Equation \eqref{eq:first chern class vs number}}] \\
        & = & 0                                                                                 & \quad [\text{by assumption}].                                             & \qedhere
    \end{IEEEeqnarray*}
\end{proof}

Let $(M,\alpha)$ be a contact manifold of dimension $2n + 1$ with Reeb vector field $R$. Our goal is to repeat the discussion of the first part of this section in the context of periodic orbits of $R$.

\begin{definition}
    A \textbf{Reeb orbit} is a map $\gamma \colon \R / T \Z \longrightarrow M$ such that
    \begin{IEEEeqnarray*}{c+x*}
        \dot{\gamma}(t) = R(\gamma(t))
    \end{IEEEeqnarray*}
    for every $t \in S^1$. In this case, we call $T$ the \textbf{period} of $\gamma$. The \textbf{multiplicity} of $\gamma$, which we will usually denote by $m$, is the degree of the map $\gamma \colon \R / T \Z \longrightarrow \img \gamma$. The \textbf{action} of $\gamma$ is
    \begin{IEEEeqnarray*}{c+x*}
        \mathcal{A}(\gamma) \coloneqq \int_{0}^{T} \gamma^* \lambda = T. 
    \end{IEEEeqnarray*}
\end{definition}

\begin{remark}
    Alternatively, a $T$-periodic Reeb orbit can be seen as a map $\gamma \colon S^1 \longrightarrow M$ such that $\dot{\gamma}(t) = T R(\gamma(t))$. We will use the two possible descriptions interchangeably.
\end{remark}

Since $\ldv{R} \alpha = 0$ (by \cref{lem:reeb vf preserves contact form}) and using \cref{lem:mosers trick}, we conclude that $(\phi^t_R)^* \alpha = \alpha$. In particular, $\dv \phi^t_R(p) (\xi_p) \subset \xi_{\phi^t_R(p)}$ and
\begin{IEEEeqnarray*}{c+x*}
    \dv \phi^t_R(p) \colon \xi_p \longrightarrow \xi_{\phi^t_R(p)}
\end{IEEEeqnarray*}
is a symplectic linear map.

\begin{definition}
    A Reeb orbit $\gamma$ of $M$ is \textbf{nondegenerate} if the linear map
    \begin{IEEEeqnarray*}{c+x*}
        \dv \phi^1_R(\gamma(0)) \colon \xi_{\gamma(0)} \longrightarrow \xi_{\gamma(1)} = \xi_{\gamma(0)}
    \end{IEEEeqnarray*}
    does not have $1$ as an eigenvalue. We say that $(M, \alpha)$ is \textbf{nondegenerate} if every Reeb orbit in $M$ is nondegenerate. If $(X, \lambda)$ is a Liouville domain, then $(X, \lambda)$ is \textbf{nondegenerate} if $(\partial X, \lambda|_{\partial X})$ is nondegenerate.
\end{definition}

\begin{definition}
    \label{def:cz of reeb orbit wrt trivialization}
    Let $\gamma$ be a periodic orbit of $R$ and $\tau$ be a symplectic trivialization of $\gamma^* \xi$. The \textbf{Conley--Zehnder index} of $\gamma$ is given by
    \begin{IEEEeqnarray*}{c+x*}
        \conleyzehnder^{\tau}(\gamma) \coloneqq \conleyzehnder(A^{\gamma,\tau}),
    \end{IEEEeqnarray*}
    where $A^{\gamma,\tau} \colon [0,1] \longrightarrow \operatorname{Sp}(2n)$ is the path of symplectic matrices given by the equation $A^{\gamma,\tau}(t) \coloneqq \tau_t \circ \dv \phi^t_{R}(\gamma(0)) \circ \tau_{0}^{-1}$.
\end{definition}

\begin{lemma}
    \label{lem:cz of reeb is independent of triv over filling disk}
    Let $(X, \lambda)$ be a Liouville domain and $\gamma \colon S^1 \longrightarrow \partial X$ be a Reeb orbit. For $i = 1,2$, let $u_i \colon D \longrightarrow X$ be a filling disk for $\gamma$ (i.e. $\iota_{X,\partial X} \circ \gamma = u_i \circ \iota_{D,S^1}$). Let $\tau^i$ be a symplectic trivialization of $u_i^* TX$ and denote also by $\tau^i$ the induced trivialization of $(\iota_{X,\partial X} \circ \gamma)^* TX$. Assume that
    \begin{IEEEeqnarray*}{rClCl}
        \tau^i_{t}(Z_{\gamma(t)}) & = & e_1     & \in & \R^{2n}, \\
        \tau^i_{t}(R_{\gamma(t)}) & = & e_{n+1} & \in & \R^{2n},
    \end{IEEEeqnarray*}
    for every $t \in S^1$. If $2 c_1(TX) = 0$, then
    \begin{IEEEeqnarray*}{c+x*}
        \conleyzehnder^{\tau^1}(\gamma) = \conleyzehnder^{\tau^2}(\gamma).
    \end{IEEEeqnarray*}
\end{lemma}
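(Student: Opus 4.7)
The plan is to mimic the argument of \cref{lem:cz of hamiltonian is independent of triv over filling disk}, adapting it to the contact setting and exploiting the normalization conditions on $\tau^i$ to compensate for the weaker topological assumption $2c_1(TX) = 0$ (in place of $c_1(TX)|_{\pi_2(X)} = 0$). Consider the overlap loop $B \colon S^1 \longrightarrow \operatorname{Sp}(2n)$ defined by $B(t) = \tau^1_t \circ (\tau^2_t)^{-1}$. The normalization conditions imply that each $B(t)$ fixes both $e_1$ and $e_{n+1}$ pointwise; being symplectic, it therefore preserves the symplectic complement $W \coloneqq \spn(e_1, e_{n+1})^{\omega_0} \cong \R^{2n-2}$. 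By the decomposition $\iota^* TX = \p{<}{}{Z} \directsum \p{<}{}{R} \directsum \xi$ from \cref{lem:decomposition coming from contact hypersurface}, the restrictions $\tau^i|_{\gamma^* \xi}$ are trivializations of $\gamma^* \xi$ into $W$, and $B$ splits as $B = \operatorname{id}_{\R^2} \directsum B^{\xi}$, where $B^{\xi} \colon S^1 \longrightarrow \operatorname{Sp}(W)$ is the overlap loop of the restricted trivializations.

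Next, I would run the computation in the proof of \cref{lem:cz of hamiltonian is independent of triv over filling disk} verbatim, applied to the trivializations $\tau^i|_{\gamma^* \xi}$ and the paths $A^{\gamma, \tau^i} \in \operatorname{SP}(n-1)$ of \cref{def:cz of reeb orbit wrt trivialization}: using naturality and the loop property of $\conleyzehnder$ together with the homotopy invariance of $\maslov$, one obtains
\begin{IEEEeqnarray*}{c+x*}
    \conleyzehnder^{\tau^1}(\gamma) - \conleyzehnder^{\tau^2}(\gamma) = 2 \maslov(B^{\xi}).
\end{IEEEeqnarray*}
The direct sum property of the Maslov index (\cref{thm:maslov sympl properties}), together with the fact that a constant loop has Maslov index zero, then gives $\maslov(B) = \maslov(\operatorname{id}_{\R^2}) + \maslov(B^{\xi}) = \maslov(B^{\xi})$.

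Finally, since $u_1$ and $u_2$ agree on $\partial D$, gluing them along $\gamma$ produces a sphere $\sigma \colon S^2 \longrightarrow X$, and $B$ is by construction the transition cocycle between $\tau^1$ and $\tau^2$ viewed as trivializations of $\sigma^* TX$ on the two hemispheres. By \cref{def:c1} and \eqref{eq:first chern class vs number}, $\maslov(B) = c_1(\sigma^* TX) = c_1(TX)(\sigma_*[S^2])$, so combining the steps
\begin{IEEEeqnarray*}{c+x*}
    \conleyzehnder^{\tau^1}(\gamma) - \conleyzehnder^{\tau^2}(\gamma) = 2 \maslov(B^{\xi}) = 2 \maslov(B) = (2 c_1(TX))(\sigma_*[S^2]) = 0,
\end{IEEEeqnarray*}
where the last equality uses $2 c_1(TX) = 0$. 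The point that requires the most care, and which is the main subtlety compared to \cref{lem:cz of hamiltonian is independent of triv over filling disk}, is the direct sum decomposition $B = \operatorname{id}_{\R^2} \directsum B^{\xi}$: it is precisely this splitting that absorbs the factor of $2$ produced by the Maslov loop property into the weaker hypothesis on the first Chern class.
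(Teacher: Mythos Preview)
Your proof is correct and follows essentially the same approach as the paper: define the overlap loop $B$ on $\R^{2n}$, use the normalization $\tau^i(Z)=e_1$, $\tau^i(R)=e_{n+1}$ to split $B$ as $\id_{\R^2}\oplus B^{\xi}$, reduce the Conley--Zehnder difference to $2\maslov(B^{\xi})=2\maslov(B)$ via the direct sum property, and identify $2\maslov(B)$ with $2c_1(\sigma^*TX)$ for the glued sphere $\sigma$. The paper writes $B^{2n}$ and $B^{2n-2}$ for your $B$ and $B^{\xi}$, and invokes the computation from \cref{lem:cz of hamiltonian is independent of triv over filling disk} exactly as you do.
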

\begin{proof}
    By the assumptions on $\tau^i$, the diagram
    \begin{IEEEeqnarray}{c+x*}
        \plabel{eq:diagram cz reeb indep triv}
        \begin{tikzcd}
            \xi_{\gamma(t)} \ar[r] \ar[d, swap, "\tau^i_t"] & T_{\gamma(t)} X \ar[d, "\tau^i_t"] & \xi^{\perp}_{\gamma(t)} \ar[d, "\tau^i_t"] \ar[l] \\
            \R^{2n-2} \ar[r, swap, "\iota_{\R^{2n-2}}"]     & \R^{2n}                            & \R^{2} \ar[l, "\iota_{\R^{2}}"]
        \end{tikzcd}
    \end{IEEEeqnarray}
    commutes, where
    \begin{IEEEeqnarray*}{rCls+x*}
        \iota_{\R^{2n-2}}(x^2,\ldots,x^n,y^2,\ldots,y^n) & = & (0,x^2,\ldots,x^n,0,y^2,\ldots,y^n), \\
        \iota_{\R^{2}}(x,y)                              & = & (x,0,\ldots,0,y,0,\ldots,0).
    \end{IEEEeqnarray*}
    Define 
    \begin{IEEEeqnarray*}{rCcCrCl}
        B^{2n}(t)   & \coloneqq & \tau^1_t \circ (\tau^2_t)^{-1} & \colon & \R^{2n}   & \longrightarrow & \R^{2n}, \\
        B^{2n-2}(t) & \coloneqq & \tau^1_t \circ (\tau^2_t)^{-1} & \colon & \R^{2n-2} & \longrightarrow & \R^{2n-2},
    \end{IEEEeqnarray*}
    By the assumptions on $\tau^i$, and diagram \eqref{eq:diagram cz reeb indep triv},
    \begin{IEEEeqnarray}{c+x*}
        \plabel{eq:decomposition of b}
        B^{2n}(t) =
        \begin{bmatrix}
            \id_{\R^2} & 0 \\
            0          & B^{2n-2}
        \end{bmatrix}.
    \end{IEEEeqnarray}
    Let $\sigma \colon S^2 \longrightarrow X$ be the gluing of the disks $u_1$ and $u_2$ along their common boundary $\gamma$. Finally, we compute
    \begin{IEEEeqnarray*}{rCls+x*}
        \conleyzehnder^{\tau^1}(\gamma) - \conleyzehnder^{\tau^2}(\gamma)
        & = & 2 \mu (B^{2n-2})   & \quad [\text{by the same computation as in \cref{lem:cz of hamiltonian is independent of triv over filling disk}}] \\
        & = & 2 \mu (B^{2n})     & \quad [\text{by Equation \eqref{eq:decomposition of b} and \cref{thm:maslov sympl properties}}] \\
        & = & 2 c_1(\sigma^* TX) & \quad [\text{by definition of first Chern class}] \\
        & = & 0                  & \quad [\text{by assumption}].                                                                                        & \qedhere
    \end{IEEEeqnarray*}
\end{proof}

\begin{remark}
    \label{rmk:notation for tuples of orbits}
    Suppose that $\Gamma = (\gamma_1, \ldots, \gamma_p)$ is a tuple of (Hamiltonian or Reeb) orbits and $\tau$ is a trivialization of the relevant symplectic vector bundle over each orbit. We will frequently use the following notation:
    \begin{IEEEeqnarray*}{rCls+x*}
        \mathcal{A}(\Gamma)           & \coloneqq & \sum_{i=1}^{p} \mathcal{A}(\gamma_i), \\
        \conleyzehnder^{\tau}(\Gamma) & \coloneqq & \sum_{i=1}^{p} \conleyzehnder^{\tau}(\gamma_i).
    \end{IEEEeqnarray*}
    If $\beta = \sum_{i=1}^{m} a_i \Gamma_i$ is a formal linear combination of tuples of orbits, then we denote
    \begin{IEEEeqnarray*}{c+x*}
        \mathcal{A}(\beta) \coloneqq \max_{i = 1, \ldots, m} \mathcal{A}(\Gamma_i).
    \end{IEEEeqnarray*}
    The action of a formal linear combination is going to be relevant only in \cref{chp:contact homology}, where we will consider the action filtration on linearized contact homology.
\end{remark}

\section{Periodic Reeb orbits in a unit cotangent bundle}

Let $(L, g)$ be an orientable Riemannian manifold of dimension $n$. Recall that $L$ has a cotangent bundle $\pi \colon T^* L \longrightarrow L$, which is an exact symplectic manifold with symplectic potential $\lambda \in \Omega^1(T^* L)$, symplectic form $\omega \coloneqq \edv \lambda$ and Liouville vector field $Z$ given by $\iota_Z \omega = \lambda$. We will denote by $z \colon L \longrightarrow T^*L$ the zero section. Consider the unit cotangent bundle $\pi \colon S^* L \longrightarrow L$ and denote by $\iota \colon S^* L \longrightarrow L$ the inclusion. Then, $\alpha \coloneqq \iota^* \lambda$ is a contact form on $S^* L$, with associated contact distribution $\xi = \ker \alpha \subset T S^* L$ and Reeb vector field $R \in \mathfrak{X}(S^* L)$. The Riemannian metric $g$ defines a vector bundle isomorphism $\tilde{g} \colon TL \longrightarrow T^*L$ given by $\tilde{g}(v) = g(v, \cdot)$.

Let $\ell > 0$ and $c \colon \R / \ell \Z \longrightarrow L$ be a curve which is parametrized by arclength. Define $\gamma \coloneqq \tilde{g} \circ \dot{c} \colon \R / \ell \Z \longrightarrow S^* L$. Then, by \cref{thm:flow geodesic vs hamiltonian,thm:flow reeb vs hamiltonian}, the curve $c$ is a geodesic (of length $\ell$) if and only if $\gamma$ is a Reeb orbit (of period $\ell$). We will assume that this is the case. The goal of this section is to study specific sets of trivializations and maps between these sets (see diagram \eqref{eq:diagram of maps of trivializations}), which can be used to define the Conley--Zehnder index of $\gamma$ (see \cref{thm:index of geodesic or reeb orbit isometric triv}).

Since $T^* L$ is a symplectic manifold, $T T^* L \longrightarrow T^* L$ is a symplectic vector bundle. The hyperplane distribution $\xi$ is a symplectic subbundle of $\iota^* T T^* L \longrightarrow S^* L$. We can consider the symplectic complement of $\xi$, which by \cref{lem:decomposition coming from contact hypersurface} is given by
\begin{IEEEeqnarray*}{c+x*}
    \xi^{\perp}_{u} = \p{<}{}{Z_u} \oplus \p{<}{}{R_u}
\end{IEEEeqnarray*}
for every $u \in S^* L$. Finally, $T^* L \oplus T L \longrightarrow L$ is a symplectic vector bundle, with symplectic structure given by
\begin{IEEEeqnarray*}{c+x*}
    \omega_{T^* L \oplus TL}((u,v), (x,y)) = u(y) - x(v).
\end{IEEEeqnarray*}

\begin{remark}
    \label{rmk:connections}
    Let $\pi \colon E \longrightarrow B$ be a vector bundle. Consider the vector bundles $\pi^* E$, $TE$ and $\pi^* TB$ over $E$. There is a short exact sequence
    \begin{IEEEeqnarray*}{c+x*}
        \phantomsection\label{eq:short exact sequence of vector bundles}
        \begin{tikzcd}
            0 \ar[r] & \pi^* E \ar[r, "I^V"] & TE \ar[r, "P^H"] & \pi^* T B \ar[r] & 0
        \end{tikzcd} 
    \end{IEEEeqnarray*}
    of vector bundles over $E$, where
    \begin{IEEEeqnarray*}{rClCrClCl}
        I^V_e & \coloneqq & \dv \iota_e(e)                              & \colon & E_{\pi(e)} & \longrightarrow & T_e E,       & \quad & \text{where } \iota_e \colon E_{\pi(e)} \longrightarrow E \text{ is the inclusion,} \\
        P^H_e & \coloneqq & \dv \parbox{\widthof{$\iota_e$}}{$\pi$} (e) & \colon & T_e E      & \longrightarrow & T_{\pi(e)} B,
    \end{IEEEeqnarray*}
    for every $e \in E$. Recall that a \textbf{Koszul connection} on $E$ is a map
    \begin{IEEEeqnarray*}{c+x*}
        \nabla \colon \mathfrak{X}(B) \times \Gamma(E) \longrightarrow \Gamma(E)
    \end{IEEEeqnarray*}
    which is $C^{\infty}$-linear on $\mathfrak{X}(B)$ and satisfies the Leibniz rule on $\Gamma(E)$. A \textbf{linear Ehresmann connection} on $E$ is a vector bundle map $P^V \colon TE \longrightarrow \pi^* E$ such that $P^V \circ I^V = \id_{\pi^* TB}$ and $P^V \circ T m_{\lambda} = m_{\lambda} \circ P^V$ for every $\lambda \in \R$, where $m_{\lambda} \colon E \longrightarrow E$ is the map which multiplies by $\lambda$. The sets of Koszul connections on $E$ and of linear Ehresmann connections on $E$ are in bijection. If $\nabla$ is a Koszul connection on $E$, the corresponding linear Ehresmann connection is given as follows. Let $I^H \colon \pi^* TB \longrightarrow TE$ be the map which is given by
    \begin{IEEEeqnarray*}{c+x*}
        I^H_e(u) \coloneqq \dv s (\pi(e)) u - I^V_e(\nabla_u^{} s)
    \end{IEEEeqnarray*}
    for every $e \in E$ and $u \in T_{\pi(e)} B$, where $s$ in any choice of section of $\pi \colon E \longrightarrow B$ such that $s(\pi(e)) = e$. The map $I^H$ is independent of the choice of section $s$ and satisfies $P^H \circ I^H = \id_{\pi^* TB}$. Let $P^V \colon TE \longrightarrow \pi^* E$ be the map which is given by
    \begin{IEEEeqnarray*}{c+x*}
        P^V_e(w) \coloneqq (I^V_e)^{-1} (w - I^H_e \circ P^H_e (w))
    \end{IEEEeqnarray*}
    for every $e \in E$ and $w \in T_e E$. We point out that this definition is well-posed, since $w - I^H_e \circ P^H_e (w) \in \ker P^H_e = \img I^V_e$. As before, $P^V \circ I^V = \id_{\pi^* E}$. Finally, the maps
    \begin{IEEEeqnarray*}{rCrCrCl}
        I & \coloneqq & I^V & \oplus & I^H & \colon & \pi^* E \oplus \pi^* T B \longrightarrow TE, \\
        P & \coloneqq & P^V & \times & P^H & \colon & TE \longrightarrow \pi^* E \oplus \pi^* T B,
    \end{IEEEeqnarray*}
    are isomorphisms and inverses of one another.
\end{remark}

Consider the Levi-Civita connection on $L$, which is a Koszul connection on $T L$. There is an induced Koszul connection on $T^* L$ given by
\begin{IEEEeqnarray*}{c+x*}
    (\nabla_X \beta)(Y) \coloneqq X(\beta(Y)) - \beta(\nabla_X Y),
\end{IEEEeqnarray*}
for every $X, Y \in \mathfrak{X}(L)$ and $\beta \in \Gamma(T^* L) = \Omega^1(L)$. By \cref{rmk:connections} (with $B = L$ and $E = T^*L$), there is an induced linear Ehresmann connection on $\pi \colon T^*L \longrightarrow L$ which is given by maps
\begin{IEEEeqnarray*}{rCrCrCl}
    I & \coloneqq & I^V & \oplus & I^H & \colon & \pi^* T^* L \oplus \pi^* T L \longrightarrow T T^* L, \\
    P & \coloneqq & P^V & \times & P^H & \colon & T T^* L \longrightarrow \pi^* T^* L \oplus \pi^* T L.
\end{IEEEeqnarray*}

\begin{lemma}
    \label{prop:properties of p}
    The maps $I$ and $P$ are isomorphisms of symplectic vector bundles. Moreover,
    \begin{IEEEeqnarray}{rClCl}
        P(Z_u) & = & (u,0),                 & \quad & \text{ for every } u \in T^* L, \plabel{eq:p of vfs 1} \\
        P(R_u) & = & (0,\tilde{g}^{-1}(u)), & \quad & \text{ for every } u \in S^* L. \plabel{eq:p of vfs 2}
    \end{IEEEeqnarray}
\end{lemma}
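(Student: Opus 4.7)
The fact that $I$ and $P$ are mutually inverse vector bundle isomorphisms is already recorded in \cref{rmk:connections}, so the work is only in (a) checking that $I$ (equivalently $P$) is symplectic and (b) verifying the formulas \eqref{eq:p of vfs 1} and \eqref{eq:p of vfs 2}. My plan is to reduce all three to a local computation in one favorable chart. Fix $u \in T^*L$, set $q_0 \coloneqq \pi(u)$, choose Riemannian normal coordinates $(q^1, \ldots, q^n)$ for $(L,g)$ centered at $q_0$, and pass to the induced Darboux chart $(q^i, p_i)$ on $T^*L$ in which $\lambda = \sum_i p_i \edv q^i$. The defining feature of normal coordinates is that all Christoffel symbols of the Levi-Civita connection vanish at $q_0$, equivalently $g_{ij}(q_0) = \delta_{ij}$ together with $\pdv{g_{jk}}{q^i}(q_0) = 0$; raising indices then also gives $g^{ij}(q_0) = \delta^{ij}$ and $\pdv{g^{jk}}{q^i}(q_0) = 0$.

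For (a), I would apply the formula for $I^H$ from \cref{rmk:connections} to the constant-coefficient section $s = \sum_i u_i \edv q^i$ (which satisfies $s(q_0) = u$); the vanishing of the Christoffel symbols at $q_0$ collapses the correction term and yields, at the point $u$, the explicit identifications $I^V_u(\edv q^i|_{q_0}) = \pdv{}{p_i}$ and $I^H_u(\pdv{}{q^j}|_{q_0}) = \pdv{}{q^j}$. Plugging these into $\omega|_u = \sum_k \edv p_k \wedge \edv q^k$, the vertical-vertical and horizontal-horizontal pairings vanish by inspection, while the two mixed pairings reproduce exactly $\beta(v') - \beta'(v)$, which is the value of $\omega_{T^*L \oplus TL}$ on $((\beta,v),(\beta',v'))$. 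Hence $I^*\omega = \omega_{T^*L \oplus TL}$ at $u$, and since $u$ was arbitrary both $I$ and $P$ are symplectic.

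For \eqref{eq:p of vfs 1}, I would note that $Z$ generates the fiberwise dilation $(t,u) \mapsto e^t u$, so $Z_u$ is tangent to the fiber $T^*_{q_0}L$ and under the canonical identification $T_u(T^*_{q_0}L) \cong T^*_{q_0}L$, which is precisely $I^V_u$, corresponds to $u$ itself; thus $Z_u = I^V_u(u)$, giving $P^H(Z_u) = 0$ and $P^V(Z_u) = u$. (Equivalently, $\iota_Z \omega = \lambda$ in coordinates gives $Z = \sum_i p_i \pdv{}{p_i}$.) For \eqref{eq:p of vfs 2}, \cref{thm:flow reeb vs hamiltonian} reduces the claim to computing $X_H|_u$, where $H(u) = \tfrac{1}{2} \| u \|^2$. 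Evaluating \eqref{eq:hamiltonian vector field in coordinates} at $u$ in the chosen chart, the factors $\pdv{g^{jk}}{q^i}(q_0) = 0$ kill the vertical part, leaving $X_H|_u = \sum_i u_i \pdv{}{q^i}$ at $u$; and $g_{ij}(q_0) = \delta_{ij}$ identifies $\sum_i u_i \pdv{}{q^i}|_{q_0}$ with $\tilde{g}^{-1}(u)$. By the explicit formula for $I^H_u$ derived above, this expresses $R_u = X_H|_u = I^H_u(\tilde{g}^{-1}(u))$, so $P^V(R_u) = 0$ and $P^H(R_u) = \tilde{g}^{-1}(u)$.

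The only conceptual input is the choice of normal coordinates, which simultaneously cancels the connection-correction terms that appear in $I^H$ and the derivative-of-metric terms that appear in $X_H$, reducing both non-trivial computations to a single line each. I do not foresee any obstacle beyond this bookkeeping.
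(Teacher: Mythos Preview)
Your proposal is correct and follows essentially the same approach as the paper: both reduce everything to a computation in normal coordinates centred at $\pi(u)$, use the vanishing of Christoffel symbols there to get $I^V_u(\edv q^i) = \partial/\partial p_i$ and $I^H_u(\partial/\partial q^j) = \partial/\partial q^j$, conclude that $P$ is the identity matrix in the two symplectic bases, and then read off the formulas for $Z$ and $R$ from their coordinate expressions (the paper does the latter via \eqref{eq:liouville vector field in coordinates} and \eqref{eq:reeb vector field in coordinates}, which is exactly your computation). Your dilation description of $Z_u = I^V_u(u)$ is a slightly more conceptual packaging of the same fact, but the underlying argument is identical.
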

\begin{proof}
    Let $q \coloneqq \pi(u)$ and choose normal coordinates $(q^1,\ldots,q^n)$ on $L$ centred at $q$ (this means that with respect to these coordinates, $g_{ij}(q) = \delta_{ij}$ and $\partial_k g_{ij} (q) = 0$). Let $(q^1, \ldots, q^n, p_1, \ldots, p_n)$ be the induced coordinates on $T^* L$. Then, the vector spaces $T_u T^*L$ and $T^*_q L \directsum T_q L$ have the following symplectic bases:
    \begin{IEEEeqnarray}{rCls+x*}
        T_ u T^*L                & = & \spn \p{c}{2}{ \pdv{}{p_1}\Big|_{u}, \cdots, \pdv{}{p_n}\Big|_{u}, \pdv{}{q^1}\Big|_{u}, \cdots, \pdv{}{q^n}\Big|_{u} },          \plabel{eq:basis 1} \\
        T^*_q L \directsum T_q L & = & \spn \p{c}{1}{ \edv q^1|_q, \ldots, \edv q^n|_q } \directsum \spn \p{c}{2}{ \pdv{}{q^1}\Big|_{q}, \cdots, \pdv{}{q^n}\Big|_{q} }. \plabel{eq:basis 2}
    \end{IEEEeqnarray}
    By the definitions of $P$ and $I$ in \cref{rmk:connections}, we have
    \begin{IEEEeqnarray}{rCls+x*}
        I^V_u (\edv q^i|_q)                   & = & \pdv{}{p_i}\Big|_u,                              \IEEEnonumber\\
        P^H_u \p{}{2}{ \pdv{}{q^i}\Big|_{u} } & = & \pdv{}{q^i}\Big|_{q},                            \plabel{eq:p horizontal in coordinates} \\
        P^V_u \p{}{2}{ \pdv{}{p_i}\Big|_{u} } & = & P^V_u \circ I^V_u (\edv q^i|_{q}) = \edv q^i|_q, \plabel{eq:p vertical in coordinates}
    \end{IEEEeqnarray}
    which implies that $P$ is the identity matrix when written with respect to the bases \eqref{eq:basis 1} and \eqref{eq:basis 2}. Since these bases are symplectic, $P$ is a symplectic linear map. With respect to the coordinates $(q^1, \ldots, q^n, p_1, \ldots, p_n)$, the Liouville vector field is given by
    \begin{IEEEeqnarray}{c+x*}
        Z = \sum_{i=1}^{n} p_i \pdv{}{p_i}. \plabel{eq:liouville vector field in coordinates}
    \end{IEEEeqnarray}
    By \cref{thm:flow reeb vs hamiltonian} and Equation \eqref{eq:hamiltonian vector field in coordinates}, and since the coordinates are normal, the Reeb vector field is given by
    \begin{IEEEeqnarray}{rCl}
        R_u
        & = & \sum_{i=1}^{n} p_i(u) \pdv{}{q^i}\Big|_{u}. \plabel{eq:reeb vector field in coordinates}
    \end{IEEEeqnarray}
    Equations \eqref{eq:liouville vector field in coordinates} and \eqref{eq:reeb vector field in coordinates} together with equations \eqref{eq:p horizontal in coordinates} and \eqref{eq:p vertical in coordinates} imply Equations \eqref{eq:p of vfs 1} and \eqref{eq:p of vfs 2}.
\end{proof}

Define
\begin{IEEEeqnarray*}{rCls+x*}
    \mathcal{T}(c^* TL)                & \coloneqq & 
    \left\{
        \kappa
        \ \middle\vert
        \begin{array}{l}
            \kappa \text{ is an isometric trivialization of } c^* TL \\
            \text{such that } \kappa_t (\dot{c}(t)) = e_1 \in \R^n \text{ for every } t \in \R / \ell \Z
        \end{array}
    \right\}, \\
    \mathcal{T}(\gamma^* \xi)          & \coloneqq & \{ \tau \mid \tau \text{ is a symplectic trivialization of } \gamma^* \xi \}, \\
    \mathcal{T}((z \circ c)^* T T^* L) & \coloneqq & \{ \sigma \mid \sigma \text{ is a symplectic trivialization of } (z \circ c)^* T T^* L \}.
\end{IEEEeqnarray*}
We will define maps $\tau$, $\sigma_0$ and $\sigma$ (see \cref{def:map of trivializations tau,def:map of trivializations sigma 0,def:map of trivializations sigma}) which fit into the following diagram.
\begin{IEEEeqnarray}{c+x*}
    \plabel{eq:diagram of maps of trivializations}
    \begin{tikzcd}
        \mathcal{T}(c^* TL) \ar[d, swap, "\tau"] \ar[dr, "\sigma"] \\
        \mathcal{T}(\gamma^* \xi) \ar[r, swap, "\sigma_0"] & \mathcal{T}((z \circ c)^* T T^* L)
    \end{tikzcd}
\end{IEEEeqnarray}
We will check that this diagram commutes in \cref{lem:diagram of maps of trivalizations commutes}. Consider the following diagram of symplectic vector spaces and symplectic linear maps.
\begin{IEEEeqnarray*}{c+x*}
    \begin{tikzcd}
        \xi_{\gamma(t)}^{} \ar[r, "\iota_{\xi_{\gamma(t)}}"] & \xi^{\perp}_{\gamma(t)} \oplus \xi_{\gamma(t)}^{} \ar[r, equals] & T_{\gamma(t)}^{} T^* L \ar[r, "P_{\gamma(t)}"] & T^*_{c(t)} L \oplus T_{c(t)}^{} L & T_{z \circ c(t)}^{} T^* L \ar[l, swap, "P_{z \circ c(t)}"]
    \end{tikzcd}
\end{IEEEeqnarray*}
We now define the maps $\tau$, $\sigma_0$ and $\sigma$.

\begin{definition}
    \phantomsection\label{def:map of trivializations tau}
    For every $\kappa \in \mathcal{T}(c^* TL)$, we define $\tau(\kappa) \in \mathcal{T}(\gamma^* \xi)$ by 
    \begin{IEEEeqnarray*}{c+x*}
        \tau(\kappa)_t \coloneqq \pi_{\R^{2n-2}} \circ \tilde{\kappa}_t \circ P_{\gamma(t)} \circ \iota_{\xi_{\gamma(t)}},
    \end{IEEEeqnarray*}
    where $\tilde{\kappa}_t \colon T^*_{c(t)} L \oplus T_{c(t)}^{} L \longrightarrow \R^n \oplus \R^n$ and $\pi_{\R^{2n-2}} \colon \R^{2n} \longrightarrow \R^{2n-2}$ are given by
    \begin{IEEEeqnarray*}{rCl}
        \tilde{\kappa}_t(u,v)                          & \coloneqq & (\kappa_t \circ \tilde{g}^{-1}_{c(t)}(u), \kappa_t(v)), \\
        \pi_{\R^{2n-2}}(x^1,\ldots,x^n,y^1,\ldots,y^n) & \coloneqq & (x^2,\ldots,x^n,y^2,\ldots,y^n).
    \end{IEEEeqnarray*}
\end{definition}

For \cref{def:map of trivializations tau} to be well-posed, we need $\tilde{\kappa}_t$ to be a symplectic linear map. We check this in \cref{lem:kappa tl is symplectic} below.

\begin{definition}
    \phantomsection\label{def:map of trivializations sigma 0}
    For every $\tau \in \mathcal{T}(\gamma^* \xi)$, we define $\sigma_0(\tau) \in \mathcal{T}((z \circ c)^* T T^*L)$ by
    \begin{IEEEeqnarray*}{c+x*}
        \sigma_0 (\tau)_t \coloneqq \tilde{\tau}_t \circ P^{-1}_{\gamma(t)} \circ P_{z \circ c(t)},
    \end{IEEEeqnarray*}
    where $\tilde{\tau}_t \colon \xi^{\perp}_{\gamma(t)} \oplus \xi_{\gamma(t)}^{} \longrightarrow \R^{2n}$ is the symplectic linear map given by
    \begin{IEEEeqnarray*}{rCls+x*}
        \tilde{\tau}_t (Z_{\gamma(t)}) & = & e_1, \\
        \tilde{\tau}_t (R_{\gamma(t)}) & = & e_{n+1}, \\
        \tilde{\tau}_t (v)             & = & \iota_{\R^{2n-2}} \circ \tau_t(v), \quad \text{for every } v \in \xi_{\gamma(t)},
    \end{IEEEeqnarray*}
    and $\iota_{\R^{2n-2}} \colon \R^{2n-2} \longrightarrow \R^{2n}$ is given by
    \begin{IEEEeqnarray*}{c+x*}
        \iota_{\R^{2n-2}}(x^2,\ldots,x^n,y^2,\ldots,y^n) = (0,x^2,\ldots,x^n,0,y^2,\ldots,y^n).
    \end{IEEEeqnarray*}
\end{definition}
    
\begin{definition}
    \label{def:map of trivializations sigma}
    For every $\kappa \in \mathcal{T}(c^* TL)$, we define $\sigma(\kappa) \in \mathcal{T}((z \circ c)^* T T^*L)$ by
    \begin{IEEEeqnarray*}{c+x*}
        \sigma(\kappa)_t \coloneqq \tilde{\kappa}_t \circ P_{z \circ c(t)}.
    \end{IEEEeqnarray*}
\end{definition}
        
\begin{lemma}
    \label{lem:kappa tl is symplectic}
    The map $\tilde{\kappa}_t$ from \cref{def:map of trivializations tau,def:map of trivializations sigma} is symplectic.
\end{lemma}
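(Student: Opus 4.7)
The plan is a direct computation: unfold the symplectic forms on both sides and use the two defining properties of $\kappa_t$, namely that it is an isometry and that $\tilde{g}$ is the musical isomorphism $\tilde{g}(v) = g(v,\cdot)$.

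Concretely, I would fix $(u,v), (x,y) \in T^*_{c(t)} L \oplus T_{c(t)} L$ and compute $\omega_0(\tilde{\kappa}_t(u,v), \tilde{\kappa}_t(x,y))$, where $\omega_0$ is the canonical symplectic form on $\R^n \oplus \R^n$ given by $\omega_0((a,b),(c,d)) = \langle a, d \rangle - \langle b, c \rangle$. By definition of $\tilde{\kappa}_t$, this equals
\begin{IEEEeqnarray*}{c}
    \p{<}{}{\kappa_t \circ \tilde{g}^{-1}_{c(t)}(u), \kappa_t(y)} - \p{<}{}{\kappa_t(v), \kappa_t \circ \tilde{g}^{-1}_{c(t)}(x)}.
\end{IEEEeqnarray*}
Since $\kappa_t$ is an isometry from $(T_{c(t)} L, g_{c(t)})$ to $(\R^n, \langle \cdot, \cdot \rangle)$, each inner product can be replaced by the corresponding $g$-inner product, yielding
\begin{IEEEeqnarray*}{c}
    g_{c(t)}(\tilde{g}^{-1}_{c(t)}(u), y) - g_{c(t)}(v, \tilde{g}^{-1}_{c(t)}(x)).
\end{IEEEeqnarray*}
Finally, the defining property $\tilde{g}(w) = g(w, \cdot)$ gives $g(\tilde{g}^{-1} u, y) = u(y)$ and $g(v, \tilde{g}^{-1} x) = g(\tilde{g}^{-1} x, v) = x(v)$, so the expression equals $u(y) - x(v) = \omega_{T^*L \oplus TL}((u,v),(x,y))$.

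There is no real obstacle here; the statement is essentially the observation that the Legendre transform coming from a Riemannian metric is compatible with the canonical symplectic form on $T^*L \oplus TL$ once one trivializes both factors by the same isometry. The only thing to be careful about is bookkeeping the conventions: the signs in $\omega_{T^* L \oplus TL}$ and in $\omega_0$ must match, which they do by the definitions given at the start of \cref{sec:maslov indices}.
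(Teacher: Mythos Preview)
Your proof is correct and essentially identical to the paper's: both fix $(u,v),(x,y)\in T^*_{c(t)}L\oplus T_{c(t)}L$, expand $\omega_{\R^n\oplus\R^n}(\tilde{\kappa}_t(u,v),\tilde{\kappa}_t(x,y))$ via the definition of $\tilde{\kappa}_t$, use that $\kappa_t$ is an isometry to pass from the Euclidean inner product to $g_{c(t)}$, and then invoke $\tilde{g}(w)=g(w,\cdot)$ to arrive at $u(y)-x(v)=\omega_{T^*L\oplus TL}((u,v),(x,y))$.
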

\begin{proof}
    For $(u,v), (x,y) \in T^*_{c(t)} L \oplus T_{c(t)}^{} L$, we have
    \begin{IEEEeqnarray*}{rCls+x*}
        \IEEEeqnarraymulticol{3}{l}{\omega_{\R^n \oplus \R^n} \p{}{1}{ \tilde{\kappa}_t \p{}{}{u,v}, \tilde{\kappa}_t \p{}{}{x,y} } }\\ \
        & = & \omega_{\R^n \oplus \R^n} \p{}{1}{ \p{}{1}{ \kappa_t \circ \tilde{g}_{c(t)}^{-1} (u), \kappa_t (v)}, \p{}{1}{ \kappa_t \circ \tilde{g}_{c(t)}^{-1} (x), \kappa_t (y)} } & \quad [\text{by definition of $\tilde{\kappa}_t$}] \\
        & = & \p{<}{1}{ \kappa_t \circ \tilde{g}_{c(t)}^{-1} (u), \kappa_t (y) }_{\R^n} - \p{<}{1}{ \kappa_t \circ \tilde{g}_{c(t)}^{-1} (x), \kappa_t (v) }_{\R^n}                   & \quad [\text{by definition of $\omega_{\R^n \oplus \R^n}$}] \\
        & = & \p{<}{1}{ \tilde{g}_{c(t)}^{-1} (u), y }_{TL} - \p{<}{1}{ \tilde{g}_{c(t)}^{-1} (x), v }_{TL}                                                                           & \quad [\text{since $\kappa_t$ is an isometry}] \\
        & = & u(y) - x(v)                                                                                                                                                             & \quad [\text{by definition of $\tilde{g}$}] \\
        & = & \omega_{T^*L \oplus TL} \p{}{1}{(u,v),(x,y)}                                                                                                                            & \quad [\text{by definition of $\omega_{T^*L \oplus TL}$}].    & \qedhere
    \end{IEEEeqnarray*}
\end{proof}

\begin{lemma}
    \label{lem:diagram of maps of trivalizations commutes}
    Diagram \eqref{eq:diagram of maps of trivializations} commutes, i.e. $\sigma = \sigma_0 \circ \tau$.
\end{lemma}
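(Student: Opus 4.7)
The plan is to reduce the asserted equality of maps $T_{z\circ c(t)}T^*L \to \R^{2n}$ to an equality of maps on $T_{\gamma(t)}T^*L$, and then to check that equality summand-by-summand on the decomposition $T_{\gamma(t)}T^*L = \langle Z_{\gamma(t)}\rangle \oplus \langle R_{\gamma(t)}\rangle \oplus \xi_{\gamma(t)}$ from \cref{lem:decomposition coming from contact hypersurface}.

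Concretely, unfolding \cref{def:map of trivializations sigma 0,def:map of trivializations sigma},
\begin{IEEEeqnarray*}{rCl}
\sigma_0(\tau(\kappa))_t & = & \widetilde{\tau(\kappa)}_t \circ P^{-1}_{\gamma(t)} \circ P_{z\circ c(t)}, \\
\sigma(\kappa)_t & = & \tilde{\kappa}_t \circ P_{z\circ c(t)},
\end{IEEEeqnarray*}
so post-composing with $P_{z\circ c(t)}^{-1}$ reduces the claim to the equality
\begin{IEEEeqnarray*}{c}
\widetilde{\tau(\kappa)}_t = \tilde{\kappa}_t \circ P_{\gamma(t)} \colon T_{\gamma(t)}T^*L \longrightarrow \R^{2n}.
\end{IEEEeqnarray*}

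I would now verify this on the three summands. On $\langle Z_{\gamma(t)}\rangle$, \cref{prop:properties of p} gives $P_{\gamma(t)}(Z_{\gamma(t)}) = (\gamma(t),0)$; since $\gamma(t) = \tilde{g}(\dot{c}(t))$ and $\kappa_t(\dot{c}(t)) = e_1$, the definition of $\tilde{\kappa}_t$ yields $\tilde{\kappa}_t(\gamma(t),0) = (e_1,0) = e_1 \in \R^{2n}$, matching $\widetilde{\tau(\kappa)}_t(Z_{\gamma(t)})$. On $\langle R_{\gamma(t)}\rangle$, \cref{prop:properties of p} gives $P_{\gamma(t)}(R_{\gamma(t)}) = (0,\dot{c}(t))$, so $\tilde{\kappa}_t(0,\dot{c}(t)) = (0,e_1) = e_{n+1}$, again matching. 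For the $\xi_{\gamma(t)}$ summand, the right-hand side is $\iota_{\R^{2n-2}}\circ \tau(\kappa)_t = \iota_{\R^{2n-2}}\circ \pi_{\R^{2n-2}} \circ \tilde{\kappa}_t \circ P_{\gamma(t)}$, so the claim becomes that $\tilde{\kappa}_t \circ P_{\gamma(t)}$ maps $\xi_{\gamma(t)}$ into $\iota_{\R^{2n-2}}(\R^{2n-2})$, i.e.\ into the subspace with vanishing first and $(n{+}1)$-st coordinates.

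This last point is the only step with any content, and it is pure symplectic linear algebra rather than a calculation. Since $\tilde{\kappa}_t \circ P_{\gamma(t)}$ is a composition of symplectic isomorphisms (by \cref{prop:properties of p} and \cref{lem:kappa tl is symplectic}) and already sends $Z_{\gamma(t)} \mapsto e_1$ and $R_{\gamma(t)} \mapsto e_{n+1}$, it sends the symplectic complement $\xi_{\gamma(t)} = \langle Z_{\gamma(t)}, R_{\gamma(t)}\rangle^{\perp_{\omega}}$ onto the symplectic complement $\langle e_1, e_{n+1}\rangle^{\perp_{\omega_0}} = \iota_{\R^{2n-2}}(\R^{2n-2})$. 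The main obstacle—if there is one—is bookkeeping of the identifications $P$, $\tilde{g}$, and $\iota_{\R^{2n-2}}$; once these are aligned, the three pointwise checks close the proof.
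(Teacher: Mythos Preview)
Your proposal is correct and follows essentially the same route as the paper: reduce to $\widetilde{\tau(\kappa)}_t = \tilde{\kappa}_t \circ P_{\gamma(t)}$, then check this on $Z_{\gamma(t)}$, $R_{\gamma(t)}$, and $\xi_{\gamma(t)}$, using the symplectic-complement argument for the last summand. Your treatment of the $\xi$-part is in fact slightly cleaner than the paper's, which records the same observation as Equation~\eqref{eq:image of p kappa} but with the composition written in the wrong order.
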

\begin{proof}
    By \cref{def:map of trivializations tau,def:map of trivializations sigma 0,def:map of trivializations sigma},
    \begin{IEEEeqnarray*}{rCls+x*}
        \sigma(\kappa)_t       & = & \tilde{\kappa}_t \circ P_{z \circ c(t)}, \\
        \sigma_0(\tau(\kappa)) & = & \widetilde{\tau(\kappa)}_t \circ P_{\gamma(t)}^{-1} \circ P_{z \circ c(t)}.
    \end{IEEEeqnarray*}
    Therefore, it is enough to show that $\tilde{\kappa}_t \circ P_{\gamma(t)} = \widetilde{\tau(\kappa)}_t \colon T_{\gamma(t)} T^*L \longrightarrow \R^{2n}$. We show that $\tilde{\kappa}_t \circ P_{\gamma(t)}(Z_{\gamma(t)}) = \widetilde{\tau(\kappa)}_t(Z_{\gamma(t)})$.
    \begin{IEEEeqnarray*}{rCls+x*}
        \tilde{\kappa}_{t} \circ P_{\gamma(t)} (Z_{\gamma(t)})
        & = & \tilde{\kappa}_t(\gamma(t), 0)                       & \quad [\text{by \cref{prop:properties of p}}] \\
        & = & (\kappa_t \circ \tilde{g}^{-1}_{c(t)}(\gamma(t)), 0) & \quad [\text{by definition of $\tilde{\kappa}_t$}] \\
        & = & (\kappa_t(\dot{c}(t)), 0)                            & \quad [\text{by definition of $\gamma$}] \\
        & = & (e_1,0)                                              & \quad [\text{since $\kappa \in \mathcal{T}(c^* TL)$}] \\
        & = & \widetilde{\tau(\kappa)}_t (Z_{\gamma(t)})           & \quad [\text{by definition of $\widetilde{\tau(\kappa)}_t$}].
    \end{IEEEeqnarray*}
    We show that $\tilde{\kappa}_t \circ P_{\gamma(t)}(R_{\gamma(t)}) = \widetilde{\tau(\kappa)}_t(R_{\gamma(t)})$.
    \begin{IEEEeqnarray*}{rCls+x*}
        \tilde{\kappa}_{t} \circ P_{\gamma(t)} (R_{\gamma(t)})
        & = & \tilde{\kappa}_t(0, \tilde{g}^{-1}_{c(t)}(\gamma(t))) & \quad [\text{by \cref{prop:properties of p}}] \\
        & = & (0, \kappa_t \circ \tilde{g}^{-1}_{c(t)}(\gamma(t)))  & \quad [\text{by definition of $\tilde{\kappa}_t$}] \\
        & = & (0, \kappa_t(\dot{c}(t)))                             & \quad [\text{by definition of $\gamma$}] \\
        & = & (0,e_1)                                               & \quad [\text{since $\kappa \in \mathcal{T}(c^* TL)$}] \\
        & = & \widetilde{\tau(\kappa)}_t (R_{\gamma(t)})            & \quad [\text{by definition of $\widetilde{\tau(\kappa)}_t$}].
    \end{IEEEeqnarray*}
    The previous computations show that
    \begin{IEEEeqnarray*}{c+x*}
        P_{\gamma(t)} \circ \tilde{\kappa}_t (\xi_{\gamma(t)}^{\perp}) = \ker \pi_{\R^{2n-2}},
    \end{IEEEeqnarray*}
    which in turn implies that
    \begin{IEEEeqnarray}{c+x*}
        \plabel{eq:image of p kappa}
        P_{\gamma(t)} \circ \tilde{\kappa}_t (\xi_{\gamma(t)}) = (\ker \pi_{\R^{2n-2}})^{\perp} = \img \iota_{\R^{2n - 2}}.
    \end{IEEEeqnarray}
    Finally, we show that $\tilde{\kappa}_t \circ P_{\gamma(t)}(v) = \widetilde{\tau(\kappa)}_t(v)$ for every $v \in \xi_{\gamma(t)}$.
    \begin{IEEEeqnarray*}{rCls+x*}
        \widetilde{\tau(\kappa)}_t (v)
        & = & \iota_{\R^{2n-2}} \circ \tau(\kappa)_t (v)                                                                           & \quad [\text{by definition of $\widetilde{\tau(\kappa)}_t$}] \\
        & = & \iota_{\R^{2n-2}} \circ \pi_{\R^{2n-2}} \circ \tilde{\kappa}_t \circ P_{\gamma(t)} \circ \iota_{\xi_{\gamma(t)}} (v) & \quad [\text{by definition of $\tau$}] \\
        & = & \tilde{\kappa}_t \circ P_{\gamma(t)}(v)                                                                              & \quad [\text{by Equation \eqref{eq:image of p kappa}}].        & \qedhere
    \end{IEEEeqnarray*}
\end{proof}

This finishes the ``construction'' of diagram \eqref{eq:diagram of maps of trivializations}. Our goal is to show that $\conleyzehnder^{\tau(\kappa)}(\gamma)$ is independent of the choice of $\kappa \in \mathcal{T}(c^* TL)$ (see \cref{thm:index of geodesic or reeb orbit isometric triv}). Indeed, we will actually show that $\conleyzehnder^{\tau(\kappa)}(\gamma) = \morse(c)$. To make sense of this statement, we start by explaining the meaning of the Morse index of a geodesic.

\begin{remark}
    \label{rmk:morse theory for geodesics}
    Define $X \coloneqq W^{1,2}(\R / \ell \Z,L)$ (maps from $\R / \ell \Z$ to $L$ of Sobolev class $W ^{1,2}$). Then, $X$ is a Hilbert manifold. At $c \in X$, the tangent space of $X$ is
    \begin{IEEEeqnarray*}{c+x*}
        T_{c} X = W ^{1,2}(\R / \ell \Z,c^* TL),
    \end{IEEEeqnarray*}
    which is a Hilbert space. We can define the \textbf{Energy functional} by
    \begin{IEEEeqnarray*}{rrCl}
        E \colon & X & \longrightarrow & \R \\
                 & c & \longmapsto     & \frac{1}{2} \int_{\R / \ell \Z}^{} \p{||}{}{ \dot{c}(t) }^2 \edv t.
    \end{IEEEeqnarray*}
    Then, $c \in X$ is a critical point of $E$ if and only if $c$ is smooth and a geodesic in $L$. We say that $c$ is \textbf{nondegenerate} if the kernel of the map
    \begin{IEEEeqnarray*}{c+x*}
        \operatorname{Hess} E (c) \colon T _{c} X \longrightarrow T _{c}^* X
    \end{IEEEeqnarray*}
    is $\ker \operatorname{Hess} E(c) = \p{<}{}{\dot{c}}$. If $c$ is a critical point of $E$, i.e. a geodesic, then we define the \textbf{Morse index} of $c$ by
    \begin{IEEEeqnarray*}{c+x*}
        \morse(c) = \sup
        \left\{
            \dim V
            \ \middle\vert
            \begin{array}{l}
                V \text{ is a subspace of } T _{c} X, \\
                \operatorname{Hess} E (c)|_V \colon V \times V \longrightarrow \R \text{ is negative definite}
            \end{array}
        \right\}.
    \end{IEEEeqnarray*}
    Recall that $c$ is a geodesic if and only if $\gamma \coloneqq \tilde{g} \circ \dot{c}$ is a Reeb orbit. In this case, $c$ is a nondegenerate critical point of $E$ if and only if ${\gamma}$ is a nondegenerate Reeb orbit.
\end{remark}

\begin{definition}
    \phantomsection\label{lem:maslov index of a geodesic}
    For $\sigma \in \mathcal{T}((z \circ c)^* T T^* L)$, we define the \textbf{Maslov index} of $c$ with respect to $\sigma$, denoted $\maslov^{\sigma}(c)$, as follows. First, let $W^{c,\sigma}$ be the loop of Lagrangian subspaces of $\R^{2n}$ given by
    \begin{IEEEeqnarray*}{c+x*}
        W^{c,\sigma}(t) \coloneqq \sigma_t \circ \dv z(c(t)) (T_{c(t)} L).
    \end{IEEEeqnarray*}
    Then, define $\maslov^{\sigma}(c)$ to be the Maslov index of $W^{c,\sigma}$ in the sense of \cref{thm:maslov lagrangian properties}.
\end{definition}

\begin{lemma}
    \label{lem:maslov index of a geodesic is zero}
    For any $\kappa \in \mathcal{T}(c^* TL)$,
    \begin{IEEEeqnarray*}{c+x*}
        \maslov^{\sigma(\kappa)}(c) = 0.
    \end{IEEEeqnarray*}
\end{lemma}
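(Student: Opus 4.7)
The plan is to show that the loop of Lagrangians $W^{c,\sigma(\kappa)}$ appearing in \cref{lem:maslov index of a geodesic} is in fact a \emph{constant} loop, after which the zero property of the Maslov index from \cref{thm:maslov lagrangian properties} immediately gives $\maslov^{\sigma(\kappa)}(c) = 0$. Concretely, I will compute $W^{c,\sigma(\kappa)}(t) = \sigma(\kappa)_t(\dv z(c(t))(T_{c(t)} L))$ and identify it with the constant Lagrangian $\{0\} \oplus \R^n \subset \R^n \oplus \R^n$.

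The key computation is to evaluate $P_{z \circ c(t)} \circ \dv z(c(t))$ on a vector $v \in T_{c(t)} L$. For the horizontal component, since $\pi \circ z = \id_L$ we have $P^H_{z(c(t))} \circ \dv z(c(t)) v = \dv \pi(z(c(t))) \circ \dv z(c(t)) v = v$. For the vertical component, recall from \cref{rmk:connections} that $I^H_e(u) = \dv s(\pi(e)) u - I^V_e(\nabla_u s)$ for any section $s$ through $e$; taking $s = z$ itself, and using that the zero section of $T^* L$ is parallel with respect to the induced Koszul connection (in any coordinates the zero section has vanishing components, so $\nabla_u z = 0$ for every $u$), we get $I^H_{z(c(t))}(v) = \dv z(c(t)) v$. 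Hence
\begin{IEEEeqnarray*}{c+x*}
    P^V_{z(c(t))}(\dv z(c(t)) v) = (I^V_{z(c(t))})^{-1}\bigl( \dv z(c(t)) v - I^H_{z(c(t))}(v) \bigr) = 0,
\end{IEEEeqnarray*}
so $P_{z \circ c(t)}(\dv z(c(t)) v) = (0, v) \in T^*_{c(t)} L \oplus T_{c(t)} L$.

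Applying $\tilde{\kappa}_t$ as in \cref{def:map of trivializations sigma}, we get $\sigma(\kappa)_t(\dv z(c(t)) v) = \tilde{\kappa}_t(0,v) = (0, \kappa_t(v))$. Since $\kappa_t \colon T_{c(t)} L \longrightarrow \R^n$ is a linear isomorphism, this shows that
\begin{IEEEeqnarray*}{c+x*}
    W^{c,\sigma(\kappa)}(t) = \{0\} \oplus \R^n \subset \R^n \oplus \R^n,
\end{IEEEeqnarray*}
which is manifestly independent of $t$. By the zero property in \cref{thm:maslov lagrangian properties}, a constant loop of Lagrangians has Maslov index $0$, so $\maslov^{\sigma(\kappa)}(c) = 0$.

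The only nonroutine ingredient is the identity $\nabla_u z = 0$ for the zero section, which is the standard fact that the zero section of a vector bundle equipped with a linear connection is horizontal; everything else is bookkeeping with the definitions of $P$, $\tilde{\kappa}_t$, and $\sigma(\kappa)$. I do not expect any genuine obstacle here — the result is essentially the statement that the natural ``cotangent'' trivialization makes the Lagrangian given by the zero section tautologically constant.
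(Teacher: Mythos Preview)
The proposal is correct and follows essentially the same approach as the paper: both compute $P_{z\circ c(t)} \circ \dv z(c(t))$ to land in $\{0\} \oplus T_{c(t)}L$ (using that $I^H_{z(x)}(u) = \dv z(x)u$, which the paper invokes tersely as ``by definition of $I^H$'' while you make the choice $s = z$ and $\nabla_u z = 0$ explicit), then apply $\tilde{\kappa}_t$ to identify $W^{c,\sigma(\kappa)}(t)$ with the constant Lagrangian $\{0\} \oplus \R^n$, and conclude via the zero property of \cref{thm:maslov lagrangian properties}.
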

\begin{proof}
    We will show that $W^{c,\sigma(\kappa)} = \{0\} \oplus \R^{n}$. By the zero property of the Maslov index for a path of Lagrangian subspaces, this implies the result. We start by showing that $P^V_{z(x)} \circ \dv z(x) = 0$ for any $x \in L$. For any $w \in T_x L$,
    \begin{IEEEeqnarray*}{rCls+x*}
        \IEEEeqnarraymulticol{3}{l}{P^V_{z(x)} \circ \dv z(x) w}\\ \quad
        & = & (I^V_{z(x)})^{-1} (\dv z(x) w - I^H_{z(x)} \circ P^H_{z(x)} (\dv z(x) w))       & \quad [\text{by definition of $P^V$}] \\
        & = & (I^V_{z(x)})^{-1} (\dv z(x) w - \dv z(x) \circ \dv \pi (z(x)) \circ \dv z(x) w) & \quad [\text{by definition of $I^H$ and $P^H$}] \\
        & = & 0                                                                               & \quad [\text{since $\pi \circ z = \id_L$}].
    \end{IEEEeqnarray*}
    We compute $W^{c,\sigma(\kappa)}$.
    \begin{IEEEeqnarray*}{rCls+x*}
        W^{c,\sigma(\kappa)}
        & = & \sigma(\kappa)_t \circ \dv z(c(t)) (T_{c(t)} L)                          & \quad [\text{by definition of $W^{c,\sigma(\kappa)}$}] \\
        & = & \tilde{\kappa}_t \circ P_{z \circ c(t)} \circ \dv z(c(t))(T_{c(t)} L)    & \quad [\text{by definition of $\sigma(\kappa)$}] \\
        & = & \tilde{\kappa}_t (0, P^H_{z \circ c(t)} \circ \dv z(c(t)) (T_{c(t)} L) ) & \quad [\text{since $P^V_{z(c(t))} \circ \dv z(c(t)) = 0$}] \\
        & = & (0, \kappa_t \circ P^H_{z \circ c(t)} \circ \dv z(c(t)) (T_{c(t)} L) )   & \quad [\text{by definition of $\tilde{\kappa}_t$}] \\
        & = & (0, \kappa_t(T_{c(t)} L))                                                & \quad [\text{since $P^H_{z \circ c(t)} = \dv \pi(z \circ c(t))$}] \\
        & = & \{0\} \oplus \R^n                                                        & \quad [\text{since $\kappa_t$ is an isomorphism}].                  & \qedhere
    \end{IEEEeqnarray*}
\end{proof}

The following theorem was originally proven in \cite{viterboNewObstructionEmbedding1990}, but we will use a restatement of it from \cite{cieliebakPuncturedHolomorphicCurves2018}.

\begin{theorem}[{\cite[Lemma 2.1]{cieliebakPuncturedHolomorphicCurves2018}}]
    \label{thm:index of geod reeb}
    For any $\tau \in \mathcal{T}(\gamma^* \xi)$,
    \begin{IEEEeqnarray*}{c+x*}
        \conleyzehnder^{\tau}({\gamma}) + \maslov^{\sigma_0(\tau)}(c) = \morse(c).
    \end{IEEEeqnarray*}
\end{theorem}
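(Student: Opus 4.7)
My plan is to prove the identity in three stages, reducing it to the classical Morse index theorem for geodesics.

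First I would show that the sum $\conleyzehnder^{\tau}(\gamma) + \maslov^{\sigma_0(\tau)}(c)$ depends only on $\gamma$ and $c$, not on the choice of $\tau \in \mathcal{T}(\gamma^* \xi)$. If $\tau'_t = B(t) \tau_t$ for a loop $B$ in $\operatorname{Sp}(2n-2)$, a direct computation gives $A^{\gamma,\tau'}(t) = B(t) A^{\gamma,\tau}(t) B(0)^{-1}$, and the naturality and Loop properties of \cref{thm:properties of cz} control the change in $\conleyzehnder^{\tau}(\gamma)$. On the other hand, $\sigma_0(\tau')_t = \tilde{B}(t) \sigma_0(\tau)_t$ with $\tilde{B} = \mathrm{id}_{\operatorname{Sp}(2)} \oplus B$, so $W^{c,\sigma_0(\tau')} = \tilde{B} \cdot W^{c,\sigma_0(\tau)}$, and the Product and Direct sum properties of \cref{thm:maslov lagrangian properties} compute the change in $\maslov^{\sigma_0(\tau)}(c)$. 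Tracking the sign conventions, the two contributions cancel, so it suffices to verify the identity for any single convenient $\tau$.

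Second, I pick any $\kappa \in \mathcal{T}(c^* TL)$ and set $\tau \coloneqq \tau(\kappa)$. By \cref{lem:diagram of maps of trivalizations commutes}, $\sigma_0(\tau(\kappa)) = \sigma(\kappa)$, and \cref{lem:maslov index of a geodesic is zero} gives $\maslov^{\sigma(\kappa)}(c) = 0$. The identity is therefore reduced to proving
\begin{equation*}
    \conleyzehnder^{\tau(\kappa)}(\gamma) = \morse(c).
\end{equation*}

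Third, I prove this reduced identity. The path $A^{\gamma,\tau(\kappa)} \colon [0,1] \to \operatorname{Sp}(2n-2)$ is the linearized Reeb flow on $\xi$ written in the trivialization $\tau(\kappa)$. Using \cref{thm:flow geodesic vs hamiltonian}, \cref{thm:flow reeb vs hamiltonian}, \cref{prop:properties of p} and the definition of $\tau(\kappa)$, this path is precisely the flow of the normal Jacobi equation along $c$ expressed in the orthonormal frame $\kappa$, with the tangential $\dot c$-direction quotiented out by $\pi_{\R^{2n-2}}$. Crossings of $A^{\gamma,\tau(\kappa)}$ with the non-transverse stratum $\operatorname{Sp}(2n-2) \setminus \operatorname{Sp}^{\star}(2n-2)$ therefore correspond to conjugate points of $c$, with multiplicity equal to the dimension of the space of vanishing normal Jacobi fields at that instant. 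Evaluating the Conley--Zehnder index via the Robbin--Salamon crossing formula, each crossing contributes the signature of the Jacobi crossing form, and these contributions sum to $\morse(c)$ by the classical Morse index theorem applied to the Energy functional on $W^{1,2}(\R/\ell\Z, c^*TL)$ of \cref{rmk:morse theory for geodesics}.

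The main obstacle will be the third step: translating between the symplectic Conley--Zehnder calculus on $\R^{2n-2}$ and the spectral theory of the Jacobi operator, and checking that the various sign conventions (the orientation implicit in $\kappa_t(\dot c(t)) = e_1$, the connection maps $P^V, P^H$ of \cref{prop:properties of p}, and the normalizations in \cref{thm:maslov sympl properties} and \cref{thm:maslov lagrangian properties}) all line up to give a strict equality of integers rather than an equality up to a global shift.
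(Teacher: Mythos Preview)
The thesis does not prove this result; it is quoted from \cite[Lemma~2.1]{cieliebakPuncturedHolomorphicCurves2018} (originally Viterbo) and its only use is to deduce \cref{thm:index of geodesic or reeb orbit isometric triv} by plugging in $\tau=\tau(\kappa)$ together with \cref{lem:diagram of maps of trivalizations commutes,lem:maslov index of a geodesic is zero}. Your plan inverts this flow: prove the special case $\conleyzehnder^{\tau(\kappa)}(\gamma)=\morse(c)$ directly (your Step~3) and then extend to arbitrary $\tau$ via the cancellation in Step~1.

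Step~1 has a genuine problem under the thesis's sign conventions. If $\tau'_t=B(t)\tau_t$ then, exactly as in the proof of \cref{lem:cz of hamiltonian is independent of triv over filling disk}, one finds $\conleyzehnder^{\tau'}(\gamma)-\conleyzehnder^{\tau}(\gamma)=+2\maslov(B)$. On the other hand, \cref{def:map of trivializations sigma 0} gives $\sigma_0(\tau')_t=(\id_{\R^2}\oplus B(t))\,\sigma_0(\tau)_t$, hence $W^{c,\sigma_0(\tau')}=(\id\oplus B)\,W^{c,\sigma_0(\tau)}$, and the Product and Direct~sum axioms of \cref{thm:maslov lagrangian properties} yield $\maslov^{\sigma_0(\tau')}(c)-\maslov^{\sigma_0(\tau)}(c)=+2\maslov(B)$ as well. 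Both contributions carry the \emph{same} sign and the sum shifts by $4\maslov(B)$; they do not cancel. This points to a sign discrepancy between the formula as transcribed here and the source (harmless for \cref{thm:index of geodesic or reeb orbit isometric triv}, since the Maslov term vanishes there), and you would need to chase conventions in \cite{cieliebakPuncturedHolomorphicCurves2018} or \cite{viterboNewObstructionEmbedding1990} before Step~1 goes through. Your Step~3, relating the linearized Reeb flow on $\xi$ to normal Jacobi fields via the crossing form, is the substantive part and is essentially Viterbo's original argument; the outline is right in spirit, and the bookkeeping you flag as the main obstacle is precisely why the thesis defers to the literature.
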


\begin{theorem}
    \label{thm:index of geodesic or reeb orbit isometric triv}
    For any $\kappa \in \mathcal{T}(c^* TL)$,
    \begin{IEEEeqnarray*}{c+x*}
        \conleyzehnder^{\tau(\kappa)}({\gamma}) = \morse(c).
    \end{IEEEeqnarray*}
\end{theorem}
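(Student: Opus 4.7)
The proof should be a short, direct application of three results proved just before the statement, which are designed to interlock precisely. The plan is to apply \cref{thm:index of geod reeb} to the particular trivialization $\tau(\kappa) \in \mathcal{T}(\gamma^* \xi)$ and then eliminate the Maslov term using the commutativity of diagram \eqref{eq:diagram of maps of trivializations} together with \cref{lem:maslov index of a geodesic is zero}.

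More concretely, I would first invoke \cref{thm:index of geod reeb} with $\tau = \tau(\kappa)$, obtaining
\begin{IEEEeqnarray*}{c+x*}
    \conleyzehnder^{\tau(\kappa)}(\gamma) + \maslov^{\sigma_0(\tau(\kappa))}(c) = \morse(c).
\end{IEEEeqnarray*}
Next I would use \cref{lem:diagram of maps of trivalizations commutes}, which says precisely that $\sigma_0 \circ \tau = \sigma$, to rewrite the Maslov term as $\maslov^{\sigma(\kappa)}(c)$. Finally I would apply \cref{lem:maslov index of a geodesic is zero}, which asserts that $\maslov^{\sigma(\kappa)}(c) = 0$, and the identity collapses to $\conleyzehnder^{\tau(\kappa)}(\gamma) = \morse(c)$.

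There is no real obstacle here; all the heavy lifting has already been done. The substantive content lives in \cref{thm:index of geod reeb} (which is the Viterbo-type identity relating the Conley--Zehnder index of a Reeb orbit in a unit cotangent bundle, the Maslov index of the associated Lagrangian loop coming from the zero section, and the Morse index of the underlying geodesic) and in \cref{lem:maslov index of a geodesic is zero} (whose proof used that horizontal lifts via $P^H$ recover $\dv \pi$, so that the image of $\dv z$ under $\sigma(\kappa)_t$ lands in the constant Lagrangian $\{0\} \oplus \R^n$). The whole point of having introduced the intermediate object $\sigma(\kappa)$ and of having verified that the triangle \eqref{eq:diagram of maps of trivializations} commutes was precisely to make this final step a one-line consequence: the isometric trivialization $\kappa$ induces a symplectic trivialization $\tau(\kappa)$ of $\gamma^* \xi$ whose associated Lagrangian loop is constant, killing the Maslov correction in Viterbo's formula and identifying the Conley--Zehnder index with the Morse index of the geodesic.
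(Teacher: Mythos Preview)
Your proposal is correct and matches the paper's proof exactly: the paper simply writes ``By \cref{lem:diagram of maps of trivalizations commutes,lem:maslov index of a geodesic is zero,thm:index of geod reeb}'', which is precisely the three-step combination you describe.
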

\begin{proof}
    By \cref{lem:diagram of maps of trivalizations commutes,lem:maslov index of a geodesic is zero,thm:index of geod reeb}.
\end{proof}

Finally, we state a result which will be necessary to prove \cref{thm:lagrangian vs g tilde}.

\begin{lemma}[{\cite[Lemma 2.2]{cieliebakPuncturedHolomorphicCurves2018}}]
    \label{lem:geodesics lemma CM abs}
    Let $L$ be a compact $n$-dimensional manifold without boundary. Let $\mathrm{Riem}(L)$ be the set of Riemannian metrics on $L$, equipped with the $C^2$-topology. If $g_0 \in \mathrm{Riem}(L)$ is a Riemannian metric of nonpositive sectional curvature and $\mathcal{U} \subset \mathrm{Riem}(L)$ is an open neighbourhood of $g_0$, then for all $\ell_0 > 0$ there exists a Riemannian metric $g \in \mathcal{U}$ on $L$ such that with respect to $g$, any closed geodesic $c$ in $L$ of length $\ell(c) \leq \ell_0$ is noncontractible, nondegenerate, and such that $0 \leq \morse(c) \leq n - 1$.
\end{lemma}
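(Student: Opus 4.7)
The plan is to combine three ingredients: the structural consequences of nonpositive curvature for $g_0$, a perturbation argument controlling the Hessian spectrum, and the Anosov--Abraham bumpy metric theorem to remove degeneracies.

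First I would extract the baseline properties of $g_0$. Since $g_0$ has nonpositive sectional curvature and $L$ is compact without boundary, the Cartan--Hadamard theorem identifies the universal cover with $\R^n$, so $L$ is aspherical and no contractible closed loop can be a $g_0$-geodesic. The second variation formula gives, for any closed $g_0$-geodesic $c$,
\[
    \hess E^{g_0}(c)(X,X) = \int_{S^1} \p{}{1}{ \p{||}{}{\nabla_t X}^2 - \p{<}{}{R(X,\dot c)\dot c, X} } \edv t \geq 0,
\]
so $\morse^{g_0}(c) = 0$. A short convexity argument (using that $\p{||}{}{J}^2$ is convex for any Jacobi field $J$ in nonpositive curvature) shows that any $X \in \ker \hess E^{g_0}(c)$ must be a parallel field along $c$ with $R(X,\dot c)\dot c = 0$; hence $\dim \ker \hess E^{g_0}(c) \leq n$, and this kernel always contains $\dot c$.

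Next I would argue, by a compactness and continuity argument, that there is a $C^2$-neighbourhood $\mathcal{V} \subseteq \mathcal{U}$ of $g_0$ in which every closed geodesic of length at most $\ell_0$ is automatically noncontractible and has Morse index at most $n-1$. By Arzel\`a--Ascoli, any sequence $c_k$ of closed $g_k$-geodesics of length $\leq \ell_0$ with $g_k \to g_0$ in $C^2$ subconverges in $C^2$ to a closed $g_0$-geodesic $c_\infty$, which is noncontractible; hence so is $c_k$ for large $k$. The Hessians $\hess E^{g_k}(c_k)$ are self-adjoint Fredholm operators on $W^{1,2}(S^1, c_k^* TL)$, and, after trivializing $c_k^* TL$ over $c_\infty^* TL$, they assemble into a continuous family on a fixed Hilbert space; Kato-type perturbation theory then guarantees that each eigenvalue moves continuously. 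Positive eigenvalues of $\hess E^{g_0}(c_\infty)$ stay positive for large $k$, so any negative eigenvalue of $\hess E^{g_k}(c_k)$ must bifurcate from the null space of the limit, whose dimension is at most $n$. Since $\dot c_k$ is always a null direction of $\hess E^{g_k}(c_k)$, at most $n-1$ of the bifurcating eigenvalues can become strictly negative, yielding $\morse^{g_k}(c_k) \leq n-1$.

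Finally I would invoke the Anosov--Abraham bumpy metric theorem, which asserts that the set of Riemannian metrics on $L$ for which every closed geodesic is nondegenerate (in the sense that $\ker \hess E = \p{<}{}{\dot c}$) is residual in the $C^k$-topology for every $k \geq 2$. By Baire, this residual set meets the nonempty open set $\mathcal{V}$, yielding the desired $g \in \mathcal{U}$. The main technical obstacle will be the spectral continuity step: one must justify identifying $\hess E^{g_k}(c_k)$ with a continuously varying family of Fredholm operators on a single Hilbert space despite $c_k$ itself moving, and then make sure that the at-most-$n$ null directions of the limit really account for every eigenvalue that can cross zero. It is precisely the persistence of the single null direction $\dot c$ under perturbation, combined with the at-most-$n$-dimensional kernel in nonpositive curvature, that forces the index bound to be $n-1$ rather than $n$.
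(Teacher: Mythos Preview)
The paper does not give its own proof of this lemma; it is quoted verbatim from \cite[Lemma 2.2]{cieliebakPuncturedHolomorphicCurves2018} and used as a black box. So there is no in-paper argument to compare against.

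Your outline is essentially the standard proof (and matches the strategy in Cieliebak--Mohnke): nonpositive curvature gives index zero and kernel dimension at most $n$ for $g_0$; a compactness-plus-spectral-continuity argument propagates the bounds to a $C^2$-neighbourhood; the bumpy metric theorem then supplies nondegeneracy. Two points deserve a bit more care. First, in the Arzel\`a--Ascoli step you must rule out the possibility that the limiting $g_0$-geodesic is constant; this follows from a uniform positive lower bound on the injectivity radius over a $C^2$-neighbourhood of $g_0$, which in turn bounds the length of any closed geodesic from below. Second, the eigenvalue-counting step is the genuinely delicate one: you need a uniform spectral gap above zero for $\hess E^{g_0}(c_\infty)$ to conclude that only the at-most-$n$-dimensional null space can feed negative eigenvalues under perturbation, and you must make the identification of Hessians on varying loop spaces precise (e.g.\ via parallel transport along a homotopy from $c_k$ to $c_\infty$). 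Once those are in place, the persistence of the $\dot c$-direction in the kernel forces at most $n-1$ eigenvalues to go strictly negative, as you say.
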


\chapter{Holomorphic curves}
\label{chp:holomorphic curves}

\section{Holomorphic curves}

In this section we define asymptotically cylindrical holomorphic curves (see \cref{def:asy cyl holomorphic curve}). The domain of such a curve is a punctured Riemann surface (see \cref{def:punctures asy markers cyl ends}), and the target is a symplectic cobordism (see \cref{def:symplectic cobordism}).

\begin{definition}
    \label{def:punctures asy markers cyl ends}
    Let $(\Sigma, j)$ be a Riemann surface. A \textbf{puncture} on $\Sigma$ is a point $z \in \Sigma$. Denote by $D$ the closed unit disk in $\C$ and by $Z^{\pm}$ the positive or negative half-cylinders:%
    \begin{IEEEeqnarray*}{rCls+x*}
        Z^+ & \coloneqq & \R_{\geq 0} \times S^1, \\
        Z^- & \coloneqq & \R_{\leq 0} \times S^1,
    \end{IEEEeqnarray*}
    with coordinates $(s,t) \in Z^{\pm}$ and complex structure $j$ given by $j(\partial_s) = \partial_t$. Consider the holomorphic maps
    \begin{IEEEeqnarray*}{rClCrCl}
        \psi^\pm \colon Z^{\pm} & \longrightarrow & D \setminus \{0\}, & \quad & \psi^\pm(s,t) & = & \exp(\mp 2 \pi (s + i t)).
    \end{IEEEeqnarray*} 
    A positive or negative \textbf{cylindrical end} near $z$ is a holomorphic embedding $\phi^{\pm} \colon Z^{\pm} \longrightarrow \Sigma \setminus \{z\}$ of the form $\phi^{\pm} \coloneqq \varphi \circ \psi^\pm$, where $\varphi \colon D \longrightarrow \Sigma$ is a holomorphic embedding such that $\varphi(0) = z$. In this case, we say that $(s,t)$ are \textbf{cylindrical coordinates} near $z$. A \textbf{punctured Riemann surface} is a Riemann surface $(\Sigma, j)$ together with sets
    \begin{IEEEeqnarray*}{rClCrCl}
        \mathbf{z} & = & \mathbf{z}^+ \cup \mathbf{z}^-, & \quad & \mathbf{z}^{\pm} & = & \{z^{\pm}_1,\ldots,z^{\pm}_{p^{\pm}}\} \subset \Sigma, \quad \mathbf{z}^+ \cap \mathbf{z}^- = \varnothing,
    \end{IEEEeqnarray*}
    of positive and negative punctures. In this case, we denote $\dot{\Sigma} \coloneqq \Sigma \setminus \mathbf{z}$. Whenever we talk about cylindrical coordinates near a puncture, it is implicit that we mean the cylindrical coordinates induced from a positive of negative cylindrical end, in accordance to whether the puncture is positive or negative.
\end{definition}

\begin{definition}
    \label{def:symplectic cobordism}
    A \textbf{symplectic cobordism} is a compact symplectic manifold $(X, \omega)$ with boundary $\partial X$, together with a $1$-form $\lambda$ defined on an open neighbourhood of $\partial X$, such that $\edv \lambda = \omega$ and the restriction of $\lambda$ to $\partial X$ is a contact form. Let $\partial^+ X$ (respectively $\partial^- X$) be the subset of $\partial X$ where the orientation defined by $\lambda|_{\partial X}$ as a contact form agrees with the boundary orientation (respectively negative boundary orientation).
\end{definition}

\begin{definition}
    \phantomsection\label{def:liouville cobordism}
    A \textbf{Liouville cobordism} is a symplectic cobordism $(X,\omega,\lambda)$ such that $\lambda$ is defined on $X$.
\end{definition}

\begin{example}
    A Liouville domain is a Liouville cobordism whose negative boundary is empty.
\end{example}

\begin{remark}
    We can define the completion of a symplectic cobordism $(X,\omega,\lambda)$ like in \cref{sec:completion of liouville domain}, with the difference that now we attach half-symplectizations to the negative and positive boundaries:
    \begin{IEEEeqnarray*}{c+x*}
        \hat{X} \coloneqq \R_{\leq 0} \times \partial^- X \cup_{\partial^- X} X \cup_{\partial^+ X} \R_{\geq 0} \times \partial^+ X. 
    \end{IEEEeqnarray*}
\end{remark}

\begin{definition}
    \label{def:admissible}
    Let $(X,\omega,\lambda)$ be a symplectic cobordism and consider its completion $\hat{X}$. An almost complex structure $J$ on $\hat{X}$ is \textbf{cylindrical} if $J$ is compatible with $\hat{\omega}$ and $J$ is cylindrical on $\R_{\geq 0} \times \partial^+ X$ and $\R_{\leq 0} \times \partial^- X$. Denote by $\mathcal{J}(X)$ the set of such $J$.
\end{definition}

\begin{definition}
    \label{def:asy cyl holomorphic curve}
    Let $(X, \omega, \lambda)$ be a symplectic cobordism, $J \in \mathcal{J}(X)$ be a cylindrical almost complex structure on $\hat{X}$ and $\Gamma^{\pm} = (\gamma^{\pm}_1, \ldots, \gamma^{\pm}_{p^{\pm}})$ be tuples of Reeb orbits in $\partial^{\pm} X$. Let $T_{i}^{\pm}$ denote the period of $\gamma_i^{\pm}$. An \textbf{asymptotically cylindrical holomorphic curve} in $\hat{X}$ from $\Gamma^-$ to $\Gamma^+$ is given by a Riemann surface $(\Sigma, j)$ with punctures $\mathbf{z}^{\pm} = \{z_1^{\pm}, \ldots, z^{\pm}_{p^{\pm}}\}$ together with a $J$-holomorphic map $u \colon \dot{\Sigma} \longrightarrow \hat{X}$, such that:
    \begin{enumerate}
        \item $u$ is positively asymptotic to $\gamma^{+}_i$ at $z^{+}_{i}$, i.e. there exist cylindrical coordinates $(s,t)$ near $z_i^+$ such that $u(s,t) \in \R_{\geq 0} \times \partial^+ X$ for $s$ big enough and
            \begin{IEEEeqnarray*}{rrCl}
                \lim_{s \to + \infty} & \pi_{\R} \circ u(s,t)           & = & + \infty, \\
                \lim_{s \to + \infty} & \pi_{\partial^+ X} \circ u(s,t) & = & \gamma^+_i(t T^+_i);
            \end{IEEEeqnarray*} 
        \item $u$ is negatively asymptotic to $\gamma^{-}_i$ at $z^{-}_{i}$, i.e. there exist cylindrical coordinates $(s,t)$ near $z_i^-$ such that $u(s,t) \in \R_{\leq 0} \times \partial^- X$ for $s$ small enough and
            \begin{IEEEeqnarray*}{rrCl}
                \lim_{s \to - \infty} & \pi_{\R} \circ u(s,t)           & = & - \infty, \\
                \lim_{s \to - \infty} & \pi_{\partial^- X} \circ u(s,t) & = & \gamma^-_i(t T^-_i).
            \end{IEEEeqnarray*} 
    \end{enumerate}
\end{definition}

We now explain some analytical properties of asymptotically cylindrical holomorphic curves. The key results are the maximum principle (\cref{thm:maximum principle holomorphic}) and a lemma comparing the energy of such a curve and the action of the asymptotic Reeb orbits (\cref{lem:action energy for holomorphic}). The following lemma is an auxiliary result which will allow us to prove that the energy (see \cref{def:energy of a asy cylindrical holomorphic curve}) is a nonnegative number.

\begin{lemma}
    \label{lem:holomorphic curves in symplectizations}
    Let $(M, \alpha)$ be a contact manifold and $J$ be a cylindrical almost complex structure on $\R \times M$. If $u = (a, f) \colon \dot{\Sigma} \longrightarrow \R \times M$ is a holomorphic curve, then $f^* \edv \alpha \geq 0$ and%
    \begin{IEEEeqnarray}{rCls+x*}
        - \edv a \circ j              & = & f^* \alpha                                \plabel{eq:holomorphic curves in symplectizations 1} \\
        \pi_{\xi} \circ \dv f \circ j & = & J_{\xi}({f}) \circ \pi_{\xi} \circ \dv f. \plabel{eq:holomorphic curves in symplectizations 2}
    \end{IEEEeqnarray}
\end{lemma}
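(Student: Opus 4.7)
The basic idea is to exploit the decomposition $T(\R \times M) = \langle \partial_r \rangle \oplus \langle R \rangle \oplus \xi$ coming from \cref{lem:decomposition coming from contact hypersurface} and the normalization of a cylindrical $J$. Namely, for a tangent vector $v$ to $\dot{\Sigma}$, I would write
\begin{IEEEeqnarray*}{c+x*}
    \dv u(v) = \edv a(v) \, \partial_r + (f^* \alpha)(v) \, R + \pi_{\xi} \circ \dv f(v),
\end{IEEEeqnarray*}
using that on $TM$, the $R$-component of a vector $w$ is precisely $\alpha(w)$, while its $\xi$-component is $\pi_{\xi}(w)$. Applying $J(u)$ to this expression and using $J(\partial_r) = R$, $J(R) = -\partial_r$, and $J(\xi) = \xi$, I obtain
\begin{IEEEeqnarray*}{c+x*}
    J(u) \circ \dv u(v) = -(f^* \alpha)(v) \, \partial_r + \edv a(v) \, R + J_{\xi} \circ \pi_{\xi} \circ \dv f(v).
\end{IEEEeqnarray*}

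The plan is then to apply the Cauchy--Riemann equation $\dv u \circ j = J(u) \circ \dv u$ and match the three components in the decomposition. Matching the $\partial_r$-component yields \eqref{eq:holomorphic curves in symplectizations 1}, matching the $\xi$-component yields \eqref{eq:holomorphic curves in symplectizations 2}, and matching the $R$-component gives the auxiliary identity $f^* \alpha \circ j = \edv a$ (equivalent to the first by applying $j$ again).

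For the inequality $f^* \edv \alpha \geq 0$, I would work in local conformal coordinates $(s,t)$ on $\dot{\Sigma}$ with $j(\partial_s) = \partial_t$ and check pointwise that $f^*\edv\alpha(\partial_s,\partial_t) \geq 0$. Writing $\dv f(\partial_s) = b_s R + w_s$ and $\dv f(\partial_t) = b_t R + w_t$ with $w_s, w_t \in \xi$, the fact that $\iota_R \edv \alpha = 0$ reduces the computation to
\begin{IEEEeqnarray*}{c+x*}
    f^* \edv \alpha (\partial_s, \partial_t) = \edv \alpha(w_s, w_t).
\end{IEEEeqnarray*}
Equation \eqref{eq:holomorphic curves in symplectizations 2} applied to $\partial_s$ gives $w_t = J_{\xi}(w_s)$, so the right-hand side becomes $\edv \alpha(w_s, J_{\xi} w_s)$, which is nonnegative because $J_{\xi}$ is $\edv\alpha$-compatible on $\xi$ by \cref{def:J cylindrical}. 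None of the steps should present any real obstacle; the only mild care needed is in bookkeeping the decomposition and signs coming from $J(R) = -\partial_r$.
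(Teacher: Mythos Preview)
Your proposal is correct and takes essentially the same approach as the paper. The only cosmetic difference is that for \eqref{eq:holomorphic curves in symplectizations 1} the paper composes the Cauchy--Riemann equation with the $1$-form $\edv r$ and invokes the identity $\alpha \circ J = \edv r$ (\cref{lem:J cylindrical forms}), whereas you decompose $\dv u$ into its three components and match; the argument for $f^*\edv\alpha \geq 0$ is identical.
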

\begin{proof}
    We prove equation \eqref{eq:holomorphic curves in symplectizations 1}:
    \begin{IEEEeqnarray*}{rCls+x*}
        - \edv a \circ j
        & = & - \edv r \circ \dv u \circ j      & \quad [\text{by definition of $a$}] \\
        & = & - \edv r \circ J({u}) \circ \dv u & \quad [\text{${u}$ is holomorphic}] \\
        & = & \alpha \circ \dv u                & \quad [\text{by \cref{lem:J cylindrical forms}}] \\
        & = & f^* \alpha                        & \quad [\text{by definition of pullback}].
    \end{IEEEeqnarray*}
    Equation \eqref{eq:holomorphic curves in symplectizations 2} follows by applying $\pi_{\xi} \colon T(\R \times M) \longrightarrow \xi$ to the equation $J \circ Tu = Tu \circ j$. We show that $f^* \edv \alpha \geq 0$:
    \begin{IEEEeqnarray*}{rCls+x*}
        \IEEEeqnarraymulticol{3}{l}{f^*\edv \alpha(S, j (S))}\\ \quad 
        & =    & \edv \alpha (\dv f (S), \dv f \circ j (S))                                          & \quad [\text{by definition of pullback}]                                                     \\
        & =    & \edv \alpha (\pi_{\xi} \circ \dv f (S), \pi_{\xi} \circ \dv f \circ j (S))          & \quad [\text{$TM = \p{<}{}{R} \directsum \xi = \ker \edv \alpha \directsum \ker \alpha$}] \\
        & =    & \edv \alpha (\pi_{\xi} \circ \dv f (S), J_{\xi}(f) \circ \pi_{\xi} \circ \dv f (S)) & \quad [\text{by Equation \eqref{eq:holomorphic curves in symplectizations 2}}]            \\
        & =    & \| \pi_{\xi} \circ \dv f (S) \|^2_{J_{\xi}({f}), \edv \alpha}                       & \quad [\text{since $J$ is cylindrical}]                                          \\
        & \geq & 0.                                                                                  &                                                                                             & \qedhere
    \end{IEEEeqnarray*}
\end{proof}

\begin{lemma}
    \label{lem:laplacian}
    Let $\omega_{\dot{\Sigma}}$ be a symplectic form on $\dot{\Sigma}$ such that $g_{\dot{\Sigma}} \coloneqq \omega_{\dot{\Sigma}}(\cdot, j \cdot)$ is a Riemannian metric. Denote by $\operatorname{dvol}_{\dot{\Sigma}}$ the Riemannian volume element of $\dot{\Sigma}$. Let $a$ be a function on $\dot{\Sigma}$ and consider the Laplacian of $a$, $\Delta a \coloneqq \operatorname{div} (\nabla a)$. Then, $\omega_{\dot{\Sigma}} = \operatorname{dvol}_{\dot{\Sigma}}$ and 
    \begin{IEEEeqnarray*}{c+x*}
        \Delta a \, \omega_{\dot{\Sigma}} = - \edv (\edv a \circ j).
    \end{IEEEeqnarray*}
\end{lemma}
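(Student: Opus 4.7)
The plan is to establish the two assertions separately, with the first providing the measure-theoretic normalization needed for the second.

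For the identity $\omega_{\dot{\Sigma}} = \operatorname{dvol}_{\dot{\Sigma}}$, I would argue pointwise. At any $p \in \dot{\Sigma}$, pick any unit vector $e_1 \in T_p \dot{\Sigma}$ and set $e_2 \coloneqq j e_1$. Using $j^2 = -\id$ together with $g_{\dot{\Sigma}}(\cdot,\cdot) = \omega_{\dot{\Sigma}}(\cdot, j \cdot)$, a short computation gives $g_{\dot{\Sigma}}(e_2,e_2) = 1$ and $g_{\dot{\Sigma}}(e_1,e_2) = 0$, so $\{e_1,e_2\}$ is an orthonormal basis. Both $\omega_{\dot{\Sigma}}$ and $\operatorname{dvol}_{\dot{\Sigma}}$ evaluate to $1$ on this basis (for $\omega_{\dot{\Sigma}}$, directly from the definition of $g_{\dot{\Sigma}}$), so they agree at $p$; since the orientation induced by $j$ matches in both cases, we get a global identity.

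For the Laplacian formula, the key algebraic observation is that $\iota_{\nabla a} \omega_{\dot{\Sigma}} = -\edv a \circ j$. Indeed, by the definition of the gradient and the compatibility relation,
\begin{IEEEeqnarray*}{rCls+x*}
(\edv a \circ j)(v) & = & g_{\dot{\Sigma}}(\nabla a, j v) \\
& = & \omega_{\dot{\Sigma}}(\nabla a, j^2 v) \\
& = & -\omega_{\dot{\Sigma}}(\nabla a, v).
\end{IEEEeqnarray*}

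Now I would combine this with the standard characterization of divergence via the Lie derivative of the volume form. Because $\dot{\Sigma}$ is two-dimensional, $\omega_{\dot{\Sigma}}$ is a top-degree form and hence closed, so Cartan's formula gives $\ldv{X} \omega_{\dot{\Sigma}} = \edv \iota_X \omega_{\dot{\Sigma}}$ for any vector field $X$. Applying this to $X = \nabla a$ and using $\omega_{\dot{\Sigma}} = \operatorname{dvol}_{\dot{\Sigma}}$ together with $\ldv{X}\operatorname{dvol}_{\dot{\Sigma}} = (\operatorname{div} X)\operatorname{dvol}_{\dot{\Sigma}}$ yields
\begin{IEEEeqnarray*}{rCls+x*}
\Delta a \, \omega_{\dot{\Sigma}}
& = & \edv (\iota_{\nabla a} \omega_{\dot{\Sigma}}) \\
& = & -\edv(\edv a \circ j),
\end{IEEEeqnarray*}
as desired. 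I do not anticipate a genuine obstacle here; the only subtlety is checking orientations in the first step so that the volume form and symplectic form carry the same sign.
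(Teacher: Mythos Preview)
Your argument is correct and matches the paper's proof essentially line for line: both establish $\omega_{\dot{\Sigma}} = \operatorname{dvol}_{\dot{\Sigma}}$ via a $j$-adapted orthonormal frame, prove the identity $\iota_{\nabla a}\omega_{\dot{\Sigma}} = -\edv a \circ j$, and combine it with the Lie-derivative characterization of divergence and Cartan's formula. The only difference is cosmetic ordering (you prove the contraction identity first, the paper last).
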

\begin{proof}
    For any unit vector $S \in T \dot{\Sigma}$, if we define $T \coloneqq j (S)$ then $\{S, T\}$ is an orthonormal basis of $T \dot{\Sigma}$ and $\omega_{\dot{\Sigma}}(S, T) = 1$, which implies $\omega_{\dot{\Sigma}} = \operatorname{dvol}_{\dot{\Sigma}}$. We now prove the formula for the Laplacian.
    \begin{IEEEeqnarray*}{rCls+x*}
        \Delta a \, \omega_{\dot{\Sigma}}
        & = & \operatorname{div} (\nabla a) \omega_{\dot{\Sigma}} & \quad [\text{by definition of Laplacian}]                                                       \\
        & = & \ldv{\nabla a} \omega_{\dot{\Sigma}}                & \quad [\text{by definition of divergence and $\omega_{\dot{\Sigma}} = \operatorname{dvol}_{\dot{\Sigma}}$}] \\
        & = & \edv \iota_{\nabla a} \omega_{\dot{\Sigma}}         & \quad [\text{by the Cartan magic formula}].
    \end{IEEEeqnarray*}
    It remains to show that $\iota_{\nabla a} \omega_{\dot{\Sigma}} = - \edv a \circ j$.
    \begin{IEEEeqnarray*}{rCls+x*}
        \iota_{\nabla a} \omega_{\dot{\Sigma}} (S)
        & = & \omega_{\dot{\Sigma}} (\nabla a, S)               & \quad [\text{by definition of interior product}]         \\
        & = & - \omega_{\dot{\Sigma}} (\nabla a, j \circ j (S)) & \quad [\text{by definition of almost complex structure}] \\
        & = & - g_{\dot{\Sigma}} (\nabla a, j (S))              & \quad [\text{by definition of $g_{\dot{\Sigma}}$}]             \\
        & = & - \edv a \circ j (S)                              & \quad [\text{by definition of gradient}].                        & \qedhere
    \end{IEEEeqnarray*}
\end{proof}

\begin{lemma}[maximum principle]
    \label{thm:maximum principle holomorphic}
    Assume that $\dot{\Sigma}$ is connected. Let $(M, \alpha)$ be a contact manifold and $J$ be a cylindrical almost complex structure on $\R \times M$. If 
    \begin{IEEEeqnarray*}{c+x*}
        u = (a, f) \colon \dot{\Sigma} \longrightarrow \R \times M
    \end{IEEEeqnarray*}
    is a holomorphic curve and $a \colon \dot{\Sigma} \longrightarrow \R$ has a local maximum then $a$ is constant.
\end{lemma}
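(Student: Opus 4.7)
The plan is to show that $a$ is subharmonic on $\dot{\Sigma}$ and then apply the classical strong maximum principle for subharmonic functions on a connected domain.

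First, I would choose any symplectic form $\omega_{\dot{\Sigma}}$ on $\dot{\Sigma}$ compatible with $j$, so that $g_{\dot{\Sigma}} \coloneqq \omega_{\dot{\Sigma}}(\cdot, j \cdot)$ is a Riemannian metric on $\dot{\Sigma}$. Such a form certainly exists: for instance, pick any Hermitian metric on the Riemann surface $(\dot{\Sigma}, j)$ and take its associated Kähler form. This puts us in the setup of \cref{lem:laplacian}, so that the Laplacian $\Delta a$ of $a$ satisfies $\Delta a \, \omega_{\dot{\Sigma}} = - \edv(\edv a \circ j)$.

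Next, combining the two preparatory lemmas, I would compute
\begin{IEEEeqnarray*}{rCls+x*}
    \Delta a \, \omega_{\dot{\Sigma}}
    & = & - \edv (\edv a \circ j) & \quad [\text{by \cref{lem:laplacian}}] \\
    & = & \edv (f^* \alpha)       & \quad [\text{by Equation \eqref{eq:holomorphic curves in symplectizations 1}}] \\
    & = & f^* \edv \alpha         & \quad [\text{by naturality of $\edv$}] \\
    & \geq & 0                    & \quad [\text{by \cref{lem:holomorphic curves in symplectizations}}].
\end{IEEEeqnarray*}
Since $\omega_{\dot{\Sigma}}$ is a positive area form, this shows $\Delta a \geq 0$ pointwise on $\dot{\Sigma}$, i.e. $a$ is subharmonic with respect to $g_{\dot{\Sigma}}$.

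Finally, since $\dot{\Sigma}$ is connected and $a$ attains a local maximum at some point of $\dot{\Sigma}$, the strong maximum principle for subharmonic functions (applied in a coordinate chart around the maximum and then propagated along $\dot{\Sigma}$ by a standard connectedness argument) forces $a$ to be locally constant, hence constant on $\dot{\Sigma}$. There is no real obstacle here: the content of the result lies in the two preparatory lemmas, and the maximum principle proof is just the bookkeeping that assembles them into the classical subharmonic statement.
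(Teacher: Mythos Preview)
Your proof is correct and essentially identical to the paper's: the paper defines $L = -\Delta$, shows $La \leq 0$ via exactly the same chain of equalities (\cref{lem:laplacian}, then \cref{lem:holomorphic curves in symplectizations} for both $-\edv a \circ j = f^*\alpha$ and $f^*\edv\alpha \geq 0$), and invokes the strong maximum principle for elliptic operators from Evans. The only cosmetic difference is that the paper phrases it as $La \leq 0$ for $L = -\Delta$ rather than $\Delta a \geq 0$.
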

\begin{proof}
    Define $L = -\Delta$. The operator $L$ is a linear elliptic partial differential operator (as in \cite[p.~312]{evansPartialDifferentialEquations2010}). We show that $L a \leq 0$. For this, choose $\omega_{\dot{\Sigma}}$ a symplectic structure on $\dot{\Sigma}$ such that $g_{\dot{\Sigma}} \coloneqq \omega_{\dot{\Sigma}}(\cdot, j \cdot)$ is a Riemannian metric.
    \begin{IEEEeqnarray*}{rCls+x*}
        L a \, \omega_{\dot{\Sigma}}
        & =    & - \Delta a \, \omega_{\dot{\Sigma}} & \quad [\text{by definition of $L$}] \\
        & =    & \edv (\edv a \circ j)               & \quad [\text{by \cref{lem:laplacian}}]                                                    \\
        & =    & - \edv f^* \alpha                   & \quad [\text{by \cref{lem:holomorphic curves in symplectizations}}] \\
        & =    & - f^* \edv \alpha                   & \quad [\text{by naturality of exterior derivative}] \\
        & \leq & 0                                   & \quad [\text{by \cref{lem:holomorphic curves in symplectizations}}].
    \end{IEEEeqnarray*}
    This shows that $L a \leq 0$. By the strong maximum principle for elliptic partial differential operators in \cite[p.~349-350]{evansPartialDifferentialEquations2010}, if $a$ has a local maximum then $a$ is constant.
\end{proof}

\begin{lemma}
    \label{lem:integrand of energy is well-defined}
    Let $(V,j)$ be a complex vector space of real dimension 2, $(W,J,\omega,g)$ be a complex vector space with a symplectic form $\omega$ and inner product $g = \omega(\cdot,J \cdot)$, and $\phi \colon V \longrightarrow W$ be a linear map. For each choice of $s \in V$, define 
    \begin{IEEEeqnarray*}{rCls+x*}
        t                & \coloneqq & js, \\
        \{\sigma, \tau\} & \coloneqq & \text{basis of } V^* \text{ dual to } \{s,t\}, \\
        \omega_V         & \coloneqq & \sigma \wedge \tau, \\
        \| \phi \|^2     & \coloneqq & \| \phi s \|^2 + \|\phi t\|^2.
    \end{IEEEeqnarray*}
    Then,
    \begin{IEEEeqnarray*}{c+x*}
        \frac{1}{2} \| \phi \|^2 \omega_V = (\phi ^{1,0})^* \omega - (\phi ^{0,1})^* \omega,
    \end{IEEEeqnarray*}
    which is independent of the choice of $s$.
\end{lemma}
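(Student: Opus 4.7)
The natural approach is to decompose $\phi$ into its complex-linear and complex-antilinear parts,
\begin{IEEEeqnarray*}{rCls+x*}
    \phi^{1,0} \coloneqq \tfrac{1}{2}(\phi - J \phi j), \qquad \phi^{0,1} \coloneqq \tfrac{1}{2}(\phi + J \phi j),
\end{IEEEeqnarray*}
so that $\phi = \phi^{1,0} + \phi^{0,1}$, $\phi^{1,0} j = J \phi^{1,0}$, and $\phi^{0,1} j = - J \phi^{0,1}$. With $t = js$, the key observation is that
\begin{IEEEeqnarray*}{rCls+x*}
    \phi^{1,0} t = J \phi^{1,0} s, \qquad \phi^{0,1} t = - J \phi^{0,1} s.
\end{IEEEeqnarray*}

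First I would evaluate each summand on the right-hand side on the basis $\{s, t\}$. Since $\omega_V(s,t) = 1$, and using $g = \omega(\cdot, J \cdot)$,
\begin{IEEEeqnarray*}{rCls+x*}
    (\phi^{1,0})^* \omega \, (s,t) & = & \omega(\phi^{1,0} s, J \phi^{1,0} s) = \| \phi^{1,0} s \|^2, \\
    (\phi^{0,1})^* \omega \, (s,t) & = & \omega(\phi^{0,1} s, - J \phi^{0,1} s) = - \| \phi^{0,1} s \|^2,
\end{IEEEeqnarray*}
so the right-hand side equals $\bigl( \| \phi^{1,0} s \|^2 + \| \phi^{0,1} s \|^2 \bigr) \, \omega_V$.

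Next I would compute the left-hand side. Using the decomposition $\phi s = \phi^{1,0} s + \phi^{0,1} s$ and $\phi t = J \phi^{1,0} s - J \phi^{0,1} s$, expanding the norms gives
\begin{IEEEeqnarray*}{rCls+x*}
    \| \phi s \|^2 & = & \| \phi^{1,0} s \|^2 + \| \phi^{0,1} s \|^2 + 2 g(\phi^{1,0} s, \phi^{0,1} s), \\
    \| \phi t \|^2 & = & \| \phi^{1,0} s \|^2 + \| \phi^{0,1} s \|^2 - 2 g(\phi^{1,0} s, \phi^{0,1} s),
\end{IEEEeqnarray*}
where the cross terms cancel because $J$ is an isometry of $g$ and produces opposite signs on the two pieces. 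Adding yields $\| \phi \|^2 = 2 ( \| \phi^{1,0} s \|^2 + \| \phi^{0,1} s \|^2 )$, matching the computation of the right-hand side after multiplication by $\tfrac{1}{2}$.

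Finally, independence of the choice of $s$ is essentially automatic: the right-hand side is manifestly intrinsic, since $\phi^{1,0}$, $\phi^{0,1}$ and $\omega$ do not involve $s$. Alternatively one can verify it directly by replacing $s$ with $s' = a s + b t$, computing $\omega_{V}^{\prime} = (a^2 + b^2)^{-1} \omega_V$ and $\| \phi' \|^2 = (a^2+b^2) \| \phi \|^2$, and noting that the factors cancel. No real obstacle is expected here; the only subtlety is bookkeeping the signs coming from $J$-linearity versus $J$-antilinearity, which is exactly what makes the two summands on the right-hand side appear with opposite signs.
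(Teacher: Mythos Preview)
Your proposal is correct and follows essentially the same approach as the paper: decompose $\phi = \phi^{1,0} + \phi^{0,1}$, use the $J$-linearity/antilinearity to reduce both sides to $\|\phi^{1,0}s\|^2 + \|\phi^{0,1}s\|^2$, and match. The paper packages this as a single chain of equalities invoking the parallelogram law where you expand the cross terms directly, but the computation is the same.
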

\begin{proof}
    Recall the definitions of $\phi^{1,0}$ and $\phi^{0,1}$:
    \begin{IEEEeqnarray*}{rCls+x*}
        \phi^{1,0} & \coloneqq & \frac{1}{2} (\phi - J \circ \phi \circ j), \\
        \phi^{0,1} & \coloneqq & \frac{1}{2} (\phi + J \circ \phi \circ j).
    \end{IEEEeqnarray*}
    These equations imply that $\phi^{1,0}$ is holomorphic, while $\phi^{0,1}$ is anti-holomorphic:
    \begin{IEEEeqnarray}{c+x*}
        \plabel{eq:phi holo and anti holo}
        \phi^{1,0} \circ j = J \circ \phi^{1,0}, \qquad \phi^{0,1} \circ j = - J \circ \phi^{0,1}.
    \end{IEEEeqnarray}
    Finally, we compute
    \begin{IEEEeqnarray*}{rCls+x*}
        \IEEEeqnarraymulticol{3}{l}{\| \phi \|^2 \omega_V(s,js)} \\ \quad
        & = & \| \phi (s) \|^2 + \| \phi \circ j (s) \|^2                                                               & \quad [\text{definitions of $\|\phi\|$, $\omega_V$}]  \\
        & = & \| \phi ^{1,0} (s) + \phi ^{0,1} (s) \|^2 + \| \phi ^{1,0} \circ j (s) + \phi ^{0,1} \circ j (s) \|^2     & \quad [\text{since $\phi = \phi^{1,0} + \phi^{0,1}$}] \\
        & = & \| \phi ^{1,0} (s) + \phi ^{0,1} (s) \|^2 + \| J \circ \phi ^{1,0} (s) - J \circ \phi ^{0,1} (s) \|^2     & \quad [\text{by \eqref{eq:phi holo and anti holo}}]   \\
        & = & \| \phi ^{1,0} (s) + \phi ^{0,1} (s) \|^2 + \| \phi ^{1,0} (s) - \phi ^{0,1} (s) \|^2                     & \quad [\text{since $g = \omega(\cdot, J \cdot)$}]     \\
        & = & 2 \| \phi ^{1,0} (s) \|^2 + 2 \| \phi ^{0,1} (s) \|^2                                                     & \quad [\text{by the parallelogram law}]               \\
        & = & 2 \omega (\phi ^{1,0} (s), J \circ \phi ^{1,0} (s)) + 2 \omega (\phi ^{0,1} (s), J \circ \phi ^{0,1} (s)) & \quad [\text{since $g = \omega(\cdot, J \cdot)$}]     \\
        & = & 2 \omega (\phi ^{1,0} (s), \phi ^{1,0} \circ j (s)) - 2 \omega (\phi ^{0,1} (s), \phi ^{0,1} \circ j (s)) & \quad [\text{by \eqref{eq:phi holo and anti holo}}]   \\
        & = & 2 (\phi ^{1,0})^* \omega (s,js) - 2 (\phi ^{0,1})^* \omega (s,js)                                         & \quad [\text{by definition of pullback}].             & \qedhere
    \end{IEEEeqnarray*}    
\end{proof}

\begin{definition}
    \phantomsection\label{def:energy of a asy cylindrical holomorphic curve}
    Define a piecewise smooth $2$-form $\tilde{\omega} \in \Omega^2(\hat{X})$ by
    \begin{IEEEeqnarray*}{c+x*}
        \tilde{\omega}
        \coloneqq
        \begin{cases}
            \edv \lambda|_{\partial^+ X} & \text{on } \R_{\geq 0} \times \partial^+ X, \\
            \omega                       & \text{on } X, \\
            \edv \lambda|_{\partial^- X} & \text{on } \R_{\leq 0} \times \partial^- X.
        \end{cases}
    \end{IEEEeqnarray*}
    If $u$ is an asymptotically cylindrical holomorphic curve, its \textbf{energies} are given by
    \begin{IEEEeqnarray*}{rClCl}
        E_{\hat{\omega}}(u)   & \coloneqq & \int_{\dot{\Sigma}}^{} u^* \hat{\omega}, \\
        E_{\tilde{\omega}}(u) & \coloneqq & \int_{\dot{\Sigma}}^{} u^* \tilde{\omega}.
    \end{IEEEeqnarray*}
\end{definition}

We point out that if $u$ has positive punctures, then $E_{\hat{\omega}}(u) = + \infty$. Whenever we talk about the energy of an asymptotically cylindrical holomorphic curve, we mean the $E_{\tilde{\omega}}$ energy, unless otherwise specified. We included $E_{\hat{\omega}}$ in the definition above because we will need to use it in \cref{thm:lagrangian vs g tilde} to compare the Lagrangian and the McDuff--Siegel capacities. In \cref{lem:energy wrt different forms}, we compare $E_{\hat{\omega}}$ and $E_{\tilde{\omega}}$.

\begin{lemma}
    \label{lem:action energy for holomorphic}
    If $(X, \omega, \lambda)$ is a Liouville cobordism then 
    \begin{IEEEeqnarray*}{c+x*}
        0 \leq E_{\tilde{\omega}}(u) = \mathcal{A}(\Gamma^+) - \mathcal{A}(\Gamma^-).
    \end{IEEEeqnarray*}
\end{lemma}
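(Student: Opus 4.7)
The plan is to use Stokes' theorem together with the action--energy identity on the cylindrical ends. First, I would observe that $\tilde{\omega}$ admits a piecewise-smooth, continuous primitive $\tilde{\lambda}\in\Omega^1(\hat X)$ defined by
\begin{IEEEeqnarray*}{c+x*}
    \tilde{\lambda}
    \coloneqq
    \begin{cases}
        \pi^{*}(\lambda|_{\partial^+ X}) & \text{on } \R_{\geq 0} \times \partial^+ X, \\
        \lambda                          & \text{on } X, \\
        \pi^{*}(\lambda|_{\partial^- X}) & \text{on } \R_{\leq 0} \times \partial^- X,
    \end{cases}
\end{IEEEeqnarray*}
where $\pi$ denotes the projection to the contact boundary. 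Continuity across $\partial^{\pm} X$ is immediate because the restriction of $\lambda$ (defined on all of $X$ since $(X,\lambda)$ is a Liouville cobordism) to $\partial^{\pm} X$ agrees with the contact form used on the cylindrical end. On each of the three pieces $\tilde{\lambda}$ is smooth and satisfies $\edv\tilde{\lambda} = \tilde{\omega}$.

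Next, I would establish nonnegativity of the integrand $u^{*}\tilde{\omega}$ pointwise. On $u^{-1}(X)$ this is immediate: there $\tilde{\omega}=\omega$ and $J$ is $\omega$-compatible, so $u^{*}\omega \ge 0$. On the preimage of the positive end $\R_{\geq 0}\times\partial^+ X$, write $u = (a,f)$. Then $u^{*}\tilde{\omega} = f^{*}\edv\alpha$, which is nonnegative by \cref{lem:holomorphic curves in symplectizations}; the analogous statement holds on the negative end. Integrating yields $E_{\tilde{\omega}}(u)\ge 0$.

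It remains to identify $E_{\tilde\omega}(u)$ with $\mathcal A(\Gamma^+)-\mathcal A(\Gamma^-)$. For each puncture $z_i^{\pm}$ choose cylindrical coordinates and, for large $R>0$, let $\dot\Sigma_R \subset \dot\Sigma$ be the compact surface obtained by removing, near each positive (resp.\ negative) puncture, the half-cylinder $\{s>R\}$ (resp.\ $\{s<-R\}$). Split the integral $\int_{\dot\Sigma_R} u^{*}\tilde{\omega}$ according to the three pieces of $\hat X$; since $\tilde\lambda$ is continuous across the interfaces, the interface boundary terms from adjacent pieces cancel, and Stokes' theorem gives
\begin{IEEEeqnarray*}{c+x*}
    \int_{\dot\Sigma_R} u^{*}\tilde{\omega}
    \;=\; \sum_{i=1}^{p^+}\int_{\{R\}\times S^1} u^{*}\tilde{\lambda}
          \;-\;\sum_{j=1}^{p^-}\int_{\{-R\}\times S^1} u^{*}\tilde{\lambda}.
\end{IEEEeqnarray*}
Using the asymptotic behaviour from \cref{def:asy cyl holomorphic curve}, as $R\to\infty$ the loop $t\mapsto u(\pm R,t)$ converges in $C^0$ to $t\mapsto \gamma_i^{\pm}(t T_i^{\pm})$, and $\tilde{\lambda}$ on the end is pulled back from the contact form. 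Hence each boundary integral converges to $\int_{S^1} (\gamma_i^{\pm}(\cdot\,T_i^{\pm}))^{*}\alpha = T_i^{\pm} = \mathcal A(\gamma_i^{\pm})$. Taking $R\to\infty$ and using monotone convergence (justified by the pointwise nonnegativity of $u^{*}\tilde{\omega}$) yields $E_{\tilde\omega}(u)=\mathcal A(\Gamma^+)-\mathcal A(\Gamma^-)$.

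The main subtlety will be justifying the application of Stokes' theorem with the piecewise-smooth primitive $\tilde{\lambda}$ and controlling the limit $R\to\infty$; both are handled by the splitting above and by the uniform asymptotic convergence of $u$ to its asymptotic orbits, so no substantial new analysis is required beyond what is recorded in \cref{lem:holomorphic curves in symplectizations} and \cref{def:asy cyl holomorphic curve}.
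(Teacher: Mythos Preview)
Your proposal is correct and follows essentially the same approach as the paper: split $u^*\tilde\omega$ according to the three regions of $\hat X$, establish pointwise nonnegativity on each piece (using $\omega$-compatibility on $X$ and \cref{lem:holomorphic curves in symplectizations} on the ends), and apply Stokes' theorem with the piecewise primitive to identify the integral with the action difference. The paper's proof is more terse---it invokes \cref{lem:integrand of energy is well-defined} for the middle piece and simply cites Stokes without the explicit $R\to\infty$ cutoff argument---but the underlying reasoning is the same as yours.
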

\begin{proof}
    Since $(X, \omega, \lambda)$ is a Liouville cobordism, $E_{\tilde{\omega}}(u)$ is given by
    \begin{IEEEeqnarray*}{rCls+x*}
        E_{\tilde{\omega}}(u)
        & = & \int_{\dot{\Sigma}}^{} u^* \tilde{\omega} \\
        & = & \int_{u^{-1}(\R_{\leq 0} \times \partial^- X)} u^* \edv \lambda|_{\partial^- X} + \int_{u^{-1}(X)} u^* \edv \lambda + \int_{u^{-1}(\R_{\geq 0} \times \partial^+ X)} u^* \edv \lambda|_{\partial^+ X}.
    \end{IEEEeqnarray*}
    Here, the first and third terms are nonnegative by \cref{lem:holomorphic curves in symplectizations}, while the second term is nonnegative by \cref{lem:integrand of energy is well-defined}. This shows that $E_{\tilde{\omega}}(u) \geq 0$. Since $u$ is asymptotic to $\Gamma^{\pm}$ and by Stokes' theorem, $E_{\tilde{\omega}}(u) = \mathcal{A}(\Gamma^+) - \mathcal{A}(\Gamma^-)$.
\end{proof}

\begin{lemma}
    \label{lem:energy wrt different forms}
    Assume that $\Sigma$ has no positive punctures. Let $(X, \omega, \lambda)$ be a symplectic cobordism, and $J \in \mathcal{J}(X)$ be a cylindrical almost complex structure on $\hat{X}$. Assume that the canonical symplectic embedding
    \begin{align*}
        (\R_{\leq 0} \times \partial^- X, \edv (e^r \lambda|_{\partial^- X})) \longrightarrow (\hat{X}, \hat{\omega}) & \\
        \intertext{can be extended to a symplectic embedding}
        (\R_{\leq K} \times \partial^- X, \edv (e^r \lambda|_{\partial^- X})) \longrightarrow (\hat{X}, \hat{\omega}) &     
    \end{align*}
    for some $K > 0$. Let $u \colon \dot{\Sigma} \longrightarrow \hat{X}$ be a $J$-holomorphic curve which is negatively asymptotic to a tuple of Reeb orbits $\Gamma$ of $\partial^- X$. Consider the energies $E_{\hat{\omega}}(u)$ and $E_{\tilde{\omega}}(u)$ of \cref{def:energy of a asy cylindrical holomorphic curve}. Then, 
    \begin{IEEEeqnarray}{rCls+x*}
        \mathcal{A}(\Gamma) & \leq & \frac{1  }{e^K - 1} E_{\tilde{\omega}}(u), \plabel{eq:action is bounded by vertical energy} \\
        E_{\hat{\omega}}(u) & \leq & \frac{e^K}{e^K - 1} E_{\tilde{\omega}}(u). \plabel{eq:energy is bounded by vertical energy}
    \end{IEEEeqnarray}
\end{lemma}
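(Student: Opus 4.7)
The plan is to use Stokes' theorem on the sub-level sets $\Sigma_K \coloneqq u^{-1}(\phi(\R_{\leq K} \times \partial^- X))$ and $\Sigma_0 \coloneqq u^{-1}(\phi(\R_{\leq 0} \times \partial^- X))$ to extract $\mathcal{A}(\Gamma)$ from the asymptotic behaviour at the negative punctures, and to exploit the explicit difference $\hat\omega - \tilde\omega$ on the negative cylindrical end.

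Set $\alpha \coloneqq \lambda|_{\partial^- X}$, $v \coloneqq \phi^{-1} \circ u$ on $\Sigma_K$, and $\rho \coloneqq \pi_\R \circ v$. First, I would verify via the maximum principle (\cref{thm:maximum principle holomorphic}), applied to each connected component of $u^{-1}(\R_{> 0} \times \partial^+ X)$ together with the absence of positive punctures, that $u^{-1}(\R_{> 0} \times \partial^+ X) = \varnothing$; hence on $\dot\Sigma \setminus \Sigma_K$ the curve $u$ maps into $X \setminus \phi(\R_{(0,K]} \times \partial^- X)$, where $\hat\omega = \omega = \tilde\omega$, so $u^* \hat\omega = u^* \tilde\omega$ pointwise there. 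Similarly on $\Sigma_K \setminus \Sigma_0$ one has $u^* \hat\omega = u^* \tilde\omega$, while on $\Sigma_0$ the two pullbacks differ by $u^* \hat\omega - u^* \tilde\omega = v^* \edv((e^r - 1) \alpha)$.

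The core step is Stokes' theorem on $\Sigma_K$ and $\Sigma_0$, truncated at $\rho \geq -N$ for regular values (obtained via Sard, with a limiting argument to treat $K$ and $0$ if they are not regular). At the truncation level $\rho = -N$, the asymptotic convergence of $u$ to $\Gamma$ forces $\int_{\rho^{-1}(-N)} v^* \alpha \to -\mathcal{A}(\Gamma)$ while $\int_{\rho^{-1}(-N)} v^*(e^r \alpha) \to 0$ because of the prefactor $e^{-N}$. Passing $N \to \infty$ yields
\begin{IEEEeqnarray*}{rCl}
    \int_{\Sigma_K} v^* \edv \alpha      & = & \int_{\rho^{-1}(K)} v^* \alpha - \mathcal{A}(\Gamma), \\
    \int_{\Sigma_K} v^* \edv(e^r \alpha) & = & e^K \int_{\rho^{-1}(K)} v^* \alpha;
\end{IEEEeqnarray*}
a parallel computation on $\Sigma_0$, where the boundary contribution at $\rho = 0$ vanishes because $e^0 - 1 = 0$, gives $\int_{\Sigma_0} v^* \edv((e^r - 1) \alpha) = \mathcal{A}(\Gamma)$. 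Since $\phi$ is $\hat\omega$-symplectic we have $v^* \edv(e^r \alpha) = u^* \hat\omega$ on $\Sigma_K$; combining the vanishing of $\hat\omega - \tilde\omega$ outside $\Sigma_0$ with this $\Sigma_0$ identity then produces $E_{\hat\omega}(u) = E_{\tilde\omega}(u) + \mathcal{A}(\Gamma)$.

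Finally, $v^* \edv \alpha \geq 0$ by \cref{lem:holomorphic curves in symplectizations}, so the first displayed identity forces $\int_{\rho^{-1}(K)} v^* \alpha \geq \mathcal{A}(\Gamma)$; substituting into the second identity gives $E_{\hat\omega}(u) \geq \int_{\Sigma_K} u^* \hat\omega = e^K \int_{\rho^{-1}(K)} v^* \alpha \geq e^K \mathcal{A}(\Gamma)$. Combining this with $E_{\hat\omega}(u) = E_{\tilde\omega}(u) + \mathcal{A}(\Gamma)$ and solving for $\mathcal{A}(\Gamma)$ produces \eqref{eq:action is bounded by vertical energy}, and substituting back produces \eqref{eq:energy is bounded by vertical energy}. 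The main technical obstacle will be careful orientation bookkeeping in the Stokes truncations and justifying the truncation limit: specifically, verifying that for $N$ sufficiently large each connected component of $\rho^{-1}(-N)$ is a loop confined to the asymptotic cylindrical neighborhood of a single negative puncture, where the asymptotic form of $u$ controls the boundary integrals.
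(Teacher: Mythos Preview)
Your proposal is correct and follows essentially the same route as the paper: both arguments reduce the two inequalities to the pair of identities $E_{\hat\omega}(u) - E_{\tilde\omega}(u) = \mathcal{A}(\Gamma)$ and $E_{\hat\omega}(u) \geq e^K \mathcal{A}(\Gamma)$, prove the first via Stokes on $\Sigma_0$ applied to $(e^r - 1)\alpha$, and prove the second via Stokes on $\Sigma_K$ applied to $e^r\alpha$ and then to $\alpha$ together with $v^* \edv\alpha \geq 0$. Your version is more explicit about the truncation and regular-value issues that the paper suppresses, but the mathematical content is identical.
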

\begin{proof}
    It is enough to show that
    \begin{IEEEeqnarray}{rCls+x*}
        E_{\hat{\omega}}(u) - E_{\tilde{\omega}}(u) & =    & \mathcal{A}(\Gamma), \plabel{eq:vertical energy bounds 1} \\
        E_{\hat{\omega}}(u)                         & \geq & e^K \mathcal{A}(\Gamma), \plabel{eq:vertical energy bounds 2}
    \end{IEEEeqnarray}
    since these equations imply Equations \eqref{eq:action is bounded by vertical energy} and \eqref{eq:energy is bounded by vertical energy}. Since $u$ has no positive punctures, the maximum principle (\cref{thm:maximum principle holomorphic}) implies that $u$ is contained in $\R_{\leq 0} \times \partial^- X \cup X$. We prove Equation \eqref{eq:vertical energy bounds 1}. For simplicity, denote $M = \partial^- X$ and $\alpha = \lambda|_{\partial^- X}$.
    \begin{IEEEeqnarray*}{rCls+x*}
        E_{\hat{\omega}}(u) - E_{\tilde{\omega}}(u)
        & = & \int_{\dot{\Sigma}}^{} u^* (\hat{\omega} - \tilde{\omega})         & \quad [\text{by definition of $E_{\hat{\omega}}$ and $E_{\tilde{\omega}}$}] \\
        & = & \int_{u^{-1}(\R_{\leq 0} \times M)}^{} u^* \edv ((e^r - 1) \alpha) & \quad [\text{by definition of $\hat{\omega}$ and $\tilde{\omega}$}] \\
        & = & \mathcal{A}(\Gamma)                                                & \quad [\text{by Stokes' theorem}].
    \end{IEEEeqnarray*}
    We prove Equation \eqref{eq:vertical energy bounds 2}.
    \begin{IEEEeqnarray*}{rCls+x*}
        E_{\hat{\omega}}(u)
        & =    & \int_{\dot{\Sigma}}^{} u^* \hat{\omega}                                               & \quad [\text{by definition of $E_{\hat{\omega}}$}] \\
        & \geq & \int_{u^{-1}(\R_{\leq K} \times M)}^{} u^* \edv (e^r \alpha)                          & \quad [\text{by definition of $\hat{\omega}$ and $u^* \hat{\omega} \geq 0$}] \\
        & =    & e^K \int_{u^{-1}( \{K\} \times M)}^{} u^* \alpha                                      & \quad [\text{by Stokes' theorem}] \\
        & =    & e^K \int_{u^{-1}( \R_{\leq K} \times M)}^{} u^* \edv \alpha + e^K \mathcal{A}(\Gamma) & \quad [\text{by Stokes' theorem}] \\
        & \geq & e^K \mathcal{A}(\Gamma)                                                               & \quad [\text{by \cref{lem:holomorphic curves in symplectizations}}].           & \qedhere
    \end{IEEEeqnarray*}
\end{proof}

\section{Moduli spaces of Holomorphic curves}
\label{sec:moduli spaces of holomorphic curves}

If $(M, \alpha)$ is a contact manifold, we denote by $\mathcal{J}(M)$ the set of cylindrical almost complex structures on $\R \times M$ (see \cref{def:J cylindrical}). If $(X, \omega, \lambda)$ is a symplectic cobordism, we denote by $\mathcal{J}(X)$ the set of cylindrical almost complex structures on $\hat{X}$ (see \cref{def:admissible}). If $J^{\pm} \in \mathcal{J}(\partial^{\pm} X)$ is a cylindrical almost complex structure on $\R \times \partial^{\pm} X$, then we define the following subsets of $\mathcal{J}(X)$:
\begin{IEEEeqnarray*}{rCls+x*}
    \mathcal{J}^{J^+}(X)                  & \coloneqq & \{ J \in \mathcal{J}(X) \mid J = J^{+} \text{ on } \R_{\geq 0} \times \partial^+ X \}, \\
    \mathcal{J}_{J^-}^{\hphantom{J^+}}(X) & \coloneqq & \{ J \in \mathcal{J}(X) \mid J = J^{-} \text{ on } \R_{\leq 0} \times \partial^- X \}, \\
    \mathcal{J}^{J^+}_{J^-}(X)            & \coloneqq & \{ J \in \mathcal{J}(X) \mid J = J^{+} \text{ on } \R_{\geq 0} \times \partial^+ X \text{ and } J = J^{-} \text{ on } \R_{\leq 0} \times \partial^- X \}.
\end{IEEEeqnarray*}

Let $\Gamma^{\pm} = (\gamma^{\pm}_1, \ldots, \gamma^{\pm}_{p ^{\pm}})$ be a tuple of Reeb orbits in $\partial^{\pm} X$ and $J \in \mathcal{J}(X)$ be a cylindrical almost complex structure on $\hat{X}$. Define a moduli space
\begin{IEEEeqnarray*}{c+x*}
    \mathcal{M}^{J}_{X}(\Gamma^+, \Gamma^-)
    \coloneqq
    \left\{
        (\Sigma, u)
        \ \middle\vert
        \begin{array}{l}
            \Sigma \text{ is a connected closed Riemann surface} \\
            \text{of genus $0$ with punctures $\mathbf{z}^{\pm} = \{z^{\pm}_1, \ldots, z^{\pm}_{p ^{\pm}}\}$,} \\
            u \colon \dot{\Sigma} \longrightarrow \hat{X} \text{ is as in \cref{def:asy cyl holomorphic curve}}
        \end{array}
        \right\} / \sim,
\end{IEEEeqnarray*}
where $(\Sigma_0, u_0) \sim (\Sigma_1, u_1)$ if and only if there exists a biholomorphism $\phi \colon \Sigma_0 \longrightarrow \Sigma_1$ such that $u_1 \circ \phi = u_0$ and $\phi(z^{\pm}_{0,i}) = z^{\pm}_{1,i}$ for every $i = 1,\ldots,p ^{\pm}$. If $\Gamma^{\pm} = (\gamma^{\pm}_1, \ldots, \gamma^{\pm}_{p ^{\pm}})$ is a tuple of Reeb orbits on a contact manifold $M$ and $J \in \mathcal{J}(M)$, we define a moduli space $\mathcal{M}_{M}^{J}(\Gamma^+, \Gamma^-)$ of holomorphic curves in $\R \times M$ analogously. Since $J$ is invariant with respect to translations in the $\R$ direction, $\mathcal{M}_{M}^{J}(\Gamma^+, \Gamma^-)$ admits an action of $\R$ by composition on the target by a translation.

One can try to show that the moduli space $\mathcal{M}_{X}^{J}(\Gamma^+, \Gamma^-)$ is transversely cut out by showing that the relevant linearized Cauchy--Riemann operator is surjective at every point of the moduli space. In this case, the moduli space is an orbifold whose dimension is given by the Fredholm index of the linearized Cauchy--Riemann operator. However, since the curves in $\mathcal{M}_{X}^{J}(\Gamma^+, \Gamma^-)$ are not necessarily simple, this proof will in general not work, and we cannot say that the moduli space is an orbifold. However, the Fredholm theory part of the proof still works, which means that we still have a dimension formula. In this case the expected dimension given by the Fredholm theory is usually called a virtual dimension. For the moduli space above, the virtual dimension at a point $u$ is given by (see \cite[Section 4]{bourgeoisCoherentOrientationsSymplectic2004})
\begin{IEEEeqnarray*}{c}
    \operatorname{virdim}_u \mathcal{M}_{X}^{J}(\Gamma^+, \Gamma^-) = (n - 3)(2 - p^+ - p^-) + c_1^{\tau}(u^* T \hat{X}) + \conleyzehnder^{\tau} (\Gamma^+) - \conleyzehnder^{\tau} (\Gamma^-),
\end{IEEEeqnarray*}
where $\tau$ is a unitary trivialization of the contact distribution over each Reeb orbit. 

We now discuss curves satisfying a tangency constraint. Our presentation is based on \cite[Section 2.2]{mcduffSymplecticCapacitiesUnperturbed2022} and \cite[Section 3]{cieliebakPuncturedHolomorphicCurves2018}. Let $(X,\omega,\lambda)$ be a symplectic cobordism and $x \in \itr X$. A \textbf{symplectic divisor} through $x$ is a germ of a $2$-codimensional symplectic submanifold $D \subset X$ containing $x$. A cylindrical almost complex structure $J \in \mathcal{J}(X)$ is \textbf{compatible} with $D$ if $J$ is integrable near $x$ and $D$ is holomorphic with respect to $J$. We denote by $\mathcal{J}(X,D)$ the set of such almost complex structures. In this case, there are complex coordinates $(z^1, \ldots, z^n)$ near $x$ such that $D$ is given by $h(z_1,\ldots,z_n) = 0$, where $h(z_1,\ldots,z_n) = z_1$. Let $u \colon \Sigma \longrightarrow X$ be a $J$-holomorphic curve together with a marked point $w \in \Sigma$. For $k \geq 1$, we say that $u$ has \textbf{contact order $k$} to $D$ at $x$ if $u(w) = x$ and%
\begin{IEEEeqnarray*}{c+x*}
    (h \circ u \circ \varphi)^{(1)}(0) = \cdots = (h \circ u \circ \varphi)^{(k-1)}(0) = 0,
\end{IEEEeqnarray*}
for some local biholomorphism $\varphi \colon (\C,0) \longrightarrow (\Sigma, w)$. We point out that the condition of having ``contact order $k$'' as written above is equal to the condition of being ``tangent of order $k-1$'' as defined in \cite[Section 3]{cieliebakPuncturedHolomorphicCurves2018}. Following \cite{mcduffSymplecticCapacitiesUnperturbed2022}, we will use the notation $\p{<}{}{\mathcal{T}^{(k)}x}$ to denote moduli spaces of curves which have contact order $k$, i.e. we will denote them by $\mathcal{M}_{X}^{J}(\Gamma^+, \Gamma^-)\p{<}{}{\mathcal{T}^{(k)}x}$ and $\mathcal{M}_{M}^{J}(\Gamma^+, \Gamma^-)\p{<}{}{\mathcal{T}^{(k)}x}$. The virtual dimension is given by (see \cite[Equation (2.2.1)]{mcduffSymplecticCapacitiesUnperturbed2022})
\begin{IEEEeqnarray*}{l}
    \operatorname{virdim}_u \mathcal{M}_{X}^{J}(\Gamma^+, \Gamma^-)\p{<}{}{\mathcal{T}^{(k)}x} \\
    \quad = (n - 3)(2 - p^+ - p^-) + c_1^{\tau}(u^* T \hat{X}) + \conleyzehnder^{\tau} (\Gamma^+) - \conleyzehnder^{\tau} (\Gamma^-) - 2n - 2k + 4.
\end{IEEEeqnarray*}

The following theorem says that moduli spaces of simple, asymptotically cylindrical holomorphic curves are transversely cut out.

\begin{theorem}[{\cite[Proposition 6.9]{cieliebakSymplecticHypersurfacesTransversality2007}}]
    \label{thm:transversality with tangency}
    Let $(X,\omega,\lambda)$ be a symplectic cobordism, $x \in \itr X$ and $D$ be a symplectic divisor at $x$. There exists a comeagre set $\mathcal{J}_{\mathrm{reg}}(X,D) \subset \mathcal{J}(X,D)$ with the following property. If $J \in \mathcal{J}_{\mathrm{reg}}(X,D)$ is a regular almost complex structure, $\Gamma^{\pm} = (\gamma^\pm_1,\ldots,\gamma^\pm_{p^{\pm}})$ is a tuple of Reeb orbits of $\partial^{\pm} X$ and $A \in H_2(X,\Gamma^+ \cup \Gamma^-)$, then the moduli space $\mathcal{M}_{X,A,s}^J(\Gamma^+,\Gamma^-)\p{<}{}{\mathcal{T}^{(k)}x} \subset \mathcal{M}_{X}^J(\Gamma^+,\Gamma^-)\p{<}{}{\mathcal{T}^{(k)}x}$ of simple curves representing the homology class $A$ is a manifold of dimension
    \begin{IEEEeqnarray*}{l}
        \dim \mathcal{M}_{X,A,s}^J(\Gamma^+,\Gamma^-)\p{<}{}{\mathcal{T}^{(k)}x} \\
        \quad = (n-3)(2 - p^+ - p^-) + 2 c_1^{\tau}(TX) \cdot A + \conleyzehnder^{\tau}(\Gamma^+) - \conleyzehnder^{\tau}(\Gamma^-) - 2n - 2k + 4.
    \end{IEEEeqnarray*}
\end{theorem}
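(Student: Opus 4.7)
The plan is to run the standard Sard--Smale argument on a universal moduli space, with the tangency constraint imposed as an additional linear condition on jets at a marked point. Fix complex coordinates $(z^1, \ldots, z^n)$ near $x$ so that $D = \{z^1 = 0\}$, and choose a Banach manifold $\mathcal{J}^\ell(X,D) \subset \mathcal{J}(X,D)$ of $C^\ell$ cylindrical almost complex structures which agree with a fixed integrable $J_0$ on a small neighbourhood $U$ of $x$, and which are cylindrical on the ends. One first constructs the universal moduli space
\begin{IEEEeqnarray*}{c+x*}
    \mathcal{M}^{\mathrm{univ}}_{X,A,s}(\Gamma^+,\Gamma^-)\p{<}{}{\mathcal{T}^{(k)} x} \coloneqq \{(J,u,w) \mid J \in \mathcal{J}^\ell(X,D),\ u \in \mathcal{M}^J_{X,A,s}(\Gamma^+,\Gamma^-),\ u \text{ has order } k \text{ at } w\}
\end{IEEEeqnarray*}
as the zero set of a section of a Banach bundle over a product of a Banach manifold of maps of suitable Sobolev regularity with exponential weights at the punctures, a Teichmüller slice, the space $\mathcal{J}^\ell(X,D)$, and the marked point.

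Next I would show that this universal moduli space is a Banach manifold by proving that the linearization of the defining section is surjective at every point. The linearization splits as the sum of the usual linearized Cauchy--Riemann operator $D_u$ (perturbed by variations of $J$ and of the complex structure on the domain), together with the $k$ linear evaluations on the $(k-1)$-jet of $h \circ u$ at $w$ that encode the tangency condition, where $h(z) = z^1$. Because $u$ is simple, it has an injective point $p$ outside the compact set where $J$ is constrained (and in particular away from $x$); a standard perturbation argument in the $J$-direction then shows that the image of the universal linearization already equals the full target on the complement of the tangency constraints. The $k$ jet evaluations at $w$ are linearly independent functionals, and one produces suitable variations of $J$ supported in small balls along $u$ away from $w$ and $x$ which hit them independently. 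Combining these two mechanisms yields surjectivity of the full universal linearization.

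The dimension count is then the pure index computation. Riemann--Roch for the asymptotically cylindrical Cauchy--Riemann operator gives
\begin{IEEEeqnarray*}{c+x*}
    \fredholm(D_u) = (n-3)(2 - p^+ - p^-) + 2 c_1^\tau(TX) \cdot A + \conleyzehnder^\tau(\Gamma^+) - \conleyzehnder^\tau(\Gamma^-),
\end{IEEEeqnarray*}
to which one adds $+2$ for the marked point $w$ and subtracts the real codimension of the tangency condition. The condition $u(w) = x$ is codimension $2n$, and the vanishing of the derivatives $(h \circ u \circ \varphi)^{(j)}(0)$ for $j = 1, \ldots, k-1$ adds codimension $2(k-1)$, giving total tangency codimension $2n + 2k - 2$. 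The result is an index shift of $-2n - 2k + 4$, which matches the stated formula. The final step is Sard--Smale applied to the projection $\mathcal{M}^{\mathrm{univ}} \to \mathcal{J}^\ell(X,D)$, yielding a comeagre set of regular values; a Taubes-style argument promotes this from $C^\ell$ to $C^\infty$, producing the desired $\mathcal{J}_{\mathrm{reg}}(X,D)$.

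The principal obstacle is the surjectivity step in the presence of the tangency constraint: one must simultaneously use variations of $J$ away from $x$ (where $J$ is locked to the integrable $J_0$) to fill the Cauchy--Riemann cokernel and to realize the jet constraints. The point $x$ is forced on $u$ and sits inside the frozen region, so the only room for perturbation is along the portion of $\img u$ outside $U$; simplicity of $u$ and the unique continuation principle ensure such points exist, but making the argument work uniformly across all strata of possible node/collision configurations and all choices of marked point $w$ is the delicate part, and is what makes this theorem a non-trivial extension of the standard somewhere-injectivity transversality scheme.
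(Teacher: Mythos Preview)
The paper does not give its own proof of this statement: it is quoted as \cite[Proposition 6.9]{cieliebakSymplecticHypersurfacesTransversality2007} and used as a black box, so there is no in-paper argument to compare against. Your sketch is a faithful outline of the standard Sard--Smale scheme for this kind of transversality-with-tangency result and is consistent with how the cited reference proceeds: one sets up a universal moduli space over a Banach manifold of almost complex structures fixed near $x$, encodes the tangency as a jet constraint at the marked point, proves surjectivity of the universal linearization using somewhere-injectivity of simple curves (giving injective points outside the frozen region $U$), reads off the index from Riemann--Roch plus the codimension count, and applies Sard--Smale followed by a Taubes trick to pass to $C^\infty$.

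One point worth sharpening: in your surjectivity discussion you say one hits the $k$ jet evaluations by perturbing $J$ in small balls away from $w$ and $x$. That is not quite how it goes, since variations of $J$ supported away from $w$ do not directly move the jet of $u$ at $w$. The actual mechanism (as in Cieliebak--Mohnke) is that once the Cauchy--Riemann cokernel is killed by $J$-variations supported at injective points outside $U$, the restricted linearized operator \emph{together with} variations of the marked point and of the local holomorphic section is already surjective onto the jet space, because near $x$ the structure is integrable and one can vary the Taylor coefficients of a local holomorphic map freely. Your identification of the main obstacle (the frozen region around $x$) is correct, but the resolution uses the integrability near $x$ rather than $J$-perturbations.
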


We will now use this transversality result to state two lemmas from \cite{cieliebakPuncturedHolomorphicCurves2018}, namely \cref{lem:punctures and tangency,lem:punctures and tangency simple}, which we will use in the proof of \cref{thm:lagrangian vs g tilde}. For the sake of completeness, we will also give proofs of the results. We point out that in order to achieve the conditions in the statement of the lemmas, we can use a metric as in \cref{lem:geodesics lemma CM abs}. Finally, notice that \cref{lem:punctures and tangency} generalizes \cref{lem:punctures and tangency simple} to the case where the curve is not necessarily simple.

\begin{lemma}[{\cite[Lemma 3.2]{cieliebakPuncturedHolomorphicCurves2018}}]
    \phantomsection\label{lem:punctures and tangency simple}
    Let $(L,g)$ be an $n$-dimensional Riemannian manifold with the property that for some $\ell_0 > 0$, all closed geodesics $\gamma$ of length $\ell(\gamma) \leq \ell_0$ are noncontractible and nondegenerate and have Morse index $\morse(\gamma) \leq n - 1$. Let $x \in T^*L$ and $D$ be a symplectic divisor through $x$. For generic $J$ every simple punctured $J$-holomorphic sphere $C$ in $T^*L$ which is asymptotic at the punctures to geodesics of length $\leq \ell_0$ and which has contact order $k$ to $D$ at $x$ must have at least $k + 1$ punctures.
\end{lemma}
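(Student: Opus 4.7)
The plan is a dimension count applied to the moduli space of simple curves containing the given sphere $C$. Write $A \coloneqq [C]$ and let $\Gamma = (\gamma_1, \ldots, \gamma_p)$ be the tuple of Reeb orbits of $S^*L$ corresponding, via $\tilde{g}$, to the asymptotic geodesics $c_1, \ldots, c_p$ of $C$. Since $T^*L$ is the completion of the Liouville domain $D^*L$, all punctures are positive, so $p^+ = p$ and $p^- = 0$.

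First, I apply \cref{thm:transversality with tangency} to $D^*L$ equipped with the divisor $D$ at $x$. For a generic $J \in \mathcal{J}(T^*L, D)$, the stratum of simple curves $\mathcal{M}_{T^*L, A, s}^J(\Gamma)\p{<}{}{\mathcal{T}^{(k)}x}$ is a manifold of dimension
\begin{IEEEeqnarray*}{c+x*}
    d = (n-3)(2-p) + 2 c_1^{\tau}(TT^*L) \cdot A + \conleyzehnder^{\tau}(\Gamma) - 2n - 2k + 4,
\end{IEEEeqnarray*}
and since $[C]$ lies in it, $d \geq 0$.

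Next, I choose the trivialization $\tau$ along each $\gamma_i$ to be $\tau(\kappa_i)$ in the notation of \cref{def:map of trivializations tau}, where $\kappa_i$ is an isometric orthonormal trivialization of $c_i^*TL$ (exists because $L$ is orientable). By \cref{thm:index of geodesic or reeb orbit isometric triv} and the hypothesis on the metric,
\begin{IEEEeqnarray*}{c+x*}
    \conleyzehnder^{\tau}(\gamma_i) = \morse(c_i) \leq n-1.
\end{IEEEeqnarray*}
The remaining ingredient is to show that $c_1^{\tau}(TT^*L) \cdot A = 0$ for this choice of trivialization. The idea is to use the symplectic bundle isomorphism $TT^*L \cong \pi^*(T^*L \oplus TL)$ from \cref{prop:properties of p}, combined with the fact that $c_1(T^*L \oplus TL) = c_1(T^*L) + c_1(TL) = 0$ on $L$, and the observation from \cref{lem:diagram of maps of trivalizations commutes} that $\sigma(\kappa_i) = \sigma_0(\tau(\kappa_i))$, so that $\tau$ extends consistently via $\sigma$ to the trivialization of $TT^*L$ along the zero-section over $c_i$.

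Combining these facts,
\begin{IEEEeqnarray*}{c+x*}
    0 \;\leq\; d \;\leq\; (n-3)(2-p) + p(n-1) - 2n - 2k + 4 \;=\; 2(p - k - 1),
\end{IEEEeqnarray*}
so $p \geq k + 1$, as required. The main obstacle is the verification that $c_1^{\tau}(TT^*L) \cdot A = 0$: since the geodesics $c_i$ are non-contractible by hypothesis, there are no filling disks to reduce the computation to, and the argument must proceed by a direct comparison of the trivialization $\tau(\kappa_i)$ at the ends with a globally available splitting of $TT^*L$ induced from $\pi^*(T^*L \oplus TL)$. The remainder is a purely algebraic manipulation of the index formula.
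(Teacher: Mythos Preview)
Your argument is correct and follows the paper's approach exactly: apply \cref{thm:transversality with tangency}, bound each $\conleyzehnder^\tau(\gamma_i)$ by $n-1$ via \cref{thm:index of geodesic or reeb orbit isometric triv}, set $c_1^\tau = 0$, and rearrange. The only difference is cosmetic: the paper absorbs the $c_1^\tau = 0$ step into its choice of trivialization by declaring $\tau$ to be a \emph{global} trivialization of $C^*TT^*L$ that restricts at each puncture to one of the form used in \cref{thm:index of geodesic or reeb orbit isometric triv} (so the relative Chern number vanishes by definition), whereas you fix $\tau(\kappa_i)$ at the ends and argue the vanishing separately---logically equivalent formulations, since such a global extension exists precisely when the relative $c_1^\tau$ is zero.
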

\begin{proof}
    Let $(\gamma_1, \ldots, \gamma_p)$ be the tuple of asymptotic Reeb orbits of $C$, which have corresponding geodesics also denoted by $(\gamma_1, \ldots, \gamma_p)$. By assumption, $\morse(\gamma_i) \leq n - 1$ for every $i = 1,\ldots,p$. Choose a trivialization $\tau$ of $C^* T T^*L$ such that the induced trivialization over the asymptotic Reeb orbits is as in \cref{thm:index of geodesic or reeb orbit isometric triv}. We show that $p \geq k + 1$.
    \begin{IEEEeqnarray*}{rCls+x*}
        0
        & \leq & \dim_{C} \mathcal{M}_{X,s}^J(\Gamma^+,\Gamma^-)\p{<}{}{\mathcal{T}^{(k)}x}                             \\
        & =    & (n-3)(2-p) + 2 c_1^{\tau}(TX) \cdot [C] + \sum_{i=1}^{p} \conleyzehnder^{\tau}(\gamma_i) - 2n - 2k + 4 \\
        & =    & (n-3)(2-p) + \sum_{i=1}^{p} \morse(\gamma_i) - 2n - 2k + 4                                             \\
        & \leq & (n-3)(2-p) + \sum_{i=1}^{p} (n-1) - 2n - 2k + 4                                                        \\
        & =    & 2 (p - 1 - k).                                                                                         & & \qedhere
    \end{IEEEeqnarray*}
\end{proof}

\begin{lemma}[{\cite[Corollary 3.3]{cieliebakPuncturedHolomorphicCurves2018}}]
    \label{lem:punctures and tangency}
    Let $(L,g)$ be an $n$-dimensional Riemannian manifold with the property that for some $\ell_0 > 0$, all closed geodesics $\gamma$ of length $\ell(\gamma) \leq \ell_0$ are noncontractible and nondegenerate and have Morse index $\morse(\gamma) \leq n - 1$. Let $x \in T^*L$ and $D$ be a symplectic divisor through $x$. For generic $J$ every (not necessarily simple) punctured $J$-holomorphic sphere $\tilde{C}$ in $T^*L$ which is asymptotic at the punctures to geodesics of length $\leq \ell_0$ and which has contact order $\tilde{k}$ to $D$ at $x$ must have at least $\tilde{k} + 1$ punctures.
\end{lemma}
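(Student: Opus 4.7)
The plan is to reduce the non-simple case to \cref{lem:punctures and tangency simple} by passing to the underlying simple curve and controlling how punctures, contact orders, and ramification interact under a branched cover. Write $\tilde{C} \colon \dot{\tilde{\Sigma}} \longrightarrow T^*L$ for a (possibly multiply covered) punctured $J$-holomorphic sphere with $\tilde{p}$ punctures having contact order $\tilde{k}$ to $D$ at a marked point $\tilde{w}$ mapping to $x$. By the standard factorization theorem for asymptotically cylindrical $J$-holomorphic curves, there is a simple $J$-holomorphic sphere $C \colon \dot{\Sigma} \longrightarrow T^*L$ with $p$ punctures and a holomorphic branched cover $\phi \colon \tilde{\Sigma} \longrightarrow \Sigma$ of degree $d \geq 1$ between the compactified spheres, sending punctures to punctures, such that $\tilde{C} = C \circ \phi$. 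If $d = 1$ then $\tilde{C}$ itself is simple and the bound $\tilde{p} \geq \tilde{k} + 1$ is immediate from \cref{lem:punctures and tangency simple}, so assume $d \geq 2$.

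Let $m$ be the ramification index of $\phi$ at $\tilde{w}$ and set $w = \phi(\tilde{w})$; since $\tilde{C}(\tilde{w}) = x$ we must have $C(w) = x$ as well. In local coordinates $\phi$ has the form $\tilde{z} \mapsto \tilde{z}^m$, so if $C$ has contact order exactly $k'$ at $w$, i.e.\ $h \circ C(z) = z^{k'} f(z)$ with $f(0) \neq 0$, then $h \circ \tilde{C}(\tilde{z}) = \tilde{z}^{mk'} f(\tilde{z}^m)$, and therefore $\tilde{C}$ has contact order $mk'$ at $\tilde{w}$. Hence $mk' \geq \tilde{k}$. The asymptotic Reeb orbit at each puncture of $C$ has length at most $\ell_0$ (it is a subcover of a length $\leq \ell_0$ geodesic appearing as asymptote of $\tilde{C}$), so \cref{lem:punctures and tangency simple} applies to $C$ and yields $p \geq k' + 1 \geq 2$.

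The final step is a Riemann--Hurwitz count. For the degree $d$ branched cover of spheres $\phi \colon S^2 \longrightarrow S^2$, the total ramification equals $2d - 2$. The preimage of each puncture $z_i \in \mathbf{z}$ consists of $\tilde{p}_i$ punctures of $\tilde{C}$ with multiplicities summing to $d$, contributing $d - \tilde{p}_i$ to the total ramification; summing over $i$ gives contribution $pd - \tilde{p}$. The point $\tilde{w}$, which is disjoint from the punctures, contributes a further $m - 1$. Therefore
\begin{equation*}
    pd - \tilde{p} + (m - 1) \leq 2d - 2,
    \qquad \text{i.e.,} \qquad
    \tilde{p} \geq d(p - 2) + m + 1.
\end{equation*}
Using $d \geq m$ (the ramification index is bounded above by the degree) together with $p - 2 \geq k' - 1 \geq 0$, we conclude
\begin{equation*}
    \tilde{p} \geq m(p - 2) + m + 1 = m(p - 1) + 1 \geq m k' + 1 \geq \tilde{k} + 1,
\end{equation*}
as required. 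The only real subtlety is invoking the factorization theorem for punctured curves asymptotic to Reeb orbits and checking that the ramification indices interact with the contact order as claimed; once those standard facts are in hand, the combinatorics above are straightforward.
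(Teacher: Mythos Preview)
Your proof is correct and follows essentially the same approach as the paper: factor through the underlying simple curve, apply \cref{lem:punctures and tangency simple}, relate the contact orders via the local branching index, and finish with a Riemann--Hurwitz count. The only differences from the paper's argument are cosmetic (your $m,k'$ are the paper's $b,k$, and the final chain of inequalities is reordered).
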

\begin{proof}
    Let $\tilde{z}_1,\ldots,\tilde{z}_{\tilde{p}}$ be the punctures of $\tilde{C}$. Then $\tilde{C}$ is a map $\tilde{C} \colon S^2 \setminus \{\tilde{z}_1,\ldots,\tilde{z}_{\tilde{p}}\} \longrightarrow T^*L$ which has contact order $\tilde{k}$ at $\tilde{z}_0$ to $D$, for some $\tilde{z}_0 \in S^2 \setminus \{\tilde{z}_1,\ldots,\tilde{z}_{\tilde{p}}\}$. There exists a $d$-fold branched cover $\phi \colon S^2 \longrightarrow S^2$ and a simple punctured $J$-holomorphic sphere $C$ with $p$ punctures $\{z_1,\ldots,z_p\}$ which has contact order $k$ at $z_0 = \phi(\tilde{z}_0)$ to $D$, such that the following diagram commutes:
    \begin{IEEEeqnarray*}{c+x*}
        \begin{tikzcd}
            S^2 \setminus \{\tilde{z}_1,\ldots,\tilde{z}_{\tilde{p}}\} \ar[d, swap, "\phi"] \ar[rd, "\tilde{C}"] \\
            S^2 \setminus \{z_1,\ldots,z_p\} \ar[r, swap, "C"] & T^*L
        \end{tikzcd}
    \end{IEEEeqnarray*}
    
    Define $b = \operatorname{ord}(\tilde{z}_0)$. Since the asymptotic Reeb orbits of $\tilde{C}$ are multiples of the asymptotic Reeb orbits of $C$, we have that the Reeb orbits of $C$ all have period less or equal to $\ell_0$. Therefore, applying \cref{lem:punctures and tangency simple} to $C$ we conclude that $p - 1 \geq k$.
    
    We show that $k b \geq \tilde{k}$. For this, choose holomorphic coordinates centred at $z_0 \in S^2$, $\tilde{z}_0 \in S^2$, and $x \in X$ such that $D$ is given by $h(z_1,\ldots,z_n) = 0$, where $h(z_1,\ldots,z_n) = z_1$. Then, with respect to these coordinates
    \begin{IEEEeqnarray*}{rCls+x*}
        \phi(z)      & = & z^b, \\
        h \circ C(z) & = & \sum_{j=1}^{+\infty} a_j z^j,
    \end{IEEEeqnarray*}
    and therefore
    \begin{IEEEeqnarray*}{c+x*}
        h \circ \tilde{C}(z) = h \circ C \circ \phi(z) = \sum_{j=1}^{+\infty} a_j z^{b j}.
    \end{IEEEeqnarray*}
    Since $\tilde{C}$ has contact order $\tilde{k}$ to $D$,
    \begin{IEEEeqnarray*}{c+x*}
        0 = (h \circ \tilde{C})^{(r)}(0) = \sum_{j=1}^{+\infty} a_j (b j)^r z^{b j - r} \Big|_{z = 0}
    \end{IEEEeqnarray*}
    for every $r = 1,\ldots,\tilde{k}-1$. Therefore, for every $j \in \Z_{\geq 1}$ if there exists $r = 1,\ldots,\tilde{k}-1$ such that if $b j - r = 0$, then $a_j = 0$. In other words $a_1 = \cdots = a_\ell = 0$, where
    \begin{IEEEeqnarray*}{rCll}
        \ell
        & = & \max & \{ j \in \Z_{\geq 1} \mid b j \leq \tilde{k} - 1 \} \\
        & = & \min & \{ j \in \Z_{\geq 1} \mid b (j+1) \geq \tilde{k} \}.
    \end{IEEEeqnarray*}
    So, we conclude that $b k \geq b (\ell + 1) \geq \tilde{k}$.

    We show that $\tilde{p} \geq (p - 2) d + b + 1$.
    \begin{IEEEeqnarray*}{rCls+x*}
        2 d - 2
        & =    & \sum_{\tilde{z} \in S^2}^{} (\operatorname{ord}(\tilde{z}) - 1)                                    & \quad [\text{by the Riemann-Hurwitz formula}] \\
        & \geq & \sum_{i=1}^{\tilde{p}} (\operatorname{ord}(\tilde{z}_i) - 1) + \operatorname{ord}(\tilde{z}_0) - 1 & \quad [\text{since $\operatorname{ord}(z) \geq 1$ for every $z \in S^2$}] \\
        & =    & p d - \tilde{p} + \operatorname{ord}(\tilde{z}_0) - 1                                              & \quad [\text{since $\phi(\{\tilde{z}_1,\ldots,\tilde{z}_{\tilde{p}}\}) = \{z_1,\ldots,z_p\}$}] \\
        & =    & p d - \tilde{p} + b - 1                                                                            & \quad [\text{by definition of $b$}].
    \end{IEEEeqnarray*}

    Since $\phi$ is a $d$-fold covering, $d \geq b$. Combining all the facts which we have proven, we conclude that
    \begin{IEEEeqnarray*}{rCls+x*}
        \tilde{p}
        & \geq & (p-2)d + b + 1 & \quad [\text{by the last computation}] \\
        & \geq & (k-1)d + b + 1 & \quad [\text{since $p - 1 \geq k$}] \\
        & \geq & k b + 1        & \quad [\text{since $d \geq b$}] \\
        & \geq & \tilde{k} + 1  & \quad [\text{since $k b \geq \tilde{k}$}]. & \qedhere
    \end{IEEEeqnarray*}
\end{proof}

\section{SFT compactness}
\label{sec:sft compactness}

In this section we present the SFT compactness theorem, which describes the compactifications of the moduli spaces of the previous section. This theorem was first proven by Bourgeois--Eliashberg--Hofer--Wysocki--Zehnder \cite{bourgeoisCompactnessResultsSymplectic2003}. Cieliebak--Mohnke \cite{cieliebakCompactnessPuncturedHolomorphic2005} have given a proof of this theorem using different methods. Our presentation is based primarily on \cite{cieliebakPuncturedHolomorphicCurves2018} and \cite{mcduffSymplecticCapacitiesUnperturbed2022}.

\begin{definition}
    \label{def:nodal riemann surface}
    A \textbf{nodal Riemann surface} is a Riemann surface $(\Sigma, j)$ together with a set $\mathbf{n}$ of \textbf{nodes} of the form $\mathbf{n} = \{n_1^+, n_1^-, \ldots, n_k^+, n_k^-\}$.
\end{definition}

\begin{definition}
    \label{def:nodal holomorphic curve}
    Let $(\Sigma, j)$ be a Riemann surface with a set $\mathbf{n} = \{n_1^+, n_1^-, \ldots, n_k^+, n_k^-\}$ of nodes and $(X, J)$ be an almost complex manifold. A \textbf{nodal $J$-holomorphic curve} is a $J$-holomorphic curve $u \colon (\Sigma, j) \longrightarrow (X, J)$ such that $u(n^+_i) = u(n^-_i)$ for every $i = 1, \ldots, k$.%
\end{definition}

Let $(X, \omega, \lambda)$ be a symplectic cobordism and choose almost complex structures $J^{\pm} \in \mathcal{J}(\partial^{\pm} X)$ and $J \in \mathcal{J}^{J^+}_{J^-}(X)$. Let $\Gamma^{\pm} = (\gamma^{\pm}_1, \ldots, \gamma^{\pm}_{p ^{\pm}})$ be a tuple of Reeb orbits in $\partial^{\pm} X$.

\begin{definition}
    \label{def:sft compactification}
    For $1 \leq L \leq N$, let $\alpha^{\pm} \coloneqq \lambda|_{\partial^{\pm} X}$ and define
    \begin{IEEEeqnarray*}{rCl}
        (X^{\nu}, \omega^\nu, \tilde{\omega}^{\nu}, J^{\nu})
        & \coloneqq & 
        \begin{cases}
            (\R \times \partial^- X, \edv(e^r \alpha^-), \edv \alpha^-  , J^-) & \text{if } \nu = 1   , \ldots, L - 1, \\
            (\hat{X}               , \hat{\omega}      , \tilde{\omega} , J  ) & \text{if } \nu = L   , \\
            (\R \times \partial^+ X, \edv(e^r \alpha^+), \edv \alpha^+  , J^+) & \text{if } \nu = L+1 ,\ldots ,N     ,
        \end{cases} \\
        (X^*, \omega^*, \tilde{\omega}^*, J^*) & \coloneqq & \bigcoproduct_{\nu = 1}^N (X^{\nu}, \omega^\nu, \tilde{\omega}^{\nu}, J^{\nu}).
    \end{IEEEeqnarray*}
    The moduli space of \textbf{holomorphic buildings}, denoted $\overline{\mathcal{M}}^{J}_X(\Gamma^+, \Gamma^-)$, is the set of tuples $F = (F^1, \ldots, F^N)$, where $F^{\nu} \colon \dot{\Sigma}^\nu \longrightarrow X^\nu$ is an asymptotically cylindrical nodal $J^{\nu}$-holomorphic curve in $X^{\nu}$ with sets of asymptotic Reeb orbits $\Gamma^{\pm}_{\nu}$. Here, each $F^{\nu}$ is possibly disconnected and if $X^{\nu}$ is a symplectization then $F^{\nu}$ is only defined up to translation in the $\R$ direction. We assume in addition that $F$ satisfies the following conditions.
    \begin{enumerate}
        \item The sets of asymptotic Reeb orbits $\Gamma_{\nu}^{\pm}$ are such that
            \begin{IEEEeqnarray*}{rCls+x*}
                \Gamma^+_{\nu} & = & \Gamma^-_{\nu + 1} \quad \text{for every } \nu = 1, \ldots, N - 1, \\
                \Gamma^-_1     & = & \Gamma^-, \\
                \Gamma^+_N     & = & \Gamma^+.
            \end{IEEEeqnarray*}
        \item Define the graph of $F$ to be the graph whose vertices are the components of $F^1, \ldots, F^N$ and whose edges are determined by the asymptotic Reeb orbits. Then the graph of $F$ is a tree.
        \item The building $F$ has no symplectization levels consisting entirely of trivial cylinders, and any constant component of $F$ has negative Euler characteristic after removing all special points.
    \end{enumerate}
\end{definition}

\begin{definition}
    The \textbf{energy} of a holomorphic building $F = (F^1, \ldots, F^N)$ is 
    \begin{IEEEeqnarray*}{c+x*}
        E_{\tilde{\omega}^*}(F) \coloneqq \sum_{\nu = 1}^{N} E_{\tilde{\omega}^{\nu}}(F^{\nu}),
    \end{IEEEeqnarray*}
    where $E_{\tilde{\omega}^{\nu}}(F^{\nu})$ is given as in \cref{def:energy of a asy cylindrical holomorphic curve}.
\end{definition}

The moduli space $\overline{\mathcal{M}}_X^J(\Gamma^+, \Gamma^-)$ admits a metrizable topology (see \cite[Appendix B]{bourgeoisEquivariantSymplecticHomology2016}). With this language, the SFT compactness theorem can be stated as follows.

\begin{theorem}[SFT compactness]
    The moduli space $\overline{\mathcal{M}}_X^J(\Gamma^+, \Gamma^-)$ is compact.
\end{theorem}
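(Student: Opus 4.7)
The plan is to adapt the proof of Gromov's compactness theorem to the punctured, cylindrical-end setting, following the strategy of \cite{bourgeoisCompactnessResultsSymplectic2003} (or, alternatively, \cite{cieliebakCompactnessPuncturedHolomorphic2005}). Fix a sequence $(F_k) \subset \overline{\mathcal{M}}_X^{J}(\Gamma^+, \Gamma^-)$; we need to extract a subsequence converging to a limit $F \in \overline{\mathcal{M}}_X^{J}(\Gamma^+, \Gamma^-)$. The first step is to establish uniform energy bounds. Any building $F_k = (F_k^1, \ldots, F_k^{N_k})$ has $\tilde{\omega}^*$-energy $E_{\tilde{\omega}^*}(F_k)$ expressible, level by level, as a difference of actions of adjacent Reeb-orbit tuples via \cref{lem:action energy for holomorphic}; since these telescope to $\mathcal{A}(\Gamma^+) - \mathcal{A}(\Gamma^-)$, we obtain a uniform bound $E_{\tilde{\omega}^*}(F_k) \leq \mathcal{A}(\Gamma^+)$ independent of $k$.

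The second step is bubbling analysis on compact pieces of the domain away from punctures and nodes. On any fixed compact set avoiding the special points, one studies $|\edv F_k|$ in a fixed Riemannian metric, and either obtains a uniform $C^0$ gradient bound --- in which case elliptic bootstrapping yields $C^{\infty}_{\mathrm{loc}}$ convergence to a limit holomorphic curve --- or one rescales at a point of blow-up to produce a bubble. In the interior of $X$ the bubble is a non-constant $J$-holomorphic plane with finite $\hat{\omega}$-energy, which by standard removal of singularities extends to a $J$-holomorphic sphere; soft rescaling then produces a bubble tree. Energy quantization ensures this procedure terminates in finitely many steps and the total energy is preserved in the limit.

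The third and technically hardest step is the analysis at the cylindrical ends, which produces the multi-level structure of the limit. Near each puncture of each $F_k^{\nu}$, Hofer's asymptotic behaviour theorem for finite-energy punctured cylinders (using that the Reeb orbits of $\partial^{\pm} X$ are isolated along the relevant action window by non-degeneracy or perturbation) forces $C^\infty$-convergence to a Reeb orbit in the limit after a possible choice of cylindrical coordinates. Neck stretching must also be accounted for: if some intermediate annulus in the domain gets mapped to a region in $\hat{X}$ whose $r$-coordinate has very large oscillation, one performs an $\R$-translation chosen so that the rescaled map has bounded $r$-coordinate on the annulus and extracts a new limit living in a symplectization level $\R \times \partial^{\pm} X$. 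Iterating this soft-rescaling procedure, and using energy quantization together with a minimal positive period assumption on Reeb orbits (which gives a lower bound on energy carried by any non-trivial symplectization level), one concludes that only finitely many levels appear. The main obstacle is organizing this iterative extraction of levels coherently so that (i) asymptotic Reeb-orbit tuples match between adjacent levels, (ii) the combinatorial type of the limit is a tree, and (iii) no level consists entirely of trivial cylinders.

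The final step is to verify the stability conditions of \cref{def:sft compactification}: levels consisting only of trivial cylinders are discarded by collapsing, and constant components whose domain (with special points) has non-negative Euler characteristic are removed by absorbing their special points into the adjacent stable component --- this is the standard stabilization procedure for nodal curves. After these reductions the resulting object $F$ lies in $\overline{\mathcal{M}}_X^{J}(\Gamma^+, \Gamma^-)$, and the extracted subsequence converges to $F$ in the metrizable topology of \cite[Appendix B]{bourgeoisEquivariantSymplecticHomology2016}, completing the compactness argument.
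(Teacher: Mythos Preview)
The paper does not give its own proof of this theorem; it is stated as a known result, with the references \cite{bourgeoisCompactnessResultsSymplectic2003} and \cite{cieliebakCompactnessPuncturedHolomorphic2005} indicated at the start of the section. Your proof sketch is a reasonable outline of the standard argument from those sources, so there is nothing to compare against in the paper itself.
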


We now consider the case where the almost complex structure on $\hat{X}$ is replaced by a family of almost complex structures obtained via \textbf{neck stretching}. Let $(X^{\pm}, \omega^{\pm}, \lambda^{\pm})$ be symplectic cobordisms with common boundary 
\begin{IEEEeqnarray*}{c+x*}
    (M, \alpha) = (\partial^- X^{+}, \lambda^+|_{\partial^- X^+}) = (\partial^+ X^-, \lambda^-|_{\partial^+ X^-}).
\end{IEEEeqnarray*}
Choose almost complex structures
\begin{IEEEeqnarray*}{rCls+x*}
    J_M & \in & \mathcal{J}(M), \\
    J_+ & \in & \mathcal{J}_{J_M}(X^+), \\
    J_- & \in & \mathcal{J}^{J_M}(X^-),
\end{IEEEeqnarray*}
and denote by $J_{\partial^{\pm} X^{\pm}} \in \mathcal{J}(\partial^{\pm} X^{\pm})$ the induced cylindrical almost complex structure on $\R \times \partial^{\pm} X^{\pm}$. Let $(X, \omega, \lambda) \coloneqq (X^-, \omega^-, \lambda^-) \circledcirc (X^+, \omega^+, \lambda^+)$ be the gluing of $X^-$ and $X^+$ along $M$. We wish to define a family of almost complex structures $(J_t)_{t \in \R_{\geq 0}} \subset \mathcal{J}(X)$. For every $t \geq 0$, let
\begin{IEEEeqnarray*}{c+x*}
    X_t \coloneqq X^- \cup_M [-t, 0] \times M \cup_M X^+.
\end{IEEEeqnarray*}
There exists a canonical diffeomorphism $\phi_t \colon X \longrightarrow X_t$. Define an almost complex structure $J_t$ on $X_t$ by
\begin{IEEEeqnarray*}{c+x*}
    J_t \coloneqq
    \begin{cases}
        J^{\pm} & \text{on } X^{\pm}, \\
        J_M     & \text{on } [-t, 0] \times M.
    \end{cases}
\end{IEEEeqnarray*}
Denote also by $J_t$ the pullback of $J_t$ to ${X}$, as well as the induced almost complex structure on the completion $\hat{X}$. Finally, consider the moduli space
\begin{IEEEeqnarray*}{c+x*}
    \mathcal{M}_X^{(J_t)_t}(\Gamma^+, \Gamma^-) \coloneqq \bigcoproduct_{t \in \R_{\geq 0}} \mathcal{M}^{J_t}_{X}(\Gamma^+, \Gamma^-).
\end{IEEEeqnarray*}

\begin{definition}
    \phantomsection\label{def:sft compactification neck stretching}
    For $1 \leq L^- < L^+ \leq N$, let $\alpha^{\pm} \coloneqq \lambda^{\pm}|_{\partial^{\pm} X^\pm}$ and define
    \begin{IEEEeqnarray*}{rCls+x*}
        (X^{\nu}, \omega^\nu, \tilde{\omega}^{\nu}, J^{\nu})
        & \coloneqq & 
        \begin{cases}
            (\R \times \partial^- X^-, \edv(e^r \alpha^-)  , \edv \alpha^-   , J_{\partial^- X^-}) & \text{if } \nu = 1        , \ldots, L^- - 1, \\
            (X^-                     , \omega^-            , \tilde{\omega}^-, J^-)                & \text{if } \nu = L^-, \\
            (\R \times M             , \edv(e^r \alpha)    , \edv \alpha     , J_M)                & \text{if } \nu = L^- + 1    , \ldots, L^+ - 1, \\
            (X^+                     , \omega^+            , \tilde{\omega}^+, J^+)                & \text{if } \nu = L^+, \\
            (\R \times \partial^+ X^+, \edv (e^r \alpha^+) , \edv \alpha^+   , J_{\partial^+ X^+}) & \text{if } \nu = L^+ + 1  , \ldots, N      , \\
        \end{cases} \\
        (X^*, \omega^*, \tilde{\omega}^*, J^*) & \coloneqq & \bigcoproduct_{\nu = 1}^N (X^{\nu}, \omega^\nu, \tilde{\omega}^{\nu}, J^{\nu}).
    \end{IEEEeqnarray*}
    Define $\overline{\mathcal{M}}^{(J_t)_t}_X(\Gamma^+, \Gamma^-)$ to be the set of tuples $F = (F^1, \ldots, F^N)$, where $F^{\nu} \colon \dot{\Sigma}^\nu \longrightarrow X^\nu$ is an asymptotically cylindrical nodal $J^{\nu}$-holomorphic curve in $X^{\nu}$ with sets of asymptotic Reeb orbits $\Gamma^{\pm}_{\nu}$, such that $F$ satisfies conditions analogous to those of \cref{def:sft compactification}.
\end{definition}

\begin{theorem}[SFT compactness]
    The moduli space $\overline{\mathcal{M}}^{(J_t)_t}_X(\Gamma^+, \Gamma^-)$ is compact.
\end{theorem}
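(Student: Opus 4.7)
This is the SFT compactness theorem for the neck-stretching family, originally due to Bourgeois--Eliashberg--Hofer--Wysocki--Zehnder \cite{bourgeoisCompactnessResultsSymplectic2003} with an alternative proof by Cieliebak--Mohnke \cite{cieliebakCompactnessPuncturedHolomorphicCompactness2005}. My plan is to adapt the strategy used in the previous (non--neck-stretching) compactness theorem, adding in the analysis of the degenerating neck. Let $(u_n)_n$ be a sequence with $u_n \in \mathcal{M}^{J_{t_n}}_X(\Gamma^+, \Gamma^-)$; we must extract a subsequence which converges in $\overline{\mathcal{M}}^{(J_t)_t}_X(\Gamma^+, \Gamma^-)$. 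After passing to a subsequence, either $t_n$ converges to some $t_\infty \in \R_{\geq 0}$ (in which case we are reduced to the non--neck-stretching SFT compactness theorem), or $t_n \to +\infty$, which is the case I treat below.

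The first step is to obtain a uniform energy bound. Since each $u_n$ is asymptotic to the fixed tuples $\Gamma^\pm$ and by the analogue of \cref{lem:action energy for holomorphic} applied to each piece (the symplectization pieces contribute nonnegatively by \cref{lem:holomorphic curves in symplectizations}, and the cobordism pieces by \cref{lem:integrand of energy is well-defined}), the $\tilde{\omega}^*$-energy of $u_n$ is bounded above by $\mathcal{A}(\Gamma^+) - \mathcal{A}(\Gamma^-)$, independent of $n$. Next, I would perform the standard Deligne--Mumford degeneration on the domains $(\Sigma_n, j_n)$, extracting after a subsequence a nodal limit domain together with thin/thick decompositions; on the thick parts the complex structure converges, while the thin parts will either give rise to nodes or to cylindrical necks that are rescaled in the target.

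The core new step is the neck analysis. Split each $X_{t_n}$ as $X^- \cup_M [-t_n,0] \times M \cup_M X^+$. For each $n$, pick a sequence of translations in the $r$-direction on the neck $[-t_n,0] \times M$ and, using the uniform gradient bounds that follow from the standard bubbling-off analysis (which produces at most finitely many bubbles thanks to the energy bound and the quantization of energy coming from the minimal Reeb period), extract a finite collection of limit curves, each landing either in $X^-$, in $X^+$, or in a symplectization $\R \times M$. The key technical input here is the asymptotic convergence to Reeb orbits at each puncture and at each end of a neck-limit: one uses the exponential decay estimates for finite-energy half-cylinders over a nondegenerate contact manifold, together with the fact that $\tilde{\omega}$ is pulled back from $\edv \alpha$ on the neck, so finite $\tilde{\omega}$-energy on a cylindrical end forces asymptotic convergence (up to reparametrization) to a Reeb orbit of $M$. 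Matching asymptotic orbits across consecutive levels then produces the holomorphic building $F = (F^1, \ldots, F^N)$ required by \cref{def:sft compactification neck stretching}.

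The hard part will be the neck analysis just described, and in particular showing that the graph of the limit $F$ is a tree and that no energy is lost in the limit: both require that every cylindrical end and every neck region converges, after rescaling, to an honest cylinder over a Reeb orbit, which is the content of the ``long cylinder lemma''. Once the building is constructed, I would discard any symplectization levels consisting entirely of trivial cylinders and stabilize any constant components with nonnegative Euler characteristic by absorbing them into adjacent components, so that the resulting $F$ satisfies all three conditions of \cref{def:sft compactification neck stretching}. Convergence in the metric topology of \cite{bourgeoisEquivariantSymplecticHomology2016} then follows from the thick/thin convergence on the domain together with the $C^\infty_{\mathrm{loc}}$ convergence of the rescaled maps on each level.
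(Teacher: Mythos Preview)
The paper does not give a proof of this theorem at all: it is stated as a black-box result, with the citations \cite{bourgeoisCompactnessResultsSymplectic2003} and \cite{cieliebakCompactnessPuncturedHolomorphic2005} given at the start of \cref{sec:sft compactness} as the sources. Your sketch is a reasonable outline of the standard argument from those references (energy bound, Deligne--Mumford degeneration of domains, bubbling analysis, long-cylinder/neck analysis with exponential decay to Reeb orbits, assembly into a building), so there is no mathematical disagreement --- you are simply supplying content that the paper deliberately omits.

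One small point: the paper does not assume $X$ is a Liouville cobordism in this section, so your invocation of \cref{lem:action energy for holomorphic} for the energy bound is slightly off in generality; in the general symplectic-cobordism setting one uses the Hofer-type energy (a sup over taming profiles) rather than the $\tilde{\omega}$-energy alone, as in the original BEHWZ paper. This does not affect the thesis's applications, where $X$ is always Liouville, but it is worth noting if you intend the sketch to cover the theorem as stated.
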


\begin{remark}
    \label{rmk:compactifications with tangency}
    The discussion above also applies to compactifications of moduli spaces of curves satisfying tangency constraints. The compactification $\overline{\mathcal{M}}^{J}_{X}(\Gamma^+,\Gamma^-)\p{<}{}{\mathcal{T}^{(k)}x}$ consists of buildings $F = (F^1, \ldots, F^N) \in \overline{\mathcal{M}}^J_X(\Gamma^+, \Gamma^-)$ such that exactly one component $C$ of $F$ inherits the tangency constraint $\p{<}{}{\mathcal{T}^{(k)}x}$, and which satisfy the following additional condition. Consider the graph obtained from the graph of $F$ by collapsing adjacent constant components to a point. Let $C_1, \ldots, C_p$ be the (necessarily nonconstant) components of $F$ which are adjacent to $C$ in the new graph. Then we require that there exist $k_1, \ldots, k_p \in \Z_{\geq 1}$ such that $k_1 + \cdots + k_p \geq k$ and $C_i$ satisfies the constraint $\p{<}{}{\mathcal{T}^{(k_i)}x}$ for every $i = 1, \ldots, p$. This definition is natural to consider by \cite[Lemma 7.2]{cieliebakSymplecticHypersurfacesTransversality2007}. We can define $\overline{\mathcal{M}}^{(J_t)_t}_X(\Gamma^+, \Gamma^-)\p{<}{}{\mathcal{T}^{(k)}x}$ analogously.
\end{remark}

\begin{remark}
    We point out that in \cite[Definition 2.2.1]{mcduffSymplecticCapacitiesUnperturbed2022}, the compactification of \cref{rmk:compactifications with tangency} is denoted by $\overline{\overline{\mathcal{M}}}^{J}_{X}(\Gamma^+,\Gamma^-)\p{<}{}{\mathcal{T}^{(k)}x}$, while the notation $\overline{\mathcal{M}}^{J}_{X}(\Gamma^+,\Gamma^-)\p{<}{}{\mathcal{T}^{(k)}x}$ is used to denote the moduli space of buildings $F = (F^1, \ldots, F^N) \in \overline{\mathcal{M}}^J_X(\Gamma^+, \Gamma^-)$ such that exactly one component $C$ of $F$ inherits the tangency constraint $\p{<}{}{\mathcal{T}^{(k)}x}$, but which do not necessarily satisfy the additional condition of \cref{rmk:compactifications with tangency}.
\end{remark}

\begin{lemma}
    \label{lem:no nodes}
    Suppose that $\Gamma^- = \varnothing$ and $\Gamma^+ = (\gamma)$ consists of a single Reeb orbit. Let $F$ be a holomorphic building of genus $0$ in any of the following compactified moduli spaces:%
    \begin{IEEEeqnarray*}{lCl}
        \overline{\mathcal{M}}^J_X(\gamma),         & \quad & \overline{\mathcal{M}}^J_X(\gamma)\p{<}{}{\mathcal{T}^{(k)}x}, \\
        \overline{\mathcal{M}}^{(J_t)_t}_X(\gamma), & \quad & \overline{\mathcal{M}}^{(J_t)_t}_X(\gamma)\p{<}{}{\mathcal{T}^{(k)}x}.
    \end{IEEEeqnarray*}
    Then $F$ has no nodes.
\end{lemma}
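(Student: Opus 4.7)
The plan is to argue by contradiction: assume $F$ has at least one node $p$, cut the building at $p$, use the energy–action identity to show the piece that does not contain the positive puncture has vanishing energy, and then reach a contradiction with the stability clause of \cref{def:sft compactification} via a combinatorial count of special points.

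Setup. Since the arithmetic genus of $F$ is $0$ and each irreducible component has geometric genus $0$, the graph of \cref{def:sft compactification} is a tree, and moreover the full dual graph (with vertices the irreducible components and edges both the Reeb-orbit matchings across adjacent levels and the nodes within each level) is also a tree: within each connected component of $\dot{\Sigma}^\nu$ the node-edges form a tree on the irreducible pieces, and these sub-trees are joined across levels by the Reeb matchings, which already form a tree. Removing the single edge corresponding to $p$ disconnects this tree into two subtrees $T_1, T_2$, and correspondingly $F$ into two sub-buildings $F_1, F_2$. The unique external puncture (the positive asymptote at $\gamma$) lies in one of them, say $F_1$; since $\Gamma^- = \varnothing$, the piece $F_2$ then has no external punctures at all, and because $T_1, T_2$ are disconnected in the full dual graph, every Reeb-orbit asymptote of every component of $F_2$ is matched internally.

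Energy argument and elimination of branched covers. Apply \cref{lem:action energy for holomorphic} componentwise to $F_2$ and sum; internal Reeb matchings contribute once as a positive and once as a negative action and cancel, and there are no external contributions, so $E_{\tilde{\omega}}(F_2) = 0$. Each summand is nonnegative (by \cref{lem:action energy for holomorphic} in the cobordism level, and by \cref{lem:holomorphic curves in symplectizations} in the symplectization levels), so every component of $F_2$ has zero $\tilde{\omega}$-energy. In the cobordism level \cref{lem:integrand of energy is well-defined} forces constancy; in a symplectization level the vanishing of $\int f^* \edv \alpha$ together with \cref{lem:holomorphic curves in symplectizations} forces $\pi_\xi \circ \dv f = 0$, so the component is constant or a branched cover of a trivial cylinder. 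I rule out the latter inductively, top down. At the highest level $\nu_{\max}$ occupied by $F_2$, a positive asymptote of an $F_2$-component would need a matching partner in level $\nu_{\max}+1$; this partner can lie neither in $F_2$ (no $F_2$-component exists higher up) nor in $F_1$ (the subtrees are graph-disconnected) nor among the external punctures (none remain on the $F_2$ side), so the top components have no positive asymptote and must be constant. Assuming inductively that all $F_2$-components above level $\nu$ are constant (hence carry no negative asymptotes), the same three-case analysis kills the positive asymptotes at level $\nu$, forcing every component of $F_2$ to be constant.

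Stability contradiction. Let $n$ denote the number of irreducible components of $F_2$. The subtree $T_2$ has $n-1$ internal edges; since constant components carry no asymptotes, none of these is a Reeb-orbit matching, so all are node-edges, contributing $2(n-1)$ special points to components of $F_2$. The cut boundary edge $p$ contributes one further special point on the $F_2$-side, and in the tangency-constraint cases at most one additional marked point may sit on a component of $F_2$, giving a total of at most $2n$ special points distributed over the $n$ constant components. But the stability clause in \cref{def:sft compactification} demands at least $3$ special points per constant component, hence at least $3n$ in total, contradicting $3n > 2n$ for $n \geq 1$. The main obstacle is the dual-graph bookkeeping in the first paragraph — confirming that cutting a single node-edge genuinely disconnects all Reeb-orbit matchings between $F_1$ and $F_2$, which is what makes both the energy cancellation and the downward induction valid; once this is in place the remaining steps are energy positivity plus the clean count above.
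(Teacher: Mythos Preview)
Your proof is correct but takes a genuinely different route from the paper's. The paper's argument is a two-line cohomological contradiction: it glues the building into a continuous map $\overline{F}\colon \overline{\Sigma}\to \overline{X}\cong \hat X$, decomposes along the node into a plane $\overline{F}_0$ and a sphere $\overline{F}_1$, and observes that the homology class $[\overline{F}_1]\in H_2(\hat X;\Z)$ satisfies $\edv\hat\lambda([\overline{F}_1])=0$ by exactness of $\hat\omega$ while $\edv\hat\lambda([\overline{F}_1])>0$ by \cite[Lemma~2.8]{cieliebakPuncturedHolomorphicCurves2018}. So the paper trades your level-by-level energy and stability analysis for a single appeal to an external positivity lemma. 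Your argument, by contrast, is self-contained: you unpack \emph{why} the sphere side must collapse (zero $\tilde\omega$-energy, a top-down induction killing trivial-cylinder covers) and then finish with the clean special-point count against the stability clause of \cref{def:sft compactification}.

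One small wording issue: the sentence ``in the cobordism level \cref{lem:integrand of energy is well-defined} forces constancy'' is not quite complete, since a zero-$\tilde\omega$-energy component whose image lies entirely in a cylindrical end of $\hat X^{\pm}$ only satisfies $\pi_\xi\circ \dv f=0$ there and is a priori in the same situation as a symplectization-level curve. However, your top-down induction already handles this uniformly (no positive asymptote, hence by the action identity no asymptotes at all, hence closed, hence constant by exactness), so the argument goes through without change.
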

\begin{proof}
    Assume by contradiction that $F$ has a node. Let $\overline{\Sigma}$ be the topological space obtained by gluing the $\Sigma^{\nu}$ along the matching punctures. Let $\overline{X}$ be the topological space obtained by gluing the $X^{\nu}$ along the matching ends. The space $\overline{X}$ is homeomorphic to $\hat{X}$, and therefore we can identify homology classes in $\overline{X}$ and $\hat{X}$. The holomorphic building $F$ defines a continuous map $\overline{F} \colon \overline{\Sigma} \longrightarrow \overline{X}$ (for more details on the definitions of $\overline{F} \colon \overline{\Sigma} \longrightarrow \overline{X}$, see \cite[Section 2.6]{cieliebakPuncturedHolomorphicCurves2018}). By the assumptions on $F$ and since $F$ has a node, it is possible to decompose $\overline{F}$ along the node into two continuous maps
    \begin{IEEEeqnarray*}{rCls+x*}
        \overline{F}_0 \colon \overline{\Sigma}_0 & \longrightarrow & \overline{X}, \\
        \overline{F}_1 \colon \overline{\Sigma}_1 & \longrightarrow & \overline{X},
    \end{IEEEeqnarray*}
    where $\overline{F}_0$ is a plane and $\overline{F}_1$ is a sphere. Since $\overline{F}_1$ is a sphere, it defines a homology class $[\overline{F}_1] \in H_2(\hat{X}; \Z)$. Then,
    \begin{IEEEeqnarray*}{rCls+x*}
        0
        & = & \edv \hat{\lambda}([\overline{F}_1]) & \quad [\text{since $\edv \hat{\lambda} = 0 \in H^2_{\mathrm{dR}}(\hat{X})$}] \\
        & > & 0                                    & \quad [\text{by \cite[Lemma 2.8]{cieliebakPuncturedHolomorphicCurves2018}}],
    \end{IEEEeqnarray*}
    which gives the desired contradiction.
\end{proof}

\section{Solutions of the parametrized Floer equation}
\label{sec:floer trajectories}

The goal of this section is to introduce the trajectories that appear in $S^1$-equivariant symplectic homology (see \cref{def:floer trajectory abstract}). We will write these trajectories as maps whose domain is any punctured Riemann surface, but we point out that in \cref{chp:floer}, where we discuss $S^1$-equivariant symplectic homology, all trajectories have as domain the cylinder $\R \times S^1$. Let $(\Sigma, j)$ be a Riemann surface with punctures 
\begin{IEEEeqnarray*}{c+x*}
    \mathbf{z} = \mathbf{z}^+ \cup \mathbf{z}^-, \qquad \mathbf{z}^{\pm} = \{z^{\pm}_1, \ldots, z^{\pm}_{p^{\pm}}\}.
\end{IEEEeqnarray*}
We assume that near every puncture $z$, there are cylindrical coordinates $(s,t)$ as in \cref{def:punctures asy markers cyl ends}. Let $\sigma, \tau \in \Omega^1(\dot{\Sigma})$ be $1$-forms such that for every (positive or negative) puncture $z$, if we denote by $(s,t)$ the coordinates on the cylindrical end of $\dot{\Sigma}$ near $z$, then%
\begin{IEEEeqnarray*}{rCls+x*}
    \sigma & = & A \, \edv s, \\
    \tau   & = & B \, \edv t,
\end{IEEEeqnarray*}
for some $A, B > 0$. Finally, we assume that there is an action 
\begin{IEEEeqnarray*}{c+x*}
    S^1 \times \dot{\Sigma} \longrightarrow \dot{\Sigma}
\end{IEEEeqnarray*}
of $S^1$ on $\dot{\Sigma}$ which preserves $j$, $\sigma$ and $\tau$ and such that if $t' \in S^1$ and $(s,t)$ belongs to any cylindrical coordinate neighbourhood, then
\begin{IEEEeqnarray*}{c+x*}
    t' \cdot (s, t) = (s, t + t').
\end{IEEEeqnarray*} 

\begin{example}
    \label{exa:sphere and cylinder}
    Consider the cylinder $\R \times S^1$ with coordinates $(s,t)$ and almost complex structure given by $j(\partial_s) = \partial_t$. We have the $1$-forms $\sigma \coloneqq \edv s$ and $\tau \coloneqq \edv t$. The cylinder is biholomorphic to the sphere $S^2$ with the north and south poles removed. There is an action of $S^1$ on $\R \times S^1$ given by $t' \cdot (s,t) = (s,t + t')$. Therefore, $\R \times S^1$ can be seen as a special case of the assumptions above. In this case, we will typically denote $\dot{\Sigma} = \R \times S^1$ and $\Sigma = S^2$.
\end{example}

Let $(S,g^S)$ be a Riemannian manifold together with an action $S^1 \times S \longrightarrow S$ which is free, proper and by isometries. Define $C = S / S^1$ and denote the projection by $\pi \colon S \longrightarrow C$. Since the action is by isometries, there exists a unique Riemannian metric $g^C$ on $C$ such that $\pi \colon S \longrightarrow C$ is a Riemannian submersion. Let $f \colon C \longrightarrow \R$ be a Morse function and define $\tilde{f} \coloneqq f \circ \pi \colon S \longrightarrow \R$, which is Morse--Bott. 

\begin{example}
    For $N \in \Z_{\geq 1}$, let 
    \begin{IEEEeqnarray*}{rCls+x*}
        S & \coloneqq & S^{2N+1}, \\
        C & \coloneqq & \C P^N, \\
        f & \coloneqq & f_N,
    \end{IEEEeqnarray*}
    where
    \begin{IEEEeqnarray*}{c+x*}
        f_N([w_0:\cdots:w_N]) \coloneqq \frac{ \sum_{j=0}^{N} j |w_j|^2 }{ \sum_{j=0}^{N} |w_j|^2 }.
    \end{IEEEeqnarray*}
    As we will discuss in \cref{sec:action functional}, $S$, $C$ and $f$ given above are as in the previous paragraph.
\end{example}

Finally, let $(X,\lambda)$ be a Liouville domain.

\begin{definition}
    \label{def:admissible hamiltonian abstract}
    An \textbf{admissible Hamiltonian} is a map $H \colon \dot{\Sigma} \times S \times \hat{X} \longrightarrow \R$ such that:
    \begin{enumerate}
        \item \label{def:admissible hamiltonian abstract 1} For every puncture $z$, the restriction of $H$ to the cylindrical end near $z$ is independent of $s$ for $s$ large enough. In other words, there is a map $H_z \colon S^1 \times S \times \hat{X} \longrightarrow \R$ such that $H(s,t,w,x) = H_z(t,w,x)$ for $s$ large enough.
        \item \label{def:admissible hamiltonian abstract 2} For every critical point $w$ of $\tilde{f}$, there exists a neighbourhood $V$ of $w$ in $S$ such that the restriction $H \colon \dot{\Sigma} \times V \times \hat{X} \longrightarrow \R$ is independent of $V$.
        \item Consider the action of $S^1$ on $\dot{\Sigma} \times S \times \hat{X}$ given by $t \cdot (z, w, x) = (t \cdot z, t \cdot w, x)$. Then, the Hamiltonian $H$ is invariant under the action of $S^1$.
        \item For every puncture $z$, there exist $D \in \R$, $C \in \R_{> 0} \setminus \operatorname{Spec}(\partial X, \lambda|_{\partial X})$ and $\delta > 0$ such that on $S^1 \times S \times [\delta,+\infty) \times \partial X$, we have that $H_z(t,w,r,x) = C e^r + D$.
        \item For every puncture $z$ and critical point $w$ of $\tilde{f}$ the Hamiltonian $H_{z,w} \colon S^1 \times \hat{X} \longrightarrow \R$ is nondegenerate.
        \item \label{def:admissible hamiltonian abstract 3} For every $(z,w,x) \in \dot{\Sigma} \times S \times \hat{X}$ we have
            \begin{IEEEeqnarray*}{rCls+x*}
                H_{w,x} \, \edv \tau                                                             & \leq & 0, \\
                \edv_{\dot{\Sigma}} H_{w,x} \wedge \tau                                          & \leq & 0, \\
                \p{<}{}{ \nabla_S H_{z,x}(w), \nabla \tilde{f} (w) } \, \sigma_z \wedge \tau_z & \leq & 0.
            \end{IEEEeqnarray*}
    \end{enumerate}
\end{definition}

\begin{definition}
    \label{def:admissible acs abstract}
    An \textbf{admissible almost complex structure} on $\hat{X}$ is a section $J \colon \dot{\Sigma} \times S \times \hat{X} \longrightarrow \End(T \hat{X})$ such that $J^2 = - \id_{TX}$ and:
    \begin{enumerate}
        \item \label{def:admissible acs abstract 1} For every puncture $z$, the restriction of $J$ to the cylindrical end near $z$ is independent of $s$ for $s$ large enough. In other words, there is a function $J_z \colon S^1 \times S \times \hat{X} \longrightarrow \End(T \hat{X})$ such that $J(s,t,w,x) = J_z(t,w,x)$ for $s$ large enough.
        \item \label{def:admissible acs abstract 2} For every critical point $w$ of $\tilde{f}$, there exists a neighbourhood $V$ of $w$ in $S$ such that the restriction $J \colon \dot{\Sigma} \times V \times \hat{X} \longrightarrow \End(T \hat{X})$ is independent of $V$.
        \item The almost complex structure $J$ is $S^1$-invariant.
        \item $J$ is \textbf{compatible}, i.e. $g \coloneqq \omega(\cdot, J \cdot) \colon \dot{\Sigma} \times S \times \hat{X} \longrightarrow T^* \hat{X} \otimes T^* \hat{X}$ is a Riemannian metric on $X$ parametrized by $\dot{\Sigma} \times S$.
        \item $J$ is \textbf{cylindrical}, i.e. if $(z,w) \in \dot{\Sigma} \times S$ then $J_{z,w}$ is cylindrical on $\R_{\geq 0} \times \partial X$.
    \end{enumerate}
\end{definition}

\begin{definition}
    \label{def:floer trajectory abstract}
    Let $w \colon \dot{\Sigma} \longrightarrow S$ and $u \colon \dot{\Sigma} \longrightarrow \hat{X}$ be maps. We will denote by $\mathbf{u}$ the map $\mathbf{u} \coloneqq (\id_{\dot{\Sigma}}, w, u) \colon \dot{\Sigma} \longrightarrow \dot{\Sigma} \times S \times \hat{X}$. We say that $(w,u)$ is a solution of the \textbf{parametrized Floer equation} if
    \begin{IEEEeqnarray}{rCls+x*}
        \dv w - \nabla \tilde{f} (w) \otimes \sigma                     & = & 0, \phantomsection\label{eq:parametrized floer equation 1} \\
        (\dv u - X_H(\mathbf{u}) \otimes \tau)^{0,1}_{J(\mathbf{u}), j} & = & 0. \phantomsection\label{eq:parametrized floer equation 2}
    \end{IEEEeqnarray}
\end{definition}

\begin{example}
    Suppose that $(\dot{\Sigma}, j, \sigma, \tau) = (\R \times S^1, j, \edv s, \edv t)$ is the cylinder from \cref{exa:sphere and cylinder}. Then, $(w,u)$ is a solution of the parametrized Floer equation if and only if $w \colon \R \times S^1 \longrightarrow S$ is independent of $t \in S^1$, thus defining a map $w \colon \R \longrightarrow S$, and
    \begin{IEEEeqnarray*}{rCls+x*}
        \pdv{w}{s}(s)   & = & \nabla \tilde{f}(w(s)), \\
        \pdv{u}{s}(s,t) & = & - J(s, t, w(s), u(s,t)) \p{}{2}{ \pdv{u}{t}(s,t) - X_{H}(s, t,w(s),u(s,t)) }.
    \end{IEEEeqnarray*}
\end{example}

\begin{definition}
    \label{def:1 periodic orbit abstract}
    Let $z$ be a puncture and $B > 0$ be such that $\tau = B \, \edv t$, where $(s,t)$ are the cylindrical coordinates near $z$. A \textbf{$1$-periodic orbit} of $H$ at $z$ is a pair $(w ,\gamma)$ such that $w \in S$ is a critical point of $\tilde{f}$ and $\gamma$ is a $1$-periodic orbit of $H_{z,w} \colon S^1 \times \hat{X} \longrightarrow \R$. Denote by $\mathcal{P}(H,z)$ the set of such pairs. The \textbf{action} of $(w, \gamma)$ is
    \begin{IEEEeqnarray*}{c+x*}
        \mathcal{A}_{H}(w,\gamma) \coloneqq \mathcal{A}_{B H_{z,w}}(\gamma) = \int_{S^1}^{} \gamma^* \hat{\lambda} - B \int_{S^1}^{} H_{z,w} (t, \gamma(t)) \edv t.
    \end{IEEEeqnarray*}
\end{definition}

\begin{definition}
    \label{def:asymptotic}
    Let $(w,u)$ be a solution of the parametrized Floer equation. We say that $(w,u)$ is \textbf{asymptotic} at $z^{\pm}_i$ to $(w^{\pm}_i, \gamma^{\pm}_i) \in \mathcal{P}(H, z^{\pm}_i)$ if
    \begin{IEEEeqnarray*}{rCls+x*}
        \lim_{s \to \pm \infty} w(s)   & = & w^{\pm}_i, \\
        \lim_{s \to \pm \infty} u(s,t) & = & \gamma^{\pm}_i,
    \end{IEEEeqnarray*}
    where $(s,t)$ are the cylindrical coordinates near $z^{\pm}_i$.
\end{definition}

\begin{definition}
    \label{def:energy of floer trajectory}
    The \textbf{energy} of $(w,u)$ is
    \begin{IEEEeqnarray*}{c+x*}
        E(u) \coloneqq \frac{1}{2} \int_{\dot{\Sigma}}^{} \| \dv u - X_H(\mathbf{u}) \otimes \tau \|^2_{J(\mathbf{u}), \hat{\omega}} \, \omega_{\Sigma}.
    \end{IEEEeqnarray*}
\end{definition}

We will now state the analytical results about solutions of the parametrized Floer equation. Some results we will state are analogous to previous results about solutions of a pseudoholomorphic curve equation. Namely, in \cref{lem:action energy for floer trajectories} we compare the energy of a solution with the action at the asymptotes, and in \cref{lem:maximum principle} we show that solutions satisfy a maximum principle.

\begin{lemma}
    \phantomsection\label{lem:action energy for floer trajectories}
    If $(w,u)$ is a solution of the parametrized Floer equation which is asymptotic at $z^{\pm}_i$ to $(w^{\pm}_i, \gamma^{\pm}_i) \in \mathcal{P}(H, z^{\pm}_i)$, then
    \begin{IEEEeqnarray*}{c+x*}
        0 \leq E(u) \leq \sum_{i=1}^{p^+} \mathcal{A}_H(w^+_i, \gamma^+_i) - \sum_{i=1}^{p^-} \mathcal{A}_H(w^-_i, \gamma^-_i).
    \end{IEEEeqnarray*}
\end{lemma}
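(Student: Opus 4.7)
The non-negativity $E(u) \geq 0$ is immediate since $E(u)$ is the integral of a pointwise non-negative function. For the upper bound, the plan is to adapt the argument of \cref{lem:action energy for holomorphic}: I will express the integrand of $E(u)$ as a twisted pullback of $\hat{\omega}$, apply Stokes' theorem, and then use the admissibility conditions on $H$ from \cref{def:admissible hamiltonian abstract} to control the resulting error terms.

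Concretely, set $\alpha \coloneqq \edv u - X_H(\mathbf{u}) \tensorpr \tau$, regarded as a section of $\Hom(T\dot{\Sigma}, u^* T \hat{X})$. Equation \eqref{eq:parametrized floer equation 2} says exactly that $\alpha^{0,1}_{J(\mathbf{u}), j} = 0$, so \cref{lem:integrand of energy is well-defined} applied pointwise yields
\begin{IEEEeqnarray*}{c+x*}
    \tfrac{1}{2} \| \alpha \|^2_{J(\mathbf{u}), \hat{\omega}} \, \omega_{\Sigma} = \alpha^* \hat{\omega},
\end{IEEEeqnarray*}
where $\alpha^* \hat\omega$ denotes the $2$-form $(\xi, \eta) \mapsto \hat\omega(\alpha(\xi), \alpha(\eta))$. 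Expanding by bilinearity, using $\hat{\omega}(X_H, X_H) = 0$, and invoking the identity $\edv_X H_{z,w} = - \iota_{X_H(z,w,\cdot)} \hat{\omega}$ (where $\edv_X$ denotes the differential of $H$ in the $\hat{X}$ variable alone), I obtain
\begin{IEEEeqnarray*}{c+x*}
    \alpha^* \hat{\omega} = u^* \hat{\omega} - (\edv_X H(\mathbf{u}) \circ \edv u) \wedge \tau.
\end{IEEEeqnarray*}

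Applying the chain rule to $H \circ \mathbf{u}$ and substituting the first parametrized Floer equation \eqref{eq:parametrized floer equation 1} gives $\edv_X H \circ \edv u = \edv(H \circ \mathbf{u}) - \edv_{\Sigma} H - \p{<}{}{\nabla_S H, \nabla \tilde{f}(w)} \sigma$, hence
\begin{IEEEeqnarray*}{rCl}
    \alpha^* \hat{\omega} & = & u^* \hat{\omega} - \edv(H \circ \mathbf{u}) \wedge \tau + \edv_{\Sigma} H \wedge \tau + \p{<}{}{\nabla_S H, \nabla \tilde{f}} \sigma \wedge \tau.
\end{IEEEeqnarray*}
Using $u^* \hat{\omega} = \edv(u^* \hat{\lambda})$ and $\edv(H \circ \mathbf{u}) \wedge \tau = \edv((H \circ \mathbf{u}) \tau) - (H \circ \mathbf{u}) \edv \tau$, integration and Stokes' theorem (applied to a truncation of $\dot{\Sigma}$ away from the punctures, with convergence justified by the standard exponential decay of Floer trajectories to their asymptotes) yield
\begin{IEEEeqnarray*}{rCl}
    E(u) & = & \int_{\del \dot{\Sigma}} \p{}{}{u^* \hat{\lambda} - (H \circ \mathbf{u}) \tau} + \int_{\dot{\Sigma}} (H \circ \mathbf{u}) \edv \tau \\
         &   & {} + \int_{\dot{\Sigma}} \edv_{\Sigma} H \wedge \tau + \int_{\dot{\Sigma}} \p{<}{}{\nabla_S H, \nabla \tilde{f}} \sigma \wedge \tau.
\end{IEEEeqnarray*}
With the standard boundary orientations at the punctures and $\tau = B \edv t$ on each cylindrical end, the boundary term evaluates to $\sum_i \mathcal{A}_H(w^+_i, \gamma^+_i) - \sum_i \mathcal{A}_H(w^-_i, \gamma^-_i)$ by \cref{def:1 periodic orbit abstract}.

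The three remaining bulk integrals are each pointwise non-positive by the admissibility conditions on $H$ (the three inequalities in item (6) of \cref{def:admissible hamiltonian abstract}), so their sum is non-positive and the desired upper bound follows. The main item to be careful about is the bookkeeping of signs---in particular, the reversed boundary orientation at negative punctures produces precisely the minus sign in front of the $\mathcal{A}_H(w^-_i, \gamma^-_i)$ terms---and the justification of Stokes' theorem at the punctures; these are routine, and I do not anticipate any substantial obstacle.
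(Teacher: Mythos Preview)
Your proof is correct and follows essentially the same approach as the paper: both compute the energy density as $u^*\hat{\omega} - u^*\edv_{\hat{X}}H(\mathbf{u})\wedge\tau$ via \cref{lem:integrand of energy is well-defined}, use the chain rule together with \eqref{eq:parametrized floer equation 1} to relate $u^*\edv_{\hat{X}}H$ to $\edv(\mathbf{u}^*H)$ modulo the three error terms controlled by \cref{def:admissible hamiltonian abstract}~\ref{def:admissible hamiltonian abstract 3}, and then apply Stokes' theorem. The only cosmetic difference is that the paper bounds at the level of the integrand before integrating, whereas you integrate first and then bound the bulk terms; the content is identical.
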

\begin{proof}
    We show that $1/2 \| \dv u - X_H(\mathbf{u}) \otimes \tau \|^{2}_{J(\mathbf{u}),j} \, \omega_{\dot{\Sigma}} = u^* \hat{\omega} - u^* \edv_{\hat{X}} H(\mathbf{u}) \wedge \tau$.
    \begin{IEEEeqnarray*}{rCls+x*}
        \IEEEeqnarraymulticol{3}{l}{\frac{1}{2} \| \dv u - X_H(\mathbf{u}) \otimes \tau \|^{2}_{J(\mathbf{u}), \hat{\omega}} \, \omega_{\dot{\Sigma}}(S, T)}\\
        \quad & = & (\dv u - X_H(\mathbf{u}) \otimes \tau)^* \hat{\omega}(S, T)                                            \\
              & = & \hat{\omega}(\dv u (S) - X_{H}(\mathbf{u}) \tau(S), \dv u (T) - X_{H}(\mathbf{u}) \tau(T))             \\
              & = & \hat{\omega} (\dv u (S), \dv u (T)) - \hat{\omega} (\dv u (S), X_{H}(\mathbf{u})) \tau(T) - \hat{\omega} (X_{H}(\mathbf{u}), \dv u (T)) \tau(S)              \\
              & = & u^* \hat{\omega} (S,T) + u^* \iota_{X_H(\mathbf{u})} \hat{\omega} \wedge \tau (S,T)                    \\
        \quad & = & u^* \hat{\omega} (S,T) - u^* \edv_{\hat{X}} H(\mathbf{u}) \wedge \tau (S,T),
    \end{IEEEeqnarray*}
    Where in the first equality we used \cref{lem:integrand of energy is well-defined} and the fact that $\dv u - X_H(\mathbf{u}) \otimes \tau$ is holomorphic, and in the last equality we used the definition of Hamiltonian vector field. We show that $u^* \hat{\omega} - u^* \edv_{\hat{X}} H (\mathbf{u}) \wedge \tau \leq u^* \hat{\omega} - \edv(\mathbf{u}^* H \wedge \tau)$.
    \begin{IEEEeqnarray*}{rCls+x*}
        \edv (\mathbf{u}^* H \wedge \tau)
        & =    & \mathbf{u}^* H \wedge \edv \tau + \mathbf{u}^* \edv H \wedge \tau                                                                                                      \\
        & =    & \mathbf{u}^* H \wedge \edv \tau + \edv_{\dot{\Sigma}} H (\mathbf{u}) \wedge \tau + w^* \edv_S H(\mathbf{u}) \wedge \tau + u^* \edv_{\hat{X}} H(\mathbf{u}) \wedge \tau \\
        & =    & \mathbf{u}^* H \wedge \edv \tau + \edv_{\dot{\Sigma}} H (\mathbf{u}) \wedge \tau + \p{<}{}{\nabla_S H(\mathbf{u}), \nabla \tilde{f}(w)} \, \sigma \wedge \tau + u^* \edv_{\hat{X}} H(\mathbf{u}) \wedge \tau \\
        & \leq & u^* \edv_{\hat{X}} H (\mathbf{u}) \wedge \tau                                                                                                                          
    \end{IEEEeqnarray*}
    Here, in the third equality we used Equation \eqref{eq:parametrized floer equation 1} and in the last line of the computation we used the fact that $H$ is admissible. Combining these results,
    \begin{IEEEeqnarray*}{rCls+x*}
        0
        & \leq & E(u)                                                                                                      \\
        & \leq & \int_{\dot{\Sigma}}^{} u^* \edv \hat{\lambda} - \int_{\dot{\Sigma}}^{} \edv (\mathbf{u}^* H \wedge \tau)  \\
        & =    & \sum_{i=1}^{p^+} \mathcal{A}_H(w^+_i, \gamma^+_i) - \sum_{i=1}^{p^-} \mathcal{A}_H(w^-_i, \gamma^-_i),
    \end{IEEEeqnarray*}
    where in the last line we used Stokes' theorem.
\end{proof}

\begin{lemma}
    \label{lem:floer eq proj}
    Suppose that $(M, \alpha)$ is a contact manifold, $H \colon \dot{\Sigma} \times S \times \R \times M \longrightarrow \R$ is a Hamiltonian which is independent of $M$ and $J \colon \dot{\Sigma} \times S \times \R \times M \longrightarrow \End(T(\R \times M))$ is a cylindrical almost complex structure. If 
    \begin{IEEEeqnarray*}{c+x*}
        \mathbf{u} = (\id_{\dot{\Sigma}}, w, u) = (\id_{\dot{\Sigma}}, w, (a, f)) \colon \dot{\Sigma} \longrightarrow \dot{\Sigma} \times S \times \R \times M
    \end{IEEEeqnarray*}
    is a solution of the parametrized Floer equation, then $f^* \edv \alpha \geq 0$ and
    \begin{IEEEeqnarray}{rCls+x*}
        - \edv a \circ j              & = & f^* \alpha - \alpha(X_H(\mathbf{u})) \tau        \plabel{eq:floer eq proj 1} \\
        \pi_{\xi} \circ \dv f \circ j & = & J_{\xi}(\mathbf{u}) \circ \pi_{\xi} \circ \dv f. \plabel{eq:floer eq proj 2}
    \end{IEEEeqnarray}
\end{lemma}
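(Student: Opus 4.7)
The plan is to closely mimic the proof of \cref{lem:holomorphic curves in symplectizations}, treating the inhomogeneous term $X_H(\mathbf{u}) \otimes \tau$ in the parametrized Floer equation as a perturbation. The essential observation is that since $H$ is independent of $M$ and $J$ is cylindrical, the Hamiltonian vector field $X_H(\mathbf{u})$ is a multiple of the Reeb vector field, exactly as in \cref{lem:reeb equals hamiltonian on symplectization}. Write $X_H(\mathbf{u}) = T R$ for some function $T$ on $\dot{\Sigma}$. Then $\edv r(X_H(\mathbf{u})) = 0$, $\alpha(X_H(\mathbf{u})) = T$, and $\pi_\xi(X_H(\mathbf{u})) = 0$; these are the only facts about $X_H$ that will enter the argument.

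To prove \eqref{eq:floer eq proj 1}, I would rewrite the second parametrized Floer equation in the equivalent form
\begin{IEEEeqnarray*}{c}
    \dv u + J(\mathbf{u}) \circ \dv u \circ j = X_H(\mathbf{u}) \otimes \tau + J(\mathbf{u}) X_H(\mathbf{u}) \otimes \tau \circ j,
\end{IEEEeqnarray*}
and then apply $\edv r$ to both sides. Using \cref{lem:J cylindrical forms} (so $\alpha \circ J = \edv r$, hence $\edv r \circ J = -\alpha$), together with $\edv r(X_H(\mathbf{u})) = 0$ and $\edv r(J(\mathbf{u}) X_H(\mathbf{u})) = -\alpha(X_H(\mathbf{u}))$, I obtain
\begin{IEEEeqnarray*}{c}
    \edv a - f^* \alpha \circ j = -\alpha(X_H(\mathbf{u})) \, \tau \circ j.
\end{IEEEeqnarray*}
Composing on the right with $j$ and using $j^2 = -\id$ then yields \eqref{eq:floer eq proj 1}.

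For \eqref{eq:floer eq proj 2}, I would apply $\pi_\xi$ to the same rewritten equation. Since $J$ is cylindrical it preserves the splitting $T(\R \times M) = \xi^\perp \oplus \xi$, so $\pi_\xi \circ J(\mathbf{u}) = J_\xi(\mathbf{u}) \circ \pi_\xi$. The inhomogeneous terms vanish because $X_H(\mathbf{u}) \in \langle R \rangle \subset \xi^\perp$ and $J(\mathbf{u}) X_H(\mathbf{u}) \in \langle \partial_r \rangle \subset \xi^\perp$. Noting also that $\pi_\xi \circ \dv u = \pi_\xi \circ \dv f$ (since the $\R$-component lies in $\xi^\perp$), the equation reduces to $\pi_\xi \circ \dv f + J_\xi(\mathbf{u}) \circ \pi_\xi \circ \dv f \circ j = 0$. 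Replacing the input vector by $j$ of itself and using $j^2 = -\id$ gives \eqref{eq:floer eq proj 2} directly.

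Finally, to show $f^* \edv \alpha \geq 0$, I would repeat verbatim the last computation in the proof of \cref{lem:holomorphic curves in symplectizations}: for any tangent vector $S$, using that $\edv \alpha$ vanishes on $\xi^\perp = \langle \partial_r \rangle \oplus \langle R \rangle$ (\cref{lem:decomposition coming from contact hypersurface}) together with \eqref{eq:floer eq proj 2}, one obtains
\begin{IEEEeqnarray*}{c}
    f^* \edv \alpha (S, j(S)) = \edv \alpha (\pi_\xi \dv f(S), J_\xi(\mathbf{u}) \pi_\xi \dv f(S)) = \|\pi_\xi \dv f(S)\|^2_{J_\xi(\mathbf{u}), \edv \alpha} \geq 0,
\end{IEEEeqnarray*}
by compatibility of $J_\xi$ with $\edv \alpha$. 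There is no real obstacle here; the only subtlety is the careful bookkeeping to ensure that the inhomogeneous term $X_H(\mathbf{u}) \otimes \tau$ contributes nontrivially only to the first identity (through $\alpha(X_H(\mathbf{u})) = T$), while being invisible both to $\pi_\xi$ and to $\edv \alpha$.
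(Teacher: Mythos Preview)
Your proposal is correct and follows essentially the same route as the paper. The only stylistic difference is in the derivation of \eqref{eq:floer eq proj 1}: the paper first notes that $\edv r(X_H(\mathbf{u})) = 0$ lets one replace $\dv u$ by $\dv u - X_H(\mathbf{u}) \otimes \tau$ inside $-\edv r \circ \dv u \circ j$, then uses holomorphicity of this combined object to swap $\circ\, j$ for $J(\mathbf{u})\,\circ$, and finally applies \cref{lem:J cylindrical forms}; you instead expand the Floer equation first and then apply $\edv r$. Both arguments use the same inputs and are equally valid.
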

\begin{proof}
    We prove equation \eqref{eq:floer eq proj 1}:
    \begin{IEEEeqnarray*}{rCls+x*}
        - \edv a \circ j
        & = & - \edv r \circ \dv u \circ j                                                & \quad [\text{by definition of $a$}]                           \\
        & = & - \edv r \circ (\dv u - X_H(\mathbf{u}) \tensorpr \tau) \circ j             & \quad [\text{$H$ is independent of $M$}]                      \\
        & = & - \edv r \circ J(\mathbf{u}) \circ (\dv u - X_H(\mathbf{u}) \tensorpr \tau) & \quad [\text{$\dv u - X_H(\mathbf{u}) \tensorpr \tau$ is holomorphic}] \\
        & = & \alpha \circ (\dv u - X_H(\mathbf{u}) \tensorpr \tau)                       & \quad [\text{by \cref{lem:J cylindrical forms}}]              \\
        & = & f^* \alpha - \alpha(X_H(\mathbf{u})) \tau                                   & \quad [\text{by definition of pullback}].
    \end{IEEEeqnarray*}
    Equation \eqref{eq:floer eq proj 2} follows by applying $\pi_{\xi} \colon T(\R \times M) \longrightarrow \xi$ to $(\dv u - X_H(\mathbf{u}) \tensorpr \tau)^{0,1}_{J(\mathbf{u}),j} = 0$. The proof of $f^* \edv \alpha \geq 0$ is equal to the one presented in \cref{lem:holomorphic curves in symplectizations}.
\end{proof}

The following is an adaptation to solutions of the parametrized Floer equation of the maximum principle from \cref{thm:maximum principle holomorphic}. Other authors have proven similar results about solutions of a Floer equation satisfying a maximum principle, namely Viterbo \cite[Lemma 1.8]{viterboFunctorsComputationsFloer1999}, Oancea \cite[Lemma 1.5]{oanceaSurveyFloerHomology2004}, Seidel \cite[Section 3]{seidelBiasedViewSymplectic2008} and Ritter \cite[Lemma D.1]{ritterTopologicalQuantumField2013}.

\begin{lemma}[maximum principle]
    \label{lem:maximum principle}
    Under the assumptions of \cref{lem:floer eq proj}, define 
    \begin{IEEEeqnarray*}{rClCrCl}
        h \colon \dot{\Sigma} \times S \times \R & \longrightarrow & \R, & \quad & h(z,w,\rho) & = & H(z,w,\ln(\rho)), \\
        \rho \colon \dot{\Sigma}                 & \longrightarrow & \R, & \quad & \rho        & = & \exp \circ a.
    \end{IEEEeqnarray*}
    If
    \begin{IEEEeqnarray}{rCl}
        \partial_{\rho} h(z,w,\rho) \, \edv \tau                                                     & \leq & 0, \plabel{eq:maximum principle 1} \\
        \edv_{\dot{\Sigma}} (\partial_{\rho} h(z,w,\rho)) \wedge \tau                                & \leq & 0, \plabel{eq:maximum principle 2} \\
        \p{<}{}{\nabla_{S} \partial_{\rho} h(z,w,\rho), \nabla \tilde{f} (w) } \, \sigma \wedge \tau & \leq & 0, \plabel{eq:maximum principle 3}
    \end{IEEEeqnarray}
    and $a \colon \dot{\Sigma} \longrightarrow \R$ has a local maximum then $a$ is constant.
\end{lemma}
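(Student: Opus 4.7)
The strategy mirrors that of \cref{thm:maximum principle holomorphic}: I will exhibit a linear second-order operator $L$ of the form $L = -\Delta + (\text{first-order terms})$, verify that $La \leq 0$, and invoke the strong maximum principle of \cite[p.~349--350]{evansPartialDifferentialEquations2010} to conclude that $a$ is locally constant at any interior maximum and hence globally constant on the connected surface $\dot{\Sigma}$.

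First I would use \cref{lem:laplacian} to rewrite $\Delta a \, \omega_{\dot{\Sigma}} = - \edv(\edv a \circ j)$ with respect to an auxiliary Riemannian metric on $\dot{\Sigma}$ coming from a symplectic form compatible with $j$. Then, substituting the first identity of \cref{lem:floer eq proj} yields
\begin{IEEEeqnarray*}{c+x*}
\Delta a \, \omega_{\dot{\Sigma}} = - f^* \edv \alpha + \edv \bigl( \alpha(X_H(\mathbf{u})) \, \tau \bigr),
\end{IEEEeqnarray*}
and since $f^* \edv \alpha \geq 0$ by the same lemma, we obtain $\Delta a \, \omega_{\dot{\Sigma}} \leq \edv \bigl( \alpha(X_H(\mathbf{u})) \, \tau \bigr)$.

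The next step is to identify $\alpha(X_H(\mathbf{u}))$ with $\partial_\rho h(\mathbf{u})$. Since $H$ is independent of $M$, a direct computation (as in \cref{lem:reeb equals hamiltonian on symplectization}, applied pointwise in $(z,w)$) gives $X_H = (\partial_r H / e^r) R = \partial_\rho h(z,w,\rho) R$, so that $\alpha(X_H(\mathbf{u})) = \partial_\rho h(\mathbf{u})$. I would then expand
\begin{IEEEeqnarray*}{c+x*}
\edv \bigl( \partial_\rho h(\mathbf{u}) \, \tau \bigr) = \bigl( \edv_{\dot{\Sigma}} \partial_\rho h \bigr)(\mathbf{u}) \wedge \tau + \langle \nabla_S \partial_\rho h(\mathbf{u}), \nabla \tilde{f}(w) \rangle \, \sigma \wedge \tau + \partial_\rho^2 h(\mathbf{u}) \, \rho \, \edv a \wedge \tau + \partial_\rho h(\mathbf{u}) \, \edv \tau,
\end{IEEEeqnarray*}
where I used $\edv \rho = \rho \, \edv a$ and substituted $\edv w = \nabla \tilde{f}(w) \otimes \sigma$ from the first parametrized Floer equation \eqref{eq:parametrized floer equation 1}. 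The three hypotheses \eqref{eq:maximum principle 2}, \eqref{eq:maximum principle 3}, and \eqref{eq:maximum principle 1} force the first, second, and fourth terms on the right to be nonpositive $2$-forms (relative to the orientation $\omega_{\dot{\Sigma}}$), so only the third term survives.

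The surviving inequality $\Delta a \, \omega_{\dot{\Sigma}} \leq \partial_\rho^2 h(\mathbf{u}) \, \rho \, \edv a \wedge \tau$ translates, in any local chart, into $-\Delta a + \mathbf{b} \cdot \nabla a \leq 0$ for some smooth vector field $\mathbf{b}$ of coefficients (locally bounded because $a$ is bounded near the local maximum and $h$ is smooth). This defines a linear uniformly elliptic operator $L$ with no zeroth-order term, to which the strong maximum principle applies: if $a$ attains a local interior maximum, then $a$ is constant on the connected component of $\dot{\Sigma}$. I expect the main subtlety will not be the PDE step but the bookkeeping that identifies $\alpha(X_H(\mathbf{u}))$ with $\partial_\rho h$ and correctly accounts for the contribution of the $S$-derivative via the gradient flow equation for $w$; once those two algebraic identifications are in place, the three sign hypotheses combine cleanly to absorb every term except the first-order one.
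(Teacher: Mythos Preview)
Your approach is essentially the paper's: build a linear elliptic operator $L$ with no zeroth-order term, show the relevant function is a subsolution, and apply the strong maximum principle. There is, however, a sign slip in your first displayed identity: substituting \cref{lem:floer eq proj} into \cref{lem:laplacian} actually gives
\[
\Delta a\,\omega_{\dot{\Sigma}} \;=\; f^*\edv\alpha \;-\; \edv\bigl(\alpha(X_H(\mathbf{u}))\,\tau\bigr),
\]
i.e.\ your right-hand side is the formula for $-\Delta a\,\omega_{\dot{\Sigma}}$, not $\Delta a\,\omega_{\dot{\Sigma}}$. As written, your chain yields $La \geq 0$ rather than $La \leq 0$, and your final translation to ``$-\Delta a + \mathbf{b}\cdot\nabla a \leq 0$'' does not follow. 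Once the sign is corrected the argument goes through exactly as you outline: $-\Delta a\,\omega_{\dot{\Sigma}} \leq \partial_\rho^2 h(\mathbf{u})\,\rho\,\edv a\wedge\tau$, hence $La \leq 0$ for $L\nu = -\Delta\nu - \rho\,\partial_\rho^2 h\,\tfrac{\edv\nu\wedge\tau}{\omega_{\dot{\Sigma}}}$.

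The paper differs in one cosmetic respect: it applies the same operator $L$ to $\rho = e^a$ rather than to $a$. Working with $\rho$ produces the identity $-\Delta\rho\,\omega_{\dot{\Sigma}} = -u^*\edv(e^r\alpha) + \edv(\rho\,\partial_\rho h\,\tau)$, and after Leibniz-expanding, the paper absorbs the combination $-u^*\edv(e^r\alpha) + \partial_\rho h\,\edv\rho\wedge\tau$ by recognising it as the (nonpositive) energy density from the proof of \cref{lem:action energy for floer trajectories}. Your route with $a$ sidesteps this: the corresponding term $-f^*\edv\alpha$ is immediately nonpositive by \cref{lem:floer eq proj}, so you never need the energy identity. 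In that sense your variant is slightly more direct; otherwise the two arguments are the same.
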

\begin{proof}
    Choose a symplectic structure $\omega_{\dot{\Sigma}}$ on $\dot{\Sigma}$ such that $g_{\dot{\Sigma}} \coloneqq \omega_{\dot{\Sigma}}(\cdot, j \cdot)$ is a Riemannian metric. Define $L \colon C^{\infty}(\dot{\Sigma}, \R) \longrightarrow C^{\infty}(\dot{\Sigma}, \R)$ by
    \begin{IEEEeqnarray*}{c+x*}
        L \nu = - \Delta \nu - \rho \, \partial^2_{\rho} h (z,w,\rho) \frac{\edv \nu \wedge \tau}{\omega_{\dot{\Sigma}}},
    \end{IEEEeqnarray*}
    for every $\nu \in C^{\infty}(\dot{\Sigma}, \R)$. The map $L$ is a linear elliptic partial differential operator (as in \cite[p.~312]{evansPartialDifferentialEquations2010}). We wish to show that $L \rho \leq 0$. For this, we start by computing $\Delta \rho \, \omega_{\dot{\Sigma}}$.%
    \begin{IEEEeqnarray*}{rCls+x*}
        - \Delta \rho \, \omega_{\dot{\Sigma}}
        & = & \edv (\edv \rho \circ j)                                                                       & \quad [\text{by \cref{lem:laplacian}}] \\
        & = & - \edv (u^*(e^r \alpha) - \rho \, \alpha(X_H(\mathbf{u})) \, \tau)                             & \quad [\text{by \cref{lem:floer eq proj}}] \\
        & = & - u^* \edv (e^r \alpha) + \edv (\rho \, \partial_{\rho} h (z,w,\rho) \, \tau)                  & \quad [\text{by \cref{lem:reeb equals hamiltonian on symplectization}}] \\
        & = & - u^* \edv (e^r \alpha) + \partial_{\rho} h (z,w,\rho) \, \edv \rho \wedge \tau                & \quad [\text{by the Leibniz rule}] \\
        &   & \hphantom{- u^* \edv (e^r \alpha)} + \rho \, \edv (\partial_{\rho} h (z,w,\rho)) \wedge \tau \\
        &   & \hphantom{- u^* \edv (e^r \alpha)} + \rho \, \partial_{\rho} h (z,w,\rho) \, \edv \tau.
    \end{IEEEeqnarray*}
    By Equation \eqref{eq:maximum principle 1}, the last term on the right is nonnegative. We show that the sum of the first two terms on the right is nonnegative.
    \begin{IEEEeqnarray*}{rCls+x*}
        \IEEEeqnarraymulticol{3}{l}{- u^* \edv (e^r \alpha) + \partial_{\rho} h (z,w,\rho) \, \edv \rho \wedge \tau}\\ \quad
        & =    & - u^* \edv (e^r \alpha) + u^* \edv_{\R \times M} H(\mathbf{u}) \wedge \tau                                            & \quad [\text{by definition of $h$}] \\
        & =    & - \frac{1}{2} \| \dv u - X_H(\mathbf{u}) \otimes \tau \|^2_{J(\mathbf{u}), \edv(e^r \alpha)} \, \omega_{\dot{\Sigma}} & \quad [\text{by the computation in \cref{lem:action energy for floer trajectories}}] \\
        & \leq & 0.
    \end{IEEEeqnarray*}
    Finally, we show that $\rho \, \edv (\partial_{\rho} h (z,w,\rho)) \wedge \tau \leq \rho \, \partial^2_{\rho} h(z,w,\rho) \, \edv \rho \wedge \tau$:
    \begin{IEEEeqnarray*}{rCls+x*}
        \IEEEeqnarraymulticol{3}{l}{\rho \, \edv (\partial_{\rho} h (z,w,\rho)) \wedge \tau}\\ \quad
        & =    & \rho \, \edv_{\dot{\Sigma}} \partial_{\rho} h(z,w,\rho) \wedge \tau + \rho \, \p{<}{}{\nabla_{S} \partial_{\rho} h(z,w,\rho), \nabla \tilde{f}(w)} \, \sigma \wedge \tau + \rho \, \partial^2_{\rho} h(z,w,\rho) \, \edv \rho \wedge \tau \\
        & \leq & \rho \, \partial^2_{\rho} h(z,w,\rho) \, \edv \rho \wedge \tau,
    \end{IEEEeqnarray*}
    where in the last line we used Equations \eqref{eq:maximum principle 2} and \eqref{eq:maximum principle 3}. This shows that $L \rho \leq 0$. By the strong maximum principle in \cite[p.~349-350]{evansPartialDifferentialEquations2010}, if $\rho$ has a local maximum then $\rho$ is constant. Since $\rho = \exp \circ a$, the same is true for $a$.
\end{proof}

The next lemma is an adaptation to our setup of an argument by Bourgeois--Oancea which first appeared in \cite[p.~654-655]{bourgeoisExactSequenceContact2009}. The same argument was also used by Cieliebak--Oancea \cite[Lemma 2.3]{cieliebakSymplecticHomologyEilenberg2018} in a different setup.

\begin{lemma}[asymptotic behaviour]
    \label{lem:asymptotic behaviour}
    Consider the half-cylinder $Z^{\pm}$ of \cref{def:punctures asy markers cyl ends}, with $1$-forms $\sigma \coloneqq \edv s$ and $\tau \coloneqq \edv t$. Assume the same conditions as in \cref{lem:floer eq proj}, but with $\dot{\Sigma}$ replaced by $Z^{\pm}$. Suppose that $\mathbf{u}$ is asymptotic at $\pm \infty$ to a $1$-periodic orbit $(z_{\pm}, \gamma_{\pm})$ of $H_{\pm \infty}$ of the form $\gamma_{\pm}(t) = (r_{\pm}, \rho_{\pm}(t))$, where $z_{\pm}$ is a critical point of $\tilde{f}$, $r_{\pm} \in \R$ and $\rho_{\pm} \colon S^1 \longrightarrow M$ is a periodic Reeb orbit in $M$. Define $h \colon Z^{\pm} \times S \times \R \longrightarrow \R$ by $h(s,t,z,r) = H(s,t,z,\ln(r))$ (recall that $H$ is independent of $M$). If 
    \begin{IEEEeqnarray}{rCls+x*}
        \pm \del_r^2 h(s,t,z_{\pm},e^{r_{\pm}})                                             & <    & 0  \plabel{lem:asymptotic behaviour gen 1} \\
        \p{<}{}{ \nabla_S \del_r h(s, t, z_{\pm}, e^{r_{\pm}}), \nabla \tilde{f}(z_{\pm}) } & <    & 0  \plabel{lem:asymptotic behaviour gen 2} \\
        \del_s \del_r h(s,t,z_{\pm},e^{r_{\pm}})                                            & \leq & 0, \plabel{lem:asymptotic behaviour gen 3}
    \end{IEEEeqnarray}
    then either there exists $(s_0,t_0) \in Z^{\pm}$ such that $a(s_0, t_0) > r_{\pm}$ or $\mathbf{u}$ is of the form $\mathbf{u}(s,t) = (s,t, w(s), r_{\pm}, \rho_{\pm}(t))$.
\end{lemma}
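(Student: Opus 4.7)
The plan is to prove the contrapositive: if $a(s,t) \leq r_{\pm}$ for every $(s,t) \in Z^{\pm}$, then $\mathbf{u}$ must have the stated trivial form. I treat the case $Z^+$; the case $Z^-$ is dual after reversing signs. Write $\rho \coloneqq e^a$, so $\rho \leq e^{r_+}$ on $Z^+$ while $\rho(s,t) \to e^{r_+}$ uniformly as $s \to +\infty$ by the asymptotic hypothesis.

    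The first step is to localize near $+\infty$ so that the maximum principle (\cref{lem:maximum principle}) applies along $\mathbf{u}$. Because the inequalities \eqref{lem:asymptotic behaviour gen 2} and \eqref{lem:asymptotic behaviour gen 3} hold at $(s,t,z_+,e^{r_+})$ and $(w(s,t), \rho(s,t)) \to (z_+,e^{r_+})$, by continuity there exists $S_0$ so that on $Z_{S_0} \coloneqq [S_0,+\infty) \times S^1$ one has $\partial_s \partial_\rho h \leq 0$ and $\langle \nabla_S \partial_\rho h, \nabla \tilde{f} \rangle \leq 0$ along $\mathbf{u}$; the remaining hypothesis $\partial_\rho h \, \edv \tau \leq 0$ is automatic since $\edv \tau = 0$ on the cylinder. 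The proof of \cref{lem:maximum principle} then yields $L \rho \leq 0$ on $Z_{S_0}$ for a linear elliptic operator $L$. Exhausting $Z_{S_0}$ by rectangles $[S_0, S] \times S^1$, the classical strong maximum principle applied to each, combined with $\rho \leq e^{r_+}$ and $\rho|_{\{s = S\} \times S^1} \to e^{r_+}$ uniformly as $S \to +\infty$, forces $\rho \equiv e^{r_+}$, i.e.\ $a \equiv r_+$, on $Z_{S_0}$.

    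The second step uses $a \equiv r_+$ on $Z_{S_0}$ to identify $f$ as $\rho_+$. Under $a \equiv r_+$, \cref{lem:floer eq proj} reduces to $f^*\alpha = \alpha(X_H(\mathbf{u}))\,\edv t$ together with $\pi_\xi \circ \edv f \circ j = J_\xi(\mathbf{u}) \circ \pi_\xi \circ \edv f$. Exterior-differentiating the first identity and inserting the $w$-equation \eqref{eq:parametrized floer equation 1} gives
    \begin{IEEEeqnarray*}{c+x*}
        \edv(f^*\alpha) = \rho \, \bigl( \partial_s \partial_\rho h + \langle \nabla_S \partial_\rho h, \nabla \tilde{f}(w) \rangle \bigr) \, \edv s \wedge \edv t,
    \end{IEEEeqnarray*}
    which is $\leq 0$ by \eqref{lem:asymptotic behaviour gen 2} and \eqref{lem:asymptotic behaviour gen 3}. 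Combined with $f^* \edv \alpha \geq 0$ from \cref{lem:floer eq proj}, equality must hold pointwise, and the energy computation of \cref{lem:holomorphic curves in symplectizations} run backwards forces $\pi_\xi \circ \edv f \equiv 0$ on $Z_{S_0}$. Thus $f$ is tangent to $R$ everywhere, and the Reeb ODE of \cref{lem:reeb equals hamiltonian on symplectization}, coupled with the asymptote $\lim_{s \to +\infty} f(s,\cdot) = \rho_+$, pins down $f(s,t) = \rho_+(t)$ on $Z_{S_0}$.

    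Finally, $\mathbf{u}$ coincides with the smooth solution $(\mathrm{id}, w, r_+, \rho_+)$ on the open set $Z_{S_0}$, so Aronszajn's unique continuation theorem for the elliptic parametrized Floer operator extends the agreement to all of $Z^+$; equation \eqref{eq:parametrized floer equation 1} then collapses to the gradient-flow ODE $w'(s) = \nabla \tilde{f}(w(s))$, yielding the claimed form. The main obstacle I anticipate is the second step: the strict hypothesis \eqref{lem:asymptotic behaviour gen 1} is what encodes the Morse--Bott nondegeneracy of $\gamma_{\pm}$ in the vertical direction and makes the linearization of the parametrized Floer operator at the asymptote hyperbolic, thereby singling out $\rho_{\pm}$ among all Reeb orbits on the level $\{r = r_{\pm}\}$ and enabling the rigidity conclusion needed to identify $f$ with $\rho_+$ rather than a nearby Reeb orbit.
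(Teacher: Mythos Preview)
Your first step does not work as written. The strong maximum principle on each rectangle $[S_0,S]\times S^1$ only tells you that $\rho$ attains its maximum on the boundary circles; combined with $\rho|_{\{S\}\times S^1}\to e^{r_+}$ this yields $\sup_{Z_{S_0}}\rho = e^{r_+}$, which you already knew, but it does \emph{not} rule out $\rho<e^{r_+}$ throughout the interior with the supremum achieved only in the limit $s\to+\infty$. A related symptom: hypothesis~\eqref{lem:asymptotic behaviour gen 1} on $\partial_\rho^2 h$ never enters your step~1, yet it is precisely the condition that determines the sign of the first-order coefficient in the operator $L$ of \cref{lem:maximum principle} and is what ultimately excludes such behaviour.

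The paper bypasses the maximum principle and works with the $t$-average $\overline{a}(s)=\int_0^1 a(s,t)\,\edv t$ directly. From the projection identity of \cref{lem:floer eq proj} one has $\partial_s a = f^*\alpha(\partial_t)-\alpha(X_H(\mathbf{u}))$; integrating in $t$ and applying Stokes over the strip between $s$ and $\pm\infty$ (using $f^*\edv\alpha\geq 0$) gives
\begin{IEEEeqnarray*}{c}
\pm\partial_s\overline{a}(s)\leq \pm\!\int_0^1\partial_\rho h(\pm\infty,t,z_\pm,e^{r_\pm})\,\edv t\ \mp\!\int_0^1\partial_\rho h(s,t,w(s),e^{a(s,t)})\,\edv t.
\end{IEEEeqnarray*}
One now uses all three hypotheses in turn: \eqref{lem:asymptotic behaviour gen 1} lets one replace $e^{a(s,t)}$ by $e^{r_\pm}$ (this is where $a\leq r_\pm$ meets the concavity/convexity of $h$ in the radial variable), \eqref{lem:asymptotic behaviour gen 2} lets one replace $w(s)$ by $z_\pm$, and \eqref{lem:asymptotic behaviour gen 3} handles the remaining $s$-dependence. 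The result is $\pm\partial_s\overline{a}\leq 0$, so $\overline{a}$ is monotone with limit $r_\pm$, forcing $\overline{a}\geq r_\pm$; together with $a\leq r_\pm$ this gives $a\equiv r_\pm$. All inequalities in the chain then collapse to equalities, in particular $f^*\edv\alpha=0$, so $f$ is $s$-independent and equals $\rho_\pm$---no unique continuation is needed.
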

\begin{proof}
    It suffices to assume that $a(s,t) \leq r_{\pm}$ for all $(s,t) \in Z^{\pm}$ and to prove that $a(s,t) = r_{\pm}$ and $f(s,t) = \rho_{\pm}(t)$ for all $(s,t) \in Z^{\pm}$. After replacing $Z^{\pm}$ by a smaller half-cylinder we may assume the following analogues of \eqref{lem:asymptotic behaviour gen 1} and \eqref{lem:asymptotic behaviour gen 2}:
    \begin{IEEEeqnarray}{rCls+x*}
        \pm \del_r^2 h(s,t,w(s),e^{a(s,t)})                                           & \leq & 0, \plabel{lem:asymptotic behaviour gen 1b} \\
        \p{<}{}{ \nabla_S \del_r h(s, t, w(s), e^{r_{\pm}}), \nabla \tilde{f}(w(s)) } & \leq & 0. \plabel{lem:asymptotic behaviour gen 2b}
    \end{IEEEeqnarray}
    Define the average of $a$, which we denote by $\overline{a} \colon \R^{\pm}_0 \longrightarrow \R$, by 
    \begin{IEEEeqnarray*}{c+x*}
        \overline{a}(s) \coloneqq \int_{0}^{1} a(s,t) \edv t.
    \end{IEEEeqnarray*}
    Then,
    \begin{IEEEeqnarray*}{rCls+x*}
        \IEEEeqnarraymulticol{3}{l}{\pm \del_s \overline{a}(s)}\\ \quad
        & =    & \pm \int_{0}^{1} \del_s a(s,t) \edv t                                                                                                            & \quad [\text{by definition of $\overline{a}$}]                     \\
        & =    & \pm \int_{0}^{1} f_s^* \alpha \mp \int_{0}^{1} \alpha(X_H(\mathbf{u}(s,t))) \edv t                                                               & \quad [\text{by \cref{lem:floer eq proj}}]                         \\
        & =    & \pm \int_{0}^{1} \rho_{\pm}^* \alpha \mp \int_{s}^{\pm \infty} \int_{0}^{1} f^* \edv \alpha \mp \int_{0}^{1} \alpha(X_H(\mathbf{u}(s,t))) \edv t & \quad [\text{by Stokes' theorem}]                                  \\
        & \leq & \pm \int_{0}^{1} \rho_{\pm}^* \alpha \mp \int_{0}^{1} \alpha(X_H(\mathbf{u}(s,t))) \edv t                                                        & \quad [\text{since $f^* \edv \alpha \geq 0$}]                         \\
        & =    & \pm \int_{0}^{1} \del_r h(\pm \infty, t, z_{\pm}, e^{r_{\pm}}) \edv t \mp \int_{0}^{1} \del_r h(s, t, w(s), e^{a(s,t)}) \edv t                   & \quad [\text{by \cref{lem:reeb equals hamiltonian on symplectization}}]             \\
        & \leq & \pm \int_{0}^{1} \del_r h(\pm \infty, t, z_{\pm}, e^{r_{\pm}}) \edv t \mp \int_{0}^{1} \del_r h(s, t, w(s), e^{r_{\pm}}) \edv t                  & \quad [\text{by Equation \eqref{lem:asymptotic behaviour gen 1b}}] \\
        & \leq & \pm \int_{0}^{1} \del_r h(\pm \infty, t, z_{\pm}, e^{r_{\pm}}) \edv t \mp \int_{0}^{1} \del_r h(s, t, z_{\pm}, e^{r_{\pm}}) \edv t               & \quad [\text{by Equation \eqref{lem:asymptotic behaviour gen 2b}}] \\
        & \leq & 0                                                                                                                                                & \quad [\text{by Equation \eqref{lem:asymptotic behaviour gen 3}}].
    \end{IEEEeqnarray*}
    Since $\pm \del_s \overline{a}(s) \leq 0$ and $\overline{a}(\pm \infty) = r_{\pm}$, we have that $\overline{a}(s) \geq r_{\pm}$ for all $s$. By assumption, $a(s,t) \leq r_{\pm}$, and therefore $a(s,t) = r_{\pm}$ for all $(s,t) \in Z^{\pm}$. This implies that every inequality in the previous computation is an equality, and in particular $f^* \edv \alpha = 0$. Therefore, $f$ is independent of $s$ and $f(s,t) = \rho_{\pm}(t)$ for all $(s,t) \in Z^{\pm}$.
\end{proof}

The following lemma is an adaptation of a result originally proven by Abouzaid--Seidel \cite[Lemma 7.2]{abouzaidOpenStringAnalogue2010}. Other authors have proven variations of this result, namely Ritter \cite[Lemma D.3]{ritterTopologicalQuantumField2013}, Gutt \cite[Theorem 3.1.6]{guttMinimalNumberPeriodic2014} and Cieliebak--Oancea \cite[Lemma 2.2]{cieliebakSymplecticHomologyEilenberg2018}.

\begin{lemma}[no escape]
    \label{lem:no escape}
    Let $V \subset (X, \lambda)$ be a Liouville domain such that $\iota \colon V \longrightarrow (X, \lambda)$ is a strict Liouville embedding, $H \colon \dot{\Sigma} \times S \times \hat{X} \longrightarrow \R$ be an admissible Hamiltonian, $J \colon \dot{\Sigma} \times S \times \hat{X} \longrightarrow \End(T \hat{X})$ be a compatible almost complex structure and $\mathbf{u} = (\id_{\dot{\Sigma}}, w, u) \colon \dot{\Sigma} \longrightarrow \dot{\Sigma} \times S \times \hat{X}$ be a solution of the parametrized Floer equation such that all the asymptotic $1$-periodic orbits of $\mathbf{u}$ are inside $V$. Assume that there exists $\varepsilon > 0$ such that:
    \begin{enumerate}
        \item The restriction of $H$ to $\dot{\Sigma} \times S \times (-\varepsilon, \varepsilon) \times \del V$ is independent of $\del V$.
        \item The restriction of \parbox{\widthof{$H$}}{$J$} to $\dot{\Sigma} \times S \times (-\varepsilon, \varepsilon) \times \del V$ is cylindrical.
        \item If $\mathcal{A}_{H} \colon \dot{\Sigma} \times S \times (-\varepsilon,\varepsilon) \longrightarrow \R$ is given by $\mathcal{A}_H(z,w,r) \coloneqq \lambda(X_H)(z,w,r) - H(z,w,r)$, then for every $(z,w,r) \in \dot{\Sigma} \times S \times (-\varepsilon,\varepsilon)$,
            \begin{IEEEeqnarray*}{rCls+x*}
                \mathcal{A}_H(z,w,r) \, \edv \tau                                                 & \leq & 0, \plabel{eq:no escape eq 1} \\
                \edv_{\dot{\Sigma}} \mathcal{A}_H(z,w,r) \wedge \tau                              & \leq & 0, \plabel{eq:no escape eq 2} \\
                \p{<}{}{\nabla_S \mathcal{A}_H(z,w,r), \nabla \tilde{f}(w)} \, \sigma \wedge \tau & \leq & 0. \plabel{eq:no escape eq 3}
            \end{IEEEeqnarray*} 
    \end{enumerate}
    Then, $\img u \subset V$.
\end{lemma}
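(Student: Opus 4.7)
The plan is to argue by contradiction. Assume $\img u \not\subset V$ and, applying Sard's theorem to $r \circ u$ defined on $u^{-1}((-\varepsilon, \varepsilon) \times \partial V)$, choose a regular value $\delta \in (0, \varepsilon)$ small enough that the open set
\begin{IEEEeqnarray*}{c+x*}
    W \coloneqq u^{-1}(\hat X \setminus V_\delta), \qquad V_\delta \coloneqq V \cup ([0, \delta] \times \partial V),
\end{IEEEeqnarray*}
is non-empty. Since all asymptotic orbits of $\mathbf u$ lie inside $V$, the asymptotic convergence on the cylindrical ends forces $\overline W$ to be compact in $\dot\Sigma$, and by the choice of $\delta$ the set $\partial W = u^{-1}(\{r = \delta\})$ is a smooth $1$-submanifold contained in $u^{-1}((-\varepsilon, \varepsilon) \times \partial V)$.

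The next step mirrors the proof of \cref{lem:action energy for floer trajectories}: the admissibility of $H$ gives the pointwise estimate
\begin{IEEEeqnarray*}{c+x*}
    \frac{1}{2}\, \p{||}{}{\dv u - X_H(\mathbf u) \otimes \tau}^2_{J(\mathbf u), \hat\omega}\, \omega_{\dot\Sigma} \leq \edv \eta, \qquad \eta \coloneqq u^* \hat\lambda - H(\mathbf u)\, \tau,
\end{IEEEeqnarray*}
so Stokes' theorem on $W$ yields $\int_W \tfrac{1}{2} \p{||}{}{\dv u - X_H(\mathbf u) \otimes \tau}^2 \omega_{\dot\Sigma} \leq \int_{\partial W} \eta$. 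Writing $u|_{\partial W} = (\delta, f)$ gives $u^* \hat\lambda = e^\delta f^* \alpha$ there, and substituting the identity $f^* \alpha = - \edv a \circ j + \partial_r H \cdot \tau$ from \cref{lem:floer eq proj} (valid because $H$ is independent of $\partial V$ on the collar) splits the boundary integral as
\begin{IEEEeqnarray*}{c+x*}
    \int_{\partial W} \eta = -e^\delta \int_{\partial W} \edv a \circ j \,+\, \int_{\partial W} \mathcal A_H(\cdot, w, \delta)\, \tau.
\end{IEEEeqnarray*}

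The plan is to show that both summands are non-positive. For the first, $a > \delta$ on $W$ forces the inward unit normal to $W$ along $\partial W$ to point in the direction of $\nabla a$; combined with the Riemann-surface identity $j T_{\partial W} = N_{\mathrm{in}}$ (following from $(\dot\gamma, j \dot\gamma)$ being positively oriented), this gives $\edv a \circ j \geq 0$ pointwise on $\partial W$. For the second, regard $\mathcal A_H(\cdot, w(\cdot), \delta)\, \tau$ as a $1$-form on all of $\dot\Sigma$; a direct computation, using the parametrized Floer equation $\dv w = \nabla \tilde f(w) \otimes \sigma$ to treat the $S$-derivative, yields
\begin{IEEEeqnarray*}{c+x*}
    \edv(\mathcal A_H(\cdot, w, \delta)\, \tau) = \edv_{\dot\Sigma} \mathcal A_H \wedge \tau + \p{<}{}{\nabla_S \mathcal A_H, \nabla \tilde f(w)} \sigma \wedge \tau + \mathcal A_H\, \edv \tau,
\end{IEEEeqnarray*}
and each of the three summands is non-positive by the three sign conditions of hypothesis (3) of the lemma. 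A further application of Stokes therefore yields $\int_{\partial W} \mathcal A_H(\cdot, w, \delta)\, \tau \leq 0$.

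Combining all of this, $\int_W \tfrac{1}{2} \p{||}{}{\dv u - X_H(\mathbf u) \otimes \tau}^2 \omega_{\dot\Sigma} = 0$, so $\dv u = X_H(\mathbf u) \otimes \tau$ on $W$. On any connected component $C$ of $W$, pick $z_1 \in \partial C$ with $u(z_1) \in \{\delta\} \times \partial V$ and join it to an arbitrary $z \in \overline C$ by a smooth path $\gamma$; the curve $u \circ \gamma$ then satisfies the ODE $(u \circ \gamma)'(t) = \tau(\gamma'(t))\, X_H(\mathbf u(\gamma(t)))$, and since $X_H = \partial_r H \cdot R$ is tangent to every slice $\{r = \mathrm{const}\} \times \partial V$ in the collar, the $r$-coordinate of $u \circ \gamma$ is preserved, forcing $u(z) \in \{\delta\} \times \partial V \subset V_\delta$ and contradicting $z \in W$. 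Hence $W = \varnothing$ and $\img u \subset V$. The main delicacy will be the orientation bookkeeping for $\int_{\partial W} \edv a \circ j$; once that is settled, the rest reduces to applying Stokes' theorem twice and invoking the pointwise sign conditions already built into the admissibility of $H$ and the hypotheses of the lemma.
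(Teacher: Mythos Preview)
Your proof is correct and follows essentially the same approach as the paper: perturb $V$ to make $u$ transverse to the boundary, set up the exterior region $W = \Sigma_V$, bound the energy on $W$ by a boundary integral via Stokes and the estimate from \cref{lem:action energy for floer trajectories}, and then show the boundary integral is nonpositive using (i) an orientation argument for the ``$\edv a \circ j$'' term and (ii) a second application of Stokes together with the sign hypotheses on $\mathcal{A}_H$ for the remaining term. The only cosmetic difference is that you decompose the boundary integrand using the formula $f^*\alpha = -\edv a \circ j + \alpha(X_H)\tau$ from \cref{lem:floer eq proj}, whereas the paper reaches the same expression by writing $u^*\lambda - \lambda(X_H)\tau = \lambda \circ (\dv u - X_H \otimes \tau)$, applying $J$-linearity, and then using $\lambda \circ J = \edv\exp$ for cylindrical $J$; these are equivalent rewritings of the same quantity.
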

\begin{proof}
    Assume by contradiction that $\img u$ is not contained in $V$. After changing $V$ to $\hat{V} \setminus \{ (r,x) \in \R \times \del V \mid r > r_0 \}$, for some $r_0 \in (-\varepsilon,\varepsilon)$, we may assume without loss of generality that $\img u$ is not contained in $V$ and that $u$ is transverse to $\del V$. Then, ${\Sigma_V} \coloneqq u ^{-1}(\hat{X} \setminus \itr V)$ is a compact surface with boundary. We show that $E({u}|_{\Sigma_V}) = 0$.
    \begin{IEEEeqnarray*}{rCls+x*}
        0
        & \leq & \frac{1}{2} \int_{\Sigma_V}^{} \| \dv u - X_{H} (\mathbf{u}) \tensorpr \tau \|^2_{J(\mathbf{u}), \edv \lambda} \, \omega _{\Sigma_V} & \quad [\text{by positivity of norms}]                                         \\
        & \leq & \int_{{\Sigma_V}}   \edv (u^* \lambda - H(\mathbf{u}) \, \tau)                                                                       & \quad [\text{by the computation in \cref{lem:action energy for floer trajectories}}]    \\
        & =    & \int_{\del {\Sigma_V}}^{} u^* \lambda - H(\mathbf{u}) \, \tau                                                                        & \quad [\text{by Stokes' theorem}]                                          \\
        & \leq & \int_{\del {\Sigma_V}}^{} u^* \lambda - \lambda(X_H(\mathbf{u})) \, \tau                                                             & \quad [\text{(a), proven below}]                                           \\
        & =    & \int_{\del {\Sigma_V}}^{} \lambda \circ (\dv u - X_H(\mathbf{u}) \tensorpr \tau)                                                     & \quad [\text{by definition of pullback}]                                   \\
        & =    & - \int_{\del {\Sigma_V}}^{} \lambda \circ J(\mathbf{u}) \circ (\dv u - X_H(\mathbf{u}) \tensorpr \tau) \circ j                       & \quad [\text{$\dv u - X_H(\mathbf{u}) \tensorpr \tau$ is holomorphic}]              \\
        & =    & - \int_{\del {\Sigma_V}}^{} \edv \exp \circ (\dv u - X_H(\mathbf{u}) \tensorpr \tau) \circ j                                         & \quad [\text{$J$ is cylindrical near $u(\del {\Sigma_V}) \subset \del V$}] \\
        & =    & - \int_{\del {\Sigma_V}}^{} \edv \exp \circ \dv u \circ j                                                                            & \quad [\text{$H$ is independent of $\del V$}]                              \\
        & \leq & 0                                                                                                                                    & \quad [\text{(b), proven below}].
    \end{IEEEeqnarray*}
    The proof of (a) is the computation
    \begin{IEEEeqnarray*}{rCls+x*}
        \IEEEeqnarraymulticol{3}{l}{\int_{\del {\Sigma_V}}^{} ( \lambda(X_H(\mathbf{u})) - H(\mathbf{u}) ) \, \tau}\\ \quad
        & =    & \int_{\del {\Sigma_V}}^{} \mathcal{A}_H(z,w,r_0) \, \tau                & \quad [\text{by definition of $\mathcal{A}_H$ and $u(\del {\Sigma_V}) \subset \del V$}] \\
        & =    & \int_{{\Sigma_V}}^{} \edv_{{\Sigma_V}} (\mathcal{A}_H(z,w,r_0) \, \tau) & \quad [\text{by Stokes' theorem}]                                                       \\
        & \leq & 0                                                                       & \quad [\text{by the assumptions on $\mathcal{A}_H$}].
    \end{IEEEeqnarray*}
    Statement (b) is true because if $\xi$ is a vector tangent to $\del {\Sigma_V}$ giving the boundary orientation, then $j (\xi)$ points into ${\Sigma_V}$, therefore $\dv u \circ j (\xi)$ points out of $V$. Then, we conclude that $E({u}|_{\Sigma_V}) = 0$ and that $\dv u = X_H(\mathbf{u}) \tensorpr \tau$, and since $X_H(\mathbf{u})$ is tangent to $\del V$ it follows that $\img u \subset \del V$. This contradicts the fact that $u$ is not contained in $V$.
\end{proof}

\section{Compactness for solutions of the parametrized Floer equation}

In this section, we assume that $(\dot{\Sigma}, j, \sigma, \tau) = (\R \times S^1, j, \edv s, \edv t)$ is the cylinder from \cref{exa:sphere and cylinder}. Suppose that $H \colon \dot{\Sigma} \times S \times \hat{X} \longrightarrow \R$ is an admissible Hamiltonian as in \cref{def:admissible hamiltonian abstract}. In this case, there exist Hamiltonians $H^{\pm} \colon S^1 \times S \times \hat{X} \longrightarrow \R$ such that $H(s,t,w,x) = H^{\pm}(t,w,x)$ for $\pm s \geq s_0$. Assume also that $J \colon \dot{\Sigma} \times S \times \hat{X} \longrightarrow \End(T \hat{X})$ is an admissible almost complex structure as in \cref{def:admissible acs abstract}, which has associated limit almost complex structures $J^{\pm} \colon S^1 \times S \times \hat{X} \longrightarrow \End(T \hat{X})$. Note that since $\dot{\Sigma} = \R \times S^1$, we can also view $H^{\pm}$ and $J^{\pm}$ as maps whose domain is $\dot{\Sigma}$. For $N \in \Z_{\geq 1}$ and $L, \nu = 1,\ldots,N$, define 
\begin{IEEEeqnarray*}{c+x*}
    H^{L,\nu} \coloneqq
    \begin{cases}
        H^{+} & \text{if } \nu > L, \\
        H     & \text{if } \nu = L, \\
        H^{-} & \text{if } \nu < L,
    \end{cases}
    \quad
    J^{L,\nu} \coloneqq
    \begin{cases}
        J^{+} & \text{if } \nu > L, \\
        J     & \text{if } \nu = L, \\
        J^{-} & \text{if } \nu < L.
    \end{cases}
\end{IEEEeqnarray*}
Finally, let $(H_m)_m$ be a sequence of admissible Hamiltonians converging to $H$, $(J_m)_m$ be a sequence of admissible almost complex structures converging to $J$, and for every $m \in \Z_{\geq 1}$ let $(w_m, u_m)$ be a solution of the parametrized Floer equation with respect to $H_m, J_m$ with asymptotes $(z^\pm_m, \gamma^\pm_m)$.

\begin{definition}
    \label{def:broken floer cylinder}
    Let $(z^{\pm}, \gamma^{\pm})$ be Hamiltonian $1$-periodic orbits of $H^{\pm}$. A \textbf{broken Floer trajectory} from $(z^-, \gamma^-)$ to $(z^+, \gamma^+)$ is given by:
    \begin{enumerate}
        \item Numbers $N \in \Z_{\geq 1}$ and $L = 1, \ldots, N$;
        \item Hamiltonian $1$-periodic orbits $(z^-, \gamma^-) = (z^1, \gamma^1), \ldots, (z^L, \gamma^L)$ of $H^-$ and Hamiltonian $1$-periodic orbits $(z^{L+1}, \gamma^{L+1}), \ldots, (z^{N+1}, \gamma^{N+1}) = (z^+, \gamma^+)$ of $H^+$;
        \item For every $\nu = 1, \ldots, N$, a Floer trajectory $(w^\nu,u^\nu)$ with respect to $H^{L,\nu}, J^{L,\nu}$ with negative asymptote $(z^\nu, \gamma^\nu)$ and positive asymptote $(z^{\nu+1}, \gamma^{\nu+1})$.
    \end{enumerate}
\end{definition}

\begin{definition}
    We say that $(w_m, u_m)_{m}$ \textbf{converges} to $(w^{\nu}, u^{\nu})_{\nu}$ if there exist numbers $s^1_m \leq \cdots \leq s^N_m$ such that
    \begin{IEEEeqnarray*}{rCls+x*}
        \lim_{m \to +\infty} s^L_m                        & \in & \R, \\
        \lim_{m \to +\infty} (s^{\nu + 1}_m - s^\nu_m)    & =   & + \infty, \\
        \lim_{m \to +\infty} w_m( \cdot + s^\nu_m)        & =   & w^\nu_m, \\
        \lim_{m \to +\infty} u_m( \cdot + s^\nu_m, \cdot) & =   & u^\nu_m.
    \end{IEEEeqnarray*}
\end{definition}

\begin{theorem}
    \label{thm:compactness in s1eft}
    There exists a subsequence (whose index we still denote by $m$) and a broken Floer trajectory $(w^{\nu}, u^{\nu})_{\nu}$ such that $(w_m, u_m)_m$ converges to $(w^{\nu}, u^{\nu})_{\nu}$.
\end{theorem}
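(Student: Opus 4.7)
The approach is to reduce to the standard compactness theorems for Morse gradient flow lines and Floer trajectories, exploiting the partial decoupling of the parametrized Floer equation \eqref{eq:parametrized floer equation 1}--\eqref{eq:parametrized floer equation 2}. Observe that \eqref{eq:parametrized floer equation 1} for $w$ depends only on $w$ (it is the negative gradient flow equation for $\tilde{f}$), while \eqref{eq:parametrized floer equation 2} for $u$ is coupled to $w$ only through $H$ and $J$.

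First, by \cref{lem:action energy for floer trajectories}, the energies $E(u_m)$ are uniformly bounded in terms of the asymptotic actions $\mathcal{A}_{H_m}(z^\pm_m, \gamma^\pm_m)$. Since $\tilde{f}$ has only finitely many critical points and, for each such $z$, the nondegenerate Hamiltonian $H^\pm_z$ has only finitely many $1$-periodic orbits of action below a fixed bound, we pass to a subsequence along which the asymptotes stabilize to fixed orbits $(z^-,\gamma^-)$ and $(z^+,\gamma^+)$. Combining the no escape lemma (\cref{lem:no escape}) with the maximum principle (\cref{lem:maximum principle}), applied on the region where $H_m$ is linear in $r$ with slope outside the Reeb spectrum, the images of the $u_m$ are confined to a fixed compact subset of $\hat{X}$, yielding uniform $C^0$ bounds.

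Next, standard Morse-theoretic compactness for gradient flow lines of $\tilde{f}$ (whose critical points are isolated, in view of \cref{def:admissible hamiltonian abstract} \ref{def:admissible hamiltonian abstract 2}) provides, after passing to a subsequence, numbers $s^1_m \leq \cdots \leq s^N_m$ with $s^{\nu+1}_m - s^\nu_m \to +\infty$, critical points $z^- = z^1, z^2, \ldots, z^{N+1} = z^+$ of $\tilde{f}$, and gradient flow lines $w^\nu$ from $z^\nu$ to $z^{\nu+1}$ such that $w_m(\cdot + s^\nu_m) \to w^\nu$ in $C^\infty_{\mathrm{loc}}$. We choose $L \in \{1,\ldots,N\}$ as the unique index for which $s^L_m$ remains bounded; this corresponds to the region of $\R$ containing the non-translation-invariant pieces of $H_m$ and $J_m$. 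For $\nu \neq L$, the large shifts $s^\nu_m$ drive the equation for $u_m(\cdot + s^\nu_m, \cdot)$ to the translation-invariant equation governed by $H^\pm$ and $J^\pm$, while for $\nu = L$ no shift is needed.

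Finally, we invoke standard Gromov--Floer compactness for Floer trajectories, applied level by level. Here the admissibility conditions \ref{def:admissible hamiltonian abstract 2} and \ref{def:admissible acs abstract 2}, which require $H$ and $J$ to be independent of $w$ on a neighbourhood of each critical point of $\tilde{f}$, are crucial: once $w_m(\cdot + s^\nu_m)$ is uniformly close to $z^\nu$ (respectively $z^{\nu+1}$), the Hamiltonian and almost complex structure in the equation for $u_m(\cdot + s^\nu_m, \cdot)$ effectively decouple from $w$. Thus on each level $\nu$, $u_m(\cdot + s^\nu_m, \cdot)$ satisfies, in the limit, an honest Floer equation for $H^{L,\nu}, J^{L,\nu}$, and a diagonal subsequence produces the Floer trajectory $u^\nu$ from a $1$-periodic orbit $\gamma^\nu$ of $H_{z^\nu}^{L,\nu}$ to a $1$-periodic orbit $\gamma^{\nu+1}$ of $H_{z^{\nu+1}}^{L,\nu}$. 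The principal obstacle is ensuring that a single sequence of shifts $s^\nu_m$ realizes the Morse breaking of $w_m$ and the Floer breaking of $u_m$ simultaneously; this compatibility is precisely what the locally constant structure of $H$ and $J$ near critical points of $\tilde{f}$ provides, reducing the problem to the already-established Morse and Floer compactness theorems on each level.
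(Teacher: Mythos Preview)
Your overall strategy matches the paper's: first apply Morse compactness to the $w_m$, obtaining shifts $s^\nu_m$ and limiting gradient trajectories $w^\nu$, then apply Floer compactness to the shifted curves $u_m(\cdot + s^\nu_m, \cdot)$ at each level $\nu$. The $C^0$ bounds and the stabilization of the asymptotes are handled correctly.

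There is, however, a genuine gap in your final step. You assert that at each Morse level $\nu$ the sequence $u_m(\cdot + s^\nu_m,\cdot)$ converges (along a subsequence) to a \emph{single} Floer trajectory $u^\nu$, and you justify this by saying that the locally constant structure of $H$ and $J$ near $\critpt \tilde f$ forces the Morse shifts and the Floer shifts to coincide. This is not correct. Floer compactness for $u_m(\cdot + s^\nu_m,\cdot)$ only yields a \emph{broken} Floer trajectory $(u^{\nu,\mu})_{\mu=1,\ldots,M_\nu}$ with its own family of secondary shifts $s^{\nu,\mu}_m$; nothing prevents $M_\nu>1$. The local constancy of $(H,J)$ near critical points of $\tilde f$ ensures that any such additional Floer breaking occurs where $w_m$ is already close to some $z^\nu$ or $z^{\nu+1}$, so that the accompanying $w$-component of each sub-level is well defined (and typically a constant trajectory at a critical point), but it does \emph{not} rule out the extra breaking itself. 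A concrete instance: even when $N=1$ (no Morse breaking at all, $w_m\to w$), the curves $u_m$ may still break into several Floer cylinders near the ends where $w$ sits in the locally constant region.

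The paper addresses exactly this by introducing the double index $(\nu,\mu)$: after Morse compactness produces the levels $\nu=1,\ldots,N$, one applies ordinary Floer compactness to each $u^\nu_m(s,t)=u_m(s+s^\nu_m,t)$ with respect to the limiting data $(H^\nu,J^\nu)$, obtaining broken trajectories $(u^{\nu,\mu})_\mu$ and secondary shifts $s^{\nu,\mu}_m$. The final broken Floer trajectory is then the list $(w^\nu,u^{\nu,\mu})_{\nu,\mu}$ in dictionary order, with duplicate entries removed. Your argument becomes correct once you incorporate this secondary breaking rather than asserting a single $u^\nu$ per Morse level.
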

\begin{proof}
    Since $f \colon C \longrightarrow \R$ is Morse and $H_{z,w} \colon S^1 \times \hat{X} \longrightarrow \R$ is nondegenerate for every puncture $z$ and critical point $w$ of $\tilde{f}$, we conclude that we can pass to a subsequence such that $(z_m^{\pm}, \gamma_m^{\pm})$ converges to $(z^{\pm}, \gamma^{\pm})$. By compactness in Morse theory, there exists a further subsequence and a broken Morse trajectory $(w^\nu)_{\nu = 1,\ldots,N}$, where $w^{\nu} \colon \R \longrightarrow S$ is a Morse trajectory from $z^{\nu}$ to $z^{\nu + 1}$, $z^1 = z^-$ and $z^{N+1} = z^+$, such that $(w_m)_m$ converges in the sense of Morse theory to $(w^{\nu})_{\nu}$. More precisely, this means that there exist numbers $s^1_m \leq \cdots \leq s^N_m$ and $L \leq N$ such that
    \begin{IEEEeqnarray*}{rCls+x*}
        \lim_{m \to +\infty} s^L_m                   & \in & \R, \\
        \lim_{m \to +\infty} (s^{\nu+1}_m - s^\nu_m) & =   & + \infty, \\
        \lim_{m \to +\infty} w_m(\cdot + s^\nu_m)    & =   & w^\nu.
    \end{IEEEeqnarray*}
    Possibly after reparametrizing the $w^\nu$, we may assume that $s^L_m = 0$ for every $m$. Now, for $\nu = 1,\ldots,N$, define
    \begin{IEEEeqnarray*}{rCLCRCl}
        u^\nu_m \colon \R \times S^1                & \longrightarrow & \hat{X},         & \quad & u^\nu_m(s,t)   & = & u_m(s + s^\nu_m, t),                \\
        H^\nu_m \colon \R \times S^1 \times \hat{X} & \longrightarrow & \R,              & \quad & H^\nu_m(s,t,x) & = & H_m(s + s^\nu_m, t, w_m(s + s^\nu_m), x), \\
        H^\nu   \colon \R \times S^1 \times \hat{X} & \longrightarrow & \R,              & \quad & H^\nu(s,t,x)   & = & H^{L,\nu}(s, t, w^\nu(s), x), \\
        J^\nu_m \colon \R \times S^1 \times \hat{X} & \longrightarrow & \End(T \hat{X}), & \quad & J^\nu_m(s,t,x) & = & J_m(s + s^\nu_m, t, w_m(s + s^\nu_m), x), \\
        J^\nu   \colon \R \times S^1 \times \hat{X} & \longrightarrow & \End(T \hat{X}), & \quad & J^\nu(s,t,x)   & = & J^{L,\nu}(s, t, w^\nu(s), x).
    \end{IEEEeqnarray*}
    Then, $u^\nu_m$ is a solution of the equation 
    \begin{IEEEeqnarray*}{c+x*}
        \pdv{u^\nu_m}{s} = - J^\nu_m(s,t,u^\nu_m) \p{}{2}{ \pdv{u^\nu_m}{t} - X_{H^\nu_m}(s,t,u^\nu_m) },      
    \end{IEEEeqnarray*}
    and 
    \begin{IEEEeqnarray*}{rCls+x*}
        \lim_{m \to + \infty} H^\nu_m & = & H^\nu, \\
        \lim_{m \to + \infty} J^\nu_m & = & J^\nu.
    \end{IEEEeqnarray*}
    By compactness in Floer theory, there exists a further subsequence such that for every $\nu = 1,\ldots,N$ there exists a broken Floer trajectory $(u^{\nu,\mu})_{\mu = 1,\ldots,M_{\nu}}$ from $\gamma^{\nu,\mu}$ to $\gamma^{\nu,\mu+1}$ with respect to $(H^\nu, J^\nu)$, such that
    \begin{IEEEeqnarray*}{rCls+x*}
        \gamma^{1,1}     & = & \gamma^-, \\
        \gamma^{N,M_{N}} & = & \gamma^+,
    \end{IEEEeqnarray*}
    and $(u^\nu_m)_m^{}$ converges to $(u^{\nu,\mu})_{\mu}$. More precisely, this means that there exist $L_\nu = 1,\ldots,N_\nu$ and numbers $s_m^{\nu,1} \leq \cdots \leq s_m^{\nu,M_\nu}$ such that 
    \begin{IEEEeqnarray*}{rCls+x*}
        \lim_{m \to +\infty} s_m^{\nu,L_\nu}                         & \in & \R, \\
        \lim_{m \to +\infty} (s_m^{\nu,\mu+1}  - s_m^{\nu,\mu})      & =   & + \infty, \\
        \lim_{m \to +\infty} u^{\nu}_m(\cdot + s^{\nu,\mu}_m, \cdot) & =   & u^{\nu,\mu}.
    \end{IEEEeqnarray*}
    Consider the list $(w^\nu, u^{\nu,\mu})_{\nu,\mu}$ ordered according to the dictionary order of the indices $\nu, \mu$. In this list, if two elements $(w^\nu, u^{\nu,\mu})$, $(w^{\nu'}, u^{\nu',\mu'})$ are equal then they must be adjacent. The list obtained from $(w^\nu, u^{\nu,\mu})_{\nu,\mu}$ by removing duplicate elements is the desired broken Floer trajectory.
\end{proof}

\section{Transversality for solutions of the parametrized Floer equation}

In this section, let $(\dot{\Sigma}, j, \sigma, \tau) = (\R \times S^1, j, \edv s, \edv t)$ be the cylinder from \cref{exa:sphere and cylinder} and $(X, \lambda)$ be a nondegenerate Liouville domain. Let $H \colon S^1 \times S \times \hat{X} \longrightarrow \R$ be a function such that the pullback $H \colon \R \times S^1 \times S \times \hat{X} \longrightarrow \R$ is as in \cref{def:admissible hamiltonian abstract}. Define $\mathcal{J}$ to be the set of almost complex structures $J \colon S^1 \times S \times \hat{X} \longrightarrow \End(T \hat{X})$ such that the pullback $J \colon \R \times S^1 \times S \times \hat{X} \longrightarrow \End(T \hat{X})$ is as in \cref{def:admissible acs abstract}. The set $\mathcal{J}$ admits the structure of a smooth Fréchet manifold, and therefore the tangent space $T_{J} \mathcal{J}$ at $J$ is a Fréchet space. Let $(z^{\pm}, \gamma^{\pm})$ be $1$-periodic orbits of $H$, i.e. $z^{\pm} \in S$ is a critical point of $\tilde{f}$ and $\gamma^{\pm}$ is a $1$-periodic orbit of $H_{z^{\pm}} \colon S^1 \times \hat{X} \longrightarrow \R$.

If $w \colon \R \longrightarrow S$ and $u \colon \R \times S^1 \longrightarrow \hat{X}$ are maps, we will denote by $\mathbf{u}$ the map
\begin{IEEEeqnarray*}{c+x*}
    \mathbf{u} \colon \R \times S^1 \longrightarrow S^1 \times S \times \hat{X}, \qquad \mathbf{u}(s,t) \coloneqq (t, w(s), u(s,t)).
\end{IEEEeqnarray*}
The pair $(w,u)$ is a solution of the parametrized Floer equation if
\begin{IEEEeqnarray*}{rCls+x*}
    \partial_s w - \nabla \tilde{f}(w)                              & = & 0, \\
    (\dv u - X_H(\mathbf{u}) \otimes \tau)^{0,1}_{J(\mathbf{u}), j} & = & 0.
\end{IEEEeqnarray*}

Define $[z^{\pm}, \gamma^{\pm}]$ to be the equivalence class
\begin{IEEEeqnarray*}{rCls+x*}
    [z^{\pm}, \gamma^{\pm}] 
    & \coloneqq & \{ t \cdot (z^{\pm}, \gamma^{\pm}) \mid t \in S^1 \}  \\
    & =         & \{ (t \cdot z^{\pm}, \gamma^{\pm}(\cdot + t)) \mid t \in S^1 \},
\end{IEEEeqnarray*}
and denote by $\hat{\mathcal{M}}(X,H,J,[z^+,\gamma^+],[z^-,\gamma^-])$ the moduli space of solutions $(w,u) \in C^{\infty}(\R, S) \times C^{\infty}(\R \times S^1, \hat{X})$ of the parametrized Floer equation such that
\begin{IEEEeqnarray*}{c+x*}
    \lim_{s \to \pm \infty} (w(s), u(s, \cdot)) \in [z^{\pm}, \gamma^{\pm}]. 
\end{IEEEeqnarray*}

Denote by $\mathcal{M}$ the moduli space of gradient flow lines $w \colon \R \longrightarrow S$ of $\tilde{f}$ such that
\begin{IEEEeqnarray*}{c+x*}
    \lim_{s \to \pm \infty} w(s) \in [z^{\pm}].
\end{IEEEeqnarray*}
By the assumptions on $(S, g^{S}, \tilde{f})$ explained in \cref{sec:floer trajectories} and \cite[Section 3.2]{austinMorseBottTheoryEquivariant1995}, the space $\mathcal{M}$ is a smooth finite dimensional manifold. Moreover,
\begin{IEEEeqnarray}{c+x*}
    \dim \mathcal{M} = \morse(z^+) + \morse(z^-) + 1. \plabel{eq:dimension of m}
\end{IEEEeqnarray}

Let $\varepsilon = (\varepsilon_{\ell})_{\ell \in \Z_{\geq 0}}$ be a sequence of positive numbers $\varepsilon_{\ell}$ such that $\lim_{\ell \to +\infty} \varepsilon_{\ell} = 0$. Define a function
\begin{IEEEeqnarray*}{rrCl}
    \| \cdot \|^{\varepsilon} \colon & T_{J_{\mathrm{ref}}} \mathcal{J} & \longrightarrow & [0, + \infty] \\
    & Y & \longmapsto & \sum_{\ell=0}^{+ \infty} \varepsilon_{\ell} \| Y \|_{C^{\ell}(S^1 \times S \times X)}, 
\end{IEEEeqnarray*}
where $\| \cdot \|_{C^{\ell}(S^1 \times S \times X)}$ is the $C^{\ell}$-norm which is determined by some finite covering of $T {X} \longrightarrow S^1 \times S \times X$ by coordinate charts and local trivializations. Define 
\begin{IEEEeqnarray*}{c+x*}
    T^{\varepsilon}_{J_{\mathrm{ref}}} \mathcal{J} \coloneqq \{ Y \in T_{J_{\mathrm{ref}}} \mathcal{J} \mid \| Y \|^{\varepsilon} < + \infty \}.
\end{IEEEeqnarray*}
By \cite[Lemma 5.1]{floerUnregularizedGradientFlow1988}, $(T^{\varepsilon}_{J_{\mathrm{ref}}} \mathcal{J}, \| \cdot \|^{\varepsilon})$ is a Banach space consisting of smooth sections and containing sections with support in
arbitrarily small sets. For every $Y \in T_{J_{\mathrm{ref}}}^{\varepsilon} \mathcal{J}$, define
\begin{IEEEeqnarray*}{c+x*}
    \exp_{J_{\mathrm{ref}}}(Y) \coloneqq J_{Y} \coloneqq \p{}{2}{1 + \frac{1}{2} J_{\mathrm{ref}} Y} J_{\mathrm{ref}} \p{}{2}{1 + \frac{1}{2} J_{\mathrm{ref}} Y}^{-1}.
\end{IEEEeqnarray*}
There exists a neighbourhood $\mathcal{O} \subset T_{J_{\mathrm{ref}}}^{\varepsilon} \mathcal{J}$ of $0$ such that $\exp_{J_{\mathrm{ref}}}^{} \colon \mathcal{O} \longrightarrow \mathcal{J}$ is injective. Define $\mathcal{J}^{\varepsilon} \coloneqq \exp_{J_{\mathrm{ref}}}^{}(\mathcal{O})$, which is automatically a Banach manifold with one global parametrization $\exp_{J_{\mathrm{ref}}}^{} \colon \mathcal{O} \longrightarrow \mathcal{J}^{\varepsilon}$. The tangent space of $\mathcal{J}^{\varepsilon}$ at $J_{\mathrm{ref}}$ is given by
\begin{IEEEeqnarray*}{c+x*}
    T_{J_{\mathrm{ref}}} \mathcal{J}^{\varepsilon} = T_{J_{\mathrm{ref}}}^{\varepsilon} \mathcal{J}.
\end{IEEEeqnarray*}
Notice that the definition of $\mathcal{J}^{\varepsilon}$ involved making several choices, namely the sequence $\varepsilon$, the choices necessary to define the $C^{\ell}$-norm, and a reference almost complex structure $J_{\mathrm{ref}}$.

\begin{definition}
    For $w \in \mathcal{M}$, let $\mathcal{F}_w$ be the Banach manifold of maps $u \colon \R \times S^1 \longrightarrow \hat{X}$ of the form
    \begin{IEEEeqnarray*}{c+x*}
        u(s,t) = \exp_{u_0(s,t)} \xi(s,t),
    \end{IEEEeqnarray*}
    where
    \begin{IEEEeqnarray*}{rCls+x*}
        u_0 & \in & C^{\infty}(\R \times S^1, \hat{X}) \text{ is such that } \lim_{s \to \pm \infty} (w(s), u_0(s, \cdot)) \in [z^{\pm}, \gamma^{\pm}], \\
        \xi & \in & W^{1,p}(\R \times S^1, u_0^* T \hat{X}).
    \end{IEEEeqnarray*}
\end{definition}

\begin{definition}
    For $J \in \mathcal{J}^{\varepsilon}$, we define a bundle $\pi^J \colon \mathcal{E}^J \longrightarrow \mathcal{B}$ as follows. The base, fibre and total space are given by
    \begin{IEEEeqnarray*}{rCls+x*}
        \mathcal{B}           & \coloneqq & \{ (w,u) \mid w \in \mathcal{M}, \, u \in \mathcal{F}_w \}, \\
        \mathcal{E}^J_{(w,u)} & \coloneqq & L^p(\Hom^{0,1}_{J(\mathbf{u}), j} (T \dot{\Sigma}, u^* T \hat{X})), \\
        \mathcal{E}^J         & \coloneqq & \{ (w,u,\xi) \mid (w,u) \in \mathcal{B}, \, \xi \in \mathcal{E}^J_{(w,u)} \}.
    \end{IEEEeqnarray*}
    The projection is given by $\pi^J(w,u,\xi) \coloneqq (w,u)$. The \textbf{Cauchy--Riemann operator} is the section $\delbar\vphantom{\partial}^J \colon \mathcal{B} \longrightarrow \mathcal{E}^J$ given by
    \begin{IEEEeqnarray*}{c+x*}
        \delbar\vphantom{\partial}^J(w,u) \coloneqq (\dv u - X_H(\mathbf{u}) \otimes \tau)^{0,1}_{J(\mathbf{u}),j} \in \mathcal{E}^J_{(w,u)}.
    \end{IEEEeqnarray*}
\end{definition}

With this definition, $(\delbar\vphantom{\partial}^J)^{-1}(0) = \hat{\mathcal{M}}(X,H,J,[z^+,\gamma^+],[z^-,\gamma^-])$.

\begin{definition}
    Define the universal bundle, $\pi \colon \mathcal{E} \longrightarrow \mathcal{B} \times \mathcal{J}^{\varepsilon}$, and the \textbf{universal Cauchy--Riemann operator}, $\delbar \colon \mathcal{B} \times \mathcal{J}^{\varepsilon} \longrightarrow \mathcal{E}$, by
    \begin{IEEEeqnarray*}{rCls+x*}
        \mathcal{E} & \coloneqq & \{ (w,u,J,\xi) \mid (w,u) \in \mathcal{B}, \, J \in \mathcal{J}^{\varepsilon}, \, \xi \in \mathcal{E}^{J}_{(w,u)} \}, \\
        \pi         & \colon    & \mathcal{E} \longrightarrow \mathcal{B} \times \mathcal{J}^{\varepsilon}, \qquad \pi(w,u,J,\xi) \coloneqq (w,u,J), \\
        \delbar     & \colon    & \mathcal{B} \times \mathcal{J}^{\varepsilon} \longrightarrow \mathcal{E}, \qquad \delbar(w,u,J) \coloneqq \delbar\vphantom{\partial}^J(w,u).
    \end{IEEEeqnarray*}
\end{definition}

For $(w,u,J)$ such that $\delbar(w,u,J) = 0$, choose a splitting $T_{(w,u)} \mathcal{B} = T_w \mathcal{M} \oplus T_u \mathcal{F}_w$. The sections $\delbar\vphantom{\partial}^J$ and $\delbar$ have corresponding linearized operators, which we denote by
\begin{IEEEeqnarray*}{rCls+x*}
    \mathbf{D}_{(w,u,J)} & \colon & T_w \mathcal{M} \oplus T_u \mathcal{F}_w \longrightarrow \mathcal{E}^J_{(w,u)}, \\
    \mathbf{L}_{(w,u,J)} & \colon & T_w \mathcal{M} \oplus T_u \mathcal{F}_w \oplus T_J \mathcal{J}^{\varepsilon} \longrightarrow \mathcal{E}^J_{(w,u)},
\end{IEEEeqnarray*}
respectively. We can write these operators with respect to the decompositions above as block matrices
\begin{IEEEeqnarray}{rCl}
    \mathbf{D}_{(w,u,J)}
    & = &
    \begin{bmatrix}
        \mathbf{D}^{\mathcal{M}}_{(w,u,J)} & \mathbf{D}^{\mathcal{F}}_{(w,u,J)}
    \end{bmatrix}, \plabel{eq:splitting linearized ops 1}
    \\
    \mathbf{L}_{(w,u,J)}
    & = &
    \begin{bmatrix}
        \mathbf{D}^{\mathcal{M}}_{(w,u,J)} & \mathbf{D}^{\mathcal{F}}_{(w,u,J)} & \mathbf{J}_{(w,u,J)}
    \end{bmatrix}. \plabel{eq:splitting linearized ops 2}
\end{IEEEeqnarray}

Let $\tau$ be a trivialization of $u^* T \hat{X}$ and denote also by $\tau$ the induced trivializations of $(\gamma^{\pm})^* T \hat{X}$. We can consider the Conley--Zehnder indices $\conleyzehnder^{\tau}(\gamma^{\pm})$ of $\gamma^{\pm}$ computed with respect to $\tau$. We denote $\ind^{\tau}(z^{\pm}, \gamma^{\pm}) \coloneqq \morse(z^\pm) + \conleyzehnder^{\tau}(\gamma^{\pm})$.

\begin{theorem}
    \phantomsection\label{thm:s1eft d is fredholm}
    The operators $\mathbf{D}^{\mathcal{F}}_{(w,u,J)}$ and $\mathbf{D}_{(w,u,J)}$ are Fredholm and
    \begin{IEEEeqnarray}{rCls+x*}
        \operatorname{ind} \mathbf{D}^{\mathcal{F}}_{(w,u,J)} & = & \conleyzehnder^{\tau}(\gamma^+) - \conleyzehnder^{\tau}(\gamma^-), \plabel{eq:s1eft fredholm ind 1} \\
        \operatorname{ind} \mathbf{D}_{(w,u,J)}               & = & \ind^{\tau}(z^+, \gamma^+) - \ind^{\tau}(z^-,\gamma^-) + 1. \plabel{eq:s1eft fredholm ind 2}
    \end{IEEEeqnarray}
\end{theorem}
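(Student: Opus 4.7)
The plan is to handle the two operators separately: first identify $\mathbf{D}^{\mathcal{F}}_{(w,u,J)}$ as a standard cylindrical Cauchy--Riemann operator and invoke the familiar Floer Fredholm theory, and then deduce the statement for $\mathbf{D}_{(w,u,J)}$ via a finite-dimensional extension argument.

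For the first step, I would pick a unitary trivialization $\tau$ of $u^{\ast} T \hat X$ extending the trivializations of $(\gamma^{\pm})^{\ast} T \hat X$ at the ends; this is possible because $\dot\Sigma$ is the cylinder and $u^{\ast} T \hat X$ is a symplectic bundle over it. Under $\tau$, the operator $\mathbf{D}^{\mathcal{F}}_{(w,u,J)}$ takes the standard form
\begin{IEEEeqnarray*}{c}
    \xi \longmapsto \partial_s \xi + J_0 \partial_t \xi + S(s,t) \xi,
\end{IEEEeqnarray*}
between the Sobolev completions $W^{1,p}(\R\times S^1,\R^{2n}) \to L^p(\R\times S^1,\R^{2n})$ (with $p > 2$), where $S(s,t)$ is a smooth family of real symmetric $2n \times 2n$ matrices arising from the Hessian of the Hamiltonian action at $u$. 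Because $w(s) \to z^{\pm}$ as $s \to \pm \infty$ and because conditions (i)--(ii) in Definitions~\ref{def:admissible hamiltonian abstract} and \ref{def:admissible acs abstract} ensure that $H$ and $J$ are $s$-independent and locally $S$-independent near the critical points $z^{\pm}$, the coefficient family $S(s,t)$ converges uniformly in $t$ to asymptotic loops $S^{\pm}(t)$ whose associated fundamental solutions of the linearized Hamiltonian flow realize the paths $A^{\gamma^{\pm},\tau}$ of Definition~\ref{def:cz of hamiltonian orbit wrt trivialization}. The nondegeneracy of $\gamma^{\pm}$ (Definition~\ref{def:admissible hamiltonian abstract}(v)) makes the asymptotic operators invertible, so the standard Fredholm package for Cauchy--Riemann operators on the cylinder with nondegenerate asymptotes (as in Salamon's lectures on Floer homology) yields Fredholmness together with the index formula \eqref{eq:s1eft fredholm ind 1}.

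For the second step, the block decomposition \eqref{eq:splitting linearized ops 1} gives $\mathbf{D}_{(w,u,J)}(X, Y) = \mathbf{D}^{\mathcal{M}}_{(w,u,J)} X + \mathbf{D}^{\mathcal{F}}_{(w,u,J)} Y$, so $\mathbf{D}_{(w,u,J)}$ is obtained from $\mathbf{D}^{\mathcal{F}}_{(w,u,J)}$ by enlarging the domain with the finite-dimensional summand $T_w \mathcal{M}$ and adding the bounded finite-rank map $\mathbf{D}^{\mathcal{M}}_{(w,u,J)}$. Since enlarging a Fredholm operator by a finite-dimensional summand of its domain increases the index by the dimension of the summand, and since adding a bounded operator of finite rank (hence compact) does not change the index, one obtains that $\mathbf{D}_{(w,u,J)}$ is Fredholm with
\begin{IEEEeqnarray*}{c}
    \fredholm \mathbf{D}_{(w,u,J)} = \fredholm \mathbf{D}^{\mathcal{F}}_{(w,u,J)} + \dim T_w \mathcal{M}.
\end{IEEEeqnarray*}
Using $\dim T_w \mathcal{M} = \dim \mathcal{M}$ at a regular point, together with \eqref{eq:dimension of m} and the conclusion of step one, yields \eqref{eq:s1eft fredholm ind 2}.

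The only genuine work is in the first step; the key technical ingredient is condition (ii) in Definition~\ref{def:admissible hamiltonian abstract}, which kills the $s$-dependence of the asymptotic operators that would otherwise be introduced by the motion of $w(s)$ inside the critical orbit $[z^{\pm}]$ near $s = \pm \infty$ and lets one apply the standard Fredholm theorem verbatim. The second step is purely functional-analytic bookkeeping.
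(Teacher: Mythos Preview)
Your proposal is correct and follows essentially the same approach as the paper: identify $\mathbf{D}^{\mathcal{F}}_{(w,u,J)}$ with the standard linearized Floer operator (the paper does this in a single sentence, whereas you spell out the trivialization and asymptotic analysis), and then obtain $\mathbf{D}_{(w,u,J)}$ by a finite-rank/compact perturbation of $0 \oplus \mathbf{D}^{\mathcal{F}}_{(w,u,J)}$, picking up $\dim T_w\mathcal{M}$ in the index and invoking \eqref{eq:dimension of m}. Your remark that condition~\ref{def:admissible hamiltonian abstract 2} of Definition~\ref{def:admissible hamiltonian abstract} is what makes the asymptotic operators genuinely $s$-independent is a useful observation that the paper leaves implicit.
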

\begin{proof}
    The operator $\mathbf{D}^{\mathcal{F}}_{(w,u,J)}$ is the linearized operator in Floer theory, which is Fredholm and has index given by Equation \eqref{eq:s1eft fredholm ind 1}. Therefore,
    \begin{IEEEeqnarray*}{c+x*}
        0 \oplus \mathbf{D}^{\mathcal{F}}_{(w,u,J)} \colon T_w \mathcal{M} \oplus T_u \mathcal{F}_w \longrightarrow \mathcal{E}^J_{(w,u)}
    \end{IEEEeqnarray*}
    is Fredholm and
    \begin{IEEEeqnarray}{c+x*}
        \operatorname{ind} (0 \oplus \mathbf{D}^{\mathcal{F}}_{(w,u,J)}) = \dim T_w \mathcal{M} + \operatorname{ind} \mathbf{D}^{\mathcal{F}}_{(w,u,J)}. \plabel{eq:index of operator floer}
    \end{IEEEeqnarray}
    Since $\mathbf{D}^{\mathcal{M}}_{(w,u,J)} \oplus 0 \colon T_w \mathcal{M} \oplus T_w \mathcal{F}_w \longrightarrow \mathcal{E}^J_{(w,u)}$ is compact, the operator
    \begin{IEEEeqnarray*}{c+x*}
        \mathbf{D}_{(w,u,J)} = \mathbf{D}^{\mathcal{M}}_{(w,u,J)} \oplus \mathbf{D}^{\mathcal{F}}_{(w,u,J)} = \mathbf{D}^{\mathcal{M}}_{(w,u,J)} \oplus 0 + 0 \oplus \mathbf{D}^{\mathcal{F}}_{(w,u,J)}
    \end{IEEEeqnarray*}
    is Fredholm and
    \begin{IEEEeqnarray*}{rCls+x*}
        \operatorname{ind} \mathbf{D}_{(w,u,J)}
        & = & \operatorname{ind} (\mathbf{D}^{\mathcal{M}}_{(w,u,J)} \oplus \mathbf{D}^{\mathcal{F}}_{(w,u,J)}) & \quad [\text{by Equation \eqref{eq:splitting linearized ops 1}}] \\
        & = & \operatorname{ind} (0 \oplus \mathbf{D}^{\mathcal{F}}_{(w,u,J)})                                  & \quad [\text{since $\mathbf{D}^{\mathcal{M}}_{(w,u,J)}$ is compact}] \\
        & = & \dim T_w \mathcal{M} + \operatorname{ind} \mathbf{D}^{\mathcal{F}}_{(w,u,J)}                      & \quad [\text{by Equation \eqref{eq:index of operator floer}}] \\
        & = & \ind^{\tau}(z^+, \gamma^+) - \ind^{\tau}(z^-,\gamma^-) + 1                                        & \quad [\text{by Equations \eqref{eq:dimension of m} and \eqref{eq:s1eft fredholm ind 1}}]. & \qedhere
    \end{IEEEeqnarray*}
\end{proof}

\begin{theorem}
    \label{thm:s1eft l is surjective}
    The operator $\mathbf{L}_{(w,u,J)}$ is surjective.
\end{theorem}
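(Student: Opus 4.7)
The plan is to follow the standard universal moduli space argument: we show that the image of $\mathbf{L}_{(w,u,J)}$ is closed and dense in $\mathcal{E}^J_{(w,u)}$. Closedness is inherited from $\mathbf{D}^{\mathcal{F}}_{(w,u,J)}$: by \cref{thm:s1eft d is fredholm}, the block $\mathbf{D}^{\mathcal{F}}_{(w,u,J)}$ in the decomposition \eqref{eq:splitting linearized ops 2} is Fredholm, hence has closed range of finite codimension, and enlarging the domain by a bounded linear map preserves the closed-range property together with finite codimensionality. It therefore suffices to prove density.

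To establish density, I will use an $L^p$--$L^q$ duality argument. Pick $p>2$ and the conjugate exponent $q$, and assume $\eta\in L^q(\Hom^{0,1}_{J(\mathbf{u}),j}(T\dot{\Sigma},u^*T\hat{X}))$ annihilates the image of $\mathbf{L}_{(w,u,J)}$, i.e.
\begin{IEEEeqnarray*}{c+x*}
    \int_{\R\times S^1} \p{<}{}{\eta,\mathbf{D}^{\mathcal{F}}_{(w,u,J)}\xi + \mathbf{D}^{\mathcal{M}}_{(w,u,J)}v + \mathbf{J}_{(w,u,J)}Y} \edv s\,\edv t = 0
\end{IEEEeqnarray*}
for every $(v,\xi,Y)\in T_w\mathcal{M}\oplus T_u\mathcal{F}_w\oplus T_J\mathcal{J}^{\varepsilon}$. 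Setting $v=0$, $Y=0$ and letting $\xi$ vary shows that $\eta$ lies in the kernel of the formal adjoint of $\mathbf{D}^{\mathcal{F}}_{(w,u,J)}$; by elliptic regularity $\eta$ is smooth and satisfies a linear Cauchy--Riemann-type equation with the usual unique continuation property.

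Next, I set $\xi=0$ and $v=0$ and exploit the freedom in $Y$. Concretely, the contribution $\mathbf{J}_{(w,u,J)}Y$ is essentially $\tfrac{1}{2}Y(\mathbf{u})\circ(\dv u-X_H(\mathbf{u})\otimes\tau)\circ j$, so the orthogonality condition becomes
\begin{IEEEeqnarray*}{c+x*}
    \int_{\R\times S^1} \p{<}{}{\eta(s,t),\, Y(t,w(s),u(s,t))\circ (\dv u - X_H(\mathbf{u})\otimes\tau)\circ j}\,\edv s\,\edv t = 0
\end{IEEEeqnarray*}
for all admissible perturbations $Y$. The hard part will be to show that this forces $\eta$ to vanish on an open set, after which the unique continuation property from the previous paragraph yields $\eta\equiv 0$. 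For this I need a ``somewhere injectivity'' statement for the evaluation map
\begin{IEEEeqnarray*}{c+x*}
    \mathbf{u}\colon \R\times S^1\longrightarrow S^1\times S\times \hat{X}, \qquad (s,t)\longmapsto (t,w(s),u(s,t)),
\end{IEEEeqnarray*}
away from the locus where conditions \ref{def:admissible acs abstract 1} and \ref{def:admissible acs abstract 2} of \cref{def:admissible acs abstract} prevent us from perturbing $J$ freely, i.e. away from the asymptotic ends and from neighbourhoods $\dot{\Sigma}\times V\times\hat{X}$ of the critical points of $\tilde{f}$ in $S$. The key observation is that since $w$ is a nonconstant gradient trajectory of $\tilde f$, the intermediate values $w(s)$ for $s$ in a suitable open set $\mathcal{R}\subset\R$ avoid all critical points of $\tilde f$, and on $\mathcal{R}\times S^1$ the map $\mathbf{u}$ is parametrized by an extra free variable (the $s$-direction projects nontrivially onto $S$). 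This extra direction, together with the standard somewhere-injectivity argument for nonconstant Floer cylinders (using $\partial_s u\neq 0$ somewhere on $\mathcal{R}\times S^1$, which holds because $u$ is not itself an orbit cylinder by \cref{lem:action energy for floer trajectories} and the fact that our pair of asymptotes is distinct modulo the $S^1$-action, or, if it is the same asymptote, that $(w,u)$ is nonconstant because $w$ is), produces a point $(s_0,t_0)\in\mathcal{R}\times S^1$ at which: $\partial_s u(s_0,t_0)-X_H(\mathbf{u}(s_0,t_0))\neq 0$; the point $\mathbf{u}(s_0,t_0)$ is not in the image of $\mathbf{u}$ on any small neighbourhood excluded for trivial reasons; and $Y$ can be chosen as a bump supported near $(t_0,w(s_0),u(s_0,t_0))$. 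Plugging such $Y$ into the orthogonality relation and using nondegeneracy of the inner product on $\Hom^{0,1}$ forces $\eta(s_0,t_0)=0$, hence $\eta$ vanishes on an open neighbourhood of $(s_0,t_0)$, and unique continuation finishes the argument. I expect the main technical obstacle to be precisely the construction of the ``good point'' $(s_0,t_0)$: one must verify that the admissibility constraints on $J$ (the independence of $J$ near critical points of $\tilde f$ and at infinity on the ends) still leave enough freedom, which uses crucially the $S^1$-parameter together with the extra $s$-direction coming from $w$.
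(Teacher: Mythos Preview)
Your overall framework is the same as the paper's: closed range from the Fredholm property of $\mathbf{D}^{\mathcal{F}}_{(w,u,J)}$, density via an annihilator argument, elliptic regularity and unique continuation for $\eta$, then a cutoff construction in $T_J\mathcal{J}^{\varepsilon}$ to force $\eta$ to vanish on an open set. That part is fine.

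There is, however, a genuine gap. You write ``since $w$ is a nonconstant gradient trajectory of $\tilde f$'', but $w$ is allowed to be constant: if $[z^+]=[z^-]$ as $S^1$-orbits of critical points, then the constant flow line $w\equiv w_0$ at a critical point is an element of $\mathcal{M}$, and $(w,u)$ is then an ordinary Floer cylinder for $H_{w_0}$. In this case the $S$-coordinate of $\mathbf{u}$ gives no injectivity whatsoever, and your proposed localization breaks down. The paper handles this case separately and explicitly: it invokes the regular-points set $R(u)$ of Floer--Hofer--Salamon (\cite[Theorem~4.3]{floerTransversalityEllipticMorse1995}), which provides a point $(s_0,t_0)$ at which $u(s_0,t_0)\notin u((\R\setminus\{s_0\})\times\{t_0\})$ and $u(s_0,t_0)\neq\gamma^\pm(t_0)$; this is what allows one to localize the bump in the $\hat{X}$-factor. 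Your parenthetical remark about $\partial_s u\neq 0$ somewhere is not sufficient---one needs the stronger injectivity statement.

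A second point you gloss over is the $S^1$-invariance constraint on $Y$: admissible $Y$ are invariant under the diagonal $S^1$-action on $S^1\times S$, so ``a bump supported near $(t_0,w(s_0),u(s_0,t_0))$'' is not admissible. The paper deals with this via a slice construction: one picks a small slice $A\subset S^1\times S$ through $(t_0,w_0)$ (or $(t_0,w(s_0))$) transverse to the $S^1$-action, and sets $U_{S^1\times S}=S^1\cdot A$. When $w$ is nonconstant, the orthogonality of $\dot w$ to the infinitesimal $S^1$-action ensures that $(t,w(s))\in U_{S^1\times S}$ already forces $(s,t)$ into the small neighbourhood $V$, so no injectivity of $u$ is needed at all---the argument there is actually \emph{simpler} than what you sketch, not harder.
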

\begin{proof}
    It suffices to prove that 
    \begin{IEEEeqnarray*}{c+x*}
        \mathbf{L}^{\mathcal{F}}_{(w,u,J)} \coloneqq \mathbf{D}^{\mathcal{F}}_{(w,u,J)} \oplus \mathbf{J}_{(w,u,J)} \colon T_u \mathcal{F}_w \oplus T_J \mathcal{J}^{\varepsilon} \longrightarrow \mathcal{E}^{J}_{(w,u)}
    \end{IEEEeqnarray*}
    is surjective. Since $\mathbf{D}^{\mathcal{F}}_{(w,u,J)}$ is Fredholm (by \cref{thm:s1eft d is fredholm}), its image is closed and has finite codimension. This implies that $\img \mathbf{L}^{\mathcal{F}}_{(w,u,J)}$ is also of finite codimension and closed. So, it suffices to show that $\img \mathbf{L}^{\mathcal{F}}_{(w,u,J)}$ is dense, which is equivalent to showing that the annihilator $\Ann \img \mathbf{L}^{\mathcal{F}}_{(w,u,J)}$ is zero. Let $\eta \in \Ann \img \mathbf{L}^{\mathcal{F}}_{(w,u,J)}$, i.e. 
    \begin{IEEEeqnarray*}{c+x*}
        \eta \in L^q(\Hom^{0,1}_{J(\mathbf{u}), j} (T \dot{\Sigma}, u^* T \hat{X}))
    \end{IEEEeqnarray*}
    is such that
    \begin{IEEEeqnarray}{rClCsrCl}
        0 & = & \p{<}{}{\eta, \mathbf{D}^{\mathcal{F}}_{(w,u,J)}(\xi)}_{L^2} & \quad & \text{ for all } & \xi & \in & T_u \mathcal{F}_w,           \plabel{eq:element in annihilator 1} \\
        0 & = & \p{<}{}{\eta, \mathbf{J}     _{(w,u,J)}(Y  )}_{L^2} & \quad & \text{ for all } & Y   & \in & T_J \mathcal{J}^{\varepsilon}. \plabel{eq:element in annihilator 2}
    \end{IEEEeqnarray}
    By Equation \eqref{eq:element in annihilator 1}, $\eta$ satisfies the Cauchy--Riemann type equation $(\mathbf{D}^{\mathcal{F}}_{(w,u,J)})^{*} \eta = 0$, and therefore $\eta$ is smooth (by elliptic regularity) and satisfies unique continuation.

    We prove that $\eta = 0$ in the case where $w$ is constant. In this case, $w(s) \eqqcolon w_0$ for every $s$, we can view $\gamma^{\pm}$ as $1$-periodic orbits of $H_{w_0}$ (after a reparametrization) and $u$ is a solution of the Floer equation:
    \begin{IEEEeqnarray*}{c+x*}
        \pdv{u}{s}(s,t) + J_{w_0}(t,u(s,t)) \p{}{2}{ \pdv{u}{t}(s,t) - X_{H_{w_0}}(t,u(s,t)) } = 0.
    \end{IEEEeqnarray*}
    Let $R(u)$ be the set of regular points of $u$, i.e. points $z = (s,t)$ such that
    \begin{IEEEeqnarray}{c+x*}
        \plabel{eq:set of regular points of u}
        \pdv{u}{s}(s,t) \neq 0, \qquad u(s,t) \neq \gamma^{\pm}(t), \qquad u(s,t) \notin u(\R - \{s\}, t).
    \end{IEEEeqnarray}
    By \cite[Theorem 4.3]{floerTransversalityEllipticMorse1995}, $R(u)$ is open. By unique continuation, it is enough to show that $\eta$ vanishes in $R(u)$. Let $z_0 = (s_0,t_0) \in R(u)$ and assume by contradiction that $\eta(z_0) \neq 0$. By \cite[Lemma 3.2.2]{mcduffHolomorphicCurvesSymplectic2012}, there exists $Y \in T_J \mathcal{J}$ such that
    \begin{IEEEeqnarray}{c+x*}
        \plabel{eq:variation of acs before cut off}
        \p{<}{}{\eta(z_0), Y(\mathbf{u}(z_0)) \circ (\dv u(z_0) - X_H(\mathbf{u}(z_0)) \otimes \tau_{z_0}) \circ j_{z_0} } > 0.
    \end{IEEEeqnarray}
    Choose a neighbourhood $V = V_{\R} \times V_{S^1}$ of $z_0 = (s_0,t_0)$ in $\dot{\Sigma} = \R \times S^1$ such that 
    \begin{IEEEeqnarray}{c+x*}
        \plabel{eq:inner product bigger than 0 in v}
        \p{<}{}{\eta, Y(\mathbf{u}) \circ (\dv u - X_H(\mathbf{u}) \otimes \tau) \circ j }|_V > 0.
    \end{IEEEeqnarray}
    Since $z_0$ is as in \eqref{eq:set of regular points of u}, there exists a neighbourhood $U_{\hat{X}}$ of $u(z_0)$ in $\hat{X}$ such that
    \begin{IEEEeqnarray*}{c+x*}
        u(s,t) \in U_{\hat{X}} \Longrightarrow s \in V_{\R}.
    \end{IEEEeqnarray*}
    Choose a slice $A \subset S^1 \times S$ which contains $(t_0, w_0)$ and which is transverse to the action of $S^1$ on $S^1 \times S$. Define $U_{S^1 \times S} = S^1 \cdot A$. For $A$ chosen small enough,
    \begin{IEEEeqnarray*}{c+x*}
        (t, w_0) \in U_{S^1 \times S} \Longrightarrow t \in V_{S^1}.
    \end{IEEEeqnarray*}
    Then, defining $U \coloneqq U_{S^1 \times S} \times U_{\hat{X}}$ we have that $\mathbf{u}^{-1}(U) \subset V$. Choose an $S^1$-invariant function $\beta \colon S^1 \times S \times \hat{X} \longrightarrow [0,1]$ such that
    \begin{IEEEeqnarray}{c+x*}
        \plabel{eq:bump function for transversality}
        \supp \beta \subset U, \qquad \beta(\mathbf{u}(z_0)) = 1, \qquad \beta Y \in T_J \mathcal{J}^{\varepsilon}.
    \end{IEEEeqnarray}
    Here, we can achieve that $\beta Y$ is of class $C^{\varepsilon}$ by \cite[Theorem B.6]{wendlLecturesSymplecticField2016}. Since $\mathbf{u}^{-1}(U) \subset V$ and $\supp \beta \subset U$, we have that $\supp (\beta \circ \mathbf{u}) \subset V$. Then,
    \begin{IEEEeqnarray*}{rCls+x*}
        0
        & = & \p{<}{}{\eta, \mathbf{J}_{(w,u,J)}(\beta Y)}_{L^2}                        & \quad [\text{by Equation \eqref{eq:element in annihilator 2}}] \\
        & = & \p{<}{}{\eta, \beta(\mathbf{u}) \, \mathbf{J}_{(w,u,J)}(Y)}_{L^2}    & \quad [\text{since $\mathbf{J}_{(w,u,J)}$ is $C^\infty$-linear}] \\
        & = & \p{<}{}{\eta, \beta(\mathbf{u}) \, \mathbf{J}_{(w,u,J)}(Y)}_{L^2(V)} & \quad [\text{since $\supp (\beta \circ \mathbf{u}) \subset V$}] \\
        & > & 0                                                                         & \quad [\text{by Equation \eqref{eq:inner product bigger than 0 in v}}],
    \end{IEEEeqnarray*}
    which is the desired contradiction.

    We prove that $\eta = 0$ in the case where $w$ is not constant. Let $z_0 = (t_0, s_0) \in \R \times S^1$ and assume by contradiction that $\eta(z_0) \neq 0$. Choose $Y$ as in \eqref{eq:variation of acs before cut off} and $V$ as in \eqref{eq:inner product bigger than 0 in v}. Choose a slice $A \subset S^1 \times S$ which contains $(t_0, w(0))$ and which is transverse to the action of $S^1$ on $S^1 \times S$. Define $U_{S^1 \times S} = S^1 \cdot A$. Since $w$ is orthogonal to the infinitesimal action on $S$, for $A$ chosen small enough we have
    \begin{IEEEeqnarray*}{c+x*}
        (t, w(s)) \in U_{S^1 \times S} \Longrightarrow (s,t) \in V.
    \end{IEEEeqnarray*}
    Defining $U = U_{S^1 \times S} \times \hat{X}$, we have that $\mathbf{u}^{-1}(U) \subset V$. Choosing $\beta$ as in \eqref{eq:bump function for transversality}, we obtain a contradiction in the same way as in the previous case.
\end{proof}

\begin{remark}
    We recall some terminology related to the Baire category theorem (we use the terminology from \cite[Section 10.2]{roydenRealAnalysis2010}). Let $X$ be a complete metric space and $E \subset X$. Then, $E$ is \textbf{meagre} or of the \textbf{first category} if $E$ is a countable union of nowhere dense subsets of $X$. We say that $E$ is \textbf{nonmeagre} or of the \textbf{second category} if $E$ is not meagre. We say that $E$ is \textbf{comeagre} or \textbf{residual} if $X \setminus E$ is meagre. Hence, a countable intersection of comeagre sets is comeagre. With this terminology, the Baire category theorem (see \cite[Section 10.2]{roydenRealAnalysis2010}) says that if $E$ is comeagre then $E$ is dense. The Sard--Smale theorem (see \cite[Theorem 1.3]{smaleInfiniteDimensionalVersion1965}) says that if $f \colon M \longrightarrow N$ is a Fredholm map between separable connected Banach manifolds of class $C^q$, for some $q > \max \{0, \operatorname{ind} f \}$, then the set of regular values of $f$ is comeagre.
\end{remark}

\begin{theorem}
    \label{thm:transversality in s1eft}
    There exists a dense subset $\mathcal{J}_{\mathrm{reg}} \subset \mathcal{J}$ with the following property. Let $J \in \mathcal{J}_{\mathrm{reg}}$ be an almost complex structure, $[z^{\pm}, \gamma^{\pm}]$ be equivalence classes of $1$-periodic orbits of $H$, and $(w,u) \in \hat{\mathcal{M}}(X, H, J, [z^+, \gamma^+], [z^-, \gamma^-])$. Then, near $(w,u)$ the space $\hat{\mathcal{M}}(X, H, J, [z^+, \gamma^+], [z^-, \gamma^-])$ is a manifold of dimension%
    \begin{IEEEeqnarray*}{c+x*}
        \dim_{(w,u)} \hat{\mathcal{M}}(X, H, J, [z^+, \gamma^+], [z^-, \gamma^-]) = \ind^{\tau}(z^+, \gamma^+) - \ind^{\tau}(z^-, \gamma^-) + 1.
    \end{IEEEeqnarray*}
\end{theorem}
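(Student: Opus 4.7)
The plan is to realize $\hat{\mathcal{M}}(X,H,J,[z^+,\gamma^+],[z^-,\gamma^-])$ as the zero-set of the Cauchy--Riemann section $\delbar\vphantom{\partial}^J$ and then derive the transversality statement by a standard Sard--Smale argument applied to the universal moduli space, together with a Taubes-type trick to pass from the Banach manifold $\mathcal{J}^{\varepsilon}$ of $C^{\varepsilon}$-almost complex structures back to $\mathcal{J}$.

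First I would set up the universal moduli space
\begin{equation*}
    \hat{\mathcal{M}}^{\varepsilon}([z^+,\gamma^+],[z^-,\gamma^-]) \coloneqq \{ (w,u,J) \in \mathcal{B} \times \mathcal{J}^{\varepsilon} \mid \delbar(w,u,J) = 0 \}.
\end{equation*}
By \cref{thm:s1eft l is surjective}, the linearized operator $\mathbf{L}_{(w,u,J)}$ is surjective at every zero of $\delbar$; together with the fact that its restriction $\mathbf{D}_{(w,u,J)}$ is Fredholm (\cref{thm:s1eft d is fredholm}), this implies that $\mathbf{L}_{(w,u,J)}$ has a bounded right inverse, so the implicit function theorem for Banach manifolds shows that $\hat{\mathcal{M}}^{\varepsilon}([z^+,\gamma^+],[z^-,\gamma^-])$ is a Banach submanifold of $\mathcal{B} \times \mathcal{J}^{\varepsilon}$.

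Next I would consider the projection
\begin{equation*}
    \pi_{\mathcal{J}} \colon \hat{\mathcal{M}}^{\varepsilon}([z^+,\gamma^+],[z^-,\gamma^-]) \longrightarrow \mathcal{J}^{\varepsilon}, \qquad (w,u,J) \longmapsto J.
\end{equation*}
A standard diagram chase identifies $\ker \dv \pi_{\mathcal{J}}(w,u,J) = \ker \mathbf{D}_{(w,u,J)}$ and $\coker \dv \pi_{\mathcal{J}}(w,u,J) = \coker \mathbf{D}_{(w,u,J)}$, so $\pi_{\mathcal{J}}$ is Fredholm of index $\operatorname{ind} \mathbf{D}_{(w,u,J)} = \ind^{\tau}(z^+,\gamma^+) - \ind^{\tau}(z^-,\gamma^-) + 1$, and $J$ is a regular value of $\pi_{\mathcal{J}}$ precisely when $\mathbf{D}_{(w,u,J)}$ is surjective for every $(w,u) \in (\delbar\vphantom{\partial}^J)^{-1}(0)$. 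After choosing the sequence $\varepsilon$ so that $\mathcal{J}^{\varepsilon}$ is separable and the weight $\varepsilon$ decays fast enough for $\pi_{\mathcal{J}}$ to be $C^q$ with $q$ larger than the Fredholm index, the Sard--Smale theorem produces a comeagre set $\mathcal{J}^{\varepsilon}_{\mathrm{reg}}([z^+,\gamma^+],[z^-,\gamma^-]) \subset \mathcal{J}^{\varepsilon}$ of regular values. Taking the countable intersection over all pairs $[z^{\pm},\gamma^{\pm}]$ (the set of $1$-periodic orbits is countable because $H$ is nondegenerate and $f$ is Morse) yields a comeagre, hence dense, subset $\mathcal{J}^{\varepsilon}_{\mathrm{reg}} \subset \mathcal{J}^{\varepsilon}$; for $J$ in this set $\hat{\mathcal{M}}(X,H,J,[z^+,\gamma^+],[z^-,\gamma^-])$ is a manifold of the claimed dimension by the regular value theorem.

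Finally, to upgrade denseness inside the single Banach manifold $\mathcal{J}^{\varepsilon}$ to denseness in the Fréchet manifold $\mathcal{J}$, I would apply the Taubes trick: vary the reference almost complex structure $J_{\mathrm{ref}}$ and the decay sequence $\varepsilon$, noting that $\bigcup_{\varepsilon} \mathcal{J}^{\varepsilon}$ (interpreted via the parametrizations $\exp_{J_{\mathrm{ref}}}$ based at each $J_{\mathrm{ref}} \in \mathcal{J}$) exhausts $\mathcal{J}$ in the $C^{\infty}$-topology; concretely, given any $J_0 \in \mathcal{J}$ and any $C^{\infty}$-neighbourhood $\mathcal{U}$ of $J_0$, one can find $\varepsilon$ and $J_{\mathrm{ref}} = J_0$ so that $\mathcal{J}^{\varepsilon} \subset \mathcal{U}$, and then density of $\mathcal{J}^{\varepsilon}_{\mathrm{reg}}$ in $\mathcal{J}^{\varepsilon}$ yields a regular $J \in \mathcal{U}$. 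This defines the dense set $\mathcal{J}_{\mathrm{reg}} \subset \mathcal{J}$.

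The main technical obstacle is the verification that $\mathbf{L}_{(w,u,J)}$ is surjective in the case where the parametrizing flow line $w$ passes through a critical point, which is essentially the content of \cref{thm:s1eft l is surjective}; that proof already handles the subtle point that the admissible variations $Y$ must be $S^1$-invariant and locally constant in a neighbourhood of each critical point of $\tilde{f}$, which is why the slice/bump-function construction had to be tailored to the transverse-to-orbit situation. Once surjectivity is in hand, the remainder of the proof is a direct application of the implicit function theorem, Sard--Smale, and the Taubes trick as outlined above.
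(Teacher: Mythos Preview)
Your proposal is correct and follows essentially the same route as the paper: surjectivity of $\mathbf{L}_{(w,u,J)}$ gives a smooth universal moduli space, the projection to $\mathcal{J}^{\varepsilon}$ is Fredholm with kernel/cokernel matching those of $\mathbf{D}_{(w,u,J)}$, Sard--Smale yields a comeagre regular set for each pair of asymptotes, and taking the union over all reference $J_{\mathrm{ref}}$ produces the dense $\mathcal{J}_{\mathrm{reg}} \subset \mathcal{J}$. The only cosmetic difference is that the paper observes the intersection over orbit pairs is \emph{finite} (not just countable), and it does not invoke the name ``Taubes trick'' for the final step but simply defines $\mathcal{J}_{\mathrm{reg}} \coloneqq \bigcup_{J_{\mathrm{ref}}} \mathcal{J}^{\varepsilon}_{\mathrm{reg}}(J_{\mathrm{ref}})$ directly.
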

\begin{proof}
    Recall that the space $\mathcal{J}^{\varepsilon}$ is defined with respect to a reference almost complex structure $J_{\mathrm{ref}}$. We will now emphasize this fact using the notation $\mathcal{J}^{\varepsilon}(J_{\mathrm{ref}})$. As a first step, we show that for every $[z^{\pm}, \gamma^{\pm}]$ and every reference almost complex structure $J_{\mathrm{ref}}$ there exists a comeagre set $\mathcal{J}^{\varepsilon}_{\mathrm{reg}}(J_{\mathrm{ref}}^{}, [z^{\pm}, \gamma^{\pm}]) \subset \mathcal{J}^{\varepsilon}(J_{\mathrm{ref}})$ such that every $J \in \mathcal{J}^{\varepsilon}_{\mathrm{reg}}(J_{\mathrm{ref}}^{}, [z^{\pm}, \gamma^{\pm}])$ has the property in the statement of the theorem. For shortness, for every $J$ let $\hat{\mathcal{M}}(J,[z^{\pm}, \gamma^{\pm}]) \coloneqq \hat{\mathcal{M}}(X, H, J, [z^+, \gamma^+], [z^-, \gamma^-])$. By \cref{thm:s1eft l is surjective} and the implicit function theorem \cite[Theorem A.3.3]{mcduffHolomorphicCurvesSymplectic2012}, the universal moduli space
    \begin{IEEEeqnarray*}{c+x*}
        \hat{\mathcal{M}}([z^{\pm}, \gamma^{\pm}]) \coloneqq \{ (w,u,J) \mid J \in \mathcal{J}^{\varepsilon}(J_{\mathrm{ref}}), \, (w,u) \in \hat{\mathcal{M}}(J, [z^{\pm}, \gamma^{\pm}]) \}
    \end{IEEEeqnarray*}
    is a smooth Banach manifold. Consider the smooth map
    \begin{IEEEeqnarray*}{c}
        \pi \colon \hat{\mathcal{M}}([z^{\pm}, \gamma^{\pm}]) \longrightarrow \mathcal{J}^{\varepsilon}(J_{\mathrm{ref}}), \qquad \pi(w,u,J) = J.
    \end{IEEEeqnarray*}
    By \cite[Lemma A.3.6]{mcduffHolomorphicCurvesSymplectic2012},
    \begin{IEEEeqnarray}{rCr}
          \ker \dv \pi(w,u,J) & \cong &   \ker \mathbf{D}_{(w,u,J)} \plabel{eq:d pi and d u have isomorphic kernels}, \\
        \coker \dv \pi(w,u,J) & \cong & \coker \mathbf{D}_{(w,u,J)} \plabel{eq:d pi and d u have isomorphic cokernels}.
    \end{IEEEeqnarray}
    Therefore, $\dv \pi (w,u,J)$ is Fredholm and has the same index as $\mathbf{D}_{(w,u,J)}$. By the Sard--Smale theorem, the set $\mathcal{J}^{\varepsilon}_{\mathrm{reg}}(J_{\mathrm{ref}}^{}, [z^{\pm}, \gamma^{\pm}]) \subset \mathcal{J}^{\varepsilon}(J_{\mathrm{ref}})$ of regular values of $\pi$ is comeagre. By Equation \eqref{eq:d pi and d u have isomorphic cokernels}, $J \in \mathcal{J}^{\varepsilon}(J_{\mathrm{ref}})$ is a regular value of $\pi$ if and only if $\mathbf{D}_{(w,u,J)}$ is surjective for every $(w,u) \in (\delbar\vphantom{\partial}^{J})^{-1}(0)$. Therefore, by the implicit function theorem, for every $J \in \mathcal{J}^{\varepsilon}_{\mathrm{reg}}(J_{\mathrm{ref}}^{}, [z^{\pm}, \gamma^{\pm}])$ the set $\hat{\mathcal{M}}(J,[z^{\pm},\gamma^{\pm}]) = (\delbar\vphantom{\partial}^J)^{-1}(0) \subset \mathcal{B}$ is a manifold of dimension%
    \begin{IEEEeqnarray*}{rCls+x*}
        \IEEEeqnarraymulticol{3}{l}{\dim_{(w,u)} \hat{\mathcal{M}}(J,[z^{\pm},\gamma^{\pm}])}\\ \quad
        & = & \dim \ker \mathbf{D}_{(w,u,J)}                              & \quad [\text{by the implicit function theorem}] \\
        & = & \operatorname{ind} \mathbf{D}_{(w,u,J)}                     & \quad [\text{since $\mathbf{D}_{(w,u,J)}$ is surjective}] \\
        & = & \ind^{\tau}(z^+, \gamma^+) - \ind^{\tau}(z^-, \gamma^-) + 1 & \quad [\text{by \cref{thm:s1eft d is fredholm}}].
    \end{IEEEeqnarray*}

    As a second step, we show that we can switch the order of the quantifiers in the first step, i.e. that for every reference almost complex structure $J_{\mathrm{ref}}$ there exists a comeagre set $\mathcal{J}^{\varepsilon}_{\mathrm{reg}}(J_{\mathrm{ref}}^{}) \subset \mathcal{J}^{\varepsilon}(J_{\mathrm{ref}})$ such that for every $J \in \mathcal{J}^{\varepsilon}_{\mathrm{reg}}(J_{\mathrm{ref}}^{})$ and every $[z^{\pm}, \gamma^{\pm}]$, the property in the statement of the theorem statement holds. For this, define
    \begin{IEEEeqnarray*}{c+x*}
        \mathcal{J}^{\varepsilon}_{\mathrm{reg}}(J_{\mathrm{ref}}^{}) \coloneqq \bigcap_{[z^{\pm}, \gamma^{\pm}]} \mathcal{J}^{\varepsilon}_{\mathrm{reg}}(J_{\mathrm{ref}}^{}, [z^{\pm}, \gamma^{\pm}]).
    \end{IEEEeqnarray*}
    Since $H$ is nondegenerate, in the above expression we are taking an intersection over a finite set of data, and hence $\mathcal{J}^{\varepsilon}_{\mathrm{reg}}(J_{\mathrm{ref}}^{})$ is comeagre. This finishes the proof of the second step. By the Baire category theorem, $\mathcal{J}^{\varepsilon}_{\mathrm{reg}}(J_{\mathrm{ref}}^{}) \subset \mathcal{J}^{\varepsilon}(J_{\mathrm{ref}}^{})$ is dense. Finally, define%
    \begin{IEEEeqnarray*}{c+x*}
        \mathcal{J}_{\mathrm{reg}} \coloneqq \bigcup_{J_{\mathrm{ref}} \in \mathcal{J}} \mathcal{J}^{\varepsilon}_{\mathrm{reg}}(J_{\mathrm{ref}}^{}).
    \end{IEEEeqnarray*}
    Then $\mathcal{J}_{\mathrm{reg}}$ is the desired set of almost complex structures.
\end{proof}

\chapter{\texorpdfstring{$S^1$}{S1}-equivariant Floer homology}
\label{chp:floer}

\section{Categorical setup}

In this section, we define categories that will allow us to express the constructions of this chapter as functors. We will define a category of complexes (see \cref{def:category complexes,def:category of complexes up to homotopy}) and a category of modules (see \cref{def:category modules}). Associated to these, there is a Homology functor between the two categories (\cref{def:homology functor}).

\begin{remark}
    Recall that a \textbf{preorder} on a set $S$ is a binary relation $\leq$ which is reflexive and transitive. A preordered set $(S,\leq)$ can be seen as a category $S$ by declaring that objects of $S$ are elements of the set $S$ and that there exists a unique morphism from $a$ to $b$ if and only if $a \leq b$, for $a, b \in S$. Throughout this thesis, we will view $\R$ as a category in this sense.
\end{remark}

\begin{definition}
    Let $\mathbf{C}$ be a category. A \textbf{filtered object} in $\mathbf{C}$ is a functor $V \colon \R \longrightarrow \mathbf{C}$. A \textbf{morphism} of filtered objects from $V$ to $W$ is a natural transformation $\phi \colon V \longrightarrow W$. We denote by $\Hom(\R, \mathbf{C})$ the category of filtered objects in $\mathbf{C}$. In this case, we will use the following notation. If $a \in \R$, we denote by $V^a$ the corresponding object of $\mathbf{C}$. If $\mathbf{C}$ is abelian and $a \leq b \in \R$, we denote $V^{(a,b]} \coloneqq V^b / V^a \coloneqq \coker (\iota^{b,a} \colon V^a \longrightarrow V^b)$.
\end{definition}

\begin{definition}
    \label{def:category complexes}
    Denote by $\tensor[_\Q]{\mathbf{Mod}}{}$ the category of $\Q$-modules. We define a category $\komp$ as follows. An object of $\komp$ is a triple $(C,\del,U)$, where $C \in \Hom(\R, \tensor[_\Q]{\mathbf{Mod}}{})$ is a filtered $\Q$-module and $\partial, U \colon C \longrightarrow C$ are natural transformations such that 
    \begin{IEEEeqnarray*}{lCls+x*}
        \partial \circ \partial & = & 0, \\
        \partial \circ U        & = & U \circ \partial.
    \end{IEEEeqnarray*}
    A morphism in $\komp$ from $(C,\del^C,U^C)$ to $(D,\del^D,U^D)$ is a natural transformation $\phi \colon C \longrightarrow D$ for which there exists a natural transformation $T \colon C \longrightarrow D$ such that
    \begin{IEEEeqnarray*}{rCrCl}
        \partial^D & \circ \phi - \phi \circ & \partial^C & = & 0, \\
        U^D        & \circ \phi - \phi \circ & U^C        & = & \partial^D \circ T + T \circ \partial^C.
    \end{IEEEeqnarray*}
\end{definition}

\begin{definition}
    \phantomsection\label{def:category of complexes up to homotopy}
    Let $\phi, \psi \colon (C, \partial^C, U^C) \longrightarrow (D, \partial^D, U^D)$ be morphisms in $\komp$. A \textbf{chain homotopy} from $\phi$ to $\psi$ is a natural transformation $T \colon C \longrightarrow D$ such that
    \begin{IEEEeqnarray*}{c+x*}
        \psi - \phi = \partial^D \circ T + T \circ \partial^C.
    \end{IEEEeqnarray*}
    The notion of chain homotopy defines an equivalence relation $\sim$ on each set of morphisms in $\komp$. We denote the quotient category (see for example \cite[Theorem 0.4]{rotmanIntroductionAlgebraicTopology1988}) by 
    \begin{IEEEeqnarray*}{c+x*}
        \comp \coloneqq \komp / \sim.
    \end{IEEEeqnarray*}
\end{definition}

As we will see in \cref{sec:Floer homology}, the $S^1$-equivariant Floer chain complex of $X$ (with respect to a Hamiltonian $H$ and almost complex structure $J$) is an object 
\begin{IEEEeqnarray*}{c+x*}
    \homology{}{S^1}{}{F}{C}{}{}(X,H,J) \in \comp.
\end{IEEEeqnarray*}

\begin{definition}
    \label{def:category modules}
    We define a category $\modl$ as follows. An object of $\modl$ is a pair $(C,U)$, where $C \in \Hom(\R, \tensor[_\Q]{\mathbf{Mod}}{})$ is a filtered $\Q$-module and $U \colon C \longrightarrow C$ is a natural transformation. A morphism in $\modl$ from $(C,U^C)$ to $(D,U^D)$ is a natural transformation $\phi \colon C \longrightarrow D$ such that $\phi \circ U^C = U^D \circ \phi$.
\end{definition}

In \cref{sec:Floer homology}, we will show that the $S^1$-equivariant Floer homology of $X$ (with respect to a Hamiltonian $H$ and almost complex structure $J$) and the $S^1$-equivariant symplectic homology of $X$ are objects of $\modl$:
\begin{IEEEeqnarray*}{rCls+x*}
    \homology{}{S^1}{}{F}{H}{}{}(X,H,J) & \in & \modl, \\
    \homology{}{S^1}{}{S}{H}{}{}(X)     & \in & \modl.
\end{IEEEeqnarray*}

\begin{lemma}
    The category $\modl$ is abelian, complete and cocomplete.
\end{lemma}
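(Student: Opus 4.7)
The plan is to identify $\modl$ with a functor category and then appeal to standard facts about functor categories. Let $\tensor[_{\Q[U]}]{\mathbf{Mod}}{}$ denote the category of modules over the polynomial ring $\Q[U]$, which is equivalently the category of $\Q$-modules equipped with a distinguished $\Q$-linear endomorphism. My first step would be to observe that an object $(C, U) \in \modl$ is precisely the same data as a functor $C \colon \R \longrightarrow \tensor[_{\Q[U]}]{\mathbf{Mod}}{}$: for each $a \in \R$, the pair $(C^a, U^a)$ is a $\Q[U]$-module, and the naturality of $U$ amounts to the statement that the filtration maps $\iota^{b,a}$ are $\Q[U]$-linear. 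Morphisms in $\modl$ correspond bijectively to natural transformations of such functors, yielding an isomorphism of categories $\modl \cong \Hom(\R, \tensor[_{\Q[U]}]{\mathbf{Mod}}{})$.

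Next, I would recall that $\tensor[_{\Q[U]}]{\mathbf{Mod}}{}$ is abelian, complete, and cocomplete, which is the standard fact about the category of modules over any ring. I would then invoke the general principle that if $\mathbf{A}$ is abelian (respectively complete, cocomplete) and $I$ is a small category, then the functor category $\Hom(I, \mathbf{A})$ is itself abelian (respectively complete, cocomplete), with kernels, cokernels, products, coproducts, and more generally all limits and colimits computed pointwise on $I$. Since $\R$ viewed as a preorder is a small category, this immediately yields the lemma.

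If one prefers a direct construction avoiding these generalities, the argument is routine bookkeeping. For a morphism $\phi \colon (C, U^C) \longrightarrow (D, U^D)$, the kernel is the pointwise kernel $K^a \coloneqq \ker \phi^a$, on which $U^C$ restricts because the relation $\phi \circ U^C = U^D \circ \phi$ forces $U^{C,a}$ to preserve $\ker \phi^a$; cokernels, products, coproducts, and arbitrary limits and colimits are built analogously by performing the corresponding construction pointwise in $\tensor[_\Q]{\mathbf{Mod}}{}$, and in each case the relevant universal property supplies a unique natural extension of the $U$-action. I do not anticipate any substantive obstacle: the entire content is that standard pointwise constructions respect the extra endomorphism, so beyond verifying commutation with $U$ there is nothing delicate to check.
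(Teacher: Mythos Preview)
Your proof is correct. The key move is the identification $\modl \cong \Hom(\R, \tensor[_{\Q[U]}]{\mathbf{Mod}}{})$, which packages the natural endomorphism $U$ into the target category and reduces everything to standard facts about functor categories valued in a module category. The paper takes a more hands-on route: it works instead in $\Hom(\R, \tensor[_\Q]{\mathbf{Mod}}{})$, constructs kernels, cokernels, products, and coproducts there, and then separately checks that each of these constructions inherits a $U$ map via the relevant universal property. Your approach is cleaner and more conceptual, since the compatibility with $U$ is automatic once you pass to $\Q[U]$-modules; the paper's approach has the minor advantage of being self-contained and not requiring the reader to recognize the equivalence with a functor category into $\tensor[_{\Q[U]}]{\mathbf{Mod}}{}$. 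The underlying content is the same: limits and colimits are computed pointwise and the $U$ action extends by naturality.
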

\begin{proof}
    Recall the definition of (co)complete: a category $\mathbf{I}$ is small if the class of morphisms of $\mathbf{I}$ is a set. A category is (co)complete if for any $\mathbf{I}$ small and for any functor $F \colon \mathbf{I} \longrightarrow \modl$, the (co)limit of $F$ exists. By \cite[Theorem 3.4.12]{riehlCategoryTheoryContext2016}, it suffices to show that $\modl$ has products, coequalizers, coproducts and coequalizers. First, notice that $\tensor[_\Q]{\mathbf{Mod}}{}$ is abelian, complete and cocomplete. Therefore, the same is true for $\Hom(\R, \tensor[_\Q]{\mathbf{Mod}}{})$. Let $f \colon C \longrightarrow D$ be a morphism in $\modl$. Then $f$ has a kernel and a cokernel in $\Hom(\R, \tensor[_\Q]{\mathbf{Mod}}{})$. We need to show that the kernel and the cokernel are objects of $\modl$, i.e. that they come equipped with a $U$ map. The $U$ maps for $\ker f, \coker f$ are the unique maps (coming from the universal property of the (co)kernel) such that diagram
    \begin{IEEEeqnarray*}{c+x*}
        \begin{tikzcd}
            \ker f \ar[r] \ar[d, swap, dashed, "\exists ! U_{\ker f}"] & C \ar[d, "U_C"] \ar[r, "f"] & D \ar[d, "U_D"] \ar[r] & \coker f \ar[d, dashed, "\exists ! U_{\coker f}"] \\
            {\ker f} \ar[r]                                            & {C} \ar[r, "f"]             & {D} \ar[r]             & {\coker f}
        \end{tikzcd}
    \end{IEEEeqnarray*}
    commutes. Let $C_i$, for $i \in I$, be a family of objects in $\modl$. Then, the product $\prod_{i \in I}^{} C_i$ and the coproduct $\bigoplus_{i \in I}^{} C_i$ exist in $\Hom(\R, \tensor[_\Q]{\mathbf{Mod}}{})$. Again, we need to show that the product and coproduct come equipped with a $U$ map. The $U$ maps for the product and coproduct are the maps
    \begin{IEEEeqnarray*}{LCRRCRCL+x*}
        U_{\bigproduct_{i \in I}^{} C_i}   & = & \bigproduct_{i \in I}^{} U_{C_i} \colon   & \bigproduct_{i \in I}^{} C_i   & \longrightarrow & \bigproduct_{i \in I}^{} C_i, \\
        U_{\bigdirectsum_{i \in I}^{} C_i} & = & \bigdirectsum_{i \in I}^{} U_{C_i} \colon & \bigdirectsum_{i \in I}^{} C_i & \longrightarrow & \bigdirectsum_{i \in I}^{} C_i, 
    \end{IEEEeqnarray*}
    coming from the respective universal properties.
\end{proof}

\begin{definition}
    \label{def:homology functor}
    Let $(C,\partial,U) \in \comp$. The \textbf{homology} of $(C,\partial,U)$ is the object of $\modl$ given by $H(C, \partial, U) \coloneqq (H(C, \partial), H(U))$, where $H(C, \partial) = \ker \partial / \img \partial$ and $H(U)$ is the unique map such that the diagram
    \begin{IEEEeqnarray*}{c+x*}
        \begin{tikzcd}
            \img \partial \ar[r] \ar[d, swap, "U"] & \ker \partial \ar[r] \ar[d, "U"] & \ker \partial / \img \partial \ar[d, dashed, "\exists !"] \ar[r, equals] & H(C, \partial) \ar[d, "H(U)"] \\
            \img \partial \ar[r]                   & \ker \partial \ar[r]             & \ker \partial / \img \partial \ar[r, equals]                             & H(C, \partial)
        \end{tikzcd}
    \end{IEEEeqnarray*}
    commutes. If $\phi \colon (C, \partial^C, U^C) \longrightarrow (D, \partial^D, U^D)$ is a morphism in $\comp$, we define the induced morphism on homology, $H(\phi) \colon H(C, \partial^C) \longrightarrow H(D, \partial^D)$, to be the unique map such that the diagram
    \begin{IEEEeqnarray*}{c+x*}
        \begin{tikzcd}
            \img \partial^C \ar[r] \ar[d, swap, "\phi"] & \ker \partial^C \ar[r] \ar[d, "\phi"] & \ker \partial^C / \img \partial^C \ar[d, dashed, "\exists !"] \ar[r, equals] & H(C, \partial^C) \ar[d, "H(\phi)"] \\
            \img \partial^D \ar[r]                      & \ker \partial^D \ar[r]                & \ker \partial^D / \img \partial^D \ar[r, equals]                             & H(D, \partial^D)
        \end{tikzcd}
    \end{IEEEeqnarray*}
    commutes. With these definitions, homology is a functor $H \colon \comp \longrightarrow \modl$.
\end{definition}

\section{Action functional}
\label{sec:action functional}

Our goal in this section is to establish the definitions that we will need to later define the $S^1$-equivariant Floer Chain complex. We define suitable families of admissible Hamiltonians (\cref{def:hamiltonians}) and almost complex structures (\cref{def:acs}). The key points of this section are \cref{def:generators}, where we define the set of generators of the $S^1$-equivariant Floer chain complex, and \cref{def:flow lines}, where we define the trajectories that are counted in the differential of the $S^1$-equivariant Floer chain complex. We also define the action of a generator (\cref{def:action functional}), which will induce a filtration on the $S^1$-equivariant Floer chain complex. We will assume that $(X,\lambda)$ is a nondegenerate Liouville domain with completion $(\hat{X},\hat{\lambda})$. Let $\varepsilon \coloneqq \frac{1}{2} \operatorname{Spec}(\partial X,\lambda|_{\partial X})$.

We start by recalling some basic facts about $S^{2N+1}$ and $\C P^N$. For each $N \in \Z_{\geq 1}$ we denote%
\begin{IEEEeqnarray*}{c+x*}
    S^{2N + 1} \coloneqq \{ (z_0,\ldots,z_N) \in \C ^{N+1} \ | \ |z_0|^2 + \cdots + |z_N|^2 = 1 \}.
\end{IEEEeqnarray*}
There is an action $S^1 \times S^{2N + 1} \longrightarrow S^{2N + 1}$ given by $(t,z) \longmapsto e ^{2 \pi i t} z$. This action is free and proper, so we can consider the quotient manifold $S^{2N+1}/S^1$. The Riemannian metric of $\C ^{N+1} = \R ^{2(N+1)}$ pulls back to a Riemannian metric on $S^{2N + 1}$. The action of $S^1$ on $S^{2N + 1}$ is by isometries, so there exists a unique Riemannian metric on $S^{2N+1}/S^1$ such that the projection $S^{2N+1} \longrightarrow S^{2N+1}/S^1$ is a Riemannian submersion. The set $\C \setminus \{0\}$ is a group with respect to multiplication, and it acts on $\C ^{N+1} \setminus \{0\}$ by multiplication. This action is free and proper, so we can form the quotient 
\begin{IEEEeqnarray*}{c+x*}
    \C P^{N} \coloneqq (\C ^{N+1} \setminus \{0\})/(\C \setminus \{0\}).
\end{IEEEeqnarray*}
By the universal property of the quotient, there exists a unique map $S^{2N+1}/S^1 \longrightarrow \C P^N$ such that the following diagram commutes:
\begin{IEEEeqnarray*}{c+x*}
    \begin{tikzcd}
        S^{2N + 1} \ar[r, hook] \ar[d, two heads] & \C ^{N+1} \setminus \{0\} \ar[d, two heads] \\
        S^{2N + 1} / S^1 \ar[r, hook, two heads, dashed, swap, "\exists !"] & \C P^N
    \end{tikzcd}
\end{IEEEeqnarray*}
The map $S^{2N + 1} / S^1 \longrightarrow \C P^N$ is a diffeomorphism. Define the Fubini--Study metric on $\C P^N$ to be the unique Riemannian metric on $\C P^N$ such that $S^{2N + 1} / S^1 \longrightarrow \C P^N$ is an isometry.

We will now consider a special family of functions on $S^{2N+1}$ and $\C P^N$. Define a function%
\begin{IEEEeqnarray*}{rrCl}
    f_N \colon & \C P^N & \longrightarrow & \R \\
               & [w]    & \longmapsto     & \frac{\sum_{j=0}^{N} j|w_j|^2}{\sum_{j=0}^{N} |w_j|^2}.
\end{IEEEeqnarray*}
Define $\tilde{f}_N$ to be the pullback of $f_N$ to $S^{2N+1}$. Let $e_0,\ldots,e_N$ be the canonical basis of $\C ^{N+1}$ (as a vector space over $\C$). Then, 
\begin{IEEEeqnarray*}{rCls+x*}
    \critpt \tilde{f}_N & = & \{ e^{2 \pi i t} e_j \mid t \in S^1, j = 0,\ldots,N \}, \\
    \critpt f_N         & = & \{[e_0],\ldots,[e_N]\}.
\end{IEEEeqnarray*}
The function $f_N$ is Morse, while $\tilde{f}_N$ is Morse--Bott. The Morse indices are given by
\begin{IEEEeqnarray*}{rCll}
    \morse([e_j],f_N)     & = & 2j,               & \quad \text{for all } j=0,\ldots,N, \\
    \morse(z,\tilde{f}_N) & = & \morse([z], f_N), & \quad \text{for all } z \in \critpt f_N.
\end{IEEEeqnarray*}
We will use the notation $\morse(z) \coloneqq \morse(z,\tilde{f}_N) = \morse([z], f_N)$. 

We now study the relation between $\tilde{f}_{N^-}$ and $\tilde{f}_{N^+}$ for $N^- \geq N^+$. For every $k$ such that $0 \leq k \leq N^- - N^+$, define maps
\begin{IEEEeqnarray*}{rrCl}
    \inc^{N^-,N^+}_k \colon & S^{2N^++1}           & \longrightarrow & S^{2N^-+1} \\
                            & (z_0,\ldots,z_{N^+}) & \longmapsto     & (\underbrace{0,\ldots,0}_k,z_0,\ldots,z_{N^+},0,\ldots,0).
\end{IEEEeqnarray*}
Let $I_k \colon \R \longrightarrow \R$ be given by $I_k(x) = x + k$. Then, the following diagram commutes:
\begin{IEEEeqnarray*}{c+x*}
    \begin{tikzcd}[row sep=scriptsize, column sep={{{{6em,between origins}}}}]
                                                                                       & S^{2N^+ + 1} \arrow[dl, swap, "\inc_{k}^{N^-,N^+}"] \arrow[rr, "\tilde{f}_{N^+}"] \arrow[dd]                         &                                          & \R \arrow[dl, "I_k"] \arrow[dd, equals] \\
        S^{2N^- + 1} \arrow[rr, crossing over, near end, "\tilde{f}_{N^-}"] \arrow[dd] &                                                                                                                      & \R \\
                                                                                       & \C P^{N^+} \arrow[dl, dashed, swap, outer sep = -4pt, "\exists ! i_{k}^{N^-,N^+}"] \arrow[rr, near start, "f_{N^+}"] &                                          & \R \arrow[dl, "I_k"] \\
        \C P ^{N^-} \arrow[rr, swap, "f_{N^-}"]                                        &                                                                                                                      & \R \arrow[from=uu, crossing over, equals]
    \end{tikzcd}
\end{IEEEeqnarray*}
The vector fields $\nabla \tilde{f}_{N^+}$ and $\nabla \tilde{f}_{N^-}$ are $\inc_{k}^{N^-,N^+}$-related, and analogously the vector fields $\nabla {f}_{N^+}$ and $\nabla {f}_{N^-}$ are ${i}_{k}^{N^-,N^+}$-related. For $t \in \R$, denote by $\phi^t_{\tilde{f}_{N^-}}$ the time-$t$ gradient flow of $\tilde{f}_{N^-}$ and analogously for $\phi^t_{f_{N^+}}$. Then, the following diagram commutes:
\begin{IEEEeqnarray*}{c+x*}
    \begin{tikzcd}[row sep=scriptsize, column sep={{{{6em,between origins}}}}]
                                                                                                & S^{2N^+ + 1} \arrow[dl, swap, "{\inc_k^{N^-,N^+}}"] \arrow[rr, "\phi^t_{\tilde{f}_N}"] \arrow[dd] &                                          & S^{2N^+ + 1} \arrow[dl, near end, "\inc_k^{N^-,N^+}"] \arrow[dd] \\
        S^{2N^- + 1} \arrow[rr, crossing over, near end, "\phi^t_{\tilde{f}_{N^-}}"] \arrow[dd] &                                                                                                   & S^{2N^- + 1} \\
                                                                                                & \C P^{N^+} \arrow[dl, swap, "i_k^{N^-,N^+}"] \arrow[rr, near start, "\phi^t_{f_{N^+}}"]           &                                          & \C P^{N^+} \arrow[dl, "i_k^{N^-,N^+}"] \\
        \C P ^{N^-} \arrow[rr, swap, "\phi^t_{f_{N^-}}"]                                        &                                                                                                   & \C P^{N^-} \arrow[from=uu, crossing over]
    \end{tikzcd}
\end{IEEEeqnarray*}

\begin{definition}
    \label{def:hamiltonians}
    A parametrized Hamiltonian $H \colon S^1 \times S^{2N+1} \times \hat{X} \longrightarrow \R$ is \textbf{admissible} if it satisfies the conditions in \cref{item:invariant,item:profile,item:ndg,item:flow lines,item:pullbacks}. We denote the set of such $H$ by $\mathcal{H}(X,N)$.
    \begin{enumerate}
        \item \label{item:profile} There exist $D \in \R$, $C \in \R_{>0} \setminus \operatorname{Spec}(\del X, \lambda|_{\del X})$ and $\delta > 0$ such that:
            \begin{enumerate}[label=(\Roman*)]
                \item on $S^1 \times S^{2N+1} \times X$, we have that $- \varepsilon < H < 0$, $H$ is $S^1$-independent and $H$ is $C^2$-small (so that there are no nonconstant $1$-periodic orbits);
                \item on $S^1 \times S^{2N+1} \times [0,\delta] \times \del X$, we have that $-\varepsilon < H < \varepsilon$ and $H$ is $C^2$-close to $(t,z,r,x) \longmapsto h(e^r)$, where $h \colon [1,e ^{\delta}] \longrightarrow \R$ is increasing and strictly convex;
                \item[(S)] on $S^1 \times S^{2N+1} \times [\delta, + \infty) \times \del X$, we have that $H(t,z,r,x) = C e^r + D$.
            \end{enumerate}
        \item \label{item:invariant} Consider the action of $S^1$ on $S^1 \times S^{2N+1} \times \hat{X}$ given by $t' \cdot (t,z,x) = (t' + t, e ^{2 \pi i t'} z, x)$. Then $H$ is invariant under this action, i.e. $H(t'+ t, e ^{2 \pi i t'} z, x) = H(t,z,x)$.
        \item \label{item:ndg} If $z$ is a critical point of $\tilde{f}_N$ then $H_z$ is nondegenerate.
        \item \label{item:flow lines} For every $(t,z,x) \in S^1 \times S^{2N+1} \times \hat{X}$ we have $\p{<}{}{\nabla_{S^{2N+1}}H(t,z,x), \nabla \tilde{f}_N(z)} \leq 0$.
        \item \label{item:pullbacks} There exists $E \geq 0$ such that $(\inc^{N,N-1}_0)^* H = (\inc^{N,N-1}_1)^* H + E$.
    \end{enumerate}
\end{definition}

\begin{definition}
    \label{def:acs}
    A parametrized almost complex structure $J \colon S^1 \times S^{2N+1} \times \hat{X} \longrightarrow \End(T \hat{X})$ is \textbf{admissible} if it satisfies the conditions in \cref{def:acs 1,def:acs 2,def:acs 3,def:acs 4}. We denote the set of such $J$ by $\mathcal{J}(X,N)$.
    \begin{enumerate}
        \item \label{def:acs 1} $J$ is $S^1$-invariant, i.e. $J(t' + t, e ^{2 \pi i t'} z, x) = J(t, z, x)$ for every $t' \in S^1$ and $(t,z,x) \in S^1 \times S^{2N+1} \times \hat{X}$.
        \item \label{def:acs 2} $J$ is $\hat{\omega}$-compatible.
        \item \label{def:acs 3} The restriction of $J$ to $S^1 \times S^{2N+1} \times \R_{\geq 0} \times \del X$ is cylindrical.
        \item \label{def:acs 4} $(\inc_0^{N,N-1})^* J = (\inc_1^{N,N-1})^* J$.
    \end{enumerate}
\end{definition}

\begin{definition}
    Denote by $\admissible{X}$ the set of tuples 
    \begin{IEEEeqnarray*}{c+x*}
        (H,J) \in \bigcoproduct_{N \in \Z_{\geq 1}}^{} \mathcal{H}(X,N) \times \mathcal{J}(X,N)
    \end{IEEEeqnarray*}
    which are regular, where ``regular'' means that the moduli spaces of \cref{def:flow lines} are transversely cut out. Define a preorder $\leq$ on $\admissible{X}$ by 
    \begin{IEEEeqnarray*}{rCl}
        (H^+,J^+) \leq (H^-,J^-) & \mathrel{\mathop:}\Longleftrightarrow & N^+ \leq N^- \text{ and } H^+ \leq (i_0 ^{N^-,N^+})^* H^-.
    \end{IEEEeqnarray*}
\end{definition}

\begin{definition}
    \label{def:generators}
    Let $N \in \Z_{\geq 1}$ and $H \in \mathcal{H}(X,N)$. Define
    \begin{IEEEeqnarray*}{c+x*}
        \hat{\mathcal{P}}(H)
        \coloneqq
        \left\{
        (z, \gamma)
        \ \middle\vert
        \begin{array}{l}
            z \in S^{2N+1} \text{ is a critical point of } \tilde{f}_N, \\
            \gamma \in C^{\infty}(S^1, \hat{X}) \text{ is a $1$-periodic orbit of } H_z
        \end{array}
    \right\}.
    \end{IEEEeqnarray*}
    There is an action of $S^1$ on $\hat{\mathcal{P}}(H)$ given by $t \cdot (z,\gamma) \coloneqq (e ^{2 \pi i t'} z, \gamma(\cdot - t))$. Define the quotient
    \begin{IEEEeqnarray*}{c+x*}
        \mathcal{P}(H) \coloneqq \hat{\mathcal{P}}(H) / S^1.
    \end{IEEEeqnarray*}
\end{definition}

\begin{remark}
    \label{rmk:types of orbits}
    If $(z, \gamma) \in \hat{\mathcal{P}}(H)$, then either $\img \gamma$ is in region $\rmn{1}$ and $\gamma$ is constant or $\img \gamma$ is in region $\rmn{2}$ and $\gamma$ is nonconstant. In the slope region, i.e. region S, there are no $1$-periodic orbits of $H$ because $C$ is not in $\operatorname{Spec}(\del X, \lambda|_{\del X})$ and by \cref{cor:hamiltonian orbits are reeb orbits}.
\end{remark}

\begin{definition}
    \label{def:flow lines}
    Let $N \in \Z_{\geq 1}$, $H \in \mathcal{H}(X,N)$ and $J \in \mathcal{J}(X,N)$. A pair $(w,u)$, where $w \colon \R \longrightarrow S^{2N+1}$ and $u \colon \R \times S^1 \longrightarrow \hat{X}$ is a solution of the \textbf{parametrized Floer equation} if
    \begin{equation*}
        \left\{ \,
            \begin{IEEEeqnarraybox}[
                \IEEEeqnarraystrutmode
                \IEEEeqnarraystrutsizeadd{7pt}
                {7pt}][c]{rCl}
                \dot{w}(s) & = & \nabla \tilde{f}_N(w(s)) \\
                \pdv{u}{s}(s,t) & = & - J^t_{w(s)}(u(s,t)) \p{}{2}{ \pdv{u}{t}(s,t) - X_{H^t_{w(s)}} (u(s,t)) }.
            \end{IEEEeqnarraybox}
        \right.
    \end{equation*}
    For $[z^+,\gamma^+], [z^-,\gamma^-] \in \mathcal{P}(H)$, define $\hat{\mathcal{M}}(H,J,[z^+,\gamma^+],[z^-,\gamma^-])$ to be the moduli space of solutions $(w,u)$ of the parametrized Floer equation such that $(w(s),u(s,\cdot))$ converges as $s \to \pm \infty$ to an element in the equivalence class $[z^\pm,\gamma^\pm]$. We define the following two group actions.
    \begin{IEEEeqnarray*}{rsrsrCl}
        \R  & \quad \text{acts on} \quad & \hat{\mathcal{M}}(H,J,[z^+,\gamma^+],[z^-,\gamma^-]) & \quad \text{by} \quad & s \cdot (w,u) & \coloneqq & (w(\cdot - s), u(\cdot-s, \cdot)), \\
        S^1 & \quad \text{acts on} \quad & \hat{\mathcal{M}}(H,J,[z^+,\gamma^+],[z^-,\gamma^-]) & \quad \text{by} \quad & t \cdot (w,u) & \coloneqq & (e ^{2 \pi i t} w, u(\cdot, \cdot - t)).
    \end{IEEEeqnarray*}
    The actions of $\R$ and $S^1$ on $\hat{\mathcal{M}}(H,J,[z^+,\gamma^+],[z^-,\gamma^-])$ commute, so they define an action of $\R \times S^1$ on $\hat{\mathcal{M}}(H,J,[z^+,\gamma^+],[z^-,\gamma^-])$. Finally, let 
    \begin{IEEEeqnarray*}{c+x*}
        \mathcal{M}(H,J,[z^+,\gamma^+],[z^-,\gamma^-]) \coloneqq \hat{\mathcal{M}}(H,J,[z^+,\gamma^+],[z^-,\gamma^-]) / \R \times S^1.
    \end{IEEEeqnarray*}
\end{definition}

\begin{definition}
    \phantomsection\label{def:action functional}
    For $(z, \gamma) \in \hat{\mathcal{P}}(H)$, the \textbf{action} of $(z, \gamma)$, denoted $\mathcal{A}_H(z, \gamma)$, is given by%
    \begin{IEEEeqnarray*}{c+x*}
        \mathcal{A}_{H}(z,\gamma) \coloneqq \mathcal{A}_{H_z}(\gamma) = \int_{S^1}^{} \gamma^* \hat{\lambda} - \int_{S^1}^{} H(t,z,\gamma(t)) \edv t.
    \end{IEEEeqnarray*}
    The action functional is a map $\mathcal{A}_H \colon \hat{\mathcal{P}}(H) \longrightarrow \R$. Since $H$ is $S^1$-invariant, $\mathcal{A}_H$ is $S^1$-invariant as well, and therefore there is a corresponding map $\mathcal{A}_H$ whose domain is $\mathcal{P}(H)$.
\end{definition}

\begin{lemma}
    \label{lem:action admissible}
    The actions of $1$-periodic orbits of $H$ are ordered according to
    \begin{IEEEeqnarray*}{c+x*}
        0 < \mathcal{A}_H(\rmn{1}) < \varepsilon < \mathcal{A}_H(\rmn{2}).
    \end{IEEEeqnarray*}
\end{lemma}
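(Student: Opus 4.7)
By \cref{rmk:types of orbits}, every $(z,\gamma) \in \hat{\mathcal{P}}(H)$ either lies in region $\rmn{1}$ (with $\gamma$ constant) or in region $\rmn{2}$ (with $\gamma$ nonconstant lying inside $[0,\delta]\times \partial X$), so it suffices to treat these two cases separately.

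Region $\rmn{1}$ is the easy case: since $\gamma$ is constant, $\gamma^*\hat\lambda = 0$, and by item $(\rmn{1})$ of \cref{def:hamiltonians} we have $-\varepsilon < H(t,z,\gamma(t)) < 0$ pointwise. Hence
\begin{IEEEeqnarray*}{c+x*}
    \mathcal{A}_H(z,\gamma) = -\int_{S^1} H(t,z,\gamma(t))\, \edv t \in (0,\varepsilon),
\end{IEEEeqnarray*}
which yields both $0 < \mathcal{A}_H(\rmn{1})$ and $\mathcal{A}_H(\rmn{1}) < \varepsilon$.

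For region $\rmn{2}$, the strategy is to first prove the action bound for the unperturbed model $\tilde{H}(r) = h(e^r)$ on $[0,\delta]\times \partial X$ and then transfer the bound to $H$. For the model $\tilde{H}$, \cref{cor:hamiltonian orbits are reeb orbits} tells us that any nonconstant $1$-periodic orbit sits at a constant height $r_0$ and projects to a closed Reeb orbit on $\partial X$ of period $T = e^{r_0} h'(e^{r_0})$. In particular $T \in \operatorname{Spec}(\partial X,\lambda|_{\partial X})$, so $T \geq 2\varepsilon$ by definition of $\varepsilon$. Combining this with \cref{lem:action in symplectization} yields
\begin{IEEEeqnarray*}{c+x*}
    \mathcal{A}_{\tilde{H}}(\gamma) = T - h(e^{r_0}) > 2\varepsilon - \varepsilon = \varepsilon,
\end{IEEEeqnarray*}
since $h(e^{r_0}) = \tilde{H}(r_0) < \varepsilon$ by item $(\rmn{2})$ of \cref{def:hamiltonians}.

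The remaining point, which is the main technical obstacle, is to transfer the strict inequality from $\tilde{H}$ to the actual admissible Hamiltonian $H$. The bound $T - h(e^{r_0}) > \varepsilon$ is strict with slack that is uniform in $r_0$ (by compactness of the closed Reeb orbits of bounded period on the nondegenerate contact manifold $\partial X$). Hence, choosing $H$ sufficiently $C^2$-close to $\tilde{H}$ in region $\rmn{2}$, every $1$-periodic orbit $\gamma$ of some $H_z$ in region $\rmn{2}$ is $C^0$-close to a $1$-periodic orbit of $\tilde{H}$, and its action is close to the model action $T - h(e^{r_0})$, hence still strictly greater than $\varepsilon$. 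This is a routine continuation argument of the sort used when refining the action–spectrum conditions in the admissibility of Hamiltonians.
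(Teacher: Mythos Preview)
Your approach is essentially the same as the paper's. Both treat region~$\rmn{1}$ by reading off $-\varepsilon < H < 0$, and both treat region~$\rmn{2}$ by working with the model Hamiltonian $\tilde{H}(r)=h(e^r)$, using \cref{lem:action in symplectization} together with the lower bound $2\varepsilon$ on the Reeb spectrum. The paper even makes the same remark as you about passing from the model $\tilde{H}$ to the $C^2$-close admissible $H$.

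One small bookkeeping slip: by the conventions set just before \cref{lem:reeb equals hamiltonian on symplectization}, the Reeb period associated to a $1$-periodic orbit at height $r_0$ is $T(r_0)=h'(e^{r_0})$, not $e^{r_0}h'(e^{r_0})$. The quantity $e^{r_0}h'(e^{r_0})$ that you call $T$ is $e^{r_0}\cdot(\text{true period})$; it is this product that appears in the action formula of \cref{lem:action in symplectization}, but it is not itself in $\operatorname{Spec}(\partial X,\lambda|_{\partial X})$. The correction is harmless for the conclusion: since $h'(e^{r_0})\in\operatorname{Spec}$ gives $h'(e^{r_0})\geq 2\varepsilon$ and $r_0>0$, one obtains $\mathcal{A}_{\tilde H}(\gamma)=e^{r_0}h'(e^{r_0})-h(e^{r_0})\geq 2\varepsilon e^{r_0}-\varepsilon>\varepsilon$, which is exactly the paper's estimate.
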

\begin{proof}
    Consider \cref{fig:action ordering 1}. By \cref{lem:action in symplectization,def:hamiltonians}, we have that $\mathcal{A}_H$ is constant equal to $-H$ in regions $\rmn{1}$ and S and $\mathcal{A}_H$ is strictly increasing in region $\rmn{2}$. We remark that strictly speaking, the Hamiltonian plotted in the picture is not $H$ but instead a Hamiltonian which is $C^2$-close to $H$. However, it suffices to prove the statement for the Hamiltonian which approximates $H$. From this discussion, we conclude that $0 < \mathcal{A}_H(\rmn{1}) < \varepsilon$. We show that $\mathcal{A}_H(\rmn{2}) > \varepsilon$.
    \begin{IEEEeqnarray*}{rCls+x*}
        \mathcal{A}_H(\rmn{2})
        & =    & e^r T(r) - H(r)          & \quad [\text{by \cref{lem:action in symplectization}}]                                                                                                \\
        & \geq & 2 \varepsilon e^r - H(r) & \quad [\text{$2 \varepsilon = \min \operatorname{Spec}(\del X, \lambda|_{\del X})$ and $T(r) \in \operatorname{Spec}(\del X, \lambda|_{\del X})$}] \\
        & >    & \varepsilon (2 e^r - 1)  & \quad [\text{$H(r) < \varepsilon$}]                                                                                                                   \\
        & >    & \varepsilon              & \quad [\text{$r > 0$}].                                                                                                                                & \qedhere
    \end{IEEEeqnarray*}
\end{proof}

\begin{figure}[ht]
    \centering

    \begin{tikzpicture}
        [
        help lines/.style={thin, draw = black!50},
        Hamiltonian/.style={thick},
        action/.style={thick}
        ]
        \tikzmath{
            \a = 4;
            \b = 1;
            \c = 3;
            \d = 1;
            \h = 0.5;
            \sml = 0.05;
            \y = -0.3;
            \z = -0.1;
            \f = \c + \d;
            \m = - 12 * (-\y + \z) / (-1+exp(\d))^4;
            \n = 2 * (-1 + 3 * exp(\d)) * (-\y + \z) / (-1+exp(\d))^4;
            \o = ( -2 * exp(\d) * \y + 6 * exp(2 * \d) * \y - 4 * exp(3 * \d) * \y + exp(4 * \d) * \y + \z - 2 * exp(\d) * \z ) / (-1+exp(\d))^4;
            \u = -2 * (\y - \z) / (-1+exp(\d));
            \v = (2 * exp(\d) * \y - \z - exp(\d) * \z) / (-1+exp(\d));
            function h1 (\r) { return \y; };
            function h2 (\r) { return {\o + \n * \r + 1/2 * exp(\d) * \m * \r^2 + 1/6 * (-1 - exp(\d)) * \m * \r^3 + 1/12 * \m * \r^4 }; };
            function h2p(\r) { return {\n + 1/6 * \m * \r * (-3 * exp(\d) * (-2 + \r) + \r * (-3 + 2 * \r))}; };
            function hs (\r) { return { \u * \r + \v }; };
            function H1(\r) { return { \y }; };
            function H2(\r) { return { h2(exp(\r)) }; };
            function Hs(\r) { return { hs(exp(\r)) }; };
            function a1(\r) { return { -\y }; };
            function a2(\r) { return { exp(\r) * h2p(exp(\r)) - H2(\r) }; };
            function as(\r) { return { -\v }; };
            \e = ln((\a-\v)/\u) - \d;
            \g = \f + \e;
        }
        \draw[->] (0 ,  0)                   -- (\g, 0);
        \draw[->] (0 ,-\b)                   -- (0 ,\a) node[above] {$\R$};
        \draw[->] (\c,-\b) node[below] {$0$} -- (\c,\a) node[above] {$\R$};

        \draw[help lines] (0 , \h) node[left]  {$+\varepsilon$} -- (\g, \h);
        \draw[help lines] (0 ,-\h) node[left]  {$-\varepsilon$} -- (\g,-\h);
        \draw[help lines] (\f,-\b) node[below] {$\delta$}       -- (\f, \a);

        \draw[Hamiltonian, domain =  0:\c] plot (\x, {H1(\x - \c)});
        \draw[Hamiltonian, domain = \c:\f] plot (\x, {H2(\x - \c)});
        \draw[Hamiltonian, domain = \f:\g] plot (\x, {Hs(\x - \c)}) node[right] {$H$};

        \draw[action, domain =  0:\c] plot (\x, {a1(\x - \c)});
        \draw[action, domain = \c:\f] plot (\x, {a2(\x - \c)});
        \draw[action, domain = \f:\g] plot (\x, {as(\x - \c)}) node[right] {$\mathcal{A}_H$};

        \draw (\c/2       ,\a) node[below] {$\mathrm{I}$};
        \draw (\c + \d/2  ,\a) node[below] {$\mathrm{II}$};
        \draw (\c + 3*\d/2,\a) node[below] {$\mathrm{S}$};
        \draw[help lines, decoration = {brace, mirror, raise=5pt}, decorate] (0,-\b-.75) -- node[below=6pt] {$X$} (\c - \sml,-\b-.75);
        \draw[help lines, decoration = {brace, mirror, raise=5pt}, decorate] (\c + \sml,-\b-.75) -- node[below=6pt] {$\R_{\geq 0} \times \del X$} (\g,-\b-.75);
    \end{tikzpicture}

    \caption{Action of a $1$-periodic orbit of $H$}
    \label{fig:action ordering 1}
\end{figure}
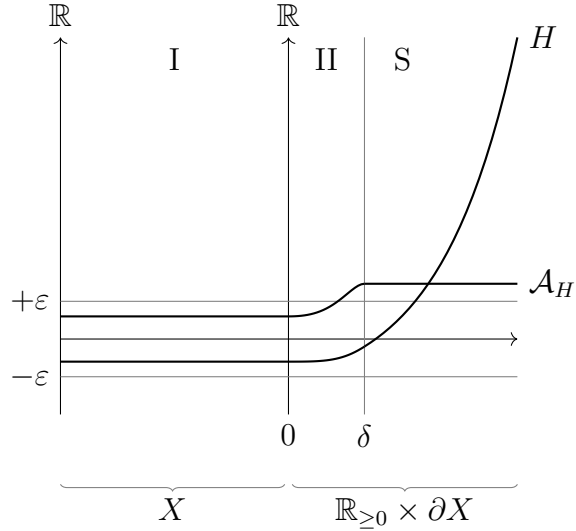

\begin{remark}
    Denote by $\critpt \mathcal{A}_{H} \subset S^{2N+1} \times C^\infty(S^1,\hat{X})$ the set of critical points of the action functional. Then, $\hat{\mathcal{P}}(H) = \critpt \mathcal{A}_{H}$, as is usual for various Floer theories. However, if $(w,u)$ is a path in $S^{2N+1} \times C^\infty(S^1,\hat{X})$, it is not true that $(w,u)$ is a gradient flow line of $\mathcal{A}_{H}$ if and only if $(w,u)$ is a solution of the parametrized Floer equations.
\end{remark}

\section{\texorpdfstring{$S^1$}{S1}-equivariant Floer homology}
\label{sec:Floer homology}

Let $(X,\lambda)$ be a nondegenerate Liouville domain. In this section, we define the $S^1$-equivariant Floer chain complex of $(X,\lambda)$ and other related invariants, namely the $S^1$-equivariant Floer homology, the positive $S^1$-equivariant Floer homology, the $S^1$-equivariant symplectic homology and the positive $S^1$-equivariant symplectic homology. The presentation we will give will be based on \cite{guttSymplecticCapacitiesPositive2018}. Other references discussing $S^1$-equivariant symplectic homology are \cite{guttMinimalNumberPeriodic2014,guttPositiveEquivariantSymplectic2017,bourgeoisGysinExactSequence2013,bourgeoisFredholmTheoryTransversality2010,bourgeoisEquivariantSymplecticHomology2016,seidelBiasedViewSymplectic2008}. The $S^1$-equivariant Floer complex of $X$ depends on the additional data of $(H,J) \in \admissible{X}$. More precisely, it can be encoded in a functor $\homology{}{S^1}{X}{F}{C}{}{} \colon \admissible{X}^{} \longrightarrow \comp$. We start by defining this functor on objects. For each $I = (H,J) \in \admissible{X}$, we need to say what is $\homology{}{S^1}{X}{F}{C}{}{}(H,J) \coloneqq \homology{}{S^1}{}{F}{C}{}{}(X,H,J) \in \comp$.

\begin{definition}
    We define $\homology{}{S^1}{}{F}{C}{}{}(X,H,J)$ to be the free $\Q$-module generated by the elements of $\mathcal{P}(H)$. Define $\homology{}{S^1}{}{F}{C}{a}{}(X,H,J)$ to be the subspace generated by the elements $[z,\gamma]$ of $\mathcal{P}(H)$ such that $\mathcal{A}_{H}(z,\gamma) \leq a$. These modules come equipped with inclusion maps
    \begin{IEEEeqnarray*}{rCls+x*}
        \iota^{a}   \colon \homology{}{S^1}{}{F}{C}{a}{}(X,H,J) & \longrightarrow & \homology{}{S^1}{}{F}{C}{}{}(X,H,J),  & \quad for $a \in \R$, \\
        \iota^{b,a} \colon \homology{}{S^1}{}{F}{C}{a}{}(X,H,J) & \longrightarrow & \homology{}{S^1}{}{F}{C}{b}{}(X,H,J), & \quad for $a \leq b$.
    \end{IEEEeqnarray*}
\end{definition}

For $[z^\pm,\gamma^\pm] \in \mathcal{P}(H)$, consider the moduli space $\mathcal{M}(H,J,[z^+,\gamma^+],[z^-,\gamma^-])$. Near a point $(w,u) \in \mathcal{M}(H,J,[z^+,\gamma^+],[z^-,\gamma^-])$, this space is a manifold (see \cref{thm:transversality in s1eft}) of dimension
\begin{IEEEeqnarray}{c+x*}
    \plabel{eq:dimension for ms}
    \dim_{(w,u)} \mathcal{M}(H,J,[z^+,\gamma^+],[z^-,\gamma^-])
    =
    \ind^{\tau^+}(z^+,\gamma^+) - \ind^{\tau^-}(z^-,\gamma^-) - 1,
\end{IEEEeqnarray}
where
\begin{IEEEeqnarray*}{c+x*}
    \ind^{\tau^\pm}(z^\pm,\gamma^\pm) \coloneqq \morse(z^{\pm}) + \conleyzehnder^{\tau^{\pm}}(\gamma^{\pm})
\end{IEEEeqnarray*}
and $\tau^{\pm}$ are symplectic trivializations of $(\gamma^{\pm})^* T \hat{X}$ which extend to a symplectic trivialization $\tau$ of $u^* T \hat{X}$. With $\tau^{\pm}$ chosen like this, even though each individual term on the right-hand side of Equation \eqref{eq:dimension for ms} depends on $\tau^{\pm}$, the right-hand side is independent of the choice of $\tau$. Throughout this chapter, if $\mathcal{M}$ is a moduli space of solutions of the parametrized Floer equation, we will denote by $\# \mathcal{M}$ the signed count of points $(w,u)$ in $\mathcal{M}$ such that $\dim_{(w,u)} \mathcal{M} = 0$. 

\begin{definition}
    \label{def:differential}
    We define $\del \colon \homology{}{S^1}{}{F}{C}{}{}(X,H,J) \longrightarrow \homology{}{S^1}{}{F}{C}{}{}(X,H,J)$ by
    \begin{IEEEeqnarray*}{c+x*}
        \del ([z^+,\gamma^+]) \coloneqq \sum_{[z^-,\gamma^-] \in \mathcal{P}(H)}^{} \# \mathcal{M}_{\vphantom{0}}(H,J,[z^+,\gamma^+],[z^-,\gamma^-]) \cdot [z^-,\gamma^-],
    \end{IEEEeqnarray*}
    for each $[z^+,\gamma^+] \in \mathcal{P}(H)$.
\end{definition}

By \cref{lem:action energy for floer trajectories}, the differential respects the action filtration, i.e. the differential $\del$ maps $\homology{}{S^1}{}{F}{C}{a}{}(X,H,J)$ to itself. By \cite[Proposition 2.2]{bourgeoisEquivariantSymplecticHomology2016}, $\partial \circ \partial = 0$.

\begin{definition}
    \phantomsection\label{def:U map}
    We define a map $U \colon \homology{}{S^1}{}{F}{C}{}{}(X,H,J) \longrightarrow \homology{}{S^1}{}{F}{C}{}{}(X,H,J)$ as follows. First, recall that a critical point $z$ of $\tilde{f}_N$ is of the form $z = e^{2 \pi i t} e_j$, for $t \in S^1$ and $j = 0, \ldots, N$. If $j \geq 1$, let $\shf(e^{2 \pi i t} e_j) \coloneqq e^{2 \pi i t} e_{j-1}$. Finally, define
    \begin{IEEEeqnarray*}{c+x*}
        U ([z,\gamma]) \coloneqq
        \begin{cases}
            [\shf(z),\gamma] & \text{if } \morse(z) \geq 2, \\
            0                & \text{if } \morse(z) = 0,
        \end{cases}
    \end{IEEEeqnarray*}
    for $[z,\gamma] \in \mathcal{P}(H)$.
\end{definition}

The definition of $U$ is well-posed because by \cref{def:hamiltonians} \ref{item:pullbacks}, the Hamiltonians $H_{e_j}$ and $H_{e_{j-1}}$ differ by a constant. Therefore, if $\gamma$ is a $1$-periodic orbit of $H_{e_j}$ then it is also a $1$-periodic orbit of $H_{e_{j-1}}$. By \cite[Section 6.3]{guttSymplecticCapacitiesPositive2018}, $U$ is a chain map, i.e. $U \circ \partial = \partial \circ U$.

\begin{lemma}
    The map $U \colon \homology{}{S^1}{}{F}{C}{}{}(X,H,J) \longrightarrow \homology{}{S^1}{}{F}{C}{}{}(X,H,J)$ respects the filtration.
\end{lemma}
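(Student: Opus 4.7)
The plan is to unwind the definition of $U$ on generators and reduce the claim to a comparison of the actions of $[z,\gamma]$ and $[\shf(z),\gamma]$. Since $U$ is $\Q$-linear and $\homology{}{S^1}{}{F}{C}{a}{}(X,H,J)$ is spanned by the classes $[z,\gamma] \in \mathcal{P}(H)$ with $\mathcal{A}_H(z,\gamma) \leq a$, it suffices to fix such a generator and show that $U([z,\gamma]) \in \homology{}{S^1}{}{F}{C}{a}{}(X,H,J)$. The case $\morse(z) = 0$ is immediate because then $U([z,\gamma]) = 0$, so I will concentrate on the case $\morse(z) \geq 2$, where $U([z,\gamma]) = [\shf(z),\gamma]$; the task is to prove the action inequality $\mathcal{A}_H(\shf(z), \gamma) \leq \mathcal{A}_H(z,\gamma)$.

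The key ingredient will be condition \ref{item:pullbacks} of \cref{def:hamiltonians}, which asserts the existence of $E \geq 0$ with $(\inc^{N,N-1}_0)^* H = (\inc^{N,N-1}_1)^* H + E$. Writing $z = e^{2 \pi i t_0} e_j$ with $j \geq 1$ and using $S^1$-invariance of $H$, together with the fact that $\inc^{N,N-1}_0$ sends $e_{j-1}$ to $e_{j-1}$ while $\inc^{N,N-1}_1$ sends $e_{j-1}$ to $e_j$, I would deduce the pointwise identity
\begin{IEEEeqnarray*}{c+x*}
    H_{\shf(z)}(t, x) = H_z(t, x) + E
\end{IEEEeqnarray*}
on $S^1 \times \hat{X}$. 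In particular, since $E$ is a constant, $X_{H_{\shf(z)}} = X_{H_z}$, so $\gamma$ really is a $1$-periodic orbit of $H_{\shf(z)}$ and $[\shf(z),\gamma]$ defines an element of $\mathcal{P}(H)$.

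From the definition of the action functional in \cref{def:action functional}, the identity above immediately gives
\begin{IEEEeqnarray*}{c+x*}
    \mathcal{A}_H(\shf(z), \gamma) = \int_{S^1} \gamma^* \hat{\lambda} - \int_{S^1} H_{\shf(z)}(t,\gamma(t)) \edv t = \mathcal{A}_H(z, \gamma) - E \leq \mathcal{A}_H(z, \gamma),
\end{IEEEeqnarray*}
where the inequality uses $E \geq 0$. This yields $\mathcal{A}_H(\shf(z),\gamma) \leq a$, completing the proof. There is no real obstacle here; the only thing to be careful about is matching the indexing conventions of the embeddings $\inc^{N,N-1}_0$ and $\inc^{N,N-1}_1$ with the definition of $\shf$, so that the sign of $E$ is consistent with the shift $e_j \mapsto e_{j-1}$ lowering (rather than raising) the Hamiltonian by the nonnegative constant $E$.
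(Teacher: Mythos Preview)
Your proof is correct and follows essentially the same approach as the paper: both reduce to the generator case, use condition \ref{item:pullbacks} to obtain $H_{\shf(z)} = H_z + E$ with $E \geq 0$, and conclude $\mathcal{A}_H(\shf(z),\gamma) = \mathcal{A}_H(z,\gamma) - E \leq \mathcal{A}_H(z,\gamma)$. Your explicit unwinding of how the embeddings $\inc^{N,N-1}_0$ and $\inc^{N,N-1}_1$ act on $e_{j-1}$ is a helpful clarification that the paper leaves implicit.
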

\begin{proof}
    Let $[z,\gamma] \in \mathcal{P}(H)$ be such that $\morse(z) \geq 2$ and $\mathcal{A}_{H}(z,\gamma) \leq a$. We wish to show that $\mathcal{A}_{H}(\shf(z),\gamma) \leq \mathcal{A}_{H}(z,\gamma) \leq a$. Assumption \ref{item:pullbacks} of \cref{def:hamiltonians} implies that $H_{\shf(z)} = H_z + E$, where $E \geq 0$. Then,
    \begin{IEEEeqnarray*}{rCls+x*}
        \mathcal{A}_{H}(\shf(z),\gamma)
        & =    & \int_{S^1}^{} \gamma^* \hat{\lambda} - \int_{0}^{1} H(t,\shf(z),\gamma(t)) \edv t & \quad [\text{by definition of $\mathcal{A}_{H}$}] \\
        & =    & \int_{S^1}^{} \gamma^* \hat{\lambda} - \int_{0}^{1} H(t,z,\gamma(t)) \edv t - E   & \quad [\text{since $H_{\shf(z)} = H_z + E$}]      \\
        & =    & \mathcal{A}_{H}(z,\gamma) - E                                                     & \quad [\text{by definition of $\mathcal{A}_{H}$}] \\
        & \leq & \mathcal{A}_{H}(z,\gamma)                                                         & \quad [\text{since $E \geq 0$}]                   \\
        & \leq & a                                                                                 & \quad [\text{by assumption on $[z,\gamma]$}].  & \qedhere
    \end{IEEEeqnarray*}
\end{proof}

We will now define the continuation maps. For $(H^+,J^+) \leq (H^-,J^-) \in \admissible{X}$, we want to define a morphism $\phi^{-,+} \colon \homology{}{S^1}{}{F}{C}{}{}(X,H^+,J^+) \longrightarrow \homology{}{S^1}{}{F}{C}{}{}(X,H^-,J^-)$. Consider the map
\begin{IEEEeqnarray*}{rrCl}
    \inc^{N^-,N^+}_k \colon & \hat{\mathcal{P}}((\inc_k ^{N^-,N^+})^* H^-) & \longrightarrow & \hat{\mathcal{P}}(H^-) \\
                            & (z,\gamma)                                   & \longmapsto     & (\inc^{N^-,N^+}_k(z),\gamma).
\end{IEEEeqnarray*}
This map fits into the commutative diagram
\begin{IEEEeqnarray*}{c+x*}
    \begin{tikzcd}[row sep=scriptsize, column sep={{{{6em,between origins}}}}]
                                                                              & \hat{\mathcal{P}}((\inc_k^{N^-,N^+})^* H^-) \arrow[dl, "\inc^{N^-,N^+}_k"] \arrow[rr] \arrow[dd] &                                                      & \critpt (\tilde{f}_{N^+}) \arrow[dl, "\inc^{N^-,N^+}_k"] \arrow[dd] \\
        \hat{\mathcal{P}}(H^-) \arrow[rr, crossing over, near end] \arrow[dd] &                                                                                                   & \critpt (\tilde{f}_{N^-})                           & \\
                                                                              & \mathcal{P}((\inc_k^{N^-,N^+})^* H^-)  \arrow[dl, dashed, "\exists ! i^{N^-,N^+}_k"] \arrow[rr]  &                                                      & \critpt (f_{N^+}) \arrow[dl, "i^{N^-,N^+}_k"]                       \\
        \mathcal{P}(H^-) \arrow[rr] \ar[uu, leftarrow, crossing over]         &                                                                                                   & \critpt (f_{N^-}) \ar[uu, leftarrow, crossing over] &
    \end{tikzcd}
\end{IEEEeqnarray*}

\begin{definition}
    An \textbf{admissible} homotopy of parametrized Hamiltonians from $H^-$ to $H^+$ is a map $H \colon \R \times S^1 \times S^{2N^+ +1} \times \hat{X} \longrightarrow \R$ which satisfies the conditions in \cref{item:homotopy h 1,item:homotopy h 2,item:homotopy h 3}, where $H_s(t,z,x) = H(s,t,z,x)$. We denote the set of such $H$ by $\mathcal{H}(H^+,H^-)$.
    \begin{enumerate}
        \item \label{item:homotopy h 3} For every $s \in \R$, we have that $H_s$ satisfies all the assumptions in \cref{def:hamiltonians}, with the exceptions that $C_s$ may be in $\operatorname{Spec}(\del X,\lambda|_{\del X})$, and it is not necessarily true that $z \in \critpt \tilde{f}_N$ implies that $H_{s,z}$ is nondegenerate.
        \item \label{item:homotopy h 1} There exists $s_0 > 0$ such that if $\pm s > s_0$ then $H_s = (\inc^{N^\pm,N^+}_0)^* H^\pm$.
        \item \label{item:homotopy h 2} For every $(s,t,z,x) \in \R \times S^1 \times S^{2N^+ + 1} \times \hat{X}$ we have that $\del_s H(s,t,x,z) \leq 0$.
    \end{enumerate}
\end{definition}

\begin{definition}
    An \textbf{admissible} homotopy of parametrized almost complex structures from $J^-$ to $J^+$ is a map $J \colon \R \times S^1 \times S^{2N^+ +1} \times \hat{X} \longrightarrow \End(T \hat{X})$ which satisfies the conditions in \cref{item:homotopy j 1,item:homotopy j 3}, where $J_s(t,z,x) = J(s,t,z,x)$. We denote the set of such $J$ by $\mathcal{J}(J^+,J^-)$.
    \begin{enumerate}
        \item \label{item:homotopy j 3} For every $s \in \R$, we have that $J_s$ satisfies all the assumptions in \cref{def:acs}.
        \item \label{item:homotopy j 1} There exists $s_0 > 0$ such that if $\pm s > s_0$ then $J_s = (\inc^{N^\pm,N^+}_0)^* J^\pm$.
    \end{enumerate}
\end{definition}

\begin{definition}
    Let $[z^\pm,\gamma^\pm] \in \mathcal{P}((\inc^{N^\pm,N^+}_0)^* H^\pm)$ and $(H,J)$ be a homotopy from $(H^-,J^-)$ to $(H^+,J^+)$. A pair $(w,u)$, where $w \colon \R \longrightarrow S^{2N^+ +1}$ and $u \colon \R \times S^1 \longrightarrow \hat{X}$ is a solution of the \textbf{parametrized Floer equation} (with respect to $(H, J)$) if
    \begin{equation*}
        \left\{ \,
            \begin{IEEEeqnarraybox}[
                \IEEEeqnarraystrutmode
                \IEEEeqnarraystrutsizeadd{7pt}
                {7pt}][c]{rCl}
                \dot{w}(s) & = & \nabla \tilde{f}_N(w(s)) \\
                \pdv{u}{s}(s,t) & = & - J^t_{s,w(s)}(u(s,t)) \p{}{2}{ \pdv{u}{t}(s,t) - X_{H^t_{s,w(s)}} (u(s,t)) }.
            \end{IEEEeqnarraybox}
        \right.
    \end{equation*}
    Define $\hat{\mathcal{M}}(H,J,[z^+,\gamma^+],[z^-,\gamma^-])$ to be the moduli space of solutions $(w,u)$ of the pa\-ra\-me\-trized Floer equation such that $(w(s),u(s,\cdot))$ converges as $s \to \pm \infty$ to an element in the equivalence class $[z^\pm,\gamma^\pm]$. Define an action of $S^1$ on $\hat{\mathcal{M}}(H,J,[z^+,\gamma^+],[z^-,\gamma^-])$ by 
    \begin{IEEEeqnarray*}{c+x*}
        t \cdot (w,u) = (e ^{2 \pi i t} w, u(\cdot, \cdot - t)).
    \end{IEEEeqnarray*}
    Finally, let $\mathcal{M}(H,J,[z^+,\gamma^+],[z^-,\gamma^-]) \coloneqq \hat{\mathcal{M}}(H,J,[z^+,\gamma^+],[z^-,\gamma^-])/S^1$.
\end{definition}

\begin{definition}
    \label{def:continuation map}
    The \textbf{continuation map} is the map 
    \begin{IEEEeqnarray*}{c+x*}
        \phi^{-,+} \colon \homology{}{S^1}{}{F}{C}{}{}(X,H^+,J^+) \longrightarrow \homology{}{S^1}{}{F}{C}{}{}(X,H^-,J^-)
    \end{IEEEeqnarray*}
    given as follows. Choose a regular homotopy $(H, J)$ from $(H^-,J^-)$ to $(H^+,J^+)$. Then, for every $[z^+, \gamma^+] \in \mathcal{P}(H^+)$,
    \begin{IEEEeqnarray*}{c}
        \phi^{-,+}([z^+,\gamma^+]) \coloneqq \sum_{[z^-,\gamma^-] \in \mathcal{P}((\inc_0 ^{N^-,N^+})^* H^-)} \# \mathcal{M}_{\vphantom{0}}(H,J,[z^+,\gamma^+],[z^-,\gamma^-]) \cdot [\inc^{N^-,N^+}_0 (z^-),\gamma^-].
    \end{IEEEeqnarray*}
\end{definition}

\begin{lemma}
    The map $\phi^{-,+}$ respects the action filtrations.
\end{lemma}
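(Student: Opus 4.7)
The plan is to imitate the computation from \cref{lem:action energy for floer trajectories}, but keeping track of the extra terms produced by the $s$-dependence of the homotopy $H$. Let $(w, u)$ be a point in $\mathcal{M}(H, J, [z^+, \gamma^+], [z^-, \gamma^-])$, so $u \colon \R \times S^1 \to \hat{X}$ satisfies the parametrized Floer equation with respect to $(H, J)$ and $(w(s), u(s, \cdot))$ converges at $\pm \infty$ to a representative of $[z^\pm, \gamma^\pm]$. Denoting as usual $\mathbf{u}(s,t) = (s, t, w(s), u(s,t))$, the pointwise identity
\begin{IEEEeqnarray*}{c+x*}
    \tfrac{1}{2} \| \dv u - X_H(\mathbf{u}) \otimes \tau \|^2_{J(\mathbf{u}), \hat{\omega}} \, \omega_{\dot{\Sigma}} = u^* \hat{\omega} - u^* \edv_{\hat{X}} H(\mathbf{u}) \wedge \tau
\end{IEEEeqnarray*}
already observed in the proof of \cref{lem:action energy for floer trajectories} still holds, since it uses only the Floer equation. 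The energy of $u$ is therefore nonnegative.

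The next step is to rewrite $u^* \edv_{\hat{X}} H(\mathbf{u}) \wedge \tau$ as an exact form minus the extra contributions. On the cylinder $\dot{\Sigma} = \R \times S^1$ with $\sigma = \edv s$ and $\tau = \edv t$ one has $\edv \tau = 0$, and splitting $\edv H(\mathbf{u}) = \edv_{\dot{\Sigma}} H(\mathbf{u}) + w^* \edv_S H(\mathbf{u}) + u^* \edv_{\hat{X}} H(\mathbf{u})$ gives
\begin{IEEEeqnarray*}{rCl}
    \edv(\mathbf{u}^* H \wedge \tau)
    & = & \partial_s H(\mathbf{u}) \, \edv s \wedge \edv t + \p{<}{}{\nabla_S H(\mathbf{u}), \nabla \tilde{f}_{N^+}(w)} \edv s \wedge \edv t \\
    &   & {} + u^* \edv_{\hat{X}} H(\mathbf{u}) \wedge \tau,
\end{IEEEeqnarray*}
where the Morse equation $\dot{w} = \nabla \tilde{f}_{N^+}(w)$ has been used. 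Combining with the previous identity, integrating over $\dot{\Sigma}$, and applying Stokes' theorem (using the asymptotic behaviour at $s \to \pm \infty$, where $H_s = (\inc_0^{N^\pm, N^+})^* H^\pm$) yields
\begin{IEEEeqnarray*}{c+x*}
    0 \leq E(u) = \mathcal{A}_{H^+}(z^+,\gamma^+) - \mathcal{A}_{H^-}(\inc_0^{N^-,N^+}(z^-),\gamma^-) - \int_{\dot\Sigma} \p{}{1}{ -\partial_s H - \p{<}{}{\nabla_S H, \nabla \tilde{f}_{N^+}(w)} } \edv s \wedge \edv t.
\end{IEEEeqnarray*}

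To conclude, observe that both integrand terms are nonpositive: $\partial_s H \leq 0$ is condition \ref{item:homotopy h 2} of the definition of admissible homotopy, and $\p{<}{}{\nabla_S H_s, \nabla \tilde{f}_{N^+}(w)} \leq 0$ follows from condition \ref{item:homotopy h 3} together with item \ref{item:flow lines} of \cref{def:hamiltonians}. Hence
\begin{IEEEeqnarray*}{c+x*}
    \mathcal{A}_{H^-}(\inc_0^{N^-,N^+}(z^-), \gamma^-) \leq \mathcal{A}_{H^+}(z^+, \gamma^+).
\end{IEEEeqnarray*}
Consequently, if $\mathcal{A}_{H^+}(z^+, \gamma^+) \leq a$, then every orbit $[\inc_0^{N^-,N^+}(z^-), \gamma^-]$ appearing with nonzero coefficient in $\phi^{-,+}([z^+,\gamma^+])$ also has action $\leq a$, so $\phi^{-,+}(\homology{}{S^1}{}{F}{C}{a}{}(X, H^+, J^+)) \subset \homology{}{S^1}{}{F}{C}{a}{}(X, H^-, J^-)$. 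I expect no serious obstacle; the only delicate point is bookkeeping the identification of $\mathcal{A}_{(\inc_0^{N^-, N^+})^* H^-}(z^-, \gamma^-)$ with $\mathcal{A}_{H^-}(\inc_0^{N^-, N^+}(z^-), \gamma^-)$, which is immediate from the definitions since the Hamiltonian evaluated at a point of $S^{2N^++1}$ agrees with the pullback of $H^-$ evaluated at the corresponding point of $S^{2N^-+1}$.
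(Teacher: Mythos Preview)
Your proof is correct and follows essentially the same approach as the paper: the action inequality is obtained from the nonnegativity of the energy of a parametrized Floer trajectory, together with the decreasing conditions on the homotopy. The only difference is that the paper does not redo the energy computation but simply invokes \cref{lem:action energy for floer trajectories} directly (the admissible homotopy satisfies the hypotheses of \cref{def:admissible hamiltonian abstract}\ref{def:admissible hamiltonian abstract 3}, since $\edv \tau = 0$ on the cylinder and conditions \ref{item:homotopy h 2}, \ref{item:homotopy h 3} supply the remaining inequalities), and then spells out the bookkeeping identification $\mathcal{A}_{H^-}(\inc_0^{N^-,N^+}(z^-),\gamma^-) = \mathcal{A}_{(\inc_0^{N^-,N^+})^* H^-}(z^-,\gamma^-)$ that you mention at the end. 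So your argument is a correct but less economical version of the paper's: you unpack the cited lemma rather than citing it.
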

\begin{proof}
    Assume that $[z^\pm,\gamma^\pm] \in \mathcal{P}((\inc_0 ^{N^\pm,N^+})^* H^\pm)$ is such that $\mathcal{A}_{H^+}(z^+,\gamma^+) \leq a$ and $\mathcal{M}(H,J,[z^+,\gamma^+],[z^-,\gamma^-])$ is nonempty. We wish to show that $\mathcal{A}_{H^-}(\inc^{N^-,N^+}_0(z^-),\gamma^-) \leq a$. The proof is the following computation.
    \begin{IEEEeqnarray*}{rCls+x*}
        \IEEEeqnarraymulticol{3}{l}{\mathcal{A}_{H^-}(\inc^{N^-,N^+}_0(z^-),\gamma^-)}\\
        \quad & =    & \int_{S^1}^{} \gamma^* \hat{\lambda} - \int_{0}^{1} H^-(t, \inc^{N^-,N^+}_0(z^-),\gamma^-(t)) \edv t       & \quad [\text{definition of action functional}]  \\
              & =    & \int_{S^1}^{} \gamma^* \hat{\lambda} - \int_{0}^{1} ((\inc_0 ^{N^-,N^+})^* H^-)(t, z^-,\gamma^-(t)) \edv t & \quad [\text{definition of $\inc^{N^-,N^+}_0$}] \\
              & =    & \mathcal{A}_{(\inc_0 ^{N^-,N^+})^* H^-}(z^-,\gamma^-)                                                      & \quad [\text{definition of action functional}]  \\
              & \leq & \mathcal{A}_{H^+}(z^+,\gamma^+)                                                                            & \quad [\text{by \cref{lem:action energy for floer trajectories}}]    \\
              & \leq & a                                                                                                          & \quad [\text{by assumption}].                    & \qedhere
    \end{IEEEeqnarray*}
\end{proof}

By \cite[Section 2.4]{bourgeoisEquivariantSymplecticHomology2016}, the $U$ maps and the continuation maps commute. Moreover, by the usual arguments in Floer theory, we have (see also \cite[Section 5.3]{guttSymplecticCapacitiesPositive2018}):
\begin{enumerate}
    \item The continuation map $\phi^{-,+}$ is a chain map, i.e. $\phi^{-,+} \circ \del^+ = \del^- \circ \phi^{-,+}$.
    \item The continuation map $\phi^{-,+}$ is independent (up to chain homotopy, i.e. as a morphism in $\comp$) on the choice of regular homotopy $(H, J)$.
    \item The continuation maps are functorial, i.e. if $(H^0,J^0) \leq (H^1,J^1) \leq (H^2,J^2) \in \admissible{X}$ then $\phi^{2,1} \circ \phi^{1,0} = \phi^{2,0}$.
\end{enumerate}

\begin{remark}
    \label{rmk:grading for s1esh}
    By the determinant property of \cref{thm:properties of cz}, the parity of the Conley--Zehnder index of a Hamiltonian $1$-periodic orbit is independent of the choice of trivialization. Therefore, $\homology{}{S^1}{}{F}{C}{}{}(X,H,J)$ has a $\Z_{2}$-grading given by
    \begin{IEEEeqnarray}{c}
        \deg([z,\gamma]) \coloneqq \mu([z,\gamma]) \coloneqq \morse(z) + \conleyzehnder(\gamma). \plabel{eq:grading s1esh}
    \end{IEEEeqnarray}
    If $\pi_1(X) = 0$ and $c_1(TX)|_{\pi_2(X)} = 0$, then by \cref{lem:cz of hamiltonian is independent of triv over filling disk} we have well-defined Conley--Zehnder indices in $\Z$. Therefore, Equation \eqref{eq:grading s1esh} defines a $\Z$-grading on $\homology{}{S^1}{}{F}{C}{}{}(X,H,J)$. With respect to this grading,
    \begin{IEEEeqnarray*}{rCls+x*}
        \deg(\partial)   & = & -1, \\
        \deg(U)          & = & -2, \\
        \deg(\phi^{-,+}) & = & 0.
    \end{IEEEeqnarray*}
\end{remark}

\begin{definition}
    If $(X,\lambda)$ is a nondegenerate Liouville domain, the \textbf{$S^1$-equivariant Floer chain complex} of $X$ is the functor
    \begin{IEEEeqnarray*}{rrCl}
        \homology{}{S^1}{X}{F}{C}{}{} \colon & \admissible{X} & \longrightarrow & \comp                        \\
                                             & (H^+,J^+)    & \longmapsto     & (\homology{}{S^1}{}{F}{C}{}{}(X,H^+,J^+), \del^+, U^+)  \\
                                             & \downarrow   & \longmapsto     & \downarrow \phi^{-,+}        \\
                                             & (H^-,J^-)    & \longmapsto     & (\homology{}{S^1}{}{F}{C}{}{}(X,H^-,J^-), \del^-, U^-),
    \end{IEEEeqnarray*}
    The \textbf{$S^1$-equivariant Floer homology} of $X$ is the functor $\homology{}{S^1}{X}{F}{H}{}{} = H \circ \homology{}{S^1}{X}{F}{C}{}{}$. The \textbf{positive $S^1$-equivariant Floer homology} of $X$ is the functor $\homology{}{S^1}{X}{F}{H}{+}{}$ given by
    \begin{IEEEeqnarray*}{rCls+x*}
        \homology{}{S^1}{X}{F}{H}{+}{}(H,J) 
        & \coloneqq & \homology{}{S^1}{}{F}{H}{(\varepsilon, +\infty)}{}(X,H,J) \\
        & =         & \homology{}{S^1}{}{F}{H}{}{}(X,H,J) / \homology{}{S^1}{}{F}{H}{\varepsilon}{}(X,H,J).
    \end{IEEEeqnarray*}
\end{definition}

\begin{definition}
    For $(X,\lambda)$ is a nondegenerate Liouville domain, the \textbf{$S^1$-equivariant symplectic homology} of $X$ is the object in $\modl$ given by $\homology{}{S^1}{}{S}{H}{}{}(X,\lambda) \coloneqq \colim \homology{}{S^1}{X}{F}{H}{}{}$. The \textbf{positive $S^1$-equivariant symplectic homology} of $X$ is given by 
    \begin{IEEEeqnarray*}{rCls+x*}
        \homology{}{S^1}{}{S}{H}{+}{}(X,\lambda)
        & \coloneqq & \colim \homology{}{S^1}{X}{F}{H}{+}{} \\
        & =         & \homology{}{S^1}{}{S}{H}{(\varepsilon, +\infty)}{}(X, \lambda) \\
        & =         & \homology{}{S^1}{}{S}{H}{}{}(X, \lambda) / \homology{}{S^1}{}{S}{H}{\varepsilon}{}(X, \lambda).
    \end{IEEEeqnarray*}
\end{definition}

\section{Viterbo transfer map of a Liouville embedding}
\label{sec:viterbo transfer map of liouville embedding}

Our goal is to prove that $\homology{}{S^1}{}{S}{H}{}{}$ is a contravariant functor from a suitable category of Liouville domains onto $\modl$. More specifically, suppose that $(V,\lambda_V)$ and $(W,\lambda_W)$ are nondegenerate Liouville domains and $\varphi \colon (V,\lambda_V) \longrightarrow (W,\lambda_W)$ is a $0$-codimensional strict generalized Liouville embedding. We will define a \textbf{Viterbo transfer map}
\begin{IEEEeqnarray*}{rrCl}
    \varphi_! \colon & \homology{}{S^1}{}{S}{H}{}{}(W,\lambda_W)  & \longrightarrow & \homology{}{S^1}{}{S}{H}{}{}(V,\lambda_V), \\
    \varphi_! \colon & \homology{}{S^1}{}{S}{H}{+}{}(W,\lambda_W) & \longrightarrow & \homology{}{S^1}{}{S}{H}{+}{}(V,\lambda_V),
\end{IEEEeqnarray*}
which is a morphism in $\modl$. We will start by definition the Viterbo transfer map in the case where $\varphi$ is a Liouville embedding instead of just a generalized Liouville embedding. Consider the completions $\hat{V}$ and $\hat{W}$ of $V$ and $W$ respectively, as well as the induced map $\hat{\varphi} \colon \hat{V} \longrightarrow \hat{W}$. Choose $R$ so small that $\hat{\varphi}(V \union ([0,R] \times \del V)) \subset W$. We define
\begin{IEEEeqnarray*}{rCls+x*}
    \varepsilon_V & \coloneqq & \frac{1}{2} \min \operatorname{Spec}(\del V, \lambda_V), \\
    \varepsilon_W & \coloneqq & \frac{1}{2} \min \operatorname{Spec}(\del W, \lambda_W), \\
    \varepsilon   & \coloneqq & \min \{ \varepsilon_V, \varepsilon_W \}.
\end{IEEEeqnarray*}


\begin{definition}
    \label{def:stair hamiltonians}
    A \textbf{stair} parametrized Hamiltonian is a map $\overline{H} \colon S^1 \times S^{2N+1} \times \hat{W} \longrightarrow \R$ such that $\overline{H}$ satisfies the conditions in \cref{item:invariant,item:flow lines,item:pullbacks,item:ndg} from \cref{def:hamiltonians} as well as the conditions in the Items below. We denote the set of such $\overline{H}$ by $\mathcal{H}(W,V,N)$.
    \begin{enumerate}[label=(\Roman*)]
        \item \label{item:stair 1} On $S^1 \times S^{2N+1} \times V$, we have that $\hat{\varphi}^* \overline{H}$ has values in $(0, \varepsilon)$, is $S^1$-independent and is $C^2$-close to a constant.
        \item \label{item:stair 2} On $S^1 \times S^{2N+1} \times [0, \delta_V] \times \del V$, we have that $-\varepsilon < \hat{\varphi}^* \overline{H} < \varepsilon$ and $\hat{\varphi}^* \overline{H}$ is $C^2$-close to $(t,z,r,x) \longmapsto h_{\rmn{2}}(e^r)$, where $h_{\rmn{2}} \colon [1,e^{\delta_V}] \longrightarrow \R$ is increasing and strictly convex.
            \myitem[($\mathrm{S}_{V}$)] \plabel{item:stair v} On $S^1 \times S^{2N+1} \times [\delta_V, R - \delta_V] \times \del V$, we have that $\hat{\varphi}^* \overline{H}(t,z,r,x) = C_V e^r + D_V$, for $D_V \in \R$ and $C_V \in \R_{>0} \setminus \operatorname{Spec}(\del V, \lambda_V|_{\del V}) \union \operatorname{Spec}(\del W, \lambda_W|_{\del W})$.
        \item \label{item:stair 3} On $S^1 \times S^{2N+1} \times [R - \delta_V, R] \times \del V$, we have that $\hat{\varphi}^* \overline{H}$ is $C^2$-close to the function $(t,z,r,x) \longmapsto h_{\rmn{3}}(e^r)$, where $h_{\rmn{3}} \colon [e^{R - \delta_V},e^{R}] \longrightarrow \R$ is increasing and strictly concave.
        \item \label{item:stair 4} On $S^1 \times S^{2N+1} \times W \setminus \hat{\varphi}(V \union [0, R] \times \del V)$, the function $\overline{H}$ is $C^2$-close to a constant.
        \item \label{item:stair 5} On $S^1 \times S^{2N+1} \times [0, \delta_W] \times \del W$, we have that $\overline{H}$ is $C^2$-close to $(t,z,r,x) \longmapsto h_{\rmn{5}}(e^r)$, where $h \colon [1,e^{\delta_W}] \longrightarrow \R$ is increasing and strictly convex.
            \myitem[($\mathrm{S}_{W}$)] \plabel{item:stair w} On $S^1 \times S^{2N+1} \times [\delta_W, +\infty) \times \del W$, we have that $\overline{H}(t,z,r,x) = C_W e^r + D_W$, for $D_W \in \R$ and $C_W \in \R_{>0} \setminus \operatorname{Spec}(\del V, \lambda_V|_{\del V}) \union \operatorname{Spec}(\del W, \lambda_W|_{\del W})$ such that $C_W < e^{-\delta_W}(C_V e^{R - \delta_V} + D_V)$.
    \end{enumerate}
\end{definition}

\begin{remark}
    If $(z, \gamma) \in \hat{\mathcal{P}}(H)$, then either $\gamma$ is nonconstant and $\img \gamma$ is in region $\rmn{2}$, $\rmn{3}$ or $\rmn{5}$, or $\gamma$ is constant and $\img \gamma$ is in region $\rmn{1}$ or $\rmn{4}$. There are no $1$-periodic orbits in the slope regions $\mathrm{S}_{V}$ and $\mathrm{S}_{W}$.
\end{remark}

\begin{lemma}
    \label{lem:action stair}
    The actions of $1$-periodic orbits of $\overline{H}$ are ordered according to
    \begin{IEEEeqnarray*}{c+x*}
        \mathcal{A}_{\overline{H}}(\rmn{4}) < \mathcal{A}_{\overline{H}}(\rmn{5}) < 0 < \mathcal{A}_{\overline{H}}(\rmn{1}) < \varepsilon < \mathcal{A}_{\overline{H}}(\rmn{2}).
    \end{IEEEeqnarray*}
\end{lemma}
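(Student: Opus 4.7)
The proof is an elaboration of the argument used for \cref{lem:action admissible}, and proceeds region by region. By \cref{def:stair hamiltonians} \ref{item:stair 1} and \ref{item:stair 4}, in regions $\rmn{1}$ and $\rmn{4}$ the Hamiltonian $\overline{H}$ is $S^1$-independent and $C^2$-close to a constant, so the only $1$-periodic orbits are constant; by \cref{def:action functional}, $\mathcal{A}_{\overline{H}} = - \overline{H}$ on each of these regions. In regions $\rmn{2}$, $\rmn{3}$ and $\rmn{5}$, $\overline{H}$ depends only on the $\R$-coordinate of the symplectization, so \cref{lem:action in symplectization} applies and gives
\begin{IEEEeqnarray*}{rCl}
    \mathcal{A}_{\overline{H}}(r) & = & e^r h'(e^r) - h(e^r), \\
    \mathcal{A}'_{\overline{H}}(r) & = & e^{2r} h''(e^r),
\end{IEEEeqnarray*}
where $h$ is $h_{\rmn{2}}$, $h_{\rmn{3}}$ or $h_{\rmn{5}}$ depending on the region. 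Moreover, by \cref{cor:hamiltonian orbits are reeb orbits}, the orbits in regions $\rmn{2}$ and $\rmn{3}$ (resp.\ $\rmn{5}$) occur precisely at those $r$ for which $h'(e^r) \in \operatorname{Spec}(\partial V, \lambda_V|_{\partial V})$ (resp.\ $\operatorname{Spec}(\partial W, \lambda_W|_{\partial W})$).

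For the inequality $0 < \mathcal{A}_{\overline{H}}(\rmn{1}) < \varepsilon$, one simply reads off $\mathcal{A} = -\overline{H}$ from the range of $\hat{\varphi}^* \overline{H}$ prescribed by \ref{item:stair 1}. For $\mathcal{A}_{\overline{H}}(\rmn{2}) > \varepsilon$, one repeats verbatim the estimate from the proof of \cref{lem:action admissible}, using $h'_{\rmn{2}}(e^r) \geq 2 \varepsilon_V \geq 2 \varepsilon$, $|h_{\rmn{2}}(e^r)| < \varepsilon$ and $e^r > 1$ to obtain
\begin{IEEEeqnarray*}{c+x*}
    \mathcal{A}_{\overline{H}}(\rmn{2}) = e^r h'_{\rmn{2}}(e^r) - h_{\rmn{2}}(e^r) \geq 2 \varepsilon e^r - \varepsilon > \varepsilon.
\end{IEEEeqnarray*}

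The inequalities involving regions $\rmn{4}$ and $\rmn{5}$ require exploiting the transition conditions between the different regions. Let $H_{IV}$ denote the constant value of $\overline{H}$ in region $\rmn{4}$, so $\mathcal{A}_{\overline{H}}(\rmn{4}) = -H_{IV}$. Continuity of $\overline{H}$ across the boundaries together with the fact that $h_{\rmn{3}}$ is increasing on $[e^{R-\delta_V}, e^R]$ yields
\begin{IEEEeqnarray*}{c+x*}
    H_{IV} = h_{\rmn{3}}(e^R) > h_{\rmn{3}}(e^{R-\delta_V}) = C_V e^{R - \delta_V} + D_V,
\end{IEEEeqnarray*}
and the slope condition $C_W < e^{-\delta_W}(C_V e^{R - \delta_V} + D_V)$ from \ref{item:stair w} therefore gives $e^{\delta_W} C_W < H_{IV}$. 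Now $\mathcal{A}'_{\overline{H}}(r) = e^{2r} h''_{\rmn{5}}(e^r) > 0$ on region $\rmn{5}$ (since $h_{\rmn{5}}$ is strictly convex), so every orbit action in $\rmn{5}$ is strictly bounded above by the endpoint value
\begin{IEEEeqnarray*}{c+x*}
    \mathcal{A}_{\overline{H}}(r = \delta_W) = e^{\delta_W} C_W - h_{\rmn{5}}(e^{\delta_W}) \leq e^{\delta_W} C_W - h_{\rmn{5}}(1) = e^{\delta_W} C_W - H_{IV} < 0,
\end{IEEEeqnarray*}
where we used that $h_{\rmn{5}}$ is increasing with $h_{\rmn{5}}(1) = H_{IV}$. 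This proves $\mathcal{A}_{\overline{H}}(\rmn{5}) < 0$.

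Finally, for $\mathcal{A}_{\overline{H}}(\rmn{4}) < \mathcal{A}_{\overline{H}}(\rmn{5})$, one uses strict convexity of $h_{\rmn{5}}$: for every $r > 0$ in region $\rmn{5}$ one has $h_{\rmn{5}}(e^r) < h_{\rmn{5}}(1) + h'_{\rmn{5}}(e^r) (e^r - 1)$, so
\begin{IEEEeqnarray*}{c+x*}
    \mathcal{A}_{\overline{H}}(r) = e^r h'_{\rmn{5}}(e^r) - h_{\rmn{5}}(e^r) > h'_{\rmn{5}}(e^r) - H_{IV} \geq 2\varepsilon_W - H_{IV} > -H_{IV} = \mathcal{A}_{\overline{H}}(\rmn{4}),
\end{IEEEeqnarray*}
since any orbit in region $\rmn{5}$ satisfies $h'_{\rmn{5}}(e^r) \geq 2 \varepsilon_W > 0$. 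The main care required is to keep track of the fact that $\overline{H}$ is only $C^2$-close to the model profiles $h_{\rmn{i}}$, but because every inequality in the statement is strict, the $C^2$-small error can be absorbed just as in \cref{lem:action admissible}.
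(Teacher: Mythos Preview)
Your argument is correct and follows the same region-by-region strategy as the paper, using \cref{lem:action in symplectization} together with the profile conditions of \cref{def:stair hamiltonians}. The paper's write-up is somewhat terser: rather than invoking the convexity inequality $h_{\rmn{5}}(e^r) < h_{\rmn{5}}(1) + h'_{\rmn{5}}(e^r)(e^r-1)$ to obtain $\mathcal{A}_{\overline{H}}(\rmn{4}) < \mathcal{A}_{\overline{H}}(\rmn{5})$, it simply observes that $\mathcal{A}'_{\overline{H}}(r) = e^{2r} h''_{\rmn{5}}(e^r) > 0$ on region $\rmn{5}$ and that $\mathcal{A}_{\overline{H}}$ equals $-H_{IV} = \mathcal{A}_{\overline{H}}(\rmn{4})$ at the left endpoint $r=0$ of that region, so strict monotonicity gives the inequality immediately. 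Your estimate for $\mathcal{A}_{\overline{H}}(\rmn{5}) < 0$ via the endpoint value at $r = \delta_W$ is essentially the same as the paper's direct bound $e^{r_W} T(r_W) - H(r_W) < e^{\delta_W} C_W - (C_V e^{R-\delta_V} + D_V) < 0$, just with the intermediate step $C_V e^{R-\delta_V} + D_V < H_{IV}$ made explicit.
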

\begin{proof}
    Consider \cref{fig:action stair}. By \cref{lem:action in symplectization,def:stair hamiltonians}, we have that $\mathcal{A}_{\overline{H}}$ is constant in regions $\rmn{1}$, $\mathrm{S}_{V}$, $\rmn{4}$ and $\mathrm{S}_{W}$, $\mathcal{A}_{\overline{H}}$ is strictly increasing in regions $\rmn{2}$ and $\rmn{5}$, and $\mathcal{A}_{\overline{H}}$ is strictly decreasing in region $\rmn{3}$. From this reasoning, we conclude that $\mathcal{A}_{\overline{H}}(\rmn{4}) < \mathcal{A}_{\overline{H}}(\rmn{5})$ and $0 < \mathcal{A}_{\overline{H}}(\rmn{1}) < \varepsilon$. By the same argument as in the proof of \cref{lem:action admissible}, we conclude that $\varepsilon < \mathcal{A}_{\overline{H}}(\rmn{2})$. We show that $\mathcal{A}_{\overline{H}}(\rmn{5}) < 0$.
    \begin{IEEEeqnarray*}{rCls+x*}
        \IEEEeqnarraymulticol{3}{l}{\mathcal{A}_{\overline{H}}(\rmn{5})}\\
        \quad & = & e^{r_W} T(r_W) - H(r_W)                       & \quad [\text{by \cref{lem:action in symplectization}}]                                \\
        \quad & < & e^{r_W} C_W - H(r_W)                          & \quad [\text{$T(\delta_W) = C_W$ and $T' = \exp \cdot h_{\rmn{5}}'' \circ \exp > 0$}] \\
        \quad & < & e^{r_W} C_W - (C_V e^{R-\delta_V} + D_V)      & \quad [\text{$H(r_W) > H(R - \delta_V) = C_V e^{R-\delta_V} + D_V$}]            \\
        \quad & < & e^{\delta_W} C_W - (C_V e^{R-\delta_V} + D_V) & \quad [\text{since $r_W < \delta_W$}]                                                       \\
        \quad & < & 0                                             & \quad [\text{since $C_W < e^{-\delta_W}(C_V e^{R - \delta_V} + D_V)$}].                      & \qedhere
    \end{IEEEeqnarray*}
\end{proof}

\begin{figure}[ht]
    \centering

    \begin{tikzpicture}
        [
        help lines/.style={thin, draw = black!50},
        Hamiltonian/.style={thick},
        action/.style={thick},
        axisv/.style={},
        axisw/.style={}
        ]
        \tikzmath{
            \a = 4;
            \b = 3;
            \c = 3;
            \d = 0.5;
            \e = 3;
            \f = 3;
            \g = 1;
            \h = 0.4;
            \sml = 0.05;
            \dOne = -0.3;
            \dFour = 2.5;
            \vFive = 2.6;
            \mTwo   = -(12 * (-\dOne + \dFour) * exp(\d))/((-1 + exp(\d))^3 * (1 + exp(\d)) * (-exp(\d) + exp(\e)));
            \n      = (2 * (-\dOne + \dFour) * exp(\d) * (-1 + 3 * exp(\d)))/((-1 + exp(\d))^3 * (1 + exp(\d)) * (-exp(\d) + exp(\e)));
            \o      = (\dFour * exp(1)^\d - 2 * \dFour * exp(2 * \d) + 2 * \dOne * exp(4 * \d) - \dOne * exp(5 * \d) - \dOne * exp(\e) + 2 * \dOne * exp(\d + \e) - 2 * \dOne * exp(3 * \d + \e) + \dOne * exp(4 * \d + \e))/((-1 + exp(\d))^3 * (1 + exp(\d)) * (-exp(\d) + exp(\e)));
            \uv     = (2 * (-\dOne + \dFour) * exp(\d))/((1 + exp(\d)) * (-exp(\d) + exp(\e))) ;
            \vv     = (\dFour * exp(\d) - \dOne * exp(\e))/(exp(\d) - exp(\e)) ;
            \mThree = -(12 * (-\dOne + \dFour) * exp(4 * \d + \e))/((-1 + exp(\d))^3 * (1 + exp(\d)) * (exp(\d) - exp(\e)));
            \q      = - (2 * (-\dOne + \dFour) * exp(3 * \d + \e) * (-3 + exp(\d)))/((-1 + exp(\d))^3 * (1 + exp(\d)) * (exp(\d) - exp(\e)));
            \s      = (-\dFour * exp(\d) + 2 * \dFour * exp(2 * \d) - 2 * \dFour * exp(4 * \d) + \dFour * exp(5 * \d) + \dFour * exp(\e) - 2 * \dFour * exp(\d + \e) + 2 * \dOne * exp(3 * \d + \e) - \dOne * exp(4 * \d + \e))/((-1 + exp(\d))^3 * (1 + exp(\d)) * (exp(\d) - exp(\e)));
            \uw = -2 * (\dFour - \vFive) / (-1+exp(\g));
            \vw = (2 * exp(\g) * \dFour - \vFive - exp(\g) * \vFive) / (-1+exp(\g));
            \jj = - 12 * (-\dFour + \vFive) / (-1+exp(\g))^4;
            \kk = 2 * (-1 + 3 * exp(\g)) * (-\dFour + \vFive) / (-1+exp(\g))^4;
            \la = ( -2 * exp(\g) * \dFour + 6 * exp(2 * \g) * \dFour - 4 * exp(3 * \g) * \dFour + exp(4 * \g) * \dFour + \vFive - 2 * exp(\g) * \vFive ) / (-1+exp(\g))^4;
            function h2  (\r) { return {\o + \n * \r + 1/2 * exp(\d) * \mTwo * \r^2 + 1/6 * (-1 - exp(\d)) * \mTwo * \r^3 + (\mTwo * \r^4)/12}; };
            function dh2 (\r) { return {\n + 1/6 * \mTwo * \r * (-3 * exp(\d) * (-2 + \r) + \r * (-3 + 2 * \r))}; };
            function h3  (\r) { return {\s + \q * \r - (1/6) * exp(-\d) * \mThree * (-3 + \r) * \r^2 + 1/12 * \mThree * (-2 + \r) * \r^3}; };
            function dh3 (\r) { return {\q + (1/6) * exp(-\d) * \mThree * \r * (6 - 3 * (1 + exp(\d)) * \r + 2 * exp(\d) *  \r^2) }; };
            function h5  (\r) { return {\la + \kk * \r + 1/2 * exp(\g) * \jj * \r^2 + 1/6 * (-1 - exp(\g)) * \jj * \r^3 + 1/12 * \jj * \r^4 }; };
            function dh5 (\r) { return {\kk + 1/6 * \jj * \r * (-3 * exp(\g) * (-2 + \r) + \r * (-3 + 2 * \r))}; };
            function hsv (\r) { return {\uv * \r + \vv}; };
            function hsw (\r) { return {\uw * \r + \vw}; };
            function H2  (\r) { return {h2 (exp(\r))}; };
            function H3  (\r) { return {h3 (exp(\r))}; };
            function H5  (\r) { return {h5 (exp(\r))}; };
            function Hsv (\r) { return {hsv(exp(\r))}; };
            function Hsw (\r) { return {hsw(exp(\r))}; };
            function a2  (\r) { return { exp(\r) * dh2(exp(\r)) - H2(\r) }; };
            function a3  (\r) { return { exp(\r) * dh3(exp(\r)) - H3(\r) }; };
            function a5  (\r) { return { exp(\r) * dh5(exp(\r)) - H5(\r) }; };
            \i = ln((\a-\vw)/\uw) - \g;
            \test = -\uw + exp(-\g) * (\uv * exp(\e-\d) + \vv);
        }
        \draw[Hamiltonian, domain = 0          :\c            ] plot (\x, {\dOne});
        \draw[Hamiltonian, domain = \c         :\c+\d         ] plot (\x, {H2(\x - \c)});
        \draw[Hamiltonian, domain = \c+\d      :\c+\e-\d      ] plot (\x, {Hsv(\x - \c)});
        \draw[Hamiltonian, domain = \c+\e-\d   :\c+\e         ] plot (\x, {H3(\x - \c - \e)});
        \draw[Hamiltonian, domain = \c+\e      :\c+\e+\f      ] plot (\x, {\dFour});
        \draw[Hamiltonian, domain = \c+\e+\f   :\c+\e+\f+\g   ] plot (\x, {H5(\x - \c - \e - \f)});
        \draw[Hamiltonian, domain = \c+\e+\f+\g:\c+\e+\f+\g+\i] plot (\x, {Hsw(\x - \c - \e - \f)}) node[right] {$\overline{H}$};

        \draw[action, domain = 0          :\c            ] plot (\x, {-\dOne});
        \draw[action, domain = \c         :\c+\d         ] plot (\x, {a2(\x - \c)});
        \draw[action, domain = \c+\d      :\c+\e-\d      ] plot (\x, {-\vv});
        \draw[action, domain = \c+\e-\d   :\c+\e         ] plot (\x, {a3(\x - \c - \e)});
        \draw[action, domain = \c+\e      :\c+\e+\f      ] plot (\x, {-\dFour});
        \draw[action, domain = \c+\e+\f   :\c+\e+\f+\g   ] plot (\x, {a5(\x - \c - \e - \f)});
        \draw[action, domain = \c+\e+\f+\g:\c+\e+\f+\g+\i] plot (\x, {-\vw}) node[right] {$\mathcal{A}_{\overline{H}}$};

        \draw[help lines] (0,\h) node[left] {$+\varepsilon$} -- (\c+\e+\f+\g+\i,\h);
        \draw[help lines] (0,-\h) node[left] {$-\varepsilon$} -- (\c+\e+\f+\g+\i,-\h);
        \draw[help lines] (\c+\d,-\b) node[below, axisv] {$\delta_V$} -- (\c+\d,\a);
        \draw[help lines] (\c+\e-\d,-\b) node[below, axisv] {$R-\delta_V\hspace{1.5em}$} -- (\c+\e-\d,\a);
        \draw[help lines] (\c+\e,-\b) node[below, axisv] {$\hspace{0.5em}R$} -- (\c+\e,\a);
        \draw[help lines] (\c+\e+\f+\g,-\b) node[below, axisw] {$\delta_W$} -- (\c+\e+\f+\g,\a);
        \draw[->] (0,-\b) -- (0,\a) node[above] {$\R$};
        \draw (0,0) -- (\c,0);
        \draw[->, axisw] (\c+\e+\f,0) -- (\c+\e+\f+\g+\i,0);
        \draw[->, axisw] (\c+\e+\f,-\b) node[below] {$0$} -- (\c+\e+\f,\a) node[above] {$\R$};
        \draw[->, axisv] (\c,0) -- (\c+\e+\f,0);
        \draw[->, axisv] (\c,-\b) node[below] {$0$} -- (\c,\a) node[above] {$\R$};

        \draw (\c/2,\a) node[below] {$\mathrm{I}$};
        \draw (\c+\d/2,\a) node[below] {$\mathrm{II}$};
        \draw (\c+\e/2,\a) node[below] {$\mathrm{S}_{{V}}$};
        \draw (\c+\e-\d/2,\a) node[below] {$\mathrm{III}$};
        \draw (\c+\e+\f/2,\a) node[below] {$\mathrm{IV}$};
        \draw (\c+\e+\f+\g/2,\a) node[below] {$\mathrm{V}$};
        \draw (\c+\e+\f+\g+1,\a) node[below] {$\mathrm{S}_{{W}}$};

        \draw[help lines, decoration = {brace, mirror, raise=5pt}, decorate] (0,-\b-.75) -- node[below=6pt] {\scriptsize $V$} (\c - \sml,-\b-.75);
        \draw[help lines, decoration = {brace, mirror, raise=5pt}, decorate] (\c+\sml,-\b-.75) -- node[below=6pt] {\scriptsize $[0,R] \times \del V$} (\c + \e - \sml,-\b-.75);
        \draw[help lines, decoration = {brace, mirror, raise=5pt}, decorate] (\c+\e+\sml,-\b-.75) -- node[below=6pt] {\scriptsize ${W \setminus \hat{\varphi} (V \union [0,R] \times \del V)}$} (\c + \e + \f - \sml,-\b-.75);
        \draw[help lines, decoration = {brace, mirror, raise=5pt}, decorate] (\c+\e+\f+\sml,-\b-.75) -- node[below=6pt] {\scriptsize $\R_{\geq 0} \times \del W$} (\c+\e+\f+\g+\i,-\b-.75);
    \end{tikzpicture}

    \caption{Action of a $1$-periodic orbit of $\overline{H}$}
    \label{fig:action stair}
\end{figure}

\begin{definition}
    \phantomsection\label{def:stair acs}
    A \textbf{stair} parametrized almost complex structure is a map $\overline{J} \colon S^1 \times S^{2N+1} \times \hat{W} \longrightarrow \End(T \hat{W})$ satisfying the conditions in \cref{def:stair acs 1,def:stair acs 2,def:stair acs 3,def:stair acs 4} below. We denote the set of such $\overline{J}$ by $\mathcal{J}(W,V,N)$.
    \begin{enumerate}
        \item \label{def:stair acs 1} $\overline{J}$ is $S^1$-invariant.
        \item \label{def:stair acs 2} $\overline{J}$ is $\hat{\omega}$-compatible.
        \item \label{def:stair acs 3} $\overline{J}$ is cylindrical on $S^1 \times S^{2N+1} \times [0, \delta] \times \del V$ and on $S^1 \times S^{2N+1} \times \R_{\geq 0} \times \del W$.
        \item \label{def:stair acs 4} $(\tilde{\iota}_0^{N,N-1})^* \overline{J} = (\tilde{\iota}_1^{N,N-1})^* \overline{J}$.
    \end{enumerate}
\end{definition}

\begin{definition}
    Define sets
    \begin{IEEEeqnarray*}{rCls+x*}
        \stair{W,V} & \coloneqq & 
        \left\{
            (\overline{H}, \overline{J})
            \ \middle\vert
            \begin{array}{l}
                \overline{H} \in \mathcal{H}(W,V,N) \text{ and } \overline{J} \in \mathcal{J}(W,V,N) \text{ for some }N, \\
                (\overline{H}, \overline{J}) \text{ is regular}
            \end{array}
        \right\},
        \\
        \admstair{W,V} & \coloneqq & 
        \left\{
            (H,J,\overline{H}, \overline{J})
            \ \middle\vert
            \begin{array}{l}
                H \in \mathcal{H}(W,N), J \in \mathcal{J}(W,N), \\
                \overline{H} \in \mathcal{H}(W,V,N) \text{ and } \overline{J} \in \mathcal{J}(W,V,N) \text{ for some }N, \\
                H \leq \overline{H}, \text{ and } (H,J) \text{ and } (\overline{H}, \overline{J}) \text{ are regular}
            \end{array}
        \right\}.
    \end{IEEEeqnarray*}
    Define preorders on $\stair{W,V}$ and $\admstair{W,V}$ by
    \begin{IEEEeqnarray*}{rCls+x*}
        (\overline{H}^+,\overline{J}^+) \leq (\overline{H}^-,\overline{J}^-) 
        & \mathrel{\mathop:}\Longleftrightarrow & 
        \left\{
            \begin{array}{l}
                N^+ \leq N^-, \\
                \overline{H}^+ \leq (\inc_0 ^{N^-,N^+})^* \overline{H}^-,
            \end{array}
        \right. \\
        (H^+,J^+,\overline{H}^+,\overline{J}^+) \leq (H^-,J^-,\overline{H}^-,\overline{J}^-)
        & \mathrel{\mathop:}\Longleftrightarrow & 
        \left\{
            \begin{array}{l}
                N^+ \leq N^-, \\
                H^+ \leq (\inc_0 ^{N^-,N^+})^* H^-, \\
                \overline{H}^+ \leq (\inc_0 ^{N^-,N^+})^* \overline{H}^-.
            \end{array}
        \right.
    \end{IEEEeqnarray*}
\end{definition}

\begin{definition}
    Define a function $\pi^{\mathcal{H}}_{W,V,N} \colon \mathcal{H}(W,V,N) \longrightarrow \mathcal{H}(V,N)$ by $\pi_{W,V,N}^{\mathcal{H}}(\overline{H}) = \overline{H}_V$, where
    \begin{IEEEeqnarray*}{c+x*}
        \overline{H}_V(t,z,x) \coloneqq
        \begin{cases}
            \overline{H}(t,z,\hat{\varphi}(x)) & \text{if } x \in V \union ([0,R] \times \del V), \\
            C_V e^r + D_V                      & \text{if } x = (r,y) \in [R, +\infty) \times \del V.
        \end{cases}
    \end{IEEEeqnarray*}
    Define a function $\pi^{\mathcal{J}}_{W,V,N} \colon \mathcal{J}(W,V,N) \longrightarrow \mathcal{J}(V,N)$ by $\pi_{W,V,N}^{\mathcal{J}}(\overline{J}) = \overline{J}_V$, where
    \begin{IEEEeqnarray*}{c+x*}
        \overline{J}_V(t,z,x) \coloneqq
        \begin{cases}
            \dv \hat{\varphi}^{-1}(\hat{\varphi}(x)) \circ \overline{J}(t,z,\hat{\varphi}(x)) \circ \dv \hat{\varphi}(x)       & \text{if } x \in V \union ([0,R] \times \del V), \\
            \dv \hat{\varphi}^{-1}(\hat{\varphi}(0,y)) \circ \overline{J}(t,z,\hat{\varphi}(0,y)) \circ \dv \hat{\varphi}(0,y) & \text{if } x = (r,y) \in [0, +\infty) \times \del V.
        \end{cases}
    \end{IEEEeqnarray*}
\end{definition}

\begin{definition}
    Define the functors
    \begin{IEEEeqnarray*}{rrClCl}
        \pi_W \colon                                      & \admstair{W,V} & \longrightarrow & \admissible{W}, & \text{ given by } & \pi_W(H,J,\overline{H},\overline{J}) \coloneqq (H,J),                        \\
        \pi_{W,V} \colon                                  & \admstair{W,V} & \longrightarrow & \stair{W,V},    & \text{ given by } & \pi_W(H,J,\overline{H},\overline{J}) \coloneqq (\overline{H}, \overline{J}), \\
        \pi_{W,V}^{\mathcal{H} \times \mathcal{J}} \colon & \stair{W,V}    & \longrightarrow & \admissible{V}, & \text{ given by } & \pi_{W,V}^{\mathcal{H} \times \mathcal{J}}(\overline{H},\overline{J}) \coloneqq (\pi^{\mathcal{H}}_{W,V,N}(\overline{H}),\pi^{\mathcal{J}}_{W,V,N}(\overline{J})) = (\overline{H}_V, \overline{J}_V),
    \end{IEEEeqnarray*}
    for $(\overline{H}, \overline{J}) \in \mathcal{H}(W,V,N) \times \mathcal{J}(W,V,N)$. Let $\pi_V^{} \coloneqq \pi_{W,V}^{\mathcal{H} \times \mathcal{J}} \circ \pi_{W,V}^{} \colon \admstair{W,V}^{} \longrightarrow \admissible{V}^{}$.
\end{definition}

\begin{definition}
    \phantomsection\label{def:homotopy stair to admissible hamiltonian}
    Let $H^+ \in \mathcal{H}(W,N^+)$ be an admissible parametrized Hamiltonian and $H^- \in \mathcal{H}(W,V,N^-)$ be a stair parametrized Hamiltonian. Assume that $N^+ \leq N^-$ and $(\tilde{i}_0^{N^-,N^+}) H^+ \leq H^-$. An \textbf{admissible} homotopy of parametrized Hamiltonians from $H^-$ to $H^+$ is a map $H \colon \R \times S^1 \times S^{2 N^+ + 1} \times \hat{W} \longrightarrow \R$ which satisfies the conditions in \cref{item:homotopy stair to admissible hamiltonian 1,item:homotopy stair to admissible hamiltonian 2,item:homotopy stair to admissible hamiltonian 3} for some $s_0 > 0$, where $H_s(t,z,x) = H(s,t,z,x)$. We denote the set of such $H$ by $\mathcal{H}(H^+,H^-)$.
    \begin{enumerate}
        \item \label{item:homotopy stair to admissible hamiltonian 1} For every $s \in (-s_0, s_0)$, we have that $H_s$ satisfies all the conditions in \cref{def:stair hamiltonians} with the exceptions that $C_{W,s}$ and $C_{V,s}$ are possibly in $\operatorname{Spec}(\del W, \lambda_W|_{\del W}) \union \operatorname{Spec}(\del V, \lambda_V|_{\del V})$ and $H_{s,z}$ is not necessarily nondegenerate for $z \in \critpt \tilde{f}_{N^+}$.
        \item \label{item:homotopy stair to admissible hamiltonian 2} For every $s$, if $\pm s \geq s_0$ then $H_s = (\tilde{i}_0^{N^\pm, N^+})^* H^\pm$.
        \item \label{item:homotopy stair to admissible hamiltonian 3} For every $(s,t,z,x) \in \R \times S^1 \times S^{2 N^+ + 1} \times \hat{W}$ we have $\del_s H(s,t,x,z) \leq 0$.
    \end{enumerate}
\end{definition}

\begin{remark}
    In \cref{def:homotopy stair to admissible hamiltonian}, the parameters of $H_s$ depend on $s$. In particular, the ``constant'' value that $H_s$ takes in regions $\rmn{1}$ and $\rmn{4}$ is dependent on $s$. However, the parameter $R$ does not depend on $s$.
\end{remark}

\begin{definition}
    \label{def:homotopy stair to admissible acs}
    Let $J^+ \in \mathcal{J}(W,N^+)$ be an admissible parametrized almost complex structure and $J^- \in \mathcal{J}(W,V,N^-)$ be a stair parametrized almost complex structure. An \textbf{admissible} homotopy of parametrized almost complex structures from $J^-$ to $J^+$ is a map $J \colon \R \times S^1 \times S^{2 N^+ + 1} \times \hat{W} \longrightarrow \End(T \hat{W})$ which satisfies the conditions in \cref{item:homotopy stair to admissible acs 1,item:homotopy stair to admissible acs 2} for some $s_0 > 0$, where $J_s(t,z,x) = J(s,t,z,x)$. We denote the set of such $J$ by $\mathcal{J}(J^+,J^-)$.
    \begin{enumerate}
        \item \label{item:homotopy stair to admissible acs 1} For every $s \in (-s_0, s_0)$, we have that $J_s$ satisfies all the conditions in \cref{def:stair acs}.
        \item \label{item:homotopy stair to admissible acs 2} For every $s$, if $\pm s \geq s_0$ then $J_s = (\tilde{i}_0^{N^\pm, N^+})^* J^\pm$.
    \end{enumerate}
\end{definition}

\begin{remark}
    \label{rmk:floer complex wrt stair}
    Let $(H,J,\overline{H},\overline{J}) \in \admstair{W,V}$ and consider $\pi_W(K) = (H,J) \in \admissible{W}$ and $\pi_{W,V}(K) = (\overline{H},\overline{J}) \in \stair{W,V}$. In \cref{sec:Floer homology} we defined $\homology{}{S^1}{}{F}{C}{}{}(W,H,J)$, the Floer chain complex of $W$ with respect to the auxiliary data $(H,J)$, for every $(H,J) \in \admissible{W}$. Despite the fact that $(\overline{H}, \overline{J})$ is not an element of $\admissible{W}$, the Floer Chain complex $\homology{}{S^1}{}{F}{C}{}{}(W,\overline{H}, \overline{J})$ of $W$ with respect to the auxiliary data $(\overline{H}, \overline{J})$ is well-defined. More precisely, it is possible to replicate the results of \cref{sec:Floer homology} but with the category $\stair{W,V}$ instead of $\admissible{W}$. Then, we can define a functor
    \begin{IEEEeqnarray*}{rrCl}
        \homology{\mathrm{I-V}}{S^1}{W}{F}{C}{}{} \colon & \stair{W,V}             & \longrightarrow & \comp \\
                                                         & (\overline{H}, \overline{J}) & \longmapsto     & \homology{\mathrm{I-V}}{S^1}{W}{F}{C}{}{}(\overline{H},\overline{J}) \coloneqq \homology{}{S^1}{}{F}{C}{}{}(W,\overline{H}, \overline{J}).
    \end{IEEEeqnarray*}
    For every $(H^+, J^+, H^-, J^-) \in \admstair{W,V}$, we have that $H^+ \leq {H}^-$, and therefore we can define a continuation map $\phi^{-,+} \colon \homology{}{S^1}{}{F}{C}{}{}(W,H^+,J^+) \longrightarrow \homology{}{S^1}{}{F}{C}{}{}(W,H^-,J^-)$ which is given by counting solutions of the Floer equation with respect to $H \in \mathcal{H}(H^+,H^-)$ and $J \in \mathcal{J}(J^+,J^-)$. These continuation maps assemble into a natural transformation 
    \begin{IEEEeqnarray*}{c+x*}
        \phi \colon \homology{}{S^1}{W}{F}{C}{}{} \circ \pi_W^{} \longrightarrow \homology{\mathrm{I-V}}{S^1}{W}{F}{C}{}{} \circ \pi_{W,V}^{}.
    \end{IEEEeqnarray*}
\end{remark}

\begin{definition}
    \label{def:subcomplex}
    We define a functor $\homology{\mathrm{III,IV,V}}{S^1}{W}{F}{C}{}{} \colon \stair{W,V}^{} \longrightarrow \comp$ as follows. If $(\overline{H},\overline{J}) \in \stair{W,V}$, then the module $\homology{\mathrm{III,IV,V}}{S^1}{W}{F}{C}{}{}(\overline{H}, \overline{J}) \coloneqq \homology{\mathrm{III,IV,V}}{S^1}{}{F}{C}{}{}(W,\overline{H},\overline{J})$ is the submodule of $\homology{\mathrm{I-V}}{S^1}{}{F}{C}{}{}(W,\overline{H},\overline{J})$ which is generated by (equivalence classes of) $1$-periodic orbits $[z, \gamma]$ of $\overline{H}$ such that $\img \gamma$ is in region $\rmn{3}$, $\rmn{4}$ or $\rmn{5}$. The maps
    \begin{IEEEeqnarray*}{rrCl}
        \del \colon       & \homology{\mathrm{III,IV,V}}{S^1}{}{F}{C}{}{}(W,\overline{H},\overline{J})     & \longrightarrow & \homology{\mathrm{III,IV,V}}{S^1}{}{F}{C}{}{}(W,\overline{H},\overline{J}),   \\
        U \colon          & \homology{\mathrm{III,IV,V}}{S^1}{}{F}{C}{}{}(W,\overline{H},\overline{J})     & \longrightarrow & \homology{\mathrm{III,IV,V}}{S^1}{}{F}{C}{}{}(W,\overline{H},\overline{J}),   \\
        \phi^{-,+} \colon & \homology{\mathrm{III,IV,V}}{S^1}{}{F}{C}{}{}(W,\overline{H}^+,\overline{J}^+) & \longrightarrow & \homology{\mathrm{III,IV,V}}{S^1}{}{F}{C}{}{}(W,\overline{H}^-,\overline{J}^-).
    \end{IEEEeqnarray*}
    are the restrictions (see \cref{lem:maps restrict to subcomplex}) of the maps
    \begin{IEEEeqnarray*}{rrCl}
        \del \colon       & \homology{\mathrm{I-V}}{S^1}{}{F}{C}{}{}(W,\overline{H},\overline{J})     & \longrightarrow & \homology{\mathrm{I-V}}{S^1}{}{F}{C}{}{}(W,\overline{H},\overline{J}),   \\
        U \colon          & \homology{\mathrm{I-V}}{S^1}{}{F}{C}{}{}(W,\overline{H},\overline{J})     & \longrightarrow & \homology{\mathrm{I-V}}{S^1}{}{F}{C}{}{}(W,\overline{H},\overline{J}),   \\
        \phi^{-,+} \colon & \homology{\mathrm{I-V}}{S^1}{}{F}{C}{}{}(W,\overline{H}^+,\overline{J}^+) & \longrightarrow & \homology{\mathrm{I-V}}{S^1}{}{F}{C}{}{}(W,\overline{H}^-,\overline{J}^-),
    \end{IEEEeqnarray*}
    This completes the definition of $\homology{\mathrm{III,IV,V}}{S^1}{W}{F}{C}{}{}$. Since $\homology{\mathrm{III,IV,V}}{S^1}{}{F}{C}{}{}(W,\overline{H},\overline{J})$ is a subcomplex of $\homology{\mathrm{I-V}}{S^1}{}{F}{C}{}{}(W,\overline{H},\overline{J})$, we have an inclusion natural transformation $\iota \colon \homology{\mathrm{III,IV,V}}{S^1}{W}{F}{C}{}{} \longrightarrow \homology{\mathrm{I-V}}{S^1}{W}{F}{C}{}{}$.
\end{definition}

\begin{lemma}
    \label{lem:maps restrict to subcomplex}
    In \cref{def:subcomplex}, the maps $\del, U$ and $\phi^{-,+}$ restrict to maps on $\homology{\mathrm{III,IV,V}}{S^1}{W}{F}{C}{}{}$.
\end{lemma}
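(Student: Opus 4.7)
The lemma asserts that three maps restrict to the submodule $\homology{\mathrm{III,IV,V}}{S^1}{}{F}{C}{}{}(W,\overline{H},\overline{J})$: the $U$ map, the differential $\partial$, and the continuation map $\phi^{-,+}$. I would handle these one at a time, and the proof for $U$ is immediate: by \cref{def:U map}, $U[z,\gamma]$ equals either $[\shf(z),\gamma]$ or $0$, so the orbit $\gamma$ is unchanged. Since membership in the subcomplex is defined purely in terms of $\img\gamma$, the $U$-map trivially preserves it.

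For $\partial$ (and, by an entirely analogous argument, for $\phi^{-,+}$), the task is to show that there is no solution $(w,u)$ of the parametrized Floer equation with positive asymptote $[z^+,\gamma^+]$ having $\img\gamma^+$ in region $\mathrm{III}$, $\mathrm{IV}$, or $\mathrm{V}$, and negative asymptote $[z^-,\gamma^-]$ having $\img\gamma^-$ in region $\mathrm{I}$ or $\mathrm{II}$. I would split the argument into two cases according to where $\gamma^+$ lives.

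The first case covers $\gamma^+$ in region $\mathrm{IV}$ or $\mathrm{V}$. Here the action ordering in \cref{lem:action stair} forces $\mathcal{A}_{\overline{H}}(\gamma^+)<0$, whereas $\mathcal{A}_{\overline{H}}(\gamma^-)>0$ for any $\gamma^-$ in $\mathrm{I}$ or $\mathrm{II}$. Combined with the action-energy inequality from \cref{lem:action energy for floer trajectories} (which yields $\mathcal{A}_{\overline{H}}(\gamma^+)\geq\mathcal{A}_{\overline{H}}(\gamma^-)$ for $\partial$, and the same inequality for $\phi^{-,+}$ thanks to the condition $\partial_s H\leq 0$ in \cref{item:homotopy h 2} and \cref{item:homotopy stair to admissible hamiltonian 3}), this gives an immediate contradiction, so no such trajectory exists.

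The second case, with $\gamma^+$ in region $\mathrm{III}$, is the main obstacle and the only one requiring a confinement argument. Action alone is insufficient because orbits in $\mathrm{III}$ can have action as large as $\mathcal{A}_{\overline{H}}(S_V)>\varepsilon$, so the action-energy inequality does not rule out trajectories to $\mathrm{II}$ (or to $\mathrm{I}$). The strategy is to apply the no escape lemma (\cref{lem:no escape}) to a Liouville subdomain $V^{*}=\hat{\varphi}(V\cup[0,R^{*}]\times\partial V)\subset\hat W$ whose boundary sits inside the slope region $S_V$, i.e.\ with $R^{*}\in(\delta_V,R-\delta_V)$. On a neighbourhood of $\partial V^{*}$ the stair Hamiltonian has the slope form $\overline{H}=C_V e^r+D_V$, so $\overline{J}$ is cylindrical there and the action $\mathcal{A}_{\overline{H}}=-D_V$ is constant; consequently the three hypotheses on $\mathcal{A}_{\overline{H}}$ in \cref{lem:no escape} hold as equalities. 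Orbits in region $\mathrm{III}$, $\mathrm{IV}$, or $\mathrm{V}$ lie outside $V^{*}$, while orbits in $\mathrm{I}$ or $\mathrm{II}$ lie inside $V^{*}$, so a trajectory of the hypothetical type would have to cross $\partial V^{*}$. The hard part will be closing this argument cleanly: a naive application of \cref{lem:no escape} asks that \emph{all} asymptotes lie inside $V^{*}$, which is not our situation. I expect to handle this by combining \cref{lem:no escape} with the maximum principle (\cref{lem:maximum principle}) applied to $r\circ u$ on the preimage of the slope region; the subharmonicity of $\rho=e^{r\circ u}$ that comes out of the slope form of $\overline{H}$ should force the trajectory's intersection with the slab $u^{-1}(\hat{\varphi}([\delta_V,R-\delta_V]\times\partial V))$ to be compact and to produce the sign needed to apply Stokes as in the proof of \cref{lem:no escape}, yielding the desired contradiction. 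The same argument, now carried out for the homotopy data $(H,J)\in\mathcal{H}(\overline{H}^+,\overline{H}^-)\times\mathcal{J}(\overline{J}^+,\overline{J}^-)$ (using that each $H_s$ and $J_s$ still has the slope/cylindrical form in $S_V$), handles $\phi^{-,+}$.
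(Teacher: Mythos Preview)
Your treatment of $U$ and of the case $\gamma^+$ in region $\mathrm{IV}$ or $\mathrm{V}$ matches the paper exactly. The region-$\mathrm{III}$ case is where you diverge, and there your argument has a genuine gap.

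The paper does not place the barrier in the slope region $S_V$. Instead it applies \cref{lem:no escape} with the barrier at the level $r^+$ of $\gamma^+$ itself: writing $\gamma^+(t)=(r^+,\rho^+(t))$, both asymptotes lie in $\hat{\varphi}(V\cup[0,r^+]\times\partial V)$ (the orbit $\gamma^+$ sits on the boundary $\{r=r^+\}$, which is part of that domain), so the no escape lemma applies directly and confines the entire trajectory to $\{r\le r^+\}$. The contradiction then comes from the asymptotic behaviour lemma (\cref{lem:asymptotic behaviour}), which you never invoke: since $h_{\mathrm{III}}$ is strictly \emph{concave} at $r^+$, that lemma applied at the positive puncture says the trajectory is either the trivial cylinder $(s,t)\mapsto(w(s),r^+,\rho^+(t))$, or has a point with $a>r^+$. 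The first option forces $\gamma^-=\gamma^+$, contradicting $\gamma^-\in\mathrm{I}\cup\mathrm{II}$; the second contradicts the confinement just obtained. The same pair of lemmas handles $\phi^{-,+}$.

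Your route---barrier in $S_V$ repaired by the maximum principle---runs into exactly the obstacle you name, and the fix you sketch does not obviously close: subharmonicity of $\rho=e^{r\circ u}$ on the slope slab only excludes interior maxima, which does not prevent a trajectory from crossing monotonically from region $\mathrm{III}$ down through $S_V$ into $\mathrm{II}$. The missing ingredient is precisely \cref{lem:asymptotic behaviour}, which exploits the strict concavity in region $\mathrm{III}$ (absent in $S_V$) to force the trajectory upward near the positive end.
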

\begin{proof}
    To show that $U$ restricts to a map on $\homology{\mathrm{III,IV,V}}{S^1}{W}{F}{C}{}{}$, we simply note that by definition $U$ affects only $z$ and not $\gamma$.

    We show that $\del$ restricts to a map on $\homology{\mathrm{III,IV,V}}{S^1}{W}{F}{C}{}{}$. For this, let $[z^{\pm}, \gamma^{\pm}] \in \mathcal{P}(\overline{H})$ be such that $\img \gamma^+$ is in region $\rmn{3}$, $\rmn{4}$ or $\rmn{5}$ and assume that there exists a Floer trajectory from $[z^+, \gamma^+]$ to $[z^-, \gamma^-]$ with respect to $(\overline{H}, \overline{J})$. We need to show that $\img \gamma^-$ is in region $\rmn{3}$, $\rmn{4}$ or $\rmn{5}$. Assume by contradiction that $\img \gamma^-$ is in region $\rmn{1}$ or $\rmn{2}$. In the case where $\img \gamma^+$ is in region $\rmn{4}$ or $\rmn{5}$, the computation
    \begin{IEEEeqnarray*}{rCls+x*}
        0
        & <    & \mathcal{A}_{\overline{H}}(z^-,\gamma^-) & \quad [\text{by \cref{lem:action stair}}]    \\
        & \leq & \mathcal{A}_{\overline{H}}(z^+,\gamma^+) & \quad [\text{by \cref{lem:action energy for floer trajectories}}] \\
        & <    & 0                                        & \quad [\text{by \cref{lem:action stair}}]
    \end{IEEEeqnarray*}
    gives a contradiction. It remains to derive a contradiction in the case where $\img \gamma^+$ is in region $\rmn{3}$. By \cref{cor:hamiltonian orbits are reeb orbits}, $\gamma^+$ is (approximately) of the form $\gamma^+(t) = (r^+, \rho^+(t))$ for some Reeb orbit $\rho^+$ in $(\del V, \lambda_V|_{\del V})$. The ``no escape'' lemma (\cref{lem:no escape}) implies that the Floer trajectory is inside $\hat{\varphi}(V \union [0, r^+] \times \del V)$, while the ``asymptotic behaviour'' lemma (\cref{lem:asymptotic behaviour}) implies that the Floer trajectory must leave $\hat{\varphi}(V \union [0, r^+] \times \del V)$. This completes the proof that $\del$ restricts to a map on $\homology{\mathrm{III,IV,V}}{S^1}{W}{F}{C}{}{}$.

    To show that $\phi^{-,+}$ restricts to a map on $\homology{\mathrm{III,IV,V}}{S^1}{W}{F}{C}{}{}$, we would use a proof analogous to that of $\del$. The key difference is that now the Floer trajectory would be defined with respect to homotopies of Hamiltonians and almost complex structures. This does not affect the proof because \cref{lem:action energy for floer trajectories,lem:asymptotic behaviour,lem:no escape} also apply to homotopies.
\end{proof}

\begin{definition}
    \label{def:quotient complex}
    Define a functor $\homology{\mathrm{I,II}}{S^1}{W}{F}{C}{}{} \colon \stair{W,V}^{} \longrightarrow \comp$ as follows. For $(\overline{H},\overline{J}) \in \stair{W,V}$, the module $\homology{\mathrm{I,II}}{S^1}{W}{F}{C}{}{}(\overline{H}, \overline{J}) \coloneqq \homology{\mathrm{I,II}}{S^1}{}{F}{C}{}{}(W,\overline{H}, \overline{J})$ is given by the quotient
    \begin{IEEEeqnarray*}{rCls+x*}
        \homology{\mathrm{I,II}}{S^1}{}{F}{C}{}{}(W,\overline{H},\overline{J}) & \coloneqq & \homology{\mathrm{I-V}}{S^1}{}{F}{C}{}{}(W,\overline{H}, \overline{J}) / \homology{\mathrm{III,IV,V}}{S^1}{}{F}{C}{}{}(W,\overline{H},\overline{J}).
    \end{IEEEeqnarray*}
    For $(\overline{H}^+,\overline{J}^+) \leq (\overline{H}^{-},\overline{J}^-) \in \stair{W,V}$, the continuation map $\phi^{-,+} \colon \homology{\mathrm{I,II}}{S^1}{}{F}{C}{}{}(W,\overline{H}^+,\overline{J}^+) \longrightarrow \homology{\mathrm{I,II}}{S^1}{}{F}{C}{}{}(W,\overline{H}^-,\overline{J}^-)$ is the unique map such that the diagram
    \begin{IEEEeqnarray*}{c+x*}
        \begin{tikzcd}
            \homology{\mathrm{III,IV,V}}{S^1}{}{F}{C}{}{}(W,\overline{H}^+,\overline{J}^+) \ar[r, hookrightarrow, "\iota^{+}"] \ar[d, swap, "\phi^{-,+}"] & \homology{\mathrm{I-V}}{S^1}{}{F}{C}{}{}(W,\overline{H}^+,\overline{J}^+) \ar[d, "\phi^{-,+}"] \ar[r, two heads, "\pi^{+}"] & \homology{\mathrm{I,II}}{S^1}{}{F}{C}{}{}(W,\overline{H}^+,\overline{J}^+) \ar[d, dashed, "\exists ! \phi^{-,+}"]\\
            \homology{\mathrm{III,IV,V}}{S^1}{}{F}{C}{}{}(W,\overline{H}^-,\overline{J}^-) \ar[r, hookrightarrow, swap, "\iota^{-}"]                      & \homology{\mathrm{I-V}}{S^1}{}{F}{C}{}{}(W,\overline{H}^-,\overline{J}^-) \ar[r, two heads, swap, "\pi^{-}"]                & \homology{\mathrm{I,II}}{S^1}{}{F}{C}{}{}(W,\overline{H}^-,\overline{J}^-)
        \end{tikzcd}
    \end{IEEEeqnarray*}
    commutes. There is a projection natural transformation $\pi \colon \homology{\mathrm{I-V}}{S^1}{W}{F}{C}{}{} \longrightarrow \homology{\mathrm{I,II}}{S^1}{W}{F}{C}{}{}$.
\end{definition}

\begin{definition}
    \label{def:v with respect to stair nt}
    We define a natural transformation $\eta \colon \homology{}{S^1}{V}{F}{C}{}{} \circ \pi^{\mathcal{H} \times \mathcal{J}}_{W,V} \longrightarrow \homology{\mathrm{I,II}}{S^1}{W}{F}{C}{}{}$ as follows. For $(\overline{H},\overline{J}) \in \stair{W,V}$, the map $\eta^{\overline{H},\overline{J}} \colon \homology{}{S^1}{}{F}{C}{}{}(V,\overline{H}_V, \overline{J}_V) \longrightarrow \homology{\mathrm{I,II}}{S^1}{}{F}{C}{}{}(W,\overline{H}, \overline{J})$ is given by $\eta^{\overline{H},\overline{J}}([z,\gamma]) \coloneqq [z, \hat{\varphi} \circ \gamma]$.
\end{definition}

\begin{lemma}
    \cref{def:v with respect to stair nt} is well posed, i.e.:
    \begin{enumerate}
        \item \label{lem:v with respect to stair nt 1} $\eta^{\overline{H},\overline{J}}$ is well-defined and it is a morphism of filtered modules.
        \item \label{lem:v with respect to stair nt 2} $\eta^{\overline{H},\overline{J}}$ commutes with the $U$ map.
        \item \label{lem:v with respect to stair nt 3} $\eta^{\overline{H},\overline{J}}$ is a chain map.
        \item \label{lem:v with respect to stair nt 4} The maps $\eta^{\overline{H},\overline{J}}$ assemble into a natural transformation.
    \end{enumerate}
\end{lemma}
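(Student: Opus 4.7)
The plan is to prove the four claims in order, with the key analytical input being the no-escape lemma (\cref{lem:no escape}) applied to the Liouville embedding $\hat{\varphi} \colon \hat{V} \longrightarrow \hat{W}$.

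First I would prove \ref{lem:v with respect to stair nt 1}. By the remark on types of orbits, any orbit of $\overline{H}_V \in \mathcal{H}(V, N)$ lies in region $\mathrm{I}$ or $\mathrm{II}$ of $\overline{H}_V$, i.e.\ inside $V \cup [0,\delta_V] \times \partial V$. On this region we have $\overline{H}_V = \hat{\varphi}^* \overline{H}$ by construction of $\pi^{\mathcal{H}}_{W,V,N}$, and similarly $\overline{J}_V = \hat{\varphi}^* \overline{J}$; moreover $\hat{\varphi}^* \hat{\lambda}_W = \hat{\lambda}_V$ because $\varphi$ is a Liouville embedding. Therefore $\hat{\varphi}$ intertwines the Hamiltonian vector fields and $\hat{\varphi} \circ \gamma$ is a $1$-periodic orbit of $\overline{H}_z$ lying in region $\mathrm{I}$ or $\mathrm{II}$ of $\overline{H}$, and the $S^1$-action is manifestly preserved so the equivalence class $[z, \hat{\varphi} \circ \gamma]$ is well-defined. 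For the filtration, the same identities give
\[
\mathcal{A}_{\overline{H}}(z, \hat{\varphi} \circ \gamma) = \int_{S^1} (\hat{\varphi}\circ\gamma)^* \hat{\lambda}_W - \int_0^1 \overline{H}(t,z,\hat{\varphi}\circ\gamma(t))\,\edv t = \mathcal{A}_{\overline{H}_V}(z,\gamma),
\]
so $\eta^{\overline{H},\overline{J}}$ preserves the action filtration.

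Next, \ref{lem:v with respect to stair nt 2} is immediate from \cref{def:U map}: the $U$-map acts only on the $S^{2N+1}$-factor via the shift map $\shf$ and by the identity on the loop factor, and $\eta^{\overline{H},\overline{J}}([\shf(z),\gamma]) = [\shf(z), \hat{\varphi}\circ\gamma]$.

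For \ref{lem:v with respect to stair nt 3}, the main point is to set up a bijection, for $[z^{\pm},\gamma^{\pm}] \in \mathcal{P}(\overline{H}_V)$, between
\[
\mathcal{M}(\overline{H}_V, \overline{J}_V, [z^+,\gamma^+], [z^-,\gamma^-]) \quad \text{and} \quad \mathcal{M}(\overline{H}, \overline{J}, [z^+,\hat{\varphi}\circ\gamma^+], [z^-,\hat{\varphi}\circ\gamma^-]),
\]
modulo pieces factoring through region $\mathrm{III}$, $\mathrm{IV}$, or $\mathrm{V}$, which are killed by the projection $\pi^-$ to $\homology{\mathrm{I,II}}{S^1}{}{F}{C}{}{}(W,\overline{H},\overline{J})$. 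One direction is straightforward: post-composition with $\hat{\varphi}$ takes solutions of the parametrized Floer equation on $\hat{V}$ to solutions on $\hat{W}$ because $\overline{J}_V$ and $\overline{H}_V$ were defined precisely as pullbacks. For the converse, let $(w,u)$ be a Floer trajectory in $\hat{W}$ with both asymptotes in region $\mathrm{I}$ or $\mathrm{II}$ of $\overline{H}$. I would apply \cref{lem:no escape} with the Liouville subdomain $\hat{\varphi}(V \cup [0,R-\delta_V]\times \partial V) \subset \hat{W}$: the required hypotheses on $\lambda(X_{\overline{H}}) - \overline{H}$ hold because on the linear slope region $\mathrm{S}_V$ the Hamiltonian is $C_V e^r + D_V$ with $C_V > 0$, so the action functional $-D_V$ is $s$- and $z$-independent and the three inequalities in \cref{lem:no escape} are automatic. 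Hence $u$ is contained in $\hat{\varphi}(V \cup [0,R-\delta_V] \times \partial V)$ and pulls back to a Floer trajectory in $\hat{V}$. The signed counts therefore agree, which exactly says $\partial^W \circ \eta = \eta \circ \partial^V$ at the level of $\homology{\mathrm{I,II}}{S^1}{}{F}{C}{}{}(W,\overline{H},\overline{J})$.

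For \ref{lem:v with respect to stair nt 4}, given $(\overline{H}^+,\overline{J}^+) \leq (\overline{H}^-, \overline{J}^-)$ in $\stair{W,V}$, I need to show that the square
\[
\begin{tikzcd}
\homology{}{S^1}{}{F}{C}{}{}(V, \overline{H}^+_V, \overline{J}^+_V) \ar[r, "\eta^+"] \ar[d, "\phi_V^{-,+}"] & \homology{\mathrm{I,II}}{S^1}{}{F}{C}{}{}(W, \overline{H}^+, \overline{J}^+) \ar[d, "\phi_W^{-,+}"] \\
\homology{}{S^1}{}{F}{C}{}{}(V, \overline{H}^-_V, \overline{J}^-_V) \ar[r, "\eta^-"] & \homology{\mathrm{I,II}}{S^1}{}{F}{C}{}{}(W, \overline{H}^-, \overline{J}^-)
\end{tikzcd}
\]
commutes up to chain homotopy. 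Choose a monotone homotopy $(\overline{H}, \overline{J})$ of stair data interpolating $\overline{H}^-$ and $\overline{H}^+$; the induced homotopy on $V$ (by applying $\pi^{\mathcal{H} \times \mathcal{J}}_{W,V}$ slice-wise) is a monotone homotopy of admissible data on $V$ computing $\phi_V^{-,+}$. Exactly as in \ref{lem:v with respect to stair nt 3}, a parametric version of the no-escape lemma confines the relevant homotopy Floer trajectories between orbits in regions $\mathrm{I}$ or $\mathrm{II}$ to $\hat{\varphi}(V \cup [0,R-\delta_V] \times \partial V)$, identifying the two counts. The main obstacle is verifying that the no-escape hypotheses hold uniformly along the homotopy, which follows because in region $\mathrm{S}_V$ every Hamiltonian in the homotopy is of the form $C_{V,s} e^r + D_{V,s}$ with $s$-independent $r$-interval, so $\mathcal{A}_{\overline{H}_s}$ is constant in $r$ and the three required inequalities hold.
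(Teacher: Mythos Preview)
Your proof is correct and follows essentially the same approach as the paper: both use the Liouville embedding to identify orbits and actions in \ref{lem:v with respect to stair nt 1}, check \ref{lem:v with respect to stair nt 2} directly from the definition of $U$, invoke the no-escape lemma (\cref{lem:no escape}) to identify the relevant moduli spaces in \ref{lem:v with respect to stair nt 3}, and argue that \ref{lem:v with respect to stair nt 4} is analogous. You supply more detail than the paper does---in particular the explicit subdomain for no-escape and the compatible choice of homotopies in \ref{lem:v with respect to stair nt 4}---which is helpful.

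One small technical caution on your application of \cref{lem:no escape} in \ref{lem:v with respect to stair nt 3}: you take the boundary of the subdomain at $r = R - \delta_V$, but \cref{def:stair acs} only requires $\overline{J}$ to be cylindrical on $[0,\delta] \times \partial V$, which need not reach that far. Placing the boundary at some $r_0$ inside the region where $\overline{J}$ is cylindrical (and still in $\mathrm{S}_V$, so your action computation $\mathcal{A}_{\overline{H}} = -D_V$ still holds) makes the hypotheses of \cref{lem:no escape} cleanly satisfied. The paper glosses over this choice entirely, so your version is already more explicit; just be aware of the constraint coming from the almost complex structure. Also note that the paper records the bijection of moduli spaces as an \emph{orientation preserving} diffeomorphism, which is what makes the signed counts agree---worth stating explicitly.
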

\begin{proof}
    \ref{lem:v with respect to stair nt 1}: Since $\hat{\varphi}$ is a Liouville embedding, if $[z,\gamma] \in \mathcal{P}(\overline{H}_V)$ then $[z,\hat{\varphi} \circ \gamma] \in \mathcal{P}(\overline{H})$ and $\mathcal{A}_{\overline{H}}(z,\hat{\varphi} \circ \gamma) = \mathcal{A}_{\overline{H}_V}(z,\gamma)$.

    \ref{lem:v with respect to stair nt 2}: We need to show that $U^{}_W \circ \eta^{\overline{H},\overline{J}}([z,\gamma]) = \eta^{\overline{H},\overline{J}} \circ U ^{}_V ([z,\gamma])$, for $[z,\gamma] \in \mathcal{P}(\overline{H}_V)$. If $\morse(z) = 0$, then both sides of the equation are $0$. If $\morse(z) > 0$, then
    \begin{IEEEeqnarray*}{rCls+x*}
        U^{}_W \circ \eta^{\overline{H},\overline{J}}([z,\gamma])
        & = & U^{}_W ([z,\hat{\varphi} \circ \gamma])           & \quad [\text{by definition of $\eta$}] \\
        & = & [\shf(z),\hat{\varphi} \circ \gamma]                          & \quad [\text{by definition of $U$}]    \\
        & = & \eta^{\overline{H},\overline{J}} [\shf(z),\gamma]                         & \quad [\text{by definition of $\eta$}] \\
        & = & \eta^{\overline{H},\overline{J}} \circ U ^{}_V ([z,\gamma]) & \quad [\text{by definition of $U$}].
    \end{IEEEeqnarray*}

    \ref{lem:v with respect to stair nt 3}: We need to show that $\eta^{\overline{H},\overline{J}} \circ \del ^{}_V([z^+,\gamma^+]) = \del ^{}_W \circ \eta^{\overline{H},\overline{J}}([z^+,\gamma^+])$, for every $[z^+,\gamma^+] \in \mathcal{P}(\overline{H}_V)$. By the ``no escape'' lemma (\cref{lem:no escape}), if $[z^-,\gamma^-] \in \mathcal{P}(\overline{H}_V)$ then the map
    \begin{IEEEeqnarray*}{rrCl}
        & \mathcal{M}_{\vphantom{0}}(\overline{H}_V,\overline{J}_V,[z^+,\gamma^+],[z^-,\gamma^-]) & \longrightarrow & \mathcal{M}_{\vphantom{0}}(\overline{H},\overline{J},[z^+,\hat{\varphi} \circ \gamma^+],[z^-,\hat{\varphi} \circ \gamma^-]) \\
        & [w,u]                                                                    & \longmapsto     & [w,\hat{\varphi} \circ u]
    \end{IEEEeqnarray*}
    is an orientation preserving diffeomorphism. Then, we compute
    \begin{IEEEeqnarray*}{rCls+x*}
        \IEEEeqnarraymulticol{3}{l}{\eta^{\overline{H},\overline{J}} \circ \del ^{}_V([z^+,\gamma^+])}\\
        \quad & = & \sum_{[z^-,\gamma^-]   \in \mathcal{P}(\overline{H}_V)                              } \# \mathcal{M}_{\vphantom{0}}(\overline{H}_V, \overline{J}_V, [z^+,\gamma^+]                    , [z^-,\gamma^-]                    ) \cdot \eta^{\overline{H},\overline{J}} ([z^-,\gamma^-]) \\
        \quad & = & \sum_{[z^-,\gamma^-]   \in \mathcal{P}(\overline{H}_V)                              } \# \mathcal{M}_{\vphantom{0}}(\overline{H}_V, \overline{J}_V, [z^+,\gamma^+]                    , [z^-,\gamma^-]                    ) \cdot [z^-,\hat{\varphi} \circ \gamma^-] \\
        \quad & = & \sum_{[z^-,\gamma^-]   \in \mathcal{P}(\overline{H}_V)                              } \# \mathcal{M}_{\vphantom{0}}(\overline{H}  , \overline{J}  , [z^+,\hat{\varphi} \circ \gamma^+], [z^-,\hat{\varphi} \circ \gamma^-]) \cdot [z^-,\hat{\varphi} \circ \gamma^-] \\
        \quad & = & \sum_{[z^-,\gamma^-_W] \in \mathcal{P}^{\mathrm{I,II}}(\overline{H})} \# \mathcal{M}_{\vphantom{0}}(\overline{H}  , \overline{J}  , [z^-,\gamma^-_W]                  , [z^+,\gamma^+_W])                   \cdot [z^-,\gamma^-_W] \\
        \quad & = & \sum_{[z^-,\gamma^-_W] \in \mathcal{P}(\overline{H})                                } \# \mathcal{M}_{\vphantom{0}}(\overline{H}  , \overline{J}  , [z^-,\gamma^-_W]                  , [z^+,\gamma^+_W])                   \cdot [z^-,\gamma^-_W] \\
        \quad & = & \del ^{}_W ([z^+,\hat{\varphi} \circ \gamma^+]) \\
        \quad & = & \del ^{}_W \circ \eta^{\overline{H},\overline{J}}([z^+,\gamma^+]).
    \end{IEEEeqnarray*}
    In this computation, in the third equality we used the orientation preserving diffeomorphism defined above, in the fourth equality we performed the variable change $[z^-,\gamma^-_W] \coloneqq [z^-,\hat{\varphi} \circ \gamma^-] \in \mathcal{P}^{\mathrm{I,II}}(\overline{H})$ and in the fifth equality we used the fact that if $[z^-,\gamma^-_W] \in \mathcal{P}^{\mathrm{III,IV,V}}(\overline{H})$ then $[z^-,\gamma^-_W] = 0$ as an element of $\homology{\mathrm{I,II}}{S^1}{}{F}{C}{}{}(W,\overline{H},\overline{J})$.

    \ref{lem:v with respect to stair nt 4}: This proof is analogous to that of \ref{lem:v with respect to stair nt 3}.
\end{proof}

\begin{proposition}
    The map $\eta \colon \homology{}{S^1}{V}{F}{C}{}{} \circ \pi^{\mathcal{H} \times \mathcal{J}}_{W,V} \longrightarrow \homology{\mathrm{I,II}}{S^1}{W}{F}{C}{}{}$ is a natural isomorphism.
\end{proposition}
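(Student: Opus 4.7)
By the preceding lemma, each $\eta^{\overline{H},\overline{J}}$ is a morphism of filtered chain complexes commuting with $U$ and the maps assemble into a natural transformation; it therefore suffices to show that for every $(\overline{H},\overline{J}) \in \stair{W,V}$ the map $\eta^{\overline{H},\overline{J}}$ is an isomorphism of filtered $\Q$-modules. Since both sides are free $\Q$-modules with a distinguished set of generators (equivalence classes of $1$-periodic orbits), and since $\eta^{\overline{H},\overline{J}}$ is defined on generators by $[z,\gamma] \mapsto [z, \hat{\varphi} \circ \gamma]$, I will prove the statement by exhibiting a bijection between generators which preserves the action filtration.

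First, I would unpack the set of generators on each side. By \cref{rmk:types of orbits} (applied to the admissible Hamiltonian $\overline{H}_V$ on $\hat{V}$), a generator $[z,\gamma] \in \mathcal{P}(\overline{H}_V)$ has $\img \gamma$ contained in region $\rmn{1}$ or $\rmn{2}$ of $\hat{V}$, i.e.\ in $V \cup [0,\delta_V] \times \partial V$. On the other side, by the definition of $\homology{\mathrm{I,II}}{S^1}{W}{F}{C}{}{}$ in \cref{def:quotient complex}, the generators are precisely the classes $[z,\gamma_W] \in \mathcal{P}(\overline{H})$ with $\img \gamma_W$ in region $\rmn{1}$ or $\rmn{2}$ of $\overline{H}$, which by \cref{def:stair hamiltonians} \ref{item:stair 1} and \ref{item:stair 2} is exactly $\hat{\varphi}(V \cup [0,\delta_V] \times \partial V)$.

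Next, I would use the crucial identity $\overline{H}_V = \hat{\varphi}^* \overline{H}$ on $V \cup [0,R] \times \partial V$ coming from the definition of $\pi^{\mathcal{H}}_{W,V,N}$. Since $\hat{\varphi}$ is a Liouville embedding (hence symplectic), the Hamiltonian vector fields $X_{(\overline{H}_V)_z}$ and $X_{\overline{H}_z}$ are $\hat{\varphi}$-related on this region. Therefore $\gamma$ is a $1$-periodic orbit of $(\overline{H}_V)_z$ contained in $V \cup [0,\delta_V] \times \partial V$ if and only if $\hat{\varphi} \circ \gamma$ is a $1$-periodic orbit of $\overline{H}_z$ contained in $\hat{\varphi}(V \cup [0,\delta_V] \times \partial V)$. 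This correspondence is $S^1$-equivariant, so it descends to a bijection
\begin{IEEEeqnarray*}{c+x*}
    \mathcal{P}(\overline{H}_V) \longleftrightarrow \{ [z,\gamma_W] \in \mathcal{P}(\overline{H}) \mid \img \gamma_W \subset \hat{\varphi}(V \cup [0,\delta_V] \times \partial V) \},
\end{IEEEeqnarray*}
and this bijection is precisely the map induced by $\eta^{\overline{H},\overline{J}}$ on generators.

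Finally, the action is preserved under $\eta^{\overline{H},\overline{J}}$: using $\hat{\varphi}^* \hat{\lambda}_W = \hat{\lambda}_V$ and $\overline{H}_V = \hat{\varphi}^* \overline{H}$ on the relevant region, one has $\mathcal{A}_{\overline{H}}(z, \hat{\varphi} \circ \gamma) = \mathcal{A}_{\overline{H}_V}(z, \gamma)$ (as already noted in the proof of well-definedness). Hence $\eta^{\overline{H},\overline{J}}$ is a bijection of filtered generating sets, and so it is an isomorphism of filtered $\Q$-modules, completing the proof. The only subtle point — which I would verify carefully — is that no generator on the $W$-side is lost or gained, i.e.\ that every orbit of $\overline{H}$ in regions $\rmn{1}$ and $\rmn{2}$ is actually of the form $\hat{\varphi} \circ \gamma$ for some orbit $\gamma$ of $\overline{H}_V$; this is immediate from injectivity of $\hat{\varphi}$ on $V \cup [0,R] \times \partial V$.
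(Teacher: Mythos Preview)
Your proof is correct and takes essentially the same approach as the paper. The paper is more concise: it simply writes down the inverse map $\nu^{\overline{H},\overline{J}}$ on generators by $[z,\gamma] \mapsto [z,\hat{\varphi}^{-1}\circ\gamma]$ for orbits in regions $\rmn{1},\rmn{2}$ (and zero on regions $\rmn{3},\rmn{4},\rmn{5}$), then observes this descends to the quotient; your argument unpacks exactly why this is a bijection on generators, which amounts to the same content.
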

\begin{proof}
    It suffices to show that $\eta^{\overline{H},\overline{J}} \colon \homology{}{S^1}{}{F}{C}{}{}(V,\overline{H}_V,\overline{J}_V) \longrightarrow \homology{\mathrm{I,II}}{S^1}{}{F}{C}{}{}(W,\overline{H},\overline{J})$ admits an inverse as a map of $\Q$-modules. Define $\nu^{\overline{H},\overline{J}} \colon \homology{\mathrm{I-V}}{S^1}{}{F}{C}{}{}(W,\overline{H},\overline{J}) \longrightarrow \homology{}{S^1}{}{F}{C}{}{}(V,\overline{H}_V,\overline{J}_V)$ by%
    \begin{IEEEeqnarray*}{c+x*}
        \nu^{\overline{H},\overline{J}}([z,\gamma]) =
        \begin{cases}
            [z,\hat{\varphi}^{-1} \circ \gamma] & \text{if } [z,\gamma] \in \mathcal{P}^{\mathrm{I,II}}(\overline{H}), \\
            0                                   & \text{if } [z,\gamma] \in \mathcal{P}^{\mathrm{III,IV,V}}(\overline{H}).
        \end{cases}
    \end{IEEEeqnarray*}
    Then, by the universal property of the quotient of $\Q$-modules, $\nu^{\overline{H},\overline{J}}$ descends to a map $\nu^{\overline{H},\overline{J}} \colon \homology{\mathrm{I,II}}{S^1}{}{F}{C}{}{}(W,\overline{H},\overline{J}) \longrightarrow \homology{}{S^1}{}{F}{C}{}{}(V,\overline{H}_V,\overline{J}_V)$, which is the inverse of $\eta^{\overline{H},\overline{J}}$.
\end{proof}

\begin{definition}
    \label{def:viterbo transfer map}
    The \textbf{Viterbo transfer map}, $\varphi_! \colon \homology{}{S^1}{}{S}{H}{}{}(W, \lambda_W) \longrightarrow \homology{}{S^1}{}{S}{H}{}{}(V, \lambda_V)$, is given as follows. Consider the following diagram in the category of functors from $\admstair{W,V}$ to $\comp$:
    \begin{IEEEeqnarray}{c+x*}
        \plabel{eq:viterbo transfer map diagram}
        \begin{tikzcd}
            \homology{\mathrm{III,IV,V}}{S^1}{W}{F}{C}{}{} \circ \pi_{W,V}^{} \ar[r, hook, "\iota \circ \pi_{W,V}"] &
            \homology{\mathrm{I-V}}{S^1}{W}{F}{C}{}{} \circ \pi_{W,V}^{} \ar[r, hook, "\pi \circ \pi_{W,V}"]        &
            \homology{\mathrm{I,II}}{S^1}{W}{F}{C}{}{} \circ \pi_{W,V}^{} \\                                        &
            \homology{}{S^1}{W}{F}{C}{}{} \circ \pi_{W}^{} \ar[u, "\phi"] \ar[r, dashed, swap, "\exists ! \varphi"] &
            \homology{}{S^1}{V}{F}{C}{}{} \circ \pi_{V}^{} \ar[u, swap, two heads, hook, "\eta \circ \pi_{W,V}"]
        \end{tikzcd}
    \end{IEEEeqnarray}
    Passing to homology, we get a natural transformation $H \varphi \colon \homology{}{S^1}{W}{F}{H}{}{} \circ \pi_{W}^{} \longrightarrow \homology{}{S^1}{V}{F}{H}{}{} \circ \pi_{V}^{}$. Then, $\varphi_!$ is the unique map such that the following diagram commutes:
    \begin{IEEEeqnarray}{c+x*}
        \plabel{eq:viterbo transfer map}
        \begin{tikzcd}
            \homology{}{S^1}{W}{F}{H}{}{} \circ \pi_W^{} \ar[d, "H \varphi"] \ar[r] & \colim \homology{}{S^1}{W}{F}{H}{}{} \circ \pi_W^{} \ar[r, equal] \ar[d, dashed, "\exists ! \varphi_! = \colim H \varphi"] & \homology{}{S^1}{}{S}{H}{}{}(W,\lambda_W) \ar[d, dashed, "\exists ! \varphi_!"] \\
            \homology{}{S^1}{V}{F}{H}{}{} \circ \pi_V^{} \ar[r] & \colim \homology{}{S^1}{V}{F}{H}{}{} \circ \pi_V^{} \ar[r, equal] & \homology{}{S^1}{}{S}{H}{}{}(V,\lambda_V)
        \end{tikzcd}
    \end{IEEEeqnarray}

    We define the \textbf{Viterbo transfer map} on positive $S^1$-equivariant symplectic homology by declaring it to be the unique map such that the following diagram commutes:
    \begin{IEEEeqnarray*}{c+x*}
        \begin{tikzcd}
            \homology{}{S^1}{}{S}{H}{\varepsilon}{}(W,\lambda_W) \ar[r] \ar[d, swap, "\varphi^\varepsilon_!"] & \homology{}{S^1}{}{S}{H}{}{}(W,\lambda_W) \ar[r] \ar[d, "\varphi_!"] & \homology{}{S^1}{}{S}{H}{+}{}(W,\lambda_W) \ar[d, dashed, "\exists ! \varphi^+_!"] \\
            \homology{}{S^1}{}{S}{H}{\varepsilon}{}(W,\lambda_W) \ar[r] & \homology{}{S^1}{}{S}{H}{}{}(W,\lambda_W) \ar[r] & \homology{}{S^1}{}{S}{H}{+}{}(W,\lambda_W)
        \end{tikzcd}
    \end{IEEEeqnarray*}
\end{definition}

\begin{remark}
    \label{rmk:viterbo transfer map def}
    We have the following observations about \cref{def:viterbo transfer map}.
    \begin{enumerate}
        \item In diagram \eqref{eq:viterbo transfer map}, we view $\colim \homology{}{S^1}{W}{F}{H}{}{} \circ \pi_W$ and $\colim \homology{}{S^1}{V}{F}{H}{}{} \circ \pi_V$ as constant functors, and we view $\varphi_! \colon \colim \homology{}{S^1}{W}{F}{H}{}{} \circ \pi_W \longrightarrow \colim \homology{}{S^1}{V}{F}{H}{}{} \circ \pi_V$ as a constant natural transformation, which is just a map. Existence and uniqueness of $\varphi$ comes from the universal property of colimits.
        \item Since $\pi_W ( \admstair{W,V} )$ is a cofinal subset of $\admissible{W}$, we have $\homology{}{S^1}{}{S}{H}{}{}(W,\lambda_W) = \colim \homology{}{S^1}{W}{F}{H}{}{} = \colim \homology{}{S^1}{W}{F}{H}{}{} \circ \pi_W$, and analogously for $V$.
        \item We are also using the fact that
            \begin{IEEEeqnarray*}{rCls+x*}
                \homology{}{S^1}{}{S}{H}{+}{}(W,\lambda_W)
                & = & \homology{}{S^1}{}{S}{H}{}{}(W,\lambda_W) / \homology{}{S^1}{}{S}{H}{\varepsilon_W}{} (W,\lambda_W) \\
                & = & \homology{}{S^1}{}{S}{H}{}{}(W,\lambda_W) / \homology{}{S^1}{}{S}{H}{\varepsilon}{}(W,\lambda_W).
            \end{IEEEeqnarray*}
            This is true because $\homology{}{S^1}{}{S}{H}{}{}$ is obtained as a direct limit of Floer homologies for increasing Hamiltonians, and for $(H,J) \in \admissible{W}$ with $H$ big enough we have that $H$ restricted to the interior of $W$ takes values in $(-\varepsilon,0) \subset (-\varepsilon_W,0)$ (and analogously for $V$).
    \end{enumerate}
\end{remark}

Let $\liouvle$ be the category whose objects are nondegenerate Liouville domains and whose morphisms are $0$-codimensional Liouville embeddings which are either strict or diffeomorphisms.

\begin{theorem}[{\cite[Theorem 3.1.16]{guttMinimalNumberPeriodic2014}}]
    \label{thm:sh is functor not generalized}
    The following are contravariant functors:
    \begin{IEEEeqnarray*}{rrClCrrCl}
        \homology{}{S^1}{}{S}{H}{}{} \colon & \liouvle           & \longrightarrow & \modl                                      & \qquad & \homology{}{S^1}{}{S}{H}{+}{} \colon & \liouvle           & \longrightarrow & \modl                                  \\
                                            & (V,\lambda_V)      & \longmapsto     & \homology{}{S^1}{}{S}{H}{}{}(V,\lambda_V)  & \qquad &                                      & (V,\lambda_V)      & \longmapsto     & \homology{}{S^1}{}{S}{H}{+}{}(V,\lambda_V) \\
                                            & \varphi \downarrow & \longmapsto     & \uparrow \varphi_!                         & \qquad &                                      & \varphi \downarrow & \longmapsto     & \uparrow \varphi_!^+                   \\
                                            & (W,\lambda_W)      & \longmapsto     & \homology{}{S^1}{}{S}{H}{}{}(W,\lambda_W), & \qquad &                                      & (W,\lambda_W)      & \longmapsto     & \homology{}{S^1}{}{S}{H}{+}{}(W,\lambda_W).
    \end{IEEEeqnarray*}
\end{theorem}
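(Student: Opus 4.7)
The proof reduces to three verifications: that each $\varphi_!$ is actually a morphism in $\modl$, that identities are sent to identities, and that $(\psi \circ \varphi)_! = \varphi_! \circ \psi_!$ for composable Liouville embeddings $\varphi \colon U \to V$ and $\psi \colon V \to W$. The positive version will then follow formally from the unfiltered version because the $\varepsilon$-truncation is preserved by every map in sight, as already used implicitly in \cref{def:viterbo transfer map}.

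For the first point, I would observe that every chain-level map appearing in diagram \eqref{eq:viterbo transfer map diagram} is a morphism in $\comp$: the inclusion $\iota$ and the projection $\pi$ are strict chain maps that commute with $U$ and preserve the action filtration by \cref{lem:maps restrict to subcomplex,def:quotient complex}; the continuation $\phi$ is a morphism in $\comp$ by the standard Floer-theoretic arguments recalled in \cref{sec:Floer homology}; and $\eta$ is a natural isomorphism in $\comp$ by \cref{lem:v with respect to stair nt}. The dashed map $\varphi$ therefore exists uniquely in $\comp$, and taking homology produces a morphism in $\modl$ at every level of $\admstair{W,V}$. Since $\modl$ is cocomplete, the colimit over $\admstair{W,V}$ is again a morphism in $\modl$, which is $\varphi_!$.

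For identities, the morphism $\id_V$ in $\liouvle$ is a diffeomorphism rather than a strict embedding, so one either extends the definition tautologically by pullback of Hamiltonians (in which case $(\id_V)_! = \id$ by inspection), or one works with stair Hamiltonians that degenerate to ordinary admissible ones, in which case the subcomplex $CF^{\mathrm{III,IV,V}}$ becomes trivial in the colimit, $\eta$ reduces to the identity, and the induced map on $\homology{}{S^1}{}{S}{H}{}{}(V,\lambda_V)$ is the identity.

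The composition property is the main content. My plan is to introduce \emph{doubly-stair} parametrized Hamiltonians on $W$: Hamiltonians whose radial profile is of stair type near $\partial(\hat{\psi} V)$ \emph{and} near $\partial(\hat{\psi}\hat{\varphi} U)$, with slopes and constants adjusted so that the action inequalities of \cref{lem:action stair} hold in each of the three nested regions (inside $U$, between $U$ and $V$, outside $V$). Such Hamiltonians produce a three-step filtration of $\homology{}{S^1}{}{F}{C}{}{}(W,\overline{\overline{H}},\overline{\overline{J}})$ whose successive subquotients are identified, exactly as in \cref{def:v with respect to stair nt}, with the Floer complexes of $U$, $V$, and a complex in region outside $V$; and the confinement lemmas \cref{lem:no escape,lem:asymptotic behaviour} applied separately at each of the two stairs ensure that the differential, $U$-map and continuation maps respect this filtration. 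Doubly-stair data are cofinal in $\admissible{W}$, so they compute $\homology{}{S^1}{}{S}{H}{}{}(W,\lambda_W)$. The resulting algebraic picture presents $(\psi\varphi)_!$ as the composite obtained by passing first from the outer to the middle subquotient (which is $\psi_!$ in the colimit) and then to the inner subquotient (which is $\varphi_!$); comparison with $(\psi\varphi)_!$ defined via a single stair at $\partial(\hat{\psi}\hat{\varphi} U)$ is achieved through a continuation homotopy interpolating between the two stair profiles.

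The main obstacle will be ensuring that the confinement lemmas really do apply in the doubly-stair setting, uniformly in the continuation parameter $s$. Concretely, I will need to rule out Floer trajectories that start in one of the inner regions and escape across the outer stair, or vice versa. This is done by applying the no-escape lemma (\cref{lem:no escape}) to each stair separately, exploiting the strict inequality $C_W < e^{-\delta_W}(C_V e^{R-\delta_V} + D_V)$ between the two slopes to guarantee that the action comparison needed in \cref{lem:action stair} is satisfied at each level, and by invoking \cref{lem:asymptotic behaviour} to handle the behaviour at the punctures. Once this confinement is established, the identification of the three-step filtration with the iterated two-step filtrations of $\psi_!$ and $\varphi_!$ is essentially formal, carried out in $\comp$ and then pushed to $\modl$ by the homology functor and to $\modl$ again by the colimit.
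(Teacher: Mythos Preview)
The paper does not give its own proof of this theorem: it is stated with a citation to \cite[Theorem 3.1.16]{guttMinimalNumberPeriodic2014} and nothing further. So there is no ``paper's approach'' to compare against; the entire \cref{sec:viterbo transfer map of liouville embedding} builds the Viterbo transfer map and then defers functoriality to Gutt.

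Your outline is the standard one, and it is essentially what one finds in Gutt's thesis and in the non-equivariant literature (Viterbo, Cieliebak--Oancea). The doubly-stair argument for $(\psi\circ\varphi)_! = \varphi_!\circ\psi_!$ is the right idea: one introduces Hamiltonians with two steps, establishes a three-term filtration, and identifies the composite of the two quotient maps with the single-stair transfer for $\psi\circ\varphi$ via a homotopy of stair profiles. Your identification of the confinement lemmas (\cref{lem:no escape,lem:asymptotic behaviour}) as the key analytic input is correct, and the action inequalities you mention are exactly what is needed.

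Two places are thinner than they should be. First, your treatment of identities is hand-wavy: the category $\liouvle$ allows diffeomorphisms as morphisms, and for these the transfer map is defined directly by pullback of auxiliary data, not via stairs; you should say this explicitly rather than gesture at ``degenerating'' stairs. Second, the cofinality of doubly-stair data in $\admissible{W}$ needs a sentence of justification (any admissible Hamiltonian is dominated by a doubly-stair one with sufficiently large slopes), and the comparison homotopy between the single-stair and doubly-stair computations of $(\psi\varphi)_!$ deserves more than ``interpolating between the two stair profiles'' --- one has to check that the interpolation stays in the class where the confinement lemmas apply at every $s$. None of these are genuine gaps, but in a complete proof they would each need a paragraph.
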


\section{Viterbo transfer map of a generalized Liouville embedding}
\label{sec:viterbo transfer map of exact symplectic embedding}

We now define the Viterbo transfer map in the case where $\varphi \colon (V,\lambda_V) \longrightarrow (W,\lambda_W)$ is a generalized Liouville embedding, i.e. $\varphi^* \edv \lambda_W = \edv \lambda_V$ and $(\varphi^* \lambda_W - \lambda_V)|_{\partial V}$ is exact.

\begin{lemma}[{\cite[Lemma 7.5]{guttSymplecticCapacitiesPositive2018}}]
    \label{lem:exists deformed form}
    If $\phi \colon (V,\lambda_V) \longrightarrow (W, \lambda_W)$ is a $0$-codimensional strict generalized Liouville embedding, then there exists a $1$-form $\lambda'_W$ on $W$ such that $\edv \lambda'_W = \edv \lambda_W^{}$, $\lambda'_W = \lambda_W^{}$ near $\partial W$ and $\phi^* \lambda'_W = \lambda_V^{}$.
\end{lemma}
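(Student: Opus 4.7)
The plan is to construct $\lambda'_W$ in the form $\lambda'_W = \lambda_W - \alpha$ for a suitable $1$-form $\alpha \in \Omega^1(W)$. Setting $\gamma \coloneqq \phi^* \lambda_W - \lambda_V \in \Omega^1(V)$, the three required properties of $\lambda'_W$ translate into: (i) $\edv \alpha = 0$, (ii) $\alpha$ vanishes in a neighbourhood of $\partial W$, and (iii) $\phi^* \alpha = \gamma$. So the whole problem reduces to extending $\gamma$ (via pushforward along $\phi$) to a closed form on $W$ whose support lies in $\itr W$.

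By hypothesis, $\gamma|_{\partial V} = \edv f$ for some $f \in C^{\infty}(\partial V)$. I will first use this to arrange that $\gamma - \edv \tilde{f}$ is not merely exact on $\partial V$ but in fact identically zero on a whole neighbourhood of $\partial V$. Choose a collar $U$ of $\partial V$ in $V$; since $U$ deformation retracts onto $\partial V$, the closed form $\gamma|_U$ is exact. Write $\gamma|_U = \edv g$, and adjust $g$ by a locally constant function so that $g|_{\partial V} = f$. Using a cutoff supported in $U$ and equal to $1$ on a smaller collar $U'' \subset U$, extend $g$ to $\tilde{f} \in C^{\infty}(V)$ with $\tilde{f} = g$ on $U''$. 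Then $\gamma - \edv \tilde{f}$ is closed on $V$ and vanishes identically on $U''$.

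Build $\alpha$ in two pieces. First, push $\gamma - \edv \tilde{f}$ to $W$ by extension by zero: define $\alpha_0$ on $W$ to equal $(\phi^{-1})^*(\gamma - \edv \tilde{f})$ on $\phi(V)$ and $0$ on $W \setminus \phi(V \setminus U'')$. Both prescriptions vanish on the overlap $\phi(U'')$, so $\alpha_0$ is smooth and closed; strictness of $\phi$ ensures $\supp \alpha_0 \subset \phi(V \setminus U'') \subset \itr W$, so $\alpha_0$ vanishes near $\partial W$. Second, extend $\tilde{f} \circ \phi^{-1}$ smoothly across $\phi(\partial V)$ in $W$ using a bicollar, then multiply by a bump function to obtain $\tilde{f}_W \in C^{\infty}(W)$ satisfying $\tilde{f}_W|_{\phi(V)} = \tilde{f} \circ \phi^{-1}$ and $\tilde{f}_W = 0$ near $\partial W$. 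Set $\alpha \coloneqq \alpha_0 + \edv \tilde{f}_W$. Properties (i) and (ii) are then immediate, and (iii) is the short computation
\begin{IEEEeqnarray*}{c+x*}
    \phi^* \alpha = \phi^*\alpha_0 + \edv(\tilde{f}_W \circ \phi) = (\gamma - \edv \tilde{f}) + \edv \tilde{f} = \gamma.
\end{IEEEeqnarray*}

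The single genuinely delicate step is the passage from the exactness of $\gamma|_{\partial V}$ to the identical vanishing of $\gamma - \edv \tilde{f}$ on a full open neighbourhood of $\partial V$. This is what makes the subsequent extension by zero across $\phi(\partial V)$ simultaneously smooth and closed, and it is also the one place where the generalized Liouville hypothesis (rather than just $\edv \gamma = 0$) is essential.
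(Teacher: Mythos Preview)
The paper does not supply its own proof of this lemma; it is quoted directly from \cite[Lemma~7.5]{guttSymplecticCapacitiesPositive2018} without argument, so there is nothing to compare against. Your proof is correct and is essentially the standard construction.

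One phrasing issue worth tightening: the sentence ``since $U$ deformation retracts onto $\partial V$, the closed form $\gamma|_U$ is exact'' reads as though the retraction alone forces exactness. What the retraction actually gives is the isomorphism $H^1(U) \cong H^1(\partial V)$; exactness of $\gamma|_U$ then follows from the generalized Liouville hypothesis that $\gamma|_{\partial V}$ is exact --- exactly as you acknowledge in your final paragraph. Also, the step ``adjust $g$ by a locally constant function so that $g|_{\partial V} = f$'' is never used afterwards (only $g$ and its extension $\tilde f$ appear), so you may as well drop the function $f$ altogether and simply invoke exactness of $\gamma|_U$.
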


\begin{lemma}
    \phantomsection\label{lem:sh indep of potential}
    Let $(X,\lambda_X)$ and $(Y,\lambda_Y)$ be nondegenerate Liouville domains and assume that $\phi \colon (X,\lambda_X) \longrightarrow (Y, \lambda_Y)$ is a $0$-codimensional strict Liouville embedding. Suppose that $\lambda'_X \in \Omega^1(X)$ and $\lambda'_Y \in \Omega^1(Y)$ are $1$-forms such that
    \begin{IEEEeqnarray*}{rClCrCl}
        \edv \lambda'_X   & = & \edv \lambda_X^{}, & \quad & \lambda'_X & = & \lambda_X^{} \text{ near } \partial X, \\
        \edv \lambda'_Y   & = & \edv \lambda_Y^{}, & \quad & \lambda'_Y & = & \lambda_Y^{} \text{ near } \partial Y, \\
        \phi^* \lambda'_Y & = & \lambda'_X.
    \end{IEEEeqnarray*}
    Then,
    \begin{IEEEeqnarray*}{rClCl}
        \homology{}{S^1}{}{S}{H}{}{}(X,\lambda_X)  & = & \homology{}{S^1}{}{S}{H}{}{}(X,\lambda'_X), \\
        \homology{}{S^1}{}{S}{H}{+}{}(X,\lambda_X) & = & \homology{}{S^1}{}{S}{H}{+}{}(X,\lambda'_X),
    \end{IEEEeqnarray*}
    and the diagrams
    \begin{IEEEeqnarray}{c+x*}
        \plabel{eq:viterbo transfer map indep potential}
        \begin{tikzcd}
            \homology{}{S^1}{}{S}{H}{}{}(Y,\lambda_Y) \ar[r, equal] \ar[d, swap, "\phi_!"] & \homology{}{S^1}{}{S}{H}{}{}(Y,\lambda'_Y) \ar[d, "\phi'_!"] \\
            \homology{}{S^1}{}{S}{H}{}{}(X,\lambda_X) \ar[r, equal]                        & \homology{}{S^1}{}{S}{H}{}{}(X,\lambda'_X)
        \end{tikzcd} \quad
        \begin{tikzcd}
            \homology{}{S^1}{}{S}{H}{+}{}(Y,\lambda_Y) \ar[r, equal] \ar[d, swap, "\phi_!^+"] & \homology{}{S^1}{}{S}{H}{+}{}(Y,\lambda'_Y) \ar[d, "{\phi'}_!^+"] \\
            \homology{}{S^1}{}{S}{H}{+}{}(X,\lambda_X) \ar[r, equal]                          & \homology{}{S^1}{}{S}{H}{+}{}(X,\lambda'_X)
        \end{tikzcd}
    \end{IEEEeqnarray}
    commute.
\end{lemma}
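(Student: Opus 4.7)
The plan is to show that, at the level of filtered chain complexes, nothing in the construction actually depends on the choice of primitive for $\edv\lambda_X$ (or $\edv\lambda_Y$) once the primitive agrees with the given one near the boundary; once this is established, both equalities and the commutativity of the diagrams become tautologies.

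First I would observe that the completions built from $(X,\lambda_X)$ and $(X,\lambda'_X)$ can be canonically identified. Indeed, the smooth manifold $\hat{X}$ and its cylindrical end only use the contact form $\lambda_X|_{\partial X} = \lambda'_X|_{\partial X}$, and the symplectic forms agree on the nose, $\hat{\omega}_X = \edv \hat{\lambda}_X = \edv \hat{\lambda}'_X$. Hence the admissibility conditions of \cref{def:hamiltonians} and \cref{def:acs}, which refer only to $\hat{\omega}_X$, to $\lambda_X|_{\partial X}$, and to the cylindrical structure at infinity, yield the same set $\admissible{X}$ for either choice. The same is true for $Y$.

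Next, for $(H,J)\in\admissible{X}$, I would check that $\homology{}{S^1}{}{F}{C}{}{}(X,H,J)$ is literally the same as a module with the same differential, the same $U$-map, and the same continuation maps, regardless of whether we use $\lambda_X$ or $\lambda'_X$. The set of generators $\mathcal{P}(H)$ depends only on Hamiltonian vector fields $X_{H_z}$, which are determined by $\hat{\omega}_X$; the parametrized Floer equation from \cref{def:flow lines} depends on $\hat{\omega}_X,H,J$ only; and likewise for the homotopies defining continuation maps. So the moduli spaces, and hence the maps $\partial$, $U$, and $\phi^{-,+}$, are identical.

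The crux is then the action filtration. For $(z,\gamma)\in\hat{\mathcal{P}}(H)$,
\begin{IEEEeqnarray*}{rCl}
\mathcal{A}_{H,\lambda_X}(z,\gamma)-\mathcal{A}_{H,\lambda'_X}(z,\gamma) & = & \int_{S^1}\gamma^{*}(\hat{\lambda}_X-\hat{\lambda}'_X).
\end{IEEEeqnarray*}
By admissibility of $H$ (see \cref{def:hamiltonians} and \cref{rmk:types of orbits}), every $1$-periodic orbit of $H$ is either a constant in region I, so that $\gamma^{*}(\hat{\lambda}_X-\hat{\lambda}'_X)=0$ trivially, or a nonconstant orbit contained in the collar $[0,\delta]\times\partial X$. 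Since the set of admissible $H$ with arbitrarily small $\delta$ is cofinal in $\admissible{X}$ and $\lambda_X=\lambda'_X$ on a neighborhood of $\partial X$, we may arrange that region II lies inside that neighborhood, whence the integrand vanishes there as well. Consequently actions agree, the filtered complexes coincide, and taking homology and then colimits gives $\homology{}{S^1}{}{S}{H}{}{}(X,\lambda_X)=\homology{}{S^1}{}{S}{H}{}{}(X,\lambda'_X)$; the same argument shows $\homology{}{S^1}{}{S}{H}{\varepsilon}{}$ is unchanged, so passing to the quotient yields the equality for $\homology{}{S^1}{}{S}{H}{+}{}$.

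Finally, for the commutativity of the diagrams in \eqref{eq:viterbo transfer map indep potential}, I would repeat the same analysis on the Viterbo transfer side. The stair data of \cref{def:stair hamiltonians,def:stair acs} is defined purely in terms of $\hat{\omega}_Y$, the embedding $\hat{\phi}$, and the contact forms on $\partial V$ and $\partial W$, all of which are unchanged. The natural transformations $\iota,\pi,\phi,\eta$ entering \cref{def:viterbo transfer map} are defined by counting Floer trajectories and by the region decomposition of $\mathcal{P}(\overline{H})$; neither depends on the choice of primitive. Thus $\phi_!$ and $\phi'_!$ are the same map under the identifications of the previous paragraphs, and likewise for the positive versions. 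The main thing to be careful about is the cofinality argument that lets us shrink $\delta$ to fit inside the neighborhood where the two primitives agree; once this is in place, the rest of the proof is formal.
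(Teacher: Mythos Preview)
Your overall structure matches the paper's: both proofs observe that the completion, the admissible data, the Hamiltonian vector fields, the Floer equation, the differential, the $U$ map, and the continuation maps depend only on $\edv\lambda_X$ and on $\lambda_X$ near $\partial X$, so the only thing to check is the action functional.

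Where you diverge is in how you verify
\[
\int_{S^1}\gamma^*(\hat\lambda_X-\hat\lambda'_X)=0.
\]
You argue geometrically: region~I orbits are constant and region~II orbits live in the collar $[0,\delta]\times\partial X$, where you arrange (via cofinality) that the two primitives agree. The paper instead argues cohomologically: $\hat\lambda_X-\hat\lambda'_X$ is closed, hence defines a class in $H^1_{\mathrm{dR}}(\hat X)$; the integral is its pairing with $[\gamma]$, which vanishes because either $\gamma$ is contractible, or $\gamma$ is homotopic to its associated Reeb orbit $\rho$ in $\partial X$, and $\iota_{\partial X}^*(\hat\lambda_X-\hat\lambda'_X)=0$.

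A small remark on your version: the cofinality step is actually unnecessary for admissible Hamiltonians. Region~II sits in $\R_{\ge 0}\times\partial X$, where $\hat\lambda_X=e^r\lambda_X|_{\partial X}=e^r\lambda'_X|_{\partial X}=\hat\lambda'_X$ for \emph{any} $\delta$, since the completed forms there depend only on the boundary restriction. More substantively, for the Viterbo transfer diagram your sketch becomes vague exactly where it matters: the region~II and~III orbits of a stair Hamiltonian lie near $\phi(\partial X)\subset\operatorname{int}W$, not near $\partial W$, so the hypothesis $\lambda'_Y=\lambda_Y$ near $\partial Y$ says nothing directly there. One has to invoke $\phi^*\lambda'_Y=\lambda'_X$ together with $\lambda'_X=\lambda_X$ near $\partial X$ to see that $\lambda_Y-\lambda'_Y$ vanishes on $\phi(\partial X)$. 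The paper's cohomological argument absorbs this uniformly (the orbit is homotopic to a loop in $\phi(\partial X)$, where the difference pulls back to zero); your geometric approach would need this observation spelled out, and shrinking $\delta$ alone does not supply it.
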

\begin{proof}
    We note that the following concepts only depend on $\edv \lambda_X$ and on $\lambda_X$ near $\del X$: the set of admissible Hamiltonians and admissible almost complex structures, the Hamiltonian vector field, action, the module which underlies the Floer complex (by all the previous statements), the Floer equation and the notion of Floer trajectories (also by the previous statements), the $U$ map, the differential and the continuation maps. All the statements follow immediately from the definitions given in \cref{sec:Floer homology}, except the fact that the action actually only depends on $\edv \lambda_X$ and on $\lambda_X|_{\partial X}$. To prove this, it is enough to show that 
    \begin{IEEEeqnarray}{c+x*}
        \phantomsection\label{eq:action indep form}
        \int_{S^1}^{} \gamma^* (\hat{\lambda}_X^{} - \hat{\lambda}'_X) = 0.
    \end{IEEEeqnarray}
    Since $\hat{\lambda}_X^{} - \hat{\lambda}'_X$ is closed, it defines a cohomology class $[\hat{\lambda}_X^{} - \hat{\lambda}'_X] \in H^1_{\mathrm{dR}}(\hat{X})$. The orbit $\gamma$ also defines a homology class $[\gamma] \coloneqq \gamma_* [S^1] \in H_1(\hat{X};\Z)$. Equation \eqref{eq:action indep form} can be restated as
    \begin{IEEEeqnarray}{c+x*}
        \phantomsection\label{eq:action indep form topology}
        [\hat{\lambda}_X^{} - \hat{\lambda}'_X]([\gamma]) = 0.
    \end{IEEEeqnarray}
    If $\gamma$ is contractible, then Equation \eqref{eq:action indep form topology} holds. If $\gamma$ is noncontractible, $\gamma$ must have an associated Reeb orbit $\rho \in C^{\infty}(S^1, \partial X)$. Denote by $\iota \colon \partial X \longrightarrow \hat{X}$ the inclusion.
    \begin{IEEEeqnarray*}{rCls+x*}
        [\hat{\lambda}_X^{} - \hat{\lambda}'_X]([\gamma]) 
        & = & [\hat{\lambda}_X^{} - \hat{\lambda}'_X](\iota_* [\rho])  & \quad [\text{since $\gamma$ and $\iota \circ \rho$ are homotopic}] \\
        & = & (\iota^*[\hat{\lambda}_X^{} - \hat{\lambda}'_X])([\rho]) & \quad [\text{by definition of pullback}] \\
        & = & 0                                                     & \quad [\text{since $\lambda'_X = \lambda_X^{}$ near $\partial X$}].
    \end{IEEEeqnarray*}
    Since the functors and natural transformations in diagram \eqref{eq:viterbo transfer map diagram} only depend on $\edv \lambda_X, \edv \lambda_Y$ and on $\lambda_X, \lambda_Y$ near the boundaries, the diagrams \eqref{eq:viterbo transfer map indep potential} commute.
\end{proof}

\begin{definition}[{\cite[Definition 7.6]{guttSymplecticCapacitiesPositive2018}}]
    \phantomsection\label{def:viterbo transfer generalized}
    If $\varphi \colon (V,\lambda_V) \longrightarrow (W,\lambda_W)$ is a strict generalized Liouville embedding of codimension $0$, then the \textbf{Viterbo transfer map} of $\varphi$ is defined as follows. Choose $\lambda'_W \in \Omega^1(W)$ as in \cref{lem:exists deformed form}. Denote by $\varphi' \colon (V,\lambda_V) \longrightarrow (W,\lambda'_W)$ the Liouville embedding which as a map of sets coincides with $\varphi$. Then, define
    \begin{IEEEeqnarray*}{rRCRCl}
        \varphi_! \colon   & \homology{}{S^1}{}{S}{H}{}{}(W,\lambda_W)  & = & \homology{}{S^1}{}{S}{H}{}{}(W,\lambda'_W)  & \xrightarrow{\varphi'_!} & \homology{}{S^1}{}{S}{H}{}{}(V,\lambda_V), \\
        \varphi^+_! \colon & \homology{}{S^1}{}{S}{H}{+}{}(W,\lambda_W) & = & \homology{}{S^1}{}{S}{H}{+}{}(W,\lambda'_W) & \xrightarrow{\varphi'_!} & \homology{}{S^1}{}{S}{H}{+}{}(V,\lambda_V),
    \end{IEEEeqnarray*}
    where the equality was explained in \cref{lem:sh indep of potential} and the arrows are the Viterbo transfer maps of a Liouville embedding as in \cref{def:viterbo transfer map}.
\end{definition}

\begin{lemma}
    In \cref{def:viterbo transfer generalized}, $\varphi_!$ and $\varphi_!^+$ are independent of the choice of $\lambda'_W$.
\end{lemma}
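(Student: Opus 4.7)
The plan is to reduce the independence statement directly to \cref{lem:sh indep of potential}. Let $\lambda'_W$ and $\lambda''_W$ be two $1$-forms on $W$ provided by \cref{lem:exists deformed form}, and denote by $\varphi' \colon (V, \lambda_V) \to (W, \lambda'_W)$ and $\varphi'' \colon (V, \lambda_V) \to (W, \lambda''_W)$ the corresponding strict Liouville embeddings (which, as maps of sets, both coincide with $\varphi$). The goal is to show that the two compositions defining $\varphi_!$ from $\varphi'$ and from $\varphi''$ agree as maps $\homology{}{S^1}{}{S}{H}{}{}(W, \lambda_W) \longrightarrow \homology{}{S^1}{}{S}{H}{}{}(V, \lambda_V)$, and similarly in the positive case.

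The key step is to apply \cref{lem:sh indep of potential} to $\varphi'$, taking $X = V$, $Y = W$, $\lambda_X = \lambda'_X = \lambda_V$, $\lambda_Y = \lambda'_W$, and replacing $\lambda'_Y$ there by $\lambda''_W$. The hypotheses are straightforward to verify: $\varphi'$ is a strict Liouville embedding by construction; $\edv \lambda''_W = \edv \lambda'_W = \edv \lambda_W$; the forms $\lambda'_W$ and $\lambda''_W$ both agree with $\lambda_W$ near $\partial W$, hence with each other there; and $(\varphi')^* \lambda''_W = \lambda_V$ by the defining property of $\lambda''_W$ from \cref{lem:exists deformed form}. The lemma then yields the equalities $\homology{}{S^1}{}{S}{H}{}{}(W, \lambda'_W) = \homology{}{S^1}{}{S}{H}{}{}(W, \lambda''_W)$ and the commutativity of the square
\begin{equation*}
\begin{tikzcd}
\homology{}{S^1}{}{S}{H}{}{}(W, \lambda'_W) \ar[r, equal] \ar[d, swap, "\varphi'_!"] & \homology{}{S^1}{}{S}{H}{}{}(W, \lambda''_W) \ar[d, "\varphi''_!"] \\
\homology{}{S^1}{}{S}{H}{}{}(V, \lambda_V) \ar[r, equal] & \homology{}{S^1}{}{S}{H}{}{}(V, \lambda_V)
\end{tikzcd}
\end{equation*}
and analogously for $\homology{}{S^1}{}{S}{H}{+}{}$.

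Finally, one observes that the chain of equalities
\begin{equation*}
\homology{}{S^1}{}{S}{H}{}{}(W, \lambda_W) = \homology{}{S^1}{}{S}{H}{}{}(W, \lambda'_W) = \homology{}{S^1}{}{S}{H}{}{}(W, \lambda''_W)
\end{equation*}
used to define $\varphi_!$ from $\varphi'$ and $\varphi''$ respectively is literally the same identification on both sides, because, as emphasized in the proof of \cref{lem:sh indep of potential}, the $S^1$-equivariant Floer chain complex depends only on $\edv \lambda_W$ together with the germ of $\lambda_W$ along $\partial W$, which is common to $\lambda_W$, $\lambda'_W$ and $\lambda''_W$. Combining this with the commutative square above gives the desired equality of Viterbo transfer maps. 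The positive version is identical, using the functoriality of the quotient by $\homology{}{S^1}{}{S}{H}{\varepsilon}{}$. There is no real obstacle here beyond bookkeeping of the identifications; the content of the argument is entirely packaged in \cref{lem:sh indep of potential,lem:exists deformed form}.
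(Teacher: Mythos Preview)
Your proposal is correct and takes essentially the same approach as the paper: both argue by applying \cref{lem:sh indep of potential} to the pair $\lambda'_W, \lambda''_W$ to obtain the commuting square relating $\varphi'_!$ and $\varphi''_!$, and then observe that the identifications $\homology{}{S^1}{}{S}{H}{}{}(W,\lambda_W) = \homology{}{S^1}{}{S}{H}{}{}(W,\lambda'_W) = \homology{}{S^1}{}{S}{H}{}{}(W,\lambda''_W)$ are consistent. Your verification of the hypotheses of \cref{lem:sh indep of potential} is slightly more explicit than the paper's, but the content is identical.
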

\begin{proof}
    Let $\lambda'_W$ and $\lambda''_W$ be $1$-forms as in \cref{lem:exists deformed form}, and denote the corresponding Liouville embeddings by $\varphi' \colon (W,\lambda'_W) \longrightarrow (V,\lambda_V)$ and $\varphi'' \colon (W,\lambda''_W) \longrightarrow (V,\lambda_V)$ (note that as set theoretic maps, $\varphi' = \varphi'' = \varphi$). Then, by \cref{lem:sh indep of potential}, the following diagram commutes:
    \begin{IEEEeqnarray*}{c+x*}
        \begin{tikzcd}
            \homology{}{S^1}{}{S}{H}{}{}(W,\lambda_W) \ar[r, equals] \ar[d, equals] & \homology{}{S^1}{}{S}{H}{}{}(W,\lambda'_W) \ar[d, equals] \ar[r, "\varphi'_!"] & \homology{}{S^1}{}{S}{H}{}{}(V,\lambda_V) \ar[d, equals] \\
            \homology{}{S^1}{}{S}{H}{}{}(W,\lambda_W) \ar[r, equals] & \homology{}{S^1}{}{S}{H}{}{}(W,\lambda''_W) \ar[r, "\varphi''_!"] & \homology{}{S^1}{}{S}{H}{}{}(V,\lambda_V)
        \end{tikzcd}
    \end{IEEEeqnarray*}
    In this diagram, the top arrow is the Viterbo transfer map defined with respect to $\lambda'_W$ and the bottom arrow is the Viterbo transfer map defined with respect to $\lambda''_W$.
\end{proof}

Let $\liouvndg$ be the ``category'' whose objects are nondegenerate Liouville domains and whose morphisms are $0$-codimensional generalized Liouville embeddings which are either strict or diffeomorphisms. Strictly speaking, since composition of generalized Liouville embeddings is not in general a generalized Liouville embedding, this is not a category. However, $\liouvndg$ does fit into the notion of \textbf{categroid} (see \cref{def:categroid}), which is an object like a category with only partially defined compositions. One can then talk about functors between categroids.

\begin{theorem}
    The assignments
    \begin{IEEEeqnarray*}{rrClCrrCl}
        \homology{}{S^1}{}{S}{H}{}{} \colon & \liouvndg               & \longrightarrow & \modl         & \qquad & \homology{}{S^1}{}{S}{H}{+}{} \colon & \liouvndg               & \longrightarrow & \modl           \\
                  & (V,\lambda_V)      & \longmapsto     & \homology{}{S^1}{}{S}{H}{}{}(V,\lambda_V)    & \qquad &             & (V,\lambda_V)      & \longmapsto     & \homology{}{S^1}{}{S}{H}{+}{}(V,\lambda_V)    \\
                  & \varphi \downarrow & \longmapsto     & \uparrow \varphi_! & \qquad &             & \varphi \downarrow & \longmapsto     & \uparrow \varphi_!^+ \\
                  & (W,\lambda_W)      & \longmapsto     & \homology{}{S^1}{}{S}{H}{}{}(W,\lambda_W),   & \qquad &             & (W,\lambda_W)      & \longmapsto     & \homology{}{S^1}{}{S}{H}{+}{}(W,\lambda_W)
    \end{IEEEeqnarray*}
    are contravariant functors.
\end{theorem}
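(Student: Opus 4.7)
The plan is to verify the two functorial axioms: preservation of identities and preservation of composable compositions. Preservation of identities is immediate, since for any nondegenerate Liouville domain $(X, \lambda_X)$ the identity $\id_X$ is a Liouville diffeomorphism and hence a morphism of $\liouvle$; \cref{thm:sh is functor not generalized} then already gives $(\id_X)_! = \id$ and $(\id_X)_!^+ = \id$. The real content is composition: given morphisms $\varphi \colon (V, \lambda_V) \longrightarrow (W, \lambda_W)$ and $\psi \colon (W, \lambda_W) \longrightarrow (Z, \lambda_Z)$ in $\liouvndg$ whose composition is itself a morphism of $\liouvndg$, I must show $(\psi \circ \varphi)_! = \varphi_! \circ \psi_!$ and analogously for $\homology{}{S^1}{}{S}{H}{+}{}$.

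The strategy is to reduce to the already-established functoriality for Liouville embeddings (\cref{thm:sh is functor not generalized}) by iteratively applying \cref{lem:exists deformed form} to produce compatible primitives on $W$ and $Z$. First I would apply \cref{lem:exists deformed form} to $\varphi$ to obtain $\tilde{\lambda}_W \in \Omega^1(W)$ with $\edv \tilde{\lambda}_W = \edv \lambda_W$, $\tilde{\lambda}_W = \lambda_W$ near $\partial W$, and $\varphi^* \tilde{\lambda}_W = \lambda_V$; then $\varphi' \colon (V, \lambda_V) \longrightarrow (W, \tilde{\lambda}_W)$ is a strict Liouville embedding. Next I would check that $\psi \colon (W, \tilde{\lambda}_W) \longrightarrow (Z, \lambda_Z)$ remains a generalized Liouville embedding: the form $\psi^* \lambda_Z - \tilde{\lambda}_W$ is closed since $\edv \tilde{\lambda}_W = \edv \lambda_W$, and on $\partial W$ it coincides with $\psi^* \lambda_Z - \lambda_W$ (because $\tilde{\lambda}_W = \lambda_W$ near $\partial W$), which is exact by the original hypothesis on $\psi$. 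A second application of \cref{lem:exists deformed form} then yields $\bar{\lambda}_Z \in \Omega^1(Z)$ with $\edv \bar{\lambda}_Z = \edv \lambda_Z$, $\bar{\lambda}_Z = \lambda_Z$ near $\partial Z$, and $\psi^* \bar{\lambda}_Z = \tilde{\lambda}_W$. With these choices the embeddings $\psi' \colon (W, \tilde{\lambda}_W) \longrightarrow (Z, \bar{\lambda}_Z)$ and $(\psi \circ \varphi)' \colon (V, \lambda_V) \longrightarrow (Z, \bar{\lambda}_Z)$ are both strict Liouville.

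Applying \cref{thm:sh is functor not generalized} to $\varphi'$, $\psi'$, and $(\psi \circ \varphi)' = \psi' \circ \varphi'$ then yields
\begin{IEEEeqnarray*}{c+x*}
    ((\psi \circ \varphi)')_! = \varphi'_! \circ \psi'_!.
\end{IEEEeqnarray*}
It remains to translate this back to the original potentials. By \cref{def:viterbo transfer generalized}, each of $\varphi_!$, $\psi_!$, $(\psi \circ \varphi)_!$ is by definition the Viterbo transfer map of the Liouville embedding obtained from some admissible choice of primitive, and \cref{lem:sh indep of potential} asserts that this construction is independent of the choice. Under the canonical identifications $\homology{}{S^1}{}{S}{H}{}{}(Z, \lambda_Z) = \homology{}{S^1}{}{S}{H}{}{}(Z, \bar{\lambda}_Z)$ and $\homology{}{S^1}{}{S}{H}{}{}(W, \lambda_W) = \homology{}{S^1}{}{S}{H}{}{}(W, \tilde{\lambda}_W)$ afforded by \cref{lem:sh indep of potential}, the primed maps above coincide with the unprimed maps in the statement, and the composition identity transfers. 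The same argument, applied to the positive versions of all the relevant Viterbo transfer maps, handles $\homology{}{S^1}{}{S}{H}{+}{}$.

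The main obstacle, and the only step that is not purely formal bookkeeping, is verifying that $\psi$ remains a generalized Liouville embedding when $\lambda_W$ is replaced by $\tilde{\lambda}_W$, so that the second application of \cref{lem:exists deformed form} is legitimate; this relies crucially on the conclusion of \cref{lem:exists deformed form} that $\tilde{\lambda}_W$ agrees with $\lambda_W$ in a neighbourhood of $\partial W$. The cases in which one or both of $\varphi, \psi$ is a Liouville diffeomorphism rather than a strict generalized Liouville embedding are subsumed by the same argument with the trivial choice of primitives, or else reduce directly to \cref{thm:sh is functor not generalized}.
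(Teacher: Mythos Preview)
Your proposal is correct and follows essentially the same approach as the paper: iteratively apply \cref{lem:exists deformed form} to produce primitives making $\varphi$ and $\psi$ into honest Liouville embeddings, invoke \cref{thm:sh is functor not generalized} for the resulting composition, and use \cref{lem:sh indep of potential} to identify the resulting transfer maps with $\varphi_!$, $\psi_!$, $(\psi\circ\varphi)_!$. The paper introduces three auxiliary primitives $\lambda'_W$, $\lambda'_Z$, $\lambda''_Z$ and draws an explicit commutative diagram, whereas you use only two (your $\tilde{\lambda}_W$, $\bar{\lambda}_Z$ correspond to the paper's $\lambda'_W$, $\lambda''_Z$) and appeal to \cref{lem:sh indep of potential} more directly; this is a cosmetic difference. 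You are in fact slightly more careful than the paper in explicitly verifying that $\psi$ remains a generalized Liouville embedding after replacing $\lambda_W$ by $\tilde\lambda_W$, which is needed to justify the second application of \cref{lem:exists deformed form}.
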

\begin{proof}
    We prove the result only for $\homology{}{S^1}{}{S}{H}{}{}$, since the proof for $\homology{}{S^1}{}{S}{H}{+}{}$ is analogous. It suffices to assume that $\varphi \colon (V, \lambda_V) \longrightarrow (W, \lambda_W)$ and $\psi \colon (W, \lambda_W) \longrightarrow (Z, \lambda_Z)$ are composable strict, generalized Liouville embeddings of codimension 0 and to prove that $(\psi \circ \varphi)_! = \varphi_! \circ \psi_!$. Here, ``composable'' means that the composition $\psi \circ \varphi$ is also a generalized Liouville embedding. We start by choosing
    \begin{IEEEeqnarray*}{rClCrClrCllCrCl}
        \lambda'_W  & \in & \Omega^1(W) & \quad\text{such that}\quad & \edv \lambda'_W  & = & \edv \lambda_W^{},\quad & \lambda'_W  & = & \lambda_W^{} & \text{ near } \partial W, & \quad\text{and}\quad & \varphi^* \lambda'_W & = & \lambda_V^{}, \\
        \lambda'_Z  & \in & \Omega^1(Z) & \quad\text{such that}\quad & \edv \lambda'_Z  & = & \edv \lambda_Z^{},\quad & \lambda'_Z  & = & \lambda_Z^{} & \text{ near } \partial Z, & \quad\text{and}\quad & \psi^* \lambda'_Z    & = & \lambda_W^{}, \\
        \lambda''_Z & \in & \Omega^1(Z) & \quad\text{such that}\quad & \edv \lambda''_Z & = & \edv \lambda'_Z,  \quad & \lambda''_Z & = & \lambda'_Z   & \text{ near } \partial Z, & \quad\text{and}\quad & \psi^* \lambda''_Z   & = & \lambda'_W.
    \end{IEEEeqnarray*}
    Therefore, we have Liouville embeddings
    \begin{IEEEeqnarray*}{rCrCl}
        \varphi' & \colon & (V,\lambda_V^{}) & \longrightarrow & (W, \lambda'_W), \\
        \psi'    & \colon & (W,\lambda_W^{}) & \longrightarrow & (Z, \lambda'_Z), \\
        \psi''   & \colon & (W,\lambda'_W)   & \longrightarrow & (Z, \lambda''_Z).
    \end{IEEEeqnarray*}
    We can define the Viterbo transfer maps
    \begin{IEEEeqnarray*}{rLCLCl}
        \varphi_!              \colon & \homology{}{S^1}{}{S}{H}{}{}(W,\lambda_W) & = & \homology{}{S^1}{}{S}{H}{}{}(W,\lambda'_W)  & \xrightarrow{\varphi'_!}                & \homology{}{S^1}{}{S}{H}{}{}(V,\lambda_V), \\
        \psi_!                 \colon & \homology{}{S^1}{}{S}{H}{}{}(Z,\lambda_Z) & = & \homology{}{S^1}{}{S}{H}{}{}(Z,\lambda'_Z)  & \xrightarrow{\psi'_!}                   & \homology{}{S^1}{}{S}{H}{}{}(W,\lambda_W), \\
        (\varphi \circ \psi)_! \colon & \homology{}{S^1}{}{S}{H}{}{}(Z,\lambda_Z) & = & \homology{}{S^1}{}{S}{H}{}{}(Z,\lambda''_Z) & \xrightarrow{(\psi'' \circ \varphi')_!} & \homology{}{S^1}{}{S}{H}{}{}(V,\lambda_V).
    \end{IEEEeqnarray*}
    Consider the following commutative diagram:
    \begin{IEEEeqnarray*}{c+x*}
        \begin{tikzcd}
            \homology{}{S^1}{}{S}{H}{}{}(Z,\lambda_Z) \ar[r, equals] \ar[dr, dashed, swap, "\psi_!"] \ar[drdr, dashed, bend right, swap, "(\psi \circ \varphi)_!"] & \homology{}{S^1}{}{S}{H}{}{}(Z,\lambda'_Z) \ar[d, "\psi'_!"] \ar[r, equals]                 & \homology{}{S^1}{}{S}{H}{}{}(Z,\lambda''_Z) \ar[d, "\psi''_!"] \ar[dd, bend left=90, "(\psi'' \circ \varphi')_!"] \\
                                                                                                                                                                   & \homology{}{S^1}{}{S}{H}{}{}(W,\lambda_W) \ar[r, equals] \ar[dr, swap, dashed, "\varphi_!"] & \homology{}{S^1}{}{S}{H}{}{}(W,\lambda'_W) \ar[d, "\varphi'_!"]                                                \\
                                                                                                                                                                   &                                                                                             & \homology{}{S^1}{}{S}{H}{}{}(V,\lambda_V)
        \end{tikzcd}
    \end{IEEEeqnarray*}
    Here, the two small triangles and the outside arrows commute by definition of the Viterbo transfer map of a generalized Liouville embedding, the square commutes by \cref{lem:sh indep of potential}, and $(\psi'' \circ \varphi')_! = \varphi'_! \circ \psi''_!$ by \cref{thm:sh is functor not generalized}. Therefore, $(\psi \circ \varphi)_! = \varphi_! \circ \psi_!$.
\end{proof}

\section{\texorpdfstring{$\delta$}{Delta} map}
\label{sec:delta map}

Let $(X,\lambda)$ be a nondegenerate Liouville domain. Our goal in this section is to define a map $\delta \colon \homology{}{S^1}{}{S}{H}{+}{}(X) \longrightarrow H_\bullet(BS^1;\Q) \otimes H_\bullet(X,\partial X; \Q)$. As we will see, $\delta = \alpha \circ \delta_0$, where $\delta_0 \colon \homology{}{S^1}{}{S}{H}{+}{}(X) \longrightarrow \homology{}{S^1}{}{S}{H}{\varepsilon}{}(X)$ is the continuation map associated to a long exact sequence in homology (see \cref{def:delta map}) and $\alpha \colon \homology{}{S^1}{}{S}{H}{\varepsilon}{}(X) \longrightarrow H_\bullet(BS^1;\Q) \otimes H_\bullet(X,\partial X; \Q)$ is an isomorphism which we define in several steps (see \cref{lem:iso floer and alt floer,lem:iso from floer to morse,lem:iso from floer to singular,lem:iso from symplectic to singular}). For every $(H,J) \in \admissible{X}$, define
\begin{IEEEeqnarray*}{rCrCrCls+x*}
    H' & \coloneqq & H_{e_0} & \colon & S^1 \times \hat{X} & \longrightarrow & \R, \\
    J' & \coloneqq & J_{e_0} & \colon & S^1 \times \hat{X} & \longrightarrow & \End(T \hat{X}),
\end{IEEEeqnarray*}
where $e_0 \in S^{2N+1} \subset \C^{N+1}$ is the first vector in the canonical basis of $\C^{N+1}$. We start by giving an alternative definition of the $S^1$-equivariant Floer chain complex.

\begin{definition}[{\cite[Remark 5.15]{guttSymplecticCapacitiesPositive2018}}]
    We define a chain complex $\homology{}{S^1}{}{F}{C}{}{}(X,H,J)_{\mathrm{alt}}$ as follows. Let $u$ be a formal variable of degree $2$ and consider $\Q \{1,\ldots,u^N\}$, the $\Q$-module of polynomials in $u$ of degree less or equal to $2N$. As a $\Q$-module,
    \begin{IEEEeqnarray*}{c+x*}
        \homology{}{S^1}{}{F}{C}{}{}(X,H,J)_{\mathrm{alt}} \coloneqq \Q \{1,\ldots,u^N\} \otimes \homology{}{}{}{F}{C}{}{}(X,H',J'),
    \end{IEEEeqnarray*}
    where $\homology{}{}{}{F}{C}{}{}(X,H',J')$ is the Floer chain complex (not $S^1$-equivariant) of $X$ with respect to $(H',J')$, with $\Q$ coefficients. We will now define a differential $\partial_{\mathrm{alt}}$ on $\homology{}{S^1}{}{F}{C}{}{}(X,H,J)_{\mathrm{alt}}$. For every $j = 0,\ldots,N$, define a map $\varphi_j \colon \homology{}{}{}{F}{C}{}{}(X,H',J') \longrightarrow \homology{}{}{}{F}{C}{}{}(X,H',J')$ by
    \begin{IEEEeqnarray*}{c+x*}
        \varphi_j(\gamma^+) \coloneqq \sum_{\gamma^- \in \mathcal{P}(H')} \# \mathcal{M}_{\vphantom{0}}(H,J,[e_j,\gamma^+],[e_0,\gamma^-]) \cdot \gamma^-,
    \end{IEEEeqnarray*}
    for every $\gamma^+ \in \mathcal{P}(H')$. Note that $\varphi_0 \colon \homology{}{}{}{F}{C}{}{}(X,H',J') \longrightarrow \homology{}{}{}{F}{C}{}{}(X,H',J')$ is the usual differential of the Floer chain complex. Finally, we define
    \begin{IEEEeqnarray*}{rrCl}
        \del_{\mathrm{alt}} \colon & \Q \{1,\ldots,u^N\} \tensorpr \homology{}{}{}{F}{C}{}{}(X,H',J') & \longrightarrow & \Q \{1,\ldots,u^N\} \tensorpr \homology{}{}{}{F}{C}{}{}(X,H',J') \\
                                   & u^k \tensorpr \gamma                                             & \longmapsto     & \sum_{j=0}^{k} u ^{k-j} \tensorpr \varphi_j(\gamma).
    \end{IEEEeqnarray*}
\end{definition}

\begin{lemma}[{\cite[Section 2.3]{bourgeoisEquivariantSymplecticHomology2016}}]
    \label{lem:iso floer and alt floer}
    The map
    \begin{IEEEeqnarray*}{rCl}
        \homology{}{S^1}{}{F}{C}{}{}(X,H,J) & \longrightarrow & \homology{}{S^1}{}{F}{C}{}{}(X,H,J)_{\mathrm{alt}} \\
        {[e_j, \gamma]}                     & \longmapsto     & u^j \otimes \gamma
    \end{IEEEeqnarray*}
    is an isomorphism of chain complexes.
\end{lemma}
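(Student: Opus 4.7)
The plan is to show that the map $\Phi \colon [e_j, \gamma] \longmapsto u^j \otimes \gamma$ is a bijection on generators and commutes with the differential. Every equivalence class in $\mathcal{P}(H)$ has a unique representative of the form $(e_j, \gamma)$: the critical points of $\tilde{f}_N$ lie on $N+1$ distinct $S^1$-orbits $[e_j]$, the stabilizer of $e_j$ under $t \cdot z = e^{2\pi i t} z$ is trivial, and iterating condition \ref{item:pullbacks} of \cref{def:hamiltonians} yields $H_{e_j} = H_{e_0} - j E$, so the $1$-periodic orbits of $H_{e_j}$ coincide with those of $H' = H_{e_0}$. Hence $\Phi$ is a well-defined bijection on generators.

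For the chain map property $\partial_{\mathrm{alt}} \circ \Phi = \Phi \circ \partial$, expanding both sides on $[e_k, \gamma^+]$ and reindexing with $j = k - l$ reduces the claim to the moduli space count identity
\begin{IEEEeqnarray*}{c+x*}
    \# \mathcal{M}(H, J, [e_k, \gamma^+], [e_l, \gamma^-]) = \# \mathcal{M}(H, J, [e_{k-l}, \gamma^+], [e_0, \gamma^-])
\end{IEEEeqnarray*}
for every $0 \leq l \leq k$ and every $\gamma^- \in \mathcal{P}(H')$. I would prove this identity by exhibiting an orientation-preserving diffeomorphism between the two moduli spaces.

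The geometric input is that the positive gradient flow of $\tilde{f}_N$ on $S^{2N+1}$ preserves every coordinate hyperplane. A direct computation (using $|z|^2 = 1$) shows that $(\nabla \tilde{f}_N(z))_j = 2(j - \tilde{f}_N(z)) z_j$, which vanishes whenever $z_j$ does, so flow lines cannot cross $\{z_j = 0\}$. After using the $S^1$-action to normalize $w(-\infty) = e_0$ (respectively, $w(-\infty) = e_l$), the Morse component $w$ of any solution in $\hat{\mathcal{M}}(H, J, [e_{k-l}, \gamma^+], [e_0, \gamma^-])$ (respectively, $\hat{\mathcal{M}}(H, J, [e_k, \gamma^+], [e_l, \gamma^-])$) therefore takes values in $\{z \mid z_j = 0 \text{ for } j > k-l\}$ (respectively, $\{z \mid z_j = 0 \text{ for } j < l \text{ or } j > k\}$). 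Let $\iota_{l,k}^{N} \colon S^{2(k-l)+1} \longrightarrow S^{2N+1}$ be the composition of $l$ maps of the form $\inc^{\cdot, \cdot}_1$ with $N-k$ maps of the form $\inc^{\cdot, \cdot}_0$, sending $(w_0, \ldots, w_{k-l}) \longmapsto (0, \ldots, 0, w_0, \ldots, w_{k-l}, 0, \ldots, 0)$ with $l$ leading and $N-k$ trailing zeros. Factoring $w = \iota_{0, k-l}^{N} \circ w'$ for a unique $w' \colon \R \longrightarrow S^{2(k-l)+1}$, define
\begin{IEEEeqnarray*}{c+x*}
    \Psi(w, u) \coloneqq (\iota_{l,k}^{N} \circ w', u).
\end{IEEEeqnarray*}
Iterating conditions \ref{item:pullbacks} of \cref{def:hamiltonians} and \ref{def:acs 4} of \cref{def:acs} yields $(\iota_{l,k}^{N})^* H = (\iota_{0, k-l}^{N})^* H - lE$ and $(\iota_{l,k}^{N})^* J = (\iota_{0, k-l}^{N})^* J$; since $X_H$ is insensitive to additive constants, $\Psi(w,u)$ satisfies the parametrized Floer equation with asymptotes $[e_k, \gamma^+]$ and $[e_l, \gamma^-]$. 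The reverse shift supplies the inverse, and $\Psi$ descends to the $\R \times S^1$-quotient $\mathcal{M}$.

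The main obstacle will be verifying that $\Psi$ preserves the coherent orientations used in the signed counts. Since $\Psi$ acts as a fixed linear reparametrization of the Morse factor of the linearized parametrized Floer operator and as the identity on the Floer factor, the determinant line bundles transport canonically, and the signed counts agree. Combining the count identity with the expansions of $\partial_{\mathrm{alt}} \circ \Phi$ and $\Phi \circ \partial$ yields the chain map property, completing the proof.
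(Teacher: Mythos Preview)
The paper does not supply a proof of this lemma; it simply cites \cite[Section 2.3]{bourgeoisEquivariantSymplecticHomology2016}. Your proposal therefore cannot be compared against a proof appearing in the paper itself, but it can be assessed on its own merits as a self-contained argument.

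Your proof is essentially correct and fills in the details one would expect. The bijection on generators is fine: the stabilizer of $e_j$ under the $S^1$-action is trivial, and iterating condition~\ref{item:pullbacks} gives $H_{e_j} = H' - jE$, so each class $[z,\gamma] \in \mathcal{P}(H)$ has a unique representative $(e_j,\gamma)$ with $\gamma \in \mathcal{P}(H')$. For the chain map property, your reduction to the count identity is the right one, and the shift map $\Psi$ is the natural bijection. The one step worth making more explicit is why the normalized Morse trajectory $w$ actually lands in the claimed coordinate subsphere: from your formula $(\nabla\tilde{f}_N(z))_j = 2(j-\tilde{f}_N(z))z_j$ you get $\tfrac{d}{ds}|z_j|^2 = 4(j-\tilde{f}_N(w))|z_j|^2$, and since $\tilde{f}_N(w(s))$ is monotone between the critical values, the sign of $j-\tilde{f}_N(w)$ is constant for $j$ outside the range $[l,k]$, forcing $|z_j|^2$ to be monotone; the boundary conditions at $\pm\infty$ then rule out $z_j\not\equiv 0$. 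This justifies the factorization through $\iota^N_{0,k-l}$ and its inverse. The pullback identities for $H$ and $J$ under the shift follow exactly as you say from iterating conditions~\ref{item:pullbacks} and~\ref{def:acs 4}.

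The only genuinely incomplete point is the one you flag: compatibility with coherent orientations. Your sketch (``fixed linear reparametrization of the Morse factor, identity on the Floer factor'') is the right shape, but making it precise requires tracking how the coherent orientations on the Morse moduli spaces of $\tilde{f}_N$ behave under the isometric embedding $\iota^N_{l,k}$, which shifts the ambient sphere but preserves the index difference. This is standard but not entirely trivial; it is presumably handled in the cited reference.
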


Recall that in $X$, the Hamiltonian $H$ is assumed to be $C^2$-small and $S^1$-independent. Therefore, if $\gamma \colon S^1 \longrightarrow \hat{X}$ is a $1$-periodic orbit of $H'$ and $\img \gamma \subset X$, then $\gamma$ is constant with value $x \in X$, where $x$ is a critical point of $H'$. We will now assume that the Hamiltonian $H$ is chosen such that if $x^{\pm}$ are critical points of $H'$, then
\begin{IEEEeqnarray}{c+x*}
    \plabel{eq:self indexing}
    H'(x^+) \leq H'(x^-) \Longrightarrow \morse(x^+,H') \geq \morse(x^-,H').
\end{IEEEeqnarray}
We will denote by $(MC(X,H'), \partial^M)$ the Morse complex of $X$ with respect to $H'$, defined with the following conventions. As a vector space, $MC(X,H')$ is the vector space over $\Q$ generated by the critical points of $H'$. If $x^\pm$ are critical points of $H'$, the coefficient $\p{<}{}{\partial^{M} (x^+), x^-}$ is the count of gradient flow lines of $H'$ from $x^-$ to $x^+$. Finally, the degree of a critical point $x$ is the Morse index of $x$.

\begin{lemma}
    \label{lem:iso from floer to morse}
    There is a canonical isomorphism of chain complexes
    \begin{IEEEeqnarray*}{c+x*}
        (\homology{}{S^1}{}{F}{C}{\varepsilon}{}(X,H,J), \partial_{\mathrm{alt}}) = (\Q \{1,\ldots,u^N\} \otimes MC(X,H'), \id \otimes \partial^M).
    \end{IEEEeqnarray*}
\end{lemma}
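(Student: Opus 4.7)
The plan is to use the alternative description of the complex from \cref{lem:iso floer and alt floer}. Under the chain isomorphism $[e_j, \gamma] \longmapsto u^j \otimes \gamma$, the subcomplex $\homology{}{S^1}{}{F}{C}{\varepsilon}{}(X,H,J)$ corresponds to a submodule of $\Q\{1, \ldots, u^N\} \otimes \homology{}{}{}{F}{C}{}{}(X, H', J')$, and the task reduces to identifying this submodule with $\Q\{1, \ldots, u^N\} \otimes MC(X, H')$ and to verifying that $\partial_{\mathrm{alt}}$ restricts to $\id \otimes \partial^M$.

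For the identification on the module level, I would invoke \cref{lem:action admissible}: the generator $[e_j, \gamma]$ satisfies $\mathcal{A}_{H}(e_j, \gamma) \leq \varepsilon$ exactly when $\gamma$ is a Type I orbit. In region I, $H$ is $C^2$-small, $t$-independent, and $S^1$-invariant, so the 1-periodic orbits of $H_{e_j}$ there are precisely the constant orbits at critical points of $H'$. Hence the generators of $\homology{}{S^1}{}{F}{C}{\varepsilon}{}(X,H,J)$ biject with $\{u^j \otimes x : 0 \leq j \leq N, \ x \in \critpt H'\}$, which is a $\Q$-basis of $\Q\{1, \ldots, u^N\} \otimes MC(X, H')$.

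To match the differentials, since $\partial_{\mathrm{alt}}(u^k \otimes x) = \sum_{j=0}^{k} u^{k-j} \otimes \varphi_j(x)$, it suffices to show $\varphi_0|_{MC(X,H')} = \partial^M$ and $\varphi_j|_{MC(X,H')} = 0$ for $j \geq 1$. The first identity is the classical Floer--Morse correspondence of Salamon--Zehnder for $C^2$-small autonomous Hamiltonians: Floer trajectories between constant orbits at critical points of $H'$ coincide, with matching signs, with negative gradient flow lines of $H'$. The self-indexing hypothesis \eqref{eq:self indexing} combined with the no-escape lemma (\cref{lem:no escape}) applied to $X \subset \hat{X}$ excludes trajectories that pass through Type II orbits and thus lets the correspondence be used directly on the subcomplex.

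The principal obstacle will be proving $\varphi_j|_{MC} = 0$ for $j \geq 1$. In region I the parametrized Floer equation decouples: $w$ satisfies an autonomous Morse equation on $S^{2N+1}$ for $\tilde{f}_N$, and $u$ satisfies an $s$-dependent Floer equation with Hamiltonian $H_{w(s)}$ which, by \cref{def:hamiltonians} \ref{item:pullbacks} and the $C^2$-smallness, has the same critical set as $H'$. Adapting the Floer--Morse correspondence to this parametrized setting, I would identify $\mathcal{M}(H, J, [e_j, x^+], [e_0, x^-])$ with the $(\R \times S^1)$-quotient of the product of Morse moduli spaces on $S^{2N+1}$ and on $X$, where $\R$ acts diagonally and $S^1$ acts freely on the first factor (for $j \geq 1$). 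The dimension of the quotient is $2j + \morse(x^+) - \morse(x^-) - 2$, so rigidity forces $\morse(x^+) - \morse(x^-) = 2 - 2j$. For $j \geq 2$ this is negative and the Morse moduli on $X$ is empty. For $j = 1$ the $S^{2N+1}$-factor reduces to the Morse moduli of $f_N$ on $\C P^N$ between index-$2$ and index-$0$ critical points, whose signed count vanishes because $f_N$ is a perfect Morse function on $\C P^N$. The technical heart will be making the decoupling rigorous when $H$ is only $C^2$-close to a product: this should follow from an implicit function theorem argument, perturbing approximate Morse-pair solutions uniquely to genuine parametrized Floer trajectories using the uniform smallness of $H$ in region I.
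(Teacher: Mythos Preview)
Your identification on the module level and of $\varphi_0$ with $\partial^M$ matches the paper. The difference is entirely in how you argue $\varphi_j|_{MC} = 0$ for $j \geq 1$, and here your route diverges from the paper's and contains errors.

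The paper's argument is a two-line index computation that needs no decoupling or product identification. It invokes precisely the self-indexing hypothesis \eqref{eq:self indexing} that you never use: if $\mathcal{M}(H,J,[e_j,x^+],[e_0,x^-])$ is nonempty, the action-energy inequality (\cref{lem:action energy for floer trajectories}) forces $H'(x^+)\le H'(x^-)$, whence self-indexing gives $\morse(x^+,H')\ge\morse(x^-,H')$. Plugging into the dimension formula yields $\dim_{(w,u)}\mathcal{M}=2j+\morse(x^+)-\morse(x^-)-1\ge 2j-1\ge 1$, so the zero-dimensional count is empty. That is the whole argument; the self-indexing assumption was introduced exactly to make this work.

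Your alternative via an explicit product model is both harder and, as written, incorrect. First, your dimension count is off by one: the quotient has dimension $2j+\morse(x^+)-\morse(x^-)-1$, not $-2$. Second, your treatment of $j=1$ is wrong: the statement that $f_N$ is perfect on $\C P^N$ says the Morse \emph{differential} vanishes, i.e.\ the rigid count between critical points whose indices differ by one is zero. It says nothing about a ``signed count'' of trajectories between the index-$2$ and index-$0$ critical points, which is a one-dimensional moduli after dividing by $\R$ and has no well-defined signed count. Finally, the IFT step needed to pass from the approximate product $H$ to an actual product is not a formality and is entirely avoided by the paper's direct action/index argument. If you insist on the product picture, the clean way to extract the vanishing is to observe that a nonconstant $u$-factor already forces the Morse-index inequality and hence dimension $\ge 1$ --- but this is just the paper's argument recovered through more machinery.
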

\begin{proof}
    By \cref{rmk:types of orbits,lem:action admissible,lem:iso floer and alt floer}, there is a canonical isomorphism of $\Q$-modules
    \begin{IEEEeqnarray*}{c+x*}
        \homology{}{S^1}{}{F}{C}{\varepsilon}{}(X,H,J) = \Q \{1,\ldots,u^N\} \otimes MC(X,H').
    \end{IEEEeqnarray*}
    We show that this isomorphism is a chain map. We claim that if $j \geq 1$ and $x^+, x^-$ are critical points of $H'$, then $\dim_{(w,u)} \mathcal{M}(H,J,[e_j,x^+],[e_0,x^-]) \geq 1$. To see this, we compute
    \begin{IEEEeqnarray*}{rCls+x*}
        \dim_{(w,u)} \mathcal{M}(H,J,[e_j,x^+],[e_0,x^-])
        & =    & \ind(e_j, x^+) - \ind(e_0, x^-) - 1 \\
        & =    & \morse(e_j) - \morse(e_0) + \morse(x^+,H') - \morse(x^-,H') - 1 \\
        & =    & 2 j + \morse(x^+,H') - \morse(x^-,H') - 1 \\
        & \geq & 2 j - 1 \\
        & \geq & 1,
    \end{IEEEeqnarray*}
    where in the fourth line we used \cref{lem:action energy for floer trajectories} and Equation \eqref{eq:self indexing}. Therefore, if $j \geq 1$ and $x^+$ is a critical point of $H'$ then $\varphi_j(x^+) = 0$. This implies that
    \begin{IEEEeqnarray*}{c+x*}
        \partial_{\mathrm{alt}}(u^k \otimes x^+) = u^k \otimes \varphi_0(x^+),
    \end{IEEEeqnarray*}
    where $\varphi_0(x^+) = \partial^M(x^+)$ is the Morse theory differential applied to $x^+$.
\end{proof}

\begin{lemma}
    \label{lem:iso from floer to singular}
    There is a canonical isomorphism 
    \begin{IEEEeqnarray*}{c+x*}
        \homology{}{S^1}{}{F}{H}{\varepsilon}{}(X,H,J) = \Q \{1,\ldots,u^N\} \otimes H_\bullet(X, \partial X; \Q).
    \end{IEEEeqnarray*}
\end{lemma}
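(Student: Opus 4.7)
The plan is to combine the chain-level identification from \cref{lem:iso from floer to morse} with a Künneth-type step and a standard identification between Morse (co)homology of $H'|_X$ and the singular (co)homology of $X$, converted to relative homology by Poincar\'e--Lefschetz duality.

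\textbf{Step 1 (reduction to Morse).} By \cref{lem:iso from floer to morse} we have an identification of chain complexes
\begin{IEEEeqnarray*}{c+x*}
    (\homology{}{S^1}{}{F}{C}{\varepsilon}{}(X,H,J), \partial_{\mathrm{alt}}) = (\Q\{1,\ldots,u^N\} \otimes MC(X,H'),\, \id \otimes \partial^M).
\end{IEEEeqnarray*}
Since the left tensor factor $\Q\{1,\ldots,u^N\}$ carries the zero differential, a trivial K\"unneth argument yields
\begin{IEEEeqnarray*}{c+x*}
    \homology{}{S^1}{}{F}{H}{\varepsilon}{}(X,H,J) \;=\; \Q\{1,\ldots,u^N\} \otimes H_{\bullet}(MC(X,H'), \partial^M).
\end{IEEEeqnarray*}
It therefore suffices to identify the Morse-theoretic homology of $(MC(X,H'), \partial^M)$ with $H_{\bullet}(X, \partial X; \Q)$.

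\textbf{Step 2 (geometric interpretation of $\partial^M$).} By the admissibility conditions of \cref{def:hamiltonians}, all critical points of $H'$ lie in the interior of $X$ (region I), and on the collar (region II) $H'$ is $C^2$-close to $h(e^r)$ with $h$ strictly increasing. A direct computation (as in the analysis preceding \cref{lem:reeb equals hamiltonian on symplectization}) shows that $\nabla H'$ points outward along $\partial X$, so the downward gradient flow of $H'$ is forward-invariant in $X$. By the convention stated just before \cref{lem:iso from floer to morse}, the matrix coefficient $\langle \partial^M(x^+), x^-\rangle$ counts gradient flow lines of $H'$ from $x^-$ to $x^+$. Since such lines go from higher $H'$-value to lower $H'$-value, the self-indexing hypothesis \eqref{eq:self indexing} ensures that $x^-$ has Morse index one greater than $x^+$. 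Thus $\partial^M$ raises the Morse-index degree by $1$, so $(MC(X,H'),\partial^M)$ is (with Morse-index grading) the \emph{Morse cochain complex} of $H'|_X$.

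\textbf{Step 3 (identification with singular (co)homology).} By the classical Morse theory for a function on a compact manifold with boundary whose gradient points outward along $\partial X$ (which gives a handle decomposition of $X$ from the critical points of $H'$), one has
\begin{IEEEeqnarray*}{c+x*}
    H^{k}(MC(X,H'), \partial^M) \;\cong\; H^{k}(X;\Q).
\end{IEEEeqnarray*}
Finally, Poincar\'e--Lefschetz duality for the oriented $2n$-dimensional manifold $X$ with boundary gives $H^{k}(X;\Q) \cong H_{2n-k}(X, \partial X;\Q)$. Combining these two isomorphisms with Step 1 produces the desired canonical identification, understood with the grading convention that a generator $u^j \otimes x$ in Floer degree $2j + n - \morse(x,H')$ corresponds to the class $u^j \otimes [\text{PD}(x)] \in \Q\{1,\ldots,u^N\} \otimes H_{2n-\morse(x,H')}(X,\partial X;\Q)$.

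The only nontrivial step is Step 3; the Künneth step is trivial and Step 2 is a bookkeeping exercise. The main obstacle is checking carefully that the outward-gradient hypothesis together with the self-indexing condition suffice to make the Morse-theoretic argument go through, but this is a standard result in the Morse theory of manifolds with boundary.
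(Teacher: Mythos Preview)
Your overall structure (Step 1 = previous lemma, K\"unneth, then Morse theory) matches the paper's three-line proof. The paper simply writes
\[
\homology{}{S^1}{}{F}{H}{\varepsilon}{}(X,H,J) = H(\Q\{1,\dots,u^N\}\otimes MC(X,H')) = \Q\{1,\dots,u^N\}\otimes MH_\bullet(X,H') = \Q\{1,\dots,u^N\}\otimes H_\bullet(X,\partial X;\Q),
\]
citing \cref{lem:iso from floer to morse}, the form of the differential, and the standard Morse-to-singular isomorphism. Your Step 1 is identical; the difference is that you try to unpack the last identification via Poincar\'e--Lefschetz duality.

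There is, however, a genuine sign error in your Step 2. You write that a trajectory from $x^-$ to $x^+$ goes from higher to lower $H'$-value, i.e.\ $H'(x^+) \leq H'(x^-)$, and then conclude from \eqref{eq:self indexing} that $\morse(x^-) = \morse(x^+)+1$. But \eqref{eq:self indexing} says precisely the opposite: $H'(x^+)\leq H'(x^-)$ implies $\morse(x^+,H') \geq \morse(x^-,H')$. (This is also how the paper itself uses \eqref{eq:self indexing} in the proof of \cref{lem:iso from floer to morse}, to get $\morse(x^+,H')-\morse(x^-,H')\geq 0$.) Hence $\partial^M$ \emph{lowers} the Morse index; it is the ordinary Morse chain differential, not a codifferential.

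With that correction your detour still lands in the right place: for $\nabla H'$ outward along $\partial X$ the Morse \emph{homology} computes $H_\bullet(X;\Q)$, and then Lefschetz duality $H_k(X;\Q)\cong H_{2n-k}(X,\partial X;\Q)$ gives the target module. So your route is salvageable, but it inverts the roles of chains and cochains and is longer than necessary. The paper bypasses all of this by quoting the Morse/singular isomorphism for $(X,\partial X)$ directly; since the lemma does not assert a specific grading, no explicit appeal to Poincar\'e duality is needed.
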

\begin{proof}
    \begin{IEEEeqnarray*}{rCls+x*}
        \homology{}{S^1}{}{F}{H}{\varepsilon}{}(X,H,J)
        & = & H(\Q \{1,\ldots,u^N\} \otimes MC(X,H')) \\     
        & = & \Q \{1,\ldots,u^N\} \otimes MH_\bullet(X,H') \\            
        & = & \Q \{1,\ldots,u^N\} \otimes H_{\bullet}(X, \partial X; \Q),
    \end{IEEEeqnarray*}
    where in the first equality we used \cref{lem:iso from floer to morse}, in the second equality we used the definition of the differential of $\Q \{1,\ldots,u^N\} \otimes MC(X,H')$, and in the third equality we used the isomorphism between Morse homology and singular homology.
\end{proof}

\begin{lemma}
    \label{lem:iso from symplectic to singular}
    There is a canonical isomorphism 
    \begin{IEEEeqnarray*}{c+x*}
        \alpha \colon \homology{}{S^1}{}{S}{H}{\varepsilon}{}(X) \longrightarrow H_\bullet(BS^1;\Q) \otimes H_\bullet(X,\partial X; \Q).
    \end{IEEEeqnarray*}
\end{lemma}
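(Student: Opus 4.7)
The approach is to define $\alpha$ as the colimit of the isomorphisms already constructed in \cref{lem:iso from floer to singular}. For each $(H,J) \in \admissible{X}$ with associated integer $N$, that lemma yields a canonical isomorphism
\begin{equation*}
    \alpha^{H,J} \colon \homology{}{S^1}{}{F}{H}{\varepsilon}{}(X,H,J) \longrightarrow \Q\{1,\ldots,u^N\} \otimes H_\bullet(X,\partial X;\Q).
\end{equation*}
Since $H_\bullet(BS^1;\Q) = H_\bullet(\C P^\infty;\Q) \cong \Q[u]$ is the colimit of the truncated polynomial modules $\Q\{1,\ldots,u^N\}$ along the natural inclusions, and since tensoring with the fixed module $H_\bullet(X,\partial X;\Q)$ preserves colimits (being a left adjoint), it suffices to show that the family $\{\alpha^{H,J}\}$ is natural with respect to continuation maps. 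The desired $\alpha$ will then be obtained by passing to the colimit over $\admissible{X}$.

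The heart of the argument is therefore to verify, for every pair $(H^+,J^+) \leq (H^-,J^-)$ in $\admissible{X}$ with associated integers $N^+ \leq N^-$, commutativity of the square
\begin{equation*}
    \begin{tikzcd}
        \homology{}{S^1}{}{F}{H}{\varepsilon}{}(X,H^+,J^+) \ar[r, "\alpha^{H^+,J^+}"] \ar[d, swap, "\phi^{-,+}"] & \Q\{1,\ldots,u^{N^+}\} \otimes H_\bullet(X,\partial X;\Q) \ar[d, hook] \\
        \homology{}{S^1}{}{F}{H}{\varepsilon}{}(X,H^-,J^-) \ar[r, swap, "\alpha^{H^-,J^-}"] & \Q\{1,\ldots,u^{N^-}\} \otimes H_\bullet(X,\partial X;\Q),
    \end{tikzcd}
\end{equation*}
in which the right vertical arrow is the natural inclusion tensored with the identity. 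I would reexpress $\phi^{-,+}$ via the alternative presentation of \cref{lem:iso floer and alt floer}, writing each side as $\Q\{1,\ldots,u^N\} \otimes \homology{}{}{}{F}{C}{}{}(X,H',J')$, and then restrict attention to the action window $(-\infty,\varepsilon]$, on which \cref{lem:iso from floer to morse} identifies the Floer complex with $\Q\{1,\ldots,u^N\} \otimes MC(X,H')$. On generators $u^j \otimes x$ with $x \in \critpt H'^+$, the target is to show that $\phi^{-,+}$ reduces to $u^j \otimes \phi^M(x)$, where $\phi^M \colon MC(X,H'^+) \longrightarrow MC(X,H'^-)$ is a Morse-theoretic continuation map, which induces the identity on $H_\bullet(X,\partial X;\Q)$.

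The chief technical obstacle is the moduli-theoretic analysis required to rule out continuation trajectories that change the $u$-power. This is a direct analogue of the computation appearing in the proof of \cref{lem:iso from floer to morse}: for orbits in region $\rmn{1}$, a rigid continuation trajectory from $[e_{j^+},x^+]$ to $[e_{j^-},x^-]$ is forced to satisfy $j^- = j^+$, by combining the index formula
\begin{equation*}
    \dim \mathcal{M} = 2(j^+ - j^-) + \morse(x^+,H'^+) - \morse(x^-,H'^-)
\end{equation*}
with condition \cref{item:flow lines} of \cref{def:hamiltonians} (controlling the $S^{2N+1}$-component along the homotopy) and the self-indexing hypothesis on $H'^\pm$ used in \cref{lem:iso from floer to morse}. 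Once this compatibility is established, passing to colimits yields
\begin{equation*}
    \homology{}{S^1}{}{S}{H}{\varepsilon}{}(X) \cong \left( \colim_{N} \Q\{1,\ldots,u^N\} \right) \otimes H_\bullet(X,\partial X;\Q) \cong H_\bullet(BS^1;\Q) \otimes H_\bullet(X,\partial X;\Q),
\end{equation*}
and canonicity of $\alpha$ is inherited from that of each $\alpha^{H,J}$.
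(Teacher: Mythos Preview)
Your approach is essentially the same as the paper's: both define $\alpha$ as the colimit over $\admissible{X}$ of the isomorphisms $\alpha^{H,J}$ from \cref{lem:iso from floer to singular}, and both identify $\varinjlim_N \Q\{1,\ldots,u^N\}$ with $\Q[u] \cong H_\bullet(BS^1;\Q)$. The difference is only in level of detail. The paper's proof is a four-line chain of equalities that takes the naturality of the $\alpha^{H,J}$ with respect to continuation maps as implicit, whereas you correctly flag this as the substantive point and sketch the index argument (parallel to the one in \cref{lem:iso from floer to morse}) needed to verify it. Your extra care is well-placed; the paper simply treats this step as routine.
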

\begin{proof}
    \begin{IEEEeqnarray*}{rCls+x*}
        \homology{}{S^1}{}{S}{H}{\varepsilon}{}(X)
        & = & \varinjlim_{N,H,J} \homology{}{S^1}{}{F}{H}{\varepsilon}{}(X,H,J) \\
        & = & \varinjlim_{N,H,J} \Q \{1,\ldots,u^N\} \otimes H_\bullet(X, \partial X; \Q) \\
        & = & \Q[u] \otimes H_\bullet(X, \partial X; \Q) \\
        & = & H_\bullet(BS^1; \Q) \otimes H_\bullet(X, \partial X; \Q),
    \end{IEEEeqnarray*}
    where in the first equality we used the definition of $S^1$-equivariant symplectic homology and in the second equality we used \cref{lem:iso from floer to singular}.
\end{proof}

\begin{definition}
    \phantomsection\label{def:delta map}
    We define a map $\delta \colon \homology{}{S^1}{}{S}{H}{+}{}(X) \longrightarrow H_\bullet(BS^1;\Q) \otimes H_\bullet(X,\partial X; \Q)$ as follows. For every $(H,J) \in \admissible{X}$, consider the short exact sequence of complexes
    \begin{IEEEeqnarray*}{c+x*}
        \begin{tikzcd}
            0 \ar[r] & \homology{}{S^1}{}{F}{C}{\varepsilon}{}(X,H,J) \ar[r] & \homology{}{S^1}{}{F}{C}{}{}(X,H,J) \ar[r] & \homology{}{S^1}{}{F}{C}{+}{}(X,H,J) \ar[r] & 0
        \end{tikzcd}
    \end{IEEEeqnarray*}
    There is an associated long exact sequence in homology
    \begin{IEEEeqnarray*}{c+x*}
        \begin{tikzcd}
            \cdots \ar[r] & \homology{}{S^1}{}{F}{H}{}{}(X,H,J) \ar[r] & \homology{}{S^1}{}{F}{H}{+}{}(X,H,J) \ar[r, "\delta^{H,J}"] & \homology{}{S^1}{}{F}{H}{\varepsilon}{}(X,H,J) \ar[r] & \cdots
        \end{tikzcd}
    \end{IEEEeqnarray*}
    Passing to the colimit, we obtain a sequence
    \begin{IEEEeqnarray*}{c+x*}
        \begin{tikzcd}
            \cdots \ar[r] & \homology{}{S^1}{}{S}{H}{}{}(X) \ar[r] & \homology{}{S^1}{}{S}{H}{+}{}(X) \ar[r, "\delta_0"] & \homology{}{S^1}{}{S}{H}{\varepsilon}{}(X) \ar[r] & \cdots
        \end{tikzcd}
    \end{IEEEeqnarray*}
    Finally, define $\delta \coloneqq \alpha \circ \delta_0 \colon \homology{}{S^1}{}{S}{H}{+}{}(X) \longrightarrow H_\bullet(BS^1;\Q) \otimes H_\bullet(X,\partial X; \Q)$, where $\alpha$ is the isomorphism from \cref{lem:iso from symplectic to singular}.
\end{definition}

Let $\varphi \colon (X,\lambda_X) \longrightarrow (Y, \lambda_Y)$ be a $0$-codimensional strict generalized Liouville embedding. Define $\rho \colon H_\bullet(Y,\partial Y; \Q) \longrightarrow H_\bullet(X,\partial X; \Q)$ to be the unique map such that the diagram
\begin{IEEEeqnarray*}{c+x*}
    \begin{tikzcd}
        H_\bullet(X,\del X; \Q) \ar[r, hook, two heads, "\varphi_*"]    & H_\bullet(\varphi(X),\varphi(\del X); \Q) \ar[d, hook, two heads] \\
        H_\bullet(Y,\del Y; \Q) \ar[r] \ar[u, dashed, "\exists ! \rho"] & H_\bullet(Y, Y \setminus \varphi(\itr X); \Q)
    \end{tikzcd}
\end{IEEEeqnarray*}
commutes, where $\varphi_*$ is an isomorphism by functoriality of homology and the vertical arrow on the right is an isomorphism by excision. The map $\rho$ is such that $\rho([Y]) = [X]$.

\begin{proposition}[{\cite[Proposition 3.3]{guttSymplecticCapacitiesPositive2018}}]
    The diagram
    \begin{IEEEeqnarray*}{c+x*}
        \begin{tikzcd}
            \homology{}{S^1}{}{S}{H}{+}{}(Y) \ar[r, "\delta_Y"] \ar[d, swap, "\varphi_!"] & H_\bullet(BS^1;\Q) \otimes H_\bullet(Y,\partial Y; \Q) \ar[d, "\id \otimes \rho"] \\
            \homology{}{S^1}{}{S}{H}{+}{}(X) \ar[r, swap, "\delta_X"]                     & H_\bullet(BS^1;\Q) \otimes H_\bullet(X,\partial X; \Q)
        \end{tikzcd}
    \end{IEEEeqnarray*}
    commutes.
\end{proposition}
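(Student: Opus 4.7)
I will prove the proposition by separately establishing commutativity of the two squares
\begin{equation*}
    \begin{tikzcd}
        \homology{}{S^1}{}{S}{H}{+}{}(Y) \ar[r, "\delta_0^Y"] \ar[d, swap, "\varphi_!^+"] & \homology{}{S^1}{}{S}{H}{\varepsilon}{}(Y) \ar[d, "\varphi_!^{\varepsilon}"] \ar[r, "\alpha_Y"] & H_\bullet(BS^1;\Q) \otimes H_\bullet(Y,\partial Y; \Q) \ar[d, "\id \otimes \rho"] \\
        \homology{}{S^1}{}{S}{H}{+}{}(X) \ar[r, swap, "\delta_0^X"] & \homology{}{S^1}{}{S}{H}{\varepsilon}{}(X) \ar[r, swap, "\alpha_X"] & H_\bullet(BS^1;\Q) \otimes H_\bullet(X,\partial X; \Q)
    \end{tikzcd}
\end{equation*}
since $\delta = \alpha \circ \delta_0$ by \cref{def:delta map}, and then pasting.

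For the left square, the plan is to observe that every natural transformation entering diagram \eqref{eq:viterbo transfer map diagram} preserves the action-$\varepsilon$ filtration: the continuation map $\phi$ does so by \cref{lem:action energy for floer trajectories}; the inclusion $\iota$ and projection $\pi$ trivially do; and the map $\eta$ of \cref{def:v with respect to stair nt} preserves action because $\hat{\varphi}$ is a strict Liouville embedding, so that $\mathcal{A}_{\overline{H}}(z, \hat{\varphi} \circ \gamma) = \mathcal{A}_{\overline{H}_X}(z, \gamma)$. Therefore, for every $(H,J,\overline{H},\overline{J}) \in \admstair{Y,X}$ the transfer map descends to a morphism of the short exact sequences $0 \to \homology{}{S^1}{}{F}{C}{\varepsilon}{} \to \homology{}{S^1}{}{F}{C}{}{} \to \homology{}{S^1}{}{F}{C}{+}{} \to 0$ on $Y$ and $X$. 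By the naturality of the connecting map of a long exact sequence, the left square commutes at the level of filtered Floer homologies for each $(H,J,\overline{H},\overline{J})$; taking colimits along the cofinal functors $\pi_Y, \pi_X$ yields $\varphi_!^{\varepsilon} \circ \delta_0^Y = \delta_0^X \circ \varphi_!^+$.

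For the right square, I will choose a stair $(\overline{H}, \overline{J}) \in \stair{Y, X}$ such that on region $\rmn{1}$ the function $\hat{\varphi}^* \overline{H}$ is a small $S^1$-independent Morse function $H'_X \colon X \to \R$ satisfying \eqref{eq:self indexing}, and so that a corresponding Morse condition holds for $\overline{H}$ on region $\rmn{4}$ (viewed as a function on the closure of $Y \setminus \hat{\varphi}(X \cup [0,R] \times \partial X)$). By \cref{lem:action stair}, the complex $\homology{}{S^1}{}{F}{C}{\varepsilon}{}(Y, \overline{H}, \overline{J})$ is generated by equivalence classes of orbits in regions $\rmn{1}, \rmn{4}, \rmn{5}$. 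A verbatim adaptation of the dimension count in the proof of \cref{lem:iso from floer to morse} shows that, for index reasons, the differential within the region-$\rmn{1}$ generators is exactly the Morse differential of $H'_X$ (with the $U$ map acting as the identity on the $\Q\{1, \ldots, u^N\}$ factor), and similarly within region $\rmn{4}$. The region-$\rmn{1}$ summand therefore forms a subcomplex isomorphic to $\Q\{1, \ldots, u^N\} \otimes MC(X, H'_X)$, the quotient by it computes (via regions $\rmn{4}$ and $\rmn{5}$) the Morse-theoretic contribution of the pair $(Y, X \cup \partial Y)$, and under these identifications $\eta$ is the projection onto the region-$\rmn{1}$ summand. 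Passing to homology and taking colimits, this projection becomes exactly the map $H_\bullet(Y, \partial Y; \Q) \longrightarrow H_\bullet(Y, (Y \setminus \itr X) \cup \partial Y; \Q) \cong H_\bullet(X, \partial X; \Q)$, which by construction (excision plus $\varphi_*$) is the map $\rho$.

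The main obstacle will be the right square: one must verify that the isomorphism $\alpha_Y$ (built in \cref{lem:iso from symplectic to singular} using admissible Hamiltonians on $Y$) agrees with the chain-level picture coming from a stair Hamiltonian on $Y$, which amounts to checking that continuation maps between admissible and stair Hamiltonians (both on $Y$) respect the Morse-theoretic decomposition of the action-$\varepsilon$ subcomplex up to chain homotopy, and that no ``crossing'' trajectories between regions $\rmn{1}$ and $\rmn{4}$ contribute to the differential. Once those chain-level statements are in place, the identification of the induced projection with $\rho$ is essentially a tautology from the excision definition of $\rho$.
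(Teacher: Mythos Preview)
The paper does not actually prove this proposition: it is stated with a citation to \cite[Proposition 3.3]{guttSymplecticCapacitiesPositive2018} and no proof is supplied. So there is no ``paper's own proof'' to compare against; the thesis simply imports the result.

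That said, your outline follows the natural strategy one would use to establish this from the definitions in the thesis. The decomposition $\delta = \alpha \circ \delta_0$ and the treatment of the left square via naturality of the connecting homomorphism are correct and essentially forced. For the right square, your plan is the right one, but as you yourself flag, it is not yet a proof: you have not actually carried out the verification that the continuation maps between admissible and stair Hamiltonians respect the Morse-theoretic identifications of \cref{lem:iso from floer to morse,lem:iso from symplectic to singular}, nor have you established the absence of crossing trajectories between regions $\rmn{1}$ and $\rmn{4}$ in the differential of the action-$\varepsilon$ subcomplex of a stair Hamiltonian. These are genuine technical points (the second requires an energy or no-escape argument specific to the stair geometry, since \cref{lem:action stair} alone does not order $\mathcal{A}_{\overline{H}}(\rmn{1})$ relative to $\mathcal{A}_{\overline{H}}(\rmn{3})$ in a way that rules out such trajectories on action grounds). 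What you have written is a correct and coherent proof plan, but the work you identify as ``the main obstacle'' is precisely the content of the proposition, and you have deferred it rather than done it.
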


\chapter{Symplectic capacities}
\label{chp:symplectic capacities}

\section{Symplectic capacities}
\label{sec:symplectic capacities}

In this section we define the notion of symplectic capacity (\cref{def:symplectic capacity}). A capacity is a function $c$ which assigns to every symplectic manifold $X$ (in a restricted subclass) a number $c(X) \in [0,+\infty]$, and which is functorial with respect to symplectic embeddings (in a restricted subclass). In the remaining sections of this chapter, we will define various capacities, namely the Lagrangian capacity (\cref{def:lagrangian capacity}), the Gutt--Hutchings capacities (\cref{def:gutt hutchings capacities}) and the McDuff--Siegel capacities (\cref{def:g tilde}). In this section we also deal with two small technicalities:
\begin{enumerate}
    \item Most of the capacities we will deal with in this thesis are functorial with respect to generalized Liouville embeddings, which do not form a category. However, they form an object which is like a category but has only partially defined composition of morphisms. We will use the nomenclature of \cite{andersenTQFTQuantumTeichmuller2014} and call such an object a categroid (\cref{def:categroid}).
    \item As we will see, some capacities we will consider are defined on the class of nondegenerate Liouville domains. In the last part of this section, we will see how such a capacity can be extended uniquely to a capacity of Liouville domains.
\end{enumerate}

\begin{definition}[{\cite[Definition 22]{andersenTQFTQuantumTeichmuller2014}}]
    \label{def:categroid}
    A \textbf{categroid} $\mathbf{C}$ consists of a family of objects $\operatorname{Obj}(\mathbf{C})$ and for any pair of objects $A,B \in \mathbf{C}$ a set $\Hom_{\mathbf{C}}(A,B)$ such that the following holds.
    \begin{enumerate}
        \item For any three objects $A$, $B$, $C$ there is a subset $\operatorname{Comp}_{\mathbf{C}}(A,B,C) \subset \Hom_{\mathbf{C}}(B,C) \times \Hom_{\mathbf{C}}(A,B)$ of \textbf{composable morphisms} and an associated \textbf{composition map}
            \begin{IEEEeqnarray*}{c+x*}
                \circ \colon \operatorname{Comp}_{\mathbf{C}}(A,B,C) \longrightarrow \Hom_{\mathbf{C}}(A,C)
            \end{IEEEeqnarray*}
            such that composition of composable morphisms is associative.
        \item For any object $A$ there exists an \textbf{identity morphism} $\id_A \in \Hom_{\mathbf{C}}(A,A)$ which is composable with any morphism $f \in \Hom_{\mathbf{C}}(A,B)$ or $g \in \Hom_{\mathbf{C}}(B,A)$ and satisfies%
            \begin{IEEEeqnarray*}{rCls+x*}
                f \circ \id_A & = & f, \\
                \id_A \circ g & = & g.
            \end{IEEEeqnarray*}
    \end{enumerate}
\end{definition}

In this context, one has obvious definitions of subcategroids and also of functors between categroids. Denote by $\symp$ the category of symplectic manifolds, where morphisms are $0$-codimensional symplectic embeddings.

\begin{definition}
    \label{def:symplectic categroid}
    A \textbf{symplectic categroid} is a subcategroid $\mathbf{C}$ of $\symp$ such that $(X,\omega) \in \mathbf{C}$ implies $(X,\alpha \omega) \in \mathbf{C}$ for all $\alpha > 0$.
\end{definition}

\begin{definition}
    \label{def:symplectic capacity}
    Let $\mathbf{C}$ be a symplectic categroid. A \textbf{symplectic capacity} is a functor $c \colon \mathbf{C} \longrightarrow [0,+\infty]$ satisfying
    \begin{description}
        \item[(Monotonicity)] If $(X,\omega_X) \longrightarrow (Y, \omega_Y)$ is a morphism in $\mathbf{C}$ then $c(X,\omega_X) \leq c(Y,\omega_Y)$;
        \item[(Conformality)] If $\alpha > 0$ then $c(X,\alpha \omega) = \alpha \, c(X, \omega)$.
    \end{description}
\end{definition}

Notice that the monotonicity property is just a restatement of the fact that $c$ is a functor.

\begin{definition}
    \label{def:nontrivial}
    Let $c \colon \mathbf{C} \longrightarrow [0, +\infty]$ be a symplectic capacity with the property that $B^{2n}(1), Z^{2n}(1) \in \mathbf{C}$ for every $n$. We say that $c$ is \textbf{nontrivial} or \textbf{normalized} if it satisfies%
    \begin{description}
        \item[(Nontriviality)] $0 < c(B^{2n}(1)) \leq  c(Z^{2n}(1)) < + \infty$;
        \item[(Normalization)] $0 < c(B^{2n}(1)) = 1 = c(Z^{2n}(1)) < + \infty$.
    \end{description}
\end{definition}

\begin{example}
    Let $(X, \omega)$ be a $2n$-dimensional symplectic manifold. Recall that the \textbf{symplectic volume} of $X$ is given by
    \begin{IEEEeqnarray*}{c+x*}
        \operatorname{vol}(X) \coloneqq \int_{X}^{} \frac{\omega^n}{n!}.
    \end{IEEEeqnarray*}
    The \textbf{volume capacity} of $X$ is given by
    \begin{IEEEeqnarray*}{c+x*}
        c_{\mathrm{vol}}(X) \coloneqq \p{}{2}{\frac{\operatorname{vol}(X)}{\operatorname{vol}(B)}}^{1/n},
    \end{IEEEeqnarray*}
    where $B \coloneqq B^{2n}(1) \coloneqq \{z \in \C^{n} \mid \pi |z|^2 \leq 1 \}$.
\end{example}

\begin{example}
    Let $(Y,\Omega)$ be a symplectic manifold. We define the \textbf{embedding capacities}, denoted by $c_{(Y,\Omega)}$ and $c^{(Y,\Omega)}$, by
    \begin{IEEEeqnarray*}{rCll}
        c_{(Y,\Omega)}(X, \omega) & \coloneqq & \sup & \{ a > 0 \mid \text{there exists a symplectic embedding } (Y, a \Omega) \longrightarrow (X, \omega) \}, \\
        c^{(Y,\Omega)}(X, \omega) & \coloneqq & \inf & \{ a > 0 \mid \text{there exists a symplectic embedding } (X, \omega) \longrightarrow (Y, a \Omega) \},
    \end{IEEEeqnarray*}
    for any symplectic manifold $(X, \omega)$. Let $\omega_0$ denote the canonical symplectic structure of $\C^n$. In the case where $(Y, \Omega) = (B^{2n}(1), \omega_0)$ or $(Y, \Omega) = (P^{2n}(1), \omega_0)$, we denote
    \begin{IEEEeqnarray*}{lClCl}
        c_B(X,\omega) & \coloneqq & c_{(B^{2n}(1), \omega)}(X, \omega) & = & \sup \{ a \ | \ \text{$\exists$ symplectic embedding } B^{2n}(a) \longrightarrow X \}, \\
        c_P(X,\omega) & \coloneqq & c_{(P^{2n}(1), \omega)}(X, \omega) & = & \sup \{ a \ | \ \text{$\exists$ symplectic embedding } P^{2n}(a) \longrightarrow X \}.
    \end{IEEEeqnarray*}
    Embedding capacities tend to be hard to compute, since they are defined as a restatement of a hard embedding problem. For example, a restatement of Gromov's nonsqueezing theorem \cite{gromovPseudoHolomorphicCurves1985} is that $c_B$ is a normalized symplectic capacity. The capacity $c_B$ is also called \textbf{Gromov width}.
\end{example}

\begin{definition}[{\cite[Section 4.2]{guttSymplecticCapacitiesPositive2018}}]
    \phantomsection\label{def:perturbation of liouville domain}
    If $(X,\lambda)$ is a Liouville domain and $f \colon \partial X \longrightarrow \R$ is a smooth function, we define a new Liouville domain $(X_f,\lambda_f)$ as follows. Consider the completion $\hat{X}$, which has as subsets $X \subset \hat{X}$ and $\R \times \partial X \subset \hat{X}$. Then,
    \begin{IEEEeqnarray*}{c+x*}
        X_f \coloneqq \hat{X} \setminus \{ (\rho,y) \in \R \times \partial X \mid \rho > f(y) \}
    \end{IEEEeqnarray*}
    and $\lambda_f$ is the restriction of $\hat{\lambda}$ to $X_f$. Define $\mathcal{F}_{X}^{\pm}$ to be the set of $f^{\pm} \colon \partial X \longrightarrow \R^\pm$ such that $(X_{f^\pm}, \lambda_{f^\pm})$ is nondegenerate.
\end{definition}

\begin{definition}
    \label{def:liouville categroid}
    A \textbf{Liouville categroid} is a subcategroid $\mathbf{L}$ of $\symp$ such that
    \begin{enumerate}
        \item Every object of $\mathbf{L}$ is a Liouville domain.
        \item If $X \in \mathbf{L}$ and $f^{+} \in \mathcal{F}^{+}_X$ then $X_{f^{+}} \in \mathbf{L}$ and the inclusion $X \longrightarrow X_{f^+}$ is a morphism in $\mathbf{L}$ which is composable with any other morphisms $Y \longrightarrow X$ or $X_{f^+} \longrightarrow Z$ in $\mathbf{L}$.
        \item If $X \in \mathbf{L}$ and $f^{-} \in \mathcal{F}^{-}_X$ then $X_{f^{-}} \in \mathbf{L}$ and the inclusion $X_{f^-} \longrightarrow X$ is a morphism in $\mathbf{L}$ which is composable with any other morphisms $Y \longrightarrow X_{f^-}$ or $X \longrightarrow Z$ in $\mathbf{L}$.
    \end{enumerate}
\end{definition}

\begin{example}
    Let $\liouvgle$ be the categroid whose objects are Liouville domains and whose morphisms are $0$-codimensional generalized Liouville embeddings. Then $\liouvgle$ is a Liouville categroid.
\end{example}
 
\begin{lemma}
    \label{lem:c is the unique extension to lvds}
    Let $\mathbf{L}$ be a Liouville categroid. Let $\mathbf{L}_{\mathrm{ndg}}$ be the full subcategroid of $\mathbf{L}$ of nondegenerate Liouville domains (i.e., if $X, Y \in \mathbf{L}_{\mathrm{ndg}}$ then $\Hom_{\mathbf{L}_{\mathrm{ndg}}}(X,Y) = \Hom_{\mathbf{L}}(X,Y)$). If $c \colon \mathbf{L}_{\mathrm{ndg}} \longrightarrow [0, +\infty]$ is a symplectic capacity, then there exists a unique symplectic capacity $\overline{c} \colon \mathbf{L} \longrightarrow [0, + \infty]$ such that the following diagram commutes:
    \begin{IEEEeqnarray}{c+x*}
        \plabel{eq:diagram extend cap liouv}
        \begin{tikzcd}
            \mathbf{L}_{\mathrm{ndg}} \ar[d] \ar[dr, "c"] & \\
            \mathbf{L} \ar[r, swap, "\overline{c}"] & {[0,+\infty]}
        \end{tikzcd}
    \end{IEEEeqnarray}
\end{lemma}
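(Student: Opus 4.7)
The natural candidate for the extension is
\begin{IEEEeqnarray*}{c+x*}
    \overline{c}(X) \;\coloneqq\; \inf_{f^+ \in \mathcal{F}_X^+} c(X_{f^+}) \;=\; \sup_{f^- \in \mathcal{F}_X^-} c(X_{f^-}),
\end{IEEEeqnarray*}
and my first task would be to show that these two quantities actually coincide. Monotonicity of $c$ applied to the Liouville embeddings $X_{f^-} \hookrightarrow X \hookrightarrow X_{f^+}$ (which belong to $\mathbf{L}$ by the definition of a Liouville categroid and are composable with each other) gives immediately the inequality $\sup_{f^-} c(X_{f^-}) \leq \inf_{f^+} c(X_{f^+})$. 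Uniqueness of the extension also drops out of this: any symplectic capacity $\overline{c}'$ making diagram \eqref{eq:diagram extend cap liouv} commute must satisfy $c(X_{f^-}) = \overline{c}'(X_{f^-}) \leq \overline{c}'(X) \leq \overline{c}'(X_{f^+}) = c(X_{f^+})$, so if $\sup = \inf$ holds then $\overline{c}'(X)$ is forced to equal this common value.

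The reverse inequality is the main content, and here the key tool is a scaling identification between perturbed Liouville domains. If $Z$ denotes the Liouville vector field of $\hat{X}$, then for any real $t$ the time-$t$ Liouville flow $\phi^t_Z$ sends the graph of a function $g \colon \partial X \to \R$ in $\R \times \partial X$ to the graph of $g + t$, and pulls back $\hat{\lambda}$ to $e^t \hat{\lambda}$; consequently, for every $g^+ \in \mathcal{F}_X^+$ and every $\alpha > 0$ one has a Liouville diffeomorphism
\begin{IEEEeqnarray*}{c+x*}
    \phi^{\log \alpha}_Z \colon (X_{g^+ + \log \alpha},\, \lambda_{g^+ + \log \alpha}) \longrightarrow (X_{g^+},\, \alpha \lambda_{g^+}).
\end{IEEEeqnarray*}
Moreover, since the Reeb flow on $\partial X_{g^+ + \log \alpha}$ is just a time-reparametrization of the one on $\partial X_{g^+}$, nondegeneracy is preserved, so $g^+ + \log \alpha$ lies in $\mathcal{F}_X^-$ whenever it is negative. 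Combining this diffeomorphism with the conformality of $c$ on $\mathbf{L}_{\mathrm{ndg}}$ yields $c(X_{g^+ + \log \alpha}) = \alpha\, c(X_{g^+})$. Choosing $\alpha$ arbitrarily close to $1$ with $\log\alpha < -\max g^+$ and then letting $g^+ \to 0$ in $\mathcal{F}_X^+$ (using that $c(X_{h^+}) \leq c(X_{g^+})$ whenever $h^+ \leq g^+$, so the infimum is approached by small perturbations) gives $\sup_{f^-} c(X_{f^-}) \geq \inf_{g^+} c(X_{g^+})$, completing the equality. Applying the same identification with $X$ itself nondegenerate and constant perturbations $f^+ = \log \alpha$ produces $c(X_{\log \alpha}) = \alpha\, c(X)$, whose limit $\alpha \to 1^+$ shows $\overline{c}(X) = c(X)$, so diagram \eqref{eq:diagram extend cap liouv} commutes.

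Finally, monotonicity of $\overline{c}$ is a routine consequence of the categroid axioms: for a morphism $\varphi \colon X \to Y$ in $\mathbf{L}$ and any $f^- \in \mathcal{F}_X^-$, $g^+ \in \mathcal{F}_Y^+$, the composition $X_{f^-} \hookrightarrow X \xrightarrow{\varphi} Y \hookrightarrow Y_{g^+}$ is composable in $\mathbf{L}$ by the third clause of \cref{def:liouville categroid}, so $c(X_{f^-}) \leq c(Y_{g^+})$ and taking $\sup$/$\inf$ gives $\overline{c}(X) \leq \overline{c}(Y)$. Conformality is checked by observing that the completion of $(X,\alpha\lambda)$ is naturally identified with $(\hat X, \alpha\hat\lambda)$ so that $(X,\alpha\lambda)_{f^+} = (X_{f^+}, \alpha\lambda_{f^+})$, and $\mathcal{F}^+_{(X,\alpha\lambda)} = \mathcal{F}^+_{(X,\lambda)}$ since the Reeb flow on the boundary rescales by $\alpha$; then conformality of $c$ on $\mathbf{L}_{\mathrm{ndg}}$ pulls out an $\alpha$ from the infimum. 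I expect the main obstacle to be the careful verification of the sup--inf equality — in particular, checking that small nondegenerate perturbations $g^+ \in \mathcal{F}_X^+$ exist in enough abundance that the infimum is truly approached as $g^+ \to 0$, and handling the edge cases where $\overline{c}(X) \in \{0, +\infty\}$ so that the limit argument $\alpha \to 1$ is legitimate.
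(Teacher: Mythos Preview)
Your proposal is correct and follows essentially the same approach as the paper: define $c^+(X)=\inf_{f^+}c(X_{f^+})$ and $c^-(X)=\sup_{f^-}c(X_{f^-})$, use the Liouville flow to identify $(X_{g+\varepsilon},\lambda_{g+\varepsilon})$ with $(X_g,e^\varepsilon\lambda_g)$ and deduce the scaling identity $c(X_{g+\varepsilon})=e^\varepsilon c(X_g)$, then conclude $c^-=c^+$ and verify monotonicity, conformality, and uniqueness exactly as you outline. The only organizational difference is that the paper runs the key inequality in the other direction --- it picks $f^-\in\mathcal{F}^-_X$ with image in $(-\varepsilon,0)$, sets $f^+\coloneqq f^-+\varepsilon\in\mathcal{F}^+_X$, and obtains $c^+(X)\le c(X_{f^+})=e^\varepsilon c(X_{f^-})\le e^\varepsilon c^-(X)$ directly --- which sidesteps the concern you flagged about small $g^+$ approaching the infimum.
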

\begin{proof}
    This proof is based on \cite[Section 4.2]{guttSymplecticCapacitiesPositive2018}. We claim that if $\varepsilon > 0$ and $(X, \lambda)$ is a nondegenerate Liouville domain in $\mathbf{L}_{\mathrm{ndg}}$, then $(X_{\varepsilon}, \lambda_{\varepsilon})$ is nondegenerate and 
    \begin{IEEEeqnarray}{c+x*}
        \plabel{eq:capacity of deformed domain}
        c(X_\varepsilon, \lambda_\varepsilon) = e^{\varepsilon} c (X, \lambda).
    \end{IEEEeqnarray}
    To see this, notice that the time $\varepsilon$ flow of the Liouville vector field $Z$ of $\hat{X}$ restricts to a Liouville embedding $\phi \colon (X, e^{\varepsilon} \lambda) \longrightarrow (X_\varepsilon, \lambda_\varepsilon)$ and also to a contactomorphism $\phi \colon (\partial X, e^{\varepsilon} \lambda|_{\partial X}) \longrightarrow (\partial X_\varepsilon, \partial \lambda_\varepsilon|_{\partial X_\varepsilon})$. This shows that $(X_\varepsilon, \lambda_\varepsilon)$ is nondegenerate. In particular, $(X_\varepsilon, \lambda_\varepsilon) \in \mathbf{L}_{\mathrm{ndg}}$. Finally,%
    \begin{IEEEeqnarray*}{rCls+x*}
        c(X_\varepsilon, \lambda_\varepsilon)
        & = & c(X, e^{\varepsilon} \lambda) & \quad [\text{by functoriality of $c$}] \\
        & = & e^{\varepsilon} c(X,\lambda)  & \quad [\text{by conformality}]. &
    \end{IEEEeqnarray*}
    This finishes the proof of Equation \eqref{eq:capacity of deformed domain}. Define functions $c^{\pm} \colon \mathbf{L} \longrightarrow [0,+\infty]$ by
    \begin{IEEEeqnarray*}{rCls+x*}
        c^+(X) & \coloneqq & \inf_{f^+ \in \mathcal{F}^+_X} c(X_{f^+}), \\
        c^-(X) & \coloneqq & \sup_{f^- \in \mathcal{F}^-_X} c(X_{f^-}).
    \end{IEEEeqnarray*}
    We claim that if $(X, \lambda) \in \mathbf{L}$ is a Liouville domain then 
    \begin{IEEEeqnarray}{c+x*}
        \plabel{eq:c minus equals c plus}
        c^-(X) = c^+(X).
    \end{IEEEeqnarray}
    Monotonicity of $c$ implies $c^-(X) \leq c^+(X)$. To show the reverse inequality, it is enough to show that $c^+(X) \leq e^{\varepsilon} c^-(X)$ for every $\varepsilon > 0$. For this, choose $f^- \in \mathcal{F}^{-}_X$ such that $\img f^- \subset (- \varepsilon, 0)$ and define $f^+ = f^- + \varepsilon$. By the previous discussion, $(X_{f^+}, \lambda_{f^+})$ is nondegenerate and $f^+ \in \mathcal{F}^+_X$. Then,
    \begin{IEEEeqnarray*}{rCls+x*}
        c^+(X)
        & =    & \inf_{g^+ \in \mathcal{F}^+_X} c(X_{g^+})                 & \quad [\text{by definition of $c^+$}] \\
        & \leq & c(X_{f^+})                                                & \quad [\text{since $f^+ \in \mathcal{F}^+_X$}] \\
        & =    & e^{\varepsilon} c(X_{f^-})                                & \quad [\text{by Equation \eqref{eq:capacity of deformed domain}}] \\
        & \leq & e^{\varepsilon} \sup_{g^- \in \mathcal{F}^-_X} c(X_{g^-}) & \quad [\text{since $f^- \in \mathcal{F}^-_X$}] \\
        & =    & e^{\varepsilon} c^-(X)                                    & \quad [\text{by definition of $c^-$}],
    \end{IEEEeqnarray*}
    which finishes the proof of Equation \eqref{eq:c minus equals c plus}. Moreover, if $(X, \lambda) \in \mathbf{L}_{\mathrm{ndg}}$ is nondegenerate, then $c^-(X) \leq c(X) \leq c^+(X) = c^-(X)$, which implies
    \begin{IEEEeqnarray*}{c+x*}
        c^-(X) = c(X) = c^+(X).
    \end{IEEEeqnarray*}
    We now show that $c^{\pm}$ are symplectic capacities. The conformality property is immediate. To prove monotonicity, let $X \longrightarrow Y$ be a morphism in $\mathbf{L}$.
    \begin{IEEEeqnarray*}{rCls+x*}
        c^-(X)
        & =    & \sup_{f^- \in \mathcal{F}^-_X} c(X_{f^-}) & \quad [\text{by definition of $c^-$}] \\
        & \leq & \inf_{g^+ \in \mathcal{F}^+_Y} c(Y_{g^+}) & \quad [\text{since $X_{f^-} \subset X \longrightarrow Y \subset Y_{g^+}$ and by monotonicity of $c$}] \\
        & =    & c^+(Y)                                    & \quad [\text{by definition of $c^+$}].
    \end{IEEEeqnarray*}
    The result follows from Equation \eqref{eq:c minus equals c plus}. To prove existence, simply notice that by the above discussion, the function $\overline{c} \coloneqq c^- = c^+ \colon \mathbf{L} \longrightarrow [0, +\infty]$ has all the desired properties.

    To prove uniqueness, let $\overline{c}$ be any function as in the statement of the lemma. We wish to show that $\overline{c} \coloneqq c^- = c^+$. We start by showing that $c^-(X) \leq \overline{c}(X)$.
    \begin{IEEEeqnarray*}{rCls+x*}
        c^-(X)
        & =    & \sup_{f^- \in \mathcal{F}^-_X} c(X_{f^-})            & \quad [\text{by definition of $c^-$}] \\
        & =    & \sup_{f^- \in \mathcal{F}^-_X} \overline{c}(X_{f^-}) & \quad [\text{by assumption on $\overline{c}$}] \\
        & \leq & \sup_{f^- \in \mathcal{F}^-_X} \overline{c}(X)       & \quad [\text{by monotonicity of $\overline{c}$}] \\
        & =    & \overline{c}(X).
    \end{IEEEeqnarray*}
    Analogously, we can show that $c^+(X) \geq \overline{c}(X)$, which concludes the proof.
\end{proof}

\begin{lemma}
    \label{lem:can prove ineqs for ndg}
    For $i = 0,1$, let $c_i \colon \mathbf{L}_{\mathrm{ndg}} \rightarrow [0, +\infty]$ be symplectic capacities with extensions $\overline{c}_i \colon \mathbf{L} \rightarrow [0, +\infty]$ as in \cref{lem:c is the unique extension to lvds}. If $c_0(Y) \leq c_1(Y)$ for every nondegenerate Liouville domain $Y \in \mathbf{L}_{\mathrm{ndg}}$ then $\overline{c}_0(X) \leq \overline{c}_1(X)$ for every Liouville domain $X \in \mathbf{L}$.
\end{lemma}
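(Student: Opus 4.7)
The plan is to reduce the inequality $\overline{c}_0(X) \leq \overline{c}_1(X)$ for a general Liouville domain $X \in \mathbf{L}$ to the assumed inequality for nondegenerate Liouville domains by using the explicit formula for the extension that was produced in the proof of \cref{lem:c is the unique extension to lvds}. Concretely, that proof shows that the unique extension $\overline{c}$ of a capacity $c \colon \mathbf{L}_{\mathrm{ndg}} \to [0,+\infty]$ can be computed by either of the two formulas
\begin{IEEEeqnarray*}{c+x*}
    \overline{c}(X) \;=\; \sup_{f^- \in \mathcal{F}^-_X} c(X_{f^-}) \;=\; \inf_{f^+ \in \mathcal{F}^+_X} c(X_{f^+}),
\end{IEEEeqnarray*}
where each perturbed domain $X_{f^\pm}$ is itself nondegenerate and hence lies in $\mathbf{L}_{\mathrm{ndg}}$.

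First I would invoke this formula for both $\overline{c}_0$ and $\overline{c}_1$, applied to the same Liouville domain $X \in \mathbf{L}$. Next, I would fix any $f^- \in \mathcal{F}^-_X$; since $X_{f^-} \in \mathbf{L}_{\mathrm{ndg}}$, the hypothesis gives $c_0(X_{f^-}) \leq c_1(X_{f^-})$. Taking the supremum over $f^- \in \mathcal{F}^-_X$ on both sides yields
\begin{IEEEeqnarray*}{c+x*}
    \overline{c}_0(X) \;=\; \sup_{f^- \in \mathcal{F}^-_X} c_0(X_{f^-}) \;\leq\; \sup_{f^- \in \mathcal{F}^-_X} c_1(X_{f^-}) \;=\; \overline{c}_1(X),
\end{IEEEeqnarray*}
which is the desired inequality.

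There is no real obstacle here: all the nontrivial content (existence of the $f^\pm$, nondegeneracy of $X_{f^\pm}$, and the two equivalent formulas for $\overline{c}$) has already been established in \cref{lem:c is the unique extension to lvds}. The role of the present lemma is simply to record that the extension procedure is monotone in the input capacity, which follows formally once we use the supremum (or, dually, the infimum) representation of $\overline{c}$.
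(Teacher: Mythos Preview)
Your proposal is correct and matches the paper's proof essentially verbatim: the paper also writes $\overline{c}_0(X) = \sup_{f^- \in \mathcal{F}^-_X} c_0(X_{f^-}) \leq \sup_{f^- \in \mathcal{F}^-_X} c_1(X_{f^-}) = \overline{c}_1(X)$, justifying each step exactly as you do.
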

\begin{proof}
    \begin{IEEEeqnarray*}{rCls+x*}
        \overline{c}_0(X)
        & =    & \sup_{f^- \in \mathcal{F}^-_X} c_0(X_{f^-}) & \quad [\text{by the definition of $\overline{c}_0$ in \cref{lem:c is the unique extension to lvds}}] \\
        & \leq & \sup_{f^- \in \mathcal{F}^-_X} c_1(X_{f^-}) & \quad [\text{by assumption on $c_0$ and $c_1$}] \\
        & =    & \overline{c}_1(X)                           & \quad [\text{by the definition of $\overline{c}_1$ in \cref{lem:c is the unique extension to lvds}}].  & \qedhere
    \end{IEEEeqnarray*}
\end{proof}

By the exposition above, if $c$ is a capacity of nondegenerate Liouville domains then it can be extended to a capacity of Liouville domains. In particular, $c(X)$ is defined for any star-shaped domain $X$. However, it will be useful to us to compute capacities of the cube $P(r)$ and of the nondisjoint union of cylinders $N(r)$. These spaces are not quite star-shaped domains, because they have corners and $N(r)$ is noncompact. So we will consider a further extension of the capacity $c$. Let $\mathbf{Star}$ be the category of star-shaped domains, where there is a unique morphism $X \longrightarrow Y$ if and only if $X \subset Y$. Denote by $\mathbf{Star}_{\mathrm{ncp}}$ the category of ``star-shaped domains'' which are possibly noncompact or possibly have corners, with the same notion of morphisms. 

\begin{lemma}
    \label{lem:c is the smallest extension to ss}
    Let $c \colon \mathbf{Star} \longrightarrow [0, +\infty]$ be a symplectic capacity. Define a symplectic capacity $\overline{c} \colon \mathbf{Star}_{\mathrm{ncp}} \longrightarrow [0, +\infty]$ by
    \begin{IEEEeqnarray*}{c+x*}
        \overline{c}(X) = \sup_{Y \subset X} c(Y),
    \end{IEEEeqnarray*}
    where the supremum is taken over star-shaped domains $Y \subset X$ which are compact and have smooth boundary. Then, the diagram
    \begin{IEEEeqnarray*}{c+x*}
        \begin{tikzcd}
            \mathbf{Star} \ar[dr, "c"] \ar[d] \\
            \mathbf{Star}_{\mathrm{ncp}} \ar[r, swap, "\overline{c}"] & {[0, + \infty]}
        \end{tikzcd}
    \end{IEEEeqnarray*}
    commutes. Moreover, $\overline{c}$ is the smallest capacity making this diagram commute.
\end{lemma}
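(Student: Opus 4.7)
The plan is to verify in order: that the diagram commutes, that $\overline{c}$ is a symplectic capacity, and that it is the smallest such extension. Each step follows directly from the definitions together with monotonicity/conformality of $c$, so no real obstacle is expected; the work is essentially bookkeeping.

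First I would check commutativity of the diagram. For $X \in \mathbf{Star}$, the domain $X$ itself is compact, star-shaped, and has smooth boundary, so it appears in the supremum defining $\overline{c}(X)$, giving $c(X) \leq \overline{c}(X)$. Conversely, for any compact star-shaped $Y \subset X$ with smooth boundary, the inclusion $Y \hookrightarrow X$ is a morphism in $\mathbf{Star}$, so monotonicity of $c$ yields $c(Y) \leq c(X)$; taking the supremum gives $\overline{c}(X) \leq c(X)$.

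Next I would verify the capacity axioms for $\overline{c}$. For monotonicity, given a morphism $X \hookrightarrow X'$ in $\mathbf{Star}_{\mathrm{ncp}}$, any compact star-shaped $Y \subset X$ with smooth boundary is also a subset of $X'$, so it contributes to the supremum for $\overline{c}(X')$; hence $\overline{c}(X) \leq \overline{c}(X')$. For conformality with $\alpha > 0$, the collection of admissible $Y \subset X$ in the supremum is unchanged when $\omega$ is rescaled to $\alpha \omega$, and by conformality of $c$ we have $c(Y, \alpha \omega) = \alpha c(Y, \omega)$ for each such $Y$, so
\begin{IEEEeqnarray*}{c+x*}
    \overline{c}(X, \alpha \omega) = \sup_{Y \subset X} c(Y, \alpha \omega) = \alpha \sup_{Y \subset X} c(Y, \omega) = \alpha \overline{c}(X, \omega).
\end{IEEEeqnarray*}

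Finally, for minimality, let $\overline{c}' \colon \mathbf{Star}_{\mathrm{ncp}} \longrightarrow [0, +\infty]$ be any symplectic capacity whose restriction to $\mathbf{Star}$ equals $c$. For any $X \in \mathbf{Star}_{\mathrm{ncp}}$ and any compact star-shaped $Y \subset X$ with smooth boundary, monotonicity of $\overline{c}'$ applied to the inclusion $Y \hookrightarrow X$ (a morphism in $\mathbf{Star}_{\mathrm{ncp}}$) together with commutativity of the diagram gives $c(Y) = \overline{c}'(Y) \leq \overline{c}'(X)$. Taking the supremum over all such $Y$ yields $\overline{c}(X) \leq \overline{c}'(X)$, completing the proof.
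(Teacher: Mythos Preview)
Your proof is correct and follows essentially the same approach as the paper: both verify commutativity by noting $X$ itself is a valid $Y$ and using monotonicity of $c$, and both establish minimality via monotonicity of the competing extension applied to inclusions $Y \hookrightarrow X$. The only difference is that you spell out the monotonicity and conformality of $\overline{c}$ explicitly, whereas the paper simply declares these immediate.
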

\begin{proof}
    It is immediate that $\overline{c}$ is a symplectic capacity. We show that the diagram commutes. If $X$ is a compact star-shaped domain with smooth boundary, then
    \begin{IEEEeqnarray*}{rCls+x*}
        c(X) 
        & \leq & \sup_{Y \subset X} c(Y) & \quad [\text{since $X$ is compact and has smooth boundary}] \\
        & \leq & c(X)                    & \quad [\text{by monotonicity}].
    \end{IEEEeqnarray*}
    If $\tilde{c} \colon \mathbf{Star}_{\mathrm{ncp}} \longrightarrow [0, +\infty]$ is another capacity making the diagram commute, then
    \begin{IEEEeqnarray*}{rCls+x*}
        \overline{c}(X)
        & =    & \sup_{Y \subset X} c(Y)         & \quad [\text{by definition of $\overline{c}$}] \\
        & =    & \sup_{Y \subset X} \tilde{c}(Y) & \quad [\text{since $\tilde{c}$ makes the diagram commute}] \\
        & \leq & \tilde{c}(X)                    & \quad [\text{by monotonicity of $\tilde{c}$}].               & \qedhere
    \end{IEEEeqnarray*}
\end{proof}

\begin{remark}
    We will always assume that every capacity of nondegenerate Liouville domains that we define is extended as in \cref{lem:c is the unique extension to lvds,lem:c is the smallest extension to ss} to possibly degenerate Liouville domains and to ``star-shaped domains'' which are possibly noncompact or possibly have corners.
\end{remark}

\section{Lagrangian capacity}

Here, we define the Lagrangian capacity (\cref{def:lagrangian capacity}) and state its properties (\cref{prop:properties of cL}). One of the main goals of this thesis is to study whether the Lagrangian capacity can be computed in some cases, for example for toric domains. In the end of the section, we state some easy inequalities concerning the Lagrangian capacity (\cref{lem:c square leq c lag,lem:c square geq delta}), known computations (\cref{prp:cl of ball,prp:cl of cylinder}) and finally the main conjecture of this thesis (\cref{conj:the conjecture}), which is inspired by all the previous results. The Lagrangian capacity is defined in terms of the minimal area of Lagrangian submanifolds, which we now define.

\begin{definition}
    Let $(X,\omega)$ be a symplectic manifold. If $L$ is a Lagrangian submanifold of $X$, then we define the \textbf{minimal symplectic area of} $L$, denoted $A_{\mathrm{min}}(L)$, by%
    \begin{IEEEeqnarray*}{c+x*}
        A_{\mathrm{min}}(L) \coloneqq \inf \{ \omega(\sigma) \mid \sigma \in \pi_2(X,L), \, \omega(\sigma) > 0 \}.
    \end{IEEEeqnarray*}
\end{definition}

\begin{lemma}
    \label{lem:properties of minimal area}
    Let $\iota \colon (X,\omega) \longrightarrow (X',\omega')$ be a symplectic embedding, $L \subset X$ be an embedded Lagrangian submanifold and $L' = \iota(L)$. In this case,
    \begin{enumerate}
        \item \label{lem:properties of minimal area 1} $A_{\mathrm{min}}(L) \geq A_{\mathrm{min}}(L')$;
        \item \label{lem:properties of minimal area 2} $A_{\mathrm{min}}(L) = A_{\mathrm{min}}(L')$, provided that $\pi_2(X',\iota(X)) = 0$.
    \end{enumerate}
\end{lemma}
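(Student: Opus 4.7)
The plan is to deduce both inequalities directly from the fact that $\iota$ is symplectic, namely $\iota^* \omega' = \omega$, combined with the homotopy-theoretic relationship between the pairs $(X, L)$ and $(X', L')$ induced by $\iota$.

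First, I would set up the induced map $\iota_* \colon \pi_2(X, L) \longrightarrow \pi_2(X', L')$ on relative homotopy groups. Since $\iota^* \omega' = \omega$, for every $\sigma \in \pi_2(X, L)$ one has $\omega'(\iota_* \sigma) = \omega(\sigma)$. This yields the inclusion of sets
\begin{IEEEeqnarray*}{c+x*}
\{ \omega(\sigma) \mid \sigma \in \pi_2(X, L), \, \omega(\sigma) > 0 \} \subseteq \{ \omega'(\sigma') \mid \sigma' \in \pi_2(X', L'), \, \omega'(\sigma') > 0 \},
\end{IEEEeqnarray*}
because the image under $\iota_*$ of any class with positive area is a class with the same positive area. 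Taking infima reverses the inclusion and immediately gives statement \ref{lem:properties of minimal area 1}.

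For statement \ref{lem:properties of minimal area 2}, the goal is to upgrade the above set inclusion to an equality. The key observation is that $L' = \iota(L) \subset \iota(X)$, so there is a natural map $j \colon \pi_2(\iota(X), L') \longrightarrow \pi_2(X', L')$ induced by inclusion, and the long exact sequence of the triple $(X', \iota(X), L')$ reads
\begin{IEEEeqnarray*}{c+x*}
\cdots \longrightarrow \pi_2(\iota(X), L') \xrightarrow{\ j \ } \pi_2(X', L') \longrightarrow \pi_2(X', \iota(X)) \longrightarrow \cdots.
\end{IEEEeqnarray*}
Under the hypothesis $\pi_2(X', \iota(X)) = 0$, the map $j$ is surjective. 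Since $\iota \colon (X, L) \longrightarrow (\iota(X), L')$ is a diffeomorphism of pairs, any $\sigma' \in \pi_2(X', L')$ therefore lifts to some $\sigma \in \pi_2(X, L)$ with $\iota_* \sigma = \sigma'$, and hence $\omega(\sigma) = \omega'(\sigma')$. This promotes the set inclusion above to an equality, so the infima agree and $A_{\mathrm{min}}(L) = A_{\mathrm{min}}(L')$.

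No step looks genuinely hard: the argument is purely formal once one has the correct long exact sequence in hand. The only mild subtlety to be careful about is to phrase the computation in terms of the sets of positive values $\{\omega(\sigma)>0\}$ rather than in terms of classes, since $\iota_*$ need not be injective; this is harmless because we only care about the infima of these value sets.
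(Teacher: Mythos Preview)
Your proposal is correct and follows essentially the same approach as the paper: part \ref{lem:properties of minimal area 1} via the compatibility $\omega'(\iota_*\sigma)=\omega(\sigma)$ (expressed in the paper as a commutative triangle), and part \ref{lem:properties of minimal area 2} via surjectivity of $\iota_*$ coming from the long exact sequence of the triple $(X',\iota(X),L')$. Your write-up is in fact slightly more explicit than the paper's, but the ideas are identical.
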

\begin{proof}
    \ref{lem:properties of minimal area 1}: By definition of minimal area and since the diagram
    \begin{IEEEeqnarray}{c+x*}
        \plabel{eq:diag minimal area}
        \begin{tikzcd}[ampersand replacement = \&]
            \pi_2(X,L) \ar[d, swap, "\iota_*"] \ar[dr, "\omega"] \\
            \pi_2(X',L') \ar[r, swap, "\omega'"] \& \R
        \end{tikzcd}
    \end{IEEEeqnarray}
    commutes.

    \ref{lem:properties of minimal area 2}: Considering the long exact sequence of the triple $(X',\iota(X),L')$,
    \begin{IEEEeqnarray*}{c+x*}
        \begin{tikzcd}[ampersand replacement = \&]
            \cdots \ar[r] \& \pi_2(\iota(X),L') \ar[r] \& \pi_2(X',L') \ar[r] \& \pi_2(X',\iota(X)) = 0
        \end{tikzcd}
    \end{IEEEeqnarray*}
    we conclude that $\iota_{*} \colon \pi_2(X,L) \longrightarrow \pi_2(X',L')$ is surjective. Again, the result follows by the definition of minimal area and diagram \eqref{eq:diag minimal area}.
\end{proof}

\begin{lemma}
    \label{lem:a min with exact symplectic manifold}
    Let $(X,\lambda)$ be an exact symplectic manifold and $L \subset X$ be a Lagrangian submanifold. If $\pi_1(X) = 0$, then
    \begin{IEEEeqnarray*}{c+x*}
        A _{\mathrm{min}}(L) = \inf \left\{ \lambda(\rho) \ | \ \rho \in \pi_1(L), \ \lambda(\rho) > 0 \right\}.
    \end{IEEEeqnarray*}
\end{lemma}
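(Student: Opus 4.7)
The plan is to exploit the long exact sequence of the pair $(X,L)$ together with Stokes' theorem to identify the sets
\[
\{ \omega(\sigma) \mid \sigma \in \pi_2(X,L) \} \quad \text{and} \quad \{ \lambda(\rho) \mid \rho \in \pi_1(L) \},
\]
from which equality of the infima over positive values is immediate.

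First I would show that for every $\sigma \in \pi_2(X,L)$, represented by a smooth map $u \colon (D^2, \partial D^2) \longrightarrow (X,L)$, one has
\[
\omega(\sigma) = \int_{D^2} u^* \edv \lambda = \int_{\partial D^2} (u|_{\partial D^2})^* \lambda = \lambda(\partial \sigma),
\]
where $\partial \colon \pi_2(X,L) \longrightarrow \pi_1(L)$ is the boundary map in the long exact sequence of the pair. The second equality is Stokes' theorem, and the pairing $\lambda(\rho)$ is well-defined on $\pi_1(L)$ because $\lambda|_L$ is closed (since $L$ is Lagrangian, $\edv \lambda|_L = \omega|_L = 0$), so $\int_\gamma \lambda$ depends only on the free homotopy class of $\gamma$ in $L$.

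Next I would use the relevant segment of the long exact sequence of $(X,L)$,
\[
\pi_2(X) \longrightarrow \pi_2(X,L) \xrightarrow{\ \partial\ } \pi_1(L) \longrightarrow \pi_1(X) = 0,
\]
to conclude that $\partial$ is surjective. Combined with the previous step, this yields
\[
\{ \lambda(\rho) \mid \rho \in \pi_1(L) \} = \{ \lambda(\partial \sigma) \mid \sigma \in \pi_2(X,L) \} = \{ \omega(\sigma) \mid \sigma \in \pi_2(X,L) \}.
\]
Restricting to strictly positive elements on both sides and taking the infimum gives the claimed identity. There is no real obstacle here; the only subtle point is confirming that $\lambda$ descends to a well-defined map on $\pi_1(L)$ and that the Stokes computation makes sense on homotopy classes, both of which follow from $L$ being Lagrangian. (Note that elements of $\ker \partial$ coming from $\pi_2(X)$ automatically satisfy $\omega(\sigma) = 0$ by Stokes on the closed surface $S^2$, which is consistent with the factorization through $\partial$.)
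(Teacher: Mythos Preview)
Your proposal is correct and takes essentially the same approach as the paper: the paper's proof consists of a single commutative diagram with the long exact sequence of the pair $(X,L)$ on top and the pairings $\omega,\lambda$ on the bottom, which encodes exactly the Stokes identity $\omega(\sigma)=\lambda(\partial\sigma)$ and the surjectivity of $\partial$ from $\pi_1(X)=0$ that you spell out. Your version is simply a more explicit unpacking of that diagram.
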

\begin{proof}
    The diagram
    \begin{IEEEeqnarray*}{c+x*}
        \begin{tikzcd}[ampersand replacement = \&]
            \pi_2(L) \ar[d, swap, "0"] \ar[r] \& \pi_2(X) \ar[d, "\omega"] \ar[r] \& \pi_2(X,L) \ar[d, "\omega"] \ar[r, two heads,"\del"] \& \pi_1(L) \ar[d, "\lambda"] \ar[r, "0"] \& \pi_1(X) \ar[d, "\lambda"] \\
            \R \ar[r, equals] \& \R \ar[r, equals] \& \R \ar[r, equals] \& \R \ar[r, equals] \& \R
        \end{tikzcd}
    \end{IEEEeqnarray*}
    commutes, where $\del([\sigma]) = [\sigma|_{S^1}]$, and the top row is exact.
\end{proof}

\begin{definition}[{\cite[Section 1.2]{cieliebakPuncturedHolomorphicCurves2018}}]
    \phantomsection\label{def:lagrangian capacity}
    Let $(X,\omega)$ be a symplectic manifold. We define the \textbf{Lagrangian capacity} of $(X,\omega)$, denoted $c_L(X,\omega)$, by%
    \begin{IEEEeqnarray*}{c}
        c_L(X,\omega) \coloneqq \sup \{ A_{\mathrm{min}}(L) \mid L \subset X \text{ is an embedded Lagrangian torus}\}.
    \end{IEEEeqnarray*}
\end{definition}

\begin{proposition}[{\cite[Section 1.2]{cieliebakPuncturedHolomorphicCurves2018}}]
    \label{prop:properties of cL}
    The Lagrangian capacity $c_L$ satisfies:
    \begin{description}
        \item[(Monotonicity)] If $(X,\omega) \longrightarrow (X',\omega')$ is a symplectic embedding with $\pi_2(X',\iota(X)) = 0$, then $c_L(X,\omega) \leq c_L(X',\omega')$.
        \item[(Conformality)] If $\alpha \neq 0$, then $c_L(X,\alpha \omega) = |\alpha| \, c_L(X,\omega)$.
    \end{description}
\end{proposition}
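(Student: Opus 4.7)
The proof proposal is essentially a direct unpacking of the definitions together with the auxiliary \cref{lem:properties of minimal area} already proved in the excerpt. There is no deep step here; both properties are formal consequences of how $A_{\mathrm{min}}$ transforms under symplectic embeddings and under rescaling of the symplectic form.

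For \textbf{monotonicity}, let $\iota \colon (X,\omega) \longrightarrow (X',\omega')$ be the given symplectic embedding. The plan is to show that every embedded Lagrangian torus $L \subset X$ gives rise to an embedded Lagrangian torus $L' \coloneqq \iota(L) \subset X'$ of the same minimal area, so that $L$ contributes to the supremum defining $c_L(X',\omega')$ with the same value as to the one defining $c_L(X,\omega)$. That $L'$ is an embedded Lagrangian torus is immediate since $\iota$ is a symplectic embedding and $L$ is compact. The equality $A_{\mathrm{min}}(L) = A_{\mathrm{min}}(L')$ is precisely the conclusion of \cref{lem:properties of minimal area}~\ref{lem:properties of minimal area 2}, which applies because we are assuming $\pi_2(X',\iota(X)) = 0$. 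Taking the supremum over all embedded Lagrangian tori $L \subset X$ then yields $c_L(X,\omega) \leq c_L(X',\omega')$.

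For \textbf{conformality}, I would treat the cases $\alpha > 0$ and $\alpha < 0$ separately. If $\alpha > 0$, then a submanifold $L \subset X$ is Lagrangian with respect to $\omega$ if and only if it is Lagrangian with respect to $\alpha \omega$, and
\begin{IEEEeqnarray*}{rCl}
    A_{\mathrm{min}}^{\alpha \omega}(L) & = & \inf \{ \alpha \omega(\sigma) \mid \sigma \in \pi_2(X,L), \; \alpha \omega(\sigma) > 0\} \\
    & = & \alpha \, A_{\mathrm{min}}^{\omega}(L),
\end{IEEEeqnarray*}
so taking the supremum over embedded Lagrangian tori gives $c_L(X,\alpha \omega) = \alpha \, c_L(X,\omega) = |\alpha| \, c_L(X,\omega)$. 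For $\alpha < 0$ the same class of submanifolds is Lagrangian, and the key observation is that the involution $\sigma \mapsto -\sigma$ on $\pi_2(X,L)$ (reversing orientation on the disk) sends the set $\{\sigma \mid \omega(\sigma) > 0\}$ bijectively onto $\{\sigma \mid (-\omega)(\sigma) > 0\}$ with $(-\omega)(-\sigma) = \omega(\sigma)$; this yields $A_{\mathrm{min}}^{-\omega}(L) = A_{\mathrm{min}}^{\omega}(L)$. Combining with the positive case applied to $|\alpha|$ gives $A_{\mathrm{min}}^{\alpha \omega}(L) = |\alpha| \, A_{\mathrm{min}}^{\omega}(L)$, and the conformality property follows after taking the supremum.

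There is no real obstacle in this argument: monotonicity is a one-line consequence of \cref{lem:properties of minimal area}, and conformality is a straightforward rescaling of the infimum defining $A_{\mathrm{min}}$. The only minor subtlety worth being explicit about is the sign case in conformality, where one must notice that the definition of $A_{\mathrm{min}}$ is invariant under $\omega \mapsto -\omega$ because one only takes the infimum over classes of positive area.
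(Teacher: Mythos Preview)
The proposal is correct and follows essentially the same approach as the paper: monotonicity via \cref{lem:properties of minimal area}~\ref{lem:properties of minimal area 2}, and conformality via the rescaling $A_{\mathrm{min}}(L,\alpha\omega) = |\alpha|\,A_{\mathrm{min}}(L,\omega)$. Your treatment of the sign case in conformality is more explicit than the paper's, which simply asserts this identity ``by definition of minimal area'' without separating into cases.
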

\begin{proof}
    We prove monotonicity. 
    \begin{IEEEeqnarray*}{rCls+x*}
        c_L(X,\omega)  
        & =    & \sup _{L \subset X} A _{\min}(L)    & \quad [\text{by definition of $c_L$}] \\
        & \leq & \sup _{L' \subset X'} A _{\min}(L') & \quad [\text{by \cref{lem:properties of minimal area}}] \\
        & =    & c_L(X',\omega')                     & \quad [\text{by definition of $c_L$}].
    \end{IEEEeqnarray*}

    We prove conformality. Note that a submanifold $L \subset X$ is Lagrangian with respect to $\omega$ if and only if it is Lagrangian with respect to $\alpha \omega$.
    \begin{IEEEeqnarray*}{rCls+x*}
        c_L(X,\alpha \omega)
        & = & \sup _{L \subset (X,\alpha \omega)} A _{\mathrm{min}}(L,\alpha \omega)    & \quad [\text{by definition of $c_L$}] \\
        & = & \sup _{L \subset (X,\omega)       } |\alpha| A _{\mathrm{min}}(L, \omega) & \quad [\text{by definition of minimal area}] \\
        & = & |\alpha| \, c_L(X,\omega)                                                 & \quad [\text{by definition of $c_L$}].         & \qedhere
    \end{IEEEeqnarray*}
\end{proof}

\begin{lemma}
    \label{lem:c square leq c lag}
    If $X$ is a star-shaped domain, then $c_L(X) \geq c_P(X)$.
\end{lemma}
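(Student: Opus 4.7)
The plan is to show that any symplectic embedding $\phi \colon P^{2n}(a) \longrightarrow X$ yields, up to arbitrarily small loss, an embedded Lagrangian torus in $X$ with minimal symplectic area $a$; taking suprema over such $a$ then gives $c_L(X) \geq c_P(X)$.

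Fix $a < c_P(X)$ and pick a symplectic embedding $\phi \colon P^{2n}(a) \longrightarrow X$. For $\epsilon > 0$ small, let $r \coloneqq \sqrt{(a-\epsilon)/\pi}$ and consider the Clifford torus
\begin{IEEEeqnarray*}{c+x*}
    T_\epsilon \coloneqq \{(z_1, \ldots, z_n) \in \C^n \mid |z_j| = r \text{ for all } j\} \subset \operatorname{int} P^{2n}(a),
\end{IEEEeqnarray*}
which is Lagrangian with respect to $\omega_0$; its image $L_\epsilon \coloneqq \phi(T_\epsilon)$ is therefore an embedded Lagrangian torus in $X$. Since $X$ is star-shaped, we have $\pi_1(X) = 0$ and $\omega_X = \edv \lambda_X$ is exact, so by \cref{lem:a min with exact symplectic manifold}
\begin{IEEEeqnarray*}{c+x*}
    A_{\mathrm{min}}(L_\epsilon) = \inf \{\lambda_X(\rho) \mid \rho \in \pi_1(L_\epsilon),\, \lambda_X(\rho) > 0\}.
\end{IEEEeqnarray*}

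The crucial reduction is that integrals of $\lambda_X$ over loops in $L_\epsilon$ agree with integrals of $\lambda_0$ over the corresponding loops in $T_\epsilon$. Indeed, $\phi^{*}\lambda_X$ is a primitive of $\phi^{*} \omega_X = \omega_0$ on the contractible domain $P^{2n}(a)$, so $\phi^{*}\lambda_X - \lambda_0 = \edv f$ for some smooth function $f$, and hence for any closed loop $\gamma = \phi \circ \gamma'$ in $L_\epsilon$,
\begin{IEEEeqnarray*}{c+x*}
    \int_{\gamma} \lambda_X = \int_{\gamma'} \phi^{*}\lambda_X = \int_{\gamma'} \lambda_0.
\end{IEEEeqnarray*}
Under the natural identification $\pi_1(T_\epsilon) \cong \Z^n$, a direct computation shows that the class $(k_1, \ldots, k_n)$ integrates to $(k_1 + \cdots + k_n)\, \pi r^2 = (k_1 + \cdots + k_n)(a-\epsilon)$ against $\lambda_0$. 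Therefore $A_{\mathrm{min}}(L_\epsilon) = a - \epsilon$, so $c_L(X) \geq a - \epsilon$; letting $\epsilon \to 0$ and then $a \nearrow c_P(X)$ yields the inequality. The only step needing attention is the identification of the two integrals, which uses the simple connectedness of $P^{2n}(a)$ to conclude that the two primitives of $\omega_0$ differ by an exact form; everything else is routine.
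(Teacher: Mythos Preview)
Your proof is correct and follows essentially the same approach as the paper's: embed the Clifford torus via the given symplectic embedding and compute its minimal symplectic area by reducing to periods of $\lambda_0$ on the standard torus. Two minor differences: the paper places the torus on $\partial P(a)$ (avoiding the $\epsilon$-shrinking and limit), and it transfers the minimal-area computation from $X$ back to $P(a)$ via \cref{lem:properties of minimal area} (using $\pi_2(X,\iota(P(a)))=0$) rather than directly via exactness of $\phi^*\lambda_X - \lambda_0$; both routes amount to the same thing.
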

\begin{proof}
    Let $\iota \colon P(a) \longrightarrow X$ be a symplectic embedding, for some $a > 0$. We want to show that $c_L(X) \geq a$. Define $T = \{ z \in \C^n \mid |z_1|^2 = a/\pi, \ldots, |z_n|^2 = a/ \pi \} \subset \partial P(a)$ and $L = \iota(T)$. Then,
    \begin{IEEEeqnarray*}{rCls+x*}
        c_L(X)
        & \geq & A_{\mathrm{min}}(L) & \quad [\text{by definition of $c_L$}] \\
        & =    & A_{\mathrm{min}}(T) & \quad [\text{by \cref{lem:properties of minimal area}}] \\
        & =    & a                   & \quad [\text{by \cref{lem:a min with exact symplectic manifold}}]. & \qedhere
    \end{IEEEeqnarray*}
\end{proof}

Recall that if $X_{\Omega}$ is a toric domain, its diagonal is given by $\delta_{\Omega} \coloneqq \sup \{ a \mid (a, \ldots, a) \in \Omega \}$ (see \cref{def:moment map}).

\begin{lemma}
    \label{lem:c square geq delta}
    If $X_{\Omega}$ is a convex or concave toric domain, then $c_P(X_{\Omega}) \geq \delta_\Omega$.
\end{lemma}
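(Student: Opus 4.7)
The plan is to establish the stronger statement that for any $a < \delta_{\Omega}$, the entire cube $[0,a]^n$ is contained in $\Omega$, whenever $X_{\Omega}$ is convex or concave. From this it follows immediately that $P(a) = \mu^{-1}([0,a]^n) \subset \mu^{-1}(\Omega) = X_{\Omega}$, so the inclusion $P(a) \hookrightarrow X_{\Omega}$ is a symplectic embedding. Hence $c_P(X_{\Omega}) \geq a$ for every $a < \delta_{\Omega}$, and letting $a \to \delta_{\Omega}$ gives $c_P(X_{\Omega}) \geq \delta_{\Omega}$.

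For the convex case, I would use the reflection symmetry built into the definition of $\hat{\Omega}$. Since $(a,\ldots,a) \in \Omega$, each of the $2^n$ points $(\pm a,\ldots,\pm a)$ lies in $\hat{\Omega}$. Convexity of $\hat{\Omega}$ then forces the convex hull of these points, namely $[-a,a]^n$, to be contained in $\hat{\Omega}$. Intersecting with $\R^n_{\geq 0}$ yields $[0,a]^n \subset \Omega$.

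For the concave case, I would argue by contradiction. Suppose some $y = (y_1,\ldots,y_n) \in [0,a]^n$ lies in the convex set $\Omega^c \coloneqq \R^n_{\geq 0} \setminus \Omega$. Since $\Omega$ is compact, choose $M > 0$ so large that no point of $\Omega$ has any coordinate exceeding $M$, and take $R > \max\{M, na\}$. The $n$ auxiliary points
\begin{equation*}
    y^{(i)} \coloneqq y + (R - y_i)\, e_i = (y_1,\ldots,y_{i-1},R,y_{i+1},\ldots,y_n)
\end{equation*}
all lie in $\Omega^c$, since their $i$-th coordinate equals $R > M$. Setting $\alpha_i \coloneqq (a - y_i)/(R - y_i) \in [0, a/R]$, one checks that
\begin{equation*}
    (a,\ldots,a) = \p{}{2}{1 - \sum_{i=1}^{n} \alpha_i} y + \sum_{i=1}^{n} \alpha_i\, y^{(i)},
\end{equation*}
and $\sum_i \alpha_i \leq na/R < 1$ by the choice of $R$. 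Thus $(a,\ldots,a)$ is a convex combination of points of $\Omega^c$, and by convexity of $\Omega^c$ we conclude $(a,\ldots,a) \in \Omega^c$. This contradicts $(a,\ldots,a) \in \Omega$.

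The convex case is essentially immediate from symmetry; the only point requiring a small idea is the concave case, where one must produce a convex combination exhibiting $(a,\ldots,a)$ as an element of $\Omega^c$. The auxiliary points $y^{(i)}$ and the explicit computation of the coefficients $\alpha_i$ constitute the main (mild) obstacle.
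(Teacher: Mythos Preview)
Your proof is correct and follows the same approach as the paper: both establish $c_P(X_\Omega) \geq \delta_\Omega$ via the inclusion $P(\delta_\Omega) \subset X_\Omega$ (equivalently, $[0,\delta_\Omega]^n \subset \Omega$). The paper simply asserts this inclusion and moves on; you supply the verification, with the convex case handled by the reflection symmetry of $\hat{\Omega}$ and the concave case by an explicit convex-combination argument in $\R^n_{\geq 0} \setminus \Omega$.
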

\begin{proof}
    Since $X_{\Omega}$ is a convex or concave toric domain, we have that $P(\delta_\Omega) \subset X_{\Omega}$. The result follows by definition of $c_P$.
\end{proof}

Actually, Gutt--Hutchings show that $c_P(X_{\Omega}) = \delta_\Omega$ for any convex or concave toric domain $X_{\Omega}$ (\cite[Theorem 1.18]{guttSymplecticCapacitiesPositive2018}). However, for our purposes we will only need the inequality in \cref{lem:c square geq delta}. We now consider the results by Cieliebak--Mohnke for the Lagrangian capacity of the ball and the cylinder.

\begin{proposition}[{\cite[Corollary 1.3]{cieliebakPuncturedHolomorphicCurves2018}}]
    \phantomsection\label{prp:cl of ball}
    The Lagrangian capacity of the ball is
    \begin{IEEEeqnarray*}{c+x*}
        c_L(B^{2n}(1)) = \frac{1}{n}.   
    \end{IEEEeqnarray*}
\end{proposition}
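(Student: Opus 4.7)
The lower bound $c_L(B^{2n}(1)) \geq 1/n$ is direct: for each $a < 1/n$ the Clifford torus
\[
T(a,\ldots,a) = \{ z \in \C^n \mid \pi |z_j|^2 = a \text{ for all } j \}
\]
is an embedded Lagrangian torus contained in $B^{2n}(1)$. Since $B^{2n}(1)$ is simply connected, \cref{lem:a min with exact symplectic manifold} reduces the minimal area to $\inf \{ \lambda(\rho) \mid \rho \in \pi_1(T(a,\ldots,a)),\ \lambda(\rho) > 0\}$. Pulling the standard Liouville form back to $T(a,\ldots,a)$ yields $a \sum_j \edv t_j$, so every generator of $\pi_1(T(a,\ldots,a)) \cong \Z^n$ has action $a$ and hence $A_{\mathrm{min}}(T(a,\ldots,a)) = a$. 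Letting $a \nearrow 1/n$ gives the lower bound.

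For the upper bound $c_L(B^{2n}(1)) \leq 1/n$ I would follow the neck-stretching strategy of Cieliebak--Mohnke. Fix an arbitrary embedded Lagrangian torus $L \subset B^{2n}(1)$, equip $L$ with a metric provided by \cref{lem:geodesics lemma CM abs} (so that every short closed geodesic is noncontractible, nondegenerate, and of Morse index at most $n-1$), and identify a small codisk bundle $D^*_\delta L$ with a Weinstein neighbourhood of $L$ inside $B^{2n}(1)$. Embed $B^{2n}(1) \hookrightarrow \C P^n$ with the Fubini--Study form normalized so that a projective line has symplectic area $1$. Pick a point $x$ inside $D^*_\delta L$, a symplectic divisor $D$ through $x$, and, for a generic compatible almost complex structure, a $J$-holomorphic projective line through $x$ with contact order $n-1$ to $D$ at $x$; the dimension count $(4n - 4) - (2n + 2(n-1) - 4) = 2$ shows that such curves form a $2$-dimensional family, and every member has $\hat{\omega}$-area $1$. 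Next I would stretch the neck along $S^*_\delta L$ and apply SFT compactness to extract a limiting holomorphic building $F$.

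The decisive step is the analysis of $F$. The component $C$ of $F$ that inherits the tangency constraint lies in the completion of $T^*L$ (because $x \in D^*_\delta L$) and is a punctured sphere asymptotic to closed geodesics of $L$ of length at most $1$. Applying \cref{lem:punctures and tangency} with $\tilde{k} = n-1$ forces $C$ to have at least $n$ punctures. Completing each puncture through the remaining levels of $F$ produces $n$ topological disks with boundary on $L$, defining classes in $\pi_2(\C P^n, L)$ (equivalently, via the standard embedding, in $\pi_2(B^{2n}(1), L)$), whose $\tilde{\omega}$-areas sum to at most $1$. Hence some disk has $\tilde{\omega}$-area at most $1/n$, and passing $\delta \to 0$ while using \cref{lem:energy wrt different forms} to compare $\tilde{\omega}$- and $\omega$-areas upgrades this to a bound $\omega(\sigma) \leq 1/n$ for some class $\sigma \in \pi_2(B^{2n}(1), L)$ with $\omega(\sigma) > 0$. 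Thus $A_{\mathrm{min}}(L) \leq 1/n$, and since $L$ was arbitrary, $c_L(B^{2n}(1)) \leq 1/n$. The main obstacle is the SFT package: transversality for the tangency-constrained simple curve via \cref{thm:transversality with tangency}, ruling out extra sphere bubbles and nodes in the SFT limit, and the careful $\tilde{\omega}$-versus-$\omega$ bookkeeping that constitutes the main technical subtlety already highlighted in the sketch of \cref{thm:lagrangian vs g tilde}.
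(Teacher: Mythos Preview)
The paper does not give a proof of this proposition; it is simply quoted as Corollary~1.3 of Cieliebak--Mohnke. Your outline is a faithful reconstruction of their neck-stretching argument, and your lower bound via the Clifford torus is exactly right.

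For the upper bound there is a genuine off-by-one error coupled with an incorrect identification of relative homotopy groups. With contact order $k=n-1$ you obtain a $2$-dimensional family of lines, and \cref{lem:punctures and tangency} then only yields $p\geq n$ punctures for the bottom component~$C$. Your parenthetical claim that the resulting disks lie ``equivalently \ldots\ in $\pi_2(B^{2n}(1),L)$'' is false: the projective line meets the hyperplane~$H=\C P^{n-1}$ at infinity exactly once, and by positivity of intersection precisely one top-level disk in the SFT building inherits this intersection. That disk does not lie in the affine chart~$\C^n$, and for it the Fubini--Study area differs from the $\lambda$-period of its boundary by $+1$, so it contributes nothing to $A_{\mathrm{min}}(L)$ in $B^{2n}(1)$. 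This leaves only $p-1\geq n-1$ usable disks with total area at most~$1$, giving merely $A_{\mathrm{min}}(L)\leq 1/(n-1)$.

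The correct choice is contact order $k=n$: the moduli space of lines with a $\p{<}{}{\mathcal{T}^{(n)}x}$ constraint is then $0$-dimensional, and Cieliebak--Mohnke compute its count to be $(n-1)!\neq 0$ (this is the result referenced in the remark after \cref{lem:moduli spaces of ellipsoids have 1 element}). Now $p\geq n+1$; one disk absorbs the intersection with~$H$, and the remaining $p-1\geq n$ disks lie in~$\C^n$ with total area at most~$1$, so one of them has area at most~$1/n$ and furnishes the desired class in $\pi_2(\C^n,L)\cong\pi_1(L)$.
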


\begin{proposition}[{\cite[p.~215-216]{cieliebakPuncturedHolomorphicCurves2018}}]
    \label{prp:cl of cylinder}
    The Lagrangian capacity of the cylinder is
    \begin{IEEEeqnarray*}{c+x*}
        c_L(Z^{2n}(1)) = 1.
    \end{IEEEeqnarray*}
\end{proposition}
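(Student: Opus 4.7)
For the inequality $c_L(Z^{2n}(1)) \geq 1$, I would invoke the general lower bound $c_L(X_\Omega) \geq \delta_\Omega$ available for convex or concave toric domains. More precisely, \cref{lem:c square leq c lag,lem:c square geq delta}, extended to noncompact toric domains via \cref{lem:c is the smallest extension to ss}, yield
\begin{equation*}
c_L(Z^{2n}(1)) \;\geq\; c_P(Z^{2n}(1)) \;\geq\; \delta_{\Omega_Z(1)} \;=\; 1.
\end{equation*}
Equivalently, for every $a<1$ the product torus $T(a) \coloneqq \{z\in\C^n \mid \pi|z_j|^2 = a \text{ for every } j\}$ is an embedded Lagrangian sitting inside $Z^{2n}(1)$. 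Since $T(a)$ lies in the simply-connected exact symplectic manifold $(\C^n,\lambda)$, \cref{lem:a min with exact symplectic manifold} identifies $A_{\mathrm{min}}(T(a))$ with the minimal positive $\lambda$-action on $\pi_1(T(a)) \cong \Z^n$; each standard generator has $\lambda$-action exactly $a$, so $A_{\mathrm{min}}(T(a)) = a$, and letting $a\uparrow 1$ gives the bound.

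For the reverse inequality, the plan is to show that every embedded Lagrangian torus $L \subset Z^{2n}(1)$ satisfies $A_{\mathrm{min}}(L) \leq 1$. The geometric input is the foliation of $\operatorname{int}(Z^{2n}(1)) \cong B^2(1)\times \C^{n-1}$ by the vertical $J_0$-holomorphic disks $D_c \coloneqq B^2(1)\times\{c\}$, each of symplectic area exactly $1$. Fix a point $p \in L$ and let $D_{c_0}$ be the unique vertical disk containing $p$. I would adapt the Cieliebak--Mohnke argument used in the proof of \cref{prp:cl of ball}: choose a Riemannian metric on $L$ satisfying \cref{lem:geodesics lemma CM abs}, identify a Weinstein neighborhood of $L$ with a neighborhood of the zero-section in $T^{*}L$, and consider a sequence $(J_t)_{t\in[0,1)}$ of cylindrical almost complex structures realizing SFT neck-stretching along $S^{*}L$. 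For each $t$, the vertical disk through $p$ deforms to a $J_t$-holomorphic curve $u_t$ of $\tilde{\omega}$-energy equal to $1$, and SFT compactness (\cref{sec:sft compactness}) extracts a limiting holomorphic building $F$.

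A dimension and puncture-counting argument in the spirit of \cref{lem:punctures and tangency simple,lem:punctures and tangency}, adapted to replace the tangency constraint by the point constraint through $p \in L$, shows that the pieces of $F$ assemble into at least one nonconstant holomorphic disk in $\hat{X}$ with boundary on $L$. Its $\tilde{\omega}$-energy is bounded by the total $\tilde{\omega}$-energy of $u_t$, namely $1$, and by \cref{lem:energy wrt different forms} its $\hat{\omega}$-area is bounded by $e^{K}/(e^{K}-1)$ times that energy, a factor that tends to $1$ as $K\to+\infty$. Hence $A_{\mathrm{min}}(L) \leq 1$, and taking the supremum over $L$ yields $c_L(Z^{2n}(1)) \leq 1$.

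The principal obstacle I anticipate is the noncompactness of $Z^{2n}(1)$, which is outside the scope of SFT compactness as formulated in \cref{sec:sft compactness}. I would handle this by truncating to the compact Liouville cobordism $Z^{2n}(1) \cap \{\pi|z_j|^2 \leq R,\, j\geq 2\}$ (after standard corner-rounding), running the neck-stretching argument there, and passing to the limit $R\to +\infty$. The confinement of the limit building to a region independent of $R$ would follow from the maximum principle (\cref{thm:maximum principle holomorphic}) together with a no-escape argument analogous to \cref{lem:no escape}, which keeps the relevant holomorphic disks away from the artificial outer boundary.
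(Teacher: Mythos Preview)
The paper does not supply a proof of this proposition: it is stated with a citation to \cite[p.~215--216]{cieliebakPuncturedHolomorphicCurves2018} and no argument follows. So there is no ``paper's own proof'' to compare against; what I can do is compare your proposal to the argument in the cited reference.

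Your lower bound is fine and matches the standard argument.

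Your upper bound, however, is substantially more elaborate than what Cieliebak--Mohnke actually do. The argument on the cited pages is elementary and does not use SFT neck-stretching at all: it relies on Chekanov's theorem that $A_{\min}(L)$ is bounded above by the displacement energy of $L$. Since any compact subset of $Z^{2n}(1)=B^2(1)\times\C^{n-1}$ can be Hamiltonianly displaced with energy at most $1$ (translate in the $B^2(1)$ factor), one gets $A_{\min}(L)\le 1$ immediately. There is no neck-stretching, no building, no noncompactness issue to work around.

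Your proposed route is not wrong in spirit, but it has genuine soft spots. The sentence ``the vertical disk through $p$ deforms to a $J_t$-holomorphic curve $u_t$'' is doing a lot of work: once you change $J$, the vertical disks are no longer holomorphic, and you need a separate existence argument (e.g.\ a cobordism/Gromov--Floer argument showing that the count of such disks through a point is invariant) to produce $u_t$. You also correctly flag the noncompactness of $Z^{2n}(1)$ as an obstacle, and your truncation-plus-maximum-principle workaround is plausible but would need to be carried out carefully. All of this is avoidable given the one-line displacement-energy proof.
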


By \cref{lem:c square leq c lag,lem:c square geq delta}, if $X_{\Omega}$ is a convex or concave toric domain then $c_L(X_\Omega) \geq \delta_\Omega$. But as we have seen in \cref{prp:cl of ball,prp:cl of cylinder}, if $X_\Omega$ is the ball or the cylinder then $c_L(X_\Omega) = \delta_\Omega$. This motivates \cref{conj:cl of ellipsoid} below for the Lagrangian capacity of an ellipsoid, and more generally \cref{conj:the conjecture} below for the Lagrangian capacity of any convex or concave toric domain.

\begin{conjecture}[{\cite[Conjecture 1.5]{cieliebakPuncturedHolomorphicCurves2018}}]
    \label{conj:cl of ellipsoid}
    The Lagrangian capacity of the ellipsoid is%
    \begin{IEEEeqnarray*}{c+x*}
        c_L(E(a_1,\ldots,a_n)) = \p{}{2}{\frac{1}{a_1} + \cdots + \frac{1}{a_n}}^{-1}.
    \end{IEEEeqnarray*}
\end{conjecture}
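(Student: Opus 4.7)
The plan is to prove both inequalities separately, and for the ellipsoid we have $\delta_\Omega = \bigl(\frac{1}{a_1}+\cdots+\frac{1}{a_n}\bigr)^{-1}$, since $(a,\ldots,a) \in \Omega_E(a_1,\ldots,a_n)$ if and only if $\sum_j a/a_j \le 1$. The lower bound $c_L(E) \ge \delta_\Omega$ is immediate from the general chain $c_L(X_\Omega) \ge c_P(X_\Omega) \ge \delta_\Omega$ provided by \cref{lem:c square leq c lag,lem:c square geq delta}, which applies because the ellipsoid is a convex (and concave) toric domain.

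For the upper bound $c_L(E) \le \delta_\Omega$, I would follow the strategy outlined in the proof sketch of \cref{lem:computation of cl} and its generalization \cref{thm:my main theorem}. The key point is that since $E$ is convex, its moment image $\Omega_E$ is contained in $\Omega_N(\delta_\Omega)$: if some $x \in \Omega_E$ satisfied $x_j > \delta_\Omega$ for all $j$, then $\sum_j x_j/a_j > \delta_\Omega \sum_j 1/a_j = 1$, contradicting $x \in \Omega_E$. Hence $E \subset N^{2n}(\delta_\Omega)$. Combining the inequalities the author has already established,
\begin{IEEEeqnarray*}{rCls+x*}
    c_L(E)
    & \leq & \frac{\tilde{\mathfrak{g}}^{\leq 1}_k(E)}{k} & \quad [\text{by \cref{thm:lagrangian vs g tilde}}]\\
    & \leq & \frac{\mathfrak{g}^{\leq 1}_k(E)}{k}        & \quad [\text{by \cref{thm:g tilde vs g hat}}]\\
    & =    & \frac{\cgh{k}(E)}{k}                        & \quad [\text{by \cref{thm:g hat vs gh}, since $E$ is simply connected with $c_1(TE)=0$}]\\
    & \leq & \frac{\cgh{k}(N^{2n}(\delta_\Omega))}{k}    & \quad [\text{by monotonicity and $E \subset N^{2n}(\delta_\Omega)$}]\\
    & =    & \frac{\delta_\Omega(k+n-1)}{k}              & \quad [\text{by \cref{lem:cgh of nondisjoint union of cylinders}}],
\end{IEEEeqnarray*}
and taking the infimum over $k \in \Z_{\geq 1}$ yields $c_L(E) \le \delta_\Omega$.

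The main obstacle is that \cref{thm:g hat vs gh} (and the very definition of the higher capacity $\mathfrak{g}^{\leq 1}_k$) depends on \cref{assumption} on the existence of a suitable virtual perturbation scheme for linearized contact homology. In dimension $2n = 4$ one can bypass this entirely: by \cref{prp:g tilde and cgh} one has $\tilde{\mathfrak{g}}^{\leq 1}_k(E) = \cgh{k}(E)$ directly, so the argument above goes through unconditionally, matching \cref{lem:computation of cl}. Thus the two-dimensional ellipsoid case $E(a_1,a_2)$ is fully established; for general $n$, the conjecture follows from \cref{thm:my main theorem} under \cref{assumption}, and the outstanding difficulty is purely the foundational one of setting up the virtual perturbation theory needed for linearized contact homology in arbitrary dimension.
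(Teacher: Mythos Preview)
Your proposal is correct and follows exactly the paper's approach: the paper does not give a separate proof of this conjecture but rather deduces it as the special case of \cref{thm:my main theorem} (under \cref{assumption}) and of \cref{lem:computation of cl} (unconditionally, in dimension $4$), using precisely the chain of inequalities you wrote down. Your identification of $\delta_\Omega = \bigl(\sum_j 1/a_j\bigr)^{-1}$ and of the remaining foundational dependence on \cref{assumption} is also exactly how the paper frames the situation.
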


\begin{conjecture}
    \label{conj:the conjecture}
    If $X_{\Omega}$ is a convex or concave toric domain then 
    \begin{IEEEeqnarray*}{c+x*}
        c_L(X_{\Omega}) = \delta_\Omega.
    \end{IEEEeqnarray*}
\end{conjecture}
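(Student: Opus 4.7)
The lower bound $\delta_\Omega \leq c_L(X_\Omega)$ is immediate from the chain $c_L(X_\Omega) \geq c_P(X_\Omega) \geq \delta_\Omega$ supplied by \cref{lem:c square leq c lag,lem:c square geq delta}, so the substance of the conjecture lies entirely in the upper bound $c_L(X_\Omega) \leq \delta_\Omega$. My plan is to prove the sharper family of bounds $c_L(X_\Omega) \leq \delta_\Omega (k+n-1)/k$ for every $k \geq 1$ and then take $k \to \infty$. The geometric hinge is that whenever $X_\Omega$ is a convex or concave toric domain, the moment image $\Omega$ is contained in the ``L-shape'' $\Omega_N(\delta_\Omega)$, giving a symplectic embedding $X_\Omega \hookrightarrow N^{2n}(\delta_\Omega)$. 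This reduces the upper bound to a comparison between $c_L$ and the Gutt--Hutchings capacities, whose values on $N^{2n}(\delta_\Omega)$ are $\delta_\Omega(k+n-1)$ by \cref{lem:cgh of nondisjoint union of cylinders}.

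The comparison $c_L \leq c^{\mathrm{GH}}_k / k$ will be routed through the McDuff--Siegel capacity $\tilde{\mathfrak{g}}^{\leq 1}_k$. The first step is \cref{thm:lagrangian vs g tilde}, which gives $c_L(X_\Omega) \leq \tilde{\mathfrak{g}}^{\leq 1}_k(X_\Omega)/k$. In the four-dimensional convex case, \cref{prp:g tilde and cgh} provides the identity $\tilde{\mathfrak{g}}^{\leq 1}_k(X_\Omega) = \cgh{k}(X_\Omega)$, and together with monotonicity of $\cgh{k}$ and \cref{lem:cgh of nondisjoint union of cylinders} this yields $c_L(X_\Omega) \leq \delta_\Omega (k+1)/k$ for every $k$; taking $k \to \infty$ closes the argument (this is exactly \cref{lem:computation of cl}). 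In arbitrary dimension one no longer has the equality of \cref{prp:g tilde and cgh}, so I would instead bridge the gap via Siegel's higher capacity: apply \cref{thm:g tilde vs g hat} to obtain $\tilde{\mathfrak{g}}^{\leq 1}_k \leq \mathfrak{g}^{\leq 1}_k$, then invoke \cref{thm:g hat vs gh} to identify $\mathfrak{g}^{\leq 1}_k(X_\Omega) = \cgh{k}(X_\Omega)$, using that any toric domain is simply connected with $2c_1(TX_\Omega) = 0$. The same limit $k \to \infty$ then gives the desired bound.

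The principal obstacle is that the general-dimensional branch of the argument rests decisively on \cref{thm:g hat vs gh}, which in turn requires linearized contact homology and its augmentation map to be well-defined, i.e.\ on the virtual perturbation scheme postulated in \cref{assumption}. The delicate input inside the proof of \cref{thm:g hat vs gh} is that the augmentation map $\epsilon_k^E$ of a small ellipsoid is nonzero in the relevant degree, which demands both the computation $CH(E) \cong \Q$ in that degree and a transversality result ensuring that the moduli spaces of holomorphic curves in $\hat{E}$ with an order-$k$ tangency constraint are cut out transversely and realize the expected signed count. For this reason, in general dimension the conjecture will be proved only modulo \cref{assumption}, while the four-dimensional convex case proceeds unconditionally by bypassing contact homology in favour of the ECH-based equality of \cref{prp:g tilde and cgh}.
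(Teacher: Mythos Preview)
Your proposal is correct and follows essentially the same route as the paper: the lower bound via \cref{lem:c square leq c lag,lem:c square geq delta}, the upper bound via the chain $c_L \leq \tilde{\mathfrak{g}}^{\leq 1}_k/k \leq \mathfrak{g}^{\leq 1}_k/k = \cgh{k}/k \leq \cgh{k}(N(\delta_\Omega))/k = \delta_\Omega(k+n-1)/k$, and then letting $k \to \infty$; this is precisely the content of \cref{thm:my main theorem}, with the unconditional $4$-dimensional convex case handled separately as \cref{lem:computation of cl}. You also correctly flag that the general-dimensional argument is conditional on \cref{assumption}, which is why the statement remains a conjecture in the paper.
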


In \cref{lem:computation of cl,thm:my main theorem} we present our results concerning \cref{conj:the conjecture}.

\section{Gutt--Hutchings capacities}
\label{sec:equivariant capacities}


In this section we will define the Gutt--Hutchings capacities (\cref{def:gutt hutchings capacities}) and the $S^1$-equivariant symplectic homology capacities (\cref{def:s1esh capacities}), and list their properties (\cref{thm:properties of gutt-hutchings capacities,prp:properties of s1esh capacities} respectively). We will also compare the two capacities (\cref{thm:ghc and s1eshc}). The definition of these capacities relies on $S^1$-equivariant symplectic homology. In the commutative diagram below, we display the modules and maps which will play a role in this section, for a nondegenerate Liouville domain $X$.

\begin{IEEEeqnarray}{c+x*}
    \plabel{eq:diagram for s1esh capacities}
    \begin{tikzcd}
        \homology{}{S^1}{}{S}{H}{(\varepsilon,a]}{}(X) \ar[r, "\delta^a_0"] \ar[d, swap, "\iota^a"] & \homology{}{S^1}{}{S}{H}{\varepsilon}{}(X) \ar[d, two heads, hook, "\alpha"] \ar[r, "\iota^{a,\varepsilon}"] & \homology{}{S^1}{}{S}{H}{a}{}(X) \\
        \homology{}{S^1}{}{S}{H}{+}{}(X) \ar[ur, "\delta_0"] \ar[r, swap, "\delta"]                 & H_\bullet(BS^1;\Q) \otimes H_\bullet(X, \partial X;\Q)
    \end{tikzcd}
\end{IEEEeqnarray}

Here, $\iota^a$ and $\iota^{a, \varepsilon}$ are the maps induced by the action filtration, $\delta_0$ and $\delta$ are the maps from \cref{def:delta map} and $\alpha$ is the isomorphism from \cref{lem:iso from symplectic to singular}. We point out that every vertex in the above diagram has a $U$ map and every map in the diagram commutes with this $U$ map. Specifically, all the $S^1$-equivariant symplectic homologies have the $U$ map given as in \cref{def:U map} and $H_\bullet(BS^1;\Q) \otimes H_\bullet(X, \partial X;\Q) \cong \Q[u] \otimes H_\bullet(X, \partial X;\Q)$ has the map $U \coloneqq u^{-1} \otimes \id$. We will also make use of a version of diagram \eqref{eq:diagram for s1esh capacities} in the case where $X$ is star-shaped, namely diagram \eqref{eq:diagram for s1esh capacities case ss} below. In this case, the modules in the diagram admit gradings and every map is considered to be a map in a specific degree. By \cite[Proposition 3.1]{guttSymplecticCapacitiesPositive2018}, $\delta$ and $\delta_0$ are isomorphisms.

\begin{IEEEeqnarray}{c+x*}
    \plabel{eq:diagram for s1esh capacities case ss}
    \begin{tikzcd}
        \homology{}{S^1}{}{S}{H}{(\varepsilon,a]}{n - 1 + 2k}(X) \ar[r, "\delta^a_0"] \ar[d, swap, "\iota^a"]                   & \homology{}{S^1}{}{S}{H}{\varepsilon}{n - 2 + 2k}(X) \ar[d, two heads, hook, "\alpha"] \ar[r, "\iota^{a,\varepsilon}"] & \homology{}{S^1}{}{S}{H}{a}{n - 2 + 2k}(X) \\
        \homology{}{S^1}{}{S}{H}{+}{n - 1 + 2k}(X) \ar[ur, two heads, hook, "\delta_0"] \ar[r, swap, two heads, hook, "\delta"] & H_{2k-2}(BS^1;\Q) \otimes H_{2n}(X, \partial X;\Q)
    \end{tikzcd}
\end{IEEEeqnarray}


\begin{definition}[{\cite[Definition 4.1]{guttSymplecticCapacitiesPositive2018}}]
    \label{def:gutt hutchings capacities}
    If $k \in \Z_{\geq 1}$ and $(X,\lambda)$ is a nondegenerate Liouville domain, the \textbf{Gutt--Hutchings capacities} of $X$, denoted $\cgh{k}(X)$, are defined as follows. Consider the map
    \begin{IEEEeqnarray*}{c+x*}
        \delta \circ U^{k-1} \circ \iota^a \colon \homology{}{S^1}{}{S}{H}{(\varepsilon,a]}{}(X) \longrightarrow H_\bullet(BS^1;\Q) \otimes H_\bullet(X, \partial X;\Q)
    \end{IEEEeqnarray*}
    from diagram \eqref{eq:diagram for s1esh capacities}. Then, we define
    \begin{IEEEeqnarray*}{c+x*}
        \cgh{k}(X) \coloneqq \inf \{ a > 0 \mid [\mathrm{pt}] \otimes [X] \in \img (\delta \circ U^{k-1} \circ \iota^a) \}.
    \end{IEEEeqnarray*}
\end{definition}

\begin{theorem}[{\cite[Theorem 1.24]{guttSymplecticCapacitiesPositive2018}}]
    \label{thm:properties of gutt-hutchings capacities}
    The functions $\cgh{k}$ of Liouville domains satisfy the following axioms, for all equidimensional Liouville domains $(X,\lambda_X)$ and $(Y,\lambda_Y)$:
    \begin{description}
        \item[(Monotonicity)] If $X \longrightarrow Y$ is a generalized Liouville embedding then $\cgh{k}(X) \leq \cgh{k}(Y)$.
        \item[(Conformality)] If $\alpha > 0$ then $\cgh{k}(X, \alpha \lambda_X) = \alpha \, \cgh{k}(X, \lambda_X)$.
        \item[(Nondecreasing)] $\cgh{1}(X) \leq \cgh{2}(X) \leq \cdots \leq +\infty$. 
        \item[(Reeb orbits)] If $\cgh{k}(X) < + \infty$, then $\cgh{k}(X) = \mathcal{A}(\gamma)$ for some Reeb orbit $\gamma$ which is contractible in $X$.
    \end{description}
\end{theorem}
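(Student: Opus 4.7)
The plan is to verify each axiom using the commutative diagram displayed before the theorem, together with the functoriality of the Viterbo transfer map developed in the previous sections and the compatibility of the map $\delta$ established in the $\delta$-map section. \textbf{Conformality} is essentially formal: rescaling $\lambda$ by $\alpha > 0$ multiplies the Reeb vector field by $\alpha^{-1}$ and the action of every Hamiltonian $1$-periodic orbit by $\alpha$, so the entire filtered picture rescales by $\alpha$ while the $U$ map and the connecting homomorphism $\delta$ are unaffected; taking the infimum over $a$ immediately yields $\cgh{k}(X,\alpha\lambda) = \alpha\, \cgh{k}(X,\lambda)$. The \textbf{nondecreasing} axiom is a one-line algebraic observation: if $[\mathrm{pt}] \otimes [X] = \delta \circ U^{k} \circ \iota^a(\beta)$, then, using that $U$ commutes with $\iota^a$, one rewrites this as $\delta \circ U^{k-1} \circ \iota^a(U\beta)$, so every $a$ realizing $\cgh{k+1}(X)$ also realizes $\cgh{k}(X)$, giving $\cgh{k}(X) \leq \cgh{k+1}(X)$.

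For \textbf{monotonicity}, let $\varphi \colon X \to Y$ be a $0$-codimensional generalized Liouville embedding. The Viterbo transfer map provides morphisms $\varphi_!^{+,a} \colon \homology{}{S^1}{}{S}{H}{(\varepsilon,a]}{}(Y) \to \homology{}{S^1}{}{S}{H}{(\varepsilon,a]}{}(X)$ and $\varphi_!^+ \colon \homology{}{S^1}{}{S}{H}{+}{}(Y) \to \homology{}{S^1}{}{S}{H}{+}{}(X)$ that commute with $U$ and with the filtration-induced maps $\iota^a$, the latter compatibility arising because the stair Hamiltonian used to construct $\varphi_!$ agrees with an admissible Hamiltonian on $X$ on the subcomplex generated by region I/II orbits. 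The proposition at the end of the $\delta$-map section then supplies $\delta \circ \varphi_!^+ = (\id \otimes \rho) \circ \delta$, where $\rho \colon H_\bullet(Y, \partial Y;\Q) \to H_\bullet(X, \partial X;\Q)$ satisfies $\rho([Y]) = [X]$. Hence if $a > \cgh{k}(Y)$ and $[\mathrm{pt}] \otimes [Y] = \delta \circ U^{k-1} \circ \iota^a(\beta)$, applying $\id \otimes \rho$ yields $[\mathrm{pt}] \otimes [X] = \delta \circ U^{k-1} \circ \iota^a(\varphi_!^{+,a}\beta)$, whence $a \geq \cgh{k}(X)$ and therefore $\cgh{k}(X) \leq \cgh{k}(Y)$.

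The substantive axiom, and the expected main obstacle, is the \textbf{Reeb orbits} statement. I would split it into two steps. \emph{Spectrality}: if $a^* \coloneqq \cgh{k}(X) < +\infty$, then $a^* \in \operatorname{Spec}(\partial X, \lambda|_{\partial X})$. For $a$ in an open gap $(\alpha,\beta)$ between consecutive spectral values, the filtered group $\homology{}{S^1}{}{S}{H}{(\varepsilon,a]}{}(X)$ is locally constant in $a$, because the action of every region-II generator of an admissible Floer complex is forced, via the action lemma for admissible Hamiltonians and the symplectization action formula, to lie close to a fixed value in $\operatorname{Spec}(\partial X, \lambda|_{\partial X})$. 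Consequently the set $\{a > 0 : [\mathrm{pt}] \otimes [X] \in \img(\delta \circ U^{k-1} \circ \iota^a)\}$ is a union of half-open intervals whose endpoints lie in the spectrum, so the infimum is attained at a spectral value $a^*$. \emph{Contractibility}: the Reeb orbit $\gamma^*$ realizing $a^*$ is obtained as a limit of $1$-periodic orbits of admissible Hamiltonians sitting in the cylindrical region. Each such Hamiltonian orbit bounds a disk in $\hat{X}$, built from the $S^1$-parametrization over $S^{2N+1}$ together with a cap over the corresponding critical point of $\tilde{f}_N$; since $\hat{X}$ deformation retracts onto $X$, the limiting orbit $\gamma^*$ is contractible in $X$. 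The hard part is controlling this limit carefully as the Hamiltonian slope tends to infinity in the colimit defining $\homology{}{S^1}{}{S}{H}{+}{}(X)$, which requires an SFT-type compactness argument to ensure that the action value persists and that the filling disks survive in the limit.
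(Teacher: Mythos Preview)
The paper does not prove this theorem at all: it is stated with a citation to \cite[Theorem~1.24]{guttSymplecticCapacitiesPositive2018} and no proof is given. So there is nothing in the paper to compare your argument against; you have attempted to supply a proof that the author deliberately omits.

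That said, a brief assessment of your sketch. Your arguments for conformality, monotonicity, and the nondecreasing property are correct and follow the lines one would expect (and match what is done in the cited reference). For the Reeb orbits property, the spectrality step is fine, but your contractibility argument has a genuine gap: the claim that a Hamiltonian orbit ``bounds a disk in $\hat{X}$, built from the $S^1$-parametrization over $S^{2N+1}$ together with a cap over the corresponding critical point of $\tilde{f}_N$'' does not make sense. The $S^{2N+1}$ parameter is auxiliary Morse--Bott data for the Borel construction and carries no information about filling disks of orbits in $\hat{X}$; there is no mechanism by which a critical point of $\tilde{f}_N$ produces a disk bounding $\gamma$. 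The correct argument (as in Gutt--Hutchings) is algebraic: the Floer complex splits over free homotopy classes of loops in $X$, the map $\delta$ factors through the summand of contractible loops (since it compares with Morse homology of constant orbits), and therefore any class detected by $\delta \circ U^{k-1} \circ \iota^a$ must be represented by generators in the contractible sector. No SFT compactness is needed.
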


The following lemma provides an alternative definition of $\cgh{k}$, in the spirit of \cite{floerApplicationsSymplecticHomology1994}.

\begin{lemma}
    \label{def:ck alternative}
    Let $(X,\lambda)$ be a nondegenerate Liouville domain such that $\pi_1(X) = 0$ and $c_1(TX)|_{\pi_2(X)} = 0$. Let $E \subset \C^n$ be a nondegenerate star-shaped domain and suppose that $\phi \colon E \longrightarrow X$ is a symplectic embedding. Consider the map
    \begin{IEEEeqnarray*}{c+x*}
        \begin{tikzcd}
            \homology{}{S^1}{}{S}{H}{(\varepsilon,a]}{n - 1 + 2k}(X) \ar[r, "\iota^a"] & \homology{}{S^1}{}{S}{H}{+}{n - 1 + 2k}(X) \ar[r, "\phi_!"] & \homology{}{S^1}{}{S}{H}{+}{n - 1 + 2k}(E)
        \end{tikzcd}
    \end{IEEEeqnarray*}
    Then, $\cgh{k}(X) = \inf \{ a > 0 \mid \phi_! \circ \iota^a \text{ is nonzero} \}$.
\end{lemma}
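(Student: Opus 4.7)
The plan is to compare the two characterizations of $\cgh{k}(X)$ by a diagram chase, exploiting the fact that $E$ is star-shaped and that the relevant structure maps on $\homology{}{S^1}{}{S}{H}{+}{}(E)$ in degree $n-1+2k$ are isomorphisms. Under the hypothesis $\pi_1(X) = 0$ and $c_1(TX)|_{\pi_2(X)} = 0$, the groups $\homology{}{S^1}{}{S}{H}{\bullet}{}(X)$ carry a well-defined $\Z$-grading via the Conley--Zehnder index, and the analogous grading exists for $E \subset \C^n$. First I would note that since $E$ is star-shaped, $H^1(E) = 0$, so the closed form $\phi^* \lambda_X - \lambda_E$ is exact and $\phi$ is automatically a strict generalized Liouville embedding; hence the Viterbo transfer $\phi_!$ is well-defined by \cref{def:viterbo transfer generalized}.

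Next I would assemble the relevant commutative diagram by splicing the naturality of $\delta$ with respect to Viterbo transfer (the proposition at the end of \cref{sec:delta map}) with the commutativity of $U$ with $\phi_!$. In the Conley--Zehnder degree $n-1+2k$ the result is
\begin{equation*}
    \begin{tikzcd}[column sep=small]
        \homology{}{S^1}{}{S}{H}{+}{n-1+2k}(X) \ar[r, "U^{k-1}"] \ar[d, swap, "\phi_!"] & \homology{}{S^1}{}{S}{H}{+}{n+1}(X) \ar[r, "\delta_X"] \ar[d, "\phi_!"] & H_0(BS^1;\Q) \otimes H_{2n}(X, \partial X;\Q) \ar[d, "\id \otimes \rho"] \\
        \homology{}{S^1}{}{S}{H}{+}{n-1+2k}(E) \ar[r, swap, "U^{k-1}"] & \homology{}{S^1}{}{S}{H}{+}{n+1}(E) \ar[r, swap, "\delta_E"] & H_0(BS^1;\Q) \otimes H_{2n}(E, \partial E;\Q).
    \end{tikzcd}
\end{equation*}
Because $X$ and $E$ are connected $2n$-manifolds with boundary, both target modules are rank-one over $\Q$, generated by $[\mathrm{pt}] \otimes [X]$ and $[\mathrm{pt}] \otimes [E]$ respectively, and $\rho([X]) = [E]$ makes the rightmost vertical arrow an isomorphism. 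Applying diagram \eqref{eq:diagram for s1esh capacities case ss} to the star-shaped $E$, the bottom composite $\delta_E \circ U^{k-1}$ is an isomorphism from $\homology{}{S^1}{}{S}{H}{+}{n-1+2k}(E)$ onto $\Q$.

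Finally I would chase the diagram. The Gutt--Hutchings condition ``$[\mathrm{pt}] \otimes [X] \in \img(\delta_X \circ U^{k-1} \circ \iota^a)$'' is equivalent to ``$\delta_X \circ U^{k-1} \circ \iota^a$ is nonzero in degree $n-1+2k$'', since its target in that degree is the rank-one module generated by $[\mathrm{pt}] \otimes [X]$. Combining the isomorphisms just identified with the commutativity of the diagram, this is in turn equivalent to ``$\phi_! \circ \iota^a$ is nonzero in degree $n-1+2k$''. Taking infima yields the desired equality. The main obstacle is only bookkeeping: verifying that $U$ commutes with $\phi_!$ and that $\delta$ is natural with respect to Viterbo transfer, and that the Conley--Zehnder degree shifts align so that $[\mathrm{pt}] \otimes [X]$ lies in the image of exactly the degree-$(n-1+2k)$ piece. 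Both commutativities follow from the functoriality results for $S^1$-equivariant symplectic homology developed in \cref{sec:viterbo transfer map of liouville embedding,sec:viterbo transfer map of exact symplectic embedding,sec:delta map}; no new geometric input is required.
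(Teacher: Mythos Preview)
Your proposal is correct and follows essentially the same approach as the paper: both construct the same commutative square relating $X$ and $E$ via the Viterbo transfer, invoke the fact that for star-shaped $E$ the maps $U^{k-1}_E$ and $\delta_E$ are isomorphisms in the relevant degree, observe that $\id \otimes \rho$ is an isomorphism since $\rho([X]) = [E]$, and then chase the diagram to translate between ``$[\mathrm{pt}] \otimes [X] \in \img(\delta_X \circ U^{k-1} \circ \iota^a)$'' and ``$\phi_! \circ \iota^a \neq 0$''. Your write-up is slightly more explicit about why $\phi$ is a generalized Liouville embedding and about the rank-one targets, but the argument is the same.
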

\begin{proof}
    For every $a \in \R$ consider the following commutative diagram:
    \begin{IEEEeqnarray*}{c+x*}
        \begin{tikzcd}
            \homology{}{S^1}{}{S}{H}{(\varepsilon, a]}{n - 1 + 2k}(X) \ar[r, "\iota^a_X"] \ar[d, swap, "\phi_!^a"] & \homology{}{S^1}{}{S}{H}{+}{n - 1 + 2k}(X) \ar[r, "U ^{k-1}_X"] \ar[d, "\phi_!"]       & \homology{}{S^1}{}{S}{H}{+}{n+1}(X) \ar[r, "\delta_X"] \ar[d, "\phi_!"]       & H_0(BS^1) \tensorpr H_{2n}(X,\del X) \ar[d, hook, two heads, "\id \tensorpr \rho"] \\
            \homology{}{S^1}{}{S}{H}{(\varepsilon, a]}{n - 1 + 2k}(E) \ar[r, swap, "\iota^a_E"]                    & \homology{}{S^1}{}{S}{H}{+}{n - 1 + 2k}(E) \ar[r, swap, hook, two heads, "U ^{k-1}_E"] & \homology{}{S^1}{}{S}{H}{+}{n+1}(E) \ar[r, swap, hook, two heads, "\delta_E"] & H_0(BS^1) \tensorpr H_{2n}(E,\del E)
        \end{tikzcd}
    \end{IEEEeqnarray*}
    By \cite[Proposition 3.1]{guttSymplecticCapacitiesPositive2018} and since $E$ is star-shaped, the maps $U_E$ and $\delta_E$ are isomorphisms. Since $\rho([X]) = [E]$, the map $\rho$ is an isomorphism. By definition, $\cgh{k}$ is the infimum over $a$ such that the top arrow is surjective. This condition is equivalent to $\phi_! \circ \iota^a_X$ being nonzero.
\end{proof}

The following computation will be useful to us in the proofs of \cref{lem:computation of cl,thm:my main theorem}.

\begin{lemma}[{\cite[Lemma 1.19]{guttSymplecticCapacitiesPositive2018}}]
    \label{lem:cgh of nondisjoint union of cylinders}
    $\cgh{k}(N^{2n}(\delta)) = \delta \, (k + n - 1)$.
\end{lemma}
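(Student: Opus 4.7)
The plan is to combine the extension-of-capacities procedure of Section 5.1 with an explicit Reeb-orbit calculation on smoothings of $\partial N^{2n}(\delta)$. Since $N^{2n}(\delta)$ is noncompact and has corners, Lemma \ref{lem:c is the smallest extension to ss} gives
\begin{IEEEeqnarray*}{c+x*}
    \cgh{k}(N^{2n}(\delta)) = \sup_Y \cgh{k}(Y),
\end{IEEEeqnarray*}
where the supremum ranges over compact star-shaped domains $Y$ with smooth boundary contained in $N^{2n}(\delta)$. For $R \gg \delta$, I would exhibit a family of compact star-shaped $X_R \subset N^{2n}(\delta)$ obtained by smoothing the truncation $N^{2n}(\delta) \cap P^{2n}(R)$, chosen so that $X_R$ has the structure of an (approximately) concave toric domain with moment image close to $\Omega_N(\delta) \cap [0,R]^n$. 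One checks that every compact subset of $N^{2n}(\delta)$ lies in $X_R$ for $R$ sufficiently large, reducing the claim to the assertion that $\lim_{R \to \infty} \cgh{k}(X_R) = \delta(k+n-1)$.

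To analyze $\cgh{k}(X_R)$, I would classify the Reeb orbits on $\partial X_R$ according to the facet stratification of $\partial \Omega_R$. On the smoothed corner $\{x_{j_1} = \cdots = x_{j_p} = \delta\}$ where $p$ of the ``inner'' facets $\{x_j = \delta\}$ meet, a generic perturbation of the degenerate Morse--Bott family produces, for each $(v_{j_1},\ldots,v_{j_p}) \in \Z^p_{\geq 1}$, a nondegenerate Reeb orbit with action $\delta \sum_i v_{j_i}$. Its Conley--Zehnder index can then be computed via Proposition \ref{prp:gutts cz formula} and the trivialization conventions of Section 3.4. Conversely, any orbit arising from the truncation facets $\{x_j = R\}$ has action growing linearly in $R$, and so is irrelevant for the computation once $R$ exceeds a threshold depending on $k$.

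The alternative description of $\cgh{k}$ given in Lemma \ref{def:ck alternative} identifies $\cgh{k}(X_R)$ with the minimal action of a Reeb orbit whose equivariant symplectic-homology class has degree $n-1+2k$ and pushes to a nonzero class in the positive $S^1$-equivariant symplectic homology of a small ellipsoid inside $X_R$. The Conley--Zehnder computation would show that among the orbits enumerated above, those satisfying the degree constraint correspond precisely to tuples $(v_1,\ldots,v_n) \in \Z^n_{\geq 1}$ with $v_1 + \cdots + v_n = k+n-1$, and these all have the same action $\delta(k+n-1)$. Minimality of the action over this set, together with monotonicity under the inclusions $X_R \hookrightarrow X_{R'}$ for $R < R'$, yields the equality $\cgh{k}(X_R) = \delta(k+n-1)$ for all sufficiently large $R$, and hence the claim after passing to the supremum.

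The main obstacle is the Conley--Zehnder index calculation on the innermost corner $\{x_1 = \cdots = x_n = \delta\}$: one has to choose a nondegenerate Morse--Bott perturbation, compute the resulting indices as an iterated sum of ellipsoid-type contributions (using Proposition \ref{prp:gutts cz formula}), and verify that the degree constraint $\mu = n-1+2k$ forces exactly the combinatorial condition $v_j \geq 1$ with $\sum v_j = k+n-1$. The checks that the truncation orbits can be discarded and that the extension procedure of Section 5.1 commutes with the limit $R \to \infty$ are then straightforward applications of monotonicity.
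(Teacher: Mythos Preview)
The paper does not prove this lemma at all: it is stated with a citation to \cite[Lemma~1.19]{guttSymplecticCapacitiesPositive2018} and used as a black box. So there is no ``paper's own proof'' to compare against; you are effectively sketching a proof of the cited result.

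Your outline follows the right strategy and is close in spirit to what Gutt--Hutchings actually do, but it has a gap at the crucial step. Knowing the Conley--Zehnder indices of the Reeb orbits on a perturbed $\partial X_R$ is not enough to compute $\cgh{k}(X_R)$: the description in Lemma~\ref{def:ck alternative} requires that a class of the correct degree survive in $SH^{S^1,+}_{n-1+2k}(X_R)$ and map nontrivially under $\phi_!$. You assert that the degree constraint singles out exactly the tuples $(v_1,\ldots,v_n)\in\Z^n_{\geq 1}$ with $\sum v_j=k+n-1$, but you still need to rule out that such generators are killed by the differential, or that generators of smaller action hit them; equivalently, you need the computation of $SH^{S^1,+}$ of a concave toric domain, not just the list of generators. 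In Gutt--Hutchings this is handled by their Morse--Bott analysis and the resulting combinatorial formula (their Theorem~1.14) for $\cgh{k}$ of concave toric domains, from which Lemma~1.19 follows by a short lattice-point optimization. Your sketch implicitly assumes the output of that theorem without supplying the homological argument.
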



We now consider other capacities which can be defined using $S^1$-equivariant symplectic homology.

\begin{definition}[{\cite[Section 2.5]{irieSymplecticHomologyFiberwise2021}}]
    \label{def:s1esh capacities}
    If $k \in \Z_{\geq 1}$ and $(X,\lambda)$ is a nondegenerate Liouville domain, the \textbf{$S^1$-equivariant symplectic homology capacities} of $X$, denoted $\csh{k}(X)$, are defined as follows. Consider the map
    \begin{IEEEeqnarray*}{c+x*}
        \iota^{a,\varepsilon} \circ \alpha^{-1} \colon H_\bullet(BS^1;\Q) \otimes H_\bullet(X, \partial X;\Q) \longrightarrow \homology{}{S^1}{}{S}{H}{a}{}(X)
    \end{IEEEeqnarray*}
    from diagram \eqref{eq:diagram for s1esh capacities}. Then, we define
    \begin{IEEEeqnarray*}{c+x*}
        \csh{k}(X) \coloneqq \inf \{ a > 0 \mid \iota^{a,\varepsilon} \circ \alpha^{-1}([\C P^{k-1}] \otimes [X]) = 0 \}.
    \end{IEEEeqnarray*}
\end{definition}

\begin{theorem}
    \label{prp:properties of s1esh capacities}
    The functions $\csh{k}$ of Liouville domains satisfy the following axioms, for all Liouville domains $(X,\lambda_X)$ and $(Y,\lambda_Y)$ of the same dimension:
    \begin{description}
        \item[(Monotonicity)] If $X \longrightarrow Y$ is a generalized Liouville embedding then $\csh{k}(X) \leq \csh{k}(Y)$.
        \item[(Conformality)] If $\mu > 0$ then $\csh{k}(X, \mu \lambda_X) = \mu \, \csh{k}(X, \lambda_X)$.
        \item[(Nondecreasing)] $\csh{1}(X) \leq \csh{2}(X) \leq \cdots \leq +\infty$. 
    \end{description}
\end{theorem}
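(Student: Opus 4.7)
By the general extension procedure of \cref{lem:c is the unique extension to lvds} together with \cref{lem:can prove ineqs for ndg}, it suffices to prove the three axioms under the additional assumption that $(X,\lambda_X)$ and $(Y,\lambda_Y)$ are nondegenerate. I will handle the three axioms separately, in order of increasing difficulty.

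\textbf{Conformality.} Rescaling $\lambda_X \mapsto \mu \lambda_X$ does not alter the underlying manifold, the set of admissible Hamiltonians (up to a reparametrisation), or the moduli spaces of Floer trajectories used to define $\homology{}{S^1}{}{S}{H}{}{}$, but it multiplies the action of every Hamiltonian $1$-periodic orbit by $\mu$. Thus there is a tautological identification $\homology{}{S^1}{}{S}{H}{\mu a}{}(X,\mu\lambda_X) = \homology{}{S^1}{}{S}{H}{a}{}(X,\lambda_X)$ which intertwines $\iota^{a,\varepsilon}$ with $\iota^{\mu a,\mu\varepsilon}$ and $\alpha$ with itself (the latter because $\alpha$ is induced from the Morse complex in the interior, which is unaffected). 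The class $[\C P^{k-1}]\otimes [X]$ is preserved. Hence $a$ lies in the set defining $\csh{k}(X,\lambda_X)$ iff $\mu a$ lies in the set defining $\csh{k}(X,\mu\lambda_X)$, giving $\csh{k}(X,\mu\lambda_X)=\mu\,\csh{k}(X,\lambda_X)$.

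\textbf{Nondecreasing.} The $U$ map commutes with $\iota^{a,\varepsilon}$ (since $U$ is a chain map already on $\homology{}{S^1}{}{F}{C}{}{}$ that respects the action filtration, as observed just after \cref{def:U map}), and it commutes with $\alpha$ because, under the identification $\alpha$ of \cref{lem:iso from symplectic to singular}, $U$ corresponds to $u^{-1}\otimes\id$ on $\Q[u]\otimes H_\bullet(X,\partial X;\Q)$. Therefore, applying $U$ to the defining equation of $\csh{k}(X)$, we obtain
\begin{equation*}
    \iota^{a,\varepsilon}\circ\alpha^{-1}([\C P^{k-2}]\otimes[X]) = U\circ\iota^{a,\varepsilon}\circ\alpha^{-1}([\C P^{k-1}]\otimes[X]) = 0
\end{equation*}
whenever $a$ lies in the set defining $\csh{k}(X)$. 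This yields $\csh{k-1}(X)\le\csh{k}(X)$.

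\textbf{Monotonicity.} Let $\varphi\colon X\longrightarrow Y$ be a $0$-codimensional generalized Liouville embedding. The plan is to establish commutativity of the diagram
\begin{equation*}
    \begin{tikzcd}
        H_\bullet(BS^1;\Q)\otimes H_\bullet(Y,\partial Y;\Q) \ar[r, "\alpha_Y^{-1}"] \ar[d, swap, "\id\otimes\rho"] & \homology{}{S^1}{}{S}{H}{\varepsilon}{}(Y) \ar[r, "\iota^{a,\varepsilon}_Y"] \ar[d, "\varphi_!^\varepsilon"] & \homology{}{S^1}{}{S}{H}{a}{}(Y) \ar[d, "\varphi_!^a"] \\
        H_\bullet(BS^1;\Q)\otimes H_\bullet(X,\partial X;\Q) \ar[r, swap, "\alpha_X^{-1}"] & \homology{}{S^1}{}{S}{H}{\varepsilon}{}(X) \ar[r, swap, "\iota^{a,\varepsilon}_X"] & \homology{}{S^1}{}{S}{H}{a}{}(X)
    \end{tikzcd}
\end{equation*}
for every $a > \varepsilon$. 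Given this diagram, the axiom follows: if $a > \csh{k}(Y)$, then the top composition sends $[\C P^{k-1}]\otimes[Y]$ to $0$, so the bottom composition sends $(\id\otimes\rho)([\C P^{k-1}]\otimes[Y]) = [\C P^{k-1}]\otimes[X]$ to $0$, giving $\csh{k}(X)\le a$.

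The commutativity of the left square was already used implicitly in \cref{sec:delta map} (the isomorphism $\alpha$ is natural with respect to Viterbo transfer because it is built from the Morse complex in the interior, and the projection $\rho$ on $H_\bullet(-,\partial-;\Q)$ is the topological shadow of the transfer). The main obstacle, and the new technical content, is the construction of filtered Viterbo transfer maps $\varphi_!^a\colon \homology{}{S^1}{}{S}{H}{a}{}(Y)\longrightarrow\homology{}{S^1}{}{S}{H}{a}{}(X)$ fitting into the right square. To construct these, I would revisit the stair-Hamiltonian construction of \cref{sec:viterbo transfer map of liouville embedding}: the natural transformation $\eta$ of \cref{def:v with respect to stair nt} preserves actions on the nose (since $\mathcal{A}_{\overline{H}}(z,\hat{\varphi}\circ\gamma)=\mathcal{A}_{\overline{H}_V}(z,\gamma)$), the projection onto $\homology{\mathrm{I,II}}{S^1}{W}{F}{C}{}{}$ only quotients out orbits lying in regions III, IV, V which by \cref{lem:action stair} have action either negative or exceeding that of regions I, II, and the continuation map $\phi$ respects the action filtration by \cref{lem:action energy for floer trajectories}. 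Passing to the colimit over admissible data, one obtains filtered maps $\varphi_!^a$ commuting with $\iota^{a,\varepsilon}$ by construction. For generalized (non-strict) Liouville embeddings, the argument of \cref{lem:sh indep of potential} reduces to the Liouville case, since the action of every orbit is unchanged when the primitive is modified by an exact form relative to $\partial X$.
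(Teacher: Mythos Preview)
Your proposal is correct and follows essentially the same route as the paper for all three axioms. The only difference is that your monotonicity argument is more laboured than necessary: in the paper's categorical setup, $\homology{}{S^1}{}{S}{H}{}{}$ is an object of $\modl$, whose objects are by definition filtered $\Q$-modules and whose morphisms are natural transformations (\cref{def:category modules}); hence the Viterbo transfer map $\varphi_!$ of \cref{def:viterbo transfer map} automatically comes with filtered components $\varphi_!^a$ commuting with the inclusions $\iota^{a,\varepsilon}$, so the right square of your diagram requires no extra justification. Your remark that orbits in regions $\rmn{3}$, $\rmn{4}$, $\rmn{5}$ have action ``either negative or exceeding that of regions $\rmn{1}$, $\rmn{2}$'' is not quite accurate (see \cref{lem:action stair}, which does not bound $\mathcal{A}_{\overline{H}}(\rmn{3})$), but this is irrelevant once you use the categorical shortcut.
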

\begin{proof}
    We prove monotonicity. Consider the following commutative diagram:
    \begin{IEEEeqnarray}{c+x*}
        \plabel{eq:s1eshc diagram}
        \begin{tikzcd}
            H_\bullet(BS^1;\Q) \otimes H_\bullet(Y, \partial Y;\Q) \ar[d, swap, "\id \otimes \rho"] & \homology{}{S^1}{}{S}{H}{\varepsilon}{}(Y) \ar[l, swap, hook', two heads, "\alpha_Y"] \ar[r, "\iota^{a, \varepsilon}_Y"] \ar[d, "\phi_!^\varepsilon"] & \homology{}{S^1}{}{S}{H}{a}{}(Y) \ar[d, "\phi^a_!"] \\
            H_\bullet(BS^1;\Q) \otimes H_\bullet(X, \partial X;\Q)                                  & \homology{}{S^1}{}{S}{H}{\varepsilon}{}(X) \ar[l, hook', two heads, "\alpha_X"] \ar[r, swap, "\iota^{a, \varepsilon}_X"]                              & \homology{}{S^1}{}{S}{H}{a}{}(X)
        \end{tikzcd}
    \end{IEEEeqnarray}
    If $\iota_Y^{a,\varepsilon} \circ \alpha_Y^{-1}([\C P^{k-1}] \otimes [Y]) = 0$, then
    \begin{IEEEeqnarray*}{rCls+x*}
        \IEEEeqnarraymulticol{3}{l}{\iota_X^{a,\varepsilon} \circ \alpha_X^{-1}([\C P^{k-1}] \otimes [X])} \\
        \quad & = & \iota_X^{a,\varepsilon} \circ \alpha_X^{-1} \circ (\id \otimes \rho)([\C P^{k-1}] \otimes [Y]) & \quad [\text{since $\rho([Y]) = [X]$}] \\
              & = & \phi_! \circ \iota_Y^{a,\varepsilon} \circ \alpha_{Y}^{-1} ([\C P^{k-1}] \otimes [Y])          & \quad [\text{by diagram \eqref{eq:s1eshc diagram}}] \\
              & = & 0                                                                                              & \quad [\text{by assumption}].
    \end{IEEEeqnarray*}

    To prove conformality, choose $\varepsilon > 0$ such that $\varepsilon, \mu \varepsilon < \min \operatorname{Spec}(\partial X, \lambda|_{\partial X})$. Since the diagram
    \begin{IEEEeqnarray*}{c+x*}
        \begin{tikzcd}
            H_\bullet(BS^1;\Q) \otimes H_\bullet(X, \partial X;\Q) \ar[d, equals] & \homology{}{S^1}{}{S}{H}{\varepsilon}{}(X, \lambda) \ar[d, equals] \ar[l, swap, hook', two heads, "\alpha_{\lambda}"] \ar[r, "\iota^{a, \varepsilon}_\lambda"]            & \homology{}{S^1}{}{S}{H}{a}{}(X, \lambda) \ar[d, equals] \\
            H_\bullet(BS^1;\Q) \otimes H_\bullet(X, \partial X;\Q)                & \homology{}{S^1}{}{S}{H}{\mu \varepsilon}{}(X, \mu \lambda) \ar[l, hook', two heads, "\alpha_{\mu \lambda}"] \ar[r, swap, "\iota^{\mu a, \mu \varepsilon}_{\mu \lambda}"] & \homology{}{S^1}{}{S}{H}{\mu a}{}(X, \mu \lambda)
        \end{tikzcd}
    \end{IEEEeqnarray*}
    commutes (by \cite[Proposition 3.1]{guttSymplecticCapacitiesPositive2018}), the result follows.

    To prove the nondecreasing property, note that if $\iota^{a,\varepsilon} \circ \alpha^{-1}([\C P ^{k}] \otimes [X]) = 0$, then
    \begin{IEEEeqnarray*}{rCls+x*}
        \IEEEeqnarraymulticol{3}{l}{\iota^{a,\varepsilon} \circ \alpha^{-1}([\C P ^{k-1}] \otimes [X])}\\
        \quad & = & \iota^{a,\varepsilon} \circ \alpha^{-1} \circ U ([\C P ^{k}] \otimes [X])     & \quad [\text{since $U([\C P^k] \otimes [X]) = [\C P^{k-1}] \otimes [X]$}] \\
              & = & U^{a} \circ \iota^{a,\varepsilon} \circ \alpha^{-1} ([\C P ^{k}] \otimes [X]) & \quad [\text{since $\iota^{a,\varepsilon}$ and $\alpha$ commute with $U$}] \\
              & = & 0                                                                             & \quad [\text{by assumption}].                                        & \qedhere
    \end{IEEEeqnarray*}
\end{proof}

\begin{theorem}
    \label{thm:ghc and s1eshc}
    If $(X, \lambda)$ is a Liouville domain, then
    \begin{enumerate}
        \item \label{thm:comparison cgh csh 1} $\cgh{k}(X) \leq \csh{k}(X)$;
        \item \label{thm:comparison cgh csh 2} $\cgh{k}(X) = \csh{k}(X)$ provided that $X$ is star-shaped.
    \end{enumerate}
\end{theorem}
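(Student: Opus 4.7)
The plan is to extract both inequalities from a single diagram chase in \eqref{eq:diagram for s1esh capacities}, relying on three ingredients. First, the short exact sequence of filtered complexes $0 \to C^\varepsilon \to C^a \to C^{(\varepsilon, a]} \to 0$ induces a long exact sequence whose exactness at $\homology{}{S^1}{}{S}{H}{\varepsilon}{}(X)$ says $\ker(\iota^{a,\varepsilon}) = \img(\delta_0^a)$. Second, the identities $\delta = \alpha \circ \delta_0$ and $\delta_0^a = \delta_0 \circ \iota^a$ (which are part of the commutativity of \eqref{eq:diagram for s1esh capacities}). Third, the fact that $U$ commutes with every arrow and acts on $H_\bullet(BS^1; \Q)$ by $U([\C P^j]) = [\C P^{j-1}]$. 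By \cref{lem:can prove ineqs for ndg} it suffices throughout to treat the nondegenerate case.

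For part \ref{thm:comparison cgh csh 1}, I would fix any $a > \csh{k}(X)$. By definition $\alpha^{-1}([\C P^{k-1}] \otimes [X])$ is killed by $\iota^{a,\varepsilon}$, hence by exactness it lifts via $\delta_0^a$ to some $\gamma \in \homology{}{S^1}{}{S}{H}{(\varepsilon,a]}{}(X)$. Applying $\alpha$ and using $\delta_0^a = \delta_0 \circ \iota^a$ gives $\delta \circ \iota^a(\gamma) = [\C P^{k-1}] \otimes [X]$; applying $U^{k-1}$ (which commutes with $\delta$ and $\iota^a$ and sends $[\C P^{k-1}] \otimes [X]$ to $[\mathrm{pt}] \otimes [X]$) shows that $[\mathrm{pt}] \otimes [X] \in \img(\delta \circ U^{k-1} \circ \iota^a)$, i.e.\ $a \geq \cgh{k}(X)$.

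For part \ref{thm:comparison cgh csh 2}, I would use two extra pieces of structure supplied by the star-shaped hypothesis: diagram \eqref{eq:diagram for s1esh capacities case ss} carries a $\Z$-grading (since $\pi_1(X) = 0$ and $c_1(TX)|_{\pi_2(X)} = 0$), and by \cite[Proposition 3.1]{guttSymplecticCapacitiesPositive2018} the map $\delta$ is an isomorphism in each degree; and since $(X, \partial X)$ is homotopy equivalent to $(D^{2n}, S^{2n-1})$, the module $H_{2k-2}(BS^1; \Q) \otimes H_{2n}(X, \partial X; \Q)$ is one-dimensional, generated by $[\C P^{k-1}] \otimes [X]$. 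Starting from $a > \cgh{k}(X)$ and a preimage $\gamma \in \homology{}{S^1}{}{S}{H}{(\varepsilon,a]}{n-1+2k}(X)$ with $\delta \circ U^{k-1} \circ \iota^a(\gamma) = [\mathrm{pt}] \otimes [X]$, commutation gives $U^{k-1}(\delta \circ \iota^a(\gamma)) = [\mathrm{pt}] \otimes [X]$, and one-dimensionality forces $\delta \circ \iota^a(\gamma) = \lambda [\C P^{k-1}] \otimes [X]$ for some $\lambda \in \Q$, which applying $U^{k-1}$ pins down as $\lambda = 1$. I would then run the chase from part \ref{thm:comparison cgh csh 1} in reverse: $\alpha^{-1}([\C P^{k-1}] \otimes [X]) = \delta_0^a(\gamma) \in \img(\delta_0^a) = \ker(\iota^{a,\varepsilon})$, so $a \geq \csh{k}(X)$.

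No serious analytic input is needed; the only delicate point is the bookkeeping of gradings in part \ref{thm:comparison cgh csh 2}, specifically the verification that $\delta \circ \iota^a(\gamma)$ lands exactly in the one-dimensional degree piece in which $[\C P^{k-1}] \otimes [X]$ sits. Without star-shaped one loses both the graded isomorphism $\delta$ and this one-dimensionality, so the ``$U^{k-1}$-lift'' from $[\mathrm{pt}] \otimes [X]$ back to $[\C P^{k-1}] \otimes [X]$ can fail, explaining why part \ref{thm:comparison cgh csh 1} is only an inequality in general.
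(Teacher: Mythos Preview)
Your proposal is correct and follows essentially the same approach as the paper: both proofs use the exactness $\ker(\iota^{a,\varepsilon}) = \img(\delta_0^a)$ together with the commutativity of diagram \eqref{eq:diagram for s1esh capacities} to reduce the comparison to the single one-way step of applying $U^{k-1}$, which becomes reversible in the star-shaped case. The paper packages this as a chain of biconditionals with one $\Longrightarrow$ that upgrades to $\Longleftrightarrow$ because $U^{k-1} \colon H_{2k-2}(BS^1) \otimes H_{2n}(X,\partial X) \to H_0(BS^1) \otimes H_{2n}(X,\partial X)$ is an isomorphism, whereas you unwind this as an explicit element chase using one-dimensionality of the target; these are the same argument. (One minor remark: you invoke that $\delta$ is an isomorphism in the star-shaped case but never actually use it---only the grading and the one-dimensionality of $H_{2k-2}(BS^1;\Q)\otimes H_{2n}(X,\partial X;\Q)$ are needed, just as in the paper.)
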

\begin{proof}
    By \cref{lem:can prove ineqs for ndg}, we may assume that $X$ is nondegenerate. Since
    \begin{IEEEeqnarray*}{rCls+x*}
        \IEEEeqnarraymulticol{3}{l}{\iota^{a,\varepsilon} \circ \alpha^{-1}([\C P ^{k-1}] \otimes [X]) = 0}\\
        \quad & \Longleftrightarrow & \alpha^{-1}([\C P ^{k-1}] \otimes [X]) \in \ker \iota^{a,\varepsilon}   & \quad [\text{by definition of kernel}] \\
        \quad & \Longleftrightarrow & \alpha^{-1}([\C P ^{k-1}] \otimes [X]) \in \img \delta^a_0              & \quad [\text{since the top row of \eqref{eq:diagram for s1esh capacities} is exact}] \\
        \quad & \Longleftrightarrow & [\C P ^{k-1}] \otimes [X] \in \img (\alpha \circ \delta^a_0)            & \quad [\text{by definition of image}] \\
        \quad & \Longleftrightarrow & [\C P ^{k-1}] \otimes [X] \in \img (\delta \circ \iota^a)               & \quad [\text{since diagram \eqref{eq:diagram for s1esh capacities} commutes}] \\
        \quad & \Longrightarrow     & [\mathrm{pt}] \otimes [X] \in \img (U^{k-1} \circ \delta \circ \iota^a) & \quad [\text{since $U^{k-1}([\C P ^{k-1}] \otimes [X]) = [\mathrm{pt}] \otimes [X]$}] \\
        \quad & \Longleftrightarrow & [\mathrm{pt}] \otimes [X] \in \img (\delta \circ U^{k-1} \circ \iota^a) & \quad [\text{since $\delta$ and $U$ commute}],
    \end{IEEEeqnarray*}
    we have that $\cgh{k}(X) \leq \csh{k}(X)$. If $X$ is a star-shaped domain, we can view the maps of the computation above as being the maps in diagram \eqref{eq:diagram for s1esh capacities case ss}, i.e. they are defined in a specific degree. In this case, $U^{k-1} \colon H_{2k-2}(BS^1) \otimes H_{2n}(X, \partial X) \longrightarrow H_{0}(BS^1) \otimes H_{2n}(X, \partial X)$ is an isomorphism, and therefore the implication in the previous computation is actually an equivalence.
\end{proof}

\begin{remark}
    The capacities $\cgh{k}$ and $\csh{k}$ are defined in terms of a certain homology class being in the kernel or in the image of a map with domain or target the $S^1$-equivariant symplectic homology. Other authors have constructed capacities in an analogous manner, for example Viterbo \cite[Definition 2.1]{viterboSymplecticTopologyGeometry1992} and \cite[Section 5.3]{viterboFunctorsComputationsFloer1999}, Schwarz \cite[Definition 2.6]{schwarzActionSpectrumClosed2000} and Ginzburg--Shon \cite[Section 3.1]{ginzburgFilteredSymplecticHomology2018}.
\end{remark}

\section{McDuff--Siegel capacities}

We now define the McDuff--Siegel capacities. These will assist us in our goal of proving \cref{conj:the conjecture} (at least in particular cases) because they can be compared with the Lagrangian capacity (\cref{thm:lagrangian vs g tilde}) and with the Gutt--Hutchings capacities (\cref{prp:g tilde and cgh}).

\begin{definition}[{\cite[Definition 3.3.1]{mcduffSymplecticCapacitiesUnperturbed2022}}]
    \label{def:g tilde}
    Let $(X,\lambda)$ be a nondegenerate Liouville domain. For $\ell, k \in \Z_{\geq 1}$, we define the \textbf{McDuff--Siegel capacities} of $X$, denoted $\tilde{\mathfrak{g}}^{\leq \ell}_k(X)$, as follows. Choose $x \in \itr X$ and $D$ a symplectic divisor at $x$. Then,
    \begin{IEEEeqnarray*}{c+x*}
        \tilde{\mathfrak{g}}^{\leq \ell}_k(X) \coloneqq \sup_{J \in \mathcal{J}(X,D)} \mathop{\inf\vphantom{\sup}}_{\Gamma_1, \ldots, \Gamma_p} \sum_{i=1}^{p} \mathcal{A}(\Gamma_i),
    \end{IEEEeqnarray*}
    where the infimum is over tuples of Reeb orbits $\Gamma_1, \ldots, \Gamma_p$ such that there exist integers $k_1, \ldots, k_p \geq 1$ with
    \begin{IEEEeqnarray}{c+x*}
        \phantomsection\label{eq:g tilde two definitions conditions}
        \sum_{i=1}^{p} \# \Gamma_i \leq \ell, \qquad \sum_{i=1}^{p} k_i \geq k, \qquad \bigproduct_{i=1}^{p} \mathcal{M}_X^J(\Gamma_i)\p{<}{}{\mathcal{T}^{(k_i)}x} \neq \varnothing.
    \end{IEEEeqnarray}
\end{definition}

The following theorem shows that the definition of $\tilde{\mathfrak{g}}^{\leq \ell}_k$ we give in \cref{def:g tilde} and the one given in \cite[Definition 3.3.1]{mcduffSymplecticCapacitiesUnperturbed2022} are equal.

\begin{theorem}[{\cite[Remark 3.1.2]{mcduffSymplecticCapacitiesUnperturbed2022}}]
    \label{thm:g tilde two definitions}
    If $(X, \lambda)$ is a nondegenerate Liouville domain, $\ell, k \in \Z_{\geq 1}$, $x \in \itr X$ and $D$ is a symplectic divisor through $x$, then
    \begin{IEEEeqnarray*}{c+x*}
        \tilde{\mathfrak{g}}^{\leq \ell}_k(X) = \sup_{J \in \mathcal{J}(X,D)} \mathop{\inf\vphantom{\sup}}_{\Gamma} \mathcal{A}(\Gamma),
    \end{IEEEeqnarray*}
    where the infimum is taken over tuples of Reeb orbits $\Gamma = (\gamma_1, \ldots, \gamma_p)$ such that $p \leq \ell$ and $\overline{\mathcal{M}}^{J}_{X}(\Gamma)\p{<}{}{\mathcal{T}^{(k)}x} \neq \varnothing$.
\end{theorem}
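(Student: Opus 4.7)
Write $A \coloneqq \tilde{\mathfrak{g}}^{\leq \ell}_k(X)$ and let $B$ denote the right-hand side of the stated formula; the plan is to establish $A \leq B$ and $B \leq A$ separately, both at a fixed admissible $J \in \mathcal{J}(X,D)$ and then taking the supremum in $J$.

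For $A \leq B$, I would start with a tuple $\Gamma$ such that $\#\Gamma \leq \ell$ and $\overline{\mathcal{M}}^{J}_{X}(\Gamma)\p{<}{}{\mathcal{T}^{(k)}x} \neq \varnothing$, and extract from a building $F$ in this moduli space the components $C_1, \ldots, C_p$ carrying tangency orders $k_1, \ldots, k_p \geq 1$ with $\sum k_i \geq k$, as described in \cref{rmk:compactifications with tangency}. Because $x \in \itr X$, every $C_i$ lies in the $\hat{X}$-level of the building, so letting $\Gamma_i$ be its tuple of positive asymptotic Reeb orbits gives $C_i \in \mathcal{M}^{J}_{X}(\Gamma_i)\p{<}{}{\mathcal{T}^{(k_i)}x}$, witnessing that this product is nonempty. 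The bound $\sum \#\Gamma_i \leq \#\Gamma \leq \ell$ follows from the tree condition in \cref{def:sft compactification}: each positive puncture of a $C_i$ is matched upward through the symplectization levels along a subtree that must terminate in a distinct positive asymptote of $F$, since two punctures reaching the same leaf would create a cycle. The bound $\sum \mathcal{A}(\Gamma_i) \leq \mathcal{A}(\Gamma)$ comes from \cref{lem:action energy for holomorphic}: each $C_i$ has energy $\mathcal{A}(\Gamma_i)$, each symplectization component contributes $\mathcal{A}(\Gamma^+) - \mathcal{A}(\Gamma^-) \geq 0$, and summing over the whole building telescopes to $\mathcal{A}(\Gamma)$. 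The tuple $(\Gamma_1, \ldots, \Gamma_p)$ then satisfies every condition in \cref{def:g tilde}, so the infimum defining $A$ is bounded by $\mathcal{A}(\Gamma)$, and taking the infimum over $\Gamma$ followed by the supremum over $J$ yields $A \leq B$.

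For $B \leq A$, I would do the converse construction: given a tuple $(\Gamma_1, \ldots, \Gamma_p)$ satisfying the conditions of \cref{def:g tilde} and curves $u_i \in \mathcal{M}^{J}_{X}(\Gamma_i)\p{<}{}{\mathcal{T}^{(k_i)}x}$, form $\Gamma$ by concatenating the $\Gamma_i$, so that $\#\Gamma = \sum \#\Gamma_i \leq \ell$ and $\mathcal{A}(\Gamma) = \sum \mathcal{A}(\Gamma_i)$, and assemble the $u_i$ into a single-level nodal building $F$ with no symplectization levels: take $F = u_1$ when $p = 1$, glue $u_1$ and $u_2$ directly by one node at $x$ when $p = 2$, and for $p \geq 3$ introduce a central constant component at $x$ joined by $p$ nodes to the $u_i$. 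In the last case the central component has Euler characteristic $2 - p \leq -1$ after removing its special points, satisfying the requirement in \cref{def:sft compactification}. In every case $F \in \overline{\mathcal{M}}^{J}_{X}(\Gamma)\p{<}{}{\mathcal{T}^{(k)}x}$: the graph is a star and hence a tree, there are no symplectization levels to worry about, and the tangency condition reduces to $\sum k_i \geq k$, which holds by hypothesis. This shows $\inf_{\Gamma} \mathcal{A}(\Gamma) \leq \mathcal{A}(\Gamma) = \sum \mathcal{A}(\Gamma_i)$, whence $B \leq A$.

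The main obstacle will be the bookkeeping around \cref{def:sft compactification} and \cref{rmk:compactifications with tangency}. In the $A \leq B$ direction, one must track how the tangency constraint distributes across components meeting at the limit marked point and verify carefully that symplectization levels neither absorb ``free'' tangency nor inflate puncture counts in a way that breaks $\sum \#\Gamma_i \leq \#\Gamma$; in the $B \leq A$ direction, one must check that the three parity cases of $p$ really exhaust the possibilities and that the star-shaped nodal configuration at $x$ is an admissible stable building, in particular handling the borderline case $p = 2$ where no constant component can be introduced.
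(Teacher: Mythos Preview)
Your proposal is correct and follows the same approach as the paper: both directions are obtained by passing between actual curves $u_i$ and nodal/building configurations in the compactified moduli space, with the puncture bound coming from the tree structure plus the maximum principle and the action bound from \cref{lem:action energy for holomorphic}.

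Two minor points where the paper differs from your write-up. In the $A \leq B$ direction, the paper explicitly splits into two cases according to whether the component $C$ carrying the marked point is nonconstant (then one takes $p=1$, $C_1=C$, $k_1=k$) or constant (then the $C_i$ are the adjacent nonconstant components of \cref{rmk:compactifications with tangency}); you jumped straight to the list $C_1,\ldots,C_p$, which tacitly assumes the constant case, though your argument applies verbatim to the nonconstant case once you set $p=1$. In the $B \leq A$ direction, you forgot to count the marked point among the special points of the ghost sphere: with $p$ nodes and one marked point the Euler characteristic after removal is $2-(p+1)=1-p$, which is already negative for $p=2$, so your separate handling of $p=2$ is unnecessary---and indeed the direct node-gluing you propose there is awkward because the marked point would sit on the node. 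The paper sidesteps all of this by simply citing \cref{rmk:compactifications with tangency} for the existence of the building with asymptotes $\Gamma=\Gamma_1\cup\cdots\cup\Gamma_p$.
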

\begin{proof}
    $(\geq)$: Let $\Gamma_1, \ldots, \Gamma_p$ and $k_1, \ldots, k_p$ be as in \eqref{eq:g tilde two definitions conditions}. We wish to show that there exists a tuple of Reeb orbits $\Gamma$ such that
    \begin{IEEEeqnarray*}{c+x*}
        \# \Gamma \leq \ell, \qquad \mathcal{A}(\Gamma) \leq \sum_{i=1}^{p} \mathcal{A}(\Gamma_i), \qquad \overline{\mathcal{M}}_X^J(\Gamma)\p{<}{}{\mathcal{T}^{(k)}x} \neq \varnothing.
    \end{IEEEeqnarray*}
    By \cref{rmk:compactifications with tangency}, the tuple $\Gamma = \Gamma_1 \cup \cdots \cup \Gamma_p$ is as desired.

    $(\leq)$: Let $\Gamma^+$ be a tuple of Reeb orbits such that $\# \Gamma^+ \leq \ell$ and $\overline{\mathcal{M}}^{J}_{X}(\Gamma^+)\p{<}{}{\mathcal{T}^{(k)}x} \neq \varnothing$. We wish to show that there exist tuples of Reeb orbits $\Gamma^-_1, \ldots, \Gamma^-_p$ and numbers $k_1, \ldots, k_p$ satisfying \eqref{eq:g tilde two definitions conditions} and
    \begin{IEEEeqnarray*}{c+x*}
        \sum_{i=1}^{p} \mathcal{A}(\Gamma_i) \leq \mathcal{A}(\Gamma).
    \end{IEEEeqnarray*}
    Choose $F = (F^1, \ldots, F^N) \in \overline{\mathcal{M}}^J_X(\Gamma^+)\p{<}{}{\mathcal{T}^{(k)}x}$ and let $C$ be the component of $F$ which inherits the constraint $\p{<}{}{\mathcal{T}^{(k)}x}$.

    We prove the result in the case where $C$ is nonconstant. In this case, $C \in \mathcal{M}^J_X(\Gamma^-)\p{<}{}{\mathcal{T}^{(k)}x}$ for some tuple of Reeb orbits $\Gamma^-$. By \cref{lem:action energy for holomorphic}, $\mathcal{A}(\Gamma^-) \leq \mathcal{A}(\Gamma^+)$. We show that $\# \Gamma^- \leq \# \Gamma^+ \leq \ell$. Let $\mathbf{n}$ be the set of nodal points of $C$. Since the graph of $F$ is a tree, for every $\gamma \in \Gamma^+$ there exists a unique $f(\gamma) \in \Gamma^- \cup \mathbf{n}$ such that the subtree of $F$ emanating from $C$ at $f(\gamma)$ is positively asymptotic to $\gamma$. By the maximum principle (\cref{thm:maximum principle holomorphic}), $f \colon \Gamma^+ \longrightarrow \Gamma^- \cup \mathbf{n}$ is surjective, and therefore $\# \Gamma^- \leq \# \Gamma^+ \leq \ell$.

    We prove the result in the case where $C$ is constant. Let $C_1, \ldots, C_p$ be the nonconstant components near $C$ as in \cref{rmk:compactifications with tangency}. There exist tuples of Reeb orbits $\Gamma_1^-, \ldots, \Gamma_p^-$ and $k_1, \ldots, k_p \in \Z_{\geq 1}$ such that
    \begin{IEEEeqnarray*}{c+x*}
        \sum_{i=1}^{p} \mathcal{A}(\Gamma_i^-) \leq \mathcal{A}(\Gamma^+), \qquad \sum_{i=1}^{p} k_i \geq k, \qquad C_i \in \mathcal{M}^J_X(\Gamma_i^-)\p{<}{}{\mathcal{T}^{(k_i)}x} \neq \varnothing.
    \end{IEEEeqnarray*}
    By a reasoning similar to the previous case, $\sum_{i=1}^{p} \# \Gamma_i^- \leq \# \Gamma^+ \leq \ell$.
\end{proof}

\begin{remark}
    \phantomsection\label{cor:g tilde 1}
    If $(X, \lambda)$ is a nondegenerate Liouville domain, $k \in \Z_{\geq 1}$, $x \in \itr X$ and $D$ is a symplectic divisor through $x$, then
    \begin{IEEEeqnarray*}{c+x*}
        \tilde{\mathfrak{g}}^{\leq 1}_k(X) = \sup_{J \in \mathcal{J}(X,D)} \mathop{\inf\vphantom{\sup}}_{\gamma} \mathcal{A}(\gamma),
    \end{IEEEeqnarray*}
    where the infimum is over Reeb orbits $\gamma$ such that $\mathcal{M}^J_X(\gamma)\p{<}{}{\mathcal{T}^{(k)}x} \neq \varnothing$.
\end{remark}

\begin{theorem}[{\cite[Theorem 3.3.2]{mcduffSymplecticCapacitiesUnperturbed2022}}]
    \label{thm:properties of g tilde}
    The functions $\tilde{\mathfrak{g}}^{\leq \ell}_k$ are independent of the choices of $x$ and $D$ and satisfy the following properties, for all nondegenerate Liouville domains $(X,\lambda_X)$ and $(Y,\lambda_Y)$ of the same dimension:
    \begin{description}
        \item[(Monotonicity)] If $X \longrightarrow Y$ is a generalized Liouville embedding then $\tilde{\mathfrak{g}}^{\leq \ell}_k(X) \leq \tilde{\mathfrak{g}}^{\leq \ell}_k(Y)$.
        \item[(Conformality)] If $\alpha > 0$ then $\tilde{\mathfrak{g}}^{\leq \ell}_k(X, \alpha \lambda_X) = \alpha \, \tilde{\mathfrak{g}}^{\leq \ell}_k(X, \lambda_X)$.
        \item[(Nondecreasing)] $\tilde{\mathfrak{g}}^{\leq \ell}_1(X) \leq \tilde{\mathfrak{g}}^{\leq \ell}_{2}(X) \leq \cdots \leq +\infty$.
    \end{description}
\end{theorem}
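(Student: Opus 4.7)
\textbf{Conformality and nondecreasing properties.} These are essentially formal. For conformality, observe that $J$ is $\omega_X$-compatible if and only if it is $\alpha\omega_X$-compatible, so $\mathcal{J}(X,D)$ is unchanged by rescaling $\lambda_X \mapsto \alpha\lambda_X$. The Reeb vector field of $\alpha \lambda_X|_{\partial X}$ is $\alpha^{-1}$ times that of $\lambda_X|_{\partial X}$, inducing a bijection between Reeb orbits under which periods (hence actions) scale by $\alpha$. The moduli spaces $\mathcal{M}^J_X(\Gamma)\p{<}{}{\mathcal{T}^{(k)}x}$ and the conditions in \eqref{eq:g tilde two definitions conditions} only depend on $J$ and on the underlying smooth objects, so they are preserved; only the action rescales. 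For the nondecreasing property, note that the condition $\sum k_i \geq k$ becomes weaker as $k$ decreases, so the set of tuples valid at level $k$ contains the one at level $k+1$. An infimum over a larger set is no larger, and this persists under $\sup_J$.

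\textbf{Independence of $(x,D)$.} The set of pairs (point in $\itr X$, symplectic divisor germ) is path-connected. Given $(x_0,D_0)$ and $(x_1,D_1)$, I would construct a compactly supported Hamiltonian diffeomorphism $\phi$ of $\hat X$ with $\phi(x_0)=x_1$ and $\phi_*D_0 = D_1$, using a standard isotopy argument (move the point via a Hamiltonian flow, then rotate the divisor via a further compactly supported flow). Since $\phi$ is a symplectomorphism fixing $\hat\omega$ and a neighborhood of $\partial X$ pointwise, pullback by $\phi$ gives bijections between $\mathcal{J}(X,D_0)$ and $\mathcal{J}(X,D_1)$ and between the relevant moduli spaces, preserving asymptotic Reeb orbits and their actions. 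The two capacities therefore agree.

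\textbf{Monotonicity.} Let $\varphi\colon (X,\lambda_X)\to (Y,\lambda_Y)$ be a $0$-codimensional generalized Liouville embedding. Using \cref{lem:exists deformed form} (whose proof only modifies the Liouville form in the interior of $Y$ without changing $\edv\lambda_Y$ or $\lambda_Y$ near $\partial Y$) together with an analogue of \cref{lem:sh indep of potential} for the curve-theoretic quantities (the moduli spaces and actions depend only on $\omega_Y$ and on $\lambda_Y|_{\partial Y}$), I may assume $\varphi$ is a strict Liouville embedding. Pick $x_X\in\itr X$, a divisor $D_X$ through $x_X$, and set $x \coloneqq \varphi(x_X)$, $D \coloneqq \varphi_* D_X$. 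Given $J_X\in \mathcal{J}(X,D_X)$, I would construct a sequence $J_\nu\in \mathcal{J}(Y,D)$ obtained by SFT neck stretching along $\partial\varphi(X)\subset\hat Y$, with the $X$-side almost complex structure equal to $J_X$. For each $\nu$ and each $\epsilon>0$, the definition of $\tilde{\mathfrak g}^{\leq\ell}_k(Y)$ as a supremum provides a tuple $(\Gamma^Y_1,\ldots,\Gamma^Y_p,k_1,\ldots,k_p)$ satisfying \eqref{eq:g tilde two definitions conditions} for $J_\nu$ with total action at most $\tilde{\mathfrak g}^{\leq\ell}_k(Y)+\epsilon$. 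Choosing representative curves and passing to the SFT limit $\nu\to\infty$ (using the compactification described in \cref{rmk:compactifications with tangency}), I obtain a holomorphic building in which the component carrying the tangency constraint lies in $\hat X$ and has $\leq\ell$ positive punctures on $\partial X$ with total action bounded by $\tilde{\mathfrak g}^{\leq\ell}_k(Y)+\epsilon$. This yields a valid tuple for the infimum defining $\tilde{\mathfrak g}^{\leq\ell}_k(X)$ with respect to $J_X$, giving $\tilde{\mathfrak g}^{\leq\ell}_k(X)\leq \tilde{\mathfrak g}^{\leq\ell}_k(Y)+\epsilon$; take $\epsilon\to 0$ and then $\sup$ over $J_X$.

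\textbf{Main obstacle.} The delicate step is monotonicity, specifically the SFT neck-stretching argument. One must simultaneously control: (i) the tangency constraint at $x$, which has to be inherited by exactly one component of the limit building and must in turn land on a piece over $\hat X$; (ii) the total number of positive punctures on $\partial X$, which must remain $\leq\ell$ in the limit; and (iii) the sum of local tangency orders $k_i$ on neighboring components, which must still total $\geq k$ after splitting. All three are addressed by the analysis underlying \cref{rmk:compactifications with tangency}, but verifying each in the generalized-Liouville setting (rather than assuming $\varphi$ is strict from the start) is the main technical point.
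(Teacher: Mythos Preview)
The paper does not prove this theorem; it is stated with a citation to \cite[Theorem 3.3.2]{mcduffSymplecticCapacitiesUnperturbed2022} and no proof is given. Your sketch is the standard approach (and is essentially the argument in McDuff--Siegel): conformality and the nondecreasing property are formal, independence of $(x,D)$ comes from a compactly supported Hamiltonian isotopy, and monotonicity is proved by SFT neck stretching along $\partial\varphi(X)$.

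One point in your monotonicity sketch deserves sharpening. After neck stretching a curve $u_i \in \mathcal{M}^{J_\nu}_Y(\Gamma_i^Y)\p{<}{}{\mathcal{T}^{(k_i)}x}$, the bottom-level pieces in $\hat X$ carrying (pieces of) the tangency constraint need not form a single connected component; you may get several components $C_{i,1},\ldots,C_{i,q_i}$ with tangency orders summing to at least $k_i$ (exactly the mechanism in \cref{rmk:compactifications with tangency}). The bound $\sum_{i}\#\Gamma_i^X \leq \sum_i \#\Gamma_i^Y \leq \ell$ on the total number of positive punctures of these bottom-level pieces is not automatic: it follows from the tree structure of the building together with the maximum principle, by the same surjectivity-of-the-tracing-map argument used in the proof of \cref{thm:g tilde two definitions}. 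You should invoke that explicitly rather than asserting the bound. The action bound for the $\partial X$-asymptotes follows from \cref{lem:action energy for holomorphic} applied level by level. With this addition your outline is complete.
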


We now state a result comparing the McDuff--Siegel capacities and the Gutt--Hutchings capacities. We will later apply this result to show that $c_L(X_{\Omega}) = \delta_\Omega$ for every $4$-dimensional convex toric domain $X_{\Omega}$ (\cref{lem:computation of cl}).

\begin{proposition}[{\cite[Proposition 5.6.1]{mcduffSymplecticCapacitiesUnperturbed2022}}]
    \label{prp:g tilde and cgh}
    If $X_{\Omega}$ is a $4$-dimensional convex toric domain then
    \begin{IEEEeqnarray*}{c+x*}
        \tilde{\mathfrak{g}}^{\leq 1}_k(X_\Omega) = \cgh{k}(X_\Omega).
    \end{IEEEeqnarray*}
\end{proposition}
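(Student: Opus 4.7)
The plan is to prove the two inequalities $\tilde{\mathfrak{g}}^{\leq 1}_k(X_\Omega) \leq \cgh{k}(X_\Omega)$ and $\tilde{\mathfrak{g}}^{\leq 1}_k(X_\Omega) \geq \cgh{k}(X_\Omega)$ separately, using a combinatorial description of $\cgh{k}$ for convex toric domains together with index computations and neck-stretching for curves carrying tangency constraints. The main combinatorial input is from \cite{guttSymplecticCapacitiesPositive2018}: for a $4$-dimensional convex toric domain, $\cgh{k}(X_\Omega)$ equals the minimum symplectic action among closed Reeb orbits of $\partial X_\Omega$ whose Conley--Zehnder index (with respect to a trivialization extending over a disk in $X_\Omega$) equals $2k + 1$.

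For the inequality $\tilde{\mathfrak{g}}^{\leq 1}_k(X_\Omega) \geq \cgh{k}(X_\Omega)$ I would fix a generic $J \in \mathcal{J}(X_\Omega, D)$ and a Reeb orbit $\gamma$ such that $\mathcal{M}^J_{X_\Omega}(\gamma)\p{<}{}{\mathcal{T}^{(k)}x} \neq \varnothing$, then show $\mathcal{A}(\gamma) \geq \cgh{k}(X_\Omega)$. For simple curves, \cref{thm:transversality with tangency} gives
\begin{equation*}
    0 \le \dim \mathcal{M}^J_{X_\Omega,s}(\gamma)\p{<}{}{\mathcal{T}^{(k)}x} = -1 + \conleyzehnder^{\tau}(\gamma) - 2k,
\end{equation*}
using $n = 2$ and $c_1(TX_\Omega)|_{\pi_2} = 0$ (star-shaped). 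Hence $\conleyzehnder^{\tau}(\gamma) \geq 2k + 1$. The non-simple case reduces to the simple case by the multiple-cover argument of \cref{lem:punctures and tangency}: passing to the underlying simple curve controls the punctures and, combined with the monotonicity of the Conley--Zehnder index under iteration of Reeb orbits on a convex toric $\partial X_\Omega$, retains the lower bound $\mathcal{A}(\gamma) \geq \cgh{k}(X_\Omega)$ through the Gutt--Hutchings formula.

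For the inequality $\tilde{\mathfrak{g}}^{\leq 1}_k(X_\Omega) \leq \cgh{k}(X_\Omega)$, fix $\varepsilon > 0$ and $J \in \mathcal{J}(X_\Omega, D)$. The plan is to build, via neck stretching along a small embedded ellipsoid, a $J$-holomorphic plane in $X_\Omega$ realizing the required tangency and asymptotic to a Reeb orbit of action at most $\cgh{k}(X_\Omega) + \varepsilon$. Concretely, using the Gutt--Hutchings formula, choose $\varphi \colon E(a_1, a_2) \hookrightarrow X_\Omega$ with $a_1 / a_2$ irrational such that the minimal-action Reeb orbit on $\partial E$ has action at most $\cgh{k}(X_\Omega) + \varepsilon$. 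Inside $E$ there is a distinguished family of regular holomorphic planes asymptotic to this short orbit and tangent to order $k$ to a divisor through a chosen interior point (the same curves analyzed in \cref{sec:augmentation map of an ellipsoid} when proving that the augmentation map of an ellipsoid is an isomorphism). Transport these curves into $X_\Omega$ via $\varphi$ and apply SFT neck stretching along $\partial(\varphi(E))$; the SFT compactness theorem produces a limit building whose top level contains a component carrying the tangency $\p{<}{}{\mathcal{T}^{(k)}x}$ (by \cref{rmk:compactifications with tangency}). By \cref{lem:action energy for holomorphic} the action of the positive asymptote of that top component is bounded by the total action of the initial curves, i.e. by $\cgh{k}(X_\Omega) + \varepsilon$.

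The main obstacle is controlling the SFT degeneration in the second direction: one must ensure that the limit building contains a top-level component in $X_\Omega$ with \emph{a single} positive puncture still carrying the order-$k$ tangency constraint, rather than splitting the constraint among several planes whose combined action would exceed $\cgh{k}(X_\Omega)$ in a way we cannot control. Ruling out such bad degenerations uses the Fredholm index bookkeeping of the first direction applied to each putative component of the limit, together with positivity of intersections in dimension $4$, which constrains how the distinguished component can share punctures with neighboring curves in the building.
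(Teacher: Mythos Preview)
The thesis does not prove this proposition; it is quoted verbatim from \cite[Proposition~5.6.1]{mcduffSymplecticCapacitiesUnperturbed2022} and used as a black box in the proof of \cref{lem:computation of cl}. So there is no ``paper's own proof'' to compare against, and your sketch has to be judged on its own merits against the McDuff--Siegel argument.

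Your direction $\tilde{\mathfrak{g}}^{\leq 1}_k \geq \cgh{k}$ is in the right spirit: the index formula forces $\conleyzehnder^\tau(\gamma) \geq 2k+1$ for simple curves, and on a perturbed convex toric boundary the Gutt--Hutchings description of $\cgh{k}$ is exactly the minimal action among orbits with that index. The weak link is your handling of multiple covers. Appealing to \cref{lem:punctures and tangency} is not appropriate here (that lemma counts punctures, not index), and ``monotonicity of Conley--Zehnder index under iteration'' is not a general fact; on a convex toric domain one has to use the explicit combinatorics of the elliptic orbits to see that $\conleyzehnder(\gamma^d) \geq 2dk'+1$ when the simple curve underlying an orbit of tangency order $k' \geq k/d$ has index $\geq 2k'+1$. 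This can be made to work, but it is a genuine computation specific to convex toric domains, not a formal step.

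The direction $\tilde{\mathfrak{g}}^{\leq 1}_k \leq \cgh{k}$ is where your proposal has a real gap. Neck stretching does not \emph{produce} curves: it takes curves in $\hat{X}_\Omega$ and breaks them into buildings. The curves in $\hat{E}$ that you construct are asymptotic to Reeb orbits of $\partial E$, not of $\partial X_\Omega$, and there is no ``transport via $\varphi$'' that turns them into curves in $\hat{X}_\Omega$. What you need instead is a \emph{gluing} or cobordism argument: given the rigid plane in $\hat{E}$ carrying the tangency, one must glue it to an index-zero cylinder in the cobordism $\widehat{X_\Omega \setminus \varphi(E)}$ connecting the short ellipsoid orbit to a Reeb orbit of $\partial X_\Omega$ with the correct index. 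Exhibiting such a cylinder and carrying out the gluing (or, equivalently, the non-vanishing of the relevant relative invariant) is precisely the $4$-dimensional input that McDuff--Siegel supply; it relies on positivity of intersections and automatic transversality in dimension four and is the substantive part of their proof.
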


Finally, we state two stabilization results which we will use in \cref{sec:augmentation map of an ellipsoid}.

\begin{lemma}[{\cite[Lemma 3.6.2]{mcduffSymplecticCapacitiesUnperturbed2022}}]
    \label{lem:stabilization 1}
    Let $(X, \lambda)$ be a Liouville domain. For any $c, \varepsilon \in \R_{> 0}$, there is a subdomain with smooth boundary $\tilde{X} \subset X \times B^2(c)$ such that:
    \begin{enumerate}
        \item The Liouville vector field $Z_{\tilde{X}} = Z_{X} + Z_{B^2(c)}$ is outwardly transverse along $\partial \tilde{X}$.
        \item $X \times \{0\} \subset \tilde{X}$ and the Reeb vector field of $\partial \tilde{X}$ is tangent to $\partial X \times \{0\}$.
        \item Any Reeb orbit of the contact form $(\lambda + \lambda_0)|_{\partial \tilde{X}}$ (where $\lambda_0 = 1/2 (x \edv y - y \edv x)$) with action less than $c - \varepsilon$ is entirely contained in $\partial X \times \{0\}$ and has normal Conley--Zehnder index equal to $1$.
    \end{enumerate}
\end{lemma}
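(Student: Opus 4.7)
The plan is to construct $\tilde X$ by smoothing the corner of the product $X \times B^2(c)$ along $\partial X \times \partial B^2(c)$, while keeping $\partial X \times \{0\}$ inside the resulting smooth boundary. I work in the completion $\hat X$ with collar coordinate $s \leq 0$ near $\partial X$, so that the product Liouville form on $\hat X \times \C$ reads $\lambda = e^{s}\alpha + (\rho/(2\pi))\edv\theta$ in coordinates $(s,p,\rho,\theta)$, where $\alpha = \lambda|_{\partial X}$, $\rho = \pi|z|^2$, $\theta = \arg z$. I define $\tilde X$ in the collar region by the inequality $s + \phi(\rho) \leq 0$ for a smooth function $\phi \colon [0,c) \to [0,\infty)$ with $\phi(0) = 0$, $\phi'(0) = \delta$ for a fixed parameter $\delta \in (0, 1/(c-\varepsilon))$, $\phi'(\rho) \leq 1/(c-\varepsilon-\rho)$ on $[0,c-\varepsilon]$, and $\phi(\rho) \to \infty$ as $\rho \to c$. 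The remainder of $\partial \tilde X$ consists of the flat piece $X' \times \partial B^2(c)$ for an appropriate $X' \subset X$; to match the curved and flat pieces smoothly I use a smoothed maximum such as $F_\beta = \beta^{-1}\log(e^{\beta F_1}+e^{\beta F_2})$ with $F_1 = s + \phi(\rho)$ and $F_2 = \rho - c$. Outward transversality (condition (1)) is then automatic, since the Liouville vector field $Z = \partial_s + Z_\C$ satisfies both $ZF_1 = 1 + \rho\phi'(\rho) > 0$ and $ZF_2 = \rho > 0$, and $ZF_\beta$ is a convex combination of these.

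For condition (2), $X \times \{0\} \subset \tilde X$ since $\phi(0) = 0$, and the Reeb vector field on the curved piece (a rescaling of the Hamiltonian vector field of $F_1$) computes to
\[
    R = \frac{e^{\phi(\rho)}}{1 + \rho \phi'(\rho)} R_\alpha + \frac{2\pi \phi'(\rho)}{1 + \rho \phi'(\rho)} \partial_\theta,
\]
which reduces at $\rho = 0$ to $R_\alpha$, since $\partial_\theta = -v\partial_u + u\partial_v$ vanishes at the origin of $\C$; hence $R$ is tangent to $\partial X \times \{0\}$. For condition (3), note that $\dot{\rho} = 0$ along Reeb trajectories; a closed orbit at $\rho_0 > 0$ on the curved piece requires the resonance $m = \ell T_\gamma \phi'(\rho_0)e^{-\phi(\rho_0)} \in \Z_{\geq 1}$ with $\gamma$ an $\alpha$-Reeb orbit of period $T_\gamma$ and cover multiplicity $\ell \geq 1$, forcing $\ell T_\gamma \geq e^{\phi(\rho_0)}/\phi'(\rho_0)$ and hence
\[
    T = \ell T_\gamma (1 + \rho_0\phi'(\rho_0))e^{-\phi(\rho_0)} \geq \rho_0 + 1/\phi'(\rho_0) \geq c - \varepsilon
\]
by the bound on $\phi'$. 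On the flat piece, $R = (2\pi/c)\partial_\theta$, giving closed orbits of period $\geq c > c - \varepsilon$. So every orbit of action less than $c-\varepsilon$ lies in $\partial X \times \{0\}$. For such an orbit of action $T_\gamma$, the linearized Reeb flow in the normal $\C$-direction is, to first order in the disk coordinates, rotation by $2\pi\delta T_\gamma \in (0,2\pi)$, which gives normal Conley--Zehnder index $1$; the same argument applies to $k$-fold covers as long as $kT_\gamma < c - \varepsilon$, since $k\delta T_\gamma < 1$.

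The main technical difficulty is the construction of a $C^\infty$ hypersurface across the corner, since the tangent directions of the curved piece (vertical, $\partial_\rho$ in the $(s,\rho)$-plane) and of the flat piece (horizontal, $\partial_s$) are perpendicular, making the union non-$C^1$. The smoothed-maximum construction resolves this at the cost of introducing Reeb dynamics in a small neighborhood of the corner; by choosing this neighborhood to lie in the region $\rho > c - \varepsilon/2$, any orbits there have period close to the flat-piece period $c$ and hence still exceed $c - \varepsilon$. A secondary subtlety is that the first-order linearized normal Reeb flow at an orbit in $\partial X \times \{0\}$ is only a nondegenerate rotation if $\phi'(0) > 0$, which is the reason for choosing $\delta > 0$; the bound $\phi'(\rho) \leq 1/(c-\varepsilon-\rho)$ at $\rho = 0$ then reads $\delta \leq 1/(c-\varepsilon)$, and the strict inequality $\delta < 1/(c-\varepsilon)$ leaves room to arrange both the action bound and the normal Conley--Zehnder index simultaneously.
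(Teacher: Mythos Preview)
The paper does not prove this lemma; it is quoted from \cite[Lemma 3.6.2]{mcduffSymplecticCapacitiesUnperturbed2022} without argument, so there is no proof here to compare against.

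Your approach is the standard construction, and the substantive computations---the Reeb vector field on the curved piece, the period bound $T \geq \rho_0 + 1/\phi'(\rho_0)$ for orbits at level $\rho_0 > 0$, and the normal linearized flow as rotation by $2\pi\delta T_\gamma$ giving normal Conley--Zehnder index $1$ when $\delta T_\gamma \in (0,1)$---are all correct. One detail to tighten: requiring $\phi(\rho) \to \infty$ as $\rho \to c$ is incompatible with the collar coordinate $s$ being bounded on the compact domain $X$, so the curved piece $\{s = -\phi(\rho)\}$ cannot extend over all of $[0,c)$. A clean repair is to take $\phi$ bounded on $[0,c]$, extend $s$ to a smooth function on all of $X$ that is very negative away from the collar, and set $\tilde{X} = \{F_\beta \leq 0\}$ globally; the boundary is then everywhere a graph $s = s(\rho)$, your Reeb formula holds with $-s'(\rho)$ in place of $\phi'(\rho)$, and in the smoothing region (which you can confine to $\rho > c - \varepsilon$) the bound $T \geq \rho_0 > c - \varepsilon$ already suffices. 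Your separate treatment of the flat piece is then subsumed by this same estimate.
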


\begin{lemma}[{\cite[Lemma 3.6.3]{mcduffSymplecticCapacitiesUnperturbed2022}}]
    \label{lem:stabilization 2}
    Let $X$ be a Liouville domain, and let $\tilde{X}$ be a smoothing of $X \times B^2(c)$ as in \cref{lem:stabilization 1}.
    \begin{enumerate}
        \item Let $J \in \mathcal{J}(\tilde{X})$ be a cylindrical almost complex structure on the completion of $\tilde{X}$ for which $\hat{X} \times \{0\}$ is $J$-holomorphic. Let $C$ be an asymptotically cylindrical $J$-holomorphic curve in $\hat{X}$, all of whose asymptotic Reeb orbits are nondegenerate and lie in $\partial X \times \{0\}$ with normal Conley--Zehnder index $1$. Then $C$ is either disjoint from the slice $\hat{X} \times \{0\}$ or entirely contained in it.
        \item Let $J \in \mathcal{J}(\partial \tilde{X})$ be a cylindrical almost complex structure on the symplectization of $\partial \tilde{X}$ for which $\R \times \partial X \times \{0\}$ is $J$-holomorphic. Let $C$ be an asymptotically cylindrical $J$-holomorphic curve in $\R \times \partial \tilde{X}$, all of whose asymptotic Reeb orbits are nondegenerate and lie in $\partial X \times \{0\}$ with normal Conley--Zehnder index $1$. Then $C$ is either disjoint from the slice $\R \times \partial X \times \{0\}$ or entirely contained in it. Moreover, only the latter is possible if $C$ has at least one negative puncture. 
    \end{enumerate}
\end{lemma}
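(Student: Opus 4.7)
The plan is to prove both statements using positivity of intersection for $J$-holomorphic curves together with an asymptotic intersection number computation in the style of Siefring. Since $\hat{X} \times \{0\}$ (respectively $\R \times \partial X \times \{0\}$) is $J$-holomorphic by hypothesis, the standard positivity of intersection theorem guarantees that if $C$ is not contained in the slice, then $C$ meets the slice only in isolated points, each contributing a strictly positive integer to the algebraic intersection number. The strategy is to show that this algebraic intersection number is in fact zero, forcing $C$ to be disjoint from the slice.

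To compute the intersection number, I would use a relative/asymptotic version of the intersection pairing. Since $C$ is asymptotically cylindrical with asymptotic orbits lying in $\partial X \times \{0\}$, one cannot naively take a homological intersection; instead one fixes a trivialization $\tau$ of the normal bundle $\nu$ of $\partial X \times \{0\}$ inside $\partial \tilde X$ along each asymptotic orbit, and defines $C \cdot (\hat{X} \times \{0\})$ as a signed count of interior intersections plus asymptotic correction terms coming from the winding of the ends of $C$ in $\nu$. By Siefring's asymptotic formula, near each puncture the projection of $C$ to the normal direction is, to leading order, an eigenfunction of the asymptotic operator associated to the transverse linearized Reeb flow, and the winding number of this eigenfunction is controlled by the normal Conley–Zehnder index of the orbit. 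The precise identity to invoke is the extremal winding bound: positive punctures produce winding $\leq \alpha^{-}(\mu_{\mathrm{CZ}}^{\perp})$ and negative punctures produce winding $\geq \alpha^{+}(\mu_{\mathrm{CZ}}^{\perp})$, where for $\mu_{\mathrm{CZ}}^{\perp}=1$ both extremal winding numbers are zero.

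Plugging $\mu_{\mathrm{CZ}}^{\perp}=1$ into these bounds makes every asymptotic correction term vanish, so the relative intersection number reduces to the pure interior count. A separate, global topological computation, done by deforming the cycle $[\hat{X} \times \{0\}]$ inside $\hat{\tilde X}$ off of itself via the outward Liouville flow in the $B^2(c)$ factor, shows that this total intersection number equals zero: the deformed slice is disjoint from $C$ in the region where $C$ has its ends (using \cref{lem:stabilization 1}(3) to keep the asymptotic orbits isolated), and the homology class of the slice is trivial in a neighborhood filtered by action less than $c - \varepsilon$. Combined with positivity of intersection, this forces no intersection points to occur, proving part~(1). For part~(2), the same argument applies verbatim in the symplectization to yield the dichotomy ``disjoint or contained''.

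The main obstacle, and the ``moreover'' clause in part~(2), is the negative puncture statement. Here I would argue by contradiction: if $C$ has a negative puncture at some orbit $\gamma \subset \partial X \times \{0\}$ and $C$ is disjoint from $\R \times \partial X \times \{0\}$, then the leading-order Siefring asymptotic expansion at that negative end gives a nonzero normal eigenfunction whose winding is bounded below by $\alpha^{+}(1)=0$ and whose sign (negative puncture, normal $\mu_{\mathrm{CZ}}=1$) forces a strictly positive local intersection contribution accumulating as $s \to -\infty$; this local positivity cannot be cancelled, contradicting the vanishing of the total intersection number established above. The delicate technical point throughout is keeping the sign conventions for positive versus negative punctures consistent with the orientation conventions in the definition of the normal Conley–Zehnder index, and verifying that the extremal winding identities genuinely give equality (not just inequality) when $\mu_{\mathrm{CZ}}^{\perp}=1$; this is where one must appeal carefully to the asymptotic analysis of \cite{cieliebakSymplecticHypersurfacesTransversality2007} or the analogous results in Siefring's intersection theory.
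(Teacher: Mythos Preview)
The paper does not prove this lemma; it is quoted from \cite[Lemma 3.6.3]{mcduffSymplecticCapacitiesUnperturbed2022} without proof and used as a black box (in \cref{lem:moduli spaces of ellipsoids are all equal}). Your overall strategy---project $C$ to the normal bundle of the slice, use positivity of intersections with the $J$-holomorphic slice, and bound the asymptotic windings via the normal Conley--Zehnder index---is indeed the standard argument and is what McDuff--Siegel do.

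There is, however, one genuine numerical error and one unnecessary detour. The error: for $\conleyzehnder^{\perp}=1$ one has $\alpha^-=0$ but $\alpha^+=1$, not $0$; from $\conleyzehnder=2\alpha^-+p=1$ one reads off $\alpha^-=0$, parity $p=1$, hence $\alpha^+=\alpha^-+p=1$. This is precisely what drives the ``moreover'' clause, so your argument for that part does not go through as written. The detour: your homological deformation argument (pushing the slice off itself by the $B^2(c)$-Liouville flow) is vague for a noncompact curve and is not needed. The normal bundle of $\hat{X}\times\{0\}$ is canonically trivialized by the $\C$-factor, so $c_1^\tau(\nu)=0$, and since the slice is $J$-complex the normal projection $\xi$ of $C$ satisfies a CR-type equation; the zero-count identity then gives directly
\[
0 \;\le\; Z(\xi) \;=\; c_1^\tau(\nu)+\sum_{z\in\mathbf{z}^+}\operatorname{wind}^\tau_z(\xi)-\sum_{z\in\mathbf{z}^-}\operatorname{wind}^\tau_z(\xi) \;\le\; 0 + \sum_{z\in\mathbf{z}^+}\alpha^- - \sum_{z\in\mathbf{z}^-}\alpha^+ \;=\; -\,\#\mathbf{z}^-.
\]
In part (1) there are no negative punctures (a Liouville domain has empty negative boundary), so $Z(\xi)=0$ and $C$ is disjoint from the slice if not contained in it. In part (2) the same inequality forces both $Z(\xi)=0$ and $\#\mathbf{z}^-=0$ whenever $C$ is not contained in the slice, which is exactly the ``moreover'' statement.
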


\section{Computations not requiring contact homology}


We now state and prove one of our main theorems, which is going to be a key step in proving that $c_L(X_{\Omega}) = \delta_{\Omega}$. The proof uses techniques similar to those used in the proof of \cite[Theorem 1.1]{cieliebakPuncturedHolomorphicCurves2018}.

\begin{theorem}
    \label{thm:lagrangian vs g tilde}
    If $(X, \lambda)$ is a Liouville domain then 
    \begin{IEEEeqnarray*}{c+x*}
        c_L(X) \leq \inf_k^{} \frac{\tilde{\mathfrak{g}}_k^{\leq 1}(X)}{k}.
    \end{IEEEeqnarray*}
\end{theorem}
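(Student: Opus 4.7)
The plan is to follow the sketch given in the introduction: a neck-stretching argument combined with the tangency dimension counts from \cref{sec:moduli spaces of holomorphic curves} produces, for any embedded Lagrangian torus $L \subset \itr X$ and any $k \in \Z_{\geq 1}$, a disk with boundary on $L$ of small area. Write $a \coloneqq \tilde{\mathfrak{g}}_k^{\leq 1}(X)$. By \cref{lem:can prove ineqs for ndg} I can assume $X$ is nondegenerate, and by the definition of $c_L$ it then suffices to produce, for each $\varepsilon > 0$, a class $\sigma \in \pi_2(X,L)$ with $0 < \omega(\sigma) \leq a/k + \varepsilon$; taking the infimum over $k$ yields the theorem.

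First I would equip $L$ with a metric provided by \cref{lem:geodesics lemma CM abs}, so that all closed geodesics of length at most a fixed $\ell_0 > a$ are noncontractible, nondegenerate, and have Morse index at most $n - 1$. Identifying a Weinstein neighbourhood of $L$ with a codisk bundle $D^*_\delta L$ for some small $\delta > 0$, I fix a point $x$ in the interior of this neighbourhood together with a local symplectic divisor $D$ through $x$, and produce a generic sequence $(J_t)_{t \in [0,1)} \subset \mathcal{J}(X,D)$ performing SFT neck-stretching along $S^*_\delta L$. By \cref{cor:g tilde 1}, for each $t$ close to $1$ there is a Reeb orbit $\gamma_t$ of $\partial X$ and a curve $u_t \in \mathcal{M}^{J_t}_X(\gamma_t)\p{<}{}{\mathcal{T}^{(k)}x}$ of action at most $a + 1/t$; these curves have uniformly bounded $\tilde{\omega}$-energy by \cref{lem:action energy for holomorphic}, so SFT compactness (\cref{sec:sft compactness}) extracts a limiting holomorphic building $F = (F^1, \ldots, F^N)$, with $F^1$ living in the completion of $D^*_\delta L$, identified with $T^*L$.

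The core step is the tangency count. Let $C$ denote the component of $F^1$ inheriting the constraint $\p{<}{}{\mathcal{T}^{(k)}x}$. Thanks to the genericity of $(J_t)_t$ and \cref{thm:transversality with tangency}, the simple curve underlying $C$ is transversely cut out, and \cref{lem:punctures and tangency} then forces $C$ to have at least $k + 1$ punctures, each asymptotic to a closed geodesic on $L$ of length at most $\ell_0$. The tree of the building has a single positive asymptote at $\gamma_t$, so exactly one of these punctures is matched to the subtree of $F$ ascending to $\gamma_t$; the remaining $k$ punctures pair with the rest of $F$ to produce classes $\sigma_1, \ldots, \sigma_k \in \pi_2(X, L)$. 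Positivity of $\tilde{\omega}$-area on holomorphic pieces (\cref{lem:integrand of energy is well-defined,lem:holomorphic curves in symplectizations}) together with \cref{lem:action energy for holomorphic} applied to the building yield $0 \leq \sum_{i=1}^{k} \tilde{\omega}(\sigma_i) \leq a + 1/t$, with at least one $\sigma_i$ of strictly positive area since $C$ is nonconstant; pigeonholing delivers some $i$ with $\tilde{\omega}(\sigma_i) \leq (a + 1/t)/k$.

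The main obstacle, flagged already in the introduction, is the mismatch between the $\tilde{\omega}$-area just controlled and the $\omega$-area appearing in the definition of $c_L$: the two forms coincide outside $\R \times S^*_\delta L$ but differ along the neck. By \cref{lem:energy wrt different forms}, on this neck the ratio $\omega/\tilde{\omega}$ is comparable to $e^K/(e^K - 1)$, where $K$ is the length of the symplectization piece of the building and can be made arbitrarily large by taking $t$ close to $1$; this factor therefore approaches $1$ in the limit. Choosing first $\delta > 0$ small and then $t$ sufficiently close to $1$, I can arrange $\omega(\sigma_i) \leq \tilde{\omega}(\sigma_i) + \varepsilon \leq a/k + \varepsilon$, which gives the required disk. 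Taking $\varepsilon \to 0$ yields $A_{\min}(L) \leq a/k$, hence $c_L(X) \leq a/k$, and the infimum over $k$ concludes the proof.
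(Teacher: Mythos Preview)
Your overall strategy matches the paper's, but the quantitative setup is circular and contains a genuine misreading of \cref{lem:energy wrt different forms}. The constant $K$ there is \emph{not} ``the length of the symplectization piece of the building'' and has nothing to do with the neck-stretching parameter $t$: it measures how far the negative cylindrical end of the upper cobordism $X^+ = X \setminus \phi(D^*_\delta L)$ extends symplectically into $\hat X$, and that is fixed the moment you fix $\delta$. Concretely, since the full $D^*L = D^*_1 L$ embeds, choosing $\delta = e^{-K}$ gives exactly an extension by $K$. The disks $D_\mu$ to which the lemma is applied live in the \emph{limit} building, so the bound $E_{\hat\omega}(D_\mu) \le \tfrac{e^K}{e^K-1} E_{\tilde\omega}(D_\mu)$ involves this geometric $K$, not anything that improves as $t \to 1$. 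Hence the correct order of choices is
\[
\varepsilon \ \Longrightarrow\ K \ \Longrightarrow\ \delta = e^{-K} \ \Longrightarrow\ \ell_0 = a/\delta \ \Longrightarrow\ \text{metric via \cref{lem:geodesics lemma CM abs}} \ \Longrightarrow\ \text{neck stretch}.
\]
Your choice $\ell_0 > a$ is too small: a Reeb orbit on $S^*_\delta L$ of action $\le a$ corresponds to a geodesic of length $\le a/\delta$, not $\le a$, so without $\ell_0 = a/\delta$ you cannot invoke the Morse-index bound needed for \cref{lem:punctures and tangency}, nor the noncontractibility needed below.

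A second gap is the passage ``the remaining $k$ punctures pair with the rest of $F$ to produce classes $\sigma_i \in \pi_2(X,L)$''. This needs an argument. The paper shows that each subtree $C_\mu$ (for $\mu \ge 2$) must contain a holomorphic \emph{plane} lying in the top level $\hat X \setminus L$: a plane in $T^*L$ or in a symplectization layer would project to a filling disk of its asymptotic geodesic, contradicting noncontractibility, while the maximum principle prevents components from entering $\R_{\ge 0} \times \partial X$. Only after isolating such a plane can you convert it into a disk in $X$ with boundary on $L$ and apply \cref{lem:energy wrt different forms} to that plane alone. Gluing the whole subtree, as you suggest, does not directly yield a controlled $\omega$-area.
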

\begin{proof}
    By \cref{lem:can prove ineqs for ndg}, we may assume that $X$ is nondegenerate. Let $k \in \Z_{\geq 1}$ and $L \subset \itr X$ be an embedded Lagrangian torus. We wish to show that for every $\varepsilon > 0$ there exists $\sigma \in \pi_2(X,L)$ such that $0 < \omega(\sigma) \leq \tilde{\mathfrak{g}}_k^{\leq 1}(X) / k + \varepsilon$. Define
    \begin{IEEEeqnarray*}{rCls+x*}
        a      & \coloneqq & \tilde{\mathfrak{g}}_k^{\leq 1}(X), \\
        K_1    & \coloneqq & \ln(2), \\
        K_2    & \coloneqq & \ln(1 + a / \varepsilon k), \\
        K      & \coloneqq & \max \{K_1, K_2\}, \\
        \delta & \coloneqq & e^{-K}, \\
        \ell_0 & \coloneqq & a / \delta.
    \end{IEEEeqnarray*}

    By \cref{lem:geodesics lemma CM abs} and the Lagrangian neighbourhood theorem, there exists a Riemannian metric $g$ on $L$ and a symplectic embedding $\phi \colon D^*L \longrightarrow X$ such that $\phi(D^*L) \subset \itr X$, $\phi|_L = \id_L$ and such that if $\gamma$ is a closed geodesic in $L$ with length $\ell(\gamma) \leq \ell_0$ then $\gamma$ is noncontractible, nondegenerate and satisfies $0 \leq \morse(\gamma) \leq n - 1$.

    Let $D^*_{\delta} L$ be the codisk bundle of radius $\delta$. Notice that $\delta$ has been chosen in such a way that the symplectic embedding $\phi \colon D^* L \longrightarrow X$ can be seen as an embedding like that of \cref{lem:energy wrt different forms}. We will now use the notation of \cref{sec:sft compactness}. Define symplectic cobordisms%
    \begin{IEEEeqnarray*}{rCl}
        (X^+, \omega^+) & \coloneqq & (X \setminus \phi(D^*_{\delta} L), \omega), \\
        (X^-, \omega^-) & \coloneqq & (D^*_{\delta} L, \edv \lambda_{T^* L}),
    \end{IEEEeqnarray*}
    which have the common contact boundary 
    \begin{IEEEeqnarray*}{c+x*}
        (M, \alpha) \coloneqq (S^*_{\delta} L, \lambda_{T^* L}).
    \end{IEEEeqnarray*}
    Here, it is implicit that we are considering the restriction of the form $\lambda_{T^*L}$ on $T^* L$ to $D^*_{\delta} L$ or $S^*_{\delta} L$. Then, $(X,\omega) = (X^-, \omega^-) \circledcirc (X^+, \omega^+)$. 
    Recall that there are piecewise smooth $2$-forms $\tilde{\omega} \in \Omega^2(\hat{X})$ and $\tilde{\omega}^{\pm} \in \Omega^2(\hat{X}^{\pm})$ which are given as in \cref{def:energy of a asy cylindrical holomorphic curve}. Choose $x \in \itr \phi(D^*_{\delta} L)$ and let $D \subset \phi(D^*_{\delta} L)$ be a symplectic divisor through $x$. Choose also generic almost complex structures
    \begin{IEEEeqnarray*}{rCls+x*}
        J_M & \in & \mathcal{J}(M), \\
        J^+ & \in & \mathcal{J}_{J_M}(X^+), \\
        J^- & \in & \mathcal{J}^{J_M}(X^-, D),
    \end{IEEEeqnarray*}
    and denote by $J_{\partial X} \in \mathcal{J}(\partial X)$ the ``restriction'' of $J^+$ to $\R \times \partial X$. Let $(J_t)_{t} \subset \mathcal{J}(X, D)$ be the corresponding neck stretching family of almost complex structures. Since $a = \tilde{\mathfrak{g}}_k^{\leq 1}(X)$ and by \cref{cor:g tilde 1}, for every $t$ there exists a Reeb orbit $\gamma_t$ in $\partial X = \partial^+ X^+$ and a $J_t$-holomorphic curve $u_t \in \mathcal{M}_X^{J_t}(\gamma_t)\p{<}{}{\mathcal{T}^{(k)}x}$ such that $\mathcal{A}(\gamma_t) \leq a$. Since $\partial X$ has nondegenerate Reeb orbits, there are only finitely many Reeb orbits in $\partial X$ with action less than $a$. Therefore, possibly after passing to a subsequence, we may assume that $\gamma_t \eqqcolon \gamma_0$ is independent of $t$.

    The curves $u_t$ satisfy the energy bound $E_{\tilde{\omega}}(u_t) \leq a$. By the SFT compactness theorem, the sequence $(u_t)_{t}$ converges to a holomorphic building 
    \begin{IEEEeqnarray*}{c+x*}
        F = (F^1, \ldots, F^{L_0-1}, F^{L_0}, F^{{L_0}+1}, \ldots, F^N) \in \overline{\mathcal{M}}_X^{(J_t)_{t}}(\gamma_0)\p{<}{}{\mathcal{T}^{(k)}x},
    \end{IEEEeqnarray*}
    where
    \begin{IEEEeqnarray*}{rCls+x*}
        (X^{\nu}, \omega^\nu, \tilde{\omega}^{\nu}, J^{\nu})
        & \coloneqq & 
        \begin{cases}
            (T^* L               , \edv \lambda_{T^* L}             , \tilde{\omega}^-           , J^-)            & \text{if } \nu = 1    , \\
            (\R \times M         , \edv(e^r \alpha)                 , \edv \alpha                , J_M)            & \text{if } \nu = 2    , \ldots, {L_0} - 1, \\
            (\hat{X} \setminus L , \hat{\omega}                     , \tilde{\omega}^+           , J^+)            & \text{if } \nu = {L_0}    , \\
            (\R \times \partial X, \edv (e^r \lambda|_{\partial X}) , \edv \lambda|_{\partial X} , J_{\partial X}) & \text{if } \nu = {L_0} + 1, \ldots, N    , \\
        \end{cases} \\
        (X^*, \omega^*, \tilde{\omega}^*, J^*) & \coloneqq & \bigcoproduct_{\nu = 1}^N (X^{\nu}, \omega^\nu, \tilde{\omega}^{\nu}, J^{\nu}),
    \end{IEEEeqnarray*}
    and $F^{\nu}$ is a $J^\nu$-holomorphic curve in $X^{\nu}$ with asymptotic Reeb orbits $\Gamma^{\pm}_{\nu}$ (see \cref{fig:holomorphic building in the proof}). The holomorphic building $F$ satisfies the energy bound
    \begin{IEEEeqnarray}{c+x*}
        \plabel{eq:energy of holo building in proof}
        E_{\tilde{\omega}^*}(F) \coloneqq \sum_{\nu = 1}^{N} E_{\tilde{\omega}^{\nu}}(F^{\nu}) \leq a.
    \end{IEEEeqnarray}

    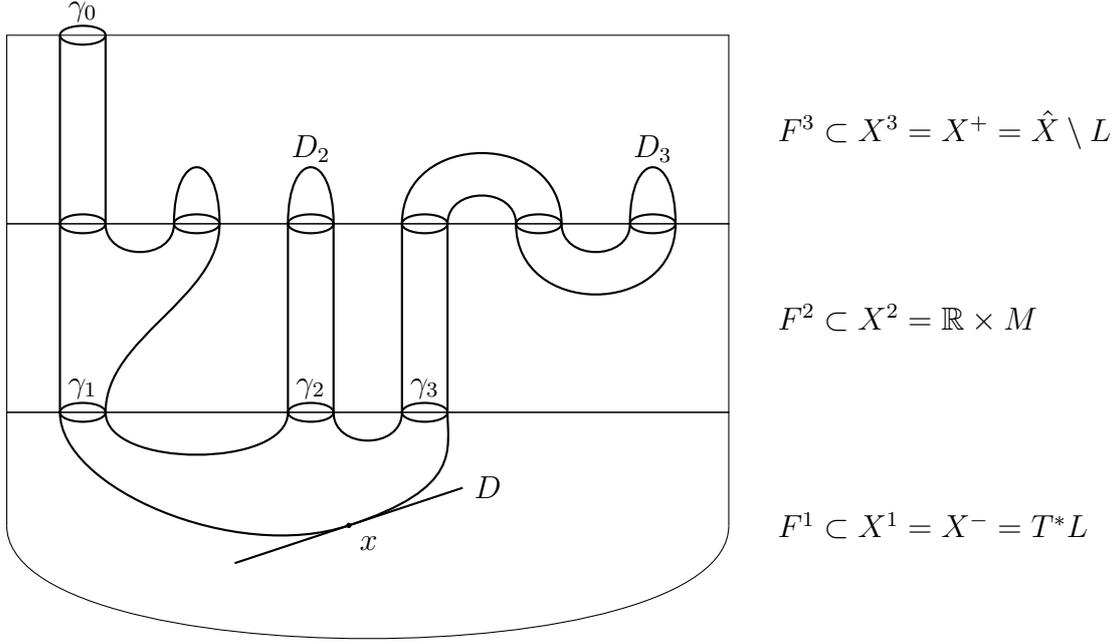
\begin{figure}[ht]
        \centering
        
        \begin{tikzpicture}
            [
                scale = 0.5,
                help/.style = {very thin, draw = black!50},
                curve/.style = {thick}
            ]
    
            \tikzmath{
                \rx = 0.6;
                \ry = 0.25;
            }
    
            \node[anchor=west] at (20, 13.5) {$F^3 \subset X^3 = X^+ = \hat{X} \setminus L$};
            \draw (0,6) rectangle (19,11);
    
            \node[anchor=west] at (20, 8.5) {$F^2 \subset X^2 = \R \times M$};
            \draw (0,11) rectangle (19,16);
    
            \node[anchor=west] at (20, 3) {$F^1 \subset X^1 = X^- = T^* L$};
            \draw (0,3) -- (0,6) -- (19,6) -- (19,3);
            \draw (0,3) .. controls (0,-1) and (19,-1) .. (19,3);
            
            \coordinate (G0) at ( 2,16);
            \coordinate (G1) at ( 2, 6);
            \coordinate (G2) at ( 8, 6);
            \coordinate (G3) at (11, 6);
            \coordinate (F1) at ( 2,11);
            \coordinate (F2) at ( 8,11);
            \coordinate (F3) at (11,11);
            \coordinate (F4) at ( 5,11);
            \coordinate (F5) at (14,11);
            \coordinate (F6) at (17,11);
            
            \coordinate (L) at (-\rx,0);
            \coordinate (R) at (+\rx,0);
    
            \coordinate (G0L) at ($ (G0) + (L) $);
            \coordinate (G1L) at ($ (G1) + (L) $);
            \coordinate (G2L) at ($ (G2) + (L) $);
            \coordinate (G3L) at ($ (G3) + (L) $);
            \coordinate (F1L) at ($ (F1) + (L) $);
            \coordinate (F2L) at ($ (F2) + (L) $);
            \coordinate (F3L) at ($ (F3) + (L) $);
            \coordinate (F4L) at ($ (F4) + (L) $);
            \coordinate (F5L) at ($ (F5) + (L) $);
            \coordinate (F6L) at ($ (F6) + (L) $);
    
            \coordinate (G0R) at ($ (G0) + (R) $);
            \coordinate (G1R) at ($ (G1) + (R) $);
            \coordinate (G2R) at ($ (G2) + (R) $);
            \coordinate (G3R) at ($ (G3) + (R) $);
            \coordinate (F1R) at ($ (F1) + (R) $);
            \coordinate (F2R) at ($ (F2) + (R) $);
            \coordinate (F3R) at ($ (F3) + (R) $);
            \coordinate (F4R) at ($ (F4) + (R) $);
            \coordinate (F5R) at ($ (F5) + (R) $);
            \coordinate (F6R) at ($ (F6) + (R) $);
    
            \coordinate (P) at (9,3);
            \coordinate (D) at (3,1);
    
            \draw[curve] (G0) ellipse [x radius = \rx, y radius = \ry] node[above = 1] {$\gamma_0$};
            \draw[curve] (G1) ellipse [x radius = \rx, y radius = \ry] node[above = 1] {$\gamma_1$};
            \draw[curve] (G2) ellipse [x radius = \rx, y radius = \ry] node[above = 1] {$\gamma_2$};
            \draw[curve] (G3) ellipse [x radius = \rx, y radius = \ry] node[above = 1] {$\gamma_3$};
            \draw[curve] (F1) ellipse [x radius = \rx, y radius = \ry];
            \draw[curve] (F2) ellipse [x radius = \rx, y radius = \ry];
            \draw[curve] (F3) ellipse [x radius = \rx, y radius = \ry];
            \draw[curve] (F4) ellipse [x radius = \rx, y radius = \ry];
            \draw[curve] (F5) ellipse [x radius = \rx, y radius = \ry];
            \draw[curve] (F6) ellipse [x radius = \rx, y radius = \ry];
    
            \fill (P) circle (2pt) node[anchor = north west] {$x$};
            \draw[curve] ($ (P) - (D) $) -- ( $ (P) + (D) $ ) node[anchor = west] {$D$};
    
            \draw[curve] (G1L) -- (G0L);
            \draw[curve] (F1R) -- (G0R);
            \draw[curve] (G2L) -- (F2L);
            \draw[curve] (G2R) -- (F2R);
            \draw[curve] (G3L) -- (F3L);
            \draw[curve] (G3R) -- (F3R);
    
            \draw[curve] (F4L) .. controls ($ (F4L) + (0,2) $) and ($ (F4R) + (0,2) $) .. (F4R);
            \draw[curve] (F2L) .. controls ($ (F2L) + (0,2) $) and ($ (F2R) + (0,2) $) .. (F2R);
            \draw[curve] (F6L) .. controls ($ (F6L) + (0,2) $) and ($ (F6R) + (0,2) $) .. (F6R);
            
            \draw[curve] (F3R) .. controls ($ (F3R) + (0,1) $) and ($ (F5L) + (0,1) $) .. (F5L);
            \draw[curve] (F5R) .. controls ($ (F5R) - (0,1) $) and ($ (F6L) - (0,1) $) .. (F6L);
            
            \draw[curve] (F3L) .. controls ($ (F3L) + (0,2.5) $) and ($ (F5R) + (0,2.5) $) .. (F5R);
            \draw[curve] (F5L) .. controls ($ (F5L) - (0,2.5) $) and ($ (F6R) - (0,2.5) $) .. (F6R);
            
            \draw[curve] (F1R) .. controls ($ (F1R) - (0,1) $) and ($ (F4L) - (0,1) $) .. (F4L);
            \draw[curve] (G1R) .. controls ($ (G1R) + (0,2) $) and ($ (F4R) - (0,2) $) .. (F4R);
    
            \draw[curve] (G1R) .. controls ($ (G1R) - (0,1.5) $) and ($ (G2L) - (0,1.5) $) .. (G2L);
            \draw[curve] (G2R) .. controls ($ (G2R) - (0,1) $) and ($ (G3L) - (0,1) $) .. (G3L);
    
            \draw[curve] (G1L) .. controls ($ (G1L) - (0,2) $) and ($ (P) - (D) $) .. (P);
            \draw[curve] (G3R) .. controls ($ (G3R) - (0,1) $) and ($ (P) + (D) $) .. (P);
    
            \node at ($ (F2) + (0,2) $) {$D_2$}; 
            \node at ($ (F6) + (0,2) $) {$D_3$}; 
        \end{tikzpicture}
    
        \caption{The holomorphic building $F = (F^1, \ldots, F^N)$ in the case ${L_0} = N = p = 3$}
        \label{fig:holomorphic building in the proof}
    \end{figure}

    Moreover, by \cref{lem:no nodes}, $F$ has no nodes. Let $C$ be the component of $F$ in $X^-$ which carries the tangency constraint $\p{<}{}{\mathcal{T}^{(k)}x}$. Then, $C$ is positively asymptotic to Reeb orbits $(\gamma_1, \ldots, \gamma_p)$ of $M$. For $\mu = 1, \ldots, p$, let $C_\mu$ be the subtree emanating from $C$ at $\gamma_\mu$. For exactly one $\mu = 1, \ldots, p$, the top level of the subtree $C_\mu$ is positively asymptotic to $\gamma_0$, and we may assume without loss of generality that this is true for $\mu = 1$. By the maximum principle, $C_\mu$ has a component in $X^{L_0} = \hat{X} \setminus L$ for every $\mu = 2, \ldots, p$. Also by the maximum principle, there do not exist components of $C_\mu$ in $X^{L_0} = \hat{X} \setminus L$ which intersect $\R_{\geq 0} \times \partial X$ or components of $C_\mu$ in the top symplectization layers $X^{{L_0}+1}, \ldots, X^N$, for every $\mu = 2, \ldots, p$.
    
    We claim that if $\gamma$ is a Reeb orbit in $M$ which is an asymptote of $F^\nu$ for some $\nu = 2,\ldots,{L_0}-1$, then $\mathcal{A}(\gamma) \leq a$. To see this, notice that
    \begin{IEEEeqnarray*}{rCls+x*}
        a
        & \geq & E_{\tilde{\omega}^*}(F)           & \quad [\text{by Equation \eqref{eq:energy of holo building in proof}}] \\
        & \geq & E_{\tilde{\omega}^N}(F^N)         & \quad [\text{by monotonicity of $E$}] \\
        & \geq & (e^K - 1) \mathcal{A}(\Gamma^-_N) & \quad [\text{by \cref{lem:energy wrt different forms}}] \\
        & \geq & \mathcal{A}(\Gamma^-_N)           & \quad [\text{since $K \geq K_1$}] \\
        & \geq & \mathcal{A}(\Gamma^-_\nu)         & \quad [\text{by \cref{lem:action energy for holomorphic}}]
    \end{IEEEeqnarray*}
    for every $\nu = 2, \ldots, {L_0}-1$. Every such $\gamma$ has a corresponding geodesic in $L$ (which by abuse of notation we denote also by $\gamma$) such that $\ell(\gamma) = \mathcal{A}(\gamma)/\delta \leq a / \delta = \ell_0$. Hence, by our choice of Riemannian metric, the geodesic $\gamma$ is noncontractible, nondegenerate and such that $\morse(\gamma) \leq n - 1$. Therefore, the Reeb orbit $\gamma$ is noncontractible, nondegenerate and such that $\conleyzehnder(\gamma) \leq n - 1$.

    We claim that if $D$ is a component of $C_\mu$ for some $\mu = 2,\ldots,p$ and $D$ is a plane, then $D$ is in $X^{L_0} = \hat{X} \setminus L$. Assume by contradiction otherwise. Notice that since $D$ is a plane, $D$ is asymptotic to a unique Reeb orbit $\gamma$ in $M = S^*_{\delta} L$ with corresponding noncontractible geodesic $\gamma$ in $L$. We will derive a contradiction by defining a filling disk for $\gamma$. If $D$ is in a symplectization layer $\R \times S^*_\delta L$, then the map $\pi \circ D$, where $\pi \colon \R \times S^*_{\delta} L \longrightarrow L$ is the projection, is a filling disk for the geodesic $\gamma$. If $D$ is in the bottom level, i.e. $X^1 = T^*L$, then the map $\pi \circ D$, where $\pi \colon T^*L \longrightarrow L$ is the projection, is also a filling disk. This proves the claim.
    
    So, summarizing our previous results, we know that for every $\mu = 2,\ldots,p$ there is a holomorphic plane $D_\mu$ in $X^{L_0} \setminus (\R_{\geq 0} \times \partial X) = X \setminus L$. For each plane $D_\mu$ there is a corresponding disk in $X$ with boundary on $L$, which we denote also by $D_\mu$. It is enough to show that $E_{\omega}(D_{\mu_0}) \leq a/k + \varepsilon$ for some $\mu_0 = 2,\ldots,p$. By \cref{lem:punctures and tangency}, $p \geq k + 1 \geq 2$. By definition of average, there exists $\mu_0 = 2,\ldots,p$ such that
    \begin{IEEEeqnarray*}{rCls+x*}
        E_{\omega}(D_{\mu_0})
        & \leq & \frac{1}{p-1} \sum_{\mu=2}^{p} E_{\omega}(D_{\mu})                           & \quad [\text{by definition of average}] \\
        & =    & \frac{E_{\omega}(D_2 \cup \cdots \cup D_p)}{p-1}                             & \quad [\text{since energy is additive}] \\
        & \leq & \frac{e^K}{e^K - 1} \frac{E_{\tilde{\omega}}(D_2 \cup \cdots \cup D_p)}{p-1} & \quad [\text{by \cref{lem:energy wrt different forms}}] \\
        & \leq & \frac{e^K}{e^K - 1} \frac{a}{p-1}                                            & \quad [\text{by Equation \eqref{eq:energy of holo building in proof}}] \\
        & \leq & \frac{e^K}{e^K - 1} \frac{a}{k}                                              & \quad [\text{since $p \geq k + 1$}] \\
        & \leq & \frac{a}{k} + \varepsilon                                                    & \quad [\text{since $K \geq K_2$}].                                  & \qedhere
    \end{IEEEeqnarray*}
\end{proof}


\begin{theorem}
    \label{lem:computation of cl}
    If $X_{\Omega}$ is a $4$-dimensional convex toric domain then
    \begin{IEEEeqnarray*}{c+x*}
        c_L(X_{\Omega}) = \delta_\Omega.
    \end{IEEEeqnarray*}
\end{theorem}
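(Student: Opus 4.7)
The plan is to sandwich $c_L(X_\Omega)$ between $\delta_\Omega$ from below and an infimum of quantities converging to $\delta_\Omega$ from above. The lower bound $\delta_\Omega \leq c_L(X_\Omega)$ is the easy direction and follows immediately by chaining \cref{lem:c square geq delta} (which gives $\delta_\Omega \leq c_P(X_\Omega)$ when $X_\Omega$ is convex or concave) with \cref{lem:c square leq c lag} (which gives $c_P(X_\Omega) \leq c_L(X_\Omega)$ for any star-shaped domain). So the entire content of the theorem is the upper bound $c_L(X_\Omega) \leq \delta_\Omega$.

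For the upper bound I will exhibit, for every $k \in \Z_{\geq 1}$, an inequality $c_L(X_\Omega) \leq \delta_\Omega (k+1)/k$, and then let $k \to \infty$. The four inequalities/equalities that I chain together are exactly the ones assembled earlier in the excerpt: first, \cref{thm:lagrangian vs g tilde} yields $c_L(X_\Omega) \leq \tilde{\mathfrak{g}}^{\leq 1}_k(X_\Omega)/k$; second, \cref{prp:g tilde and cgh} (which is where the hypotheses ``$4$-dimensional'' and ``convex'' are crucially used) identifies $\tilde{\mathfrak{g}}^{\leq 1}_k(X_\Omega) = \cgh{k}(X_\Omega)$; third, since $X_\Omega$ is convex we have the inclusion $X_\Omega \subset N^{2n}(\delta_\Omega)$ of the toric domain into the nondisjoint union of cylinders of radius $\delta_\Omega$, so monotonicity of the Gutt--Hutchings capacity (from \cref{thm:properties of gutt-hutchings capacities}) gives $\cgh{k}(X_\Omega) \leq \cgh{k}(N(\delta_\Omega))$; fourth, \cref{lem:cgh of nondisjoint union of cylinders} computes $\cgh{k}(N(\delta_\Omega)) = \delta_\Omega(k+1)$ in dimension $4$ (i.e.\ with $n = 2$).

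Putting this all together gives
\begin{IEEEeqnarray*}{rCls+x*}
    \delta_\Omega
    & \leq & c_L(X_\Omega)                                         & \quad [\text{by \cref{lem:c square geq delta,lem:c square leq c lag}}] \\
    & \leq & \frac{\tilde{\mathfrak{g}}^{\leq 1}_{k}(X_\Omega)}{k} & \quad [\text{by \cref{thm:lagrangian vs g tilde}}] \\
    & =    & \frac{\cgh{k}(X_\Omega)}{k}                           & \quad [\text{by \cref{prp:g tilde and cgh}}] \\
    & \leq & \frac{\cgh{k}(N(\delta_\Omega))}{k}                   & \quad [\text{by monotonicity, since $X_\Omega \subset N(\delta_\Omega)$}] \\
    & =    & \frac{\delta_\Omega(k+1)}{k}                          & \quad [\text{by \cref{lem:cgh of nondisjoint union of cylinders}}],
\end{IEEEeqnarray*}
and taking the infimum over $k$ yields $c_L(X_\Omega) = \delta_\Omega$.

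There is no real obstacle left at this stage: all the analytic difficulty has been pushed into the lemmas already stated in the excerpt. The proof is just a short chain of inequalities, and the only thing worth double-checking is that the hypotheses of each invoked result are actually satisfied, namely that $X_\Omega$ being a convex $4$-dimensional toric domain simultaneously allows the use of \cref{prp:g tilde and cgh} (convex plus $4$-dimensional), of \cref{lem:c square geq delta} (convex or concave), and of the inclusion $X_\Omega \subset N(\delta_\Omega)$ (convex).
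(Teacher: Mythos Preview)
Your proof is correct and essentially identical to the paper's own proof: the same chain of inequalities $\delta_\Omega \leq c_P(X_\Omega) \leq c_L(X_\Omega) \leq \tilde{\mathfrak{g}}^{\leq 1}_k(X_\Omega)/k = \cgh{k}(X_\Omega)/k \leq \cgh{k}(N(\delta_\Omega))/k = \delta_\Omega(k+1)/k$, followed by taking the infimum over $k$. The only cosmetic difference is that the paper displays the intermediate step through $c_P(X_\Omega)$ explicitly rather than compressing it into one line.
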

\begin{proof}
    For every $k \in \Z_{\geq 1}$,
    \begin{IEEEeqnarray*}{rCls+x*}
        \delta_\Omega
        & \leq & c_P(X_{\Omega})                                         & \quad [\text{by \cref{lem:c square geq delta}}] \\
        & \leq & c_L(X_{\Omega})                                         & \quad [\text{by \cref{lem:c square leq c lag}}] \\
        & \leq & \frac{\tilde{\mathfrak{g}}^{\leq 1}_{k}(X_{\Omega})}{k} & \quad [\text{by \cref{thm:lagrangian vs g tilde}}] \\
        & =    & \frac{\cgh{k}(X_{\Omega})}{k}                           & \quad [\text{by \cref{prp:g tilde and cgh}}] \\
        & \leq & \frac{\cgh{k}(N(\delta_\Omega))}{k}                     & \quad [\text{$X_{\Omega}$ is convex, hence $X_{\Omega} \subset N(\delta_\Omega)$}] \\
        & =    & \frac{\delta_\Omega(k+1)}{k}                            & \quad [\text{by \cref{lem:cgh of nondisjoint union of cylinders}}].
    \end{IEEEeqnarray*}
    The result follows by taking the infimum over $k$.
\end{proof}


The proof of \cref{lem:computation of cl} suggests the following conjecture. Notice that \cref{thm:main theorem} implies \cref{conj:the conjecture}.

\begin{conjecture}
    \label{thm:main theorem}
    If $X$ is a Liouville domain, $\pi_1(X) = 0$ and $c_1(TX)|_{\pi_2(X)} = 0$, then%
    \begin{IEEEeqnarray*}{c+x*}
        c_L(X,\lambda) \leq \inf_k \frac{\cgh{k}(X,\lambda)}{k}.
    \end{IEEEeqnarray*}
\end{conjecture}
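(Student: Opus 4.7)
The plan is to imitate the proof of \cref{thm:lagrangian vs g tilde} as closely as possible, replacing the tangency moduli space $\mathcal{M}^{J}_X(\gamma)\langle \mathcal{T}^{(k)}x\rangle$ used there by a suitable moduli space of parametrized Floer trajectories that represents the class $[\mathrm{pt}] \otimes [X] \in \img(\delta \circ U^{k-1} \circ \iota^a)$ appearing in the definition of $\cgh{k}$. By \cref{lem:can prove ineqs for ndg} it is enough to treat nondegenerate $X$. Fix $k \in \Z_{\geq 1}$, an embedded Lagrangian torus $L \subset \itr X$, and set $a = \cgh{k}(X)$; for any $\varepsilon > 0$ I want to produce $\sigma \in \pi_2(X,L)$ with $0 < \omega(\sigma) \leq a/k + \varepsilon$. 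As in the proof of \cref{thm:lagrangian vs g tilde}, use \cref{lem:geodesics lemma CM abs} and a Lagrangian neighbourhood to obtain a Riemannian metric on $L$ and a symplectic embedding $\phi \colon D^*L \hookrightarrow \itr X$ such that every closed geodesic of length $\leq a/\delta$ is noncontractible, nondegenerate, and of Morse index $\leq n-1$; then perform SFT neck stretching along $S^*_\delta L$ with $\delta$ chosen as in that proof (so that \cref{lem:energy wrt different forms} yields the $e^K/(e^K-1)$ energy loss).

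The key step is to produce, for each stretching parameter $t$, a finite-energy object inside $\hat{X}$ that simultaneously (i) is asymptotic to a Hamiltonian $1$-periodic orbit of action bounded by $a$ and (ii) carries a geometric ``$k$-fold point constraint''. For (i) one uses the definition $a = \cgh{k}(X)$: there is a class in $SH^{S^1,(\varepsilon,a]}(X)$ whose image under $\delta \circ U^{k-1}$ equals $[\mathrm{pt}] \otimes [X]$. Under the topological hypotheses $\pi_1(X) = 0$ and $c_1(TX)|_{\pi_2(X)} = 0$, these classes are represented by parametrized Floer trajectories for admissible $(H,J)$ in $\mathbf{I}_X$ whose asymptotes $(z,\gamma)$ satisfy $\mathcal{A}_H(z,\gamma) \leq a$. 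For (ii), the idea is to interpret the operator $U^{k-1}$, together with the map $\delta$ that lands in $H_\bullet(BS^1) \otimes H_\bullet(X,\partial X)$, as imposing that some chain-level representative passes through a chosen generic point $x \in \itr \phi(D^*_\delta L)$ with multiplicity $k$, in the spirit of the identification in \cref{thm:g hat vs gh} between $\mathfrak{g}^{\leq 1}_k$ and $\cgh{k}$. Granting such a representative, SFT compactness applied to the neck-stretched sequence would yield a holomorphic building in the broken target $T^*L \cup (\R \times S^*_\delta L) \cup (\hat{X} \setminus L)$, and \cref{lem:no nodes} (applied as in the proof of \cref{thm:lagrangian vs g tilde}) rules out nodes. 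The component $C$ in the bottom level $T^*L$ inheriting the $k$-fold constraint would then, by \cref{lem:punctures and tangency}, have at least $k+1$ punctures, so at least $k$ of the associated subtrees would produce disks $D_2,\dots,D_p$ in $\hat{X}\setminus L$ with boundary on $L$; the averaging argument from the end of the proof of \cref{thm:lagrangian vs g tilde} together with \cref{lem:energy wrt different forms} would then give $\omega(D_{\mu_0}) \leq a/k + \varepsilon$ for some $\mu_0$.

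The hard part is making (ii) precise, and the excerpt already flags this as the point where the mimicry fails. In the definition of $S^1$-equivariant Floer homology, the parametrized Hamiltonian $H \colon S^1 \times S^{2N+1} \times \hat{X} \to \R$ and almost complex structure $J$ depend on the auxiliary sphere $S^{2N+1}$, and the $U$ map acts by shifting the Morse critical point of $\tilde{f}_N$ on $S^{2N+1}$ rather than by requiring a higher-order jet of contact with a divisor $D$ at $x$. Consequently the trajectories contributing to $\delta \circ U^{k-1} \circ \iota^a$ are pairs $(w,u)$ with $w \colon \R \to S^{2N+1}$ a gradient flow line of $\tilde f_N$ and $u \colon \R \times S^1 \to \hat X$ a solution of the corresponding $z$-dependent Floer equation; they do not, a priori, satisfy a pointwise tangency condition in $\hat X$ at all. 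The natural attempt --- choosing $J$ so that at the critical point $z = e_k$ the trajectory $u$ passes through $x$ with order $k$ --- requires either a chain-level geometric model of the $U$-map compatible with neck stretching, or the full equivalence with linearized contact homology (i.e.\ the Bourgeois--Oancea isomorphism $\Phi_{\mathrm{BO}}$ together with \cref{thm:g hat vs gh}). The latter reduces \cref{thm:main theorem} to \cref{thm:g tilde vs g hat} and \cref{thm:lagrangian vs g tilde}, but relies on \cref{assumption} about the virtual perturbation scheme for linearized contact homology, which is exactly what the author wishes to avoid here. This is why the conjecture is only proved unconditionally in dimension four (\cref{lem:computation of cl}), where the coincidence $\tilde{\mathfrak{g}}^{\leq 1}_k = \cgh{k}$ of \cref{prp:g tilde and cgh} bypasses the need to interpret $U^{k-1}$ geometrically, and why in higher dimensions the argument must currently proceed through contact homology rather than directly through $S^1$-equivariant symplectic homology.
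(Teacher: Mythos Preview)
Your proposal correctly recognizes that this statement is a conjecture and that any attempted proof along the lines of \cref{thm:lagrangian vs g tilde} currently fails; the paper does the same, giving only a ``Proof attempt'' that ends by listing the obstructions. In that sense your write-up is appropriate. However, the specific approach you sketch differs from the paper's in a substantive way, and the difference is worth noting.

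You try to encode the parameter $k$ by interpreting $U^{k-1}$ (together with $\delta$) as a geometric $k$-fold point/tangency constraint at a chosen $x \in \phi(D^*_\delta L)$, so that after neck stretching one could invoke \cref{lem:punctures and tangency} verbatim. You then correctly observe that this interpretation has no obvious rigorous content: the $U$-map shifts the critical point on $S^{2N+1}$ and imposes nothing on the $\hat X$-component. The paper takes a different route: instead of a point $x$, it embeds a small nondegenerate star-shaped domain $E$ into $\phi(D^*_\delta L)$ and uses the alternative description of $\cgh{k}$ from \cref{def:ck alternative} via the Viterbo transfer map $\phi_!$. The parameter $k$ is then encoded not as a tangency condition but as the index $\operatorname{ind}(z^-,\gamma^-) \geq n-1+2k$ of the \emph{negative} asymptote of a Floer trajectory near $\partial E$. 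The hoped-for bound $p \geq k+1$ would then come from a Fredholm-index computation for the bottom component $C$, rather than from \cref{lem:punctures and tangency}. The obstructions the paper identifies are accordingly more technical: one needs an SFT-type compactness theorem for solutions of the parametrized Floer equation under neck stretching; one does not know how to choose $J$ on $S^1 \times S^{2N+1} \times \hat X$ so that the induced domain-dependent structure on the bottom level is regular; and, depending on what the putative compactness theorem produces, either the linearized operator on the $S^{2N+1}$-part fails to be Fredholm, or extra Morse indices of the critical points $z_\mu$ enter the index formula and spoil the inequality $p \geq k+1$.

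One concrete issue with your version: you write that after compactness one would apply \cref{lem:punctures and tangency} to the bottom component $C$. That lemma genuinely needs a tangency constraint $\langle \mathcal T^{(k)}x\rangle$ on $C$, which your Floer trajectories never carry; so even granting your compactness step, the appeal to \cref{lem:punctures and tangency} has no force. The paper's index-based substitute is designed precisely to avoid this, though it runs into its own (different) difficulties listed above.
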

\begin{proof}[Proof attempt]
    By \cref{lem:can prove ineqs for ndg}, we may assume that $X$ is nondegenerate. Let $k \in \Z_{\geq 1}$ and $L \subset \itr X$ be an embedded Lagrangian torus. Let also $a > \cgh{k}(X)$. We wish to show that for every $\varepsilon > 0$ there exists $\sigma \in \pi_2(X,L)$ such that $0 < \omega(\sigma) \leq a / k + \varepsilon$. Start by replicating word by word the proof of \cref{thm:lagrangian vs g tilde} until the point where we choose $x \in \phi(D^*_{\delta} L)$. Instead of choosing $x$, choose a nondegenerate star-shaped domain $E \subset \C^n$ and an exact symplectic embedding $\varphi \colon E \longrightarrow X$ such that $\varphi(E) \subset \itr \phi(D^*_{\delta} L)$. Since $a > \cgh{k}(X)$ and by \cref{def:ck alternative}, the map
    \begin{IEEEeqnarray}{c+x*}
        \plabel{eq:nonzero map in proof of cl leq cgh}
        \begin{tikzcd}
            \homology{}{S^1}{}{S}{H}{(\varepsilon,a]}{n - 1 + 2k}(X) \ar[r, "\iota^a"] & \homology{}{S^1}{}{S}{H}{+}{n - 1 + 2k}(X) \ar[r, "\varphi_!"] & \homology{}{S^1}{}{S}{H}{+}{n - 1 + 2k}(E)
        \end{tikzcd}
    \end{IEEEeqnarray}
    is nonzero. Choose Hamiltonians
    \begin{IEEEeqnarray*}{rClCrClCs}
        H^+ \colon S^1 \times S^{2N+1} \times \hat{X}         & \longrightarrow & \R, & \quad & H^+ & \in & \mathcal{H}(X,N),      & \quad & (see \cref{def:hamiltonians}), \\
        H^- \colon S^1 \times S^{2N+1} \times \hat{X}         & \longrightarrow & \R, & \quad & H^- & \in & \mathcal{H}(X,E,N),    & \quad & (see \cref{def:stair hamiltonians}), \\
        H \colon \R \times S^1 \times S^{2N+1} \times \hat{X} & \longrightarrow & \R, & \quad & H   & \in & \mathcal{H}(H^+, H^-), & \quad & (see \cref{def:homotopy stair to admissible hamiltonian}).
    \end{IEEEeqnarray*}
    Choose also an almost complex structure
    \begin{IEEEeqnarray*}{rClCrClCs}
        J \colon S^1 \times S^{2N+1} \times \hat{X} & \longrightarrow & \End(T \hat{X}), & \quad & J & \in & \mathcal{J}(X, E, N), & \quad & (see \cref{def:stair acs}).
    \end{IEEEeqnarray*}
    The almost complex structure $J$ defines a neck stretching family of almost complex structures
    \begin{IEEEeqnarray*}{rClCrClCs}
        J_m \colon S^1 \times S^{2N+1} \times \hat{X} & \longrightarrow & \End(T \hat{X}), & \quad & J_m & \in & \mathcal{J}(X, E, N),
    \end{IEEEeqnarray*}
    for $m \in \Z_{\geq 1}$. Since the map \eqref{eq:nonzero map in proof of cl leq cgh} is nonzero and by definition of the Viterbo transfer map, if $N, H^\pm, H$ are chosen big enough (in the sense of the partial orders defined in \cref{sec:Floer homology,sec:viterbo transfer map of liouville embedding}) then for every $m$ there exist $(z^{\pm}_m, \gamma^{\pm}_m) \in \hat{\mathcal{P}}(H^{\pm})$ and a Floer trajectory $(w_m, u_m)$ with respect to $H, J_m$ from $(z^-_m, \gamma^-_m)$ to $(z^+_m, \gamma^+_m)$, such that
    \begin{enumerate}
        \item $\img \gamma^+_m$ is near $\partial X$ and $\mathcal{A}_{H^+}(z^+_m, \gamma^+_m) \leq a$;
        \item $\img \gamma^-_m$ is near \parbox{\widthof{$\partial X$}}{$\partial E$} and $\ind (z^-_m, \gamma^-_m) \geq n - 1 + 2k$.
    \end{enumerate}
    By \cref{lem:action energy for floer trajectories}, we have the energy bound $E(w_m, u_m) \leq a$. Possibly after passing to a subsequence, we may assume that $(z^{\pm}_m, \gamma^{\pm}_m)$ converges to $(z_0^{\pm}, \gamma^{\pm}_0) \in \hat{\mathcal{P}}(H^{\pm})$.

    Now we come to the first challenge of the proof. We would like to use an adaptation of the SFT compactness theorem to take the limit of the sequence $(w_m, u_m)_m$. We will assume that such a theorem can be proven, and that we get a resulting limit $F = (F^1, \ldots, F^N)$ as in the proof of \cref{thm:lagrangian vs g tilde}, but where each $F^{\nu} = (w^\nu, u^\nu) \colon \dot{\Sigma}^\nu \longrightarrow S^{2 N + 1} \times X^{\nu}$ is a solution of the parametrized Floer equation (\cref{def:floer trajectory abstract}). Let $C$ be the component of $F$ in $X^-$ which is negatively asymptotic to $(z_0^-, \gamma_0^-)$.
    
    Notice that near $X \setminus \phi(D^*_{\delta} L)$, the Hamiltonian $H$ is independent of $\hat{X}$. Therefore, in the intermediate symplectization levels (i.e. for $\nu = 2,\ldots,L-1$) the map $u^{\nu} \colon \dot{\Sigma}^{\nu} \longrightarrow X^{\nu}$ is $J^{\nu}_{w^{\nu}}$-holomorphic, where $J^{\nu}_{w^{\nu}} \colon \dot{\Sigma}^{\nu} \times X^{\nu} \longrightarrow \End(T X^{\nu})$ is a domain dependent almost complex structure obtained from composing an almost complex structure $J^{\nu} \colon \dot{\Sigma}^{\nu} \times S^{2 N + 1} \times X^{\nu} \longrightarrow \End(T X^{\nu})$ with $w^\nu$. Hence, as in the proof of \cref{thm:lagrangian vs g tilde}, the component $C$ has $p$ positive punctures asymptotic to Reeb orbits $(\gamma_1, \ldots, \gamma_p)$ and for every $\mu = 2, \ldots, p$ there is a disk $D_{\mu}$ in $X$ with boundary on $L$. 
    
    At this point, we need to show that $p \geq k + 1$, which brings us to the main difficulty in the proof. In the proof of \cref{thm:lagrangian vs g tilde}, we chose a generic almost complex structure so that $C$ would be regular. Then, the index formula for $C$ implied that $p \geq k + 1$ (see \cref{thm:transversality with tangency,lem:punctures and tangency simple,lem:punctures and tangency}). In line with this reasoning, we wish to show that $p \geq k + 1$ using the following computation:
    \begin{IEEEeqnarray*}{rCls+x*}
        0
        & \leq & \operatorname{ind}(C)                                                          \\
        & =    & (n - 3)(1 - p) + \sum_{\mu=1}^{p} \conleyzehnder(\gamma_\mu) - \ind(z^-_0, \gamma^-_0) \\
        & \leq & (n - 3)(1 - p) + \sum_{\mu=1}^{p} (n - 1) - (n - 1 + 2k)                         \\
        & =    & 2 (p - k - 1),
    \end{IEEEeqnarray*}
    where in the first line we would need to use a transversality theorem which applies to $C$, and in the second line we would need to use a Fredholm theory theorem which gives us the desired index formula for $C$. We point out a few difficulties that arise with this approach.
    \begin{enumerate}
        \item Because of the domain dependence of the almost complex structures and Hamiltonians, it is not clear how to choose the initial almost complex structure $J \colon S^1 \times S^{2N+1} \times \hat{X} \longrightarrow \End(T \hat{X})$ in such a way that the resulting almost complex structure $J^1 \colon \dot{\Sigma}^1 \times S^{2N+1} \times X^1 \longrightarrow \End(T X^1)$ is regular.
        \item We are working under the assumption that the analogue of the SFT compactness theorem which applies to solutions of the parametrized Floer equation produces a building $F$ whose symplectization levels are asymptotic to Reeb orbits. More specifically, this means that the gradient flow line in $S^{2N+1}$ corresponding to $C$ is not asymptotic at the punctures to critical points of $\tilde{f}_N$. Therefore, in this case the linearized operator corresponding to the gradient flow line equation on $S^{2N+1}$ will not be Fredholm.
        \item However, the assumption in the previous item could be wrong. Another reasonable possibility is that the analogue of the SFT compactness theorem which applies to solutions of the parametrized Floer equation produces a building $F$ whose bottom component is positively asymptotic to pairs $(z_\mu, \gamma_\mu)$, where $z_{\mu} \in S^{2N+1}$ is a critical point of $\tilde{f}_N$ and $\gamma_\mu$ is a Reeb orbit. In this case, one would expect that the relevant operator is Fredholm. However, the Morse index of the critical points $z_{\mu}$ would appear in the index formula, and the previous computation would no longer imply that $p \geq k + 1$.
    \end{enumerate}
    Finally, we point out that if $p \geq k + 1$, then by the same computation as in the proof of \cref{thm:lagrangian vs g tilde}, we have the desired energy bound
    \begin{IEEEeqnarray*}{c+x*}
        E_{\omega}(D_{\mu_0}) \leq \frac{a}{k} + \varepsilon
    \end{IEEEeqnarray*}
    for some $\mu_0 = 2, \ldots, p$. This finishes the proof attempt.
\end{proof}

\chapter{Contact homology}
\label{chp:contact homology}

\section{Assumptions on virtual perturbation scheme}
\label{sec:assumptions of virtual perturbation scheme}

In this chapter, we wish to use techniques from contact homology to prove \cref{conj:the conjecture}. Consider the proof of \cref{lem:computation of cl}: to prove the inequality $c_L(X_{\Omega}) \leq \delta_\Omega$, we needed to use the fact that $\tilde{\mathfrak{g}}^{\leq 1}_k(X_{\Omega}) \leq \cgh{k}(X_{\Omega})$ (which is true if $X_{\Omega}$ is convex and $4$-dimensional). Our approach here will be to consider the capacities $\mathfrak{g}^{\leq \ell}_{k}$ from \cite{siegelHigherSymplecticCapacities2020}, which satisfy $\tilde{\mathfrak{g}}^{\leq 1}_k(X) \leq {\mathfrak{g}}^{\leq 1}_k(X) = \cgh{k}(X)$. As we will see, $\mathfrak{g}^{\leq \ell}_{k}(X)$ is defined using the linearized contact homology of $X$, where $X$ is any nondegenerate Liouville domain. 

Very briefly, the linearized contact homology chain complex, denoted $CC(X)$, is generated by the good Reeb orbits of $\partial X$, and therefore maps whose domain is $CC(X)$ should count holomorphic curves which are asymptotic to Reeb orbits. The ``naive'' way to define such counts of holomorphic curves would be to show that they are the elements of a moduli space which is a compact, $0$-dimensional orbifold. However, there is the possibility that a curve is multiply covered. This means that in general it is no longer possible to show that the moduli spaces are transversely cut out, and therefore we do not have access to counts of moduli spaces of holomorphic curves (or at least not in the usual sense of the notion of signed count). In the case where the Liouville domain is $4$-dimensional, there exists the possibility of using automatic transversality techniques to show that the moduli spaces are regular. This is the approach taken by Wendl \cite{wendlAutomaticTransversalityOrbifolds2010}. Nelson \cite{nelsonAutomaticTransversalityContact2015}, Hutchings--Nelson \cite{hutchingsCylindricalContactHomology2016} and Bao--Honda \cite{baoDefinitionCylindricalContact2018} use automatic transversality to define cylindrical contact homology.

In order to define contact homology in more general contexts, one needs to replace the notion of count by a suitable notion of virtual count, which is obtained through a virtual perturbation scheme. This was done by Pardon \cite{pardonAlgebraicApproachVirtual2016,pardonContactHomologyVirtual2019} to define contact homology in greater generality. The theory of polyfolds by Hofer--Wysocki--Zehnder \cite{hoferPolyfoldFredholmTheory2021} can also be used to define virtual moduli counts. Alternative approaches using Kuranishi structures have been given by Ishikawa \cite{ishikawaConstructionGeneralSymplectic2018} and Bao--Honda \cite{baoSemiglobalKuranishiCharts2021}.

Unfortunately, linearized contact homology is not yet defined in the generality we need.
\begin{enumerate}
    \item In order to prove \cref{conj:the conjecture}, we only need the capacities $\mathfrak{g}^{\leq \ell}_k$ for $\ell = 1$. These are defined using the linearized contact homology (as a chain complex) and an augmentation map which counts curves satisfying a tangency constraint. As far as we know, the current work on defining virtual moduli counts does not yet deal with moduli spaces of curves satisfying tangency constraints.
    \item In addition to \cref{conj:the conjecture}, in this chapter we will also prove some properties of the capacities $\mathfrak{g}^{\leq \ell}_k$ for $\ell > 1$. The definition of these capacities for $\ell > 1$ requires the structure of an $\mathcal{L}_{\infty}$-algebra on the linearized contact homology as well as an $\mathcal{L}_{\infty}$-augmentation map counting curves which satisfy a tangency constraint.
\end{enumerate}
So, during this chapter, we will work under assumption that it is possible to define a virtual perturbation scheme which makes the invariants and maps described above well-defined (this is expected to be the case).

\begin{assumption}
    \label{assumption}
    We assume the existence of a virtual perturbation scheme which to every compactified moduli space $\overline{\mathcal{M}}$ of asymptotically cylindrical holomorphic curves (in a symplectization or in a Liouville cobordism, possibly satisfying a tangency constraint) assigns a virtual count $\#^{\mathrm{vir}} \overline{\mathcal{M}}$. We will assume in addition that the virtual perturbation scheme has the following properties.
    \begin{enumerate}
        \item If $\#^{\mathrm{vir}} \overline{\mathcal{M}} \neq 0$ then $\operatorname{virdim} \overline{\mathcal{M}} = 0$;
        \item If $\overline{\mathcal{M}}$ is transversely cut out then $\#^{\mathrm{vir}} \overline{\mathcal{M}} = \# \overline{\mathcal{M}}$. In particular, if $\overline{\mathcal{M}}$ is empty then $\#^{\mathrm{vir}} \overline{\mathcal{M}} = 0$;
        \item The virtual count of the boundary of a moduli space (defined as a sum of virtual counts of the moduli spaces that constitute the codimension one boundary strata) is zero. In particular, the expected algebraic identities ($\partial^2 = 0$ for differentials, $\varepsilon \circ \partial = 0$ for augmentations) hold, as well as independence of auxiliary choices of almost complex structure and symplectic divisor.
    \end{enumerate}
\end{assumption}

\section{\texorpdfstring{$\mathcal{L}_{\infty}$-}{L infinity }algebras}

In this section, we give a brief review of the algebraic definitions which will play a role. Our main reference is \cite[Section 2]{siegelHigherSymplecticCapacities2020}. The key definitions are that of $\mathcal{L}_{\infty}$-algebra (\cref{def:l infinity algebra}) and its associated bar complex (\cref{def:bar complex}). We start by defining the suspension of a graded vector space. The purpose of this definition is to define $\mathcal{L}_{\infty}$-algebras in such a way that the $\mathcal{L}_{\infty}$-relations do not have extra signs (these extra signs are ``absorbed'' by the degree shift in the suspension).

\begin{definition}
    Let $V = \bigoplus_{k \in \Z} V^k$ be a graded vector space over a field $K$. The \textbf{suspension} of $V$ is the graded vector space $V[+1] = \bigoplus_{k \in \Z} (V[+1])^k$ given by $(V[+1])^k = V^{k+1}$. Define $s \colon V \longrightarrow V[+1]$ to be the linear map of degree $-1$ given by $s(v) = v$.
\end{definition}

\begin{remark}
    We use the Koszul sign convention, i.e. if $f,g \colon V \longrightarrow V$ are linear maps and $x, y \in V$ then $(f \otimes g)(x \otimes y) = (-1)^{\deg(x) \deg(g)} f(x) \otimes g(y)$.
\end{remark}

\begin{definition}
    Let $k \in \Z_{\geq 1}$ and denote by $\operatorname{Sym}(k)$ the symmetric group on $k$ elements. Let $V$ be a vector field over a field $K$. We define an action of $\operatorname{Sym}(k)$ on $\bigotimes_{j=1}^{k} V$ as follows. For $\sigma \in \operatorname{Sym}(k)$ and $v_1, \ldots, v_k \in V$, let
    \begin{IEEEeqnarray*}{rCls+x*}
        \operatorname{sign}(\sigma, v_1, \ldots, v_k) & \coloneqq & (-1)^{\operatorname{sum} \{ \deg(v_i) \deg(v_j) \, \mid \, 1 \leq i < j \leq k , \sigma(i) > \sigma(j) \} }, \\
        \sigma \cdot (v_1 \otimes \cdots \otimes v_k) & \coloneqq & \operatorname{sign}(\sigma, v_1, \ldots, v_k) \, v_{\sigma(1)} \otimes \cdots \otimes v_{\sigma(k)}.
    \end{IEEEeqnarray*}
    Define $\bigodot_{j=1}^k V \coloneqq \bigotimes_{j=1}^{k} V / \operatorname{Sym}(k)$ and denote by $v_1 \odot \cdots \odot v_k$ the equivalence class of $v_1 \otimes \cdots \otimes v_k$.
\end{definition}

We come to the main definition of this section, which encodes the algebraic structure of linearized contact homology (see \cref{def:lch l infinity}).

\begin{definition}
    \label{def:l infinity algebra}
    An \textbf{$\mathcal{L}_{\infty}$-algebra} is a graded vector space $V = \bigoplus_{k \in \Z} V^k$ together with a family $\ell = (\ell^k)_{k \in \Z_{\geq 1}}$ of maps $\ell^k \colon \bigodot_{j=1}^{k} V[+1] \longrightarrow V[+1]$ of degree $1$, satisfying the \textbf{$\mathcal{L}_{\infty}$-relations}, i.e.
    \begin{IEEEeqnarray*}{l}
        0 = \sum_{k=1}^{n} \sum_{\sigma \in \operatorname{Sh}(k,n-k)} \operatorname{sign}(\sigma, s v_1, \ldots, s v_n) \\
        \hphantom{0 = \sum_{k=1}^{n} \sum_{\sigma \in \operatorname{Sh}(k,n-k)} \quad} \ell^{n-k+1} ( \ell^k ( s v_{\sigma(1)} \odot \cdots \odot s v_{\sigma(k)} ) \odot s v_{\sigma(k+1)} \odot \cdots \odot s v_{\sigma(n)} )
    \end{IEEEeqnarray*}
    for every $v_1,\ldots,v_n \in V$. Here, $\operatorname{Sh}(k,n-k) \subset \operatorname{Sym}(n)$ is the subgroup of permutations $\sigma$ such that $\sigma(1) < \cdots < \sigma(k)$ and $\sigma(k+1) < \cdots < \sigma(n)$.
\end{definition}

The definition of $\mathcal{L}_{\infty}$-algebra can be expressed more compactly via the notion of bar complex. Indeed, the family of maps $(\ell^k)_{k \in \Z_{\geq 1}}$ satisfies the $\mathcal{L}_{\infty}$-relations if and only if the map $\hat{\ell}$ defined below is a differential, i.e. $\hat{\ell} \circ \hat{\ell} = 0$.

\begin{definition}
    \label{def:bar complex}
    Let $(V,\ell)$ be an $\mathcal{L}_{\infty}$-algebra. The \textbf{bar complex} of $(V,\ell)$ is the vector space $\mathcal{B} V = \bigoplus_{k = 1}^{+\infty} \bigodot_{j=1}^k V[+1]$ together with the degree $1$ differential $\hat{\ell} \colon \mathcal{B} V \longrightarrow \mathcal{B} V$ given by
    \begin{IEEEeqnarray*}{rCl}
        \IEEEeqnarraymulticol{3}{l}{\hat{\ell}(v_1 \odot \cdots \odot v_n)}\\
        \quad & = & \sum_{k=1}^{n} \sum_{\sigma \in \operatorname{Sh}(k,n-k)} \operatorname{sign}(\sigma, v_1, \ldots, v_n) \, \ell^k ( v_{\sigma(1)} \odot \cdots \odot v_{\sigma(k)} ) \odot v_{\sigma(k+1)} \odot \cdots \odot v_{\sigma(n)}. 
    \end{IEEEeqnarray*}
\end{definition}


\begin{definition}
    Let $(V,\ell)$ be an $\mathcal{L}_{\infty}$-algebra. A \textbf{filtration} on $V$ is a family $(\mathcal{F}^{\leq a} V)_{a \in \R}$ of subspaces $\mathcal{F}^{\leq a} V \subset V$, satisfying the following properties:
    \begin{enumerate}
        \item if $a \leq b$ then $\mathcal{F}^{\leq a} V \subset \mathcal{F}^{\leq b} V$;
        \item $\bigcup_{a \in \R} \mathcal{F}^{\leq a} V = V$;
        \item $\ell^k( \mathcal{F}^{\leq a_1} V[+1] \odot \cdots \odot \mathcal{F}^{\leq a_k} V[+1] ) \subset \mathcal{F}^{\leq a_1 + \cdots + a_k} V[+1]$.
    \end{enumerate}
\end{definition}

\begin{definition}
    Let $(V, \ell)$ be an $\mathcal{L}_{\infty}$-algebra together with a filtration $(\mathcal{F}^{\leq a} V)_{a \in \R}$. The \textbf{induced filtration} on the bar complex is the family of complexes $(\mathcal{F}^{\leq a} \mathcal{B} V, \hat{\ell})_{a \in \R}$, where%
    \begin{IEEEeqnarray*}{c+x*}
        \mathcal{F}^{\leq a} \mathcal{B} V \coloneqq \bigoplus_{k=1}^{+\infty} \, \bigcup_{a_1 + \cdots + a_k \leq a} \, \bigodot_{j=1}^{k} \mathcal{F}^{\leq a_j} V[+1]
    \end{IEEEeqnarray*}
    and $\hat{\ell} \colon \mathcal{F}^{\leq a} \mathcal{B} V \longrightarrow \mathcal{F}^{\leq a} \mathcal{B} V$ is the restriction of $\hat{\ell} \colon \mathcal{B} V \longrightarrow \mathcal{B} V$.
\end{definition}

The linearized contact homology will have a filtration induced by the action of the Reeb orbits (see \cref{def:action filtration lch}). Also, the bar complex of any $\mathcal{L}_{\infty}$-algebra has a filtration by word length, which is defined below.

\begin{definition}
    \phantomsection\label{def:word length filtration}
    Let $(V, \ell)$ be an $\mathcal{L}_{\infty}$-algebra and consider its bar complex $(\mathcal{B}V, \hat{\ell})$. The \textbf{word length filtration} of $(\mathcal{B}V, \hat{\ell})$ is the family of complexes $(\mathcal{B}^{\leq m} V, \hat{\ell})_{m \in \Z_{\geq 1}}$, where $\mathcal{B}^{\leq m} V \coloneqq \bigoplus_{k=1}^{m} \bigodot_{j=1}^{k} V[+1]$ and $\hat{\ell} \colon \mathcal{B}^{\leq m} V \longrightarrow \mathcal{B}^{\leq m} V$ is the restriction of $\hat{\ell} \colon \mathcal{B}V \longrightarrow \mathcal{B}V$.
\end{definition}

\section{Contact homology}

In this section, we define the linearized contact homology of a nondegenerate Liouville domain $X$. This is the homology of a chain complex $CC(X)$, which is described in \cref{def:linearized contact homology}. This complex has additional structure, namely it is also an $\mathcal{L}_{\infty}$-algebra (\cref{def:lch l infinity}) and it admits a filtration by action (\cref{def:action filtration lch}). We also define an augmentation map (\cref{def:augmentation map}), which is necessary to define the capacities $\mathfrak{g}^{\leq \ell}_k$.

\begin{definition}
    Let $(M,\alpha)$ be a contact manifold and $\gamma$ be a Reeb orbit in $M$. We say that $\gamma$ is \textbf{bad} if $\conleyzehnder(\gamma) - \conleyzehnder(\gamma_0)$ is odd, where $\gamma_0$ is the simple Reeb orbit that corresponds to $\gamma$. We say that $\gamma$ is \textbf{good} if it is not bad.
\end{definition}

Since the parity of the Conley--Zehnder index of a Reeb orbit is independent of the choice of trivialization, the definition above is well posed.

\begin{definition}
    \label{def:linearized contact homology}
    If $(X,\lambda)$ is a nondegenerate Liouville domain, the \textbf{linearized contact homology chain complex} of $X$, denoted $CC(X)$, is a chain complex given as follows. First, let $CC(X)$ be the vector space over $\Q$ generated by the set of good Reeb orbits of $(\partial X, \lambda|_{\partial X})$. The differential of $CC(X)$, denoted $\partial$, is given as follows. Choose $J \in \mathcal{J}(X)$. If $\gamma$ is a good Reeb orbit of $\partial X$, we define
    \begin{IEEEeqnarray*}{c+x*}
        \partial \gamma = \sum_{\eta} \p{<}{}{\partial \gamma, \eta} \, \eta,
    \end{IEEEeqnarray*}
    where $\p{<}{}{\partial \gamma, \eta}$ is the virtual count (with combinatorial weights) of holomorphic curves in $\R \times \partial X$ with one positive asymptote $\gamma$, one negative asymptote $\eta$, and $k \geq 0$ extra negative asymptotes $\alpha_1,\ldots,\alpha_k$ (called \textbf{anchors}), each weighted by the count of holomorphic planes in $\hat{X}$ asymptotic to $\alpha_j$ (see \cref{fig:differential of lch}).
\end{definition}

\begin{figure}[htp]
    \centering
    
    \begin{tikzpicture}
        [
            scale = 0.5,
            help/.style = {very thin, draw = black!50},
            curve/.style = {thick}
        ]

        \tikzmath{
            \rx = 0.75;
            \ry = 0.25;
        }
        
        \node[anchor=west] at (13,9) {$\R \times \partial X$};
        \draw (0,6) rectangle (12,12); 

        \node[anchor=west] at (13,3) {$\hat{X}$};
        \draw (0,3) -- (0,6) -- (12,6) -- (12,3);
        \draw (0,3) .. controls (0,-1) and (12,-1) .. (12,3);
        
        \coordinate (G) at ( 2,12);
        \coordinate (E) at ( 2, 6);
        \coordinate (A) at ( 6, 6);
        \coordinate (B) at (10, 6);

        \coordinate (L) at (-\rx,0);
        \coordinate (R) at (+\rx,0);

        \coordinate (GL) at ($ (G) + (L) $);
        \coordinate (EL) at ($ (E) + (L) $);
        \coordinate (AL) at ($ (A) + (L) $);
        \coordinate (BL) at ($ (B) + (L) $);

        \coordinate (GR) at ($ (G) + (R) $);
        \coordinate (ER) at ($ (E) + (R) $);
        \coordinate (AR) at ($ (A) + (R) $);
        \coordinate (BR) at ($ (B) + (R) $);

        \draw[curve] (G) ellipse [x radius = \rx, y radius = \ry] node[above = 1] {$\gamma$};
        \draw[curve] (E) ellipse [x radius = \rx, y radius = \ry] node[above = 1] {$\eta$};
        \draw[curve] (A) ellipse [x radius = \rx, y radius = \ry] node[above = 1] {$\alpha_1$};
        \draw[curve] (B) ellipse [x radius = \rx, y radius = \ry] node[above = 1] {$\alpha_2$};

        \draw[curve] (ER) .. controls ($ (ER) + (0,2) $) and ($ (AL) + (0,2) $) .. (AL);
        \draw[curve] (AR) .. controls ($ (AR) + (0,2) $) and ($ (BL) + (0,2) $) .. (BL);

        \draw[curve] (AL) .. controls ($ (AL) - (0,2) $) and ($ (AR) - (0,2) $) .. (AR);
        \draw[curve] (BL) .. controls ($ (BL) - (0,2) $) and ($ (BR) - (0,2) $) .. (BR);

        \draw[curve] (GR) .. controls ($ (GR) - (0,5) $) and ($ (BR) + (0,5) $) .. (BR);
        
        \coordinate (C) at ($ (E) + (0,3) $);
        \draw[curve] (EL) .. controls ($ (EL) + (0,1) $) and ($ (C) - (0,1) $) .. (C);
        \draw[curve] (GL) .. controls ($ (GL) - (0,1) $) and ($ (C) + (0,1) $) .. (C);
    \end{tikzpicture}

    \caption{A holomorphic curve with anchors contributing to the coefficient $\p{<}{}{\partial \gamma, \eta}$}
    \label{fig:differential of lch}
\end{figure}
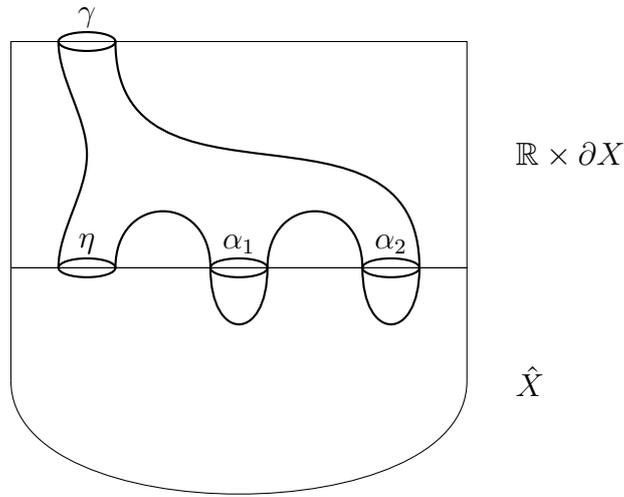

By assumption on the virtual perturbation scheme, $\partial \circ \partial = 0$ and $CC(X)$ is independent (up to chain homotopy equivalence) of the choice of almost complex structure $J$. In general, $CC(X)$ is not $\Z$-graded but only $\Z_2$-graded (see \cref{rmk:grading for lch}). We wish to define a structure of $\mathcal{L}_{\infty}$-algebra on $CC(X)[-1]$. Notice that the definition of $\mathcal{L}_{\infty}$-structure on a vector space (\cref{def:l infinity algebra}) also makes sense when the vector space is only $\Z_2$-graded.

\begin{definition}
    \label{def:lch l infinity}
    We define a structure of $\mathcal{L}_{\infty}$-algebra on $CC(X)[-1]$, given by maps $\ell^k \colon \bigodot^k CC(X) \longrightarrow CC(X)$, as follows. Choose an almost complex structure $J \in \mathcal{J}(X)$. If $\Gamma = (\gamma_1,\ldots,\gamma_k)$ is a tuple of good Reeb orbits, we define
    \begin{IEEEeqnarray*}{c+x*}
        \ell^{k} (\gamma_1 \odot \cdots \odot \gamma_{k}) = \sum_{\eta} \p{<}{}{\ell^{k} (\gamma_1 \odot \cdots \odot \gamma_{k}) , \eta} \, \eta, 
    \end{IEEEeqnarray*}
    where $\p{<}{}{\ell^{k} (\gamma_1 \odot \cdots \odot \gamma_{k}) , \eta}$ is the virtual count of holomorphic curves in $\R \times \partial X$ with positive asymptotes $\gamma_1, \ldots, \gamma_k$, one negative asymptote $\eta$, and a number of extra negative asymptotes with anchors in $\hat{X}$, such that exactly one of the components in the symplectization level is nontrivial (see \cref{fig:l infinity ops of lch}).
\end{definition}

\begin{figure}[htp]
    \centering
    
    \begin{tikzpicture}
        [
            scale = 0.5,
            help/.style = {very thin, draw = black!50},
            curve/.style = {thick}
        ]

        \tikzmath{
            \rx = 0.75;
            \ry = 0.25;
        }
        
        \node[anchor=west] at (17,9) {$\R \times \partial X$};
        \draw (0,6) rectangle (16,12); 

        \node[anchor=west] at (17,3) {$\hat{X}$};
        \draw (0,3) -- (0,6) -- (16,6) -- (16,3);
        \draw (0,3) .. controls (0,-1) and (16,-1) .. (16,3);
        
        \coordinate (G1) at ( 3,12);
        \coordinate (G2) at ( 7,12);
        \coordinate (G3) at (11,12);
        \coordinate (G4) at (14,12);
        \coordinate (F3) at (11, 6);
        \coordinate (F4) at (14, 6);
        \coordinate (E0) at ( 2, 6);
        \coordinate (A1) at ( 5, 6);
        \coordinate (A2) at ( 8, 6);

        \coordinate (L) at (-\rx,0);
        \coordinate (R) at (+\rx,0);

        \coordinate (G1L) at ($ (G1) + (L) $);
        \coordinate (G2L) at ($ (G2) + (L) $);
        \coordinate (G3L) at ($ (G3) + (L) $);
        \coordinate (G4L) at ($ (G4) + (L) $);
        \coordinate (F3L) at ($ (F3) + (L) $);
        \coordinate (F4L) at ($ (F4) + (L) $);
        \coordinate (E0L) at ($ (E0) + (L) $);
        \coordinate (A1L) at ($ (A1) + (L) $);
        \coordinate (A2L) at ($ (A2) + (L) $);

        \coordinate (G1R) at ($ (G1) + (R) $);
        \coordinate (G2R) at ($ (G2) + (R) $);
        \coordinate (G3R) at ($ (G3) + (R) $);
        \coordinate (G4R) at ($ (G4) + (R) $);
        \coordinate (F3R) at ($ (F3) + (R) $);
        \coordinate (F4R) at ($ (F4) + (R) $);
        \coordinate (E0R) at ($ (E0) + (R) $);
        \coordinate (A1R) at ($ (A1) + (R) $);
        \coordinate (A2R) at ($ (A2) + (R) $);

        \draw[curve] (G1) ellipse [x radius = \rx, y radius = \ry] node[above = 1] {$\gamma_1$};
        \draw[curve] (G2) ellipse [x radius = \rx, y radius = \ry] node[above = 1] {$\gamma_2$};
        \draw[curve] (G3) ellipse [x radius = \rx, y radius = \ry] node[above = 1] {$\gamma_3$};
        \draw[curve] (G4) ellipse [x radius = \rx, y radius = \ry] node[above = 1] {$\gamma_4$};
        \draw[curve] (F3) ellipse [x radius = \rx, y radius = \ry] node[above = 1] {$\gamma_3$};
        \draw[curve] (F4) ellipse [x radius = \rx, y radius = \ry] node[above = 1] {$\gamma_4$};
        \draw[curve] (E0) ellipse [x radius = \rx, y radius = \ry] node[above = 1] {$\eta$};
        \draw[curve] (A1) ellipse [x radius = \rx, y radius = \ry] node[above = 1] {$\alpha_1$};
        \draw[curve] (A2) ellipse [x radius = \rx, y radius = \ry] node[above = 1] {$\alpha_2$};

        \draw[curve] (G1R) .. controls ($ (G1R) - (0,2) $) and ($ (G2L) - (0,2) $) .. (G2L);

        \draw[curve] (E0R) .. controls ($ (E0R) + (0,2) $) and ($ (A1L) + (0,2) $) .. (A1L);
        \draw[curve] (A1R) .. controls ($ (A1R) + (0,2) $) and ($ (A2L) + (0,2) $) .. (A2L);

        \draw[curve] (A1L) .. controls ($ (A1L) - (0,3) $) and ($ (A1R) - (0,3) $) .. (A1R);
        \draw[curve] (A2L) .. controls ($ (A2L) - (0,3) $) and ($ (F4R) - (0,3) $) .. (F4R);

        \draw[curve] (A2R) .. controls ($ (A2R) - (0,1) $) and ($ (F3L) - (0,1) $) .. (F3L); 
        \draw[curve] (F3R) .. controls ($ (F3R) - (0,1) $) and ($ (F4L) - (0,1) $) .. (F4L);

        \draw[curve] (E0L) .. controls ($ (E0L) + (0,2) $) and ($ (G1L) - (0,2) $) .. (G1L);
        \draw[curve] (A2R) .. controls ($ (A2R) + (0,2) $) and ($ (G2R) - (0,2) $) .. (G2R);

        \draw[curve] (F3L) -- (G3L);
        \draw[curve] (F3R) -- (G3R);
        \draw[curve] (F4L) -- (G4L);
        \draw[curve] (F4R) -- (G4R);

        \node[rotate = 90] at ($ (F3) + (0,3) $) {trivial};
        \node[rotate = 90] at ($ (F4) + (0,3) $) {trivial};
    \end{tikzpicture}

    \caption{A holomorphic building contributing to the coefficient $\p{<}{}{ \ell^4 (\gamma_1 \odot \cdots \odot \gamma_4), \eta}$}
    \label{fig:l infinity ops of lch}
\end{figure}
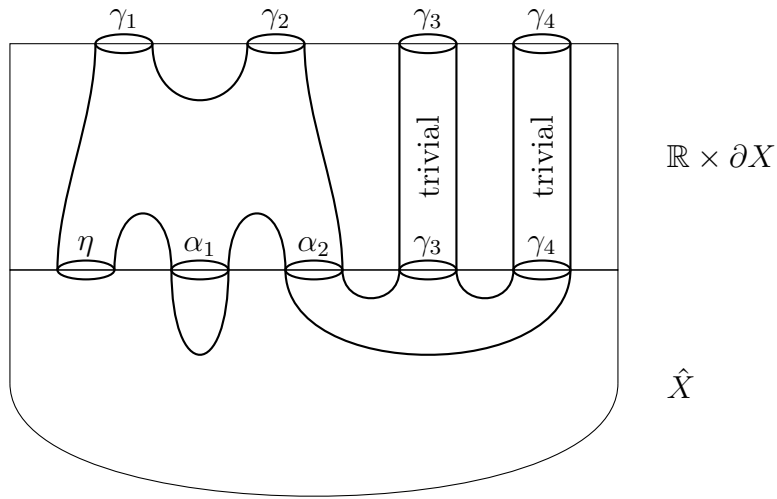

By the assumptions on the virtual perturbation scheme, the maps $\ell^k$ satisfy the $\mathcal{L}_{\infty}$-relations and $CC(X)$ is independent (as an $\mathcal{L}_{\infty}$-algebra, up to $\mathcal{L}_{\infty}$-homotopy equivalence) of the choice of $J$. We point out that the first $\mathcal{L}_{\infty}$-operation is equal to the differential of linearized contact homology, i.e. $\ell^1 = \partial$. 

\begin{remark}
    \label{rmk:grading for lch}
    In general, the Conley--Zehnder index of a Reeb orbit is well-defined as an element in $\Z_2$. Therefore, the complex $CC(X)$ has a $\Z_{2}$-grading given by $\deg(\gamma) \coloneqq n - 3 - \conleyzehnder(\gamma)$, and with respect to this definition of degree every $\mathcal{L}_{\infty}$-operation $\ell^k$ has degree $1$. If $\pi_1(X) = 0$ and $2 c_1(TX) = 0$, then by \cref{lem:cz of reeb is independent of triv over filling disk} we have well-defined Conley--Zehnder indices in $\Z$, which means that $CC(X)$ is $\Z$-graded. For some purposes, it will be enough to consider only the chain complex structure on $CC(X)$ and not the $\mathcal{L}_{\infty}$-algebra structure (namely, when we consider only the capacity $\mathfrak{g}^{\leq 1}_{k}$ instead of the higher capacities $\mathfrak{g}^{\leq \ell}_{k}$). In this case, to make comparisons with $S^1$-equivariant symplectic homology simpler, we define the grading instead by $\deg(\gamma) \coloneqq \conleyzehnder(\gamma)$, which implies that $\partial$ has degree $-1$.
\end{remark}

\begin{definition}
    \label{def:action filtration lch}
    For every $a \in \R$, we denote by $\mathcal{A}^{\leq a} CC(X)[-1]$ the submodule of $CC(X)[-1]$ generated by the good Reeb orbits $\gamma$ with action $\mathcal{A}(\gamma) \leq a$. We call this filtration the \textbf{action filtration} of $CC[-1]$. 
\end{definition}

In the next lemma, we check that this filtration is compatible with the $\mathcal{L}_{\infty}$-structure.

\begin{lemma}
    \label{lem:action filtration of lch}
    $\ell^k ( \mathcal{A}^{\leq a_1} CC(X) \odot \cdots \odot \mathcal{A}^{\leq a_k} CC(X) ) \subset \mathcal{A}^{\leq a_1 + \cdots + a_k} CC(X)$.
\end{lemma}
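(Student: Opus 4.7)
The plan is to exploit the action--energy identity (Lemma 3.1.10, or rather its analogue in this section) for each geometric component in a configuration contributing to the virtual count defining $\ell^k$. Recall from \cref{def:lch l infinity} that $\langle \ell^k(\gamma_1 \odot \cdots \odot \gamma_k), \eta \rangle$ is the virtual count of holomorphic buildings consisting of a distinguished nontrivial component $C \subset \R \times \partial X$ with positive asymptotes $\gamma_1, \ldots, \gamma_k$ and negative asymptotes $\eta, \alpha_1, \ldots, \alpha_m$, together with a family of anchor planes $P_j \subset \hat{X}$, where $P_j$ is positively asymptotic to $\alpha_j$ (the remaining components of the building being trivial cylinders). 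By the assumptions on the virtual perturbation scheme, if $\langle \ell^k(\gamma_1 \odot \cdots \odot \gamma_k), \eta \rangle \neq 0$, then at least one such configuration exists, so it will be enough to extract an action inequality from any such configuration.

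First I would apply \cref{lem:action energy for holomorphic} to each anchor plane $P_j$: since $\hat{X}$ arises from a Liouville domain (hence a Liouville cobordism with empty negative boundary) and $P_j$ has only the positive asymptote $\alpha_j$, the identity gives
\begin{IEEEeqnarray*}{c+x*}
0 \leq E_{\tilde{\omega}}(P_j) = \mathcal{A}(\alpha_j),
\end{IEEEeqnarray*}
so $\mathcal{A}(\alpha_j) \geq 0$ for every $j$. Next, applying the same lemma to the symplectization-level component $C$ (which is a cylindrical cobordism and so also has vanishing $\tilde{\omega}$-energy defect), one obtains
\begin{IEEEeqnarray*}{c+x*}
0 \leq E_{\tilde{\omega}}(C) = \sum_{i=1}^{k} \mathcal{A}(\gamma_i) - \mathcal{A}(\eta) - \sum_{j=1}^{m} \mathcal{A}(\alpha_j).
\end{IEEEeqnarray*}
Combining these two displays yields the key inequality
\begin{IEEEeqnarray*}{c+x*}
\mathcal{A}(\eta) \leq \sum_{i=1}^{k} \mathcal{A}(\gamma_i).
\end{IEEEeqnarray*}

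Finally, for the statement of the lemma, assume $\gamma_i \in \mathcal{A}^{\leq a_i} CC(X)$, i.e.\ $\mathcal{A}(\gamma_i) \leq a_i$; then by the estimate above, any $\eta$ appearing with nonzero coefficient in $\ell^k(\gamma_1 \odot \cdots \odot \gamma_k)$ satisfies $\mathcal{A}(\eta) \leq a_1 + \cdots + a_k$, and hence $\eta \in \mathcal{A}^{\leq a_1 + \cdots + a_k} CC(X)$. By linearity this proves the inclusion. The only mild subtlety is to make sure the action--energy identity of \cref{lem:action energy for holomorphic}, stated there for a connected curve, is applied componentwise to the (possibly disconnected) building; this is routine, since energy is additive over components and the asymptotes match up coherently.
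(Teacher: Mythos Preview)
Your proposal is correct and follows essentially the same approach as the paper: extract from the nonvanishing virtual count an actual holomorphic configuration and apply the action--energy identity of \cref{lem:action energy for holomorphic} to the symplectization component.

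Two minor points of comparison. First, in the paper's description of $\ell^k$ (see \cref{fig:l infinity ops of lch}) the single nontrivial symplectization component need not carry all of $\gamma_1,\ldots,\gamma_k$ as positive asymptotes; some $\gamma_i$ may sit on trivial cylinders and get anchored directly in $\hat{X}$. The paper therefore writes $\Gamma^+ \subset (\gamma_1,\ldots,\gamma_k)$ for the actual positive asymptotes of the nontrivial piece and uses $\mathcal{A}(\Gamma^+) \leq \mathcal{A}(\gamma_1,\ldots,\gamma_k)$. This only makes your inequality stronger, so your argument survives unchanged. Second, your detour through the anchor planes $P_j$ to conclude $\mathcal{A}(\alpha_j) \geq 0$ is unnecessary: the action of a Reeb orbit is its period, hence automatically positive. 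The paper simply bounds $\mathcal{A}(\eta) \leq \mathcal{A}(\Gamma^-)$ for this reason.
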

\begin{proof}
    Let $\gamma_1^+, \ldots, \gamma_k^+, \eta$ be good Reeb orbits such that 
    \begin{IEEEeqnarray*}{rCls+x*}
        \mathcal{A}(\gamma_i^+)                                         & \leq & a_i, \\
        \p{<}{}{\ell^k(\gamma_1^+ \odot \cdots \odot \gamma^+_k), \eta} & \neq & 0.
    \end{IEEEeqnarray*}
    We wish to show that $\mathcal{A}(\eta) \leq a_1 + \cdots + a_k$. Since $\p{<}{}{\ell^k(\gamma_1^+ \odot \cdots \odot \gamma^+_k), \eta} \neq 0$ and by assumption on the virtual perturbation scheme, there exists a tuple of Reeb orbits $\Gamma^-$ and a (nontrivial) punctured $J$-holomorphic sphere in $\R \times \partial X$ with asymptotes $\Gamma^\pm$, such that $\eta \in \Gamma^-$ and $\Gamma^+ \subset (\gamma^+_1,\ldots,\gamma^+_k)$. Then, 
    \begin{IEEEeqnarray*}{rCls+x*}
        \mathcal{A}(\eta)
        & \leq & \mathcal{A}(\Gamma^-)                       & \quad [\text{since $\eta \in \Gamma^-$}] \\
        & \leq & \mathcal{A}(\Gamma^+)                       & \quad [\text{by \cref{lem:action energy for holomorphic}}] \\
        & \leq & \mathcal{A}(\gamma^+_1, \ldots, \gamma^+_k) & \quad [\text{since $\Gamma^+ \subset (\gamma^+_1,\ldots,\gamma^+_k)$}] \\
        & \leq & a_1 + \cdots + a_k.                         & \quad [\text{by definition of action of a tuple}]. & \qedhere
    \end{IEEEeqnarray*}
\end{proof}

\begin{definition}
    \label{def:augmentation map}
    Consider the bar complex $(\mathcal{B}(CC(X)[-1]), \hat{\ell})$. For each $k \in \Z_{\geq 1}$, we define an augmentation ${\epsilon}_k \colon \mathcal{B}(CC(X)[-1]) \longrightarrow \Q$ as follows. Choose $x \in \itr X$, a symplectic divisor $D$ at $x$, and an almost complex structure $J \in \mathcal{J}(X,D)$. Then, for every tuple of good Reeb orbits $\Gamma = (\gamma_1, \ldots, \gamma_p)$ define ${\epsilon}_k (\gamma_1 \odot \cdots \odot \gamma_p)$ to be the virtual count of $J$-holomorphic planes in $\hat{X}$ which are positively asymptotic to $\Gamma$ and have contact order $k$ to $D$ at $x$ (see \cref{fig:augmentation of lch}).
\end{definition}

\begin{figure}[htp]
    \centering
    
    \begin{tikzpicture}
        [
            scale = 0.5,
            help/.style = {very thin, draw = black!50},
            curve/.style = {thick}
        ]

        \tikzmath{
            \rx = 0.75;
            \ry = 0.25;
        }

        \node[anchor=west] at (13,3) {$\hat{X}$};
        \draw (0,3) -- (0,6) -- (12,6) -- (12,3);
        \draw (0,3) .. controls (0,-1) and (12,-1) .. (12,3);
        
        \coordinate (G1) at (4,6); 
        \coordinate (G2) at (8,6);
        
        \coordinate (L) at (-\rx,0);
        \coordinate (R) at (+\rx,0);

        \coordinate (G1L) at ($ (G1) + (L) $);
        \coordinate (G2L) at ($ (G2) + (L) $);     
        
        \coordinate (G1R) at ($ (G1) + (R) $);
        \coordinate (G2R) at ($ (G2) + (R) $);

        \coordinate (P) at (7,3);
        \coordinate (D) at (2,1);

        \draw[curve] (G1) ellipse [x radius = \rx, y radius = \ry] node[above = 1] {$\gamma_1$};
        \draw[curve] (G2) ellipse [x radius = \rx, y radius = \ry] node[above = 1] {$\gamma_2$};

        \fill (P) circle (2pt) node[anchor = north west] {$x$};
        \draw[curve] ($ (P) - (D) $) -- ( $ (P) + (D) $ ) node[anchor = west] {$D$};

        \draw[curve] (G1R) .. controls ($ (G1R) - (0,2) $) and ($ (G2L) - (0,2) $) .. (G2L);
        \draw[curve] (G1L) .. controls ($ (G1L) - (0,2) $) and ($ (P) - (D) $) .. (P);
        \draw[curve] (G2R) .. controls ($ (G2R) - (0,2) $) and ($ (P) + (D) $) .. (P);
    \end{tikzpicture}

    \caption{A holomorphic curve contributing to the count $\epsilon_k(\gamma_1 \odot \gamma_2)$}
    \label{fig:augmentation of lch}
\end{figure}
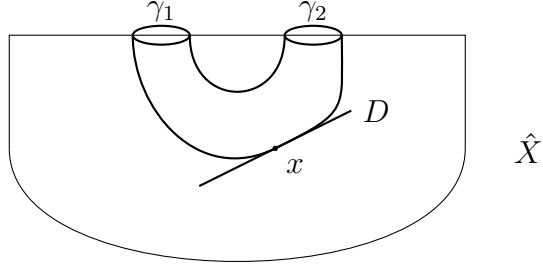

By assumption on the virtual perturbation scheme, ${\epsilon}_k$ is an augmentation, i.e. ${\epsilon}_k \circ \hat{\ell} = 0$. In addition, ${\epsilon}_k$ is independent (up to chain homotopy) of the choices of $x, D, J$.

\section{Higher symplectic capacities}


Here we define the symplectic capacities $\mathfrak{g}^{\leq \ell}_k$ from \cite{siegelHigherSymplecticCapacities2020}. We will prove the usual properties of symplectic capacities (see \cref{thm:properties of hsc}), namely monotonicity and conformality. In addition, we prove that the value of the capacities $\mathfrak{g}^{\leq \ell}_k$ can be represented by the action of a tuple of Reeb orbits. In \cref{rmk:computations using reeb orbits property} we show how this property could in principle be combined with results from \cite{guttSymplecticCapacitiesPositive2018} to compare the capacities $\mathfrak{g}^{\leq 1}_k(X_{\Omega})$ and $\cgh{k}(X_{\Omega})$ when $X_{\Omega}$ is a convex or concave toric domain.

\begin{definition}[{\cite[Section 6.1]{siegelHigherSymplecticCapacities2020}}]
    \label{def:capacities glk}
    Let $k, \ell \in \Z_{\geq 1}$ and $(X,\lambda)$ be a nondegenerate Liouville domain. The \textbf{higher symplectic capacities} of $X$ are given by
    \begin{IEEEeqnarray*}{c+x*}
        \mathfrak{g}^{\leq \ell}_k(X) \coloneqq \inf \{ a > 0 \mid \epsilon_k \colon H(\mathcal{A}^{\leq a} \mathcal{B}^{\leq \ell}(CC(X)[-1])) \longrightarrow \Q \text{ is nonzero} \}.
    \end{IEEEeqnarray*}
\end{definition}

The capacities $\mathfrak{g}^{\leq \ell}_{k}$ will be useful to us because they have similarities with the McDuff--Siegel capacities $\tilde{\mathfrak{g}}^{\leq \ell}_k$, but also with the Gutt--Hutchings capacities $\cgh{k}$ (for $\ell = 1$). More specifically:
\begin{enumerate}
    \item Both $\mathfrak{g}^{\leq \ell}_{k}$ and $\tilde{\mathfrak{g}}^{\leq \ell}_k$ are related to the energy of holomorphic curves in $X$ which are asymptotic to a word of $p \leq \ell$ Reeb orbits and satisfy a tangency constraint. In \cref{thm:g tilde vs g hat}, we will actually show that $\tilde{\mathfrak{g}}^{\leq \ell}_k(X) \leq {\mathfrak{g}}^{\leq \ell}_k(X)$. The capacities $\mathfrak{g}^{\leq \ell}_k$ can be thought of as the SFT counterparts of $\tilde{\mathfrak{g}}^{\leq \ell}_k$, or alternatively the capacities $\tilde{\mathfrak{g}}^{\leq \ell}_k$ can be thought of as the counterparts of $\mathfrak{g}^{\leq \ell}_k$ whose definition does not require the holomorphic curves to be regular.
    \item Both $\mathfrak{g}^{\leq 1}_{k}$ and $\cgh{k}$ are defined in terms of a map in homology being nonzero. In the case of $\mathfrak{g}^{\leq 1}_{k}$, we consider the linearized contact homology, and in the case of $\cgh{k}$ the invariant in question is $S^1$-equivariant symplectic homology. Taking into consideration the Bourgeois--Oancea isomorphism (see \cite{bourgeoisEquivariantSymplecticHomology2016}) between linearized contact homology and positive $S^1$-equivariant symplectic homology, one can think of $\mathfrak{g}^{\leq 1}_{k}$ and $\cgh{k}$ as restatements of one another under this isomorphism. This is the idea behind the proof of \cref{thm:g hat vs gh}, where we show that $\mathfrak{g}^{\leq 1}_{k}(X) = \cgh{k}(X)$.
\end{enumerate}

\begin{remark}
    \label{rmk:novikov coefficients}
    In the case where $X$ is only an exact symplectic manifold instead of a Liouville domain, the proof of \cref{lem:action filtration of lch} does not work. In this case, we do not have access to an action filtration on $CC(X)$. However, it is possible to define linearized contact homology with coefficients in a Novikov ring $\Lambda_{\geq 0}$, in which case a coefficient in $\Lambda_{\geq 0}$ encodes the energy of a holomorphic curve. This is the approach taken in \cite{siegelHigherSymplecticCapacities2020} to define the capacities $\mathfrak{g}^{\leq \ell}_{k}$. It is not obvious that the definition of $\mathfrak{g}^{\leq \ell}_k$ we give and the one in \cite{siegelHigherSymplecticCapacities2020} are equivalent. However, \cref{def:capacities glk} seems to be the natural analogue when we have access to an action filtration, and in addition the definition we provide will be enough for our purposes.
\end{remark}

\begin{theorem}
    \label{thm:properties of hsc}
    The functions ${\mathfrak{g}}^{\leq \ell}_k$ satisfy the following properties, for all nondegenerate Liouville domains $(X,\lambda_X)$ and $(Y,\lambda_Y)$ of the same dimension:
    \begin{description}
        \item[(Monotonicity)] If $X \longrightarrow Y$ is an exact symplectic embedding then $\mathfrak{g}^{\leq \ell}_k(X) \leq \mathfrak{g}^{\leq \ell}_k(Y)$.
        \item[(Conformality)] If $\mu > 0$ then ${\mathfrak{g}}^{\leq \ell}_k(X, \mu \lambda_X) = \mu \, {\mathfrak{g}}^{\leq \ell}_k(X, \lambda_X)$.
        \item[(Reeb orbits)] If $\pi_1(X) = 0$, $2 c_1(TX) = 0$ and ${\mathfrak{g}}^{\leq \ell}_k(X) < + \infty$, then there exists a tuple $\Gamma = (\gamma_1, \ldots, \gamma_p)$ of Reeb orbits such that 
        \begin{enumerate}
            \item ${\mathfrak{g}}^{\leq \ell}_k(X) = \mathcal{A}(\Gamma)$;
            \item $\conleyzehnder(\Gamma) = p (n - 3) + 2 (k + 1)$;
            \item $1 \leq p \leq \ell$.
        \end{enumerate}
    \end{description}
\end{theorem}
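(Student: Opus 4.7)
The plan is to prove each property separately, leveraging standard constructions in contact homology together with the action/word length filtrations and the assumed properties of the virtual perturbation scheme.

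For monotonicity, I would construct an $\mathcal{L}_\infty$-morphism $\Phi \colon CC(Y)[-1] \longrightarrow CC(X)[-1]$ from an exact symplectic embedding $X \longrightarrow Y$ by counting asymptotically cylindrical holomorphic curves in the completion of the cobordism $Y \setminus \varphi(\operatorname{int} X)$, choosing a cylindrical almost complex structure that agrees with admissible choices on each end. This induces a chain map $\hat{\Phi} \colon \mathcal{B}(CC(Y)[-1]) \longrightarrow \mathcal{B}(CC(X)[-1])$. The two key compatibility properties, both of which follow from \cref{lem:action energy for holomorphic} (exactness of the embedding ensures nonnegative energy), are that $\hat{\Phi}$ respects the action filtration $\mathcal{A}^{\leq a}$ and the word length filtration $\mathcal{B}^{\leq \ell}$. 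Moreover, by the standard argument that a composition of cobordism maps computing $\epsilon_k$ in $Y$ and then $\Phi$ can be identified with $\epsilon_k$ in $X$ (up to chain homotopy, via a neck-stretching argument applied to the tangency constraint), we obtain $\epsilon_k^X \circ \hat{\Phi} \simeq \epsilon_k^Y$. Then if $a > \mathfrak{g}^{\leq \ell}_k(Y)$ and $\beta$ witnesses this in $Y$, the class $\hat{\Phi}(\beta) \in H(\mathcal{A}^{\leq a} \mathcal{B}^{\leq \ell}(CC(X)[-1]))$ satisfies $\epsilon_k^X(\hat{\Phi}(\beta)) = \epsilon_k^Y(\beta) \neq 0$, yielding $\mathfrak{g}^{\leq \ell}_k(X) \leq a$.

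For conformality, I would use the fact that under the rescaling $\lambda \mapsto \mu \lambda$ the Reeb vector field rescales as $R \mapsto \mu^{-1} R$, so periodic Reeb orbits are in bijection and their actions rescale by $\mu$. The generators of $CC(X, \mu \lambda)$ are canonically identified with those of $CC(X, \lambda)$, and because the relevant moduli spaces of holomorphic curves in the symplectization and in the completion are unchanged (one can use the same $J$), the $\mathcal{L}_\infty$-operations and the augmentations $\epsilon_k$ are identical. Only the action filtration is rescaled: $\mathcal{A}^{\leq a}_{\mu\lambda} CC = \mathcal{A}^{\leq a/\mu}_\lambda CC$. Taking the infimum then gives $\mathfrak{g}^{\leq \ell}_k(X, \mu \lambda) = \mu \, \mathfrak{g}^{\leq \ell}_k(X, \lambda)$.

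For the Reeb orbits property, set $a \coloneqq \mathfrak{g}^{\leq \ell}_k(X) < +\infty$. Since $(X,\lambda)$ is nondegenerate, there are only finitely many Reeb orbits in any bounded action window, and in particular $\mathcal{A}^{\leq a + 1} \mathcal{B}^{\leq \ell}(CC(X)[-1])$ is finite dimensional. Consequently the set $\{ \mathcal{A}(\Gamma) \mid \Gamma \text{ a word of length} \leq \ell \text{ of good Reeb orbits}\}$ is discrete near $a$, so the infimum in the definition of $\mathfrak{g}^{\leq \ell}_k(X)$ is attained by some word $\Gamma = (\gamma_1,\ldots,\gamma_p)$ with $1 \leq p \leq \ell$ and $\mathcal{A}(\Gamma) = a$. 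The Conley--Zehnder index of such a $\Gamma$ is constrained by the requirement that $\epsilon_k(\Gamma) \neq 0$ contributes: by assumption on the virtual perturbation scheme, a nonzero virtual count forces the virtual dimension of the relevant moduli space of punctured spheres in $\hat{X}$ with positive asymptotes $\Gamma$ and contact order $k$ to $D$ at $x$ to be zero. Using the topological hypotheses $\pi_1(X)=0$ and $2 c_1(TX) = 0$ to trivialize so that $c_1^\tau$ vanishes, the dimension formula from \cref{sec:moduli spaces of holomorphic curves} gives
\begin{IEEEeqnarray*}{c}
    0 = (n-3)(2-p) + \conleyzehnder^\tau(\Gamma) - 2n - 2k + 4,
\end{IEEEeqnarray*}
which rearranges to $\conleyzehnder(\Gamma) = p(n-3) + 2(k+1)$. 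The main obstacle is justifying that the minimizing word actually realizes the infimum rather than merely approximating it; this is where nondegeneracy of $\partial X$ and the fact that only finitely many tuples of good orbits have action at most $a + 1$ is essential, together with the semicontinuity of ``$\epsilon_k$ is nonzero'' as $a$ varies across the finite set of critical action values.
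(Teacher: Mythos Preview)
Your proposal is correct and follows essentially the same approach as the paper: a Viterbo transfer map on the bar complex for monotonicity, action rescaling for conformality, and discreteness of the action spectrum plus the virtual dimension formula for the Reeb orbits property. One small imprecision worth tightening: in the Reeb orbits argument, the infimum is attained by a homology class $\beta$ (a linear combination of words), not directly by a single word; the paper then observes that every word appearing in $\beta$ satisfies the Conley--Zehnder constraint (from $\operatorname{virdim}=0$), and that by definition $\mathcal{A}(\beta)=\max_i \mathcal{A}(\Gamma_i)$, so one constituent word $\Gamma$ realizes the action.
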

\begin{proof}
    We prove monotonicity. If $(X, \lambda^X) \longrightarrow (Y, \lambda^Y)$ is an exact symplectic embedding, then it is possible to define a Viterbo transfer map $H(\mathcal{B}(CC(Y)[-1])) \longrightarrow H(\mathcal{B}(CC(X)[-1]))$. This map respects the action filtration as well as the augmentation maps, i.e. the diagram
    \begin{IEEEeqnarray*}{c+x*}
        \begin{tikzcd}
            H(\mathcal{A}^{\leq a} \mathcal{B}^{\leq \ell} (CC(Y)[-1])) \ar[d] \ar[r] & H(\mathcal{B} (CC(Y)[-1])) \ar[d] \ar[r, "{\epsilon}_{k}^Y"] & \Q \ar[d, equals] \\
            H(\mathcal{A}^{\leq a} \mathcal{B}^{\leq \ell} (CC(X)[-1])) \ar[r]        & H(\mathcal{B} (CC(X)[-1])) \ar[r, swap, "{\epsilon}_{k}^X"]  & \Q
        \end{tikzcd}
    \end{IEEEeqnarray*}
    commutes. The result then follows by definition of $\tilde{\mathfrak{g}}^{\leq \ell}_k$.

    We prove conformality. If $\gamma$ is a Reeb orbit of $(\partial X, \lambda|_{\partial X})$ of action $\mathcal{A}_{\lambda}(\gamma)$ then $\gamma$ is a Reeb orbit of $(\partial X, \mu \lambda|_{\partial X})$ of action $\mathcal{A}_{\mu \lambda}(\gamma) = \mu \mathcal{A}_{\lambda}(\gamma)$. Therefore, there is a diagram
    \begin{IEEEeqnarray*}{c+x*}
        \begin{tikzcd}
            H(\mathcal{A}^{\leq     a} \mathcal{B}^{\leq \ell} (CC(X,     \lambda)[-1])) \ar[d, equals] \ar[r] & H(\mathcal{B} (CC(X,     \lambda)[-1])) \ar[d, equals] \ar[r, "{\epsilon}_{k}^{\lambda}"] & \Q \ar[d, equals] \\
            H(\mathcal{A}^{\leq \mu a} \mathcal{B}^{\leq \ell} (CC(X, \mu \lambda)[-1])) \ar[r]                & H(\mathcal{B} (CC(X, \mu \lambda)[-1])) \ar[r, swap, "{\epsilon}_{k}^{\mu \lambda}"]     & \Q
        \end{tikzcd}
    \end{IEEEeqnarray*}
    Again, the result follows by definition of $\mathfrak{g}^{\leq \ell}_{k}$.

    We prove the Reeb orbits property. Choose a point $x \in \itr X$, a symplectic divisor $D$ through $x$ and an almost complex structure $J \in \mathcal{J}(X,D)$. Consider the bar complex $\mathcal{B}^{\leq \ell} (CC(X)[-1])$, computed with respect to $J$. By assumption and definition of $\mathfrak{g}^{\leq \ell}_{k}$,
    \begin{IEEEeqnarray*}{rCls+x*}
        + \infty
        & > & {\mathfrak{g}}^{\leq \ell}_k(X) \\
        & = & \inf \{ a > 0 \mid \epsilon_k \colon H(\mathcal{A}^{\leq a} \mathcal{B}^{\leq \ell}(CC(X)[-1])) \longrightarrow \Q \text{ is nonzero} \} \\
        & = & \inf \{ a > 0 \mid \text{there exists } \beta \in H(\mathcal{A}^{\leq a} \mathcal{B}^{\leq \ell}(CC(X)[-1])) \text{ such that } {\epsilon}_k (\beta) \neq 0 \} \\
        & = & \inf \{ \mathcal{A}(\beta) \mid \beta \in H(\mathcal{B}^{\leq \ell}(CC(X)[-1])) \text{ such that } {\epsilon}_k (\beta) \neq 0 \},
    \end{IEEEeqnarray*}
    where $\mathcal{A}(\beta)$ is given as in \cref{rmk:notation for tuples of orbits}. Since the action spectrum of $(\partial X, \lambda|_{\partial X})$ is a discrete subset of $\R$, we conclude that in the above expression the infimum is a minimum. More precisely, there exists $\beta \in H(\mathcal{B}^{\leq \ell}(CC(X)[-1]))$ such that $\epsilon_k(\beta) \neq 0$ and ${\mathfrak{g}}^{\leq \ell}_k(X) = \mathcal{A}(\beta)$. The element $\beta$ can be written as a finite linear combination of words of Reeb orbits $\Gamma = (\gamma_1, \ldots, \gamma_p)$, where every word has length $p \leq \ell$ and Conley--Zehnder index equal to $p(n-3) + 2(k+1)$. Here, the statement about the Conley--Zehnder index follows from the computation
    \begin{IEEEeqnarray*}{rCls+x*}
        0
        & = & \operatorname{virdim} \overline{\mathcal{M}}^J_X(\Gamma)\p{<}{}{\mathcal{T}^{(k)}x} \\
        & = & (n-3)(2 - p) + \conleyzehnder(\Gamma) - 2n - 2k + 4 \\
        & = & \conleyzehnder(\Gamma) - p(n-3) - 2(k+1).
    \end{IEEEeqnarray*}
    One of the words in this linear combination is such that $\mathcal{A}(\Gamma) = \mathcal{A}(\beta) = {\mathfrak{g}}^{\leq \ell}_k(X)$.
\end{proof}

\begin{remark}
    \label{rmk:computations using reeb orbits property}
    In \cite[Theorem 1.6]{guttSymplecticCapacitiesPositive2018} (respectively \cite[Theorem 1.14]{guttSymplecticCapacitiesPositive2018}) Gutt--Hutchings give formulas for $\cgh{k}$ of a convex (respectively concave) toric domain. However, the given proofs only depend on specific properties of the Gutt--Hutchings capacity and not on the definition of the capacity itself. These properties are monotonicity, conformality, a Reeb orbits property similar to the one of \cref{thm:properties of hsc}, and finally that the capacity be finite on star-shaped domains. If we showed that $\mathfrak{g}^{\leq 1}_{k}$ is finite on star-shaped domains, we would conclude that $\mathfrak{g}^{\leq 1}_{k} = \cgh{k}$ on convex or concave toric domains, because in this case both capacities would be given by the formulas in the previously mentioned theorems. Showing that $\mathfrak{g}^{\leq 1}_{k}$ is finite boils down to showing that the augmentation map is nonzero, which we will do in \cref{sec:augmentation map of an ellipsoid}. However, in \cref{thm:g hat vs gh} we will use this information in combination with the Bourgeois--Oancea isomorphism to conclude that $\mathfrak{g}^{\leq 1}_{k}(X) = \cgh{k}(X)$ for any nondegenerate Liouville domain $X$. Therefore, the proof suggested above will not be necessary, although it is a proof of $\mathfrak{g}^{\leq 1}_{k}(X) = \cgh{k}(X)$ alternative to that of \cref{thm:g hat vs gh} when $X$ is a convex or concave toric domain.
\end{remark}

\section{Cauchy--Riemann operators on bundles}
\label{sec:cr operators}

In order to show that $\mathfrak{g}^{\leq 1}_{k}(X) = \cgh{k}(X)$, we will need to show that the augmentation map of a small ellipsoid in $X$ is nonzero (see the proof of \cref{thm:g hat vs gh}). Recall that the augmentation map counts holomorphic curves satisfying a tangency constraint. In \cref{sec:augmentation map of an ellipsoid}, we will explicitly compute how many such holomorphic curves there are. However, a count obtained by explicit methods will not necessarily agree with the virtual count that appears in the definition of the augmentation map. By assumption on the virtual perturbation scheme, it does agree if the relevant moduli space is transversely cut out. 

Therefore, in this section and the next we will describe the framework that allows us to show that this moduli space is transversely cut out. This section deals with the theory of real linear Cauchy--Riemann operators on line bundles, and our main reference is \cite{wendlAutomaticTransversalityOrbifolds2010}. The outline is as follows. First, we review the basic definitions about real linear Cauchy--Riemann operators (\cref{def:real linear cauchy riemann operator}). By the Riemann-Roch theorem (\cref{thm:riemann roch with punctures}), these operators are Fredholm and their index can be computed from a number of topological quantities associated to them. We will make special use of a criterion by Wendl (\cref{prp:wen D surjective injective criterion}) which guarantees that a real linear Cauchy--Riemann operator defined on a complex line bundle is surjective. For our purposes, we will also need an adaptation of this result to the case where the operator is accompanied by an evaluation map, which we state in \cref{lem:D plus E is surjective}. We now state the assumptions for the rest of this section.

Let $(\Sigma, j)$ be a compact Riemann surface without boundary, of genus $g$, with sets of positive and negative punctures $\mathbf{z}^{\pm} = \{z^{\pm}_1,\ldots,z^{\pm}_{p^{\pm}}\}$. Denote $\mathbf{z} = \mathbf{z}^{+} \cup \mathbf{z}^{-}$ and $\dot{\Sigma} = \Sigma \setminus \mathbf{z}$. Choose cylindrical coordinates $(s,t)$ near each puncture $z \in \mathbf{z}$ and denote $\mathcal{U}_z \subset \dot{\Sigma}$ the domain of the coordinates $(s,t)$.

\begin{definition}
    \label{def:asymptotically hermitian vector bundle}
    An \textbf{asymptotically Hermitian vector bundle} over $\dot{\Sigma}$ is given by a complex vector bundle $(E, J) \longrightarrow \dot{\Sigma}$ and for each $z \in \mathbf{z}$ a Hermitian vector bundle $(E_z, J_z, \omega_z) \longrightarrow S^1$ together with a complex vector bundle isomorphism $\Phi_z^{} \colon \pi^*_z E_z^{} \longrightarrow \iota_z^* E$, where $\iota_z \colon \mathcal{U}_z \longrightarrow \dot{\Sigma}$ is the inclusion and $\pi_{z} \colon \mathcal{U}_z \longrightarrow S^1$ is given by $\pi_{z}(w) = t(w)$:
    \begin{IEEEeqnarray*}{c+x*}
        \begin{tikzcd}
            E_z^{} \ar[d] & \pi_z^* E_z^{} \ar[r, "\Phi_z"] \ar[d] \ar[l] & \iota_z^* E \ar[r] \ar[d]             & E \ar[d] \\
            S^1           & \mathcal{U}_z \ar[r, equals] \ar[l, "\pi_z"]  & \mathcal{U}_z \ar[r, swap, "\iota_z"] & \dot{\Sigma}
        \end{tikzcd}
    \end{IEEEeqnarray*}
\end{definition}

From now until the end of this section, we will assume that $E$ is an asymptotically Hermitian vector bundle over $\dot{\Sigma}$ of complex rank $n$.

\begin{definition}
    \label{def:asymptotic trivialization}
    An \textbf{asymptotic trivialization} of an asymptotically Hermitian vector bundle $E$ is a family $\tau = (\tau_z)_{z \in \mathbf{z}}$ of unitary trivializations $\tau_z$ of $(E_z, J_z, \omega_z)$. By \cref{def:asymptotically hermitian vector bundle}, every such $\tau_z$ defines a complex trivialization of $\iota^*_z E$. If $\tau$ is an asymptotic trivialization, we will typically denote each $\tau_z$ also by $\tau$.
\end{definition}

\begin{definition}
    \label{def:sobolev spaces}
    Let $E$ be an asymptotically Hermitian vector bundle over $\dot{\Sigma}$, together with an asymptotic trivialization $\tau$. If $\eta$ is a section of $E$ and $z$ is a puncture, denote by $\eta_z \colon Z^{\pm} \longrightarrow \R^{2n}$ the map $\eta$ written with respect to the trivialization $\tau$ and cylindrical coordinates near $z$. The \textbf{Sobolev space} of sections of $E$ is
    \begin{IEEEeqnarray*}{c+x*}
        W^{k,p}(E) \coloneqq \{ \eta \in W^{k,p}_{\mathrm{loc}}(E) \mid \eta_z \in W^{k,p}(Z^{\pm}, \R^{2n}) \text{ for every } z \in \mathbf{z}^{\pm} \}. 
    \end{IEEEeqnarray*}
    If $\delta > 0$, the \textbf{weighted Sobolev space} of sections of $E$ is
    \begin{IEEEeqnarray*}{c+x*}
        W^{k,p,\delta}(E) \coloneqq \{ \eta \in W^{k,p}_{\mathrm{loc}}(E) \mid e^{\pm \delta s} \eta_z \in W^{k,p}(Z^{\pm}, \R^{2n}) \text{ for every } z \in \mathbf{z}^{\pm} \}.
    \end{IEEEeqnarray*}
\end{definition}

\begin{definition}
    \label{def:real linear cauchy riemann operator}
    A \textbf{real linear Cauchy--Riemann operator} is a map
    \begin{IEEEeqnarray*}{c+x*}
        \mathbf{D} \colon W^{1,p}(\dot{\Sigma}, E) \longrightarrow L^p(\dot{\Sigma}, \Hom^{0,1}(T \dot{\Sigma}, E))
    \end{IEEEeqnarray*}
    such that $\mathbf{D}$ is linear as a map of vector spaces over $\R$ and $\mathbf{D}$ satisfies the Leibniz rule, i.e. if $v \in W^{1,p}(\dot{\Sigma}, E)$ and $f \in C^{\infty}(\dot{\Sigma}, \R)$ then $\mathbf{D}(f v) = f \mathbf{D} v + v \otimes \overline{\partial} f$.
\end{definition}

We now consider the asymptotic operators of $\mathbf{D}$. Their relevance comes from the fact that the Fredholm index of $\mathbf{D}$ is determined by the asymptotic operators at the punctures.

\begin{definition}
    An \textbf{asymptotic operator} at $z \in \mathbf{z}$ is a bounded linear operator $\mathbf{A} \colon H^1(E_z) \longrightarrow L^2(E_z)$ such that when written with respect to a unitary trivialization of $E_z$, $\mathbf{A}$ takes the form
    \begin{IEEEeqnarray*}{rrCl}
        & H^1(S^1,\R^{2n}) & \longrightarrow & L^2(S^1,\R^{2n}) \\
        & \eta & \longmapsto & - J_0 \dot{\eta} - S \eta,
    \end{IEEEeqnarray*}
    where $S \colon S^1 \longrightarrow \End(\R^{2n})$ is a loop of symmetric $2n \times 2n$ matrices. We say that $\mathbf{A}$ is nondegenerate if its spectrum does not contain $0$.
\end{definition}

\begin{definition}
    Let $\mathbf{D}$ be a real linear Cauchy--Riemann operator and $\mathbf{A}$ be an asymptotic operator at $z \in \mathbf{z}$. We say that $\mathbf{D}$ is \textbf{asymptotic} to $\mathbf{A}$ at $z$ if the expressions for $\mathbf{D}$ and $\mathbf{A}$ with respect to an asymptotic trivialization near $z$ are of the form
    \begin{IEEEeqnarray*}{rCls+x*}
        (\mathbf{D} \xi)(s,t) & = & \partial_s \xi (s,t) + J_0 \partial_t \xi (s,t) + S(s,t) \xi(s,t) \\
        (\mathbf{A} \eta)(t)  & = & - J_0 \partial_t \eta (t) - S(t) \eta(t),
    \end{IEEEeqnarray*}
    where $S(s,t)$ converges to $S(t)$ uniformly as $s \to \pm \infty$.
\end{definition}

\begin{remark}
    Suppose that $E$ splits as a direct sum of complex vector bundles $E = E_1 \oplus E_2$. In this case, there are canonical inclusions
    \begin{IEEEeqnarray*}{rCls+x*}
        W^{1,p}(\dot{\Sigma}, E_i)                         & \subset & W^{1,p}(\dot{\Sigma}, E), \\
        L^p(\dot{\Sigma}, \Hom^{0,1}(T \dot{\Sigma}, E_i)) & \subset & L^p(\dot{\Sigma}, \Hom^{0,1}(T \dot{\Sigma}, E))
    \end{IEEEeqnarray*}
    for $i = 1,2$, and we have the following decompositions:%
    \begin{IEEEeqnarray*}{rCls+x*}
        W^{1,p}(\dot{\Sigma}, E)                         & = & W^{1,p}(\dot{\Sigma}, E_1) \oplus W^{1,p}(\dot{\Sigma}, E_2), \\
        L^p(\dot{\Sigma}, \Hom^{0,1}(T \dot{\Sigma}, E)) & = & L^p(\dot{\Sigma}, \Hom^{0,1}(T \dot{\Sigma}, E_1)) \oplus L^p(\dot{\Sigma}, \Hom^{0,1}(T \dot{\Sigma}, E_2))
    \end{IEEEeqnarray*}
    We can write $\mathbf{D}$ with respect to these decompositions as a block matrix:
    \begin{IEEEeqnarray*}{c+x*}
        \mathbf{D}
        =
        \begin{bmatrix}
            \mathbf{D}_{11} & \mathbf{D}_{12} \\
            \mathbf{D}_{21} & \mathbf{D}_{22}
        \end{bmatrix}.
    \end{IEEEeqnarray*}
    By \cite[Exercise 7.8]{wendlLecturesSymplecticField2016}, the diagonal terms $\mathbf{D}_{11}$ and $\mathbf{D}_{22}$ are real linear Cauchy--Riemann operators, while the off diagonal terms $\mathbf{D}_{12}$ and $\mathbf{D}_{21}$ are tensorial.
\end{remark}

Let $\mathbf{D}$ be a real linear Cauchy--Riemann operator and for every puncture $z \in \mathbf{z}$ let $\mathbf{A}_z$ be a nondegenerate asymptotic operator at $z$. By the Riemann-Roch theorem with punctures (\cref{thm:riemann roch with punctures}), $\mathbf{D}$ is a Fredholm operator. We now explain how to compute the Fredholm index of $\mathbf{D}$. Choose an asymptotic trivialization $\tau$ as in \cref{def:asymptotic trivialization}. First, recall that the \textbf{Euler characteristic} of $\dot{\Sigma}$ is given by $\chi(\dot{\Sigma}) = 2 - 2 g - \# \mathbf{z}$, where $g$ is the genus of $\Sigma$.

\begin{definition}[{\cite[Definition 5.1]{wendlLecturesSymplecticField2016}}]
    \label{def:relative first chern number}
    Let $S$ be a compact oriented surface with boundary and $(E,J)$ be a complex vector bundle over $S$. Let $\tau$ be a complex trivialization of $E|_{\partial S}$. The \textbf{relative first Chern number} of $E$ with respect to $\tau$, denoted $c_1^{\tau}(E) \in \Z$, is defined by the following properties.
    \begin{enumerate}
        \item If $E$ has complex rank $1$, then $c_1^{\tau}(E)$ is the signed count of zeros of a generic smooth section $\eta \colon S \longrightarrow E$ such that $\tau \circ \eta|_{\partial S} \colon \partial S \longrightarrow \C$ is constant.
        \item If $E_1$ and $E_2$ are complex vector bundles over $S$ with trivializations $\tau_1$ and $\tau_2$ over $\partial S$, then $c_1^{\tau_1 \oplus \tau_2}(E_1 \oplus E_2) = c_1^{\tau}(E_1) + c_1^{\tau}(E_2)$.
    \end{enumerate}
\end{definition}

The definition of relative first Chern number extends to the class of asymptotically Hermitian vector bundles over punctured surfaces.

\begin{definition}
    The \textbf{Conley--Zehnder} index of an asymptotic operator $\mathbf{A}_z$ is given as follows. Let $(\mathbf{A}_z \eta)(t) = -J_0 \partial_t \eta(t) - S(t) \eta(t)$ be the expression of $\mathbf{A}_z$ with respect to $\tau$. Let $\Psi \colon [0,1] \longrightarrow \operatorname{Sp}(2n)$ be the unique path of symplectic matrices such that
    \begin{IEEEeqnarray*}{rCls+x*}
        \Psi(0)       & = & \id_{\R^{2n}}, \\
        \dot{\Psi}(t) & = & J_0 S(t) \Psi(t).
    \end{IEEEeqnarray*}
    Since $\mathbf{A}_z$ is nondegenerate, $\Psi$ is an element of $\operatorname{SP}(n)$. Finally, define $\conleyzehnder^{\tau}(\mathbf{A}_z) \coloneqq \conleyzehnder(\Psi)$.
\end{definition}

\begin{theorem}[Riemann-Roch, {\cite[Theorem 5.4]{wendlLecturesSymplecticField2016}}]
    \label{thm:riemann roch with punctures}
    The operator $\mathbf{D}$ is Fredholm and its (real) Fredholm index is given by
    \begin{IEEEeqnarray*}{c+x*}
        \operatorname{ind} \mathbf{D} = n \chi (\dot{\Sigma}) + 2 c_1^{\tau}(E) + \sum_{z \in \mathbf{z}^+} \conleyzehnder^{\tau}(\mathbf{A}_z) - \sum_{z \in \mathbf{z}^-} \conleyzehnder^{\tau}(\mathbf{A}_z).
    \end{IEEEeqnarray*}
\end{theorem}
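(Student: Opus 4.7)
The proof naturally splits into two parts: establishing the Fredholm property of $\mathbf{D}$, and then computing its index. For the Fredholm property, my plan is to exploit the asymptotic behavior near each puncture. With respect to an asymptotic trivialization $\tau$, on a cylindrical end $\mathcal{U}_z$ the operator takes the form $\partial_s + J_0 \partial_t + S(s,t)$ with $S(s,t) \to S(t)$ as $s \to \pm \infty$. Because $\mathbf{A}_z$ is nondegenerate its spectrum is bounded away from $0$, which gives uniform exponential decay estimates for elements of $\ker \mathbf{D}$ and $\ker \mathbf{D}^*$ on the cylindrical ends. Combining these with the standard interior elliptic estimates and a patching argument, $\mathbf{D}$ acquires closed range and finite-dimensional kernel and cokernel.

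For the index computation, the plan is to use three tools: (i) homotopy invariance of the Fredholm index through families of Cauchy--Riemann operators with fixed nondegenerate asymptotic operators, (ii) the classical Riemann--Roch theorem on compact surfaces with totally real boundary conditions, and (iii) a linear gluing theorem. The key step is to deform $\mathbf{D}$ through Fredholm operators so that, on each cylindrical end $\mathcal{U}_z$, it takes the translation-invariant form $\partial_s + \mathbf{A}_z$ written in the trivialization $\tau$. One then cuts $\dot{\Sigma}$ along circles $\{s_0\} \times S^1 \subset \mathcal{U}_z$ to produce a compact surface $\Sigma_0$ with boundary and half-cylinders $Z^{\pm}_z$ glued to it. By linear gluing, $\operatorname{ind} \mathbf{D}$ equals the sum of the indices of the pieces once a compatible totally real boundary condition (determined by $\tau$) is imposed.

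The contribution from $\Sigma_0$ is handled by the relative Riemann--Roch theorem for compact surfaces with boundary: it contributes $n \chi(\Sigma_0) + 2 c_1^{\tau}(E|_{\Sigma_0}) = n \chi(\dot{\Sigma}) + 2 c_1^{\tau}(E)$, where the equality uses that we have removed $\# \mathbf{z}$ open disks, but $\chi$ is preserved by gluing cylinders and the relative first Chern number is computed from $\tau$. For each half-cylinder, an explicit computation via the eigenfunction expansion of $\mathbf{A}_z$ identifies $\ker(\partial_s + \mathbf{A}_z)$ on $Z^{\pm}$ with the span of eigenfunctions of $\mathbf{A}_z$ with negative (resp.\ positive) eigenvalue, and after accounting for the boundary condition this gives a contribution of $\pm \conleyzehnder^{\tau}(\mathbf{A}_z)$, by the definition of the Conley--Zehnder index via spectral flow of the path $\Psi$ solving $\dot\Psi = J_0 S \Psi$. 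Summing these contributions yields the stated formula.

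The main obstacle will be the linear gluing step, since one must verify that cutting $\dot{\Sigma}$ into pieces does not alter the Fredholm index, i.e.\ that the indices behave additively under the chosen boundary condition. In practice this amounts to showing that the parameter-dependent family of operators obtained by stretching the necks has locally constant index, which can be done either via direct Fredholm estimates with exponential weights or by identifying the index with spectral flow and invoking the concatenation property of spectral flow. The cleanest reference for this argument is Wendl \cite[Chapter 5]{wendlLecturesSymplecticField2016}, whose presentation I would follow essentially verbatim.
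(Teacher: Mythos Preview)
The paper does not prove this theorem at all: it is stated with a citation to \cite[Theorem 5.4]{wendlLecturesSymplecticField2016} and no proof is given. Your proposal is a reasonable outline of the standard argument (and you yourself note at the end that you would follow Wendl's Chapter~5 essentially verbatim), so there is nothing to compare against --- the paper simply imports the result as a black box.
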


For the rest of this section, we restrict ourselves to the case where $n = \operatorname{rank}_{\C} E = 1$. We retain the assumption that $\mathbf{D}$ is a real linear Cauchy--Riemann operator and $\mathbf{A}_{z}$ is a nondegenerate asymptotic operator for every puncture $z \in \mathbf{z}$. Our goal is to state a criterion that guarantees surjectivity of $\mathbf{D}$. This criterion depends on other topological quantities which we now define.

For every $\lambda$ in the spectrum of $\mathbf{A}_z$, let $w^{\tau}(\lambda)$ be the winding number of any nontrivial section in the $\lambda$-eigenspace of $\mathbf{A}_z$ (computed with respect to the trivialization $\tau$). Define the \textbf{winding numbers}
\begin{IEEEeqnarray*}{rClls+x*}
    \alpha_-^{\tau}(\mathbf{A}_z) & \coloneqq & \max & \{ w^{\tau}(\lambda) \mid \lambda < 0 \text{ is in the spectrum of }\mathbf{A}_z \}, \\
    \alpha_+^{\tau}(\mathbf{A}_z) & \coloneqq & \min & \{ w^{\tau}(\lambda) \mid \lambda > 0 \text{ is in the spectrum of }\mathbf{A}_z \}.
\end{IEEEeqnarray*}
The \textbf{parity} (the reason for this name is Equation \eqref{eq:cz winding parity} below) and associated sets of even and odd punctures are given by
\begin{IEEEeqnarray*}{rCls+x*}
    p(\mathbf{A}_{z}) & \coloneqq & \alpha_{+}^{\tau}(\mathbf{A}_z) - \alpha^{\tau}_{-}(\mathbf{A}_z) \in \{0,1\}, \\
    \mathbf{z}_0      & \coloneqq & \{ z \in \mathbf{z} \mid p(\mathbf{A}_z) = 0 \}, \\
    \mathbf{z}_1      & \coloneqq & \{ z \in \mathbf{z} \mid p(\mathbf{A}_z) = 1 \}.
\end{IEEEeqnarray*}
Finally, the \textbf{adjusted first Chern number} is given by
\begin{IEEEeqnarray*}{c+x*}
    c_1(E,\mathbf{A}_{\mathbf{z}}) = c_1^{\tau}(E) + \sum_{z \in \mathbf{z}^+} \alpha_-^{\tau}(\mathbf{A}_z) - \sum_{z \in \mathbf{z}^-} \alpha_-^{\tau}(\mathbf{A}_z).
\end{IEEEeqnarray*}
These quantities satisfy the following equations.
\begin{IEEEeqnarray}{rCls+x*}
    \conleyzehnder^{\tau}(\mathbf{A}_z) & = & 2 \alpha_{-}^{\tau}(\mathbf{A_z}) + p(\mathbf{A}_z) = 2 \alpha_{+}^{\tau}(\mathbf{A_z}) - p(\mathbf{A}_z), \plabel{eq:cz winding parity} \\
    2 c_1 (E,\mathbf{A}_{\mathbf{z}})   & = & \operatorname{ind} \mathbf{D} - 2 - 2g + \# \mathbf{z}_0. \plabel{eq:chern and index}
\end{IEEEeqnarray}

\begin{proposition}[{\cite[Proposition 2.2]{wendlAutomaticTransversalityOrbifolds2010}}]
    \phantomsection\label{prp:wen D surjective injective criterion}
    \begin{enumerate}
        \item[] 
        \item If $\operatorname{ind} \mathbf{D} \leq 0$ and $c_1(E, \mathbf{A}_{\mathbf{z}}) < 0$ then $\mathbf{D}$ is injective.
        \item If $\operatorname{ind} \mathbf{D} \geq 0$ and $c_1(E, \mathbf{A}_{\mathbf{z}}) < \operatorname{ind} \mathbf{D}$ then $\mathbf{D}$ is surjective.
    \end{enumerate}
\end{proposition}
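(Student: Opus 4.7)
The plan is to prove both claims via Wendl's automatic transversality technique, whose technical engine is the similarity principle combined with a zero-counting formula for sections of complex line bundles on punctured surfaces. For the injectivity claim in (1), I would suppose $\eta \in \ker \mathbf{D}$ is nonzero and derive a contradiction. The similarity principle, applied locally near each zero of $\eta$, shows that the zero set is discrete and every zero contributes a strictly positive integer to the algebraic zero count $Z(\eta) \geq 0$. At the same time, Hofer--Wysocki--Zehnder's asymptotic analysis for Cauchy--Riemann type operators (as summarized in \cite{wendlLecturesSymplecticField2016}) implies that near each puncture $\eta$ is asymptotic to an eigenfunction of $\mathbf{A}_z$ corresponding to a negative eigenvalue at positive punctures (where $\eta$ must decay as $s \to +\infty$) and a positive eigenvalue at negative punctures, yielding the winding bounds
\begin{IEEEeqnarray*}{rCls+x*}
\operatorname{wind}^{\tau}(\eta,z) & \leq & \alpha_-^{\tau}(\mathbf{A}_z), \quad z \in \mathbf{z}^+, \\
\operatorname{wind}^{\tau}(\eta,z) & \geq & \alpha_+^{\tau}(\mathbf{A}_z), \quad z \in \mathbf{z}^-.
\end{IEEEeqnarray*}
Feeding these into the classical zero-counting identity
\begin{IEEEeqnarray*}{c+x*}
c_1^{\tau}(E) = Z(\eta) + \sum_{z \in \mathbf{z}^+} \operatorname{wind}^{\tau}(\eta,z) - \sum_{z \in \mathbf{z}^-} \operatorname{wind}^{\tau}(\eta,z),
\end{IEEEeqnarray*}
and using the parity relation $\alpha_+^{\tau} = \alpha_-^{\tau} + p$ together with the definition of $c_1(E,\mathbf{A}_{\mathbf{z}})$, I would obtain $c_1(E,\mathbf{A}_{\mathbf{z}}) \geq 0$, contradicting the hypothesis.

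For the surjectivity claim in (2), I would pass to the formal $L^2$-adjoint $\mathbf{D}^*$, which is itself a real linear Cauchy--Riemann type operator acting on sections of a dual complex line bundle, with asymptotic operators obtained from the $\mathbf{A}_z$ by negating eigenvalues and with the roles of positive and negative punctures interchanged. Under these transformations $\operatorname{ind} \mathbf{D}^* = -\operatorname{ind} \mathbf{D}$, and a short computation using \eqref{eq:chern and index} shows that the hypothesis $c_1(E,\mathbf{A}_{\mathbf{z}}) < \operatorname{ind}\mathbf{D}$ translates into negativity of the adjusted first Chern number of the adjoint bundle. Applying part (1) to $\mathbf{D}^*$ then forces $\ker \mathbf{D}^* = 0$, which by standard Fredholm theory is equivalent to $\coker \mathbf{D} = 0$, hence $\mathbf{D}$ is surjective.

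The main obstacle will be the careful sign and parity bookkeeping in the zero-counting estimate. Producing an inequality whose right-hand side is exactly $c_1(E,\mathbf{A}_{\mathbf{z}})$ rather than an extra shift by parity terms requires separating punctures into $\mathbf{z}_0$ and $\mathbf{z}_1$ and using the relation \eqref{eq:chern and index} to convert between Fredholm index and adjusted Chern number; it is precisely this separation that produces the sharp gap between the injectivity threshold $c_1 < 0$ and the surjectivity threshold $c_1 < \operatorname{ind}\mathbf{D}$. A secondary technical point is to verify that the formal adjoint genuinely fits into the same framework of Cauchy--Riemann operators with nondegenerate asymptotic operators on a complex line bundle, so that part (1) can be legitimately applied to $\mathbf{D}^*$; this follows from the nondegeneracy of $\mathbf{A}_z$ and a standard computation, but must be made explicit to close the argument.
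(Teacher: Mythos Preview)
The paper does not give its own proof of this proposition; it is simply quoted from \cite[Proposition 2.2]{wendlAutomaticTransversalityOrbifolds2010} and then used as a black box. Your sketch follows exactly the approach of Wendl's original proof, so there is nothing to compare against in the thesis itself.

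That said, note that the thesis records precisely the key formula you need in \cref{rmk:formulas for xi in ker nonzero}: any nonzero $\xi \in \ker \mathbf{D}$ satisfies $c_1(E,\mathbf{A}_{\mathbf{z}}) = Z(\xi) + Z_\infty(\xi) \geq 0$, which immediately gives part (1) without having to manipulate winding numbers by hand (your displayed zero-counting identity has a sign slip, but as you anticipated this is just bookkeeping, and the paper's packaged formula \eqref{eq:c1 and asy vanishing} circumvents it). Your reduction of (2) to (1) via the formal adjoint is correct and is again Wendl's argument.
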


We will apply the proposition above to moduli spaces of punctured spheres which have no even punctures. The following lemma is just a restatement of the previous proposition in this simpler case.

\begin{lemma}
    \label{lem:conditions for D surjective genus zero}
    Assume that $g = 0$ and $\# \mathbf{z}_0 = 0$. Then,
    \begin{enumerate}
        \item If $\operatorname{ind} \mathbf{D} \leq 0$ then $\mathbf{D}$ is injective.
        \item If $\operatorname{ind} \mathbf{D} \geq 0$ then $\mathbf{D}$ is surjective.
    \end{enumerate}
\end{lemma}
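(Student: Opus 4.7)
The plan is to deduce this lemma directly from \cref{prp:wen D surjective injective criterion} by using the topological identity \eqref{eq:chern and index} to eliminate $c_1(E, \mathbf{A}_{\mathbf{z}})$ from the hypotheses. Under the assumptions $g = 0$ and $\# \mathbf{z}_0 = 0$, Equation \eqref{eq:chern and index} simplifies to
\begin{IEEEeqnarray*}{c+x*}
    c_1(E, \mathbf{A}_{\mathbf{z}}) = \frac{\operatorname{ind} \mathbf{D} - 2}{2},
\end{IEEEeqnarray*}
so the Chern number is completely determined by the Fredholm index. The task then reduces to verifying that in each of the two regimes the relevant inequality on $c_1(E,\mathbf{A}_{\mathbf{z}})$ from \cref{prp:wen D surjective injective criterion} is automatically satisfied.

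For the injectivity statement, I would assume $\operatorname{ind} \mathbf{D} \leq 0$ and observe that the displayed formula gives $c_1(E, \mathbf{A}_{\mathbf{z}}) \leq -1 < 0$, so the hypotheses of the first item of \cref{prp:wen D surjective injective criterion} hold and $\mathbf{D}$ is injective. For the surjectivity statement, I would assume $\operatorname{ind} \mathbf{D} \geq 0$ and check the strict inequality $c_1(E, \mathbf{A}_{\mathbf{z}}) < \operatorname{ind} \mathbf{D}$; substituting the formula, this becomes $\operatorname{ind} \mathbf{D}/2 - 1 < \operatorname{ind} \mathbf{D}$, i.e.\ $-2 < \operatorname{ind} \mathbf{D}$, which is immediate from $\operatorname{ind} \mathbf{D} \geq 0$. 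Hence the second item of \cref{prp:wen D surjective injective criterion} applies and $\mathbf{D}$ is surjective.

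There is no real obstacle here: the lemma is a bookkeeping corollary, and the only thing to be careful about is that the identity \eqref{eq:chern and index} requires $\# \mathbf{z}_0 = 0$ to remove the correction term, which is precisely one of the hypotheses. The proof is therefore essentially a one-line computation in each case, and I would present it as two short paragraphs, one per item.
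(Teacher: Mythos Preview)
Your proposal is correct and follows exactly the paper's approach: the paper's proof is the single line ``By \cref{prp:wen D surjective injective criterion} and Equation \eqref{eq:chern and index},'' and you have simply unpacked that computation. Your verification of the two inequalities is accurate, so there is nothing to change.
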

\begin{proof}
    By \cref{prp:wen D surjective injective criterion} and Equation \eqref{eq:chern and index}.
\end{proof}

We now wish to deal with the case where $\mathbf{D}$ is taken together with an evaluation map (see \cref{lem:D plus E is surjective} below). The tools we need to prove this result are explained in the following remark.

\begin{remark}
    \label{rmk:formulas for xi in ker nonzero}
    Suppose that $\ker \mathbf{D} \neq \{0\}$. If $\xi \in \ker \mathbf{D} \setminus \{0\}$, it is possible to show that $\xi$ has only a finite number of zeros, all of positive order, i.e. if $w$ is a zero of $\xi$ then $\operatorname{ord}(\xi;w) > 0$. For every $z \in \mathbf{z}$, there is an \textbf{asymptotic winding number} $\operatorname{wind}_z^{\tau}(\xi) \in \Z$, which has the properties
    \begin{IEEEeqnarray*}{rCls+x*}
        z \in \mathbf{z}^+ & \Longrightarrow & \operatorname{wind}_z^{\tau}(\xi) \leq \alpha_-^{\tau}(\mathbf{A}_z), \\
        z \in \mathbf{z}^- & \Longrightarrow & \operatorname{wind}_z^{\tau}(\xi) \geq \alpha_+^{\tau}(\mathbf{A}_z).
    \end{IEEEeqnarray*}
    Define the \textbf{asymptotic vanishing} of $\xi$, denoted $Z_{\infty}(\xi)$, and the \textbf{count of zeros}, denoted $Z(\xi)$, by
    \begin{IEEEeqnarray*}{rCls+x*}
        Z_{\infty}(\xi) & \coloneqq & \sum_{z \in \mathbf{z}^+} \p{}{1}{\alpha_-^{\tau}(\mathbf{A}_z) - \operatorname{wind}_z^{\tau}(\xi)} + \sum_{z \in \mathbf{z}^-} \p{}{1}{\operatorname{wind}_z^{\tau}(\xi) - \alpha_+^{\tau}(\mathbf{A}_z)} \in \Z_{\geq 0}, \\
        Z(\xi)          & \coloneqq & \sum_{w \in \xi^{-1}(0)} \operatorname{ord}(\xi;w) \in \Z_{\geq 0}.
    \end{IEEEeqnarray*}
    In this case, we have the formula (see \cite[Equation 2.7]{wendlAutomaticTransversalityOrbifolds2010})
    \begin{IEEEeqnarray}{c}
        \plabel{eq:c1 and asy vanishing}
        c_1(E,\mathbf{A}_{\mathbf{z}}) = Z(\xi) + Z_{\infty}(\xi).
    \end{IEEEeqnarray}
\end{remark}

\begin{lemma}
    \label{lem:D plus E is surjective}
    Let $w \in \dot{\Sigma}$ be a point and $\mathbf{E} \colon W^{1,p}(\dot{\Sigma}, E) \longrightarrow E_w$ be the evaluation map at $w$, i.e. $\mathbf{E}(\xi) = \xi_w$. Assume that $g = 0$ and $\# \mathbf{z}_0 = 0$. If $\operatorname{ind} \mathbf{D} = 2$ then $\mathbf{D} \oplus \mathbf{E} \colon W^{1,p}(\dot{\Sigma}, E) \longrightarrow L^p(\dot{\Sigma}, \Hom^{0,1}(T \dot{\Sigma}, E)) \oplus E_w$ is surjective.
\end{lemma}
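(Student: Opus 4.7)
The plan is to reduce surjectivity of $\mathbf{D} \oplus \mathbf{E}$ to showing that the restricted evaluation $\mathbf{E}|_{\ker \mathbf{D}} \colon \ker \mathbf{D} \longrightarrow E_w$ is an isomorphism, and then to rule out nonzero elements of $\ker \mathbf{D}$ vanishing at $w$ using the asymptotic vanishing formula from \cref{rmk:formulas for xi in ker nonzero}.

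First I would observe that since $\operatorname{ind} \mathbf{D} = 2 \geq 0$, \cref{lem:conditions for D surjective genus zero} gives that $\mathbf{D}$ itself is surjective, so $\coker \mathbf{D} = 0$ and hence $\dim_{\R} \ker \mathbf{D} = \operatorname{ind} \mathbf{D} = 2$. Given any $(\eta, v) \in L^p(\dot{\Sigma}, \Hom^{0,1}(T\dot{\Sigma}, E)) \oplus E_w$, surjectivity of $\mathbf{D}$ produces $\xi_0 \in W^{1,p}(\dot{\Sigma}, E)$ with $\mathbf{D} \xi_0 = \eta$, and it then suffices to find $\xi_1 \in \ker \mathbf{D}$ with $\mathbf{E} \xi_1 = v - \mathbf{E}\xi_0$. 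Thus the surjectivity of $\mathbf{D} \oplus \mathbf{E}$ is equivalent to surjectivity of $\mathbf{E}|_{\ker \mathbf{D}} \colon \ker \mathbf{D} \longrightarrow E_w$, and since both sides are $2$-dimensional real vector spaces, this in turn is equivalent to injectivity of $\mathbf{E}|_{\ker \mathbf{D}}$.

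Next I would compute the adjusted first Chern number. By Equation \eqref{eq:chern and index}, using $g = 0$, $\# \mathbf{z}_0 = 0$, and $\operatorname{ind} \mathbf{D} = 2$,
\begin{IEEEeqnarray*}{c+x*}
    2 c_1(E, \mathbf{A}_{\mathbf{z}}) = \operatorname{ind} \mathbf{D} - 2 - 2g + \# \mathbf{z}_0 = 2 - 2 - 0 + 0 = 0,
\end{IEEEeqnarray*}
so $c_1(E, \mathbf{A}_{\mathbf{z}}) = 0$. Now assume for contradiction that $\xi \in \ker \mathbf{D} \setminus \{0\}$ satisfies $\mathbf{E}\xi = \xi_w = 0$. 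By \cref{rmk:formulas for xi in ker nonzero}, every zero of $\xi$ has positive order, so $Z(\xi) \geq \operatorname{ord}(\xi; w) \geq 1$, and also $Z_\infty(\xi) \geq 0$. Combining these with Equation \eqref{eq:c1 and asy vanishing} gives
\begin{IEEEeqnarray*}{c+x*}
    0 = c_1(E, \mathbf{A}_{\mathbf{z}}) = Z(\xi) + Z_\infty(\xi) \geq 1,
\end{IEEEeqnarray*}
a contradiction. Therefore $\mathbf{E}|_{\ker \mathbf{D}}$ is injective, hence an isomorphism, and $\mathbf{D} \oplus \mathbf{E}$ is surjective.

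The only potentially nontrivial ingredient is the use of the asymptotic winding / zero count formula \eqref{eq:c1 and asy vanishing}, which requires that all zeros of a nonzero element of $\ker \mathbf{D}$ have positive order; this is precisely the content invoked from \cref{rmk:formulas for xi in ker nonzero}. Everything else is a straightforward dimension count combined with the surjectivity criterion of \cref{lem:conditions for D surjective genus zero}, so I do not anticipate a serious obstacle.
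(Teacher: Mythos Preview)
Your proposal is correct and follows essentially the same approach as the paper: reduce to injectivity of $\mathbf{E}|_{\ker \mathbf{D}}$ via surjectivity of $\mathbf{D}$ and a dimension count, then use Equation~\eqref{eq:chern and index} to get $c_1(E,\mathbf{A}_{\mathbf{z}})=0$ and derive a contradiction from Equation~\eqref{eq:c1 and asy vanishing} since a nonzero $\xi \in \ker \mathbf{D}$ with $\xi_w = 0$ would have $Z(\xi) \geq 1$. The paper's proof is structured identically, only differing in the order of presentation.
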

\begin{proof}
    It is enough to show that the maps
    \begin{IEEEeqnarray*}{rCls+x*}
        \mathbf{D} \colon W^{1,p}(\dot{\Sigma}, E)           & \longrightarrow & L^p(\dot{\Sigma}, \Hom^{0,1}(T \dot{\Sigma}, E)), \\
        \mathbf{E}|_{\ker \mathbf{D}} \colon \ker \mathbf{D} & \longrightarrow & E_w
    \end{IEEEeqnarray*}
    are surjective. By \cref{lem:conditions for D surjective genus zero}, $\mathbf{D}$ is surjective. Since $\dim \ker \mathbf{D} = \operatorname{ind} \mathbf{D} = 2$ and $\dim_{\R} E_w = 2$, the map $\mathbf{E}|_{\ker \mathbf{D}}$ is surjective if and only if it is injective. So, we show that $\ker(E|_{\ker \mathbf{D}}) = \ker \mathbf{E} \cap \ker \mathbf{D} = \{0\}$. For this, let $\xi \in \ker \mathbf{E} \cap \ker \mathbf{D}$ and assume by contradiction that $\xi \neq 0$. Consider the quantities defined in \cref{rmk:formulas for xi in ker nonzero}. We compute
    \begin{IEEEeqnarray*}{rCls+x*}
        0
        & =    & \operatorname{ind} \mathbf{D} - 2 & \quad [\text{by assumption}] \\
        & =    & 2 c_1(E,\mathbf{A}_{\mathbf{z}})  & \quad [\text{by Equation \eqref{eq:chern and index}}] \\
        & =    & 2 Z(\xi) + 2 Z_{\infty}(\xi)      & \quad [\text{by Equation \eqref{eq:c1 and asy vanishing}}] \\
        & \geq & 0                                 & \quad [\text{by definition of $Z$ and $Z_{\infty}$}],
    \end{IEEEeqnarray*}
    which implies that $Z(\xi) = 0$. This gives the desired contradiction, because
    \begin{IEEEeqnarray*}{rCls+x*}
        0
        & =    & Z(\xi)                                             & \quad [\text{by the previous computation}] \\
        & =    & \sum_{z \in \xi^{-1}(0)} \operatorname{ord}(\xi;z) & \quad [\text{by definition of $Z$}] \\
        & \geq & \operatorname{ord}(\xi;w)                          & \quad [\text{since $\xi_w = \mathbf{E}(\xi) = 0$}] \\
        & >    & 0                                                  & \quad [\text{by \cref{rmk:formulas for xi in ker nonzero}}]. & \qedhere
    \end{IEEEeqnarray*}
\end{proof}

\section{Cauchy--Riemann operators as sections}
\label{sec:functional analytic setup}

In this section, we phrase the notion of a map $u \colon \dot{\Sigma} \longrightarrow \hat{X}$ being holomorphic in terms of $u$ being in the zero set of a section $\overline{\partial} \colon \mathcal{T} \times \mathcal{B} \longrightarrow \mathcal{E}$ (see \cref{def:bundle for cr op,def:cauchy riemann operator}). The goal of this point of view is that we can then think of moduli spaces of holomorphic curves in $\hat{X}$ as the zero set of the section $\overline{\partial}$. To see if such a moduli space is regular near $(j, u)$, one needs to consider the linearization $\mathbf{L}_{(j,u)}$ of $\overline{\partial}$ at $(j,u)$ (see \cref{def:linearized cr op}), and prove that it is surjective. We will see that a suitable restriction of $\mathbf{L}_{(j,u)}$ is a real linear Cauchy--Riemann operator (\cref{lem:D is a rlcro}), and therefore we can use the theory from the last section to show that $\mathbf{L}_{(j,u)}$ is surjective in some particular cases (\cref{lem:Du is surjective case n is 1,lem:DX surj implies DY surj}). 

\begin{definition}
    \label{def:asymptotic marker}
    Let $(\Sigma,j)$ be a Riemann surface and $z \in \Sigma$ be a puncture. An \textbf{asymptotic marker} at $z$ is a half-line $v \in (T_z \Sigma \setminus \{0\}) / \R_{> 0}$.
\end{definition}

\begin{definition}
    \label{def:moduli space of curves with asymtotic marker}
    Let $(X, \omega, \lambda)$ be a symplectic cobordism, $J \in \mathcal{J}(X)$ be a cylindrical almost complex structure on $\hat{X}$, and $\Gamma^{\pm} = (\gamma^{\pm}_1, \ldots, \gamma^{\pm}_{p^{\pm}})$ be tuples of Reeb orbits on $\partial^{\pm} X$. Let $\mathcal{M}^{\$,J}_X(\Gamma^+, \Gamma^-)$ be the moduli space of (equivalence classes of) tuples 
    \begin{IEEEeqnarray*}{c+x*}
        (\Sigma, j, \mathbf{z}, \mathbf{v}, u), \qquad \mathbf{z} = \mathbf{z}^+ \cup \mathbf{z}^-, \qquad \mathbf{v} = \mathbf{v}^+ \cup \mathbf{v}^{-}
    \end{IEEEeqnarray*}
    where $(\Sigma, j, \mathbf{z}, u)$ is as in \cref{def:asy cyl holomorphic curve} and $\mathbf{v}^{\pm} = \{v^{\pm}_1, \ldots, v^{\pm}_{p^{\pm}}\}$ is a set of asymptotic markers on $\mathbf{z}^{\pm} = \{z^{\pm}_1, \ldots, z^{\pm}_{p^{\pm}}\}$ such that
    \begin{IEEEeqnarray*}{c+x*}
        \lim_{t \to 0^+} u(c(t)) = (\pm \infty, \gamma^{\pm}_i(0))
    \end{IEEEeqnarray*}
    for every $i = 1, \ldots, p^{\pm}$ and every path $c$ in $\Sigma$ with $c(t) = z^{\pm}_i$ and $\dot{c}(0) = v^{\pm}_i$. Two such tuples $(\Sigma_0, j_0, \mathbf{z}_0, \mathbf{v}_0, u_0)$ and $(\Sigma_1, j_1, \mathbf{z}_1, \mathbf{v}_1, u_1)$ are equivalent if there exists a biholomorphism $\phi \colon \Sigma_0 \longrightarrow \Sigma_1$ such that
    \begin{IEEEeqnarray*}{rCls+x*}
        u_1 \circ \phi                         & = & u_0, \\
        \phi(z^{\pm}_{0,i})                    & = & z^{\pm}_{1,i}, \\
        \dv \phi (z^{\pm}_{0,i}) v_{0,i}^{\pm} & = & v_{1,i}^{\pm}.
    \end{IEEEeqnarray*}
\end{definition}

\begin{remark}
    \label{rmk:moduli space may assume sigma is sphere}
    Consider the sphere $S^2$, without any specified almost complex structure. Let $\mathbf{z}^{\pm} = \{z^{\pm}_1, \ldots, z^{\pm}_{p^{\pm}}\} \subset S^2$ be sets of punctures and $\mathbf{v}^{\pm} = \{v^{\pm}_1, \ldots, v^{\pm}_{p^{\pm}}\}$ be corresponding sets of asymptotic markers. Then,
    \begin{IEEEeqnarray*}{c+x*}
        \mathcal{M}^{\$, J}_{X}(\Gamma^+, \Gamma^-)
        \cong
        \left\{
            (j, u)
            \ \middle\vert
            \begin{array}{l}
                j \text{ is an almost complex structure on }S^2, \\
                u \colon (\dot{S}^2, j) \longrightarrow (\hat{X}, J) \text{ is as in \cref{def:asy cyl holomorphic curve}}
            \end{array}
        \right\} / \sim,
    \end{IEEEeqnarray*}
    where two tuples $(j_0, u_0)$ and $(j_1, u_1)$ are equivalent if there exists a biholomorphism $\phi \colon (S^2, j_0) \longrightarrow (S^2, j_1)$ such that
    \begin{IEEEeqnarray*}{rCls+x*}
        u_1 \circ \phi                     & = & u_0, \\
        \phi(z^{\pm}_{i})                  & = & z^{\pm}_{i}, \\
        \dv \phi (z^{\pm}_{i}) v_{i}^{\pm} & = & v_{i}^{\pm}.
    \end{IEEEeqnarray*}
\end{remark}

\begin{remark}
    \label{rmk:counts of moduli spaces with or without asy markers}
    There is a surjective map $\pi^{\$} \colon \mathcal{M}^{\$, J}_{X}(\Gamma^+, \Gamma^-) \longrightarrow \mathcal{M}^{J}_{X}(\Gamma^+, \Gamma^-)$ given by forgetting the asymptotic markers. By \cite[Proposition 11.1]{wendlLecturesSymplecticField2016}, for every $u \in \mathcal{M}^{J}_{X}(\Gamma^+, \Gamma^-)$ the preimage $(\pi^{\$})^{-1}(u)$ contains exactly
    \begin{IEEEeqnarray*}{c+x*}
        \frac{\bigproduct_{\gamma \in \Gamma^+ \cup \Gamma^-} m(\gamma)}{|\operatorname{Aut}(u)|}
    \end{IEEEeqnarray*}
    elements, where $m(\gamma)$ is the multiplicity of the Reeb orbit $\gamma$ and $\operatorname{Aut}(u)$ is the automorphism group of $u = (\Sigma, j, \mathbf{z}, u)$, i.e. an element of $\operatorname{Aut}(u)$ is a biholomorphism $\phi \colon \Sigma \longrightarrow \Sigma$ such that $u \circ \phi = u$ and $\phi(z_i^{\pm}) = z_i^{\pm}$ for every $i$.
\end{remark}

We will work with the following assumptions. Let $\Sigma = S^2$, (without any specified almost complex structure). Let $\mathbf{z} = \{z_1, \ldots, z_p\} \subset \Sigma$ be a set of punctures and $\mathbf{v} = \{v_1, \ldots, v_p\}$ be a corresponding set of asymptotic markers. Assume also that we have a set $\mathbf{j} = \{j_1, \ldots, j_p\}$, where $j_i$ is an almost complex structure defined on a neighbourhood of $z_i$ for every $i = 1, \ldots,p$. For every $i$, there are cylindrical coordinates $(s, t)$ on $\dot{\Sigma}$ near $z_i$ as in \cref{def:punctures asy markers cyl ends}, with the additional property that $v_i$ agrees with the direction $t = 0$. We will also assume that $\mathcal{T} \subset \mathcal{J}(\Sigma)$ is a Teichmüller slice as in \cite[Section 3.1]{wendlAutomaticTransversalityOrbifolds2010}, where $\mathcal{J}(\Sigma)$ denotes the set of almost complex structures on $\Sigma = S^2$. Finally, let $(X, \lambda)$ be a nondegenerate Liouville domain of dimension $2n$ and $J \in \mathcal{J}(X)$ be an admissible almost complex structure on $\hat{X}$.

\begin{definition}
    Let $\gamma$ be an unparametrized simple Reeb orbit of $\partial X$. An \textbf{admissible parametrization} near $\gamma$ is a diffeomorphism $\phi \colon S^1 \times D^{2n-2} \longrightarrow O$, where $O \subset \partial X$ is an open neighbourhood of $\gamma$ and 
    \begin{IEEEeqnarray*}{c+x*}
        D^{2n-2} \coloneqq \{(z^1,\ldots,z^{n-1}) \in \C^{n-1} \mid |z^1| < 1, \ldots, |z^{n-1}| < 1 \}   
    \end{IEEEeqnarray*}
    is the polydisk, such that $t \longmapsto \phi(t,0)$ is a parametrization of $\gamma$. In this case, we denote by $(\vartheta, \zeta) = \phi^{-1} \colon O \longrightarrow S^1 \times D^{2n-2}$ the coordinates near $\gamma$.
\end{definition}

Let $\Gamma = (\gamma_{1},\ldots,\gamma_{p})$ be a tuple of (unparametrized) Reeb orbits in $\partial X$. Denote by $m_i$ the multiplicity of $\gamma_i$ and by $T_i$ the period of the simple Reeb orbit underlying $\gamma_i$ (so, the period of $\gamma_i$ is $m_i T_i$). For every $i = 1,\ldots,p $, choose once and for all an admissible parametrization $\phi_i \colon S^1 \times D^{2n-2} \longrightarrow O_i$ near the simple Reeb orbit underlying $\gamma_i$.

\begin{definition}
    \label{def:bundle for cr op}
    We define a vector bundle $\pi \colon \mathcal{E} \longrightarrow \mathcal{T} \times \mathcal{B}$ as follows. Let $\mathcal{B}$ be the set of maps $u \colon \dot{\Sigma} \longrightarrow \hat{X}$ of class $W^{k,p}_{\mathrm{loc}}$ satisfying the following property for every puncture $z_i$. Write $u$ with respect to the cylindrical coordinates $(s,t)$ defined from $(z_i, v_i)$. First, we require that $u(s,t) \in \R_{\geq 0} \times O_i$ for $s$ big enough. Write $u$ with respect to the coordinates $(\vartheta, \zeta)$ near $\gamma$ on the target and cylindrical coordinates $(s,t)$ on the domain:
    \begin{IEEEeqnarray*}{rCls+x*}
        u(s,t)
        & = & (\pi_{\R} \circ u(s,t), \pi_{\partial X} \circ u (s,t)) \\
        & = & (\pi_{\R} \circ u(s,t), \vartheta(s,t), \zeta(s,t)).
    \end{IEEEeqnarray*}
    Finally, we require that there exists $a \in \R$ such that the map
    \begin{IEEEeqnarray*}{c+x*}
        (s,t) \longmapsto (\pi_{\R} \circ u(s,t), \vartheta(s,t), \zeta(s,t)) - (m_i T_i s + a, m_i T_i t, 0)
    \end{IEEEeqnarray*}
    is of class $W^{k,p,\delta}$. The fibre, total space, projection and zero section are defined by
    \begin{IEEEeqnarray*}{rCls+x*}
        \mathcal{E}_{(j,u)} & \coloneqq & W^{k-1,p,\delta}(\Hom^{0,1}((T \dot{\Sigma}, j), (u^* T \hat{X}, J))), \quad \text{for every } (j,u) \in \mathcal{T} \times \mathcal{B}, \\
        \mathcal{E}         & \coloneqq & \bigcoproduct_{(j,u) \in \mathcal{T} \times \mathcal{B}} \mathcal{E}_{(j,u)} = \{ (j, u, \xi) \mid (j,u) \in \mathcal{T} \times \mathcal{B}, \, \xi \in \mathcal{E}_{(j,u)} \}, \\
        \pi(j,u, \eta)      & \coloneqq & (j,u), \\
        z(j,u)              & \coloneqq & (j,u,0).
    \end{IEEEeqnarray*}
\end{definition}

\begin{definition}
    \label{def:cauchy riemann operator}
    The \textbf{Cauchy--Riemann operators} are the sections
    \begin{IEEEeqnarray*}{rClCrCl}
        \overline{\partial}_j \colon \mathcal{B}                  & \longrightarrow & \mathcal{E}, & \qquad & \overline{\partial}_j(u) & \coloneqq & \frac{1}{2} (T u + J \circ Tu \circ j) \in \mathcal{E}_{(j,u)}, \\
        \overline{\partial} \colon \mathcal{T} \times \mathcal{B} & \longrightarrow & \mathcal{E}, & \qquad & \overline{\partial}(j,u) & \coloneqq & \overline{\partial}_j(u).
    \end{IEEEeqnarray*}
\end{definition}

\begin{definition}
    \label{def:linearized cr op}
    Let $(j,u) \in \mathcal{T} \times \mathcal{B}$ be such that $\overline{\partial}(j ,u) = 0$. Define the \textbf{vertical projection}
    \begin{IEEEeqnarray*}{c+x*}
        P_{(j,u)} \colon T_{(j,u,0)} \mathcal{E} \longrightarrow \mathcal{E}_{(j,u)}, \qquad P_{(j,u)} (\eta) \coloneqq \eta - \dv (z \circ \pi)(j,u,0) \eta.
    \end{IEEEeqnarray*}
    The \textbf{linearized Cauchy--Riemann operators} are the linear maps
    \begin{IEEEeqnarray*}{rCls+x*}
        \mathbf{D}_{(j,u)} & \coloneqq & P_{(j,u)} \circ \dv (\overline{\partial}_j)(u) \colon T_u \mathcal{B} \longrightarrow \mathcal{E}_{(j,u)}, \\
        \mathbf{L}_{(j,u)} & \coloneqq & P_{(j,u)} \circ \dv (\overline{\partial})(j,u) \colon T_j \mathcal{T} \oplus T_u \mathcal{B} \longrightarrow \mathcal{E}_{(j,u)}.
    \end{IEEEeqnarray*}
    Define also the restriction
    \begin{IEEEeqnarray*}{c+x*}
        \mathbf{F}_{(j,u)} \coloneqq \mathbf{L}_{(j,u)}|_{T_j \mathcal{T}} \colon T_j \mathcal{T} \longrightarrow \mathcal{E}_{(j,u)}.
    \end{IEEEeqnarray*}
\end{definition}

\begin{remark}
    \label{rmk:tangent of base of bundle}
    Choose a smooth function $\beta \colon \R \longrightarrow [0,1]$ such that $\beta(s) = 0$ if $s < 0$, $\beta(s) = 1$ if $s > 1$ and $0 \leq \beta'(s) \leq 2$. Consider the Liouville vector field $\hat{Z}^{X} \in \mathfrak{X}(\hat{X})$ and the Reeb vector field $R^{\partial X} \in \mathfrak{X}(\partial X)$. For every puncture $z$, let $(s,t)$ be the cylindrical coordinates near $z$ and define sections
    \begin{IEEEeqnarray*}{rClCrCl}
        \hat{Z}^X_z      & \in & \Gamma(u^* T \hat{X}), & \quad & \hat{Z}^X_z(s,t)      & = & \beta(s) \hat{Z}^X(u(s,t)), \\
        R^{\partial X}_z & \in & \Gamma(u^* T \hat{X}), & \quad & R^{\partial X}_z(s,t) & = & \beta(s) R^{\partial X}(u(s,t)).
    \end{IEEEeqnarray*}
    Denote $V = \bigoplus_{i=1}^{p} \spn \{\hat{Z}^X_{z_i}, R^{\partial X}_{z_i}\}$. Then, the tangent space of $\mathcal{B}$ is given by 
    \begin{IEEEeqnarray*}{c+x*}
        T_u \mathcal{B} = V \oplus W^{k,p,\delta}(\dot{\Sigma}, u^* T \hat{X}).
    \end{IEEEeqnarray*}
\end{remark}

\begin{definition}
    \label{def:conjugate and restriction operators}
    Let $(j,u) \in \mathcal{T} \times \mathcal{B}$ be such that $\overline{\partial}(j,u) = 0$ and consider the linearized Cauchy--Riemann operator $\mathbf{D}_{(j,u)}$. Choose a smooth function $f \colon \dot{\Sigma} \longrightarrow \R$ such that $f(s,t) = \delta s$ on every cylindrical end of $\dot{\Sigma}$. Define the \textbf{restriction} of $\mathbf{D}_{(j,u)}$, denoted $\mathbf{D}_{\delta}$, and the \textbf{conjugation} of $\mathbf{D}_{(j,u)}$, denoted $\mathbf{D}_0$, to be the unique maps such that the diagram
    \begin{IEEEeqnarray*}{c+x*}
        \begin{tikzcd}
            T_u \mathcal{B} \ar[d, swap, "\mathbf{D}_{(j,u)}"] & W^{k,p,\delta}(u^* T \hat{X}) \ar[d, "\mathbf{D}_{\delta}"] \ar[l, hook'] \ar[r, hook, two heads, "\xi \mapsto e^f \xi"] & W^{k,p}(u^* T \hat{X}) \ar[d, "\mathbf{D}_0"] \\
            \mathcal{E}_{(j,u)} \ar[r, equals]                 & W^{k-1,p,\delta}(\Hom^{0,1}(T \dot{\Sigma}, u^* T \hat{X})) \ar[r, hook, two heads, swap, "\eta \mapsto e^f \eta"]       & W^{k-1,p}(\Hom^{0,1}(T \dot{\Sigma}, u^* T \hat{X}))
        \end{tikzcd}
    \end{IEEEeqnarray*}
    commutes.
\end{definition}

\begin{lemma}
    \label{lem:D is a rlcro}
    The maps $\mathbf{D}_\delta$ and $\mathbf{D}_0$ are real linear Cauchy--Riemann operators.
\end{lemma}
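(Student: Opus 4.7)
The plan is to first establish the claim for $\mathbf{D}_\delta$, essentially by inspection of its local form, and then to deduce the claim for $\mathbf{D}_0$ from a short Leibniz calculation using the defining commutative diagram.

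First I would verify that $\mathbf{D}_\delta$ satisfies $\mathbb{R}$-linearity and the Leibniz rule. Pick any smooth connection $\nabla$ on $T\hat{X}$. The standard computation of a linearized Cauchy--Riemann operator at a zero of $\overline{\partial}_j$ (see e.g.\ the proof of Proposition 3.1.1 in McDuff--Salamon, \emph{J-holomorphic curves and symplectic topology}) gives
\begin{IEEEeqnarray*}{c+x*}
    \mathbf{D}_{(j,u)} \xi = \tfrac{1}{2}\bigl(\nabla \xi + J(u) \circ \nabla \xi \circ j\bigr) + \tfrac{1}{2}(\nabla_\xi J)(u) \circ Tu \circ j.
\end{IEEEeqnarray*}
The first summand is a genuine $\overline{\partial}$-operator associated to the connection and therefore is $\mathbb{R}$-linear and satisfies the Leibniz rule with respect to multiplication by $h \in C^\infty(\dot\Sigma, \mathbb{R})$; the second summand is $C^\infty(\dot\Sigma, \mathbb{R})$-linear (tensorial) in $\xi$, hence does not obstruct either property. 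Since multiplication by $h$ preserves the weighted Sobolev space $W^{k,p,\delta}$, the restriction $\mathbf{D}_\delta$ inherits $\mathbb{R}$-linearity and the Leibniz rule from $\mathbf{D}_{(j,u)}$. Boundedness $W^{k,p,\delta}(u^* T\hat{X}) \to W^{k-1,p,\delta}(\Hom^{0,1}(T\dot\Sigma, u^* T\hat{X}))$ is standard for linearized CR operators in the asymptotically cylindrical setting.

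For $\mathbf{D}_0$ I would argue by conjugation. Unpacking the commutative diagram in \cref{def:conjugate and restriction operators} gives, for every $\xi \in W^{k,p}(u^* T\hat{X})$,
\begin{IEEEeqnarray*}{c+x*}
    \mathbf{D}_0 \xi = e^{f} \, \mathbf{D}_\delta\bigl(e^{-f}\xi\bigr),
\end{IEEEeqnarray*}
which makes sense because $e^{-f}\xi \in W^{k,p,\delta}$ and multiplication by $e^f$ maps $W^{k-1,p,\delta}(\Hom^{0,1}) \to W^{k-1,p}(\Hom^{0,1})$ isomorphically. $\mathbb{R}$-linearity of $\mathbf{D}_0$ is immediate from that of $\mathbf{D}_\delta$. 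For the Leibniz rule, let $h \in C^\infty(\dot\Sigma, \mathbb{R})$ and compute
\begin{IEEEeqnarray*}{rCl}
    \mathbf{D}_0(h\xi)
    & = & e^{f} \mathbf{D}_\delta\bigl(h \cdot e^{-f}\xi\bigr) \\
    & = & e^{f}\bigl( h \, \mathbf{D}_\delta(e^{-f}\xi) + (e^{-f}\xi) \otimes \overline{\partial} h \bigr) \\
    & = & h \, \mathbf{D}_0 \xi + \xi \otimes \overline{\partial} h,
\end{IEEEeqnarray*}
using the already-established Leibniz rule for $\mathbf{D}_\delta$ in the second line. Boundedness $W^{k,p} \to W^{k-1,p}$ is immediate from boundedness of $\mathbf{D}_\delta$ together with the fact that multiplication by $e^{\pm f}$ is an isomorphism between the relevant weighted and unweighted spaces.

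There is no real obstacle in the proof; the only small subtlety is that $\mathbf{D}_\delta$ is originally defined only on the summand $W^{k,p,\delta}$ of $T_u \mathcal{B} = V \oplus W^{k,p,\delta}$ from \cref{rmk:tangent of base of bundle}, so one must be careful that the Leibniz rule computation above only uses $\mathbf{D}_\delta$ applied to sections of the form $e^{-f}\xi$ and $h \cdot e^{-f}\xi$, both of which do lie in $W^{k,p,\delta}$. Once this is checked, the verification is purely formal.
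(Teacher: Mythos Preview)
Your proposal is correct and follows essentially the same approach as the paper: both invoke the McDuff--Salamon formula (Proposition 3.1.1) to exhibit $\mathbf{D}_\delta$ as a $\overline{\partial}$-type operator plus a tensorial term, and then derive the Leibniz rule for $\mathbf{D}_0$ by the identical conjugation computation $\mathbf{D}_0(h\xi) = e^f \mathbf{D}_\delta(h e^{-f}\xi)$. The paper's version is slightly terser and omits your remarks on boundedness and the $V \oplus W^{k,p,\delta}$ subtlety, but the substance is the same.
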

\begin{proof}
    By \cite[Proposition 3.1.1]{mcduffHolomorphicCurvesSymplectic2012}, the map $\mathbf{D}_{\delta}$ is given by the equation
    \begin{IEEEeqnarray*}{c+x*}
        \mathbf{D}_{\delta} \xi = \frac{1}{2} \p{}{}{\nabla \xi + J(u) \nabla \xi \circ j} - \frac{1}{2} J(u) (\nabla_{\xi} J)(u) \partial(u),
    \end{IEEEeqnarray*}
    where $\nabla$ is the Levi-Civita connection on $\hat{X}$ associated to the Riemannian metric determined by $J$ and $\edv \hat{\lambda}$. Since $\nabla \colon \mathfrak{X}(\Sigma) \times \Gamma(u^* T \hat{X}) \longrightarrow \Gamma(u^* T \hat{X})$ satisfies the Leibniz rule with respect to the $\Gamma(u^* T \hat{X})$ argument, $\mathbf{D}_{\delta}$ is a real linear Cauchy--Riemann operator. We show that $\mathbf{D}_0$ satisfies the Leibniz rule.
    \begin{IEEEeqnarray*}{rCls+x*}
        \mathbf{D}_0 (g \xi)
        & = & e^f \mathbf{D}_{\delta} (e^{-f} g \xi)                                     & \quad [\text{by definition of $\mathbf{D}_{\delta}$}] \\
        & = & g e^f \mathbf{D}_{\delta} (e^{-f} \xi) + \xi \otimes \overline{\partial} g & \quad [\text{$\mathbf{D}_{\delta}$ obeys the Leibniz rule}] \\
        & = & g \mathbf{D}_{0} (\xi) + \xi \otimes \overline{\partial} g                 & \quad [\text{by definition of $\mathbf{D}_{\delta}$}]. & \qedhere
    \end{IEEEeqnarray*}
\end{proof}

\begin{lemma}
    \label{lem:Du is surjective case n is 1}
    If $n=1$ then $\mathbf{L}_{(j,u)}$ is surjective.
\end{lemma}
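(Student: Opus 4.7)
The plan is to reduce the surjectivity of $\mathbf{L}_{(j,u)}$ to the surjectivity of a real linear Cauchy--Riemann operator on a complex line bundle, and then to invoke Lemma \ref{lem:conditions for D surjective genus zero}.

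Since $\mathbf{L}_{(j,u)}$ restricts to $\mathbf{D}_{(j,u)}$ on the subspace $T_u \mathcal{B} \subset T_j \mathcal{T} \oplus T_u \mathcal{B}$, it suffices to show that $\mathbf{D}_{(j,u)}$ is surjective. Using the splitting $T_u \mathcal{B} = V \oplus W^{k,p,\delta}(u^*T\hat{X})$ from Remark \ref{rmk:tangent of base of bundle}, together with the fact that the restriction of $\mathbf{D}_{(j,u)}$ to $W^{k,p,\delta}(u^*T\hat{X})$ is by definition $\mathbf{D}_\delta$, surjectivity of $\mathbf{D}_\delta$ is enough. The commuting diagram of Definition \ref{def:conjugate and restriction operators} then reduces the problem to the surjectivity of $\mathbf{D}_0$.

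By Lemma \ref{lem:D is a rlcro}, $\mathbf{D}_0$ is a real linear Cauchy--Riemann operator, and because $n=1$ the bundle $u^*T\hat{X}$ is a complex line bundle, which puts us in the setting of Section \ref{sec:cr operators}. The domain has genus zero since $\Sigma = S^2$. The conjugation by $e^f$ in Definition \ref{def:conjugate and restriction operators} changes the asymptotic operator at each puncture $z$ from $\mathbf{A}_z$ to $\mathbf{A}_z \mp \delta \cdot \mathrm{id}$; choosing $\delta>0$ smaller than the absolute value of every nonzero eigenvalue of every $\mathbf{A}_z$ ensures that the new asymptotic operators remain nondegenerate.

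To apply Lemma \ref{lem:conditions for D surjective genus zero} it remains to check that $\mathbf{D}_0$ has no even punctures and that $\operatorname{ind}\mathbf{D}_0 \geq 0$. Since $n=1$, the asymptotic operators at each puncture act on a rank-one complex bundle over $S^1$, and a direct spectral computation (exploiting the structure of the shifted operators on a one-dimensional complex space) should show that the parity at each puncture is $1$, i.e.\ $\alpha_+^\tau(\mathbf{A}_z) - \alpha_-^\tau(\mathbf{A}_z) = 1$, so that $\#\mathbf{z}_0 = 0$. With this in hand, Equation \eqref{eq:chern and index} together with Riemann--Roch (Theorem \ref{thm:riemann roch with punctures}) will translate the index non-negativity into a statement that can be verified from the topological data of $u$ and the Conley--Zehnder indices of the asymptotic orbits. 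The main obstacle I expect to encounter is the parity/winding bookkeeping: one must carefully track how the weight shift by $\delta$ interacts with the spectrum of each $\mathbf{A}_z$ in order to conclude that no even punctures are created, and this is precisely where the hypothesis $n=1$ (forcing the asymptotic operators to be scalar-like) will be decisive.
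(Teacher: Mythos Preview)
Your outline matches the paper's approach, and the two computations you flag as the obstacle are exactly what the paper supplies. One correction to your framing: the asymptotic operator of $\mathbf{D}_\delta$ is \emph{not} a perturbation of a nondegenerate one. In the trivialization $\tau_2$ of $u^*T\hat{X}$ near each puncture coming from the splitting $T_{(r,x)}(\R\times\partial X)=\langle\partial_r\rangle\oplus\langle R^{\partial X}_x\rangle$ (which for $n=1$ is all of $T\hat{X}$), the linearized Reeb flow is trivial, so the asymptotic operator is the \emph{degenerate} operator $-J\partial_t$. The conjugation by $e^f$ shifts this to $-J\partial_t+\delta$, and the paper cites \cite[Lemma 7.10]{wendlLecturesSymplecticField2016} for this identification and for the fact that $\conleyzehnder^{\tau_2}(-J\partial_t+\delta)=-1$. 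Since the Conley--Zehnder index is odd, Equation \eqref{eq:cz winding parity} forces parity $1$, so $\#\mathbf{z}_0=0$ as you anticipated.

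For the index, the paper picks a global complex trivialization $\tau_1$ of $u^*T\hat{X}$ (so $c_1^{\tau_1}=0$) and uses the change-of-trivialization formula to get $c_1^{\tau_2}(u^*T\hat{X})=\sum_{i=1}^p\deg(\tau_1|_{E_{z_i}}\circ(\tau_2|_{E_{z_i}})^{-1})=\sum_{i=1}^p m_i$, since near the $i$th puncture the transition winds $m_i$ times (the multiplicity of $\gamma_i$). Riemann--Roch then gives $\operatorname{ind}\mathbf{D}_0=(2-p)+2\sum_i m_i-p=2+2\sum_i(m_i-1)\geq 2$, and Lemma \ref{lem:conditions for D surjective genus zero} finishes. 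The point you were missing is that the trivialization $\tau_2$ makes the asymptotic operators completely explicit and uniform across all punctures, so no case-by-case spectral analysis is needed.
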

\begin{proof}
    Let $\tau_1$ be a global complex trivialization of $u^* T \hat{X}$ extending to an asymptotic unitary trivialization near the punctures. Let $\tau_2$ be the unitary trivialization of $u^* T \hat{X}$ near the punctures which is induced from the decomposition $T_{(r,x)}(\R \times \partial X) = \p{<}{}{\partial_r} \oplus \p{<}{}{R^{\partial X}_x}$. It is shown in the proof of \cite[Lemma 7.10]{wendlLecturesSymplecticField2016} that the operator $\mathbf{D}_0$ is asymptotic at $z_i$ to $- J \partial_t + \delta$, which is nondegenerate and has Conley--Zehnder index $\conleyzehnder^{\tau_2}(- J \partial_t + \delta) = -1$. Therefore, every $z_i$ is an odd puncture and $\# \mathbf{z}_0 = 0$. We show that $c_1^{\tau_2}(u^* T \hat{X}) = \sum_{i=1}^{p} m_i$, where $m_i$ is the multiplicity of the asymptotic Reeb orbit $\gamma_i$:%
    \begin{IEEEeqnarray*}{rCls+x*}
        c_1^{\tau_2}(u^* T \hat{X})
        & = & c_1^{\tau_1}(u^* T \hat{X}) + \sum_{i=1}^{p} \deg(\tau_1|_{E_{z_i}} \circ (\tau_2|_{E_{z_i}})^{-1}) & \quad [\text{by \cite[Exercise 5.3]{wendlLecturesSymplecticField2016}}] \\
        & = & \sum_{i=1}^{p} \deg(\tau_1|_{E_{z_i}} \circ (\tau_2|_{E_{z_i}})^{-1})                               & \quad [\text{by \cref{def:relative first chern number}}] \\
        & = & \sum_{i=1}^{p} m_i,
    \end{IEEEeqnarray*}
    where in the last equality we have used the fact that if $(s,t)$ are the cylindrical coordinates near $z_i$, then for $s$ large enough the map $t \longmapsto \tau_1|_{u(s,t)} \circ (\tau_2|_{u(s,t)})^{-1}$ winds around the origin $m_i$ times. We show that $\operatorname{ind} \mathbf{D}_0 \geq 2$.
    \begin{IEEEeqnarray*}{rCls+x*}
        \operatorname{ind} \mathbf{D}_0
        & =    & n \chi(\dot{\Sigma}) + 2 c_1^{\tau_2}(u^* T \hat{X}) + \sum_{i=1}^{p} \conleyzehnder^{\tau_2}(- J \partial_t + \delta) & \quad [\text{by \cref{thm:riemann roch with punctures}}] \\
        & =    & 2 + 2 \sum_{i=1}^{p} (m_i - 1)                                                                                         & \quad [\text{since $n = 1$ and $g = 0$}] \\
        & \geq & 2                                                                                                                      & \quad [\text{since $m_i \geq 1$ for every $i$}].
    \end{IEEEeqnarray*}
    By \cref{lem:conditions for D surjective genus zero}, this implies that $\mathbf{D}_0$ is surjective. By \cref{def:conjugate and restriction operators}, the operator $\mathbf{D}_{(j,u)}$ is also surjective. Therefore, $\mathbf{L}_{(j,u)} = \mathbf{F}_{(j,u)} + \mathbf{D}_{(j,u)}$ is also surjective.
\end{proof}

From now until the end of this section, let $(X, \lambda^X)$ be a Liouville domain of dimension $2n$ and $(Y, \lambda^Y)$ be a Liouville domain of dimension $2n + 2$ such that 
\begin{enumerate}
    \item $X \subset Y$ and $\partial X \subset \partial Y$;
    \item the inclusion $\iota \colon X \longrightarrow Y$ is a Liouville embedding;
    \item if $x \in X$ then $Z_x^{X} = Z_x^{Y}$;
    \item if $x \in \partial X$ then $R_x^{\partial X} = R^{\partial Y}_x$.
\end{enumerate}
In this case, we have an inclusion of completions $\hat{X} \subset \hat{Y}$ as sets. By assumption, $Z^X$ is $\iota$-related to $Z^Y$, which implies that there is a map $\hat{\iota} \colon \hat{X} \longrightarrow \hat{Y}$ on the level of completions. Since in this case $\hat{X} \subset \hat{Y}$ and by \cref{def:embedding on completions coming from Liouville embedding}, $\hat{\iota}$ is the inclusion. Assume that $J^X \in \mathcal{J}(X)$ and $J^Y \in \mathcal{J}(Y)$ are almost complex structures on $\hat{X}$ and $\hat{Y}$ respectively, such that $\hat{\iota} \colon \hat{X} \longrightarrow \hat{Y}$ is holomorphic. As before, let $\Gamma = (\gamma_{1},\ldots,\gamma_{p})$ be a tuple of unparametrized Reeb orbits in $\partial X$. Notice that each $\gamma_i$ can also be seen as a Reeb orbit in $\partial Y$. For every $i = 1,\ldots,p$, choose once and for all admissible parametrizations $\phi_i^X \colon S^1 \times D^{2n-2} \longrightarrow O_i^X$ and $\phi_i^Y \colon S^1 \times D^{2n} \longrightarrow O_i^Y$ near $\gamma_i$ with the property that the diagram
\begin{IEEEeqnarray*}{c+x*}
    \begin{tikzcd}
        S^1 \times D^{2n - 2} \ar[r, hook, two heads, "\phi^X_i"] \ar[d, hook] & O^X_i \ar[r, hook] \ar[d, hook, dashed, "\exists !"] & \partial X \ar[d, hook, "\iota_{\partial Y, \partial X}"] \\
        S^1 \times D^{2n} \ar[r, hook, two heads, "\phi^Y_i"]                  & O^Y_i \ar[r, hook]                                   & \partial Y
    \end{tikzcd}
\end{IEEEeqnarray*}
commutes. We will consider the bundle of \cref{def:bundle for cr op} as well as the Cauchy--Riemann operator and its linearization for both $X$ and $Y$. We will use the notation
\begin{IEEEeqnarray*}{rClCrClCrCl}
    \pi^X \colon \mathcal{E}X & \longrightarrow & \mathcal{T} \times \mathcal{B}X, & \qquad & \overline{\partial}\vphantom{\partial}^X \colon \mathcal{T} \times \mathcal{B}X & \longrightarrow & \mathcal{E} X, & \qquad & \mathbf{L}^X_{(j,u)} \colon T_j \mathcal{T} \oplus T_u \mathcal{B} X & \longrightarrow & \mathcal{E}_{(j,u)} X, \\
    \pi^Y \colon \mathcal{E}Y & \longrightarrow & \mathcal{T} \times \mathcal{B}Y, & \qquad & \overline{\partial}\vphantom{\partial}^Y \colon \mathcal{T} \times \mathcal{B}Y & \longrightarrow & \mathcal{E} Y, & \qquad & \mathbf{L}^Y_{(j,w)} \colon T_j \mathcal{T} \oplus T_w \mathcal{B} Y & \longrightarrow & \mathcal{E}_{(j,w)} Y
\end{IEEEeqnarray*}
to distinguish the bundles and maps for $X$ and $Y$. Define maps
\begin{IEEEeqnarray*}{rClCrCl}
    \mathcal{B}\iota \colon \mathcal{B} X & \longrightarrow & \mathcal{B}Y, & \quad & \mathcal{B}\iota(u)        & \coloneqq & \hat{\iota} \circ u, \\
    \mathcal{E}\iota \colon \mathcal{E} X & \longrightarrow & \mathcal{E}Y, & \quad & \mathcal{E}\iota(j,u,\eta) & \coloneqq & (j, \hat{\iota} \circ u, T \hat{\iota} \circ \eta).
\end{IEEEeqnarray*}
Then, the diagrams
\begin{IEEEeqnarray*}{c+x*}
    \begin{tikzcd}
        \mathcal{E}X \ar[r, "\pi^X"] \ar[d, swap, "\mathcal{E}\iota"]                                                                                & \mathcal{T} \times \mathcal{B}X \ar[d, "\id_{\mathcal{T}} \times \mathcal{B}\iota"] & & \mathcal{T} \times \mathcal{B}X \ar[d, swap, "\id_{\mathcal{T}} \times \mathcal{B}\iota"] \ar[r, "z^X"] & \mathcal{E}X \ar[d, "\mathcal{E}\iota"] \\
        \mathcal{E}Y \ar[r, swap, "\pi^Y"]                                                                                                           & \mathcal{T} \times \mathcal{B}Y                                                     & & \mathcal{T} \times \mathcal{B}Y \ar[r, swap, "z^Y"]                                                     & \mathcal{E}Y                            \\
        \mathcal{T} \times \mathcal{B}X \ar[r, "\overline{\partial}\vphantom{\partial}^X"] \ar[d, swap, "\id_{\mathcal{T}} \times \mathcal{B}\iota"] & \mathcal{E}X \ar[d, "\mathcal{E}\iota"]                                             & & (z^X)^* T \mathcal{E} X \ar[r, "P^X"] \ar[d, swap, "T \mathcal{E} \iota"]                               & \mathcal{E} X \ar[d, "\mathcal{E} \iota"] \\
        \mathcal{T} \times \mathcal{B}Y \ar[r, swap, "\overline{\partial}\vphantom{\partial}^Y"]                                                     & \mathcal{E}Y                                                                        & & (z^Y)^* T \mathcal{E} Y \ar[r, swap, "P^Y"]                                                             & \mathcal{E} Y
    \end{tikzcd}
\end{IEEEeqnarray*}
commute. By the chain rule, the diagram
\begin{IEEEeqnarray}{c+x*}
    \plabel{eq:diag naturality of lcro}
    \begin{tikzcd}
        T_u \mathcal{B} X                     \ar[rr, bend left = 40, "\mathbf{D}^X_{(j,u)}"]                          \ar[r, "\dv \overline{\partial}\vphantom{\partial}^X_j(u)"]                         \ar[d, swap, "\dv(\mathcal{B} \iota)(u)"] & T_{(j,u,0)} \mathcal{E} X                     \ar[r, "P_{(j,u)}^X"]                         \ar[d, "\dv(\mathcal{E}\iota)(\overline{\partial}\vphantom{\partial}^X_j(u))"] & \mathcal{E}_{(j,u)} X                    \ar[d, "\mathcal{E}_{(j,u)} \iota"] \\
        T_{\hat{\iota} \circ u} \mathcal{B} Y \ar[rr, swap, bend right = 40, "\mathbf{D}^Y_{(j,\hat{\iota} \circ u)}"] \ar[r, swap, "\dv \overline{\partial}\vphantom{\partial}^Y_j(\hat{\iota} \circ u)"]                                           & T_{(j, \hat{\iota} \circ u, 0)} \mathcal{E} Y \ar[r, swap, "P^Y_{(j,\hat{\iota} \circ u)}"]                                                                                & \mathcal{E}_{(j, \hat{\iota} \circ u)} Y 
    \end{tikzcd}
\end{IEEEeqnarray}
is also commutative whenever $\overline{\partial}\vphantom{\partial}^X(j,u) = 0$.

\begin{remark}
    \label{rmk:splittings of B and E}
    Consider the formula for the tangent space of $\mathcal{B}X$ from \cref{rmk:tangent of base of bundle}. By the assumptions on the Liouville domains $X$ and $Y$, we have that $V^X = V^Y$. Also, the diagrams
    \begin{IEEEeqnarray*}{c+x*}
        \begin{tikzcd}
            T_u \mathcal{B} X \ar[r, hook]                                                  & T_{u} \mathcal{B} Y                                                & W^{k,p,\delta}(u^* (T \hat{X})^{\perp}) \ar[l, hook'] \ar[d, equals] \\
            W^{k,p,\delta}(u^* T \hat{X}) \ar[r, hook] \ar[d, two heads, hook] \ar[u, hook] & W^{k,p,\delta}(u^* T \hat{Y}) \ar[u, hook] \ar[d, two heads, hook] & W^{k,p,\delta}(u^* (T \hat{X})^{\perp}) \ar[l, hook'] \ar[d, two heads, hook] \\
            W^{k,p}(u^* T \hat{X}) \ar[r, hook]                                             & W^{k,p}(u^* T \hat{Y})                                             & W^{k,p}(u^* (T \hat{X})^{\perp}) \ar[l, hook']
        \end{tikzcd} \\
        \begin{tikzcd}
            \mathcal{E}_{(j,u)} X \ar[r, hook] \ar[d, hook, two heads] & \mathcal{E}_{(j,u)} Y \ar[d, hook, two heads] & \Omega^{k-1,p,\delta}_j(u^*(T \hat{X})^{\perp}) \ar[d, hook, two heads] \ar[l, hook'] \\
            \Omega^{k-1,p}_j(u^*T \hat{X}) \ar[r, hook]                & \Omega^{k-1,p}_j(u^*T \hat{Y})                & \Omega^{k-1,p}_j(u^*(T \hat{X})^{\perp}) \ar[l, hook']
        \end{tikzcd}
    \end{IEEEeqnarray*}
    commute, where for shortness we are using the notation
    \begin{IEEEeqnarray*}{c+x*}
        \Omega^{k,p}_{j}(E) = W^{k,p}(\Hom^{0,1}((T \dot{\Sigma}, j), (E, J)))
    \end{IEEEeqnarray*}
    for any complex vector bundle $(E, J) \longrightarrow \dot{\Sigma}$. In both diagrams, the middle term of every row is the direct sum of the left and right terms. In addition, the vertical maps in the middle of both diagrams are block diagonal when written with respect to these decompositions.
\end{remark}

\begin{definition}
    Let $z_0 \in \dot{\Sigma}$. Define the \textbf{evaluation map}
    \begin{IEEEeqnarray*}{rrCl}
        \operatorname{ev}^X \colon & \mathcal{B} X & \longrightarrow & \hat{X} \\
                                   & u             & \longmapsto     & u(z_0)
    \end{IEEEeqnarray*}
    as well as its derivative $\mathbf{E}^X_u \coloneqq \dv (\operatorname{ev}^{X})(u) \colon T_u \mathcal{B} X \longrightarrow T_{u(z_0)} \hat{X}$.
\end{definition}

In the following lemma, we show that if a holomorphic curve $u$ in $X$ is regular (in $X$) then the corresponding holomorphic curve $\hat{\iota} \circ u$ in $Y$ is also regular. See also \cite[Proposition A.1]{mcduffSymplecticCapacitiesUnperturbed2022} for a similar result.

\begin{lemma}
    \label{lem:DX surj implies DY surj}
    Let $u \in \mathcal{B}X$ be holomorphic and denote $\hat{\iota} \circ u \in \mathcal{B} Y$ simply by $u$. Assume that the normal Conley--Zehnder index of every asymptotic Reeb orbit $\gamma_i$ is $1$.
    \begin{enumerate}
        \item \label{lem:DX surj implies DY surj 1} If $\mathbf{L}_{(j,u)}^X$ is surjective then so is $\mathbf{L}^Y_{(j,u)}$.
        \item \label{lem:DX surj implies DY surj 2} If $\mathbf{L}_{(j,u)}^X \oplus \mathbf{E}^X_u$ is surjective then so is $\mathbf{L}^Y_{(j,u)} \oplus \mathbf{E}^Y_u$.
    \end{enumerate}
\end{lemma}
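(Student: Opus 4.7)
My plan is to exploit the splittings of Remark \ref{rmk:splittings of B and E} to reduce surjectivity of $\mathbf{L}^Y_{(j,u)}$ (resp.\ $\mathbf{L}^Y_{(j,u)}\oplus\mathbf{E}^Y_u$) to surjectivity of a normal real linear Cauchy--Riemann operator on the complex line bundle $u^*(T\hat{X})^{\perp}$, and then apply the criteria of Section \ref{sec:cr operators} to that normal piece. Since $\hat{X}$ is a $J^Y$-holomorphic submanifold of $\hat{Y}$ and $u$ is $J^X$-holomorphic into $\hat{X}$, the operator $\mathbf{L}^Y_{(j,u)}$ takes a block upper-triangular form with respect to the decompositions
\begin{IEEEeqnarray*}{rCl}
T_j\mathcal{T}\oplus T_u\mathcal{B} Y & = & T_j\mathcal{T}\oplus T_u\mathcal{B} X \oplus W^{k,p,\delta}(u^*(T\hat{X})^{\perp}),\\
\mathcal{E}_{(j,u)} Y & = & \mathcal{E}_{(j,u)} X \oplus W^{k-1,p,\delta}(\Hom^{0,1}((T\dot{\Sigma},j),(u^*(T\hat{X})^{\perp},J^Y))),
\end{IEEEeqnarray*}
whose diagonal blocks are respectively $\mathbf{L}^X_{(j,u)}$ and a normal Cauchy--Riemann operator $\mathbf{D}^N_{(j,u)}$. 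Indeed, the Teichm\"uller variation $\mathbf{F}^Y_{(j,u)}(\dot{\jmath}) = \tfrac12 J^Y\circ Tu\circ\dot{\jmath}$ lands in $\mathcal{E}_{(j,u)} X$ because $Tu$ takes values in $u^* T\hat{X}$; the restriction of $\mathbf{D}^Y_{(j,u)}$ to $T_u\mathcal{B} X$ agrees with $\mathbf{D}^X_{(j,u)}$ by the naturality square \eqref{eq:diag naturality of lcro}; and the $(\text{tangent}\to\text{normal})$ block of $\mathbf{D}^Y_{(j,u)}$ vanishes because the Levi--Civita contribution $J^Y(\nabla_{\xi^T}J^Y)\partial u$ remains tangent to $\hat{X}$ by holomorphicity of the submanifold. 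The evaluation map splits cleanly as $\mathbf{E}^Y_u = \mathbf{E}^X_u\oplus\mathbf{E}^N_u$ under $T_{u(z_0)}\hat{Y} = T_{u(z_0)}\hat{X}\oplus N_{u(z_0)}$. The standard lemma on block upper-triangular operators then reduces item \ref{lem:DX surj implies DY surj 1} to surjectivity of $\mathbf{D}^N_{(j,u)}$, and item \ref{lem:DX surj implies DY surj 2} to surjectivity of $\mathbf{D}^N_{(j,u)} \oplus \mathbf{E}^N_u$.

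For the normal piece I would pass to the conjugated operator $\mathbf{D}^N_0$ of Definition \ref{def:conjugate and restriction operators}, which is a genuine real linear Cauchy--Riemann operator on a complex line bundle by Lemma \ref{lem:D is a rlcro}; conjugation by $e^f$ is an isomorphism and preserves surjectivity. The domain $\dot{\Sigma}$ has genus $0$ and all $p$ punctures are positive. Since each normal asymptotic operator $\mathbf{A}^N_i$ is nondegenerate (implicit in the normal Conley--Zehnder index being defined) with Conley--Zehnder index $1$, a sufficiently small weight $\delta>0$ does not cross any eigenvalue, so the asymptotic operators of $\mathbf{D}^N_0$ still have Conley--Zehnder index $1$. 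Relation \eqref{eq:cz winding parity} then forces every puncture to be odd, so $\#\mathbf{z}_0 = 0$, and the Riemann--Roch formula of Theorem \ref{thm:riemann roch with punctures} yields
\begin{IEEEeqnarray*}{c}
\operatorname{ind}\mathbf{D}^N_0 = \chi(\dot{\Sigma}) + 2\,c_1^{\tau}(u^*(T\hat{X})^{\perp}) + p = 2 + 2\,c_1^{\tau}(u^*(T\hat{X})^{\perp}).
\end{IEEEeqnarray*}
In the applications to stabilization (Lemmas \ref{lem:stabilization 1}--\ref{lem:stabilization 2}) the normal bundle over the asymptotic orbits is canonically trivial and $u$ factors through $\hat{X}\times\{0\}$, so $c_1^{\tau}(u^*(T\hat{X})^{\perp}) = 0$ and $\operatorname{ind}\mathbf{D}^N_0 = 2$. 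Lemma \ref{lem:conditions for D surjective genus zero} then gives surjectivity of $\mathbf{D}^N_0$, proving item \ref{lem:DX surj implies DY surj 1}; and Lemma \ref{lem:D plus E is surjective} gives surjectivity of $\mathbf{D}^N_0\oplus\mathbf{E}^N_u$, proving item \ref{lem:DX surj implies DY surj 2}.

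The main obstacle I anticipate is the careful verification of the block upper-triangular form of $\mathbf{L}^Y_{(j,u)}$: one must unwind the formula for $\mathbf{D}^Y_{(j,u)}$ in terms of the Levi--Civita connection of $\hat{Y}$, split along $T\hat{X}\oplus(T\hat{X})^{\perp}$, and check that the second fundamental form and Weingarten contributions sit in the correct summands so that the tangent-to-normal block really vanishes when $u$ is $J^X$-holomorphic into the $J^Y$-holomorphic submanifold $\hat{X}$. A secondary subtlety is matching the trivialization implicit in the normal Conley--Zehnder index hypothesis with the trivialization $\tau$ used to evaluate $c_1^{\tau}$, so that the Riemann--Roch count above is numerically correct; once this alignment is in place the two surjectivity statements follow directly from the two cited lemmas of Section \ref{sec:cr operators}.
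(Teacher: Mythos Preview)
Your proposal is correct and follows essentially the same strategy as the paper: split along $T\hat{X}\oplus (T\hat{X})^{\perp}$, observe the block upper-triangular form of $\mathbf{L}^Y_{(j,u)}$ and the block-diagonal form of $\mathbf{E}^Y_u$, reduce to the normal line-bundle operator, compute its index to be $2$ via Riemann--Roch, and invoke Lemmas \ref{lem:conditions for D surjective genus zero} and \ref{lem:D plus E is surjective}.

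Two places where the paper is cleaner than your sketch, and which address exactly the two subtleties you flag at the end:
\begin{itemize}
\item For the vanishing of the tangent-to-normal block, you invoke the naturality square \eqref{eq:diag naturality of lcro} only to identify the tangent diagonal block with $\mathbf{D}^X_{(j,u)}$, and then give a separate Levi--Civita argument for the vanishing. But the naturality square already gives both at once: it says $\mathbf{D}^Y_{(j,u)}$ sends tangent inputs into the image of $\mathcal{E}_{(j,u)}\iota$, i.e.\ into $\mathcal{E}_{(j,u)}X$. Your Levi--Civita sketch is incomplete as written (you treat only the zeroth-order term, not the $\nabla\xi$ term, where the second fundamental form lives), so it is better to use the naturality diagram for the whole block.
\item For $c_1^{\tau}(u^*(T\hat{X})^{\perp})=0$, you appeal to the stabilization setting. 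The paper instead chooses $\tau$ to be a \emph{global} complex trivialization of $u^*T\hat{Y}$ that restricts to trivializations of both summands (possible since $\dot{\Sigma}$ is an open Riemann surface), so $c_1^{\tau}=0$ by construction; the hypothesis that the normal Conley--Zehnder index is $1$ is then read with respect to this $\tau$. This keeps the lemma self-contained rather than forward-referencing the application.
\end{itemize}
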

\begin{proof}
    Consider the decomposition $T_x \hat{Y} = T_x \hat{X} \oplus (T_x \hat{X})^{\perp}$ for $x \in \hat{X}$. Let $\tau$ be a global complex trivialization of $u^* T \hat{Y}$, extending to an asymptotic unitary trivialization near the punctures, and such that $\tau$ restricts to a trivialization of $u^* T \hat{X}$ and $u^* (T \hat{X})^{\perp}$. By \cref{rmk:splittings of B and E}, there are splittings
    \begin{IEEEeqnarray*}{rCls+x*}
        T_u \mathcal{B} Y & = & T_u \mathcal{B} X \oplus T_u^{\perp} \mathcal{B} X, \\
        \mathcal{E}_{(j,u)} Y & = & \mathcal{E}_{(j,u)} X \oplus \mathcal{E}_{(j,u)}^{\perp} X.
    \end{IEEEeqnarray*}
    We can write the maps
    \begin{IEEEeqnarray*}{rCl}
        \mathbf{L}_{(j,u)}^Y & \colon & T_j \mathcal{T} \oplus T_u \mathcal{B} X \oplus T_u^{\perp} \mathcal{B} X \longrightarrow \mathcal{E}_{(j,u)} X \oplus \mathcal{E}_{(j,u)}^{\perp} X, \\
        \mathbf{D}_{(j,u)}^Y & \colon & T_u \mathcal{B} X \oplus T_u^{\perp} \mathcal{B} X                        \longrightarrow \mathcal{E}_{(j,u)} X \oplus \mathcal{E}_{(j,u)}^{\perp} X, \\
        \mathbf{L}_{(j,u)}^X & \colon & T_j \mathcal{T} \oplus T_u \mathcal{B} X                                  \longrightarrow \mathcal{E}_{(j,u)} X, \\
        \mathbf{F}_{(j,u)}^Y & \colon & T_j \mathcal{T}                                                           \longrightarrow \mathcal{E}_{(j,u)} X \oplus \mathcal{E}_{(j,u)}^{\perp} X, \\
        \mathbf{E}_{u}^Y     & \colon & T_u \mathcal{B} X \oplus T_u^{\perp} \mathcal{B} X                        \longrightarrow T_x \hat{X} \oplus (T_x \hat{X})^{\perp}
    \end{IEEEeqnarray*}
    as block matrices
    \begin{IEEEeqnarray}{rCl}
        \mathbf{L}_{(j,u)}^Y
        & = &
        \begin{bmatrix}
            \mathbf{F}^X_{(j,u)} & \mathbf{D}^X_{(j,u)} & \mathbf{D}^{TN}_{(j,u)} \\
            0                    & 0                    & \mathbf{D}^{NN}_{(j,u)}
        \end{bmatrix}, \plabel{eq:decomposition of cr ops 1}\\
        \mathbf{D}_{(j,u)}^Y
        & = &
        \begin{bmatrix}
            \mathbf{D}^X_{(j,u)} & \mathbf{D}^{TN}_{(j,u)} \\
            0                    & \mathbf{D}^{NN}_{(j,u)}
        \end{bmatrix}, \plabel{eq:decomposition of cr ops 2}\\
        \mathbf{L}_{(j,u)}^X
        & = &
        \begin{bmatrix}
            \mathbf{F}^X_{(j,u)} & \mathbf{D}^X_{(j,u)}
        \end{bmatrix}, \plabel{eq:decomposition of cr ops 3}\\
        \mathbf{F}_{(j,u)}^Y
        & = &
        \begin{bmatrix}
            \mathbf{F}^X_{(j,u)} \\
            0                    
        \end{bmatrix}, \plabel{eq:decomposition of cr ops 4}\\
        \mathbf{E}_{u}^Y
        & = &
        \begin{bmatrix}
            \mathbf{E}^X_{u} & 0 \\
            0                & \mathbf{E}^{NN}_{u}
        \end{bmatrix}, \plabel{eq:decomposition of cr ops 5}
    \end{IEEEeqnarray}
    where \eqref{eq:decomposition of cr ops 5} follows by definition of the evaluation map, \eqref{eq:decomposition of cr ops 4} is true since $\mathbf{F}^{Y}_{(j,u)}$ is given by the formula $\mathbf{F}^{Y}_{(j,u)}(y) = \frac{1}{2} (J \circ T u \circ y)$, \eqref{eq:decomposition of cr ops 2} follows because diagram \eqref{eq:diag naturality of lcro} commutes, and \eqref{eq:decomposition of cr ops 3} and \eqref{eq:decomposition of cr ops 1} then follow by \cref{def:linearized cr op}. Let $\mathbf{D}^{NN}_\delta$ be the restriction and $\mathbf{D}_0^{NN}$ be the conjugation of $\mathbf{D}^{NN}_{(j,u)}$ (as in \cref{def:conjugate and restriction operators}). Denote by $\mathbf{B}^{NN}_{\gamma_i}$ the asymptotic operator of $\mathbf{D}^{NN}_{\delta}$ at $z_i$. Then the asymptotic operator of $\mathbf{D}^{NN}_0$ at $z_i$ is $\mathbf{B}^{NN}_{\gamma_i} + \delta$, which by assumption has Conley--Zehnder index equal to $1$. We show that $\operatorname{ind} \mathbf{D}_0^{NN} = 2$.
    \begin{IEEEeqnarray*}{rCls+x*}
        \operatorname{ind} \mathbf{D}_0^{NN}
        & = & \chi(\dot{\Sigma}) + 2 c_1^{\tau}(u^* T \hat{X}) + \sum_{i=1}^{p} \conleyzehnder^{\tau}(\mathbf{B}^{NN}_{{\gamma_i}} + \delta) & \quad [\text{by \cref{thm:riemann roch with punctures}}] \\
        & = & 2                                                                                                                              & \quad [\text{since $\conleyzehnder^{\tau}(\mathbf{B}^{NN}_{{\gamma_i}} + \delta) = 1$}].
    \end{IEEEeqnarray*}
    We prove \ref{lem:DX surj implies DY surj 1}.
    \begin{IEEEeqnarray*}{rCls+x*}
        \operatorname{ind} \mathbf{D}_0^{NN} = 2
        & \Longrightarrow & \mathbf{D}_0^{NN}       \text{ is surjective} & \quad [\text{by \cref{lem:conditions for D surjective genus zero}}] \\
        & \Longrightarrow & \mathbf{D}_\delta^{NN}  \text{ is surjective} & \quad [\text{$\mathbf{D}_0^{NN}$ and $\mathbf{D}_{\delta}^{NN}$ are conjugated}] \\
        & \Longrightarrow & \mathbf{D}_{(j,u)}^{NN} \text{ is surjective} & \quad [\text{$\mathbf{D}_{\delta}^Y$ is a restriction of $\mathbf{D}_{(j,u)}^Y$}] \\
        & \Longrightarrow & \mathbf{L}_{(j,u)}^Y    \text{ is surjective} & \quad [\text{$\mathbf{L}_{(j,u)}^X$ is surjective by assumption}].
    \end{IEEEeqnarray*}
    We prove \ref{lem:DX surj implies DY surj 2}.
    \begin{IEEEeqnarray*}{rCls+x*}
        \IEEEeqnarraymulticol{3}{l}{\operatorname{ind} \mathbf{D}_0^{NN} = 2}\\ \quad
        & \Longrightarrow & \mathbf{D}_0^{NN}       \oplus \mathbf{E}_u^{NN} \text{ is surjective} & \quad [\text{by \cref{lem:D plus E is surjective}}] \\
        & \Longrightarrow & \mathbf{D}_\delta^{NN}  \oplus \mathbf{E}_u^{NN} \text{ is surjective} & \quad [\text{$\mathbf{D}_0^{NN} \oplus \mathbf{E}^{NN}_u$ and $\mathbf{D}_{\delta}^{NN} \oplus \mathbf{E}^{NN}_{u}$ are conjugated}] \\
        & \Longrightarrow & \mathbf{D}_{(j,u)}^{NN} \oplus \mathbf{E}_u^{NN} \text{ is surjective} & \quad [\text{$\mathbf{D}_{\delta}^Y \oplus \mathbf{E}^{Y}_{u}$ is a restriction of $\mathbf{D}_{(j,u)}^Y \oplus \mathbf{E}^{Y}_u$}] \\
        & \Longrightarrow & \mathbf{L}_{(j,u)}^Y    \oplus \mathbf{E}_u^{Y}  \text{ is surjective} & \quad [\text{$\mathbf{L}_{(j,u)}^X \oplus \mathbf{E}_u^{X}$ is surjective by assumption}].                                                & \qedhere
    \end{IEEEeqnarray*}
\end{proof}

\section{Moduli spaces of curves in ellipsoids}
\label{sec:augmentation map of an ellipsoid}

We now use the techniques explained in the past two sections to compute the augmentation map of an ellipsoid (\cref{thm:augmentation is nonzero}). The proof of this theorem consists in an explicit count of curves in the ellipsoid satisfying a tangency constraint (\cref{lem:moduli spaces of ellipsoids have 1 element}) together with the fact that the moduli space of such curves is transversely cut out (\cref{prp:moduli spaces without point constraint are tco,prp:moduli spaces w point are tco,prp:moduli spaces w tangency are tco}). Therefore, the explicit count agrees with the virtual count. We now state the assumptions for this section.

Let $a_1 < \cdots < a_n \in \R_{> 0}$ be rationally linearly independent and consider the ellipsoid $E(a_1,\ldots,a_n) \subset \C^n$. By \cite[Section 2.1]{guttSymplecticCapacitiesPositive2018}, $\partial E(a_1, \ldots, a_n)$ has exactly $n$ simple Reeb orbits $\gamma_1, \ldots, \gamma_n$, which satisfy
\begin{IEEEeqnarray}{rCls+x*}
    \gamma_j(t)                & = & \sqrt{\frac{a_j}{\pi}} e^{\frac{2 \pi i t}{a_j}} e_j, \\
    \mathcal{A}(\gamma^m_j)    & = & m a_j, \\
    \conleyzehnder(\gamma^m_j) & = & n - 1 + 2 \sum_{i=1}^{n} \p{L}{2}{\frac{m a_j}{a_i}}, \plabel{eq:cz of reeb in ellipsoid}
\end{IEEEeqnarray}
where $\gamma_j \colon \R / a_j \Z \longrightarrow \partial E(a_1, \ldots, a_n)$ and $e_j$ is the $j$th vector of the canonical basis of $\C^n$ as a vector space over $\C$. For simplicity, for every $\ell = 1, \ldots, n$ denote $E_\ell = E(a_1,\ldots,a_\ell) \subset \C^\ell$. Notice that $\gamma_1$ is a Reeb orbit of $\partial E_1, \ldots, \partial E_n$. Define maps
\begin{IEEEeqnarray*}{rClCrCl}
    \iota_{\ell} \colon \C^{\ell} & \longrightarrow & \C^{\ell + 1}, & \quad & \iota_\ell(z_1,\ldots,z_\ell) & \coloneqq & (z_1,\ldots,z_\ell,0) \\
    h_{\ell} \colon \C^{\ell}     & \longrightarrow & \C,            & \quad & h_\ell(z_1,\ldots,z_\ell)     & \coloneqq & z_1.
\end{IEEEeqnarray*}
The maps $\iota_{\ell} \colon E_\ell \longrightarrow E_{\ell+1}$ are Liouville embeddings satisfying the assumptions in \cref{sec:functional analytic setup}. Define also
\begin{IEEEeqnarray*}{rCls+x*}
    x_\ell   & \coloneqq & 0 \in \C^\ell, \\
    D_{\ell} & \coloneqq & \{ (z_1,\ldots,z_\ell) \in \C^{\ell} \mid z_1 = 0 \} = h_{\ell}^{-1}(0).
\end{IEEEeqnarray*}
Choose an admissible almost complex structure $J_{\ell} \in \mathcal{J}(E_\ell, D_\ell)$ on $\hat{E}_{\ell}$ such that $J_{\ell}$ is the canonical almost complex structure of $\C^\ell$ near $0$. We assume that the almost complex structures are chosen in such a way that $\hat{\iota}_{\ell} \colon \hat{E}_{\ell} \longrightarrow \hat{E}_{\ell + 1}$ is holomorphic and also such that there exists a biholomorphism $\varphi \colon \hat{E}_1 \longrightarrow \C$ such that $\varphi(z) = z$ for $z$ near $0 \in \C$ (see \cref{lem:biholomorphism explicit} below). Let $m \in \Z_{\geq 1}$ and assume that $m a_1 < a_2 < \cdots < a_n$.

Consider the sphere $S^2$, without any specified almost complex structure, with a puncture $z_1 \in S^2$ and an asymptotic marker $v_1 \in (T_{z_1} S^2 \setminus \{0\}) / \R_{> 0}$, and also a marked point $z_0 \in \dot{S}^2 = S^2 \setminus \{z_1\}$. For $k \in \Z_{\geq 0}$, denote%
\begin{IEEEeqnarray*}{lCls+x*}
    \mathcal{M}^{\ell,(k)}_{\mathrm{p}} 
    & \coloneqq & \mathcal{M}_{E_{\ell}}^{\$, J_{\ell}}(\gamma^m_1)\p{<}{}{\mathcal{T}^{(k)}x_\ell}_{\mathrm{p}} \\
    & \coloneqq & \left\{
        (j, u)
        \ \middle\vert
        \begin{array}{l}
            j \text{ is an almost complex structure on }S^2, \\
            u \colon (\dot{S}^2, j) \longrightarrow (\hat{E}_\ell, J_\ell) \text{ is as in \cref{def:asy cyl holomorphic curve}}, \\
            u(z_0) = x_\ell \text{ and $u$ has contact order $k$ to $D_\ell$ at $x_\ell$}
        \end{array}
    \right\}.
\end{IEEEeqnarray*}
Here, the subscript $\mathrm{p}$ means that the moduli space consists of parametrized curves, i.e. we are not quotienting by biholomorphisms. Denote the moduli spaces of regular curves and of unparametrized curves by
\begin{IEEEeqnarray*}{lCls+x*}
    \mathcal{M}^{\ell,(k)}_{\mathrm{p,reg}} & \coloneqq & \mathcal{M}_{E_{\ell}}^{\$, J_{\ell}}(\gamma^m_1)\p{<}{}{\mathcal{T}^{(k)}x_\ell}_{\mathrm{p,reg}}, \\
    \mathcal{M}^{\ell,(k)}                  & \coloneqq & \mathcal{M}_{E_{\ell}}^{\$, J_{\ell}}(\gamma^m_1)\p{<}{}{\mathcal{T}^{(k)}x_\ell} \coloneqq \mathcal{M}^{\ell,(k)}_{\mathrm{p}} / \sim.
\end{IEEEeqnarray*}
Here, $\mathcal{M}^{\ell,(0)} \coloneqq \mathcal{M}_{E_{\ell}}^{\$, J_{\ell}}(\gamma^m_1)\p{<}{}{\mathcal{T}^{(0)}x_\ell} \coloneqq \mathcal{M}_{E_{\ell}}^{\$, J_{\ell}}(\gamma^m_1)$ and analogously for $\mathcal{M}^{\ell,(0)}_{\mathrm{p,reg}}$ and $\mathcal{M}^{\ell,(0)}_{\mathrm{p}}$.

\begin{lemma}
    \phantomsection\label{lem:biholomorphism explicit}
    For any $a > 0$, there exists an almost complex structure $J$ on $\hat{B}(a)$ and a biholomorphism $\varphi \colon \hat{B}(a) \longrightarrow \C$ such that
    \begin{enumerate}
        \item \label{lem:biholomorphism explicit 1} $J$ is cylindrical on $\R_{\geq 0} \times \partial B(a)$;
        \item \label{lem:biholomorphism explicit 2} $J$ is the canonical almost complex structure of $\C$ near $0 \in B(a) \subset \C$;
        \item \label{lem:biholomorphism explicit 3} $\varphi(z) = z$ for $z$ near $0 \in B(a) \subset \C$.
    \end{enumerate}
\end{lemma}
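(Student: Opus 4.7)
The plan is to construct an explicit radially symmetric diffeomorphism $\varphi \colon \hat{B}(a) \to \C$ and then \emph{define} $J \coloneqq \varphi^* J_0$, where $J_0$ is the canonical complex structure on $\C$. With this choice $\varphi$ is tautologically a biholomorphism, so the three properties in the lemma reduce to properties of $\varphi$ alone.

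For properties (2) and (3), I will take $\varphi$ to equal the identity on some small ball $B(a') \subset B(a)$ around $0$; property (3) is then immediate, and $J = \varphi^* J_0 = J_0$ on $B(a')$ gives property (2). For property (1), the natural ansatz on the cylindrical end $\R_{\geq 0} \times \partial B(a)$ is $\varphi(r, p) \coloneqq e^{2\pi r/a}\, p$. Since the Reeb vector field on $\partial B(a)$ satisfies $R = (2\pi/a)\, \partial_\theta$, a direct computation gives $d\varphi(\partial_r) = (2\pi/a)\, e^{2\pi r/a}\, p$ and $d\varphi(R) = i\, (2\pi/a)\, e^{2\pi r/a}\, p = J_0 \circ d\varphi(\partial_r)$, so $\varphi^* J_0 (\partial_r) = R$; since $\xi$ is the zero distribution in this one-complex-dimensional setting, this is already the full cylindricity condition.

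The remaining task is smooth interpolation on the annulus $B(a) \setminus B(a')$ and matching across $\partial B(a)$. Parametrizing a two-sided neighborhood of $\partial B(a)$ via the Liouville flow $(r, p) \mapsto e^{r/2} p$, the radial scaling coming from the $B(a)$-side is $e^{r/2}$, while the cylindrical-end side demands $e^{2\pi r/a}$; these agree only when $a = 4\pi$, which is the crux of the difficulty (and explains why $\varphi$ cannot simply be the identity on all of $B(a)$). To reconcile them I will set $\varphi(z) = z\, F(|z|)$ on the annulus for a smooth positive radial profile $F$ with $F \equiv 1$ near $|z| = \sqrt{a'/\pi}$, and whose Taylor jet at $|z| = \sqrt{a/\pi}$ is uniquely determined by differentiating the matching identity $e^{r/2} F(\sqrt{a/\pi}\, e^{r/2}) = e^{2\pi r/a}$ at $r = 0$ to all orders. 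Existence of such an $F$ follows from a standard Borel-type extension argument, and keeping $F$ close enough to $1$ in $C^1$ ensures $(\rho F(\rho))' > 0$, so that the combined map $\varphi$ is a global diffeomorphism onto $\C$.

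The main obstacle I anticipate is precisely this smooth matching step: the infinitely many derivative conditions at $\sqrt{a/\pi}$ must be reconciled with $F \equiv 1$ near $\sqrt{a'/\pi}$ while simultaneously preserving strict radial monotonicity. Once this is in place, properties (1)--(3) of the lemma follow by construction.
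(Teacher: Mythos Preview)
Your core strategy --- construct a radial diffeomorphism $\varphi$ and set $J \coloneqq \varphi^* J_0$ --- is exactly what the paper does, and your verification of the cylindricity condition on the end is correct. But the interpolation step is overcomplicated, and your monotonicity argument has a small gap: you cannot keep $F$ close to $1$ in $C^1$, because the Taylor jet you are matching at $\sqrt{a/\pi}$ forces $F'(\sqrt{a/\pi}) = (4\pi/a - 1)\sqrt{\pi/a}$, which is nonzero for $a \neq 4\pi$. (Monotonicity does hold at that endpoint, since $(\rho F)'\big|_{\rho=\sqrt{a/\pi}} = 4\pi/a > 0$, but you still owe an argument in the interior that does not rely on $C^1$-smallness.)

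The Borel matching is entirely avoidable; you created the problem by fixing the constant in the end formula to be $1$. Cylindricity only requires $\varphi(r,p) = C\, e^{2\pi r/a}\, p$ for \emph{some} $C > 0$, and the paper exploits this freedom. Parametrize all of $\hat{B}(a) \setminus \{0\}$ by $\R \times \partial B(a)$ via the Liouville flow, set $\varphi(\rho,w) = f(\rho)\, w$, and work with the logarithmic derivative $g \coloneqq (a/2\pi)\, f'/f$. The three conditions become simply: $g \equiv a/4\pi$ for $\rho \leq \rho_0$ (so $f(\rho) = e^{\rho/2}$, giving $\varphi = \mathrm{id}$ there) and $g \equiv 1$ for $\rho \geq 0$ (so $J(\partial_\rho) = g(\rho) R = R$, giving cylindricity). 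Now $g$ just has to smoothly interpolate between two positive constants --- no Taylor jets, no Borel --- and $f' = (2\pi/a)\, g f > 0$ delivers monotonicity for free. The constant $C = f(0)$ is whatever it comes out to be.
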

\begin{proof}
    Choose $\rho_0 < 0$ and let $g \colon \R \longrightarrow \R_{>0}$ be a function such that $g(\rho) = a/4 \pi$ for $\rho \leq \rho_0$ and $g(\rho) = 1$ for $\rho \geq 0$. For $(\rho, w) \in \R \times \partial B(a)$, define
    \begin{IEEEeqnarray*}{rCls+x*}
        f(\rho)                         & \coloneqq & \exp \p{}{2}{\frac{\rho_0}{2} + \frac{2 \pi}{a} \int_{\rho_0}^{\rho} g(\sigma) \edv \sigma}, \\
        J_{(\rho, w)} (\partial_{\rho}) & \coloneqq & g (\rho) R^{\partial B(a)}_{w}, \\
        \varphi(\rho, w)                & \coloneqq & f(\rho) w.
    \end{IEEEeqnarray*}
    Property \ref{lem:biholomorphism explicit 1} follows from the fact that $g(\rho) = 1$ for $\rho \geq 0$. Consider the Liouville vector field of $\C$, which is denoted by $Z$ and given by $Z(w) = w/2$. Let $\Phi \colon \R \times \partial B(a) \longrightarrow \C$ be the map given by $\Phi(\rho, w) = \phi^\rho_Z(w) = \exp(\rho/2) w$. By definition of completion, $\Phi|_{B(a) \setminus \{0\}} \colon B(a) \setminus \{0\} \longrightarrow \C$ is the inclusion. To prove property \ref{lem:biholomorphism explicit 3}, it suffices to show that $\varphi(\rho, w) = \Phi(\rho, w)$ for every $(\rho, w) \in \R_{\leq \rho_0} \times \partial B(a)$. For this, simply note that
    \begin{IEEEeqnarray*}{rCls+x*}
        f(\rho)
        & = & \exp \p{}{2}{\frac{\rho_0}{2} + \frac{2 \pi}{a} \int_{\rho_0}^{\rho} g(\sigma) \edv \sigma} & \quad [\text{by definition of $f$}] \\
        & = & \exp \p{}{2}{\frac{\rho_0}{2} + \frac{2 \pi}{a} (\rho - \rho_0) \frac{a}{4 \pi} }           & \quad [\text{$\rho \leq \rho_0$ implies $g(\rho) = a / 4 \pi$}] \\
        & = & \exp \p{}{2}{\frac{\rho}{2}}.
    \end{IEEEeqnarray*}
    Therefore, $\varphi(z) = z$ for $z$ near $0 \in B(a) \subset \C$, and in particular $\varphi$ can be extended smoothly to a map $\varphi \colon \hat{B}(a) \longrightarrow \C$. We show that $\varphi$ is holomorphic.
    \begin{IEEEeqnarray*}{rCls+x*}
        j \circ \dv \varphi(\rho, w) (\partial_{\rho}) 
        & = & j \p{}{2}{\pdv{}{\rho} \p{}{1}{f(\rho) |w|} \pdv{}{r}\Big|_{\varphi(\rho, w)}}                & \quad [\text{by definition of $\varphi$}] \\
        & = & \frac{2 \pi}{a} \, g(\rho) \, j \p{}{2}{ f(\rho) |w| \pdv{}{r}\Big|_{\varphi(\rho, w)}}       & \quad [\text{by definition of $f$}] \\
        & = & \frac{2 \pi}{a} \, g(\rho) \, j \p{}{2}{ |\varphi(\rho,w)| \pdv{}{r}\Big|_{\varphi(\rho, w)}} & \quad [\text{by definition of $\varphi$}] \\
        & = & \frac{2 \pi}{a} \, g(\rho) \, \pdv{}{\theta}\Big|_{\varphi(\rho, w)}                          & \quad [\text{by definition of $j$}] \\
        & = & g(\rho) \, \dv \varphi(\rho, w) (R^{\partial B(a)}_w)                                         & \quad [\text{by \cite[Equation (2.2)]{guttSymplecticCapacitiesPositive2018}}] \\
        & = & \dv \varphi(\rho, w) \circ J (\partial_{\rho})                                                & \quad [\text{by definition of $J$}],
    \end{IEEEeqnarray*}
    Where $(r, \theta)$ are the polar coordinates of $\C$. Since $\varphi$ is holomorphic and $\varphi$ is the identity near the origin, we conclude that $J$ is the canonical almost complex structure of $\C$ near the origin. In particular, $J$ can be extended smoothly to an almost complex structure on $\hat{B}(a)$, which proves \ref{lem:biholomorphism explicit 2}. Finally, we show that $\varphi$ is a diffeomorphism. For this, it suffices to show that $\Phi^{-1} \circ \varphi \colon \R \times \partial B(a) \longrightarrow \R \times \partial B(a)$ is a diffeomorphism. This map is given by $\Phi^{-1} \circ \varphi(\rho, w) = (2 \ln(f(\rho)), w)$. Since
    \begin{IEEEeqnarray*}{c+x*}
        \odv{}{\rho} (2 \ln(f(\rho))) = 2 \frac{f'(\rho)}{f(\rho)} = \frac{4 \pi}{a} g(\rho) > 0,
    \end{IEEEeqnarray*}
    $\varphi$ is a diffeomorphism.
\end{proof}

\begin{lemma}
    \label{lem:psi j}
    Let $\operatorname{inv} \colon \overline{\C} \longrightarrow \overline{\C}$ be the map given by $\operatorname{inv}(z) = 1/z$ and consider the vector $V \coloneqq \dv \operatorname{inv}(0) \partial_x \in T_{\infty} \overline{\C}$. For every $j \in \mathcal{T}$ there exists a unique biholomorphism $\psi_j \colon (\overline{\C}, j_0) \longrightarrow (S^2, j)$ such that
    \begin{IEEEeqnarray*}{c+x*}
        \psi_j(0) = z_0, \qquad \psi_j(\infty) = z_1, \qquad \dv \psi_j(\infty) V = \frac{v_1}{\| v_1 \|},
    \end{IEEEeqnarray*}
    where $\| \cdot \|$ is the norm coming from the canonical Riemannian metric on $S^2$ as the sphere of radius $1$ in $\R^3$.
\end{lemma}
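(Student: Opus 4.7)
The plan is to reduce the statement to the classical fact that $\operatorname{Aut}(\overline{\C}) = \operatorname{PSL}(2,\C)$ acts sharply $3$-transitively on $\overline{\C}$, or more precisely, that it acts transitively with a unique element once one specifies two points and a $1$-jet at one of them.

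First I would invoke the uniformization theorem (or equivalently, the fact that every compact Riemann surface of genus $0$ is biholomorphic to $\overline{\C}$) to obtain \emph{some} biholomorphism $\phi \colon (\overline{\C},j_0) \longrightarrow (S^2,j)$. Then, after post-composing $\phi$ with a suitable Möbius transformation on the source, I may assume that $\phi(0) = z_0$ and $\phi(\infty) = z_1$. This uses the standard fact that $\operatorname{PSL}(2,\C)$ acts transitively on ordered pairs of distinct points of $\overline{\C}$. The remaining freedom in $\phi$ consists of precompositions by Möbius transformations fixing both $0$ and $\infty$, i.e.\ by maps $M_\lambda \colon z \longmapsto \lambda z$ with $\lambda \in \C^{*}$.

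Next I would use this remaining $\C^{*}$ of freedom to pin down the $1$-jet condition at infinity. Writing $w = 1/z$ in a chart near $\infty$, the map $M_\lambda$ becomes $w \longmapsto w/\lambda$, and the vector $V = \dv \operatorname{inv}(0)\partial_x$ is expressed as $\partial_x$ at $w=0$. Hence
\begin{IEEEeqnarray*}{c}
\dv (\phi \circ M_\lambda)(\infty) V = \lambda^{-1}\, \dv \phi(\infty) V.
\end{IEEEeqnarray*}
Since $\dv \phi(\infty) V$ and $v_1/\|v_1\|$ are both nonzero elements of the complex $1$-dimensional vector space $T_{z_1} S^2$, there is a unique $\lambda \in \C^{*}$ for which $\lambda^{-1}\, \dv \phi(\infty) V = v_1/\|v_1\|$. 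Defining $\psi_j \coloneqq \phi \circ M_\lambda$ gives existence, and the same computation shows uniqueness: any biholomorphism satisfying the three stated constraints is obtained from $\phi$ by a unique element of the stabilizer of $\{0,\infty\}$, and the $1$-jet condition then pins that element down.

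The only subtle point I expect is a careful bookkeeping of the derivative $\dv M_\lambda(\infty)$, which requires passing to the chart $w = 1/z$ at $\infty$ and using the fact that $V$ is, by construction, precisely the pushforward of $\partial_x$ by this coordinate change. Once this identification is made, the rest is purely formal, so I do not anticipate any genuine analytic or geometric obstacle; the statement is essentially a rephrasing of the fact that a Möbius transformation is determined by a point and a $1$-jet at another point.
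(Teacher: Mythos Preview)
Your proposal is correct and follows essentially the same approach as the paper: uniformization to obtain some biholomorphism between $(\overline{\C},j_0)$ and $(S^2,j)$, followed by composition with a uniquely determined M\"obius transformation. The paper's proof is more terse---it simply asserts the existence of a unique M\"obius transformation satisfying the three conditions $\psi_0(0)=\phi(z_0)$, $\psi_0(\infty)=\phi(z_1)$, $\dv\psi_0(\infty)V=\dv\phi(z_1)\,v_1/\|v_1\|$---whereas you spell out the two-step reduction (first fix the two points, then use the residual $\C^*$ to match the $1$-jet), but the content is the same.
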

\begin{proof}
    By the uniformization theorem \cite[Theorem XII.0.1]{desaint-gervaisUniformizationRiemannSurfaces2016}, there exists a biholomorphism $\phi \colon (S^2, j) \longrightarrow (\overline{\C}, j_0)$. Since there exists a unique Möbius transformation $\psi_0 \colon (\overline{\C}, j_0) \longrightarrow (\overline{\C}, j_0)$ such that
    \begin{IEEEeqnarray*}{c+x*}
        \psi_0(0) = \phi(z_0), \qquad \psi_0(\infty) = \phi(z_1), \qquad \dv \psi_0 (\infty) V = \dv \phi(z_1) \frac{v_1}{\| v_1 \|},
    \end{IEEEeqnarray*}
    the result follows.
\end{proof}

We will denote also by $\psi_j$ the restriction $\psi_j \colon (\C, j_0) \longrightarrow (S^2, j)$.

\begin{lemma}
    \label{lem:u is a polynomial}
    If $(j,u) \in \mathcal{M}^{1,(0)}$ then $\varphi \circ u \circ \psi_j \colon \C \longrightarrow \C$ is a polynomial of degree $m$.
\end{lemma}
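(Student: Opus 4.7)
The plan is to show that $f := \varphi \circ u \circ \psi_j \colon \C \to \C$ extends to a meromorphic map $\overline{\C} \to \overline{\C}$ whose only pole is of order exactly $m$ at $\infty$; the claim then follows from the elementary fact that any such meromorphic function is a polynomial of degree $m$.

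First, $f$ is holomorphic on $\C$ as a composition of three holomorphic maps: $\psi_j \colon (\C, j_0) \to (\dot{S}^2, j)$ is a biholomorphism by \cref{lem:psi j}, $u$ is $J_1$-holomorphic by assumption (together with the fact that $(j, u) \in \mathcal{M}^{1,(0)}$ means $u$ is defined on all of $\dot{S}^2$), and $\varphi \colon (\hat{E}_1, J_1) \to (\C, j_0)$ is a biholomorphism by \cref{lem:biholomorphism explicit}. In particular, $f$ has no poles in $\C$, so the only possible pole lies at $\infty \in \overline{\C}$.

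The heart of the argument is the asymptotic analysis of $f(z)$ as $|z| \to \infty$. Choose cylindrical coordinates $(s,t)$ near $z_1 \in S^2$ arising from the holomorphic chart $w \mapsto \psi_j(1/w)$, so that $\psi_j(e^{2\pi(s+it)})$ agrees with the cylindrical end parametrization $\phi^+(s,t)$ and is compatible with the asymptotic marker $v_1$; this is possible by \cref{lem:psi j}. By the asymptotic hypothesis in \cref{def:asy cyl holomorphic curve} applied to $u$ at $z_1$ (with asymptote $\gamma_1^m$ of period $m a_1$), together with the standard exponential convergence of asymptotically cylindrical holomorphic curves to their asymptotic trivial cylinder, for $s$ large enough we have $u \circ \phi^+(s,t) \in \R_{\geq 0} \times \partial B(a_1)$ with
\begin{IEEEeqnarray*}{rCl}
    \pi_{\R} \circ u \circ \phi^+(s,t)              & = & m a_1 \cdot s + O(1), \\
    \pi_{\partial B(a_1)} \circ u \circ \phi^+(s,t) & \to & \gamma_1^m(m a_1 \cdot t) = \sqrt{a_1/\pi}\, e^{2\pi i m t}.
\end{IEEEeqnarray*}
On the other hand, the proof of \cref{lem:biholomorphism explicit} yields the explicit formula $\varphi(\rho, x) = C\, e^{2\pi \rho / a_1}\, x$ for all $\rho \geq 0$ and some constant $C > 0$ (since $g \equiv 1$ on $\R_{\geq 0}$). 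Combining the two estimates gives $\varphi \circ u \circ \phi^+(s,t) \sim C'\, e^{2\pi m (s+it)}$ as $s \to +\infty$, for some $C' \neq 0$. Substituting $z = e^{2\pi(s+it)}$, we conclude that $f(z) / z^m \to C'$ as $|z| \to \infty$.

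This proves that $f$ extends to a meromorphic map $\overline{\C} \to \overline{\C}$ with a single pole of order exactly $m$ at $\infty$, hence is a polynomial of degree $m$. The main technical point is upgrading the purely qualitative convergence in \cref{def:asy cyl holomorphic curve} into the sharp leading-order asymptotic $f(z) \sim C' z^m$; this relies on the standard (but nontrivial) exponential decay estimates for asymptotically cylindrical holomorphic maps in symplectizations, which control the error terms in both the $\R$- and $\partial B(a_1)$-components of $u$ well enough that they do not affect the leading term of $f$ at $\infty$.
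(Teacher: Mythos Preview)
Your proof is correct, but it works harder than necessary compared to the paper's argument. You establish the sharp asymptotic $f(z)/z^m \to C' \neq 0$ by combining the explicit formula for $\varphi$ on the cylindrical end with the exponential decay estimates for asymptotically cylindrical holomorphic curves; as you correctly flag, upgrading the qualitative convergence of \cref{def:asy cyl holomorphic curve} to the precise statement $\pi_{\R}\circ u(s,t) = m a_1 s + O(1)$ requires those nontrivial estimates.

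The paper sidesteps this entirely with a two-step topological argument. First, the qualitative asymptotic $\pi_{\R}\circ u \to +\infty$ already gives $|f(z)|\to\infty$ as $|z|\to\infty$, so $f$ is an entire function with a pole at $\infty$, hence a polynomial. Second, the degree is read off as the winding number: since $\pi_{\partial E_1}\circ u(s,t)\to\gamma_1^m(m a_1 t)$ and $\varphi(\rho,w)=f(\rho)w$ preserves the angular coordinate, the loop $\theta\mapsto f(re^{i\theta})$ winds $m$ times about the origin for $r$ large, forcing $\deg f = m$. This uses only the convergence statements literally appearing in \cref{def:asy cyl holomorphic curve}, with no need for decay rates. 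Your approach yields slightly more information (the actual leading coefficient), but at the cost of invoking a heavier analytic input that the paper's argument avoids.
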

\begin{proof}
    Since $u$ is positively asymptotic to $\gamma^m_1$, the map $\varphi \circ u \circ \psi_j$ goes to $\infty$ as $z$ goes to $\infty$. Therefore, $\varphi \circ u \circ \psi_j$ is a polynomial. Again using the fact that $u$ is positively asymptotic to $\gamma^m_1$, we conclude that for $r$ big enough the path $\theta \longmapsto \varphi \circ u \circ \psi_j(r e^{i \theta})$ winds around the origin $m$ times. This implies that the degree of $\varphi \circ u \circ \psi_j$ is $m$.
\end{proof}

\begin{lemma}
    \label{lem:normal cz is one}
    For every $\ell = 1,\ldots,n-1$, view $\gamma^m_1$ as a Reeb orbit of $\partial E_{\ell} \subset \partial E_{\ell + 1}$. The normal Conley--Zehnder index of $\gamma^m_1$ is $1$.
\end{lemma}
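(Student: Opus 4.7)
The plan is to reduce the computation to the splitting of the linearized Reeb flow along $\gamma_1^m$ induced by the inclusion $E_\ell \hookrightarrow E_{\ell+1}$, and then apply the known index formula \eqref{eq:cz of reeb in ellipsoid} componentwise.

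First, I would observe that along $\gamma_1^m$ the contact distribution $\xi^{E_{\ell+1}}$ of $\partial E_{\ell+1}$ splits as a direct sum $\xi^{E_\ell} \oplus N$ of symplectic subbundles, where $\xi^{E_\ell}$ is the contact distribution of $\partial E_\ell$ (recall that $\gamma_1$ lies in $\partial E_\ell \cap \partial E_{\ell+1}$) and $N$ is the symplectic complement along the $z_{\ell+1}$ coordinate. Because the Reeb flow of $\partial E_{\ell+1}$ at points of $\partial E_\ell \times \{0\}$ preserves both factors (it acts by independent rotations in each coordinate plane of $\mathbb{C}^{\ell+1}$), the linearized return map of $\gamma_1^m$ in $\partial E_{\ell+1}$ respects this splitting, and the resulting path in $\mathrm{Sp}(2\ell)$ is the direct sum of the path in $\mathrm{Sp}(2\ell - 2)$ associated to $\gamma_1^m$ in $\partial E_\ell$ and a path in $\mathrm{Sp}(2)$ describing the normal dynamics. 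The normal Conley--Zehnder index $\conleyzehnder^N(\gamma_1^m)$ is by definition the Conley--Zehnder index of this $\mathrm{Sp}(2)$ factor.

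Next, by the product property of the Conley--Zehnder index from \cref{thm:properties of cz}, this gives
\begin{IEEEeqnarray*}{c+x*}
    \conleyzehnder^{E_{\ell+1}}(\gamma_1^m) = \conleyzehnder^{E_\ell}(\gamma_1^m) + \conleyzehnder^N(\gamma_1^m),
\end{IEEEeqnarray*}
where the two outer terms are computed by Equation \eqref{eq:cz of reeb in ellipsoid} applied to the ellipsoids $E_{\ell+1}$ and $E_\ell$ respectively. Taking the difference of the two formulas yields
\begin{IEEEeqnarray*}{c+x*}
    \conleyzehnder^N(\gamma_1^m) = 1 + 2 \p{L}{2}{\frac{m a_1}{a_{\ell+1}}}.
\end{IEEEeqnarray*}
Finally, the standing assumption $m a_1 < a_2 < \cdots < a_n$ gives $m a_1 < a_{\ell+1}$, so $\lfloor m a_1 / a_{\ell+1} \rfloor = 0$ and $\conleyzehnder^N(\gamma_1^m) = 1$.

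The only step requiring care is the compatibility between the two ways of computing the normal index: the splitting of the linearized return map above produces a path in $\mathrm{Sp}(2)$ whose Conley--Zehnder index should match the difference of the total indices. This is essentially a bookkeeping argument using that the product property of \cref{thm:properties of cz} is additive under direct sums in $\mathrm{SP}(n)$ and that the trivializations of $\xi^{E_\ell}$ and $N$ assemble into a trivialization of $\xi^{E_{\ell+1}}$ in which the linearized return map is block diagonal. The rest is a direct computation with Equation \eqref{eq:cz of reeb in ellipsoid}, and no further geometric input is needed.
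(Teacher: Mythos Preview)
Your argument is correct and arrives at the same conclusion, but the route is genuinely different from the paper's. The paper writes down the Reeb flow of $\partial E_{\ell+1}$ explicitly, identifies the normal component of the linearized return map along $\gamma_1^m$ as the path $t \mapsto \exp(t J_0 S)$ with $S = \tfrac{2\pi}{a_{\ell+1}} I$, and then invokes \cref{prp:gutts cz formula} directly to get $\conleyzehnder^\perp(\gamma_1^m) = \tfrac{1}{2}\signature S = 1$ once $m a_1 < a_{\ell+1}$ forces the floor term to vanish. You instead use the product property of $\conleyzehnder$ to write the normal index as the difference of the total indices in $\partial E_{\ell+1}$ and $\partial E_\ell$, both of which are already given by \eqref{eq:cz of reeb in ellipsoid}, and read off $1 + 2\lfloor m a_1 / a_{\ell+1}\rfloor = 1$.

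Your approach is slightly more economical in that it recycles the known formula \eqref{eq:cz of reeb in ellipsoid} and avoids appealing to \cref{prp:gutts cz formula}; it also makes transparent why the assumption $m a_1 < a_{\ell+1}$ enters. The paper's approach, on the other hand, is more explicit about what the normal linearized flow actually is (a rotation at rate $2\pi/a_{\ell+1}$), which is useful context for the later applications in \cref{lem:DX surj implies DY surj}. The trivialization compatibility you flag in your last paragraph is a non-issue here: the natural constant frame on the $z_{\ell+1}$-axis extends over any capping disk, so the direct-sum trivialization agrees with the global one used in \eqref{eq:cz of reeb in ellipsoid}.
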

\begin{proof}
    By \cite[Equation (2.2)]{guttSymplecticCapacitiesPositive2018}, the Reeb vector field of $\partial E_{\ell + 1}$ is given by
    \begin{IEEEeqnarray*}{c+x*}
        R^{\partial E_{\ell + 1}} = 2 \pi \sum_{j=1}^{\ell+1} \frac{1}{a_j} \pdv{}{\theta_{j}},
    \end{IEEEeqnarray*}
    where $\theta_j$ denotes the angular polar coordinate of the $j$th summand of $\C^{\ell+1}$. Therefore, the flow of $R^{\partial E_{\ell + 1}}$ is given by
    \begin{IEEEeqnarray*}{rrCl}
        \phi^{t}_{R} \colon & \partial E_{\ell+1}     & \longrightarrow & \partial E_{\ell+1} \\
                                                  & (z_1,\ldots,z_{\ell+1}) & \longmapsto     & \p{}{2}{e^{\frac{2 \pi i}{a_1}} z_1, \ldots, e^{\frac{2 \pi i}{a_{\ell+1}}} z_{\ell+1}}.
    \end{IEEEeqnarray*}
    The diagram
    \begin{IEEEeqnarray*}{c+x*}
        \begin{tikzcd}
            \xi^{\partial E_{\ell}}_{\gamma^m_1(0)} \ar[r] \ar[d, swap, "\dv \phi^t_{R}(\gamma^m_1(0))"] & \xi^{\partial E_{\ell+1}}_{\gamma^m_1(0)} \ar[d, "\dv \phi^t_{R}(\gamma^m_1(0))"] & \big(\xi^{\partial E_{\ell+1}}_{\gamma^m_1(0)}\big)^{\perp} \ar[l] \ar[d, "\dv \phi^t_{R}(\gamma^m_1(0))"] \ar[r, equals] & \C \ar[d, "\times \exp \p{}{1}{\frac{2 \pi i t}{a_{\ell+1}}}"] \\
            \xi^{\partial E_{\ell}}_{\gamma^m_1(t)} \ar[r]                                               & \xi^{\partial E_{\ell+1}}_{\gamma^m_1(t)}                                         & \big(\xi^{\partial E_{\ell+1}}_{\gamma^m_1(t)}\big)^{\perp} \ar[l] \ar[r, equals]                                         & \C
        \end{tikzcd}
    \end{IEEEeqnarray*}
    commutes. Define a path $A_{\gamma^m_1} \colon [0,m a_1] \longrightarrow \operatorname{Sp}(2)$ by $A_{\gamma^m_1}(t) = \exp (t J_0 S)$, where
    \begin{IEEEeqnarray*}{c+x*}
        S = \frac{2 \pi}{a_{\ell + 1}}
        \begin{bmatrix}
            1 & 0 \\
            0 & 1
        \end{bmatrix}.
    \end{IEEEeqnarray*}
    The only eigenvalue of $S$ is $2 \pi / a_{\ell+1}$, which has multiplicity $2$. Therefore, the signature of $S$ is $\signature S = 2$. These facts allow us to compute $\conleyzehnder^{\perp}(\gamma^m_1)$ using \cref{prp:gutts cz formula}:
    \begin{IEEEeqnarray*}{rCls+x*}
        \conleyzehnder^{\perp}(\gamma^m_1)
        & = & \conleyzehnder(A_{\gamma^m_1})                                                                                                & \quad [\text{by definition of $\conleyzehnder^{\perp}$}] \\
        & = & \p{}{2}{\frac{1}{2} + \p{L}{2}{\sqrt{\frac{2 \pi}{a_{\ell + 1}}\frac{2 \pi}{a_{\ell + 1}}} \frac{m a_1}{2 \pi}}} \signature S & \quad [\text{by \cref{prp:gutts cz formula}}] \\
        & = & \frac{1}{2} \signature S                                                                                                      & \quad [\text{since $m a_1 < a_2 < \cdots < a_n$}] \\
        & = & 1                                                                                                                             & \quad [\text{by the discussion above}].                    & \qedhere
    \end{IEEEeqnarray*}
\end{proof}

\begin{lemma}
    \label{lem:unique reeb orbit with cz equal to}
    If $\ell = 1,\ldots,n$ then $\gamma^m_1$ is the unique Reeb orbit of $\partial E_{\ell}$ such that $\conleyzehnder(\gamma^m_1) = \ell - 1 + 2m$.    
\end{lemma}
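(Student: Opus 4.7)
The plan is to use the explicit formula for the Conley--Zehnder index of Reeb orbits in an ellipsoid, namely Equation \eqref{eq:cz of reeb in ellipsoid} applied to $E_\ell$ (so the ambient dimension $n$ is replaced by $\ell$), which reads
\begin{IEEEeqnarray*}{c+x*}
    \conleyzehnder(\gamma_j^k) = \ell - 1 + 2 \sum_{i=1}^{\ell} \p{L}{2}{\frac{k a_j}{a_i}}
\end{IEEEeqnarray*}
for $j = 1, \ldots, \ell$ and $k \in \Z_{\geq 1}$. Thus the statement reduces to the combinatorial claim that
\begin{IEEEeqnarray*}{c+x*}
    \sum_{i=1}^{\ell} \p{L}{2}{\frac{k a_j}{a_i}} = m \iff (j,k) = (1, m),
\end{IEEEeqnarray*}
under the standing assumption $m a_1 < a_2 < \cdots < a_n$.

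First I would verify the forward direction for $(j,k) = (1,m)$: since $m a_1 < a_i$ for every $i \geq 2$, each of those floor terms vanishes, while the $i = 1$ term equals $m$. This gives $\conleyzehnder(\gamma_1^m) = \ell - 1 + 2m$, as desired.

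Next I would rule out all other candidates by splitting into two cases. In the case $j = 1$ with $k \neq m$: if $k > m$ then already $\lfloor k a_1/a_1 \rfloor = k > m$ and the remaining floor terms are non-negative, so the sum exceeds $m$; if $k < m$ then by the same inequality $k a_1 < m a_1 < a_i$ for $i \geq 2$, so the sum collapses to $k < m$. In the case $j \geq 2$ with $k \geq 1$: the $i = j$ term contributes $\lfloor k a_j/a_j \rfloor = k \geq 1$, while the $i = 1$ term satisfies $\lfloor k a_j/a_1 \rfloor \geq \lfloor a_j/a_1 \rfloor \geq \lfloor a_2/a_1 \rfloor \geq m$ (the last inequality uses $m a_1 < a_2$). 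Hence the full sum is at least $m + k \geq m + 1 > m$, again a contradiction.

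The argument is entirely elementary once the Conley--Zehnder formula is in hand, so I do not anticipate a genuine obstacle; the only point requiring care is bookkeeping with the floor function and making sure the hypothesis $m a_1 < a_2 < \cdots < a_n$ is used in both places (to kill the higher-index terms when $(j,k) = (1,m)$, and to force $\lfloor a_2/a_1 \rfloor \geq m$ in the case $j \geq 2$).
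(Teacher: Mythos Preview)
Your proof is correct and follows essentially the same approach as the paper's: both use the explicit Conley--Zehnder formula and the hypothesis $m a_1 < a_2 < \cdots < a_n$ to reduce to elementary floor arithmetic. Your case analysis is slightly more economical---in the $j \geq 2$ case you only need the $i=1$ and $i=j$ terms to reach a contradiction, whereas the paper sums all terms with $j \leq i$---but the underlying idea is identical.
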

\begin{proof}
    First, notice that
    \begin{IEEEeqnarray*}{rCls+x*}
        \conleyzehnder(\gamma^m_1)
        & = & \ell - 1 + 2 \sum_{j=1}^{\ell} \p{L}{2}{\frac{m a_1}{a_j}} & \quad [\text{by equation \eqref{eq:cz of reeb in ellipsoid}}] \\
        & = & \ell - 1 + 2 m                                          & \quad [\text{since $m a_1 < a_2 < \cdots < a_n$}].
    \end{IEEEeqnarray*}
    Conversely, let $\gamma = \gamma^k_i$ be a Reeb orbit of $\partial E_\ell$ with $\conleyzehnder(\gamma) = \ell - 1 + 2m$. By equation \eqref{eq:cz of reeb in ellipsoid}, this implies that 
    \begin{IEEEeqnarray}{c+x*}
        \label{eq:k is sum of floors}
        m = \sum_{j=1}^{\ell} \p{L}{2}{\frac{k a_i}{a_j}}.
    \end{IEEEeqnarray}
    We show that $i = 1$. Assume by contradiction otherwise. Then
    \begin{IEEEeqnarray*}{rCls+x*}
        m
        & =    & \sum_{1 \leq j \leq \ell} \p{L}{2}{\frac{k a_i}{a_j}}                          & \quad [\text{by equation \eqref{eq:k is sum of floors}}] \\
        & \geq & \sum_{1 \leq j \leq i} \p{L}{2}{\frac{k a_i}{a_j}}                             & \quad [\text{since every term in the sum is $\geq 0$}] \\
        & =    & \p{L}{2}{\frac{k a_i}{a_1}} + \sum_{1 < j < i} \p{L}{2}{\frac{k a_i}{a_j}} + k & \quad [\text{since by assumption, $i > 1$}] \\
        & \geq & (m + i - 1) k                                                                  & \quad [\text{$m a_1 < a_2 < \cdots < a_i$}] \\
        & >    & m k                                                                            & \quad [\text{since by assumption, $i > 1$}],
    \end{IEEEeqnarray*}
    which is a contradiction, and therefore $i = 1$. We show that $k = m$, using the fact that $m \geq \lfloor k a_i / a_1 \rfloor = k$.
    \begin{IEEEeqnarray*}{rCls+x*}
        m
        & = & \sum_{1 \leq j \leq \ell} \p{L}{2}{\frac{k a_1}{a_j}}     & \quad [\text{by equation \eqref{eq:k is sum of floors} and since $i = 1$}] \\
        & = & k + \sum_{2 \leq j \leq \ell} \p{L}{2}{\frac{k a_1}{a_j}} & \\
        & = & k                                                         & \quad [\text{since $k \leq m$ and $k a_1 \leq m a_1 < a_1 < \cdots < a_n$}]. & \qedhere
    \end{IEEEeqnarray*}
\end{proof}

\begin{lemma}
    \label{lem:lch of ellipsoid}
    The module $CH_{n - 1 + 2m}(E_n)$ is the free $\Q$-module generated by $\gamma^m_1$.
\end{lemma}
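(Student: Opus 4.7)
The plan is to show that the chain complex $CC(E_n)$ is concentrated in degrees of a single parity, which forces the differential to vanish, so that homology coincides with the chain complex in each degree.

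First, I would verify that $CC(E_n)$ is $\Z$-graded. Since $E_n$ is a star-shaped domain in $\C^n$, we have $\pi_1(E_n) = 0$ and $2 c_1(T E_n) = 0$, so by \cref{lem:cz of reeb is independent of triv over filling disk} the Conley--Zehnder indices of Reeb orbits of $\partial E_n$ are well-defined integers, and by \cref{rmk:grading for lch} we may grade $CC(E_n)$ by $\deg(\gamma) = \conleyzehnder(\gamma)$, so that $\partial = \ell^1$ has degree $-1$.

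Next, I would use the explicit formula \eqref{eq:cz of reeb in ellipsoid} to observe that every Reeb orbit $\gamma_i^k$ of $\partial E_n$ satisfies
\begin{IEEEeqnarray*}{c+x*}
    \conleyzehnder(\gamma_i^k) = n - 1 + 2 \sum_{j=1}^n \p{L}{2}{\frac{k a_i}{a_j}} \equiv n - 1 \pmod 2.
\end{IEEEeqnarray*}
Consequently, every generator of $CC(E_n)$ has degree congruent to $n-1$ modulo $2$, so $CC_d(E_n) = 0$ whenever $d \not\equiv n - 1 \pmod 2$. The same parity observation shows that for any multiply covered Reeb orbit $\gamma_i^k$, the difference $\conleyzehnder(\gamma_i^k) - \conleyzehnder(\gamma_i)$ is even, and therefore every Reeb orbit of $\partial E_n$ is good; in particular $\gamma_1^m$ itself is a valid generator of the complex.

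Since $\partial$ has degree $-1$ and the complex is concentrated in degrees of parity $n-1$, the source and target of $\partial$ in any given degree cannot both be nonzero, forcing $\partial = 0$. Therefore $CH_d(E_n) = CC_d(E_n)$ as $\Q$-modules for every $d$. Applying this in degree $d = n - 1 + 2m$ and invoking \cref{lem:unique reeb orbit with cz equal to}, which identifies $\gamma_1^m$ as the unique Reeb orbit of $\partial E_n$ with $\conleyzehnder = n - 1 + 2m$, we conclude that $CH_{n-1+2m}(E_n)$ is the free $\Q$-module generated by $\gamma_1^m$. There is no real obstacle here, since everything reduces to the parity of the Conley--Zehnder index; the only thing one needs to be careful about is that the virtual perturbation scheme of \cref{assumption} preserves the grading, which it does by construction.
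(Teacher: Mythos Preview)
Your proof is correct and follows essentially the same strategy as the paper: both arguments show that all Conley--Zehnder indices of Reeb orbits on $\partial E_n$ have parity $n-1$, deduce that all orbits are good and that the differential must vanish, and then invoke \cref{lem:unique reeb orbit with cz equal to} to identify the generator in degree $n-1+2m$.

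The only difference is presentational. You invoke \cref{rmk:grading for lch} directly to assert that $\partial$ has degree $-1$, whereas the paper unpacks the definition of the linearized differential (including the anchors $\alpha_1,\ldots,\alpha_p$) and derives the parity contradiction from the virtual dimension formulas: it shows that a nonzero coefficient $\langle\partial\gamma,\eta\rangle$ would force $\conleyzehnder(\gamma)-\conleyzehnder(\eta)-1=0$, which lies in $1+2\Z$ by \eqref{eq:cz of reeb in ellipsoid}. Your version is cleaner; the paper's is more self-contained in that it does not rely on the reader accepting that the anchored differential respects the $\Z$-grading.
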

\begin{proof}
    By equation \eqref{eq:cz of reeb in ellipsoid}, every Reeb orbit of $\partial E_n$ is good. We claim that the differential $\partial \colon CC(E_n) \longrightarrow CC(E_n)$ is zero. Assume by contradiction that there exists a Reeb orbit $\gamma$ such that $\partial \gamma \neq 0$. By definition of $\partial$, this implies that there exist Reeb orbits $\eta, \alpha_1, \ldots, \alpha_p$ such that
    \begin{IEEEeqnarray*}{rCls+x*}
        0 & \neq & \#^{\mathrm{vir}} \overline{\mathcal{M}}^{J_n}_{\partial E_n}(\gamma; \eta, \alpha_1, \ldots, \alpha_p), \\
        0 & \neq & \#^{\mathrm{vir}} \overline{\mathcal{M}}^{J_n}_{E_n}(\alpha_j), \quad \text{for } j=1,\ldots,p.
    \end{IEEEeqnarray*}
    By assumption on the virtual perturbation scheme,
    \begin{IEEEeqnarray*}{rCls+x*}
        0 & =   & \operatorname{virdim} \overline{\mathcal{M}}^{J_n}_{E_n}(\alpha_j) = n - 3 + \conleyzehnder(\alpha_j) \quad \text{for every } j = 1,\ldots,p, \\ \\
        0 & =   & \operatorname{virdim} \overline{\mathcal{M}}^{J_n}_{\partial E_n}(\gamma; \eta, \alpha_1, \ldots, \alpha_p) \\
          & =   & (n-3)(2 - (2+p)) + \conleyzehnder(\gamma) - \conleyzehnder(\eta) - \sum_{j=1}^{p} \conleyzehnder(\alpha_j) - 1 \\
          & =   & \conleyzehnder(\gamma) - \conleyzehnder(\eta) - 1 \\
          & \in & 1 + 2 \Z,
    \end{IEEEeqnarray*}
    where in the last line we used equation \eqref{eq:cz of reeb in ellipsoid}. This gives the desired contradiction, and we conclude that $\partial \colon CC(E_n) \longrightarrow CC(E_n)$ is zero. Therefore, $CH(E_n) = CC(E_n)$ is the free $\Q$-module generated by the Reeb orbits of $\partial E_n$. By \cref{lem:unique reeb orbit with cz equal to}, $\gamma^m_1$ is the unique Reeb orbit of $\partial E_n$ with $\conleyzehnder(\gamma^m_1) = n - 1 + 2m$, from which the result follows.
\end{proof}

\begin{lemma}
    \phantomsection\label{lem:moduli spaces of ellipsoids are all equal}
    If $\ell = 1,\ldots,n$ and $k \in \Z_{\geq 1}$ then $\mathcal{M}^{\ell,(k)}_{\mathrm{p}} = \mathcal{M}^{1,(k)}_{\mathrm{p}}$ and $\mathcal{M}^{\ell,(k)} = \mathcal{M}^{1,(k)}$.
\end{lemma}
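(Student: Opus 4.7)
The argument proceeds by induction on $\ell$, the base case $\ell = 1$ being trivial. For the inductive step, identify $\hat{E}_{\ell-1}$ with its image $\hat{\iota}_{\ell-1}(\hat{E}_{\ell-1}) \subset \hat{E}_{\ell}$. Since $\hat{\iota}_{\ell-1}$ is holomorphic by our choice of almost complex structures, and since it sends $x_{\ell-1}$ to $x_{\ell}$, $D_{\ell-1}$ to $D_{\ell}$, and $\gamma^m_1 \subset \partial E_{\ell-1}$ to $\gamma^m_1 \subset \partial E_{\ell}$, post-composition with $\hat{\iota}_{\ell - 1}$ yields an inclusion $\mathcal{M}^{\ell - 1, (k)}_{\mathrm{p}} \subset \mathcal{M}^{\ell, (k)}_{\mathrm{p}}$. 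The content is the reverse inclusion.

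So, let $(j, u) \in \mathcal{M}^{\ell,(k)}_{\mathrm{p}}$. The strategy is to show that the image of $u$ is entirely contained in the holomorphic slice $\hat{E}_{\ell - 1} \subset \hat{E}_{\ell}$, whence $u$ (together with $j$) defines an element of $\mathcal{M}^{\ell - 1, (k)}_{\mathrm{p}}$, which by the inductive hypothesis equals $\mathcal{M}^{1, (k)}_{\mathrm{p}}$. The key inputs are: (i) the slice $\hat{E}_{\ell - 1} \subset \hat{E}_{\ell}$ is $J_{\ell}$-holomorphic, by our standing assumptions; (ii) the unique asymptote $\gamma^m_1$ of $u$ lies in $\partial E_{\ell - 1} \subset \partial E_{\ell}$; (iii) by \cref{lem:normal cz is one}, the normal Conley--Zehnder index of $\gamma^m_1$, viewed as a Reeb orbit of $\partial E_{\ell}$, is equal to $1$. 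These are precisely the hypotheses required to apply the stabilization dichotomy of \cref{lem:stabilization 2}(1): any such asymptotically cylindrical holomorphic curve is either disjoint from the slice or entirely contained in it. Since $u$ is asymptotic to $\gamma^m_1 \in \partial E_{\ell - 1}$, the first alternative is impossible, so $u$ is contained in $\hat{E}_{\ell - 1}$.

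The main obstacle is that \cref{lem:stabilization 1} and \cref{lem:stabilization 2} are stated for a particular smoothed stabilization $\tilde{X} \subset X \times B^2(c)$, whereas $\hat{E}_{\ell}$ is not literally of this form (it is a smoothing of $E_{\ell - 1} \times B^2(a_{\ell}/\pi)$ with corners rounded, together with its symplectization completion). Unwinding the proof of \cref{lem:stabilization 2}, however, the argument is local near the slice and uses only the three inputs (i)--(iii) listed above; no use is made of the explicit global form of $\tilde{X}$. Thus the dichotomy applies verbatim in our setting, and some care in writing may be needed to make this transparent (alternatively, one can invoke positivity-of-intersection style arguments directly, using that the normal Cauchy--Riemann operator associated with the slice, after the asymptotic weight shift, has index $2$ with asymptotic winding forcing a unique zero of order one, incompatible with $u$ crossing the slice transversely).

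Finally, for the unparametrized version, note that the equivalence relation defining $\mathcal{M}^{\ell, (k)}$ from $\mathcal{M}^{\ell, (k)}_{\mathrm{p}}$ only involves reparametrizations of the domain $(S^2, j, z_0, z_1, v_1)$ and is therefore compatible with the identification $\mathcal{M}^{\ell, (k)}_{\mathrm{p}} = \mathcal{M}^{1, (k)}_{\mathrm{p}}$. Hence the equality descends to $\mathcal{M}^{\ell, (k)} = \mathcal{M}^{1, (k)}$.
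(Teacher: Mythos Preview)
Your approach is essentially the same as the paper's: reduce to showing $\mathcal{M}^{\ell,(k)}_{\mathrm{p}} = \mathcal{M}^{\ell-1,(k)}_{\mathrm{p}}$, get the easy inclusion from holomorphicity of $\hat{\iota}_{\ell-1}$, and for the reverse inclusion invoke the dichotomy of \cref{lem:stabilization 2} to force $u$ into the slice.

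There is one small but genuine gap. You rule out the ``disjoint'' alternative by saying that $u$ is asymptotic to $\gamma^m_1 \in \partial E_{\ell-1}$. This does not suffice: a curve can be asymptotic to a Reeb orbit lying in a holomorphic slice while remaining disjoint from the slice itself (it approaches the slice at infinity without ever touching it). Indeed, \cref{lem:stabilization 2}(1) already \emph{assumes} the asymptotes lie in the slice and still allows both alternatives; the extra clause forcing containment appears only in part (2), for symplectization curves with a \emph{negative} puncture, which is not our situation. The paper's fix is immediate and available to you: since $k \geq 1$, the tangency constraint entails $u(z_0) = x_{\ell} = \iota_{\ell-1}(x_{\ell-1}) \in \hat{E}_{\ell-1}$, so $u$ meets the slice at $z_0$, and the disjoint case is excluded. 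Replace your asymptotic argument with this observation and the proof goes through.
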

\begin{proof}
    It suffices to show that $\mathcal{M}^{\ell,(k)}_{\mathrm{p}} = \mathcal{M}^{\ell+1,(k)}_{\mathrm{p}}$ for every $\ell = 1,\ldots,n-1$. The inclusion $\mathcal{M}^{\ell,(k)}_{\mathrm{p}} \subset \mathcal{M}^{\ell+1,(k)}_{\mathrm{p}}$ follows from the fact that the inclusion $\hat{E}_\ell \hookrightarrow \hat{E}_{\ell+1}$ is holomorphic and the assumptions on the symplectic divisors. To prove that $\mathcal{M}^{\ell+1,(k)}_{\mathrm{p}} \subset \mathcal{M}^{\ell,(k)}_{\mathrm{p}}$, it suffices to assume that $(j,u) \in \mathcal{M}^{\ell+1,(k)}_{\mathrm{p}}$ and to show that the image of $u$ is contained in $\hat{E}_\ell \subset \hat{E}_{\ell+1}$. Since $u$ has contact order $k$ to $D_{\ell+1}$ at $x_{\ell+1} = \iota_{\ell}(x_{\ell})$, we conclude that $u$ is not disjoint from $\hat{E}_\ell$. By \cref{lem:stabilization 2}, $u$ is contained in $\hat{E}_\ell$.
\end{proof}

We now prove that the moduli spaces $\mathcal{M}^{\ell,(k)}$ are regular. The proof strategy is as follows.
\begin{enumerate}
    \item \cref{prp:moduli spaces without point constraint are tco} deals with the moduli spaces $\mathcal{M}^{1,(0)}$. We show that the linearized Cauchy--Riemann operator is surjective using \cref{lem:Du is surjective case n is 1}.
    \item \cref{prp:moduli spaces w point are tco} deals with the moduli spaces $\mathcal{M}^{\ell,(1)}$. Here, we need to consider the linearized Cauchy--Riemann operator together with an evaluation map. We show inductively that this map is surjective using \cref{lem:DX surj implies DY surj}.
    \item Finally, \cref{prp:moduli spaces w tangency are tco} deals with the moduli spaces $\mathcal{M}^{\ell,(k)}$. We now need to consider the jet evaluation map. We prove inductively that this map is surjective by writing it explicitly.
\end{enumerate}

\begin{proposition}
    \label{prp:moduli spaces without point constraint are tco}
    The moduli spaces $\mathcal{M}^{1,(0)}_{\mathrm{p}}$ and $\mathcal{M}^{1,(0)}$ are transversely cut out.
\end{proposition}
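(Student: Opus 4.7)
The statement to establish is that both the parametrized moduli space $\mathcal{M}^{1,(0)}_{\mathrm{p}}$ and its unparametrized counterpart $\mathcal{M}^{1,(0)}$ of $J_1$-holomorphic spheres in $\hat{E}_1$ with one positive puncture asymptotic to $\gamma_1^m$ (and no tangency constraint) are cut out transversely. The plan is to realize these as the zero set of the Cauchy--Riemann section $\overline{\partial} \colon \mathcal{T} \times \mathcal{B} \longrightarrow \mathcal{E}$ of \cref{def:cauchy riemann operator} (with $X = E_1$) and to verify surjectivity of the linearization $\mathbf{L}_{(j,u)}$ at every solution $(j,u) \in \mathcal{M}^{1,(0)}_{\mathrm{p}}$.

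The crucial observation is that $E_1 \subset \mathbb{C}$ has complex dimension $n = 1$. Therefore \cref{lem:Du is surjective case n is 1} applies verbatim: for every $(j,u)$ satisfying $\overline{\partial}(j,u) = 0$, the linearized operator
\begin{IEEEeqnarray*}{c+x*}
\mathbf{L}_{(j,u)} \colon T_j \mathcal{T} \oplus T_u \mathcal{B} \longrightarrow \mathcal{E}_{(j,u)}
\end{IEEEeqnarray*}
is surjective. The only nontrivial inputs to that lemma --- the explicit identification of the asymptotic operator at $z_1$, the computation of the relative first Chern number in terms of the multiplicity $m$, and the Riemann--Roch index count --- are all already packaged into its statement, so no further work is needed here. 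Combined with the implicit function theorem (in the form of \cite{wendlLecturesSymplecticField2016}), this yields that $\overline{\partial}^{-1}(0) = \mathcal{M}^{1,(0)}_{\mathrm{p}}$ is a smooth manifold of the expected dimension, i.e.\ transversely cut out.

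To pass from the parametrized to the unparametrized moduli space, recall that $\mathcal{M}^{1,(0)}$ is defined as the quotient by biholomorphisms preserving the puncture $z_1$, the asymptotic marker $v_1$, and the marked point $z_0$. By \cref{lem:psi j}, for any complex structure $j$ on $S^2$ there is a \emph{unique} biholomorphism $\psi_j \colon (\overline{\mathbb{C}}, j_0) \longrightarrow (S^2, j)$ respecting these three data; in particular, the only biholomorphism $(S^2,j) \to (S^2,j)$ fixing $(z_0, z_1, v_1)$ is the identity. Thus the reparametrization action on $\mathcal{M}^{1,(0)}_{\mathrm{p}}$ is trivial, and $\mathcal{M}^{1,(0)}$ inherits the manifold structure from $\mathcal{M}^{1,(0)}_{\mathrm{p}}$ without any further quotient.

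There is no serious obstacle: the whole argument is a direct invocation of \cref{lem:Du is surjective case n is 1} together with the triviality of the automorphism group, both of which were set up precisely for this purpose. The only point requiring a sentence of care is that the Teichmüller slice $\mathcal{T}$ has been chosen so that every equivalence class of complex structures on $S^2$ (modulo biholomorphisms fixing $(z_0, z_1, v_1)$) has a unique representative in $\mathcal{T}$; this is part of the standing hypothesis in \cref{sec:functional analytic setup}. The subsequent propositions (\cref{prp:moduli spaces w point are tco,prp:moduli spaces w tangency are tco}) will then build on this base case inductively using \cref{lem:DX surj implies DY surj}.
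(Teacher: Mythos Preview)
Your proposal is correct and follows essentially the same approach as the paper: both realize $\mathcal{M}^{1,(0)}_{\mathrm{p}}$ as the zero set of the Cauchy--Riemann section and invoke \cref{lem:Du is surjective case n is 1} (valid since $E_1$ has complex dimension $1$) to obtain surjectivity of $\mathbf{L}_{(j,u)}$. The paper simply asserts that transversality of the parametrized space implies that of the unparametrized one, whereas you spell out the triviality of the automorphism group via \cref{lem:psi j}; this is a harmless elaboration of the same argument.
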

\begin{proof}
    It is enough to show that $\mathcal{M}^{1,(0)}_{\mathrm{p}}$ is transversely cut out, since this implies that $\mathcal{M}^{1,(0)}$ is transversely cut out as well. Recall that $\mathcal{M}^{1,(0)}_{\mathrm{p}}$ can be written as the zero set of the Cauchy--Riemann operator $\overline{\partial}\vphantom{\partial}^{1} \colon \mathcal{T} \times \mathcal{B} E_{1} \longrightarrow \mathcal{E} E_{1}$. It suffices to assume that $(j,u) \in (\overline{\partial}\vphantom{\partial}^{1})^{-1}(0)$ and to prove that the linearization 
    \begin{IEEEeqnarray*}{c+x*}
        \mathbf{L}_{(j,u)}^1 \colon T_j \mathcal{T} \oplus T_u \mathcal{B} E_1 \longrightarrow \mathcal{E}_{(j,u)} E_1
    \end{IEEEeqnarray*}
    is surjective. This follows from \cref{lem:Du is surjective case n is 1}.
\end{proof}

\begin{proposition}
    \label{prp:moduli spaces w point are tco}
    If $\ell = 1,\ldots,n$ then $\mathcal{M}^{\ell,(1)}_{\mathrm{p}}$ and $\mathcal{M}^{\ell,(1)}$ are transversely cut out.
\end{proposition}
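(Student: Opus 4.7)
Since $\mathcal{M}^{\ell,(1)}$ is the quotient of $\mathcal{M}^{\ell,(1)}_{\mathrm{p}}$ by the free action of the residual reparametrizations, it is enough to show that the parametrized moduli space is transversely cut out. Recall that $\mathcal{M}^{\ell,(1)}_{\mathrm{p}}$ is the zero set of the section $\overline{\partial}\vphantom{\partial}^{\ell}$ together with the evaluation constraint $u(z_0)=x_\ell$. Thus, for every $(j,u)\in\mathcal{M}^{\ell,(1)}_{\mathrm{p}}$ we must prove surjectivity of the combined operator
\begin{IEEEeqnarray*}{c+x*}
    \mathbf{L}^{\ell}_{(j,u)}\oplus\mathbf{E}^{\ell}_{u}\colon T_{j}\mathcal{T}\oplus T_{u}\mathcal{B}E_{\ell}\longrightarrow\mathcal{E}_{(j,u)}E_{\ell}\oplus T_{x_{\ell}}\hat{E}_{\ell}.
\end{IEEEeqnarray*}
The strategy is to proceed by induction on $\ell$, with the inductive step handled by the normal-direction analysis from the previous section.

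The inductive step from $\ell$ to $\ell+1$ is the easy half. Given $(j,u)\in\mathcal{M}^{\ell+1,(1)}_{\mathrm{p}}$, the stabilization result \cref{lem:moduli spaces of ellipsoids are all equal} forces the image of $u$ to lie in $\hat{E}_{\ell}\subset\hat{E}_{\ell+1}$, so $(j,u)$ may equally be regarded as an element of $\mathcal{M}^{\ell,(1)}_{\mathrm{p}}$. By the inductive hypothesis, $\mathbf{L}^{\ell}_{(j,u)}\oplus\mathbf{E}^{\ell}_{u}$ is surjective. The setup of the inclusion $\hat{E}_{\ell}\hookrightarrow\hat{E}_{\ell+1}$ fits precisely the hypotheses of \cref{lem:DX surj implies DY surj}, and the normal Conley--Zehnder hypothesis needed to apply part \ref{lem:DX surj implies DY surj 2} of that lemma is exactly the content of \cref{lem:normal cz is one}. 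We therefore conclude that $\mathbf{L}^{\ell+1}_{(j,u)}\oplus\mathbf{E}^{\ell+1}_{u}$ is surjective.

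The main work is the base case $\ell=1$. Surjectivity of $\mathbf{L}^{1}_{(j,u)}$ alone is already established by \cref{prp:moduli spaces without point constraint are tco}, so by a standard linear-algebra argument it suffices to show that the restriction $\mathbf{E}^{1}_{u}|_{\ker\mathbf{L}^{1}_{(j,u)}}$ is surjective onto $T_{x_{1}}\hat{E}_{1}\cong\mathbb{C}$. Here I will use the explicit polynomial model: by \cref{lem:biholomorphism explicit} and \cref{lem:u is a polynomial}, any $(j,u)\in\mathcal{M}^{1,(0)}_{\mathrm{p}}$ is identified with a degree-$m$ polynomial $p=\varphi\circ u\circ\psi_{j}\colon\mathbb{C}\to\mathbb{C}$, and the constraint $u(z_{0})=x_{1}=0$ translates (via $\psi_{j}(0)=z_{0}$ and $\varphi(0)=0$) into $p(0)=0$. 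Tangent vectors to $\mathcal{M}^{1,(0)}_{\mathrm{p}}$ at $(j,u)$ correspond to variations of $p$ by polynomials of degree at most $m$ (the leading degree being preserved by continuity of the nonzero top coefficient), and the evaluation $\mathbf{E}^{1}_{u}$ becomes $p\mapsto p(0)$, i.e.\ picking off the constant term. This is manifestly surjective onto $\mathbb{C}$, completing the base case.

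The main subtlety I expect to encounter is making the identification in the base case fully rigorous: one must check that the correspondence between infinitesimal deformations in $\ker\mathbf{L}^{1}_{(j,u)}$ and variations of the polynomial $p$ is actually bijective on the relevant Sobolev completions, so that the evaluation map on the analytic side really does match the constant-term map on the polynomial side. Once this dictionary is set up, the surjectivity is immediate; but establishing the dictionary involves tracking how the biholomorphisms $\varphi$ and $\psi_{j}$ interact with the weighted Sobolev spaces $W^{k,p,\delta}$ used to define $\mathcal{B}E_{1}$. No new analytic input should be required beyond what is already in \cref{sec:cr operators,sec:functional analytic setup}.
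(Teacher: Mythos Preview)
Your proposal is correct and follows essentially the same approach as the paper: induction on $\ell$, with the inductive step via \cref{lem:DX surj implies DY surj} (using \cref{lem:normal cz is one} and \cref{lem:moduli spaces of ellipsoids are all equal}), and the base case via the explicit polynomial model. The paper packages the base case as a commutative diagram identifying $\mathcal{M}^{1,(0)}_{\mathrm{p}}$ with the space of degree-$m$ polynomials (via $\Phi(j,u)=\varphi\circ u\circ\psi_j$) and observes that the evaluation becomes the constant-term map $(a_0,\ldots,a_m)\mapsto a_0$; the surjectivity of $\dv\Phi$ is simply asserted in the diagram, so the ``subtlety'' you flag about matching Sobolev deformations with polynomial variations is present in the paper as well and is handled at the same level of rigor.
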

\begin{proof}
    We will use the notation of \cref{sec:functional analytic setup} with $X = E_{\ell}$ and $Y = E_{\ell + 1}$. We will show by induction on $\ell$ that $\mathcal{M}^{\ell,(1)}_{\mathrm{p}}$ is transversely cut out. This implies that $\mathcal{M}^{\ell,(1)}$ is transversely cut out as well.

    We prove the base case. By \cref{prp:moduli spaces without point constraint are tco}, $\mathcal{M}^{1,(0)}_{\mathrm{p}}$ is a smooth manifold. Consider the evaluation map
    \begin{IEEEeqnarray*}{rrCl}
        \operatorname{ev}^{1} \colon & \mathcal{M}^{1,(0)}_{\mathrm{p}} & \longrightarrow & \hat{E}_1 \\
                                     & (j,u)                            & \longmapsto     & u(z_0).
    \end{IEEEeqnarray*}
    Notice that $\mathcal{M}^{1,(1)}_{\mathrm{p}} = (\operatorname{ev}^1)^{-1}(x_1)$. We wish to show that the linearized evaluation map $\mathbf{E}^1_{(j,u)} = \dv (\operatorname{ev}^1)(j,u) \colon T_{(j,u)} \mathcal{M}^{1,(0)}_{\mathrm{p}} \longrightarrow T_{u(z_0)} \hat{E}_1$ is surjective whenever $u(z_0) = \operatorname{ev}^{1}(j,u) = x_1$. There are commutative diagrams
    \begin{IEEEeqnarray*}{c+x*}
        \begin{tikzcd}
            \mathcal{M}^{1,(0)}_{\mathrm{p}} \ar[r, two heads, "\Phi"] \ar[d, swap, "\operatorname{ev}^1"] & \mathcal{M} \ar[d, "\operatorname{ev}_{\mathcal{M}}"] & \mathcal{C} \ar[l, swap, hook', two heads, "\mathcal{P}"] \ar[d, "\operatorname{ev}_{\mathcal{C}}"] & & T_{(j,u)} \mathcal{M}^{1,(0)}_{\mathrm{p}} \ar[r, two heads, "{\dv \Phi(j,u)}"] \ar[d, swap, "{\mathbf{E}^1_{(j,u)}}"] & T_f \mathcal{M} \ar[d, "\mathbf{E}_{\mathcal{M}}"] & \C^{m+1} \ar[l, swap, hook', two heads, "\dv \mathcal{P}(a)"] \ar[d, "\mathbf{E}_{\mathcal{C}}"] \\
            \hat{E}_1 \ar[r, hook, two heads, swap, "\varphi"]                                             & \C \ar[r, equals]                                     & \C                                                                                                  & & T_{x_1} \hat{E}_1 \ar[r, hook, two heads, swap, "\dv \varphi(x_1)"]                                                    & \C \ar[r, equals]                                  & \C
        \end{tikzcd}
    \end{IEEEeqnarray*}
    where
    \begin{IEEEeqnarray*}{rCls+x*}
        \mathcal{M}                                     & \coloneqq & \{f \colon \C \longrightarrow \C \mid f \text{ is a polynomial of degree }m \}, \\
        \mathcal{C}                                     & \coloneqq & \{(a_0,\ldots,a_m) \in \C^{m+1} \mid a_m \neq 0\}, \\
        \Phi(j,u)                                       & \coloneqq & \varphi \circ u \circ \psi_j, \\
        \operatorname{ev}_{\mathcal{M}}(f)              & \coloneqq & f(0), \\
        \operatorname{ev}_{\mathcal{C}}(a_0,\ldots,a_m) & \coloneqq & a_0, \\
        \mathcal{P}(a_0,\ldots,a_m)(z)                  & \coloneqq & a_0 + a_1 z + \cdots + a_m z^m,
    \end{IEEEeqnarray*}
    and the diagram on the right is obtained by linearizing the one on the left. The map $\Phi$ is well-defined by \cref{lem:u is a polynomial}. Since $\mathbf{E}_{\mathcal{C}}(a_0,\ldots,a_m) = a_0$ is surjective, $\mathbf{E}^1_u$ is surjective as well. This finishes the proof of the base case.

    We prove the induction step, i.e. that if $\mathcal{M}^{\ell,(1)}_p$ is transversely cut out then so is $\mathcal{M}^{\ell+1,(1)}_p$. We prove that $\mathcal{M}^{\ell,(1)}_{\mathrm{p,reg}} \subset \mathcal{M}^{\ell+1,(1)}_{\mathrm{p,reg}}$. For this, assume that $(j,u) \in \mathcal{M}^{\ell,(1)}_{\mathrm{p}}$ is such that $\mathbf{L}_{(j,u)}^\ell \oplus \mathbf{E}_u^\ell \colon T_j \mathcal{T} \oplus T_{u} \mathcal{B} E_\ell \longrightarrow \mathcal{E}_{(j,u)} E_\ell \oplus T_{x_\ell} \hat{E}_\ell$ is surjective. By \cref{lem:DX surj implies DY surj},
    \begin{IEEEeqnarray*}{c+x*}
        \mathbf{L}_{(j,u)}^{\ell+1} \oplus \mathbf{E}_u^{\ell+1} \colon T_j \mathcal{T} \oplus T_{u} \mathcal{B} E_{\ell+1} \longrightarrow \mathcal{E}_{(j,u)} E_{\ell+1} \oplus T_{x_{\ell+1}} \hat{E}_{\ell+1}
    \end{IEEEeqnarray*} 
    is also surjective, which means that $(j,u) \in \mathcal{M}^{\ell+1,(1)}_{\mathrm{p,reg}}$. This concludes the proof of $\mathcal{M}^{\ell,(1)}_{\mathrm{p,reg}} \subset \mathcal{M}^{\ell+1,(1)}_{\mathrm{p,reg}}$. Finally, we show that $\mathcal{M}^{\ell+1,(1)}_{\mathrm{p,reg}} = \mathcal{M}^{\ell+1,(1)}_{\mathrm{p}}$.
    \begin{IEEEeqnarray*}{rCls+x*}
        \mathcal{M}^{\ell+1,(1)}_{\mathrm{p,reg}}
        & \subset & \mathcal{M}^{\ell+1,(1)}_{\mathrm{p}}     & \quad [\text{since regular curves form a subset}] \\
        & =       & \mathcal{M}^{\ell,(1)}_{\mathrm{p}}       & \quad [\text{by \cref{lem:moduli spaces of ellipsoids are all equal}}] \\
        & =       & \mathcal{M}^{\ell,(1)}_{\mathrm{p,reg}}   & \quad [\text{by the induction hypothesis}] \\
        & \subset & \mathcal{M}^{\ell+1,(1)}_{\mathrm{p,reg}} & \quad [\text{proven above}].                                             & \qedhere
    \end{IEEEeqnarray*}
\end{proof}

\begin{proposition}
    \label{prp:moduli spaces w tangency are tco}
    If $\ell = 1,\ldots, n$ and $k = 1,\ldots,m$ then $\mathcal{M}^{\ell,(k)}_{\mathrm{p}}$ and $\mathcal{M}^{\ell,(k)}$ are transversely cut out.
\end{proposition}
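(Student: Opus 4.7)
The plan is to induct on $\ell$ for each fixed $k \in \{1, \ldots, m\}$, paralleling the argument of \cref{prp:moduli spaces w point are tco} but augmenting the point-evaluation there by a $(k-1)$-jet evaluation that encodes the tangency constraint. Since the case $k = 1$ is \cref{prp:moduli spaces w point are tco}, only $k \geq 2$ requires new input, and again it suffices to prove transversality for the parametrized moduli space $\mathcal{M}^{\ell,(k)}_{\mathrm{p}}$.

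For the base case $\ell = 1$, I would reuse the commutative diagram from the proof of \cref{prp:moduli spaces w point are tco}, which presents each $(j,u) \in \mathcal{M}^{1,(0)}_{\mathrm{p}}$ via the degree-$m$ polynomial $f \coloneqq \varphi \circ u \circ \psi_j$ with coefficients $(a_0, \ldots, a_m) \in \mathcal{C}$. Because $\varphi$ is the identity near $0$ by \cref{lem:biholomorphism explicit}, contact order $k$ of $u$ to $D_1$ at $x_1$ translates precisely to the vanishing of the first $k$ Taylor coefficients of $f$; the corresponding jet evaluation factors through the linear submersion $\mathcal{C} \longrightarrow \C^k$, $(a_0, \ldots, a_m) \longmapsto (a_0, a_1, 2 a_2, \ldots, (k-1)! a_{k-1})$, which is surjective for every $k \leq m+1$. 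Linearizing the diagram then yields surjectivity of $\mathbf{L}^1_{(j,u)}$ combined with the jet evaluation, establishing the base case.

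For the induction step, fix $(j,u) \in \mathcal{M}^{\ell+1,(k)}_{\mathrm{p}}$; by \cref{lem:moduli spaces of ellipsoids are all equal}, $u$ already factors through $\hat{E}_\ell \subset \hat{E}_{\ell+1}$. The essential observation is that $h_{\ell+1}(z_1, \ldots, z_{\ell+1}) = z_1$ is constant in the normal direction $(T\hat{E}_\ell)^\perp$, so the linearization of the $(k-1)$-jet evaluation annihilates every variation in $W^{k,p,\delta}(u^*(T\hat{E}_\ell)^\perp)$. Combining this with the block splitting of \cref{rmk:splittings of B and E}, the full augmented operator $\mathbf{L}^{\ell+1}_{(j,u)} \oplus \mathbf{E}^{\ell+1}_u \oplus \mathbf{J}^{(k)}_{(j,u)}$, decomposed according to the tangential and normal splittings of source and target, has block-upper-triangular form with tangential diagonal block equal to $\mathbf{L}^{\ell}_{(j,u)} \oplus \mathbf{E}^{\ell}_u \oplus \mathbf{J}^{(k)}_{(j,u)}$ and normal diagonal block equal to $\mathbf{D}^{NN}_{(j,u)} \oplus \mathbf{E}^{NN}_u$. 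The tangential block is surjective by the inductive hypothesis, while the normal block is surjective by the argument in the proof of the second item of \cref{lem:DX surj implies DY surj}, which combines \cref{lem:normal cz is one}, the Riemann--Roch computation $\operatorname{ind} \mathbf{D}^{NN}_0 = 2$, and \cref{lem:D plus E is surjective}. The final chain $\mathcal{M}^{\ell+1,(k)}_{\mathrm{p,reg}} \subset \mathcal{M}^{\ell+1,(k)}_{\mathrm{p}} = \mathcal{M}^{\ell,(k)}_{\mathrm{p}} = \mathcal{M}^{\ell,(k)}_{\mathrm{p,reg}} \subset \mathcal{M}^{\ell+1,(k)}_{\mathrm{p,reg}}$, using \cref{lem:moduli spaces of ellipsoids are all equal} and the inductive hypothesis, then closes the induction exactly as in \cref{prp:moduli spaces w point are tco}.

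The main obstacle will be formalizing the jet-evaluation $\mathbf{J}^{(k)}_{(j,u)}$ as a smooth section on all of $\mathcal{T} \times \mathcal{B} E_\ell$ and checking rigorously that its linearization has the block-upper-triangular form claimed. This reduces to the elementary fact that $h_\ell$ is the first-coordinate projection, so every partial derivative of $h_\ell$ in the normal coordinate $z_{\ell+1}$ vanishes identically and the entire tangency constraint lives in the $E_1$ direction; this is precisely what makes the induction go through. Everything else is a direct extension of the argument from \cref{prp:moduli spaces w point are tco}.
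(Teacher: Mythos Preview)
Your proposal is correct, and the base case is essentially identical to the paper's. The induction step, however, is organized differently.

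The paper's induction step is considerably lighter than yours. Rather than working with the full augmented operator on $\mathcal{T} \times \mathcal{B} E_{\ell+1}$ and re-establishing surjectivity of the normal block $\mathbf{D}^{NN}_{(j,u)} \oplus \mathbf{E}^{NN}_u$, the paper observes that \cref{prp:moduli spaces w point are tco} has already done all of that work: $\mathcal{M}^{\ell,(1)}_{\mathrm{p}}$ is a smooth manifold for every $\ell$. So one can regard the jet map $j^{\ell,(k)}$ simply as a smooth map $\mathcal{M}^{\ell,(1)}_{\mathrm{p}} \longrightarrow \C^{k-1}$ between finite-dimensional manifolds, and the tangency moduli space is its zero level set. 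The induction step then reduces to the trivial observation that the diagram
\begin{IEEEeqnarray*}{c}
\begin{tikzcd}
T_{(j,u)} \mathcal{M}^{\ell,(1)}_{\mathrm{p}} \ar[d] \ar[dr, "\mathbf{J}^{\ell,(k)}_{(j,u)}"] & \\
T_{(j,u)} \mathcal{M}^{\ell+1,(1)}_{\mathrm{p}} \ar[r, swap, "\mathbf{J}^{\ell+1,(k)}_{(j,u)}"] & \C^{k-1}
\end{tikzcd}
\end{IEEEeqnarray*}
commutes (because $h_{\ell+1} \circ \hat{\iota}_\ell = h_\ell$), so surjectivity of $\mathbf{J}^{\ell,(k)}_{(j,u)}$ immediately forces surjectivity of $\mathbf{J}^{\ell+1,(k)}_{(j,u)}$. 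No block decomposition, no \cref{lem:D plus E is surjective}, no normal Conley--Zehnder computation is needed at this stage---it was all absorbed into \cref{prp:moduli spaces w point are tco}.

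What your approach buys is self-containment: you never appeal to the smooth-manifold structure of $\mathcal{M}^{\ell,(1)}_{\mathrm{p}}$ as an intermediate object, so your argument would survive if one wanted to bypass \cref{prp:moduli spaces w point are tco} entirely and treat the point and tangency constraints simultaneously. What the paper's approach buys is economy: the normal-direction analysis is performed once, in \cref{prp:moduli spaces w point are tco}, and the tangency proof is then purely a statement about finite-dimensional jet maps.
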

\begin{proof}
    By \cref{prp:moduli spaces w point are tco}, $\mathcal{M}^{\ell,(1)}_{\mathrm{p}}$ is a smooth manifold. Consider the jet evaluation map
    \begin{IEEEeqnarray*}{rrCl}
        j^{\ell,(k)} \colon & \mathcal{M}^{\ell,(1)}_{\mathrm{p}} & \longrightarrow & \C^{k-1} \\
                            & (j,u)                               & \longmapsto     & ((h_{\ell} \circ u \circ \psi_j)^{(1)}(0), \ldots, (h_{\ell} \circ u \circ \psi_j)^{(k-1)}(0)).
    \end{IEEEeqnarray*}
    The moduli space $\mathcal{M}^{\ell,(k)}_{\mathrm{p}}$ is given by $\mathcal{M}^{\ell,(k)}_{\mathrm{p}} = (j^{\ell,(k)})^{-1}(0)$. We will prove by induction on $\ell$ that $\mathcal{M}^{\ell,(k)}_{\mathrm{p}}$ is transversely cut out. This shows that $\mathcal{M}^{\ell,(k)}$ is transversely cut out as well. Define $\mathbf{J}^{\ell,(k)}_{(j,u)} \coloneqq \dv(j^{\ell,(k)})(j,u) \colon T_{(j,u)} \mathcal{M}^{\ell,(1)}_{\mathrm{p}} \longrightarrow \C^{k-1}$.

    We prove the base case, i.e. that $\mathcal{M}^{1,(k)}_{\mathrm{p}}$ is transversely cut out. For this, it suffices to assume that $(j,u) \in \mathcal{M}^{1,(1)}_{\mathrm{p}}$ is such that $j^{1,(k)}(j,u) = 0$ and to prove that $\mathbf{J}^{1,(k)}_{(j,u)}$ is surjective. There are commutative diagrams
    \begin{IEEEeqnarray*}{c+x*}
        \begin{tikzcd}
            \mathcal{M}^{1,(1)}_{\mathrm{p}} \ar[r, two heads, "\Phi"] \ar[d, swap, "j^{1,(k)}"] & \mathcal{M} \ar[d, "j^{(k)}_{\mathcal{M}}"] & \mathcal{C} \ar[l, swap, hook', two heads, "\mathcal{P}"] \ar[d, "j^{(k)}_{\mathcal{C}}"] & & T_{(j,u)} \mathcal{M}^{1,(1)}_{\mathrm{p}} \ar[r, two heads, "{\dv \Phi(j,u)}"] \ar[d, swap, "{\mathbf{J}^{1,(k)}_{(j,u)}}"] & T_f \mathcal{M} \ar[d, "\mathbf{J}^{(k)}_{\mathcal{M}}"] & \C^{m} \ar[l, swap, hook', two heads, "\dv \mathcal{P}(a)"] \ar[d, "\mathbf{J}^{(k)}_{\mathcal{C}}"] \\
            \C^{k-1} \ar[r, equals]                                                              & \C^{k-1} \ar[r, equals]                     & \C^{k-1}                                                                                  & & \C^{k-1} \ar[r, equals]                                                                                                      & \C^{k-1} \ar[r, equals]                                  & \C^{k-1}
        \end{tikzcd}
    \end{IEEEeqnarray*}
    where
    \begin{IEEEeqnarray*}{rCls+x*}
        \mathcal{M}                                     & \coloneqq & \{f \colon \C \longrightarrow \C \mid f \text{ is a polynomial of degree }m \text{ with }f(0)=0 \}, \\
        \mathcal{C}                                     & \coloneqq & \{(a_1,\ldots,a_m) \in \C^{m} \mid a_m \neq 0\}, \\
        \Phi(j,u)                                       & \coloneqq & \varphi \circ u \circ \psi_j, \\
        j^{(k)}_{\mathcal{M}}(f)                        & \coloneqq & (f^{(1)}(0),\ldots,f^{(k-1)}(0)), \\
        j^{(k)}_{\mathcal{C}}(a_1,\ldots,a_m)           & \coloneqq & (a_1,\ldots,(k-1)! a_{k-1}), \\
        \mathcal{P}(a_1,\ldots,a_m)(z)                  & \coloneqq & a_1 z + \cdots + a_m z^m,
    \end{IEEEeqnarray*}
    and the diagram on the right is obtained by linearizing the one on the left. The map $\Phi$ is well-defined by \cref{lem:u is a polynomial}. Since $\mathbf{J}^{(k)}_{\mathcal{C}}(a_1,\ldots,a_m) = (a_1,\ldots,(k-1)! a_{k-1})$ is surjective, $\mathbf{J}^{1,(k)}_u$ is surjective as well. This finishes the proof of the base case.

    We prove the induction step, i.e. that if $\mathcal{M}^{\ell,(k)}_{\mathrm{p}}$ is transversely cut out then so is $\mathcal{M}^{\ell+1,(k)}_{\mathrm{p}}$. We show that $\mathcal{M}^{\ell,(k)}_{\mathrm{p,reg}} \subset \mathcal{M}^{\ell+1,(k)}_{\mathrm{p,reg}}$. For this, it suffices to assume that $(j,u) \in \mathcal{M}^{\ell,(k)}_{\mathrm{p}}$ is such that $\mathbf{J}^{\ell,(k)}_{(j,u)}$ is surjective, and to prove that $\mathbf{J}^{\ell+1,(k)}_{(j,u)}$ is surjective as well. This follows because the diagrams
    \begin{IEEEeqnarray*}{c+x*}
        \begin{tikzcd}
            \mathcal{M}^{\ell,(1)}_{\mathrm{p}} \ar[d] \ar[dr, "j^{\ell,(k)}"]   &          & & T_{(j,u)} \mathcal{M}^{\ell,(1)}_{\mathrm{p}} \ar[d] \ar[dr, "\mathbf{J}^{\ell,(k)}_u"] \\
            \mathcal{M}^{\ell+1,(1)}_{\mathrm{p}} \ar[r, swap, "j^{\ell+1,(k)}"] & \C^{k-1} & & T_{(j,u)} \mathcal{M}^{\ell+1,(1)}_{\mathrm{p}} \ar[r, swap, "\mathbf{J}_u^{\ell+1,(k)}"] & \C^{k-1}
        \end{tikzcd}
    \end{IEEEeqnarray*}
    commute. Finally, we show that $\mathcal{M}^{\ell+1,(k)}_{\mathrm{p,reg}} = \mathcal{M}^{\ell+1,(k)}_{\mathrm{p}}$.
    \begin{IEEEeqnarray*}{rCls+x*}
        \mathcal{M}^{\ell+1,(k)}_{\mathrm{p,reg}}
        & \subset & \mathcal{M}^{\ell+1,(k)}_{\mathrm{p}}     & \quad [\text{since regular curves form a subset}] \\
        & =       & \mathcal{M}^{\ell,(k)}_{\mathrm{p}}       & \quad [\text{by \cref{lem:moduli spaces of ellipsoids are all equal}}] \\
        & =       & \mathcal{M}^{\ell,(k)}_{\mathrm{p,reg}}   & \quad [\text{by the induction hypothesis}] \\
        & \subset & \mathcal{M}^{\ell+1,(k)}_{\mathrm{p,reg}} & \quad [\text{proven above}].                                             & \qedhere
    \end{IEEEeqnarray*}
\end{proof}

\begin{proposition}
    \label{lem:moduli spaces of ellipsoids have 1 element}
    If $\ell = 1,\ldots,n$ then $\#^{\mathrm{vir}} \overline{\mathcal{M}}^{\ell,(m)} = \# \overline{\mathcal{M}}^{\ell,(m)} = 1$.
\end{proposition}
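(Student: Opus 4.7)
The plan is to combine the transversality from \cref{prp:moduli spaces w tangency are tco} with the explicit polynomial model from \cref{lem:u is a polynomial}. First, since $\mathcal{M}^{\ell,(m)}$ is transversely cut out of virtual dimension $0$, any genuine breaking would form a stratum of virtual dimension $-1$ and hence be empty; combined with \cref{lem:no nodes}, which rules out nodal configurations, this shows $\overline{\mathcal{M}}^{\ell,(m)} = \mathcal{M}^{\ell,(m)}$. By \cref{assumption}, the virtual count equals the signed count, and by \cref{lem:moduli spaces of ellipsoids are all equal} it suffices to prove $\#\mathcal{M}^{1,(m)} = 1$.

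The second step identifies $\mathcal{M}^{1,(m)}_{\mathrm{p}}$ explicitly. For $(j,u) \in \mathcal{M}^{1,(m)}_{\mathrm{p}}$, \cref{lem:u is a polynomial} gives that $f \coloneqq \varphi \circ u \circ \psi_j \colon \C \longrightarrow \C$ is a polynomial of degree $m$. Using $\psi_j(0) = z_0$, the fact that $\varphi$ is the identity near $0 \in \hat{E}_1$ (\cref{lem:biholomorphism explicit}), the point constraint $u(z_0) = x_1 = 0$, and the contact order $m$ condition to $D_1 = \{0\}$, one obtains $f^{(i)}(0) = 0$ for $i = 0, \ldots, m-1$, so $f(z) = a z^m$ for some $a \in \C^*$. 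The asymptotic marker condition at $z_1$ in \cref{def:moduli space of curves with asymtotic marker}, translated through $\psi_j$ near $\infty \in \overline{\C}$ and through $\varphi$ on the cylindrical end of $\hat{E}_1$, matches the angular direction of approach at infinity in $\C$ with the time-zero point of $\gamma_1^m(t) = \sqrt{a_1/\pi}\, e^{2\pi i m t} e_1$; this pins $a \in \R_{>0}$.

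To conclude, the residual reparametrizations are the Möbius transformations $\phi(z) = \lambda z$ with $\lambda \in \C^*$ fixing $0$ and $\infty$, which preserve the half-line $V$ at $\infty$ precisely when $\lambda \in \R_{>0}$. The induced action $a z^m \mapsto a \lambda^m z^m$ on polynomials restricts to a free transitive action of $\R_{>0}$ on the parameter space $\R_{>0}$, so $\mathcal{M}^{1,(m)}$ consists of a single equivalence class, represented by $u(z) = \varphi^{-1}(z^m)$, and the standard SFT orientation conventions yield signed count $+1$. The main obstacle will be the angular matching in the previous paragraph: verifying that the asymptotic marker condition genuinely singles out $a \in \R_{>0}$ requires careful bookkeeping of $\varphi$ on the symplectization end of $\hat{E}_1$ and of the cylindrical coordinates near $\infty$, since this is the one place where the explicit geometry of the ellipsoid enters. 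An alternative that sidesteps this bookkeeping is to first count the unmarked moduli space, where $a$ ranges over $\C^*$ modulo the full $\C^*$-action with stabilizer of order $m$ (giving a single orbit), and then apply \cref{rmk:counts of moduli spaces with or without asy markers} to transfer back to the marked count.
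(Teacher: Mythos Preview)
Your proposal is correct and follows essentially the same approach as the paper: both reduce to $\ell=1$ via \cref{lem:moduli spaces of ellipsoids are all equal}, use \cref{lem:u is a polynomial} together with the tangency constraint to identify curves with $z \mapsto a z^m$, and then use the asymptotic marker condition plus the residual $\R_{>0}$-reparametrizations to conclude there is a single equivalence class. The paper phrases the last step as ``any two pairs are equivalent since $\arg(a_m)=\arg(b_m)$'' rather than ``$a \in \R_{>0}$ and $\R_{>0}$ acts freely transitively'', and it handles compactness slightly differently (deducing it from $\dim = 0$ rather than from your dimension-$(-1)$ and no-nodes argument), but these are cosmetic differences; your alternative via \cref{rmk:counts of moduli spaces with or without asy markers} is a nice addition that does not appear in the paper.
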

\begin{proof}
    By assumption on the perturbation scheme and \cref{prp:moduli spaces w tangency are tco}, $\#^{\mathrm{vir}} \overline{\mathcal{M}}^{\ell,(m)} = \# \overline{\mathcal{M}}^{\ell,(m)}$. Again by \cref{prp:moduli spaces w tangency are tco}, the moduli space $\mathcal{M}^{\ell,(m)}$ is transversely cut out and%
    \begin{IEEEeqnarray*}{c}
        \dim \mathcal{M}^{\ell,(m)} = (n -3)(2 - 1) + \conleyzehnder(\gamma_1^m) - 2 \ell - 2 m + 4 = 0,
    \end{IEEEeqnarray*}
    where in the second equality we have used \cref{lem:unique reeb orbit with cz equal to}. This implies that $\mathcal{M}^{\ell,(m)}$ is compact, and in particular $\# \overline{\mathcal{M}}^{\ell,(m)} = \# \mathcal{M}^{\ell,(m)}$. By \cref{lem:moduli spaces of ellipsoids are all equal}, $\# \mathcal{M}^{\ell,(m)} = \# \mathcal{M}^{1,(m)}$. It remains to show that $\# \mathcal{M}^{1,(m)} = 1$. For this, notice that $\mathcal{M}^{1,(m)}$ is the set of equivalence classes of pairs $(j,u)$, where $j$ is an almost complex structure on $\Sigma = S^2$ and $u \colon (\dot{\Sigma}, j) \longrightarrow (\hat{E}_1, J_1)$ is a holomorphic map such that 
    \begin{enumerate}
        \item $u(z_0) = x_1$ and $u$ has contact order $m$ to $D_1$ at $x_1$;
        \item if $(s,t)$ are the cylindrical coordinates on $\dot{\Sigma}$ near $z_1$ such that $v_1$ agrees with the direction $t = 0$, then
            \begin{IEEEeqnarray*}{rrCls+x*}
                \lim_{s \to +\infty} & \pi_{\R} \circ u(s,t)           & = & + \infty, \\
                \lim_{s \to +\infty} & \pi_{\partial E_1} \circ u(s,t) & = & \gamma_1 (a_1 m t).
            \end{IEEEeqnarray*}
    \end{enumerate}
    Here, two pairs $(j_0, u_0)$ and $(j_1, u_1)$ are equivalent if there exists a biholomorphism $\phi \colon (\Sigma, j_0) \longrightarrow (\Sigma, j_1)$ such that
    \begin{IEEEeqnarray*}{c+x*}
        \phi(z_0) = z_0, \qquad \phi(z_1) = z_1, \qquad \dv \phi(z_1) v_1 = v_1.
    \end{IEEEeqnarray*}
    We claim that any two pairs $(j_0, u_0)$ and $(j_1, u_1)$ are equivalent. By \cref{lem:u is a polynomial}, the maps $\varphi \circ u_0 \circ \psi_{j_0}$ and $\varphi \circ u_1 \circ \psi_{j_1}$ are polynomials of degree $m$:
    \begin{IEEEeqnarray*}{rCls+x*}
        \varphi \circ u_0 \circ \psi_{j_0} (z) & = & a_0 + \cdots + a_m z^m, \\
        \varphi \circ u_1 \circ \psi_{j_1} (z) & = & b_0 + \cdots + b_m z^m.
    \end{IEEEeqnarray*}
    Since $u_0$ and $u_1$ have contact order $m$ to $D_1$ at $x_1$, for every $\nu = 0,\ldots,m-1$ we have
    \begin{IEEEeqnarray*}{rCls+x*}
        0 & = & (\varphi \circ u_0 \circ \psi_{j_0})^{(\nu)}(0) = \nu! a_{\nu}, \\
        0 & = & (\varphi \circ u_1 \circ \psi_{j_1})^{(\nu)}(0) = \nu! b_{\nu}.
    \end{IEEEeqnarray*}
    Since $u_0$ and $u_1$ have the same asymptotic behaviour, $\operatorname{arg}(a_m) = \operatorname{arg}(b_m)$. Hence, there exists $\lambda \in \R_{>0}$ such that $\lambda^m b_m = a_m$. Then, 
    \begin{IEEEeqnarray*}{c+x*}
        u_1 \circ \psi_{j_1} (\lambda z) = u_0 \circ \psi_{j_0} (z).
    \end{IEEEeqnarray*}
    Therefore, $(j_0, u_0)$ and $(j_1, u_1)$ are equivalent and $\# \mathcal{M}^{1,(m)} = 1$.
\end{proof}

\begin{remark}
    In \cite[Proposition 3.4]{cieliebakPuncturedHolomorphicCurves2018}, Cieliebak and Mohnke show that the signed count of the moduli space of holomorphic curves in $\C P^n$ in the homology class $[\C P^1]$ which satisfy a tangency condition $\p{<}{}{\mathcal{T}^{(n)}x}$ equals $(n-1)!$. It is unclear how this count relates to the one of \cref{lem:moduli spaces of ellipsoids have 1 element}.
\end{remark}

Finally, we will use the results of this section to compute the augmentation map of the ellipsoid $E_n$.

\begin{theorem}
    \label{thm:augmentation is nonzero}
    The augmentation map $\epsilon_m \colon CH_{n - 1 + 2m}(E_n) \longrightarrow \Q$ is an isomorphism.
\end{theorem}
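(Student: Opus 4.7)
The plan is to reduce the statement to the count performed in Proposition \ref{lem:moduli spaces of ellipsoids have 1 element}. First, by Lemma \ref{lem:lch of ellipsoid} the module $CH_{n-1+2m}(E_n)$ is the free $\Q$-module on the single generator $\gamma_1^m$, so both the source and target of $\epsilon_m$ are one-dimensional $\Q$-vector spaces. Consequently, proving that $\epsilon_m$ is an isomorphism reduces to showing that $\epsilon_m(\gamma_1^m) \neq 0$.

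By definition of the augmentation map (\cref{def:augmentation map}), $\epsilon_m(\gamma_1^m)$ is the virtual count of the compactified moduli space $\overline{\mathcal{M}}^{J_n}_{E_n}(\gamma_1^m)\p{<}{}{\mathcal{T}^{(m)} x_n}$ of $J_n$-holomorphic planes in $\hat{E}_n$ positively asymptotic to $\gamma_1^m$ with contact order $m$ to $D_n$ at $x_n$. The argument proceeds in three steps. First, by \cref{prp:moduli spaces w tangency are tco} (applied with $\ell = n$ and $k = m$), the moduli space $\mathcal{M}^{n,(m)}$, defined with asymptotic markers, is transversely cut out. The forgetful map $\pi^{\$}$ of \cref{rmk:counts of moduli spaces with or without asy markers} then shows that the moduli space $\overline{\mathcal{M}}^{J_n}_{E_n}(\gamma_1^m)\p{<}{}{\mathcal{T}^{(m)} x_n}$ without markers is likewise transversely cut out, and the two counts are related by the factor $m / |\operatorname{Aut}(u)|$. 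Second, by the assumption on the virtual perturbation scheme (\cref{assumption}), for a transversely cut out moduli space the virtual count equals the usual signed count. Third, by \cref{lem:moduli spaces of ellipsoids have 1 element} we have $\#\overline{\mathcal{M}}^{n,(m)} = 1$, so after dividing by the positive rational factor coming from \cref{rmk:counts of moduli spaces with or without asy markers}, the unmarkered count is also nonzero.

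Combining these three steps yields $\epsilon_m(\gamma_1^m) \neq 0$, and hence $\epsilon_m$ is an isomorphism. The main technical obstacle is the careful bookkeeping between the markered moduli space $\mathcal{M}^{n,(m)}$, in which the transversality results of \cref{sec:cr operators,sec:functional analytic setup} are phrased, and the unmarkered moduli space used in the definition of the augmentation map; this is resolved by \cref{rmk:counts of moduli spaces with or without asy markers}, which guarantees that the counts agree up to a positive rational multiple and hence that nonvanishing is preserved under the forgetful map.
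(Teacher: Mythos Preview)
Your proposal is correct and follows essentially the same approach as the paper's proof: both reduce to showing $\epsilon_m(\gamma_1^m)\neq 0$ via \cref{lem:lch of ellipsoid}, and both combine \cref{lem:moduli spaces of ellipsoids have 1 element} with \cref{rmk:counts of moduli spaces with or without asy markers} to conclude. Note that your separate invocation of \cref{prp:moduli spaces w tangency are tco} and \cref{assumption} is slightly redundant, since \cref{lem:moduli spaces of ellipsoids have 1 element} already records the virtual count $\#^{\mathrm{vir}}\overline{\mathcal{M}}^{n,(m)}=1$ directly.
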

\begin{proof}
    By \cref{lem:moduli spaces of ellipsoids have 1 element}, \cref{rmk:counts of moduli spaces with or without asy markers} and definition of the augmentation map, we have $\epsilon_m(\gamma^m_1) \neq 0$. By \cref{lem:lch of ellipsoid}, $\epsilon_m$ is an isomorphism.
\end{proof}

\section{Computations using contact homology}

Finally, we use the tools developed in this chapter to prove \cref{conj:the conjecture} (see \cref{thm:my main theorem}). The proof we give is the same as that of \cref{lem:computation of cl}, with the update that we will use the capacity $\mathfrak{g}^{\leq 1}_{k}$ to prove that 
\begin{IEEEeqnarray*}{c+x*}
    \tilde{\mathfrak{g}}^{\leq 1}_k(X) \leq \mathfrak{g}^{\leq 1}_k(X) = \cgh{k}(X)
\end{IEEEeqnarray*}
for any nondegenerate Liouville domain $X$. Notice that in \cref{lem:computation of cl}, $\tilde{\mathfrak{g}}^{\leq 1}_k(X) \leq \cgh{k}(X)$ held because by assumption $X$ was a $4$-dimensional convex toric domain. We start by showing that $\tilde{\mathfrak{g}}^{\leq \ell}_k(X) \leq \mathfrak{g}^{\leq \ell}_k(X)$. This result has already been proven in \cite[Section 3.4]{mcduffSymplecticCapacitiesUnperturbed2022}, but we include a proof for the sake of completeness.

\begin{theorem}[{\cite[Section 3.4]{mcduffSymplecticCapacitiesUnperturbed2022}}]
    \phantomsection\label{thm:g tilde vs g hat}
    If $X$ is a Liouville domain then 
    \begin{IEEEeqnarray*}{c+x*}
        \tilde{\mathfrak{g}}^{\leq \ell}_k(X) \leq {\mathfrak{g}}^{\leq \ell}_k(X).
    \end{IEEEeqnarray*}
\end{theorem}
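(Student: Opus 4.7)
The plan is to fix a point $x \in \itr X$, a symplectic divisor $D$ through $x$, and an almost complex structure $J \in \mathcal{J}(X,D)$, and to establish the inequality at the level of the variational definitions. Working with the characterization of $\tilde{\mathfrak{g}}^{\leq \ell}_k(X)$ given by \cref{thm:g tilde two definitions}, namely
\begin{IEEEeqnarray*}{c+x*}
    \tilde{\mathfrak{g}}^{\leq \ell}_k(X) = \sup_{J \in \mathcal{J}(X,D)} \mathop{\inf\vphantom{\mathrm{sup}}}_{\Gamma} \mathcal{A}(\Gamma),
\end{IEEEeqnarray*}
where the infimum ranges over tuples $\Gamma = (\gamma_1,\ldots,\gamma_p)$ with $p \leq \ell$ and $\overline{\mathcal{M}}_X^J(\Gamma)\p{<}{}{\mathcal{T}^{(k)}x} \neq \varnothing$, it suffices to show that for every fixed $J$ and every $a > \mathfrak{g}^{\leq \ell}_k(X)$, there exists such a tuple $\Gamma$ with $\mathcal{A}(\Gamma) \leq a$; then taking the infimum over $\Gamma$ and the supremum over $J$ yields the inequality.

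First I would unpack the definition of $\mathfrak{g}^{\leq \ell}_k(X)$: for any $a$ strictly larger than $\mathfrak{g}^{\leq \ell}_k(X)$, there is a class $\beta \in H(\mathcal{A}^{\leq a}\mathcal{B}^{\leq \ell}(CC(X)[-1]))$ with $\epsilon_k(\beta) \neq 0$, where the augmentation is computed with respect to the chosen $J$ and $D$. A chain-level representative of $\beta$ is a finite linear combination of words $\Gamma = (\gamma_1,\ldots,\gamma_p)$ of good Reeb orbits, each satisfying the word-length bound $p \leq \ell$ (by the word length filtration $\mathcal{B}^{\leq \ell}$) and the action bound $\mathcal{A}(\Gamma) \leq a$ (by the action filtration, using \cref{lem:action filtration of lch} to ensure compatibility with the $\mathcal{L}_{\infty}$-operations in the homology-level statement).

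Next, since $\epsilon_k(\beta) \neq 0$, at least one word $\Gamma$ in this linear combination must satisfy $\#^{\mathrm{vir}} \overline{\mathcal{M}}_X^J(\Gamma)\p{<}{}{\mathcal{T}^{(k)}x} \neq 0$, because $\epsilon_k$ is defined by summing such virtual counts. Invoking \cref{assumption} on the virtual perturbation scheme, a nonzero virtual count forces the moduli space $\overline{\mathcal{M}}_X^J(\Gamma)\p{<}{}{\mathcal{T}^{(k)}x}$ to be nonempty. This $\Gamma$ therefore satisfies the conditions appearing in the characterization of $\tilde{\mathfrak{g}}^{\leq \ell}_k(X)$ given above, and has $\mathcal{A}(\Gamma) \leq a$. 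Passing to the infimum over such $\Gamma$ gives $\inf_{\Gamma} \mathcal{A}(\Gamma) \leq a$ for the fixed $J$, and then passing to the supremum over $J \in \mathcal{J}(X,D)$ yields $\tilde{\mathfrak{g}}^{\leq \ell}_k(X) \leq a$. Letting $a \searrow \mathfrak{g}^{\leq \ell}_k(X)$ concludes the proof.

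The routine bookkeeping is the chain-level translation of ``$\epsilon_k(\beta) \neq 0$'' into a nonvanishing virtual count of a specific moduli space, together with the observation that the action and word-length filtrations on $\mathcal{B}(CC(X)[-1])$ are precisely the ones that match the quantities appearing in the definition of $\tilde{\mathfrak{g}}^{\leq \ell}_k$. The only substantive input is \cref{assumption}, which guarantees both the well-definedness of $\epsilon_k$ and the implication ``nonzero virtual count $\Rightarrow$ nonempty moduli space''; apart from this, the argument is essentially a direct unwinding of definitions.
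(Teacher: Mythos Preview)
Your proposal is correct and follows essentially the same approach as the paper's proof: both use \cref{thm:g tilde two definitions}, pick a class $\beta$ with $\epsilon_k(\beta)\neq 0$, extract from its chain-level representative a word $\Gamma$ with nonzero virtual count, and invoke \cref{assumption} to conclude the moduli space is nonempty. The only point you omit is the preliminary reduction to the nondegenerate case via \cref{lem:can prove ineqs for ndg}, which the paper states at the outset; otherwise the arguments are the same.
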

\begin{proof}
    By \cref{lem:can prove ineqs for ndg}, we may assume that $X$ is nondegenerate. Choose a point $x \in \itr X$ and a symplectic divisor $D$ through $x$. Let $J \in \mathcal{J}(X,D)$ be an almost complex structure on $\hat{X}$ and consider the bar complex $\mathcal{B}(CC(X)[-1])$, computed with respect to $J$. Suppose that $a > 0$ is such that the augmentation map 
    \begin{IEEEeqnarray*}{c+x*}
        \epsilon_k \colon H(\mathcal{A}^{\leq a} \mathcal{B}^{\leq \ell}(CC(X)[-1])) \longrightarrow \Q
    \end{IEEEeqnarray*}
    is nonzero. By \cref{thm:g tilde two definitions}, it is enough to show that there exists a word of Reeb orbits $\Gamma = (\gamma_1,\ldots,\gamma_p)$ such that 
    \begin{IEEEeqnarray*}{c+x*}
        p \leq \ell, \qquad \mathcal{A}(\Gamma) \leq a, \qquad \overline{\mathcal{M}}^{J}_{X}(\Gamma)\p{<}{}{\mathcal{T}^{(k)}x} \neq \varnothing.
    \end{IEEEeqnarray*}
    Choose a homology class $\beta \in H(\mathcal{A}^{\leq a} \mathcal{B}^{\leq \ell}(CC(X)[-1]))$ such that $\epsilon_k(\beta) \neq 0$. The element $\beta$ can be written as a finite linear combination of Reeb orbits $\Gamma = (\gamma_1,\ldots,\gamma_p)$, where every word has length $p \leq \ell$ and action $\mathcal{A}(\Gamma) \leq a$. One of the words in this linear combination, say $\Gamma = (\gamma_1,\ldots,\gamma_{p})$, is such that $\#^{\mathrm{vir}} \overline{\mathcal{M}}^{J}_{X}(\Gamma)\p{<}{}{\mathcal{T}^{(k)}x} \neq 0$. By assumption on the virtual perturbation scheme, $\overline{\mathcal{M}}^{J}_{X}(\Gamma)\p{<}{}{\mathcal{T}^{(k)}x}$ is nonempty. 
\end{proof}

\begin{theorem}
    \label{thm:g hat vs gh}
    If $X$ is a Liouville domain such that $\pi_1(X) = 0$ and $2 c_1(TX) = 0$ then%
    \begin{IEEEeqnarray*}{c+x*}
        {\mathfrak{g}}^{\leq 1}_k(X) = \cgh{k}(X).
    \end{IEEEeqnarray*}
\end{theorem}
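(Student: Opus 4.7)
The plan is to follow the approach outlined in the introduction's proof sketch, comparing $\mathfrak{g}^{\leq 1}_k(X)$ and $\cgh{k}(X)$ via the Bourgeois--Oancea isomorphism between positive $S^1$-equivariant symplectic homology and linearized contact homology. First I would reduce to the nondegenerate case using \cref{lem:can prove ineqs for ndg}. Then I would choose a small nondegenerate ellipsoid $E = E(a_1,\ldots,a_n)$ together with a strict exact symplectic embedding $\phi \colon E \longrightarrow X$; the topological assumptions $\pi_1(X) = 0$ and $2 c_1(TX) = 0$ ensure that both $\homology{}{S^1}{}{S}{H}{+}{}(X)$ and $CH(X)$ carry well-defined $\Z$-gradings by Conley--Zehnder index (\cref{lem:cz of hamiltonian is independent of triv over filling disk,lem:cz of reeb is independent of triv over filling disk}), so all the relevant maps can be considered in a fixed degree $n - 1 + 2k$.

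The main step is to assemble the commutative diagram displayed in the introduction's proof sketch, relating the action filtration maps $\iota^{S^1,a}$ and $\iota^a$, the Viterbo transfer maps $\phi_!^{S^1}$ and $\phi_!$ (defined in \cref{sec:viterbo transfer map of liouville embedding,sec:viterbo transfer map of exact symplectic embedding}), the Bourgeois--Oancea isomorphism $\Phi_{\mathrm{BO}}$, and the augmentation maps $\epsilon_k^X$ and $\epsilon_k^E$. Commutativity of the top two rows follows from naturality of $\Phi_{\mathrm{BO}}$ with respect to Viterbo transfer and with respect to the action filtration, while commutativity of the bottom square is essentially the definition of the augmentation map applied to a composition with the transfer map.

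Given the diagram, I would use \cref{def:ck alternative} to rewrite $\cgh{k}(X)$ as the infimum over $a$ such that $\phi_!^{S^1} \circ \iota^{S^1,a}$ is nonzero, and the definition of $\mathfrak{g}^{\leq 1}_k(X)$ as the infimum over $a$ such that $\epsilon_k^X \circ \iota^a$ is nonzero. Commutativity gives the inequality
\begin{IEEEeqnarray*}{rCls+x*}
\cgh{k}(X)
& = & \inf \{ a > 0 \mid \phi_!^{S^1} \circ \iota^{S^1,a} \neq 0 \} \\
& \leq & \inf \{ a > 0 \mid \epsilon_k^X \circ \iota^{a} \neq 0 \} \\
& = & \mathfrak{g}^{\leq 1}_k(X).
\end{IEEEeqnarray*}
To upgrade this inequality to an equality, I need $\epsilon_k^E$ to be injective on the image of $\phi_! \circ \iota^a$, and in fact it suffices that $\epsilon_k^E$ be an isomorphism on $CH_{n-1+2k}(E)$. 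This is precisely where \cref{thm:augmentation is nonzero} from the previous section enters: it asserts that $\epsilon_k^E \colon CH_{n-1+2k}(E) \longrightarrow \Q$ is an isomorphism, which rests on the transversality and explicit count results of \cref{sec:augmentation map of an ellipsoid} (\cref{prp:moduli spaces w tangency are tco,lem:moduli spaces of ellipsoids have 1 element,lem:lch of ellipsoid}).

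The main obstacle is really twofold. First, one must verify that the Bourgeois--Oancea isomorphism is compatible with the Viterbo transfer maps on both theories and with the action filtration; this is where one must be careful with conventions and with the degree shifts, but it is established in the literature cited (\cite{bourgeoisEquivariantSymplecticHomology2016}). Second, and more seriously from the standpoint of this chapter, one relies crucially on \cref{assumption} so that $\epsilon_k^E$ can be computed by the actual count of \cref{lem:moduli spaces of ellipsoids have 1 element} rather than a virtual count; the regularity proof of the moduli spaces in ellipsoids carried out via \cref{prp:moduli spaces without point constraint are tco,prp:moduli spaces w point are tco,prp:moduli spaces w tangency are tco} is what makes the identification $\#^{\mathrm{vir}} = \#$ legitimate here, and hence what ultimately forces the second inequality in the computation above to be an equality.
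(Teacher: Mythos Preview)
Your proposal is correct and follows essentially the same approach as the paper: reduce to the nondegenerate case, embed a small ellipsoid, use the Bourgeois--Oancea isomorphism together with naturality under Viterbo transfer and action filtration to build the commutative diagram, and then invoke \cref{thm:augmentation is nonzero} so that $\epsilon_k^E$ is an isomorphism and the chain of equivalences goes through. The only cosmetic difference is that the paper writes the final computation directly as a chain of equalities (since $\epsilon_k^E$ being an isomorphism makes the middle step an equivalence outright), whereas you first record the inequality and then explain why it upgrades to equality.
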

\begin{proof}
    By \cref{lem:can prove ineqs for ndg}, we may assume that $X$ is nondegenerate. Let $E = E(a_1,\ldots,a_n)$ be an ellipsoid as in \cref{sec:augmentation map of an ellipsoid} such that there exists a strict exact symplectic embedding $\phi \colon E \longrightarrow X$. In \cite{bourgeoisEquivariantSymplecticHomology2016}, Bourgeois--Oancea define an isomorphism between linearized contact homology and positive $S^1$-equivariant contact homology, which we will denote by $\Phi_{\mathrm{BO}}$. This isomorphism commutes with the Viterbo transfer maps and respects the action filtration. In addition, the Viterbo transfer maps in linearized contact homology commute with the augmentation maps of \cref{def:augmentation map}. Therefore, there is a commutative diagram
    \begin{IEEEeqnarray*}{c+x*}
        \begin{tikzcd}
            SH^{S^1,(\varepsilon,a]}_{n - 1 + 2k}(X) \ar[r, "\iota^{S^1,a}"] \ar[d, hook, two heads, swap, "\Phi_{\mathrm{BO}}^a"] & SH^{S^1,+}_{n - 1 + 2k}(X) \ar[r, "\phi_!^{S^1}"] \ar[d, hook, two heads, "\Phi_{\mathrm{BO}}"] & SH^{S^1,+}_{n - 1 + 2k}(E) \ar[d, hook, two heads, "\Phi_{\mathrm{BO}}"] \\
            CH^{a}_{n - 1 + 2k}(X) \ar[r, "\iota^{a}"] \ar[d, equals]                                & CH_{n - 1 + 2k}(X) \ar[r, "\phi_{!}"] \ar[d, equals]                  & CH_{n - 1 + 2k}(E) \ar[d, hook, two heads, "{\epsilon}^E_k"] \\
            CH^{a}_{n - 1 + 2k}(X) \ar[r, swap, "\iota^{a}"]                                         & CH_{n - 1 + 2k}(X) \ar[r, swap, "{\epsilon}_k^X"]                 & \Q
        \end{tikzcd}
    \end{IEEEeqnarray*}
    Here, the map ${\epsilon}_k^E$ is nonzero, or equivalently an isomorphism, by \cref{thm:augmentation is nonzero}. Then,%
    \begin{IEEEeqnarray*}{rCls+x*}
        \cgh{k}(X)
        & = & \inf \{ a > 0 \mid \phi_!^{S^1} \circ \iota^{S^1,a} \neq 0 \}       & \quad [\text{by \cref{def:ck alternative}}] \\
        & = & \inf \{ a > 0 \mid {\epsilon}_k^X \circ \iota^{a} \neq 0 \} & \quad [\text{since the diagram commutes}] \\
        & = & {\mathfrak{g}}^{\leq 1}_k(X)                                & \quad [\text{by \cref{def:capacities glk}}]. & \qedhere
    \end{IEEEeqnarray*}
\end{proof}

\begin{theorem}
    \phantomsection\label{thm:my main theorem}
    Under \cref{assumption}, if $X_\Omega$ is a convex or concave toric domain then%
    \begin{IEEEeqnarray*}{c+x*}
        c_L(X_{\Omega}) = \delta_\Omega.
    \end{IEEEeqnarray*}
\end{theorem}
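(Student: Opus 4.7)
The strategy is a direct generalization of the proof of \cref{lem:computation of cl}, but where the inequality $\tilde{\mathfrak{g}}^{\leq 1}_k(X_\Omega) \leq \cgh{k}(X_\Omega)$ — which in the 4-dimensional convex case came from \cref{prp:g tilde and cgh} — is now supplied by the chain $\tilde{\mathfrak{g}}^{\leq 1}_k(X_\Omega) \leq \mathfrak{g}^{\leq 1}_k(X_\Omega) = \cgh{k}(X_\Omega)$ obtained by composing \cref{thm:g tilde vs g hat} with \cref{thm:g hat vs gh}. This second equality requires the topological hypotheses $\pi_1 = 0$ and $2c_1(TX) = 0$, both of which hold automatically because $X_\Omega$ is a star-shaped domain in $\C^n$.

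Combining these ingredients with \cref{lem:c square geq delta,lem:c square leq c lag,thm:lagrangian vs g tilde}, and with the monotonicity of $\cgh{k}$ applied to the containment $X_\Omega \subset N(\delta_\Omega)$, I would write, for every $k \in \Z_{\geq 1}$,
\begin{IEEEeqnarray*}{rCls+x*}
    \delta_\Omega
    & \leq & c_P(X_\Omega) \\
    & \leq & c_L(X_\Omega) \\
    & \leq & \frac{\tilde{\mathfrak{g}}^{\leq 1}_k(X_\Omega)}{k} \\
    & \leq & \frac{\mathfrak{g}^{\leq 1}_k(X_\Omega)}{k} \\
    & =    & \frac{\cgh{k}(X_\Omega)}{k} \\
    & \leq & \frac{\cgh{k}(N(\delta_\Omega))}{k} \\
    & =    & \frac{\delta_\Omega (k + n - 1)}{k},
\end{IEEEeqnarray*}
where the last equality is \cref{lem:cgh of nondisjoint union of cylinders}. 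Taking the infimum over $k$ (equivalently $k \to +\infty$) yields $c_L(X_\Omega) = \delta_\Omega$.

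The only step that is not a direct citation of an earlier result is the containment $X_\Omega \subset N(\delta_\Omega)$, which is the main technical point to justify. For convex toric domains it follows from a short symmetry argument: if $(x_1,\ldots,x_n) \in \Omega$ has all $x_j > \delta_\Omega$, then by the symmetry and convexity of $\hat\Omega$ the axis-aligned box with vertices $(\pm x_1, \ldots, \pm x_n)$ lies in $\hat\Omega$, and its intersection with the diagonal produces the point $(\min_j x_j, \ldots, \min_j x_j) \in \Omega$ with $\min_j x_j > \delta_\Omega$, contradicting the definition of $\delta_\Omega$. For concave toric domains one uses instead that $\R^n_{\geq 0} \setminus \Omega$ is convex and that $\Omega$ is monotone decreasing (a consequence of $X_\Omega$ being a toric, star-shaped domain): given a hypothetical $x \in \Omega$ with all $x_j > \delta_\Omega$, the point $z = (\delta_\Omega + \epsilon, x_2, \ldots, x_n)$ lies coordinatewise above $(\delta_\Omega + \epsilon)(1, \ldots, 1) \in \R^n_{\geq 0} \setminus \Omega$ and coordinatewise below $x \in \Omega$, giving the required contradiction. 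This containment step is the only place where the convexity/concavity hypothesis enters, and it is where I would expect the bulk of the careful checking to occur; every other step is a purely formal chaining of previously established inequalities.
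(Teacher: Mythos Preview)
Your proposal is correct and follows exactly the same chain of inequalities as the paper's proof, citing the same results in the same order. The paper simply asserts the containment $X_\Omega \subset N(\delta_\Omega)$ for convex or concave toric domains without proof, so your added justification of this step goes slightly beyond what the paper records; your concave argument is correct but note that the monotonicity of $\Omega$ there follows from concavity (combined with boundedness and star-shapedness) rather than from the toric star-shaped condition alone.
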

\begin{proof}
    Since $X_{\Omega}$ is concave or convex, we have $X_{\Omega} \subset N(\delta_\Omega)$. For every $k \in \Z_{\geq 1}$,
    \begin{IEEEeqnarray*}{rCls+x*}
        \delta_\Omega
        & \leq & c_P(X_{\Omega})                                         & \quad [\text{by \cref{lem:c square geq delta}}] \\
        & \leq & c_L(X_{\Omega})                                         & \quad [\text{by \cref{lem:c square leq c lag}}] \\
        & \leq & \frac{\tilde{\mathfrak{g}}^{\leq 1}_{k}(X_{\Omega})}{k} & \quad [\text{by \cref{thm:lagrangian vs g tilde}}] \\
        & \leq & \frac{{\mathfrak{g}}^{\leq 1}_{k}(X_{\Omega})}{k}       & \quad [\text{by \cref{thm:g tilde vs g hat}}] \\
        & =    & \frac{\cgh{k}(X_{\Omega})}{k}                           & \quad [\text{by \cref{thm:g hat vs gh}}] \\
        & \leq & \frac{\cgh{k}(N(\delta_\Omega))}{k}                     & \quad [\text{since $X_{\Omega} \subset N(\delta_\Omega)$}] \\
        & =    & \frac{\delta_\Omega(k+n-1)}{k}                          & \quad [\text{by \cref{lem:cgh of nondisjoint union of cylinders}}].
    \end{IEEEeqnarray*}
    The result follows by taking the infimum over $k$.
\end{proof}

\AtEndDocument{
    \bibliographystyle{alpha}
    \bibliography{thesis}
}

\end{document}